\definecolor{dark-red}{rgb}{0.5,0.15,0.15}
\definecolor{dark-blue}{rgb}{0.15,0.15,0.6}
\definecolor{dark-green}{rgb}{0.15,0.6,0.15}
\newcommand{\euscr}[1]{\EuScript{#1}} % Euler script
\newcommand{\acat}{\euscr{A}} % category A in Euler script
\newcommand{\ccat}{\euscr{C}} % category C in Euler script 
\newcommand{\dcat}{\euscr{D}} % category D in Euler script
\newcommand{\pcat}{\euscr{P}} % category P in Euler script
\newcommand{\Fun}{\textnormal{Fun}} % functor category
\newcommand{\Hom}{\textnormal{Hom}} % homomorphims 
\newcommand{\Ext}{\textnormal{Ext}} % Ext 
\newcommand{\Spec}{\textnormal{\Spec}} % Spec
\newcommand{\IndCoh}{\textnormal{IndCoh}} % IndCoh
\newcommand{\Ind}{\textnormal{Ind}} % IndCoh
\newcommand{\QCoh}{\textnormal{QCoh}} % IndCoh
\newcommand{\map}{\textnormal{map}} % mapping space
\newcommand{\Map}{\textnormal{Map}} % mapping space with uppercase M 
\newcommand{\spaces}{\euscr{S}} % the category of spaces
\newcommand{\MU}{\mathrm{MU}} % complex bordism
\newcommand{\MGL}{\mathrm{MGL}} % motivic complex bordism
\newcommand{\BPL}{\mathrm{BPL}} % motivic Brown-Peterson
\newcommand{\BP}{\mathrm{BP}} % Brown-Peterson spectrum
\newcommand{\Mcale}{\mathcal{M}_{E}} % stack associated to a homology theory 
\newcommand{\abeliangroups}{\euscr{A}b} % the category of abelian groups
\newcommand{\spectra}{\euscr{S}p} % the category of spectra
\newcommand{\cspectra}{\euscr{S}p_{\mathbb{C}}} % the category of complex motivic spectra
\newcommand{\sets}{\euscr{S}et} % the category of sets
\newcommand{\stableE}{\euscr{S}table_{E_{*}E}} % the stable homotopy theory of E_*E-comodules
\newcommand{\hpsheaves}{\widehat{S}h} % hypercomplete sheaves
\newcommand{\synspectra}{\euscr{S}yn} % synthetic spectra
\newcommand{\hpsynspectra}{\widehat{\euscr{S}}yn} % synthetic spectra
\newcommand{\Lan}{\textnormal{Lan}} % left Kan exntesion
\newcommand{\spectrafp}{\spectra_{E}^{fp}} % finite E-projective spectra
\newcommand{\pcomplete}{^{\wedge} _{p}} % p-complete suffix 
\newcommand{\integralem}{H \mathbb{Z}} % integral Eilenberg-MacLane spectrum 
\newcommand{\spectramglfp}{\spectra_{MGL}^{fp}} % finite MGL-projective motivic spectra
\newcommand{\spectramufpe}{\spectra_{\MU}^{fpe}} % finite even \MU-projective spectra
\newcommand{\hpssheavesofspectra}{\hpsheaves_{\Sigma}^{\spectra}} % spherical hypercomplete sheaves of spectra
\newcommand{\Comod}{\euscr{C}omod} % comodules
\newcommand{\ComodE}{\Comod_{E_{*}E}} % E_*E-comodules
\newcommand{\ComodMU}{\Comod_{\MU_{*}\MU}} % \MU_{*}\MU-comodules
\newcommand{\ComodGamma}{\Comod_{\Gamma}} % Gamma-comodules
\newcommand{\Mod}{\euscr{M}od} % module infinity-category
\newcommand{\sigmainfty}{\Sigma^{\infty}_{+}} % Sigma infty with a disjoint basepoint
\newcommand{\omegainfty}{\Omega^{\infty}} % Omega infty 
\newcommand{\fieldp}{\mathbb{F}_{p}} % field with p elements
\newcommand{\fieldtwo}{\mathbb{F}_{2}} % field with two elements 
\newcommand{\thickdelta}{\mathbf{\Delta}} % the thick delta (ordinal category)
\newcommand{\Perf}{\euscr{P}\mathrm{erf}} % perfect complexes
\newcommand{\triplerightarrow}{%
\tikz[minimum height=0ex]
  \path[->]
   node (a)            {}
   node (b) at (1em,0) {}
  (a.north)  edge (b.north)
  (a.center) edge (b.center)
  (a.south)  edge (b.south);%
}
\theoremstyle{plain}
\newtheorem{thm}{Theorem}[section]
\newtheorem{lemma}[thm]{Lemma}
\newtheorem{prop}[thm]{Proposition}
\newtheorem{cor}[thm]{Corollary}
\newtheorem*{thm*}{Theorem}
\theoremstyle{definition}
\newtheorem{example}[thm]{Example}
\newtheorem{warning}[thm]{Warning}
\newtheorem{defin}[thm]{Definition}
\newtheorem{rem}[thm]{Remark}
\newtheorem{notation}[thm]{Notation}
\newtheorem{construction}[thm]{Construction}
\newtheorem*{rem*}{Remark}
\newtheorem*{interpretation*}{Interpretation}
\newtheorem*{defin*}{Definition}
\newtheorem*{conjecture*}{Conjecture}
\newtheorem*{notation*}{Notation}
\newtheorem*{convention*}{Convention}
\newtheorem*{thm_italics*}{Theorem}
\theoremstyle{remark}
  \def\subsection{\@startsection{subsection}{1}%
  \z@{.7\linespacing\@plus\linespacing}{.5\linespacing}%
  {\normalfont\bfseries\centering}}% NEW
\let\oldtocsection=\tocsection
\let\oldtocsubsection=\tocsubsection
\let\oldtocsubsubsection=\tocsubsubsection
\renewcommand{\tocsection}[2]{\hspace{0em}\oldtocsection{#1}{#2}}
\renewcommand{\tocsubsection}[2]{\hspace{1em}\oldtocsubsection{#1}{#2}}
\renewcommand{\tocsubsubsection}[2]{\hspace{2em}\oldtocsubsubsection{#1}{#2}}
\begin{document}
\title[Synthetic spectra]{Synthetic spectra and the cellular motivic category}
\author[Piotr Pstr\k{a}gowski]{Piotr Pstr\k{a}gowski}
\address{Northwestern University}
\email{pstragowski.piotr@gmail.com}

\begin{abstract}
To an Adams-type homology theory we associate the notion of a synthetic spectrum; this is a product-preserving sheaf on the site of finite spectra with projective $E$-homology. We show that the $\infty$-category $\synspectra_{E}$ of synthetic spectra based on $E$ is in a precise sense a deformation of the $\infty$-category of spectra into quasi-coherent sheaves over a certain algebraic stack, and show that this deformation encodes the $E_{*}$-based Adams spectral sequence.  

We describe a symmetric monoidal functor from the $\infty$-category of cellular motivic spectra over $\textnormal{Spec}(\mathbb{C})$ into an even variant of synthetic spectra based on $\MU$ and show that it induces an equivalence between the $\infty$-categories of $p$-complete objects for all primes $p$. In particular, it follows that the $p$-complete cellular motivic category can be described purely in terms of chromatic homotopy theory. 
\end{abstract}

\maketitle 

\tableofcontents

% the beginning sections do not go into the table of contents 
% \addtocontents{toc}{\protect\setcounter{tocdepth}{-1}}

\section{Introduction} 

Associated to a ring spectrum $E$ such that $E_{*}E$ is flat over $E_{*}$ we have an algebraic\footnote{It's ``almost'' algebraic: the quotient map $\textnormal{Spec}(E_{*}) \rightarrow \mathcal{M}_{E}$ is affine and flat, but not necessarily smooth.} stack 
\[
\mathcal{M}_{E} := \varinjlim_{[n] \in \Delta^{op}} \textnormal{Spec}(\pi_{*}(E^{\otimes [n]}))
\]
which encodes the self-intersections of ``$\textnormal{Spec}(E)$'' over the sphere. For any spectrum $X$, the homology $E_{*}X$ has a canonical descent datum to a quasi-coherent sheaf over $\mathcal{M}_{E}$ which encodes more subtle algebraic information. Of particular importance is the Adams spectral sequence of signature
\begin{equation}
\label{equation:introduction:ass_signature}
\Ext_{\mathcal{M}_{E}}^{s, t}(E_{*}X, E_{*}Y) \Rightarrow [X, Y]_{t-s},
\end{equation}
which relates the homological algebra of quasi-coherent sheaves over $\mathcal{M}_{E}$ to homotopy classes of maps of spectra; this is the main calculational tool in stable homotopy theory.

The main thesis of this paper is that the the relationship between algebraic geometry and stable homotopy theory is stronger than the mere spectral sequence of (\ref{equation:introduction:ass_signature}): we show that the stable $\infty$-category $\spectra$ of spectra can be \emph{deformed} into the stable $\infty$-category $\IndCoh(\mathcal{M}_{E})$ of $\Ind$-coherent sheaves over $\mathcal{M}_{E}$. 

Informally, we construct a ``derived $\infty$-category of spectra'', where the derivation is performed with respect to the $E$-homology functor. The objects of this $\infty$-category, which we call \emph{synthetic spectra}\footnote{The terminology \emph{synthetic} is motivated by the work of Hopkins and Lurie \cite{lurie_hopkins_brauer_group}, where the $\infty$-category of \emph{synthetic $K(n)$-local $E$-modules} is considered, $E$ being the Morava $E$-theory.}, exhibit both topological and algebraic features, bound together in a non-trivial way. 

The passage between topology and algebra is controlled by an endomorphism of the identity denoted by $\tau$, which should be thought of as a formal parameter exhibiting $\synspectra$ as an $\infty$-categorical deformation. Through the latter, the Adams spectral sequence 
becomes identified with the $\tau$-Bockstein spectral sequence, allowing a plethora of new computational techniques. 

The features of the $\infty$-category we construct are similar to those of the $p$-complete cellular motivic category over $\textnormal{Spec}(\mathbb{C})$, whose special fibre was identified with $\Ind$-coherent sheaves by Gheorghe, Wang and Xu \cite{gheorghe2018special}. In this work, we extend their result by showing that in fact the whole $p$-complete cellular motivic $\infty$-category can be identified with an even variant of $\MU$-based synthetic spectra. This equivalence is the first purely topological description of an $\infty$-category of motivic origin, and it readily explains the mysterious connection between cellular motives and chromatic homotopy theory. 

While this article is concerned with the construction of synthetic spectra, their deformation theoretic properties and the connection to motivic homotopy theory, a plethora of other applications have been found since the preprint of this article first appeared. As a guide to the literature and as motivation for the main construction, we collect some of those in \S\ref{subsection:applications_of_synthetic_spectra} below. 

\subsection{Statement of results}

Let $E$ be a homotopy associative and commutative ring spectrum\footnote{In the main body of the paper, we do not assume that $E$ is homotopy commutative. In this more general case, the results hold as written, except that the stated equivalences are only monoidal and not symmetric monoidal.}. We say that a spectrum $P$ is \emph{finite $E$-projective} if it is finite and $E_{*}P$ is finitely generated and projective over $E_{*}$, and denote the full subcategory of spectra spanned by finite $E$-projectives by $\spectra_{E}^{fp}$. 

We assume that $E$ is \emph{Adams-type}, in other words, that it is a filtered colimit of finite $E$-projective spectra which satisfy universal coefficient isomorphism; for example, $E$ could be Landweber exact or a field. Any such homology theory is flat so that we have the associated stack\footnote{We warn the reader that only in the introduction do we use the language of quasi-coherent sheaves over stacks, for the goal of making the outline understandable to a possibly large audience. In the main body of the text, we use the language of Hopf algebroids and comodules familiar to homotopy theorists - see \S\ref{subsection:qcoh_sheaves_and_comodules} for a short explanation of our decision.}
\[
\mathcal{M}_{E} := \varinjlim_{[n] \in \Delta^{op}} \textnormal{Spec}(\pi_{*}(E^{\otimes [n]}))
\]
and a conditionally convergent Adams spectral sequence of signature 
\[
\Ext_{\mathcal{M}_{E}}^{s, t}(E_{*}X, E_{*}Y) \Rightarrow [X, Y^{\wedge}_{E}]_{t-s},
\]
where $Y^{\wedge}_{E}$ is a suitable completion of $Y$, due to Devinatz and Hopkins \cite{dev_morava}. 

We equip the $\infty$-category $\spectrafp$ of finite $E$-projective spectra with a Grothendieck topology generated by the covering families $\{ Q_{i} \rightarrow P \}$ consisting of a single $E_{*}$-surjective map. We say a sheaf $X \colon (\spectrafp)^{op} \rightarrow \spectra$ is \emph{spherical} or \emph{product-preserving} if it takes sums in $\spectrafp$ to products of spectra. 

A \emph{synthetic spectrum based on $E$} is a spherical sheaf of spectra on $\spectrafp$; we denote their $\infty$-category by $\synspectra_{E}$. By construction, the $\infty$-category of synthetic spectra is stable and presentable; moreover, it admits a well-behaved symmetric monoidal structure induced from the tensor product of finite $E$-projective spectra. 

If $X$ is an ordinary spectrum, we show that the associated representable sheaf of spaces $y(X)$ on $\spectra_{E}^{fp}$ admits a unique lift to a connective sheaf of spectra and so defines a synthetic spectrum which we denote $\nu X$ and call the \emph{synthetic analogue} of $X$. The resulting functor $\nu \colon \spectra \rightarrow \synspectra_{E}$ is lax symmetric monoidal; in particular, it takes ring spectra to algebras in synthetic spectra. 

The \emph{bigraded spheres} are the synthetic spectra defined by $S^{t, w} = \Sigma^{t-w} \nu S^{w}$, by construction $S^{0, 0}$ is the monoidal unit. This choice of spheres leads in the usual way to the homology and homotopy groups of synthetic spectra; moreover, it endows the $\infty$-category $\synspectra_{E}$ with a bigrading. 

\begin{thm}[\ref{prop:synthetic_spectra_admit_a_t_structure}, \ref{cor:homological_criterion_of_connectivity}]
\label{thm:introduction_existence_of_homological_t_structure}
The $\infty$-category $\synspectra_{E}$ admits a right complete $t$-structure compatible with filtered colimits such that a synthetic spectrum $X$ is connective if and only if $\nu E_{t, w} X = 0$ whenever $t - w < 0$. Moreover, there exists a canonical equivalence $\synspectra_{E}^{\heartsuit} \simeq \QCoh^{\heartsuit}(\mathcal{M}_{E})$ between the heart of this $t$-structure and the abelian category of quasi-coherent sheaves.
\end{thm}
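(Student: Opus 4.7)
The strategy is to construct the $t$-structure by pointwise connectivity, and then read off the homological criterion and the heart description from the Yoneda behaviour of $\nu$ together with the specific shape of the covers on $\spectrafp$.

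First, declare $\synspectra_E^{\geq 0}$ to consist of those spherical sheaves $X$ for which $X(P)$ is a connective spectrum for every $P \in \spectrafp$. This is a presentable subcategory of $\synspectra_E$ closed under colimits and extensions, so the standard criterion for accessible $t$-structures on presentable stable $\infty$-categories produces the desired $t$-structure. Compatibility with filtered colimits is inherited from the pointwise nature of both parts and the fact that filtered colimits of (co)connective spectra are (co)connective. Right completeness reduces to the analogous fact for $\spectra$: if $X \in \bigcap_n \synspectra_E^{\geq n}$ then $X(P) \in \bigcap_n \spectra_{\geq n} = 0$ for every $P$, hence $X = 0$.

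For the homological criterion, the starting point is the identity $\Map_{\synspectra_E}(\nu P, X) \simeq X(P)$ for $P \in \spectrafp$, which is built into the construction of $\nu$. In particular $\pi_{t,w}(X) \cong \pi_{t-w} X(S^w)$, so vanishing of $\pi_{t,w}(X)$ for $t - w < 0$ already encodes connectivity of $X$ at sphere objects. The task is then to show that the a priori weaker condition $\nu E_{t,w}(X) := \pi_{t,w}(\nu E \otimes X) = 0$ for $t - w < 0$ already implies pointwise connectivity at every $P \in \spectrafp$. The ``only if'' direction uses that $\nu E$ is itself connective and that the smash product preserves the connective part, so that a pointwise connective $X$ produces $\nu E$-homology concentrated in non-negative slopes. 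For the converse I would exploit the Adams-type hypothesis: every $P \in \spectrafp$ admits an $E_*$-surjective resolution by finite sums of shifted spheres, and the sheaf descent condition then lets one bootstrap connectivity of $X$ at wedges of spheres --- which is precisely what $\nu E_{t,w}(X) = 0$ detects --- up to connectivity of $X$ at arbitrary $P$.

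For the identification of the heart, construct a functor $\Phi : \ComodE \to \synspectra_E^{\heartsuit}$ by $M \mapsto \bigl(P \mapsto H\Hom_{\ComodE}(E_*P, M)\bigr)$, interpreted as a presheaf of Eilenberg--Mac Lane spectra. Sphericality follows from additivity of $\Hom$, and the sheaf condition for an $E_*$-surjective cover $Q \to P$ reduces, by projectivity of $E_*P$ together with the Adams-type K\"unneth property, to the assertion that the \v{C}ech complex of $E_*Q \to E_*P$ resolves $E_*P$ in $\ComodE$. For the inverse, send a discrete spherical sheaf $X$ to the abelian group $\pi_0 X(S^0)$ equipped with the $E_*E$-coaction built from the maps involving $E \in \spectrafp$ and the corresponding transition maps; this recovers the classical description of $E_*E$-comodules as additive functors on finitely generated $E_*$-projective modules satisfying descent with respect to $E$.

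The principal obstacle will be the ``if'' direction of the homological criterion, where establishing that $\nu E_{t,w}$-vanishing suffices for pointwise connectivity requires the simultaneous use of the sheaf axiom and the Adams-type K\"unneth property. A secondary subtlety is verifying that the proposed inverse of $\Phi$ is well defined, namely that the $E_*E$-coaction implicit in the sheaf structure on $\spectrafp$ can be extracted functorially, and that $\Phi$ and its inverse are mutually inverse; both hinge on careful bookkeeping using projectivity.
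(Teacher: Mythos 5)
Your $\Phi$ is the right functor---it is the precomposition of a discrete representable sheaf on $\ComodE^{fp}$ with the homology morphism of sites $E_*: \spectrafp \to \ComodE^{fp}$---but the proposed inverse is wrong. If $X = \Phi(M)$, then $\pi_0 X(S^0) \simeq \Hom_{\ComodE}(E_*, M)$, which is the group of primitives of $M$, not $M$ itself. Recovering the underlying graded abelian group requires the cofree comodule: writing $E \simeq \varinjlim E_\alpha$ with $E_\alpha \in \spectrafp$, one has $M_k \simeq \varinjlim_\alpha X(\Sigma^k D E_\alpha)$, because $\varinjlim E_* E_\alpha \simeq E_*E$ is cofree. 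This is precisely where the Adams-type hypothesis and the K\"unneth isomorphism do their work, and it is not reducible to bookkeeping. The paper packages exactly this datum as a ``common envelope'' for the morphism of excellent $\infty$-sites $E_*: \spectrafp \to \ComodE^{fp}$, which yields the equivalence $Sh_\Sigma^{\sets}(\spectrafp) \simeq Sh_\Sigma^{\sets}(\ComodE^{fp})$ and hence the heart identification in one stroke.

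The ``bootstrap'' for the homological criterion is also not yet an argument, and I do not see how to make it one along the lines sketched: $\nu E_{t,w}X$ records values of $\nu E \otimes X$ at spheres, not values of $X$, and descent for $X$ along $E_*$-surjections does not translate between the two. What the paper does instead is compute the $t$-structure homotopy groups explicitly, $(\pi_k^\heartsuit X)_l \simeq \varinjlim_\alpha \pi_k X(\Sigma^l DE_\alpha)$ (again the envelope formula), and then identifies the right-hand side with $\nu E_{k+l,l}X$ using compactness of the $\nu P$ and lax monoidality of $\nu$. The homological criterion is then an immediate restatement of $\pi_k^\heartsuit X = 0$ for $k < 0$, with nothing left to bootstrap. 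Two smaller points: for right completeness you want $\bigcap_n (\synspectra_E)_{\leq n} = 0$, not $\bigcap_n (\synspectra_E)_{\geq n}$ (the levelwise evaluation argument works once you swap the inequality, since the coconnective part is also levelwise), and you should justify that the pointwise connective spherical sheaves are stable under colimits in $\synspectra_E$, which requires knowing the identification $Sh_\Sigma(\spectrafp) \simeq (\synspectra_E)_{\geq 0}$ rather than being automatic, since colimits of sheaves involve a sheafification.
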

As $\synspectra_{E}$ is an $\infty$-category of sheaves of spectra, for formal reasons it admits a $t$-structure where coconnectivity is measured levelwise; this is the $t$-structure of \cref{thm:introduction_existence_of_homological_t_structure}, so that its existence is not surprising. The interesting part of the result is a homological criterion for connectivity together with the identification of the heart with something purely algebraic, both of which are technically quite involved.  

The connection between synthetic spectra and quasi-coherent sheaves can be made stronger. Since the $\infty$-category of finite $E$-projective spectra admits suspensions, for any synthetic spectrum $X$ we have a natural \emph{colimit-to-limit} map of the form $X(\Sigma P) \rightarrow \Omega X(P)$, where $P \in \spectrafp$. We show that this morphism arises from a universal one $\tau \colon S^{0, -1} \rightarrow S^{0, 0}$ in the sense that it can be identified with $\tau \otimes X \colon \Sigma^{0, -1} X \rightarrow X$. 

\begin{thm}[Special fibre - \ref{thm:topological_part_of_hovey_stable_category_of_comodules_as_modules_over_the_cofibre_of_tau}, \ref{prop:hoveys_category_and_ctau_modules_equivalent_iff_we_have_plenty_of_projectives}] 
\label{thm:introduction_modules_over_ctau_embeding_into_hoveys_stable_category}
The synthetic spectrum
\[
C\tau := \mathrm{cofib}(\tau \colon S^{0, -1} \rightarrow S^{0, 0})
\]
admits a canonical structure of a commutative algebra. Morever, there exists a canonical symmetric monoidal embedding 
\[
\chi^{*} \colon \Mod_{C\tau}(\synspectra_{E}) \hookrightarrow \IndCoh(\mathcal{M}_{E})
\]
of the $\infty$-category of modules over $C\tau$  into $\Ind$-coherent sheaves over $\mathcal{M}_{E}$. If $E$ is Landweber exact, this is an equivalence. 
\end{thm}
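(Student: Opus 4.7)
My approach would be to handle the three claims---the algebra structure on $C\tau$, the construction and fully faithfulness of the embedding $\chi_{*}$, and the equivalence in the Landweber-exact case---in turn, using the heart identification of Theorem \ref{thm:introduction_existence_of_homological_t_structure} as the bridge between the topological and algebraic sides. To endow $C\tau$ with a commutative algebra structure, I would use that $\tau: S^{0,-1} \to S^{0,0}$ is a map of the monoidal unit in $\synspectra_{E}$ from a shift of negative ``Adams weight,'' so its cofibre acquires a canonical $E_{\infty}$-structure via Lurie's general construction of quotients by central elements (in the spirit of Higher Algebra, \S7.1.3). A more structural alternative is to realise tensoring with $C\tau$ as a symmetric monoidal localization of $\synspectra_{E}$ onto the subcategory of $\tau$-torsion synthetic spectra, in which case $C\tau$ is automatically an $E_{\infty}$-algebra as the unit of the localization.

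For the construction of $\chi_{*}$, the guiding principle is that both $\Mod_{C\tau}(\synspectra_{E})$ and $\stableE$ are stable $\infty$-categories whose hearts, under canonical $t$-structures, can be identified with $\ComodE$: on the synthetic side this uses Theorem \ref{thm:introduction_existence_of_homological_t_structure} together with the observation that $\tau$ acts by zero on heart objects, and on the Hovey side it holds essentially by construction. I would build $\chi_{*}$ as a $t$-exact, colimit-preserving, symmetric monoidal functor extending the identity on hearts. Concretely, to a $C\tau$-module $M$ one assigns a spectrum object in $\ComodE$ whose graded pieces are the bigraded homotopy groups of $M$, organized using the bigrading and the $C\tau$-module structure. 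The existence of such a functor at the $\infty$-categorical level can be obtained by exploiting the universal property of $\stableE$ as a stabilization of an appropriate category of chain complexes in $\ComodE$.

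To verify fully faithfulness, I would reduce to comparing mapping spectra between compact generators. The natural generators of $\Mod_{C\tau}(\synspectra_{E})$ are $C\tau \otimes \nu P$ for $P \in \spectrafp$, whose images under $\chi_{*}$ should be the comodule spectra corresponding to $E_{*}P$. Using the defining cofibre sequence $S^{0,-1} \to S^{0,0} \to C\tau$, one reduces the synthetic mapping-spectra computation to a $\tau$-torsion invariant, which by the heart identification matches the $\Ext$-groups in $\ComodE$ computing mapping spectra on the $\stableE$ side. For the equivalence under Landweber exactness, cocontinuity of $\chi_{*}$ reduces essential surjectivity to showing that its image generates $\stableE$ under colimits, which is precisely the condition referenced by Proposition \ref{prop:hoveys_category_and_ctau_modules_equivalent_iff_we_have_plenty_of_projectives}; this is supplied in the Landweber-exact case by the classical existence of enough $E_{*}$-projective comodules of the form $E_{*}P$ with $P \in \spectrafp$.

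I expect the main obstacle to be the construction of $\chi_{*}$ as a genuine $\infty$-functor together with the verification of fully faithfulness. These steps require one to match two a priori distinct stabilization procedures, lift the heart equivalence coherently to all of $\Mod_{C\tau}(\synspectra_{E})$, and ensure that the higher compatibility data (associativity, monoidal constraints, $t$-exactness) agrees on both sides. Once these technicalities are in place, the algebra structure on $C\tau$ and the Landweber-exact equivalence should follow as comparatively formal consequences.
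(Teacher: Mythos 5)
The central gap is in your construction of the commutative algebra structure on $C\tau$. Neither of your two proposed routes works. There is no general construction endowing the cofibre of an element of $\pi_{*,*}$ of an $\mathbb{E}_\infty$-ring with an $\mathbb{E}_\infty$-structure; the standard counterexample is the Moore spectrum $S/p$, which is not $\mathbb{E}_\infty$ for any $p$ (and not even $\mathbb{E}_1$ at $p=2$). Higher Algebra \S7.1.3 treats \emph{localizations} (inverting elements), which is a very different and much better-behaved operation. Your second alternative --- realizing $C\tau \otimes -$ as a symmetric monoidal localization --- fails because $C\tau \otimes -$ is not idempotent: $C\tau \otimes C\tau$ is strictly larger than $C\tau$ (its $\tau=0$ homotopy is computed by $\Ext_{E_*E}(E_*, E_*)$-type groups, which is the whole point of the construction). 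The paper's approach sidesteps the quotient issue entirely: one first builds an adjunction $\epsilon_* \dashv \epsilon^*: \synspectra_E \leftrightarrows \stableE$ from the morphism of additive $\infty$-sites $E_*: \spectrafp \rightarrow \ComodE^{fp}$, observes that $\epsilon^*$ is lax symmetric monoidal, and then proves the nontrivial fact that $C\tau \simeq \epsilon^*(E_*)$ (Lemma \ref{lemma:cofibre_of_tau_tensored_with_representable_equivalent_to_right_adjoint_of_its_homology_in_the_heart_of_hoveys_category}, via the identification of $C\tau \otimes \nu P$ as the $0$-truncation $(\nu P)_{\leq 0}$ in Lemma \ref{lemma:tensoring_with_cofibre_of_tau_a_discretization_on_representable_connective_synthetic_spectra}). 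Since $E_*$ is a commutative algebra in $\stableE$ and lax symmetric monoidal functors preserve commutative algebras, $C\tau$ inherits an $\mathbb{E}_\infty$-structure for free. Nothing about central quotients is needed.

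Two smaller issues. First, you assert that $\chi_*$ is $t$-exact; this is not true --- the paper proves $\chi^*$ is $t$-exact, hence $\chi_*$ is only right $t$-exact, and this distinction matters. Second, your construction of $\chi_*$ ``by exploiting the universal property of $\stableE$ as a stabilization'' is the genuine hard part, and you are right to flag it, but the sheaf-theoretic description $\stableE \simeq Sh^{\spectra}_\Sigma(\ComodE^{fp})$ of Theorem \ref{thm:hovey_stable_homotopy_theory_of_comodules_as_spherical_sheaves} is what makes it tractable: once both $\synspectra_E$ and $\stableE$ are realized as spherical-sheaf $\infty$-categories on additive $\infty$-sites, the adjunction is induced by the morphism of sites, and fully faithfulness reduces via \cite[4.7.4.16]{higher_algebra} to the single check that $C\tau \otimes X \to \epsilon^*\epsilon_*X$ is an equivalence on the generators $\nu P$, which again is Lemma \ref{lemma:cofibre_of_tau_tensored_with_representable_equivalent_to_right_adjoint_of_its_homology_in_the_heart_of_hoveys_category}. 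Your heuristic of comparing mapping spectra between generators is pointing in the right direction, but without the sheaf model you have no handle on either side. Your treatment of the Landweber-exact case is essentially correct.
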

The $\infty$-category of $\Ind$-coherent sheaves is a thickening of the usual stable $\infty$-category of quasi-coherent sheaves $\QCoh(\mathcal{M}_{E})$ of comodules; its construction is completely algebraic \cite{hovey2003homotopy}, \cite{barthel2015local}. Informally, \cref{thm:introduction_modules_over_ctau_embeding_into_hoveys_stable_category} 
shows that after ``killing $\tau$'', $\synspectra_{E}$ can be described purely in terms of algebraic geometry. To construct the needed embedding, we describe $\Ind$-coherent sheaves as spherical sheaves of spectra on a certain explicit site, a result which is perhaps of interest in its own right.

We now describe the relationship between synthetic spectra and spectra, intuitively, one obtains the latter from the former by disregarding $\tau$-torsion. We say that a synthetic spectrum $X$ is \emph{$\tau$-invertible} if $\tau \colon \Sigma^{0, -1} X \rightarrow X$ is an equivalence.

\begin{thm}[Generic fibre - \ref{thm:tau_invertible_synthetic_spectra_are_just_spectra}, \ref{prop:tau_inversion_cocontinuous_symmetric_monoidal_left_inverse_to_synthetic_analogue}]
The functor $\tau^{-1} \colon \synspectra_{E} \rightarrow \spectra$ given by 
\[
\tau^{-1} X :=\varinjlim \Sigma^{-k} X(S^{-k})
\]
is cocontinuous, symmetric monoidal and restricts to an equivalence $\synspectra_{E}(\tau^{-1}) \simeq \spectra$ between the $\infty$-categories of $\tau$-invertible synthetic spectra and spectra.
\end{thm}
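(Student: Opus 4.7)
The plan is to realize $\tau^{-1}$ as a symmetric monoidal smashing localization of $\synspectra_{E}$ and then identify the subcategory of $\tau$-invertible objects with $\spectra$ via a restricted Yoneda functor. The first step is formal: in any presentably symmetric monoidal stable $\infty$-category, inverting a map $\tau: S^{0,-1} \to S^{0,0}$ out of a shift of the unit is a smashing localization, implemented by $X[\tau^{-1}] := \varinjlim(X \xrightarrow{\tau} \Sigma^{0,1} X \xrightarrow{\tau} \Sigma^{0,2} X \to \cdots)$. This functor is manifestly cocontinuous, and the class of $\tau$-invertible objects is a tensor ideal (since $\tau$-inversion is a filtered colimit of shifts of the unit), so the localization $L_{\tau}: \synspectra_{E} \to \synspectra_{E}(\tau^{-1})$ inherits a symmetric monoidal structure. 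Unwinding the levelwise formulas for colimits and suspensions of sheaves and evaluating at $S^{0}$ recovers the explicit expression $\tau^{-1}X \simeq \varinjlim \Sigma^{-k} X(S^{-k})$ as a spectrum.

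To produce the equivalence with $\spectra$, I would construct a candidate inverse $R: \spectra \to \synspectra_{E}$ by restricted Yoneda, $R(Y)(P) := \map_{\spectra}(P, Y)$. Since the site topology is subcanonical and mapping spectra turn coproducts into products, $R(Y)$ is a spherical sheaf, and it is automatically $\tau$-invertible because the colimit-to-limit map becomes the suspension-loop adjunction $\map(\Sigma P, Y) \simeq \Omega \map(P, Y)$. A direct computation gives
\[
\tau^{-1} R(Y) \simeq \varinjlim \Sigma^{-k}\map(S^{-k},Y) \simeq \varinjlim \Sigma^{-k}\Sigma^{k}Y \simeq Y,
\]
so $\tau^{-1} \circ R \simeq \mathrm{id}_{\spectra}$.

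The main obstacle is the reverse composite. Given $\tau$-invertible $X$ with $Y := \tau^{-1}X$, the $\tau$-invertibility collapses the defining colimit to $Y \simeq X(S^{0})$ since each structure map is already an equivalence, and I must show the canonical comparison $R(Y) \to X$ is an equivalence on every $P \in \spectrafp$. The heart of the argument is that $\tau$-invertibility upgrades the sheaf condition to full exactness: the condition $X(\Sigma P) \simeq \Omega X(P)$ combined with descent for $E_{*}$-surjective covers forces $X$ to send every cofibre sequence in $\spectrafp$ to a fibre sequence in $\spectra$. The base case $X(S^{n}) \simeq \Omega^{n} X(S^{0}) = \map(S^{n}, Y)$ is immediate from iterating $\tau$-invertibility; one then proceeds by a cellular induction, reducing a general finite $E$-projective spectrum $P$ to spheres via cofibre sequences and invoking the exactness of both $X$ and $R(Y)$ to patch the pieces together. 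Finally, the symmetric monoidal compatibility with the smash product on $\spectra$ follows because $L_{\tau}$ is already symmetric monoidal with $\tau^{-1}(S^{0,0}) = S^{0}$, and the induced monoidal structure on $\synspectra_{E}(\tau^{-1})$ is characterised by this property together with being cocontinuously generated by the unit, which matches the universal property of $\spectra$.
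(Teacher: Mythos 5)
Your setup -- the smashing localization, the explicit colimit formula for $\tau^{-1}$, the restricted Yoneda functor $R$ (the paper's spectral Yoneda embedding $Y$), the verification that $R$ lands in $\tau$-invertible objects, and the computation $\tau^{-1}\circ R \simeq \mathrm{id}_{\spectra}$ -- matches the paper's own development (\textbf{Propositions \ref{prop:tau_inversion_functor_exists}} and \textbf{\ref{prop:spectral_yoneda_embedding_the_tau_inversion_of_the_synthetic_analogue}}). The gap lies in the reverse composite, and it is genuine.

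You assert that ``$\tau$-invertibility upgrades the sheaf condition to full exactness,'' but this does not follow from the two ingredients you cite. The recognition principle of \textbf{Theorem \ref{thm:recognition_of_spherical_sheaves}} only controls fibre sequences $F \to Q \to P$ where $Q \to P$ is an $E_{*}$-surjection; for an arbitrary cofibre sequence $A \to B \to C$ in $\spectrafp$, none of the three maps need be a covering (e.g.\ $\eta\colon S^{1}\to S^{0}$ at $E = H\mathbb{F}_{2}$ is zero on homology), and neither the descent axiom nor $X(\Sigma P)\simeq \Omega X(P)$ alone determines $X(C)$ from the values of $X$ on spheres. The exactness of $\tau$-invertible sheaves \emph{is} true, but establishing it requires relating $X$ to the synthetic analogue and spectral Yoneda constructions -- essentially the observation that $\tau^{-1}\mathrm{cofib}(\nu A \to \nu B) \simeq \tau^{-1}\nu C$ so that $\map(\nu C, X)\simeq \map(\mathrm{cofib}(\nu A\to\nu B),X)$ -- which is the same machinery the paper deploys for the theorem itself, so invoking it here at the level of an unproved sheaf-theoretic slogan leaves a real hole. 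Moreover, even granting exactness, the cellular induction does not close for a general Adams-type $E$: $\spectrafp$ is not generated by spheres under cofibre sequences inside $\spectrafp$, since the intermediate subquotients of a cell structure on a finite $E$-projective spectrum need not have projective $E$-homology. That reduction is a special feature of $E = MU$, proven in \textbf{Theorem \ref{thm:synthetic_spectra_based_on_mu_are_cellular}} using freeness of projective $MU_{*}$-modules, and \textbf{Remark \ref{rem:synthetic_spectra_compactly_generated_by_suspensions_of_synthetic_analogues_of_finite_projectives}} explicitly cautions that cellularity should not be expected in general.

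The paper avoids both problems with a different decomposition. It first shows $Y\colon \spectra \to \synspectra_{E}(\tau^{-1})$ is fully faithful by fixing the target and letting the source run over spheres -- this works by cocontinuity of $Y$ together with \textbf{Lemma \ref{lemma:yoneda_lemma_for_synthetic_spectra_and_explicit_formula_for_homotopy_groups}}, with no reference to exactness of $\tau$-invertible sheaves or to cell structures. It then proves essential surjectivity by a conservativity argument for the right adjoint $R$ of $Y$: if $RX = 0$, the adjunction together with the Yoneda lemma gives $\Omega^{\infty}X(P)=0$ for all $P\in\spectrafp$, so $X$ is coconnective, and a coconnective $\tau$-invertible synthetic spectrum must vanish by \textbf{Lemma \ref{lemma:tau_inversion_of_a_coconnective_synthetic_spectrum_vanishes}}. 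This sidesteps entirely the need to build an arbitrary finite projective out of spheres within $\spectrafp$, which is where your argument would break down.
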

These two results taken together imply that the $\infty$-category of synthetic spectra interpolates between $\spectra$ and $\IndCoh(\Mcale)$. In very informal words, if we think of $\tau$ as a uniformizer, then $\synspectra_{E}$ behaves like a deformation whose special fibre (obtained by ``setting $\tau = 0$'') is $\IndCoh(\Mcale)$ and the generic fibre (obtained by inverting $\tau$) is given by $\spectra$. Thus, we have a span of stable, symmetric monoidal $\infty$-categories
\begin{equation}
\label{equation:span_of_functors_in_the_intro}
	\begin{tikzpicture}
		\node (L) at (-2.6, 0) {$ \spectra $};
		\node (M) at (0, 0) {$ \synspectra_{E} $};
		\node (R) at (2.6, 0) {$ \IndCoh(\mathcal{M}_{E}) $};
		
		\draw [->] (M) to node[above]  {$ \tau^{-1} $} (L); 
		\draw [->] (M) to node[above] {$ C\tau \otimes - $} (R);
	\end{tikzpicture},
\end{equation}
where both of the functors are cocontinuous and symmetric monoidal. This means that the Adams spectral sequence in $\synspectra_{E}$ maps into the usual Adams spectral sequence in spectra as well as an algebraic one which takes place in $\IndCoh(\mathcal{M}_{E})$, this relationship can be used to relate the two spectral sequences and perform computations. 

The choice of the letter $\tau$ is not accidental, as the two above results show that the $\infty$-category $\synspectra_{E}$ behaves like the $p$-complete cellular motivic category over $\textnormal{Spec}(\mathbb{C})$ as studied in this context by Gheorghe, Isaksen, Wang and Xu \cite[\S3]{gheorghe2017structure}, \cite[\S 1.5]{isaksen_stable_stems}. In the motivic world, $\tau \colon S^{0, -1} _{p} \rightarrow S^{0, 0}_{p}$ is a map between $p$-complete motivic spheres and similarly to the synthetic case we have an identification between $C\tau$-modules and even $\Ind$-coherent sheaves over $\mathcal{M}_{\MU}$ \cite[Theorem 1.1]{gheorghe2018special}. Likewise, inverting the motivic $\tau$ yields the usual $\infty$-category of $p$-complete spectra, so that one has a span of functors analogous to (\ref{equation:span_of_functors_in_the_intro}).

The study of the motivic Adams spectral sequence and the relation it implies between the topological Adams spectral sequence and the algebraic one in the world of comodules is what we call in this note the \emph{$C\tau$-philosophy}, it has led to dramatic advances of the knowledge of the stable homotopy groups at $p = 2$ \cite{more_stable_stems}.

The above suggests that the $p$-complete cellular motivic category should be related to synthetic spectra; we show this is indeed the case in the strongest possible sense. Let us say that a synthetic spectrum based on $\MU$ is \emph{even} if it belongs to the localizing subcategory generated by the synthetic spectra of the form $\nu P$, where $P$ is a finite spectrum with $\MU_{*}P$ projective and concentrated in even degree; we denote their $\infty$-category by $\synspectra_{\MU}^{ev}$.

\begin{thm}[\ref{thm:after_p_completion_motivic_category_coincides_with_even_synthetic_spectra}]
\label{thm:introduction_comparison_between_motivic_and_even_mu_synthetic_spectra}
There exists an adjunction 
\[
\Theta^{*} \dashv \Theta_{*} \colon \cspectra \rightleftarrows \synspectra_{\MU}^{ev}
\]
between the $\infty$-categories of cellular motivic spectra over $Spec(\mathbb{C})$ and even synthetic spectra based on $\MU$ which induces an adjoint equivalence
\[
(\cspectra) \pcomplete \simeq(\synspectra_{\MU}^{ev}) \pcomplete
\]
between the $\infty$-categories of $p$-complete objects at each prime $p$. 
\end{thm}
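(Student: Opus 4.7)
The plan is first to construct the adjunction $\Theta_{*} \dashv \Theta^{*}$ and then to verify it becomes an equivalence after $p$-completion by reducing everything to a comparison on the $p$-complete bigraded spheres. To build $\Theta_{*}$ I would exploit a universal-property style presentation of $\cspectra$ as a presentably symmetric monoidal stable $\infty$-category generated by the motivic sphere together with the invertible Tate twist, and define $\Theta_{*}$ on generators by sending the motivic bigraded sphere $S^{t,w}$ to $\Sigma^{t-w}\nu S^{w} \in \synspectra_{MU}^{ev}$. That the image lies in the even subcategory follows from the fact that each $S^{w}$, after suitable suspension, is finite with free $MU_{*}$-homology concentrated in a single even degree. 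The right adjoint $\Theta^{*}$ then exists by the adjoint functor theorem, and I would check directly that $\Theta_{*}$ carries the motivic class $\tau \colon S^{0,-1}_{p} \to S^{0,0}_{p}$ to the synthetic $\tau$, compatibly with the cofibre sequences defining $C\tau$ on each side.

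Next I would reduce the $p$-complete statement to a check on bigraded spheres. Both $(\cspectra)\pcomplete$ and $(\synspectra_{MU}^{ev})\pcomplete$ are generated under colimits by their $p$-complete bigraded spheres: the former by cellularity, the latter by the very definition of $\synspectra_{MU}^{ev}$ together with the fact that $p$-completion preserves these generators. Since $(\Theta_{*})_{p}$ is cocontinuous and sends motivic $p$-complete bigraded spheres to synthetic ones, a standard compact-generator comparison reduces the adjoint-equivalence claim to checking that $(\Theta_{*})_{p}$ induces an isomorphism on mapping spectra between any two $p$-complete bigraded spheres.

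The comparison on mapping spectra I would carry out via the $MU$-based Adams spectral sequence on each side. On the synthetic side, Theorem \ref{thm:introduction_existence_of_homological_t_structure} together with the identification of the heart as $\ComodMU$ supplies an Adams spectral sequence computing maps between $p$-complete bigraded spheres with $E_{2}$-page $\Ext^{s,t}_{MU_{*}MU}(MU_{*},MU_{*})$, convergent after $p$-completion by the properties of the $t$-structure. On the motivic side, the motivic Adams-Novikov spectral sequence of Hu-Kriz-Ormsby and Dugger-Isaksen converges after $p$-completion to $p$-complete motivic stable stems over $\textnormal{Spec}(\mathbb{C})$ and has the same $E_{2}$-page. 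I would then show that $\Theta_{*}$ induces a morphism of these spectral sequences which is the identity at $E_{2}$, hence an isomorphism on abutments.

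The main obstacle will be this spectral sequence comparison: constructing a natural map of filtered spectral sequences functorially from $\Theta_{*}$ and matching the $E_{2}$ identifications on the nose. A cleaner alternative, which I would pursue if feasible, is to use the deformation picture implicit in Theorem \ref{thm:introduction_modules_over_ctau_embeding_into_hoveys_stable_category}: on each side one has a $\tau$-deformation whose special fibre $C\tau \otimes -$ lands in (the even part of) Hovey's stable $\infty$-category $\stableE$ for $E = MU$, and whose generic fibre $\tau^{-1}$ is $p$-complete spectra — via Betti realisation on the motivic side, via $\tau$-inversion on the synthetic side. A reconstruction argument matching both the special and generic fibres, together with the compatibility of $\tau$ produced in the first step, would then give the $p$-complete equivalence directly, bypassing the spectral sequence comparison altogether.
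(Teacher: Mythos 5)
Your overall architecture (construct an adjunction, reduce to generators, compare homotopy) parallels the paper's, but you miss the single idea that makes the comparison tractable, and both of your proposed ways of carrying out the comparison are underdetermined.

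First, the construction of $\Theta_{*}$. The paper does not appeal to a universal-property presentation of $\cspectra$ in terms of spheres; no such presentation is established. Instead it proves $\cspectra \simeq Sh_{\Sigma}^{\spectra}(\spectramglfp)$ (Theorem~\ref{thm:cellular_motivic_category_as_spherical_sheaves}) and $\synspectra_{MU}^{ev} = Sh_{\Sigma}^{\spectra}(\spectramufpe)$ by definition, and then $\Theta_{*} \dashv \Theta^{*}$ is the adjunction induced by the morphism of additive $\infty$-sites $Re\colon \spectramglfp \to \spectramufpe$. The generator formula $\Theta_{*}(M) \simeq \nu Re(M)$ for finite $MGL$-projective $M$ falls out; trying to define $\Theta_{*}$ directly on bigraded spheres leaves you with no mechanism to verify that this prescription extends to a well-defined cocontinuous symmetric monoidal functor.

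Second, and more seriously: your reduction to \emph{all} mapping spectra between $p$-complete bigraded spheres requires a comparison of $\pi_{t,w}$ for arbitrary Chow degree, including negative. The Gheorghe--Isaksen theorem (\textbf{Theorem}~\ref{thm:gheorghe_isaksen_p_complete_homotopy_in_nonneg_chow_degree_is_topological}) gives an isomorphism with topology only in \emph{non-negative} Chow degree, and no such isomorphism holds in negative Chow degree — there the groups are $\tau$-torsion controlled by $\Ext$-groups in comodules and are genuinely non-topological. The paper avoids this entirely: it checks the unit on the generators $\sigmainfty y(M)/p$ as sheaves, which at each test object $N$ are the connective covers $\Map(N, M/p)_{\geq 0}$. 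After Spanier--Whitehead duality, the unit at each $N$ is therefore a map of spectra concentrated in Chow degree $\geq 0$, and Gheorghe--Isaksen applied levelwise finishes the proof. Checking the unit on the sheaf generators is logically equivalent to checking it on all $p$-complete objects (by cocontinuity and exactness), but it only ever touches non-negative Chow degree. Your sphere-by-sphere reduction throws this advantage away.

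Third, both of your suggested ways to carry out the full comparison are incomplete. For the spectral-sequence route you would have to construct a map of Adams towers from $\Theta_{*}$, identify it on $E_{2}$, handle convergence on the synthetic side (which is not automatic), and then resolve lim$^{1}$/extension issues — the $E_{2}$-pages matching is far from establishing a homotopy-group isomorphism, and you correctly flag this as the main obstacle without resolving it. For the deformation route, "a reconstruction argument matching both the special and generic fibres" is not an available argument: knowing that $C\tau \otimes -$ and $\tau^{-1}$ land in the same $\infty$-categories on both sides does not determine the deformation, because one also needs the gluing data relating them, and making such a reconstruction statement precise and verifying compatibility is itself a theorem at least as hard as the one you want. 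The paper uses neither route.
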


One way to interpret \cref{thm:introduction_comparison_between_motivic_and_even_mu_synthetic_spectra} is a categorification of the classical Suslin rigidity, which in particular states that the canonical comparison map $K(\mathbb{C}) \rightarrow \mathrm{ku}$ between algebraic and topological complex $K$-theory is an equivalence after $p$-completion at any prime \cite{suslin1983thek}. It  gives a description of the $p$-complete cellular motivic category which is concrete and well-adapted to explaining the strong relation between motivic spectra and complex bordism \cite{hoyois2015algebraic} \cite{levine2015adams}.

To prove \cref{thm:introduction_comparison_between_motivic_and_even_mu_synthetic_spectra}, we describe $\cspectra$ itself as an $\infty$-category of spherical sheaves of spectra, even before $p$-completion. This is of interest on its own; for example, it implies a motivic analogue of \cref{thm:introduction_existence_of_homological_t_structure}, namely, that $\cspectra$ admits a right complete $t$-structure in which a cellular motivic spectrum $X$ is connective if and only if $MGL_{*, *} X$ is concentrated in non-negative Chow degrees, whose heart is equivalent to the abelian category of even quasi-coherent sheaves over $\mathcal{M}_{\MU}$.  

To lend credibility to the idea of developing homotopy theory of synthetic spectra, and to highlight their accessibility, we make a few fundamental calculations. To start with, we compute the homotopy groups of synthetic analogues in a range and determine them completely in the case of homotopy $E$-modules.

\begin{thm}[\ref{thm:homotopy_of_synthetic_analogues_is_topological_in_non_negative_chow_degree}, \ref{prop:homotopy_of_synthetic_analogues_of_homotopy_e_modules}]
\label{thm:introduction_homotopy_of_synthetic_analogues}
Let $X$ be a spectrum. Then, the natural map $\pi_{t, w} \nu X \rightarrow \pi_{t} X$ given by $\tau$-inversion is an isomorphism when $t - w \geq 0$. If $X$ is a homotopy $E$-module and $t - w < 0$, then $\pi_{t, w} \nu X$ vanishes. 
\end{thm}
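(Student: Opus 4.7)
The plan is to compute $\pi_{t,w}\nu X$ directly from the sheaf structure of $\nu X$. Using $S^{t,w}=\Sigma^{t-w}\nu S^{w}$ together with the fact that $\nu S^{w}$ corepresents the evaluation functor at $S^{w}\in\spectrafp$ --- a consequence of the construction of $\nu$ via sheafification and the $\Omega^{\infty}$--$\Sigma^{\infty}_{+}$ adjunction between sheaves of spaces and connective sheaves of spectra --- one obtains for every synthetic spectrum $Y$ the natural identification
\[
\pi_{t,w}Y \;\cong\; \pi_{t-w}\,Y(S^{w}),
\]
where $Y(S^{w})$ denotes the spectrum to which $Y$ evaluates on the sphere $S^{w}\in\spectrafp$. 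Applied to $Y=\nu X$, the theorem reduces to understanding the homotopy of $(\nu X)(S^{w})$ and comparing it, via the map induced by $\tau$-inversion, with $\pi_{t}X=\pi_{t-w}\map(S^{w},X)$.

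For the first statement, the key input is that for every spectrum $X$ the natural map $\tau_{\geq 0}\map(S^{w},X)\to(\nu X)(S^{w})$ is an isomorphism on $\pi_{n}$ for all $n\geq 0$; equivalently, the sheafification producing $\nu X$ preserves connective homotopy on the spheres. This follows from the very construction: $\Omega^{\infty}\nu X$ is the sheaf of spaces $y(X)$, which computes the same connective information as the presheaf $P\mapsto\Omega^{\infty}\map(P,X)$ after sheafification, and sheafification commutes with $\pi_{n}$ for $n\geq 0$. Combining these,
\[
\pi_{t,w}\nu X \;\cong\; \pi_{t-w}\tau_{\geq 0}\Sigma^{-w}X \;=\; \pi_{t}X
\]
for $t-w\geq 0$, and a direct check unwinding $\tau^{-1}\nu X\simeq X$ shows this isomorphism is induced by the $\tau$-inversion map of the theorem statement.

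For the second statement, the additional input is that when $X$ is a homotopy $E$-module the presheaf $P\mapsto\map(P,X)$ is already a sheaf of spectra on $\spectrafp$, not merely in connective degrees. Indeed $\map(P,X)\simeq\map_{E}(E\otimes P,X)$ for $P\in\spectrafp$, and \v{C}ech descent along an $E_{*}$-surjective cover $Q\to P$ reduces to descent for the induced surjection $E\otimes Q\to E\otimes P$ among projective $E$-modules, which is immediate because the \v{C}ech nerve provides an exact simplicial resolution on the level of $E_{*}$-homology. Consequently $(\nu X)(S^{w})\simeq\tau_{\geq 0}\Sigma^{-w}X$, whose negative homotopy vanishes, so $\pi_{t,w}\nu X=0$ whenever $t-w<0$.

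The main obstacle is the first part, specifically the justification that the sheafification producing $\nu X$ preserves the connective homotopy of the Yoneda presheaf at spheres. I expect this to follow from the equivalence between sheaves of grouplike $E_{\infty}$-spaces on $\spectrafp$ and connective sheaves of spectra, combined with a careful analysis of how sheafification interacts with the $t$-structure on sheaves of spectra --- an analysis that ultimately relies on the Adams-type hypothesis on $E$ to ensure the site behaves well.
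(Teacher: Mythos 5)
For the first statement (non-negative Chow degree), your approach coincides with the paper's direct argument: use the Yoneda formula $\pi_{t,w}Y\simeq\pi_{t-w}Y(S^{w})$ and the identity $\Omega^{\infty}\nu X\simeq y(X)$ to conclude $\pi_{t,w}\nu X\simeq\pi_{t-w}\map(S^{w},X)\simeq\pi_{t}X$ when $t-w\geq0$. Your justification is slightly off, though: the statement ``sheafification commutes with $\pi_n$ for $n\geq0$'' is false in general. The correct point is more elementary --- the presheaf of spaces $P\mapsto\map(P,X)$ is \emph{already} a spherical sheaf (this is exactly what makes $\nu X=\Sigma^{\infty}_{+}y(X)$ well-defined with $\Omega^{\infty}\nu X\simeq y(X)$), so no sheafification of spaces needs to happen at all. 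The identification of the resulting isomorphism with $\tau$-inversion is handwaved in both your proposal and the paper, so no complaint there.

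For the second statement (homotopy $E$-modules), you take a genuinely different route from the paper, and there is a real gap. The paper proves the vanishing via the long exact sequence relating $\pi_{t,w}\nu M$ to $\Ext_{E_{*}E}(E_{*},E_{*}M)$, analyzing the Hurewicz map $\pi_{t}M\to\Hom_{E_{*}E}(E_{*},E_{*}M)$ (an isomorphism by the K\"{u}nneth isomorphism for homotopy $E$-modules) and the cofreeness of $E_{*}M$ over $E_{*}E$. You instead want to show that the presheaf $P\mapsto F(P,M)_{\geq0}$ is already a sheaf of spectra, so that $(\nu M)(S^{w})\simeq F(S^{w},M)_{\geq0}$ levelwise and the negative homotopy vanishes by fiat. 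The gap: your descent argument --- \v{C}ech descent for $E\wedge Q\to E\wedge P$ in $E$-modules --- only shows that the \emph{uncut} presheaf $P\mapsto F(P,M)$ is a sheaf, which is already known for any spectrum and is the content of the spectral Yoneda embedding $Y(M)$ being a sheaf. It does not show that the \emph{connective cover} presheaf is a sheaf, and connective cover does not commute with the limits appearing in descent in general. The missing step is this: by the recognition theorem for spherical sheaves, one must check that if $F\to B\to A$ is a fibre sequence in $\spectrafp$ with $B\to A$ an $E_{*}$-surjection, then $F(A,M)_{\geq0}\to F(B,M)_{\geq0}\to F(F,M)_{\geq0}$ is a fibre sequence; truncation preserves the fibre sequence $F(A,M)\to F(B,M)\to F(F,M)$ exactly when $[B,M]\to[F,M]$ is surjective, which is where the hypothesis on $M$ enters --- via the K\"{u}nneth isomorphism $[P,M]\simeq\Hom_{E_{*}}(E_{*}P,M_{*})$, this surjectivity reduces to the restriction map $\Hom_{E_{*}}(E_{*}B,M_{*})\to\Hom_{E_{*}}(E_{*}F,M_{*})$, which is surjective because $E_{*}F\to E_{*}B$ is a split inclusion of projectives. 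With this step supplied, your strategy works and is arguably more transparent than the paper's Ext-theoretic argument; it is in fact what the paper hints at in its remark about Postnikov towers of $Y(E\wedge\cdots\wedge E\wedge X)$. Without it, the conclusion ``$(\nu M)(S^{w})\simeq\tau_{\geq0}\Sigma^{-w}M$'' is unjustified.
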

In fact, the results we prove are stronger and concern not only homotopy groups, but homotopy classes of maps between synthetic analogues of arbitrary spectra. The proof of \cref{thm:introduction_homotopy_of_synthetic_analogues} makes crucial use of the relation between synthetic spectra and quasi-coherent sheaves. A more precise form of the statement given in the text, see \cref{prop:long_exact_sequence_relating_synthetic_homotopy_with_ext_groups}, shows that the failure of $\tau$ to act isomorphically on the homotopy groups of synthetic analogues is controlled by appropriate $\Ext$-groups. 

The second calculation we make was suggested to us by Dan Isaksen, and is the synthetic analogue of Voevodsky's calculation of the motivic dual Steenrod algebra \cite{voevodsky2003motivic}, \cite{voevodsky2010motivic}.

\begin{thm}[\ref{thm:the_structure_of_synthetic_dual_steenrod_algebra_at_odd_primes}, \ref{thm:the_structure_of_synthetic_dual_steenrod_algebra_at_even_prime}]
\label{thm:introduction_structure_of_dual_synthetic_steenrod_algebra}
Let $p$ a prime, $H$ the mod $p$ Eilenberg-MacLane spectrum and $\nu H$ the associated synthetic Eilenberg-MacLane spectrum based on $\MU$. Then, $\nu H_{*, *} \simeq \fieldp[\tau]$ and if $p$ is odd, then there exists an isomorphism 

\begin{center}
$\nu H_{*, *} \nu H \simeq \fieldp[b_{1}, b_{2}, \ldots, \tau] \otimes _{\fieldp} E(\tau_{0}, \tau_{1}, \ldots)$
\end{center}
of bigraded algebras between the synthetic dual Steenrod algebra and the tensor product of polynomial and exterior algebras, where $| b_{k} | = (2p^{k}-2, 2p^{k}-2)$, $| \tau_{l} | = (2p^{l}-1, 2p^{l}-2)$ and $| \tau | = (0, -1)$.
If $p = 2$, then there exists an isomorphism
\begin{center}
$\nu H_{*, *} \nu H \simeq \fieldtwo[b_{1}, b_{2}, \ldots, \tau, \tau_{0}, \tau_{1}, \ldots] / (\tau_{k}^{2} = \tau^{2} b_{k+1})$
\end{center}
with generators in the same degrees.
\end{thm}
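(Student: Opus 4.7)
The plan is to combine three ingredients: the general formula for homotopy of synthetic analogues, the identification of $C\tau$-modules with objects in Hovey's algebraic stable category, and, at the prime $2$, the comparison with the motivic category.

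First I would compute $\nu H_{*,*}$. The spectrum $H = H\mathbb{F}_p$ is a homotopy $MU$-module via the Thom class $MU \rightarrow H$, so Theorem \ref{thm:introduction_homotopy_of_synthetic_analogues} applies and gives $\pi_{t,w}\nu H = \pi_t H$ in non-negative Chow degree $t \geq w$ (which is $\mathbb{F}_p$ for $t = 0$ and vanishes otherwise) and $\pi_{t,w}\nu H = 0$ for $t < w$. These values, together with the action of the universal $\tau : S^{0,-1} \rightarrow S^{0,0}$, assemble into $\mathbb{F}_p[\tau]$ with $\tau$ in bidegree $(0,-1)$. Next I would exhibit the generators of $\nu H_{*,*} \nu H$ by lifting the classical generators $\xi_k, \tau_k$ of $H_{*} H$ to synthetic Hurewicz classes $b_k, \tau_k \in \pi_{*,*}(\nu H \otimes \nu H)$; such lifts exist because, after $\tau$-inversion, $\nu H \otimes \nu H$ becomes $H \otimes H$, and one can choose preimages in the appropriate bidegree. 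This defines an algebra map from the proposed presentation into $\nu H_{*,*} \nu H$, which becomes an isomorphism after inverting $\tau$ by construction.

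To promote this to an integral isomorphism I would use the $\tau$-Bockstein spectral sequence associated with the tower $\ldots \rightarrow \Sigma^{0,-2}(\nu H \otimes \nu H) \xrightarrow{\tau} \Sigma^{0,-1}(\nu H \otimes \nu H) \xrightarrow{\tau} \nu H \otimes \nu H$, whose $E_1$-page is $\pi_{*,*}(C\tau \otimes \nu H \otimes \nu H)$. By Theorem \ref{thm:introduction_modules_over_ctau_embeding_into_hoveys_stable_category} this lives in $\stableE$ for $E = MU$, where $C\tau \otimes \nu H$ corresponds to the comodule $H_{*}$, so the $E_1$-page is computed by $\Ext$-groups of the form $\Ext_{MU_{*}MU}(MU_{*}, H_{*} \otimes_{MU_{*}} MU_{*} MU \otimes_{MU_{*}} H_{*})$, which by a Miller--Ravenel change of rings collapses to the classical $H_{*}H$ tensored with $\mathbb{F}_p[\tau]$. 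At odd primes this $E_1$-page is already a tensor product of polynomial and exterior generators and has no room for differentials or extensions by bidegree reasons, yielding the stated formula $\mathbb{F}_p[b_1, b_2, \ldots, \tau] \otimes_{\mathbb{F}_p} E(\tau_0, \tau_1, \ldots)$.

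The main obstacle is the prime $p = 2$, where the $\tau$-Bockstein exhibits the nontrivial multiplicative extension $\tau_k^2 = \tau^2 b_{k+1}$, reflecting the fact that the squares $\zeta_{k+1}^2$ in the classical dual Steenrod algebra admit nontrivial lifts in the $\tau$-adic filtration. I would establish this in one of two ways: either by a direct cobar-level computation in $\stableE$ tracking the $d_1$-differential responsible for the extension, or, more expediently, by transporting the Hu--Kriz/Voevodsky computation of the motivic dual Steenrod algebra over $\textnormal{Spec}(\mathbb{C})$ through the $2$-complete equivalence $(\cspectra)\pcomplete \simeq (\synspectra_{MU}^{ev})\pcomplete$ of Theorem \ref{thm:introduction_comparison_between_motivic_and_even_mu_synthetic_spectra}, using the bidegree rule $\Theta_{*} S^{t,w} \simeq S^{t,2w}$ to convert motivic weights into synthetic ones; the motivic $\tau$ in weight $-1$ then accounts for $\tau^2$ on the synthetic side, while an odd-weight synthetic $\tau$ is produced by $\tau$-adic interpolation using the full (non-even) synthetic category.
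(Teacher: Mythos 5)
Your outline captures the right ingredients, but your argument has two substantive gaps and one serious strategic error.

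The first gap is that your description of the $E_1$-page of the $\tau$-Bockstein spectral sequence is not correct at $p=2$. You say it is ``the classical $H_*H$ tensored with $\mathbb{F}_p[\tau]$,'' but the dual Steenrod algebra internal to Hovey's $\stableE$ for $E = MU$ is, at every prime, of the odd-prime form $\mathbb{F}_p[b_1, b_2, \ldots] \otimes E(\tau_0, \tau_1, \ldots)$ with the relation $\tau_k^2 = 0$. This is for purely degree-theoretic reasons, and it is visibly \emph{not} isomorphic to the classical $\mathbb{F}_2[\zeta_1, \zeta_2, \ldots]$ as an algebra, even though the two are abstractly isomorphic as graded $\mathbb{F}_2$-vector spaces. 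Since the entire content of the $p=2$ theorem is the multiplicative extension $\tau_k^2 = \tau^2 b_{k+1}$ that is not visible on the $E_1$-page, getting the $E_1$-page algebra structure right is essential. Second, you never establish that $\tau$ acts injectively on $\nu H_{*,*}\nu H$. This fact is needed both to preclude hidden $\tau$-torsion when assembling the additive structure and (more critically) to promote the classical relation $\zeta_{k+1}^2 = (\tau_k^{top})^2$ in $H_*H$ to the synthetic relation $\tau_k^2 = \tau^2 b_{k+1}$ via $\tau$-inversion; without injectivity you can only conclude $\tau^\ell(\tau_k^2 - \tau^2 b_{k+1}) = 0$ for some $\ell$. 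The paper obtains injectivity inductively from the $BP/(v_0, \ldots, v_k)$-tower (Corollary \ref{cor:tau_acts_injectively_on_the_synthetic_dual_steenrod_algebra}), which also produces the additive structure and the generators in one sweep; your Bockstein approach would need to supply this injectivity separately, and it is not automatic.

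The strategic error is the proposed motivic shortcut at $p=2$. This is circular in the paper's logical order: the $2$-complete equivalence $(\cspectra)_p^\wedge \simeq (\synspectra_{MU}^{ev})_p^\wedge$ of Theorem \ref{thm:introduction_comparison_between_motivic_and_even_mu_synthetic_spectra} lives in Chapter 7 and uses the Gheorghe--Isaksen structure results, which in turn rest on Voevodsky's motivic Steenrod algebra computation. The Steenrod algebra theorem is stated and proved in Chapter 6 precisely so that it is independent of those inputs (the paper says this explicitly). Moreover, the motivic transport would only determine the even-weight part of $\nu H_{*,*}\nu H$, and your remark about recovering odd synthetic weights by ``$\tau$-adic interpolation'' is not an argument. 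The paper's actual route at $p=2$ is much cheaper than either alternative you propose: after $\tau$-inversion the elements $\tau_k$ and $b_{k+1}$ reduce to the classical $\tau_k^{top}$ and $b_{k+1}^{top}$ with the known relation $(\tau_k^{top})^2 = b_{k+1}^{top}$, so $\tau_k^2$ and $\tau^2 b_{k+1}$ agree after multiplying by a power of $\tau$; since they lie in the same bidegree and $\tau$ is injective, they agree on the nose. You should be able to slot this into your framework once you have a correct $E_1$-page and an injectivity statement, at which point the $BP$-tower approach of the paper is likely cleaner than a Bockstein spectral sequence anyway.
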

Notice that after inverting $\tau$, $\nu H_{*, *} \nu H$ coincides with the usual topological dual Steenrod algebra extended to $\fieldp[\tau, \tau^{-1}]$; this is a consequence of the relation between synthetic spectra and spectra discussed above. Similarly, the quotient $\nu H_{*, *} \nu H \otimes _{\fieldp[\tau]} \fieldp$ can be identified with the dual Steenrod algebra internal to $\IndCoh(\mathcal{M}_{\MU})$. Unlike in the motivic case, in synthetic spectra these two constraints are known a priori and inform the proof of \cref{thm:introduction_structure_of_dual_synthetic_steenrod_algebra}. 

Note that through the correspondence of \cref{thm:introduction_comparison_between_motivic_and_even_mu_synthetic_spectra}, the even weight part of \cref{thm:introduction_structure_of_dual_synthetic_steenrod_algebra} recovers the computation of the cohomology of a point and of the motivic dual Steenrod algebra due to Voevodsky \cite{voevodsky2003motivic}, \cite{voevodsky2010motivic}. However, our arguments do not give an an independent proof of Voevodsky's results, as the latter are needed to establish the synthetic-motivic correspondence in the first place.  On the other hand, the synthetic calculations are substantially simpler, largely reducing to the topological case, while the computation of the motivic cohomology of a point alone is a form of the Bloch-Kato conjectures. 

\subsection{Applications of synthetic spectra}
\label{subsection:applications_of_synthetic_spectra}

As mentioned above, several applications of synthetic spectra have been found in the short period since the preprint of this article first appeared. To provide some motivation for the main construction of this paper, as well as a guide to the reader, let us mention some of these recent developments. 

The idea of expressing an Adams-type homology theory in terms of finite spectra with projective homology is due to Goerss and Hopkins, who use it to put an exotic model structure on simplicial spectra in their work on realizations of commutative ring spectra, famously culminating in an $\mathbf{E}_{\infty}$-ring structure on Morava $E$-theory \cite{moduli_problems_for_structured_ring_spectra, moduli_spaces_of_commutative_ring_spectra}. Their arguments were simplified and generalized using the language of synthetic spectra by Paul VanKoughnett and the author in \cite{pstrkagowski2021abstract}; this is also the place proving convergence of Postnikov towers for certain classes of synthetic spectra. 

In \cite{burklund2019boundaries}, Burklund, Hahn and Senger show that the structure of the Adams spectral sequence is completely described in terms of synthetic homotopy groups together with the action of $\tau$. They use synthetic spectra to prove new bounds on Adams filtration of stable stems outside of the image of $J$, completing the program of Stolz and solving several conjectures in geometric topology. This work continued in \cite{burklund2020high, burklund2020inertia}. In \cite{burklund2021extension}, synthetic spectra were used to resolve a hidden extension in the Adams spectral sequence, and in \cite{chang_banded_vanishing_line} to prove vanishing lines for the mod $2$ Moore spectrum. In \cite{marek2022h}, Marek computed the synthetic homotopy groups of $\mathrm{tmf}$. 

Let us now discuss some developments in motivic homotopy theory. In \cite{burklund2020galois}, Burklund, Hahn and Senger follow the description of the cellular motivic homotopy over $\mathbb{C}$ in terms of complex bordism given in this paper by describing the $\infty$-category of Artin-Tate motivic spectra over $\mathbb{R}$ in terms of $C_{2} = \mathrm{Gal}(\mathbb{C}/\mathbb{R})$-equivariant homotopy theory and the real bordism spectrum. 

In this paper, we construct an exotic t-structure on cellular motivic spectra over $\mathbb{C}$ whose heart can be identified with the even part of $\QCoh^{\heartsuit}(\mathcal{M}_{\MU})$. In their seminal work \cite{bachmann2020chow}, Bachmann, Kong, Wang and Xu construct Chow-Novikov t-structures over arbitrary base fields and make an analogous identification of hearts in the cellular case (after inverting the exponential characteristic). 

The idea of studying sheaves with respect to homology epimorphisms has been adapted by Sch\"{a}ppi to construct graded Tannakian categories of motives and flat replacements for non-Adams-type homology theories \cite{schappi2020graded, schappi2020flat}. In \cite{patchkoria2021adams}, Patchkoria and the author use derived $\infty$-categories of stable $\infty$-categories to prove Franke's algebraicity conjecture and monoidality of the Adams filtration. 

In \cite{gregoric2021moduli}, Gregoric identifies $\MU$-based synthetic spectra with quasi-coherent sheaves over a certain algebraic stack in the context of spectral algebraic geometry. 

\subsection{Quasi-coherent sheaves and comodules} 
\label{subsection:qcoh_sheaves_and_comodules}

In the introduction above, we have phrased our results in terms of quasi-coherent sheaves over 
\[
\mathcal{M}_{E} := \varinjlim_{[n] \in \Delta^{op}} \textnormal{Spec}(\pi_{*}(E^{\otimes [n]}))
\]
This stack can be presented by the pair $(E_{*}, E_{*}E)$ together with its structure of a Hopf algebroid \cite{ravenel_complex_cobordism}, so that we have an equivalence 
\[
\QCoh(\mathcal{M}_{E}) \simeq \dcat(\ComodE)
\]
between the stable $\infty$-category of quasi-coherent sheaves and the derived $\infty$-category of the abelian category of $E_{*}E$-comodules; that is, $E_{*}$-modules $M$ equipped with an appropriate comultiplication map $\Delta\colon M \rightarrow E_{*}E \otimes_{E_{*}} M$.

While using one or the other is a matter of preference, in the main body of the text we decided to use the language of Hopf algebroids and comodules rather than that of quasi-coherent sheaves. Our choice is informed by the following considerations: 
\begin{enumerate}
    \item Our starting point of considerations is the notion of $E$-homology, and $E_{*}E$ is a key player in many of our arguments. From the point of view of Hopf algebroids, $E_{*}$ and $E_{*}E$ are part of the data, while from the point of view of algebraic stacks it is a somewhat arbitrary presentation.
    \item We do not want to assume that $E_{*}$ or $E_{*}E$ are concentrated in even degrees, and so their underlying ungraded rings are not necessarily commutative. To get around this properly, one would have to consider $\mathcal{M}_{E}$ as a stack in the context of graded commutative rings, which would take us too far afield.
    \item The language of Hopf algebroids and comodules is that of the vast majority of our references, as it is closer to explicit calculations.
\end{enumerate}

\subsection{Organization of the paper} 

Before listing the contents of the specific sections, let us give some general advice about approaching this paper. The results are given, with very minor exceptions, in order of logical dependence, so that the definition of a synthetic spectrum appears only in \S\ref{section:synthetic_spectra}, roughly halfway through the text. The preceding \S\ref{section:cat_theory} and \S\ref{section:foundations_of_synthetic_spectra} set up the necessary foundations for the theory of sheaves on general additive $\infty$-sites, and on the sites of finite projective spectra and dualizable comodules specifically. 

However, it might be advisable to proceed by first reading \S\ref{section:synthetic_spectra} and only reviewing the results of previous sections as they become needed. Additionally, the comparison with the motivic category appears in relatively self-contained \S\ref{section_comparison_with_the_cellular_motivic_category}, so that a reader mainly interested in this part might consider starting there. 

We now summarize the contents of the paper. In \S\ref{section:cat_theory} we introduce the notion of an \emph{additive} $\infty$-site; that is, one which is additive and which has covering sieves generated by single morphisms. We prove that on such sites the sheafification functor preserves the property of being spherical and prove a recognition theorem for spherical sheaves in terms of finite limits. We develop the basic theory of spherical sheaves of spectra, introducing the $t$-structure and identifying the connective part with sheaves of spaces. We discuss functoriality of these constructions under morphisms of $\infty$-sites. 

Then, we show that if the additive $\infty$-site is \emph{excellent}; that is, admits a compatible symmetric monoidal structure in which all objects have duals, then the $\infty$-categories of spherical sheaves acquire an induced Day convolution symmetric monoidal structure. We give a technical criterion for a morphism of excellent $\infty$-sites to induce an equivalence on the categories of sheaves of sets. 

Lastly, we prove a variant of a Goerss-Hopkins theorem which describes any compactly generated Grothendieck abelian category as a category of spherical sheaves of abelian groups. As an application, we describe a model of the derived $\infty$-category of such an abelian category given by hypercomplete spherical sheaves of spectra. 

In \S\ref{section:foundations_of_synthetic_spectra} we study in more detail the additive $\infty$-sites of dualizable comodules and of finite projective spectra. We review the theory of comodules over a Hopf algebroid, describe Hovey's stable homotopy theory of comodules as an $\infty$-category of spherical sheaves of spectra, and draw the consequences. 

Then, we review the notions of a finite $E$-projective spectrum and of an Adams-type homology theory. We assemble finite projective spectra into a site and show that any spectrum represents a sheaf on that site. Lastly, we show that the homology functor between finite projective spectra and dualizable comodules induces an equivalence on categories of sheaves of sets. 

In \S\ref{section:synthetic_spectra} we introduce the notion of a synthetic spectrum and of a synthetic analogue of an ordinary spectrum. We define the bigraded spheres, introduce homotopy and homology of synthetic spectra and discuss grading conventions. 

Then, we show that the homotopy groups of synthetic spectra with respect to the natural $t$-structure can be identified with synthetic $E$-homology. As an application, we prove a criterion for a cofibre sequence of spectra to induce a cofibre sequence of synthetic spectra, as well as a criterion for a tensor product of synthetic analogues to coincide with the tensor product of spectra. 

Next, we introduce the colimit-to-limit comparison map and show that it can be identified with a certain morphism $\tau\colon S^{0, -1} \rightarrow S^{0, 0}$. We compute the cofibres of $\tau$ on synthetic analogues and discuss the connection to Postnikov towers. We show that the spectral Yoneda embedding induces an equivalence between spectra and $\tau$-invertible synthetic spectra, as an application, we show that the synthetic analogue construction is a fully faithful embedding and construct the underlying spectrum functor.

Then, we introduce an adjunction between synthetic spectra and Hovey's stable $\infty$-category of $E_{*}E$-comodules and show that it lifts to an adjunction with $C\tau$-modules. We construct a $t$-structure on $\infty$-categories of modules over a connective synthetic algebra and prove that the adjunction between Hovey's $\infty$-category and $C\tau$-modules is compatible with the respective $t$-structures. 

We show that the left adjoint of this adjunction is a fully faithful embedding and discuss conditions on $E$ that guarantee that it is an equivalence. We prove that by hypercompleting both sides one obtains an adjoint equivalence between modules over $C\tau$ in hypercomplete synthetic spectra and the derived $\infty$-category of comodules. 

Furthermore, we prove some basic results on homotopy classes of maps between  synthetic analogues of ordinary spectra, showing that they are purely topological in non-negative Chow degrees and are controlled by $\Ext$-groups in comodules beyond that. We prove that in the case of a homotopy $E$-modules, the homotopy groups vanish in negative Chow degree. Lastly, we discuss the $C\tau$-philosophy; that is, the relation between the topological and algebraic Adams spectral sequences exhibited by the synthetic one. We describe the relation between the Whitehead towers in synthetic spectra and the $E$-based Adams spectral sequence. 

In \S\ref{section:variants_of_synthetic_spectra} we define $\nu E$-local synthetic spectra, where $\nu E$ is the synthetic analogue of $E$, and show that this conditions corresponds to being a hypercomplete sheaf. Then, we discuss the description of $C\tau$-modules and $\tau$-invertible spectra in the hypercomplete setting. 

Then, we introduce the notion of an \emph{even} Adams-type homology theory. We show that to an even $E$ one can associate an $\infty$-category of even synthetic spectra and show that the latter admits a cocontinuous fully faithful embedding into synthetic spectra. 

In \S\ref{section:synthetic_spectra_based_on_mu} we prove that the $\infty$-category of synthetic spectra based on $\MU$ is cellular; that is, generated under colimits by the bigraded spheres. Then, we compute the corresponding dual synthetic Steenrod algebra.

In \S\ref{section_comparison_with_the_cellular_motivic_category} we review the basics of the cellular motivic category $\cspectra$ over $\textnormal{Spec}(\mathbb{C})$, of the corresponding motivic cobordism spectrum $MGL$, and the theorem of Hopkins-Morel-Hoyois. We introduce the notion of a finite $MGL$-projective motivic spectrum and show that these assemble into an excellent $\infty$-site. We prove that the Betti realization functor induces an equivalence between categories of sheaves of sets on finite $MGL$-projective motivic spectra and finite even $\MU$-projective spectra. 

Then, we describe the cellular motivic category as an $\infty$-category of spherical sheaves of spectra. As an application, we deduce that it admits a $t$-structure controlled by $\MGL$-homology groups. 

Next, we review the structure of the $p$-complete motivic Adams and Adams-Novikov spectral sequences. We recall a theorem of Gheorghe-Isaksen on the structure of the $p$-complete motivic homotopy groups. 

Finally, we construct an adjunction between the cellular motivic category and the $\infty$-category of even synthetic spectra based on $\MU$. We show that the induced adjunction on the $\infty$-categories of $p$-complete objects is an equivalence at each prime $p$. 

\subsection{Notation and conventions}

By an $\infty$-category we mean a quasicategory and we freely use the theory of $\infty$-categories as developed by Joyal and Lurie; the standard reference is \cite{lurie_higher_topos_theory}. All constructions should be understood in the homotopy-invariant sense, in particular limits and colimits. 

If $\ccat, \dcat$ are $\infty$-categories, we denote the $\infty$-category of $\dcat$-valued presheaves on $\ccat$ by $P^{\dcat}(\ccat)$, this is the functor $\infty$-category $\Fun(\ccat^{op},\dcat)$.  In practice, $\dcat$ will be sets, spaces or spectra. If $\euscr{D}$ is the $\infty$-category $\spaces$ of spaces, we drop it from the notation and terminology so that a \emph{presheaf on $\ccat$} is simply an element of $P(\ccat) = \Fun(\ccat^{op}, \spaces)$.

If $\ccat$ is equipped with a Grothendieck topology, then by $Sh^{\dcat}(\ccat)$ we denote the full subcategory of $\dcat$-valued presheaves spanned by sheaves; that is, those presheaves that satisfy descent with respect to all covers. We denote the sheafification functor by $L\colon P^{\dcat}(\ccat) \rightarrow Sh^{\dcat}(\ccat)$; it is the left adjoint to the inclusion of sheaves into presheaves. 

By $\hpsheaves^{\dcat}(\ccat)$ we denote the $\infty$-category of \emph{hypercomplete sheaves}; that is, those presheaves that satisfy descent with respect to all hypercovers. Equivalently, a sheaf is hypercomplete if and only if it is local with respect to all $\infty$-connective morphisms of sheaves. We denote the hypercomplete sheafification functor by $\widehat{L}\colon P^{\dcat}(\ccat) \rightarrow \hpsheaves^{\dcat}(\ccat)$. 

If $\ccat$ has finite sums, we say that presheaf $X$ is \emph{spherical} \footnote{This terminology is motivated by the previous note of the author \cite{pstrkagowski2017moduli}, where the notion of presheaves on the $\infty$-category of wedges of spheres was studied and the product-preserving ones were simply called \emph{spherical}. We find the word ``spherical'' to be pleasant-sounding and shorter than ``product-preserving'', both advantages being important considering how often we use this notion. Another name under which product-preserving presheaves appear in the literature is \emph{radditive}, see \cite{voevodsky2010simplicial}.} or \emph{product-preserving} if it takes sums to products; that is, if the natural map $X(c \sqcup c^{\prime}) \rightarrow X(c) \times X(c^{\prime})$ is an equivalence for all $c, c^\prime \in \ccat$. We denote the full subcategory of $P^{\dcat}(\ccat)$ spanned by spherical presheaves, resp. by spherical sheaves, resp. by spherical hypercomplete sheaves by $P^{\dcat}_{\Sigma}(\ccat)$, resp. $Sh^{\dcat}_{\Sigma}(\ccat)$, resp. $\widehat{S}h^{\dcat}_{\Sigma}(\ccat)$. 

Given a subcategory of the $\infty$-category of presheaves of spectra, we consistently denote the left adjoint to the functor $\omegainfty$ computed levelwise by $\sigmainfty$. Depending on the particular subcategories in question, this may or may not be computed by applying $\sigmainfty\colon \spaces \rightarrow \spectra$ levelwise; see \cref{warning:sigma_infty_plus_not_computed_by_levelwise_sigma_infty}. 

If $\ccat$ is a presentable $\infty$-category, we denote the $k$-truncation functors by $(-)_{\leq k}\colon \ccat \rightarrow \ccat$. More generally, if $\ccat$ is a stable $\infty$-category equipped with a $t$-structure, then we denote the $k$-connective and $k$-coconnective parts by, respectively, $\ccat_{\geq k}$ and $\ccat_{\leq k}$, and the truncation functors by $(-)_{\geq k}$, $(-)_{\leq k}$. Note that the $k$-truncation in the setting of presentable $\infty$-categories and with respect to a $t$-structure do not always coincide, but the potential for confusion is rather small, as they do coincide on the connective part, see \cite{higher_algebra}[1.2.1.9] for a discussion.

If $\ccat$ is a small $\infty$-category and $c \in \ccat$, then by $y(c)$ we denote the \emph{representable presheaf} defined by the formula $y(c)(c^\prime) = \map(c^\prime, c)$; this construction assembles into the \emph{Yoneda embedding} $y\colon \ccat \rightarrow P(\ccat)$. One can show that $P(\ccat)$ is freely generated under colimits by the image of the Yoneda embedding and, if $\ccat$ has finite sums, that $y$ takes values in spherical presheaves and $P_{\Sigma}(\ccat)$ is freely generated by the image under sifted colimits \cite{lurie_higher_topos_theory}[5.1.5.6, 5.5.8.15]. Both facts will be often used implicitly. 

If $f\colon \ccat \rightarrow \dcat$ is a morphism of sites, we will denote the restriction functor on sheaves by $f_{*}\colon Sh(\dcat) \rightarrow Sh(\ccat)$ and its left adjoint by $f^{*}$\footnote{An earlier preprint version of this article used the opposite convention, but we were convinced that it would be best to stick with conventions of algebraic geometry. We apologize to the early readers who might find this change confusing.}.

\subsection{Acknowledgements}

I would like to thank Norbucks and Sherbucks\footnote{1734 Sherman Ave, Evanston, IL 60201 and 1999 Campus Dr, Evanston, IL 60208.}, where this paper was written. I would like to thank Marc Hoyois and Bogdan Gheorghe for readily answering questions. I would like to thank Dan Isaksen for our discussions, helpful comments and encouragement. Finally, I would like to thank Paul Goerss for his suggestions, guidance and patience throughout the whole project. 

Several people provided comments and pointed out typos in the preprint version of this manuscript. In particular, I am grateful for the help of Dexter Chua, Jacob Hegna, Lars Hesselholt, Maxwell Johnson, John Rognes and Vesna Stojanoska. Finally, I would like to thank the anonymous referees for their valuable suggestions. 

\section{Category theory}
\label{section:cat_theory}

In this section we develop a theory of spherical sheaves on additive $\infty$-sites; that is, additive $\infty$-categories equipped with a Grothendieck topology generated by singleton covering families. We show that on such sites sheafification preserves the property of being spherical and prove a recognition theorem for spherical sheaves. We also discuss functoriality, sheaves of spectra, induced symmetric monoidal structures and give criteria for a morphism of additive $\infty$-sites to induce an equivalence on the categories of sheaves of sets. Lastly, we review the description of a compactly generated Grothendieck abelian category as a category of spherical sheaves of abelian groups and deduce a description of its derived $\infty$-category. 

The two main examples of additive $\infty$-sites we have in mind are the site of finite, $E$-projective spectra equipped with the $E_{*}$-surjection topology and the site of dualizable $E_{*}E$-comodules with the epimorphism topology. The $\infty$-categories of spherical sheaves of spectra on these two sites are, respectively, the $\infty$-category of synthetic spectra based on $E$ and Hovey's stable $\infty$-category of $E_{*}E$-comodules. However, the results in this section are proven in sufficient generality that they could perhaps be of use elsewhere. 

\subsection{Spherical sheaves on additive $\infty$-categories}
In this section we develop the basic theory of spherical sheaves on additive $\infty$-sites. Here, an \emph{additive} $\infty$-category is one which has finite products, finite coproducts and whose homotopy category is additive in the classical sense, see \cite[\S C.1.5]{lurie_spectral_algebraic_geometry}.

In an additive $\infty$-category $\ccat$ finite products and coproducts coincide. Moreover, any stable $\infty$-category is additive and any additive $\infty$-category embeds into a stable one. If $\ccat$ is additive, then the associated $\infty$-category of spherical presheaves is very well-behaved, as the following result shows. 

\begin{lemma}
\label{lemma:spherical_presheaves_form_an_additive_category}
If $\ccat$ is additive, so is the $\infty$-category $P_{\Sigma}(\ccat)$ of spherical presheaves of spaces. In particular, any spherical presheaf of spaces is canonically a grouplike commutative algebra object; that is, it can be canonically lifted to a presheaf of grouplike commutative algebras in spaces. 
\end{lemma}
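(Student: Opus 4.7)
The plan is to first establish that $P_{\Sigma}(\ccat)$ is additive, and then deduce the canonical grouplike commutative algebra structure on each spherical presheaf as a formal consequence of semiadditivity.

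I would begin by verifying that $P_{\Sigma}(\ccat)$ has finite products and coproducts. Finite products are computed levelwise as in $P(\ccat)$ and remain spherical because products commute with products. The category $P_{\Sigma}(\ccat)$ is an accessible localization of $P(\ccat)$ by \cite{lurie_higher_topos_theory}[5.5.8.10], hence in particular cocomplete. The Yoneda embedding $y: \ccat \to P_{\Sigma}(\ccat)$ preserves finite products trivially (it does so into $P(\ccat)$, and $P_{\Sigma}(\ccat)$ has the same products), and it preserves finite coproducts by the universal property of $P_{\Sigma}(\ccat)$ as the free sifted-cocompletion of $\ccat$ \cite{lurie_higher_topos_theory}[5.5.8.15].

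To show semiadditivity, I would check that the canonical comparison $X \sqcup Y \to X \times Y$ is an equivalence for all $X, Y \in P_{\Sigma}(\ccat)$. On representables this is immediate from additivity of $\ccat$, since $y(c) \sqcup y(c') \simeq y(c \sqcup c') \simeq y(c \times c') \simeq y(c) \times y(c')$. For general spherical presheaves I would write them as sifted colimits of representables and argue that both sides of the comparison commute with such colimits. Coproducts commute with colimits trivially, while finite products in $P_{\Sigma}(\ccat)$ commute with sifted colimits because they are computed levelwise and sifted colimits commute with finite products in spaces \cite{lurie_higher_topos_theory}[5.5.8.11]. The same sifted-colimit technique then upgrades semiadditivity to additivity: the shearing map is an equivalence on each representable $y(c)$ by the classical additivity of the homotopy category of $\ccat$, and this equivalence propagates to all of $P_{\Sigma}(\ccat)$.

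For the \emph{in particular} clause, I would invoke the standard fact that in any semiadditive $\infty$-category $\dcat$ the forgetful functor from commutative monoid objects to $\dcat$ is an equivalence, so that each object acquires a canonical commutative monoid structure. Since $P_{\Sigma}(\ccat)$ is additive and not merely semiadditive, this monoid structure is grouplike, furnishing each spherical presheaf of spaces with a canonical lift to a grouplike commutative algebra object. I expect the main technical obstacle to be the propagation of identities from representables to arbitrary spherical presheaves through sifted colimits; this rests delicately on the interplay between the levelwise description of products in $P_{\Sigma}(\ccat)$ and the compatibility of sifted colimits with finite products in spaces, and must be applied carefully for both the semiadditivity comparison and the shearing map.
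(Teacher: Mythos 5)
The paper's proof is a one-line citation of \cite[2.4.5.9, 2.4.5.5]{higher_algebra}; your argument is a correct reconstruction of the standard reasoning behind those references (finite biproducts on representables via Yoneda, propagation to all spherical presheaves via sifted colimits commuting with finite products, then the shear map to upgrade from semiadditive to additive, and finally the equivalence $\text{CMon}(\dcat) \simeq \dcat$ for semiadditive $\dcat$ to get the canonical grouplike commutative monoid structure). The only points to be careful about, which you correctly flag, are the two-variable reduction when commuting $\sqcup$ and $\times$ with sifted colimits, and the functoriality of the canonical monoid structure needed for the shear-map propagation; both are standard and your outline handles them correctly.
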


\begin{proof}
This is \cite[C.1.5.3, C.1.5.8]{lurie_spectral_algebraic_geometry}.
\end{proof}

Our goal will be to single out conditions on an additive $\infty$-category equipped with a Grothendieck pretopology so that the theory of sheaves and spherical presheaves are in some sense compatible. We show that this is the case of an additive $\infty$-site in the sense of \cref{defin:additive_infinity_site}; that is, when all covering families consist of a single morphism. 

Having a pretopology with single covers is a convenient assumption to make and is sufficient for our purposes, however, our criterion for compatibility will be phrased in terms of a general localization of a presheaf category and so it should be perhaps applicable in other settings.

\begin{thm}
\label{thm:localizations_compatible_with_spherical_presheaves}
Let $\ccat$ be an small additive $\infty$-category, $S$ a small set of morphisms in $P(\ccat)$ and $L^{S}\colon P(\ccat) \rightarrow S^{-1}P(\ccat)$ the associated localization functor taking values in $S$-local presheaves. Then, if $S$ consists only of morphisms of spherical presheaves and $L^{S}$ preserves finite products, then $L^{S}$ takes spherical presheaves to spherical $S$-local presheaves. 
\end{thm}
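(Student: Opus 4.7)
The plan is to reformulate sphericality as a locality condition and then exploit a symmetry between the two localizations $L^S$ and $L^T$ (the latter being the sphericalization $P(\ccat) \to P_\Sigma(\ccat)$, which is known to preserve finite products). First I would observe, using the additivity of $\ccat$ to identify $y(c \sqcup c') \simeq y(c) \times y(c')$ in $P(\ccat)$ via the Yoneda lemma, that sphericality is equivalent to being local with respect to the set $T := \{y(c) \sqcup y(c') \to y(c) \times y(c')\}_{c, c' \in \ccat} \cup \{\emptyset \to y(0)\}$, where $0 \in \ccat$ is the zero object. Thus $P_\Sigma(\ccat) = T^{-1}P(\ccat)$, and the task becomes showing that $L^S$ preserves $T$-local objects.

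The heart of the proof consists of two symmetric observations. First, $L^S$ preserves $T$-local equivalences: for a generator $t: y(c) \sqcup y(c') \to y(c) \times y(c')$ of $T$, the colimit- and product-preservation of $L^S$ gives $L^S t: L^S y(c) \sqcup L^S y(c') \to L^S y(c) \times L^S y(c')$; applying $L^T$ then gives the canonical comparison between coproduct and product in the \emph{additive} $\infty$-category $P_\Sigma(\ccat)$, which is an equivalence. Since the class $W_T$ of $T$-equivalences is strongly saturated and $L^S$ preserves colimits and finite products, this extends from generators to show $L^S(W_T) \subseteq W_T$. Symmetrically, $L^T$ preserves $S$-equivalences: since every map in $S$ is between spherical presheaves, $L^T$ acts as the identity on each element of $S$, so $L^T(S) \subseteq S \subseteq W_S$, and closure of $W_S$ together with $L^T$ preserving colimits and finite products upgrades this to $L^T(W_S) \subseteq W_S$.

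Combining the two facts concludes the argument. The first observation implies that $L^T \circ L^S$ inverts $T$-equivalences and hence factors as $\bar{L^S} \circ L^T$ for a functor $\bar{L^S}: P_\Sigma(\ccat) \to P_\Sigma(\ccat)$; a direct mapping-space computation, using the adjunction $L^T \dashv \iota$ and the universal property of $L^S$, identifies $\bar{L^S}$ with the reflector onto $P_\Sigma(\ccat) \cap S^{-1}P(\ccat)$, so in particular $L^T L^S X = \bar{L^S}(X)$ is $S$-local for every spherical $X$. The second observation, applied to the $S$-local equivalence $X \to L^S X$, shows that $L^T X \to L^T L^S X$ is also an $S$-local equivalence; since $X$ is spherical this reads $X \to L^T L^S X$ is an $S$-equivalence. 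By the universal property of the $S$-localization, the spherical $S$-local object $L^T L^S X$ must then coincide with $L^S X$, whence $L^S X$ is itself spherical.

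The main conceptual obstacle — the key insight one must find — is to recognize that the two hypotheses function \emph{symmetrically} in the proof: one gives that $L^S$ respects $T$-equivalences, the other gives that $L^T$ respects $S$-equivalences. In isolation, neither hypothesis suffices, but together with the additivity of $P_\Sigma(\ccat)$ they yield the two compatibility statements, which combine cleanly via the universal property of the $S$-localization. The only external fact I would need to quote is that $L^T$ itself preserves finite products, which amounts to the assertion that $P_\Sigma(\ccat)$ is the universal spherical target.
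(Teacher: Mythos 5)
Your reformulation of sphericality as locality with respect to $T = \{y(c)\sqcup y(c') \to y(c)\times y(c')\}_{c,c'\in\ccat} \cup \{\emptyset \to y(0)\}$ is correct, and the idea of playing the two localizations against one another is attractive, but both of your symmetric observations rest on the same unjustified step, and it is precisely the step that the finite-product hypothesis is supposed to make possible. To upgrade the generator check to $L^S(W_T)\subseteq W_T$ by saturation, you need the class $\{f : L^T L^S f \text{ is an equivalence}\}$ to itself be strongly saturated, which would follow if $L^T L^S$ were cocontinuous. It is not: writing $L^S = \iota_S L_S$ and $L^T = \iota_T L_T$ with $\iota$ the fully faithful inclusions and $L$ the reflectors, the composite $L^T L^S = \iota_T L_T \iota_S L_S$ has the inclusion $\iota_S$ in the middle, and $\iota_S$ does not preserve colimits. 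The hypothesis that $L^S$ preserves finite products repairs this for exactly one diagram shape, not wholesale. The symmetric claim $L^T(W_S)\subseteq W_S$ has the identical defect with $\iota_T$ in the middle. Even the generator check in your first observation is more delicate than stated: because $L^S$ does not preserve coproducts as an endofunctor of $P(\ccat)$, the source of $L^T L^S t$ is $L^T L^S\bigl(L^S y(c)\sqcup L^S y(c')\bigr)$ rather than $L^T\bigl(L^S y(c)\sqcup L^S y(c')\bigr)$, and the $S$-localization map relating the two sources is not known to be a $T$-equivalence. Note also that your first observation is essentially as strong as the theorem itself: if $L^S$ preserves sphericality then $L^T L^S X \simeq L^S X$ on spherical $X$, so that factorization cannot be expected to come for free and then yield the theorem as a corollary.

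The paper confronts the colimit mismatch directly. It works inside $P_\Sigma(\ccat)$, showing that the $L^S$-equivalences \emph{between spherical presheaves} form a strongly saturated class of morphisms of $P_\Sigma(\ccat)$ containing $S$, from which it follows that the intrinsic $L^S_\Sigma$-equivalences are contained among them. Saturation there means closure under colimits \emph{computed in} $P_\Sigma(\ccat)$, which differ from those in $P(\ccat)$; the paper decomposes such colimits into sifted colimits (inherited, since $P_\Sigma(\ccat)\hookrightarrow P(\ccat)$ is closed under them), finite coproducts (which equal finite products by additivity of $P_\Sigma(\ccat)$ --- exactly where the finite-product hypothesis enters), and pushouts (computed via the bar construction for the direct-sum monoidal structure and thereby reduced to a levelwise finite-product check). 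Your proposal treats the passage from generators to the saturated class as formal, but that passage is the substantive content of the proof, and you have not supplied a substitute for the paper's decomposition.
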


\begin{proof}
Let us first recall some facts about localizations of presentable $\infty$-categories, namely \cite[5.2.7.8, 5.5.4.15]{lurie_higher_topos_theory}.  If $\dcat$ is a presentable $\infty$-category and $W$ is a small set of morphisms in $\dcat$, then there exists an associated localization functor $L\colon \dcat \rightarrow W^{-1}\dcat$ taking values in the subcategory of $W$-local objects. 

We say that a map $d \rightarrow d^{\prime}$ is an $L$-equivalence if $Ld \rightarrow Ld^{\prime}$ is an equivalence and one shows that for any $d \in D$ the map $d \rightarrow Ld$ can be characterized as the unique $L$-equivalence into a $W$-local object, moreover, the class of $L$-equivalences coincides with $\overline{W}$,  the smallest strongly saturated class containing $W$. Here, we say that a class $\overline{W}$ of morphisms is strongly saturated if it is closed under pushouts, colimits in the arrow $\infty$-category, and when it satisfies 2-out-of-3. 

Since $P_{\Sigma}(\ccat)$ is presentable and $S$ consists of morphisms of spherical presheaves, it follows that there also exists an associated localization on the spherical presheaf $\infty$-category, which we denote by $L_{\Sigma}^{S} \colon P_{\Sigma}(\ccat) \rightarrow S^{-1} P_{\Sigma}(\ccat)$. Our goal is to show $L^{S}$ already preserves spherical presheaves, so that $L^{S}$ and $L_{\Sigma}^{S}$ coincide on spherical presheaves.

By the above, it is enough to prove that any spherical presheaf admits an $L^{S}$-equivalence into an $S$-local spherical presheaf. Since $L_{\Sigma}^{S}$ exists, it certainly admits an $L_{\Sigma}^{S}$-equivalence into an $S$-local spherical presheaf. As $L_{\Sigma}^{S}$-equivalences form the smallest strongly saturated class of morphisms of spherical presheaves containing $S$, it is enough to show that $L$-equivalences also form a strongly saturated class of morphisms of spherical presheaves and they contain $S$. The latter is clear and so we move to showing the former. 

It is immediate that $L^{S}$-equivalences of spherical presheaves satisfy 2-out-of-3, so instead we first show that $L^{S}$-equivalences are closed under colimits in $P_{\Sigma}(\ccat)^{\Delta^{1}}$. Since colimits in the arrow category are computed in the source and target and the inclusion $P_{\Sigma}(\ccat) \hookrightarrow P(\ccat)$ preserves sifted colimits, we deduce that $L^{S}$-equivalences are closed under sifted colimits in $P_{\Sigma}(\ccat)^{\Delta^{1}}$. It is thus enough to show that $L^{S}$-equivalences are closed under finite coproducts. 

Since $P_{\Sigma}(\ccat)$ is additive by \cref{lemma:spherical_presheaves_form_an_additive_category}, so is $P_{\Sigma}(\ccat)^{\Delta^{1}}$. As finite coproducts and products coincide in additive $\infty$-categories, we deduce that it is enough to know that $L^{S}$-equivalences of spherical presheaves are closed under finite products. This follows immediately from the assumption that $L^{S}$ preserves finite products. 

We are left with pushouts. We have to show that given a span $B \leftarrow A \rightarrow C$ of spherical presheaves such that $A \rightarrow C$ is an $L^{S}$-equivalence, so is the natural map $B \rightarrow B \oplus _{A} C$ into the pushout of spherical presheaves. This is the same as showing that $B \oplus _{A} A \rightarrow B \oplus _{A} C$ is an $L^{S}$-equivalence, to verify this, we give an explicit description of the pushout.

Since $P_{\Sigma}(\ccat)$ is additive, it is symmetric monoidal under the direct sum and moreover any object admits a canonical structure of a commutative algebra, so that we have an identification $CAlg_{\oplus}(P_{\Sigma}(\ccat)) \simeq P_{\Sigma}(\ccat)$. It follows that pushouts in spherical presheaves can be computed as the tensor product of commutative algebras under the direct sum. 

Notice that the direct sum functor on $P_{\Sigma}(\ccat)$ preserves geometric realizations separately in each variable. Indeed, this follows from the fact that geometric realizations of spherical presheaves are computed levelwise, as are products, and that these commute in the $\infty$-category of spaces. It follows that the tensor product of spherical presheaves can be computed using the bar construction of \cite[4.4.2]{higher_algebra}. Thus, to show that $B \oplus _{A} A \rightarrow B \oplus _{A} C$ is an $L^{S}$-equivalence is the same as showing that 

\begin{center}
$\varinjlim \ (\ldots \triplerightarrow B \oplus A \oplus A \rightrightarrows B \oplus A) \rightarrow \varinjlim \ (\ldots \triplerightarrow B \oplus A \oplus C \rightrightarrows B \oplus C)$ 
\end{center}
is an $L^{S}$-equivalence. We've already verified that $L^{S}$-equivalences are stable under colimits, so that it is enough to see that the map of simplicial diagrams defining the bar construction is levelwise an $L^{S}$-equivalence. This is the same as saying that for each $n \geq 0$, $B \oplus A^{\oplus n} \oplus A \rightarrow B \oplus A^{\oplus n} \oplus C$ is an $L^{S}$-equivalence, which is a product of $L^{S}$-equivalences so that we are done. 
\end{proof}

We now deduce the needed consequences in the case where the localization is the sheafification functor with respect to a compatible topology.

\begin{defin}
\label{defin:additive_infinity_site}
An \emph{additive $\infty$-site} is a small $\infty$-site $\ccat$ which is additive as an $\infty$-category and such that every covering family consists of a single map. A \emph{morphism} $f \colon \ccat \rightarrow \dcat$ of additive $\infty$-sites is an additive morphism of sites. 
\end{defin}

\begin{rem}
The intuition behind allowing only covering families with a single morphism is that we will only consider spherical sheaves. Any such sheaf is compatible with the additive structure of $\ccat$ by the assumption of sphericity, so that the topology can be relegated to only playing the role of enforcing the right epimorphisms, rather than also the right disjoint union behaviour.

It might be tempting to try to build the product-preserving condition into the topology itself, but this cannot work. The reason is that on an additive $\infty$-site the $\infty$-category of spherical sheaves is additive itself and so it can coincide with the $\infty$-category of all sheaves, which is an $\infty$-topos, only in the case where both are trivial. 
\end{rem}

\begin{prop}
\label{prop:sphercity_preserved_by_sheafification}
Let $\ccat$ be an additive $\infty$-site. Then the sheafification $L \colon P(\ccat) \rightarrow Sh(\ccat)$ and hypercomplete sheafification $\widehat{L} \colon P(\ccat) \rightarrow \hpsheaves(\ccat)$ functors take spherical presheaves to spherical presheaves. 
\end{prop}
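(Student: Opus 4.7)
The plan is to apply Theorem 2.2 to both sheafification functors. That result reduces each case to verifying two conditions: the functor is the localization at a small set $S$ of morphisms between spherical presheaves, and the functor preserves finite products. Both $L$ and $\widehat{L}$ arise from left exact Bousfield localizations of $P(\ccat)$, so finite-product preservation is automatic, and the task reduces to exhibiting a suitable generating set in each case.

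For the sheafification $L$, since every covering family in $\ccat$ consists of a single morphism $Q \rightarrow P$, I would take $S$ to be the collection of comparison maps $|\check{C}(y(Q) \rightarrow y(P))| \rightarrow y(P)$, where $\check{C}$ denotes the Čech nerve computed in $P(\ccat)$. The target $y(P)$ is representable, and representable presheaves on an additive $\infty$-category are spherical by the universal property of the coproduct, which gives $\map(c \sqcup c', P) \simeq \map(c, P) \times \map(c', P)$. Each term of the Čech nerve is an iterated fibre product of $y(Q)$ over $y(P)$, and finite limits of spherical presheaves are spherical because limits in $P(\ccat)$ are computed levelwise and commute with finite products in spaces. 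The geometric realization is then spherical because the inclusion $P_{\Sigma}(\ccat) \hookrightarrow P(\ccat)$ preserves sifted colimits. Hence $S$ lies in the arrow category of $P_{\Sigma}(\ccat)$ and Theorem 2.2 applies.

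For the hypercompletion $\widehat{L}$, the analogous argument uses hypercovers in place of Čech nerves. Since hypercompletion of a presentable $\infty$-topos is an accessible left exact localization, one may take $S$ to consist of the sheafification generators above together with the comparison maps $|U_\bullet| \rightarrow y(P)$ for a small family of hypercovers $U_\bullet \rightarrow y(P)$ with each $U_n$ a finite sum of representables; the same levelwise-limit and sifted-colimit argument shows each such map is between spherical presheaves. The main subtlety I anticipate is precisely this set-theoretic bookkeeping, namely confirming that $\widehat{L}$ can honestly be presented as localization at a small set of morphisms between spherical presheaves. Once this is in place, Theorem 2.2 closes the argument uniformly for both functors.
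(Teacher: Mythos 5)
Your approach matches the paper's almost step for step: both cases are reduced to Theorem \ref{thm:localizations_compatible_with_spherical_presheaves}, the set $S$ for $L$ is taken to be the \v{C}ech comparison maps, and the set for $\widehat{L}$ is taken to be comparison maps associated to hypercovers. The paper resolves your anticipated worry about the hypercomplete case via Proposition \ref{prop:recognition_of_hypercomplete_sheaves}, which characterizes hypercomplete presheaves as those satisfying descent for hypercovers $U: \thickdelta_{s,+}^{op} \rightarrow \ccat$ taking values \emph{in $\ccat$ itself}; this is exactly where the single-cover hypothesis earns its keep, since it lets one avoid coproducts of representables entirely.

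One step in your write-up would fail if read literally: you allow each $U_n$ to be ``a finite sum of representables'' and assert the same ``levelwise-limit and sifted-colimit argument'' shows sphericity. But a coproduct $y(c) \sqcup y(c')$ \emph{in $P(\ccat)$} is not spherical --- evaluating at $a \sqcup a'$ gives $(\map(a,c)\times\map(a',c)) \sqcup (\map(a,c')\times\map(a',c'))$, which is not a product --- and coproducts are not sifted, so neither clause of your argument covers it. The fix is precisely what Proposition \ref{prop:recognition_of_hypercomplete_sheaves} provides in the single-cover setting: the hypercovers can be taken with $U_n \in \ccat$, so each $y(U_n)$ is honestly representable, the colimit over $\thickdelta_s^{op}$ is sifted, and your argument then goes through unchanged. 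The \v{C}ech case has the analogous cleanup already built in, since the pretopology guarantees the iterated pullbacks exist in $\ccat$ and hence the nerve terms are $y(d\times_c d)$, etc.; your observation that limits of spherical presheaves are spherical is correct but not needed once one notes the terms are representable.
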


\begin{proof}
By \cref{cor:characterization_of_sheaves_in_terms_of_pretopology}, the class of sheaves can be described as the class of $S$-local objects, where $S$ is the inclusions of pretopological sieves. Since $\ccat$ is assumed to have single covers, these inclusions are all of the form 

\begin{center}
$\varinjlim \ (\ldots \triplerightarrow y(d \times_{c} d) \rightrightarrows y(d)) \rightarrow y(c)$,
\end{center}
as $d \rightarrow c$ runs through covering morphisms. Notice that here both the target and source are spherical presheaves, as all representable presheaves are spherical and the latter are closed under geometric realizations. It follows that all elements of $S$ are maps of spherical presheaves. Since $L$ preserves finite products, in fact is left exact, \cref{thm:localizations_compatible_with_spherical_presheaves} implies that $L$ takes spherical presheaves to spherical presheaves. 

The argument for hypercomplete sheaves is similar, as \cref{prop:recognition_of_hypercomplete_sheaves} characterizes hypercomplete sheaves as presheaves local with respect to the maps from the colimit of a hypercover. More precisely, it is the class local with respect to the set $\widehat{S}$ of maps of the form 

\begin{center}
$\varinjlim \ (\ldots \triplerightarrow y(d_{1}) \rightrightarrows y(d_{0})) \rightarrow y(c)$,
\end{center}
where $\ldots \triplerightarrow d_{1} \rightrightarrows d_{0} \rightarrow c$ is a hypercover. Again, both the source and target of these maps are spherical so that an application of \cref{thm:localizations_compatible_with_spherical_presheaves} finishes the argument. 
\end{proof}

\begin{rem}
A statement analogous to \cref{prop:sphercity_preserved_by_sheafification} appears in the work of Goerss-Hopkins in the context of sheaves of abelian groups on dualizable comodules as \cite{moduli_problems_for_structured_ring_spectra}[2.1.8, (2)], but the proof given there suffers from a subtle mistake: a pullback of a direct sum decomposition along an epimorphism is almost never again a direct sum.
\end{rem}

\begin{cor}
\label{cor:spherical_sheaves_as_localization}
Let $\ccat$ be an additive $\infty$-site. Then, the sheafification $L \colon P_{\Sigma}(\ccat) \rightarrow Sh_{\Sigma}(\ccat)$ presents the $\infty$-category $Sh_{\Sigma}(\ccat)$ of spherical sheaves as an accessible, left exact localization of $P_{\Sigma}(\ccat)$. In particular, $Sh_{\Sigma}(\ccat)$ is presentable. An analogous statement holds for hypercomplete sheaves. 
\end{cor}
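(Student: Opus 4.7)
The claim is that the restriction of the sheafification functor to spherical presheaves exhibits spherical sheaves as an accessible, left exact localization. My plan is to derive this essentially formally from \textbf{Proposition \ref{prop:sphercity_preserved_by_sheafification}} together with standard presentability arguments.

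First, I would verify that the restriction makes sense. Both $P_{\Sigma}(\ccat) \subseteq P(\ccat)$ and $Sh_{\Sigma}(\ccat) \subseteq Sh(\ccat)$ are full subcategories, and by \textbf{Proposition \ref{prop:sphercity_preserved_by_sheafification}} the sheafification functor $L: P(\ccat) \to Sh(\ccat)$ carries $P_{\Sigma}(\ccat)$ into $Sh_{\Sigma}(\ccat)$. Thus $L$ restricts to a functor $L: P_{\Sigma}(\ccat) \to Sh_{\Sigma}(\ccat)$. Since the inclusion $Sh_{\Sigma}(\ccat) \hookrightarrow P_{\Sigma}(\ccat)$ is the restriction of the fully faithful right adjoint $Sh(\ccat) \hookrightarrow P(\ccat)$, the adjunction $L \dashv \iota$ restricts to an adjunction on the spherical subcategories, with the right adjoint still fully faithful. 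This shows the restriction is a reflective localization.

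Next, I would check accessibility. As in the proof of \textbf{Proposition \ref{prop:sphercity_preserved_by_sheafification}}, $Sh_{\Sigma}(\ccat)$ may be identified with the full subcategory of $P_{\Sigma}(\ccat)$ consisting of objects local with respect to the small set $S$ of pretopological sieve inclusions $\varinjlim (\ldots \triplerightarrow y(d \times_{c} d) \rightrightarrows y(d)) \to y(c)$. Since $P_{\Sigma}(\ccat)$ is presentable (being freely generated under sifted colimits by $\ccat$, cf.\ \cite{lurie_higher_topos_theory}[5.5.8.15]) and $S$ is small, the general theory of accessible localizations of presentable $\infty$-categories (\cite{lurie_higher_topos_theory}[5.5.4.15]) yields that this localization is accessible and that $Sh_{\Sigma}(\ccat)$ is presentable.

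For left exactness, I would observe that finite limits in $P_{\Sigma}(\ccat)$ and $Sh_{\Sigma}(\ccat)$ are computed as in $P(\ccat)$ and $Sh(\ccat)$ respectively: products commute with finite limits in spaces, so the property of being product-preserving is stable under finite limits of presheaves. Consequently the inclusion $P_{\Sigma}(\ccat) \hookrightarrow P(\ccat)$ and its sheaf-theoretic analogue both preserve and reflect finite limits. Since $L: P(\ccat) \to Sh(\ccat)$ is left exact by the general theory of topological localizations, its restriction to spherical presheaves is left exact as well.

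Finally, for the hypercomplete variant the argument is identical: \textbf{Proposition \ref{prop:sphercity_preserved_by_sheafification}} likewise shows $\widehat{L}$ sends spherical presheaves to spherical hypercomplete sheaves, and one uses the set $\widehat{S}$ of hypercover colimit maps in place of $S$ together with the left exactness of the hypercomplete sheafification. I do not expect any serious obstacle here; the only point to be careful about is the restriction of the adjunction, since formally one must check that the unit at a spherical presheaf factors through a spherical sheaf --- but this is precisely the content of \textbf{Proposition \ref{prop:sphercity_preserved_by_sheafification}}.
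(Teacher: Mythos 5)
Your proposal is correct and takes essentially the same approach as the paper: restrict the sheafification adjunction using \textbf{Proposition~\ref{prop:sphercity_preserved_by_sheafification}}, then conclude accessibility, left exactness, and the hypercomplete variant. The only minor variation is in how accessibility is established --- you identify $Sh_{\Sigma}(\ccat)$ with the $S$-local objects of $P_{\Sigma}(\ccat)$ for the small set $S$ of pretopological sieve inclusions and invoke \cite[5.5.4.15]{lurie_higher_topos_theory}, whereas the paper observes $Sh_{\Sigma}(\ccat) = Sh(\ccat) \cap P_{\Sigma}(\ccat)$ and uses closure of accessible subcategories under intersection --- but both appeals are standard and interchangeable.
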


\begin{proof}
By \cref{prop:sphercity_preserved_by_sheafification}, the adjunction $P(\ccat) \rightleftarrows Sh(\ccat)$ restricts to one between the $\infty$-categories of spherical presheaves and spherical sheaves. To observe that the latter localization is accessible, notice that $Sh_{\Sigma}(\ccat) = Sh(\ccat) \cap P_{\Sigma}(\ccat)$ and accessible subcategories are closed under intersection, see \cite[5.4.7.10, 5.5.1.2]{lurie_higher_topos_theory}. Left exactness is clear, as $L \colon P(\ccat) \rightarrow Sh(\ccat)$ is left exact. The argument in the case of spherical hypercomplete sheaves is the same. 
\end{proof}

We now prove a strong recognition result for spherical sheaves on an additive $\infty$-site. The importance of the result is twofold, for one thing, it simplifies verification that a given presheaf is a spherical sheaf. In fact, using the criterion it is often easier to prove that a given presheaf is a spherical sheaf than it would be to directly prove that it is a sheaf.

Secondly, the recognition we describe is in terms of finite limits, which has the pleasant consequence of showing that filtered colimits of spherical sheaves on an additive $\infty$-site are computed levelwise. 

\begin{thm}[Recognition of spherical sheaves]
\label{thm:recognition_of_spherical_sheaves}
Let $\ccat$ be an additive $\infty$-site. Then a spherical presheaf $X \in P_{\Sigma}(\ccat)$ is a sheaf if and only if it satisfies the following exactness property: if $F \rightarrow B \rightarrow A$ is a fibre sequence with $B \rightarrow A$ a covering, then $X(A) \rightarrow X(B) \rightarrow X(F)$ is a fibre sequence of spaces.
\end{thm}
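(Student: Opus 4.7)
The plan is to match the sheaf descent condition with the proposed fibre sequence exactness by writing out the \v{C}ech nerve of a covering using the additive structure, then applying sphericity. By \textbf{Corollary \ref{cor:characterization_of_sheaves_in_terms_of_pretopology}} (using that our topology has singleton covers), a presheaf $X$ is a sheaf if and only if, for every covering $f \colon B \to A$, the augmented cosimplicial diagram $X(A) \to X(\check{C}(f))$ exhibits $X(A)$ as the totalization, where $\check{C}(f)$ is the \v{C}ech nerve with $\check{C}(f)_{n} = B^{\times_{A}(n+1)}$. The crucial step is then to identify $\check{C}(f)$ in terms of the fibre $F$: embedding $\ccat$ into its stable envelope via \cite[2.4.5.8]{higher_algebra}, which preserves the finite direct sums and the fibre under consideration, one gets a natural equivalence of simplicial objects
\[
\check{C}(f)_{\bullet} \;\simeq\; B \oplus F^{\oplus \bullet},
\]
where the right-hand side is the standard bar-type simplicial object whose face maps add adjacent $F$-summands (and apply the inclusion $F \to B$ on the outermost one) and whose degeneracies insert a zero summand. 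This is the classical decomposition of iterated fibre products along a map in a stable $\infty$-category.

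Next, applying the spherical presheaf $X$ and using that $X$ converts direct sums to products, the cosimplicial space $X(\check{C}(f))$ is identified with $[n] \mapsto X(B) \times X(F)^{\times n}$ equipped with a cosimplicial structure dual to the bar structure above. By \textbf{Lemma \ref{lemma:spherical_presheaves_form_an_additive_category}}, each of these values lifts canonically to a grouplike $E_{\infty}$-monoid, so the entire cosimplicial diagram can be viewed in the $\infty$-category of connective spectra. In any stable $\infty$-category, the totalization of a cosimplicial object of the form $[n] \mapsto Y \oplus Z^{\oplus n}$ with the standard bar coface and codegeneracy maps is naturally identified with $\mathrm{fib}(Y \to Z)$: this cosimplicial object is the cobar resolution associated to the map $Y \to Z$, whose totalization by a direct calculation is the homotopy fibre.

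Putting the two steps together, the condition that $X(A) \to \mathrm{Tot}(X(\check{C}(f)))$ is an equivalence translates precisely to $X(A) \to \mathrm{fib}(X(B) \to X(F))$ being an equivalence, i.e.\ to $X(A) \to X(B) \to X(F)$ being a fibre sequence of spaces, yielding both directions of the theorem at once. The main technical challenge lies in making the bar decomposition $\check{C}(f)_{\bullet} \simeq B \oplus F^{\oplus \bullet}$ precise as an equivalence of simplicial objects at the $\infty$-categorical level, including the coherence of all face and degeneracy maps. This is essentially a piece of standard bookkeeping once one passes to the stable envelope and uses the Yoneda embedding, but it is where virtually all of the real work is concentrated.
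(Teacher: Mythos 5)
Your approach differs substantially from the paper's. The paper never tries to identify the \v{C}ech nerve $\check{C}(f)_{\bullet}$ as a simplicial object; instead, it maps the fibre sequence $F \to B \to A$ to the trivial one $0 \to A \to A$, takes \v{C}ech nerves of the three vertical covering maps ($F\to 0$, $B\to A$, $A\to A$), applies $X$, and observes that at each cosimplicial level $n$ the resulting column $X(A) \to X(B^{\times_{A}(n+1)}) \to X(F^{\times_{0}(n+1)})$ is already a fibre sequence, directly by the exactness hypothesis applied to the covering $B^{\times_{A}(n+1)} \to A$. Commuting these levelwise fibre sequences with totalization reduces the sheaf condition to the contractibility of $\varprojlim X(\check{C}(F\to 0))$, which is handled by a Bousfield--Kan spectral sequence, and the converse direction is dealt with by a similar diagram in which the columns are split levelwise. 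This reduction to the ``easy'' \v{C}ech nerve of $F \to 0$ avoids ever having to produce a coherent bar decomposition of the full \v{C}ech nerve.

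Two steps in your proposal carry more weight than you give them. First, the appeal to the stable envelope via \cite[2.4.5.8]{higher_algebra} is only licensed to preserve finite (co)products; that result does not promise the embedding preserves pullbacks or fibres of coverings, so the claim that it ``preserves \ldots the fibre under consideration'' needs justification. Fortunately the level-$n$ splitting $B^{\times_{A}(n+1)} \simeq B \oplus F^{\oplus n}$ can be obtained inside $\ccat$ itself from the split fibre sequence $F \to B\times_{A}B \to B$ (split by the diagonal) and its iterates, so the stable envelope is not really needed here---but the coherence of all faces and degeneracies, which you rightly flag as the real work, is not a ``classical decomposition'' one can simply cite. Second, and more substantively, the assertion that the totalization of $[n]\mapsto Y \oplus Z^{\oplus n}$ ``by a direct calculation is the homotopy fibre'' conceals the essential content: one must show the conormalized cochain complex of this cosimplicial object is concentrated in degrees $0,1$ with differential $g\colon Y\to Z$ (this already presupposes the coherent bar identification), and then invoke d\'{e}calage/Dold--Kan for cosimplicial objects in a stable $\infty$-category, or equivalently the collapse of the Bousfield--Kan spectral sequence onto two columns. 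That is at least as much work as the paper's Bousfield--Kan argument, which it in fact contains as the special case $F\to 0$. Your route is valid in principle and proves a stronger intermediate statement---that $\varprojlim X(\check{C}(f))$ is the fibre of $X(B)\to X(F)$ for \emph{every} spherical presheaf $X$, not only sheaves---but it trades the paper's elementary levelwise diagram chase for a considerably heavier coherence and spectral-sequence burden.
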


\begin{proof}
Let $X$ be a spherical presheaf and assume that it satisfies the above exactness property. By \cref{cor:characterization_of_sheaves_in_terms_of_pretopology}, to prove that it is a sheaf, we have to show that if $B \rightarrow A$ is a covering, then

\begin{center}
$X(A) \rightarrow X(B) \rightrightarrows X(B \times_{A} B) \triplerightarrow \ldots$ 
\end{center}
is a limit diagram of spaces. We have a commutative diagram 

\begin{center}
	\begin{tikzpicture}
		\node (TLL) at (-1.5, 1) {$ F $};
		\node (BLL) at (-1.5, 0) {$ 0 $};
		\node (TL) at (0, 1) {$ B $};
		\node (TR) at (1.5, 1) {$ A $};
		\node (BL) at (0, 0) {$ A $};
		\node (BR) at (1.5, 0) {$ A $};
		
		\draw [->] (TLL) to (BLL);
		\draw [->] (TLL) to (TL);
		\draw [->] (BLL) to (BL); 	
		\draw [->] (TL) to (TR);
		\draw [->] (TL) to (BL);
		\draw [->] (TR) to (BR);
		\draw [->] (BL) to (BR);
	\end{tikzpicture}
\end{center}
where the rows form fibre sequences and the vertical arrows are coverings. Taking the \v{C}ech nerves of vertical arrows and applying $X$ we obtain a diagram 

\begin{center}
	\begin{tikzpicture}
		\node (TLL) at (-3, 1) {$ X(0) $};
		\node (TLLA) at (-2, 1) {$ \rightarrow $}; 
		\node (TL) at (-1, 1) {$ X(F) $};
		\node (TLA) at (0, 1) {$ \rightrightarrows $};
		\node (T) at (1.5, 1) {$ X(F \times _{0} F) $}; 
		\node (TA) at (3, 1) {$ \triplerightarrow $};
		\node (TR) at (4, 1) {$ \ldots $};  
		
		\node (MLL) at (-3, 0) {$ X(A) $};
		\node (MLLA) at (-2, 0) {$ \rightarrow $}; 
		\node (ML) at (-1, 0) {$ X(B) $};
		\node (MLA) at (0, 0) {$ \rightrightarrows $};
		\node (M) at (1.5, 0) {$ X(B \times_{A} B) $}; 
		\node (MA) at (3, 0) {$ \triplerightarrow $}; 
		\node (MR) at (4, 0) {$ \ldots $};  
		
		\node (BLL) at (-3, -1) {$ X(A) $};
		\node (BLLA) at (-2, -1) {$ \rightarrow $}; 
		\node (BL) at (-1, -1) {$ X(A) $};
		\node (BLA) at (0, -1) {$ \rightrightarrows $};
		\node (B) at (1.5, -1) {$ X(A \times _{A} A) $}; 
		\node (BA) at (3, -1) {$ \triplerightarrow $}; 
		\node (BR) at (4, -1) {$ \ldots $};  
		
		\draw [->] (MLL) to (TLL);
		\draw [->] (BLL) to (MLL);
		\draw [->] (ML) to (TL);
		\draw [->] (BL) to (ML);
		\draw [->] (M) to (T);
		\draw [->] (B) to (M);	
	\end{tikzpicture}
\end{center}
where each row is an augmented cosimplicial object. Since taking fibres commutes with taking pullbacks, at each cosimplicial level $n$ the vertical columns are induced by a fibre sequence in $\ccat$. More precisely, for $n \geq -1$, they're induced by the fibre sequence 

\begin{center}
$F \times _{0} \ldots \times _{0} F \rightarrow B \times _{A} \ldots \times_{A} B \rightarrow A \times _{A} \ldots \times _{A} A$, 
\end{center}
where the number of factors is $n+1$. One sees that in each case the second map is a covering and we deduce that by the assumed exactness property of $X$ each of the vertical columns is a fibre sequence of spaces. Since fibre sequences commutes with limits, we obtain a commutative diagram 

\begin{center}
	\begin{tikzpicture}
		\node (TLL) at (-4.5, 1) {$ X(0) $};
		\node (BLL) at (-4.5, 0) {$ \varprojlim |_{\thickdelta} \ X(F \times_{0} \ldots \times_{0} F) $};
		\node (TL) at (0, 1) {$ X(A) $};
		\node (BL) at (0, 0) {$ \varprojlim |_{\thickdelta} \ X(B \times _{A} \ldots \times_{A} B) $};
		\node (TR) at (4.5, 1) {$ X(A) $};
		\node (BR) at (4.5, 0) {$ \varprojlim |_{\thickdelta} \ X(A \times _{A} \ldots \times _{A} A) $};
		
		\draw [->] (TLL) to (BLL);
		\draw [->] (TLL) to (TL);
		\draw [->] (BLL) to (BL); 	
		\draw [->] (TL) to (TR);
		\draw [->] (TL) to (BL);
		\draw [->] (TR) to (BR);
		\draw [->] (BL) to (BR);
	\end{tikzpicture},
\end{center}
where the rows are fibre sequences of spaces, in fact of infinite loop spaces by \cref{lemma:spherical_presheaves_form_an_additive_category}. We deduce that to prove that the middle vertical map is an equivalence it is enough to show this about the left and right vertical maps.

This is clear about the right map, which is associated to an augmented cosimplicial object which consists of only equivalences and so is clearly a limit diagram. Hence, we focus on the left hand map, which is associated to the \v{C}ech nerve of the covering $F \rightarrow 0$, which is the augmented simplicial object which at level $n$ is given by $F \times \ldots \times F$, with $n+1$ factors, with structure maps given by the projections. 

Since $X$ is spherical, by \cref{lemma:spherical_presheaves_form_an_additive_category} we can consider it as taking values in connective spectra, notice this doesn't change the limit, since the underlying space functor $\Omega^{\infty}$ is a right adjoint. Again by sphericity, $X$ takes products in $\ccat$ to sums of connective spectra and we deduce that $X$ applied to the above \v{C}ech nerve, where we omit the augmentation, is a cosimplicial object 

\begin{center}
$X(F) \rightrightarrows X(F) \oplus X(F) \triplerightarrow \ldots$,
\end{center}
where $\oplus$ is the sum of connective spectra, the coboundary maps are given by inclusions. We have to show that the limit of this diagram is contractible, since $X(0)$ is by the assumption of sphericity. However, this is immediate from the Bousfield-Kan spectral sequence computing the homotopy of the limit \cite{bousfield1972homotopy}[X.6], whose second page is given by $\pi^{s} (\pi_{t} X(F) \oplus \ldots \oplus \pi_{t} X(F))$ and is easily seen to vanish. 

We now prove the converse, so that assume that $X$ is a spherical sheaf. We want to show that it has the exactness property, so let $F \rightarrow B \rightarrow A$ be a fibre sequence in $\ccat$ with the latter map a covering. Consider the commutative diagram 

\begin{center}
	\begin{tikzpicture}
		\node (TLL) at (-1.5, 1) {$ F $};
		\node (BLL) at (-1.5, 0) {$ F $};
		\node (TL) at (0, 1) {$ B \times_{A} B $};
		\node (TR) at (1.5, 1) {$ B $};
		\node (BL) at (0, 0) {$ B $};
		\node (BR) at (1.5, 0) {$ A $};
		
		\draw [->] (TLL) to (BLL);
		\draw [->] (TLL) to (TL);
		\draw [->] (BLL) to (BL); 	
		\draw [->] (TL) to (TR);
		\draw [->] (TL) to (BL);
		\draw [->] (TR) to (BR);
		\draw [->] (BL) to (BR);
	\end{tikzpicture},
\end{center}
again the rows are fibre sequences and the vertical maps are coverings. Taking the \v{C}ech nerves of the vertical maps and applying $X$ we obtain a diagram 

\begin{center}
	\begin{tikzpicture}
	% variables
	\pgfmathsetmacro{\ll}{-2}
	\pgfmathsetmacro{\lla}{-1}
	\pgfmathsetmacro{\l}{0.5}
	\pgfmathsetmacro{\la}{1.9}
	\pgfmathsetmacro{\c}{4.4}
	\pgfmathsetmacro{\ca}{7}
	\pgfmathsetmacro{\r}{8}
	
		\node (TLL) at (\ll, 1) {$ X(F) $};
		\node (TLLA) at (\lla, 1) {$ \rightarrow $}; 
		\node (TL) at (\l, 1) {$ X(F) $};
		\node (TLA) at (\la, 1) {$ \rightrightarrows $};
		\node (T) at (\c, 1) {$ X(F \times _{F} F) $}; 
		\node (TA) at (\ca, 1) {$ \triplerightarrow $};
		\node (TR) at (\r, 1) {$ \ldots $};  
		
		\node (MLL) at (\ll, 0) {$ X(B) $};
		\node (MLLA) at (\lla, 0) {$ \rightarrow $}; 
		\node (ML) at (\l, 0) {$ X(B \times _{A} B) $};
		\node (MLA) at (\la, 0) {$ \rightrightarrows $};
		\node (M) at (\c, 0) {$ X((B \times_{A} B) \times _{B} (B \times _{A} B)) $}; 
		\node (MA) at (\ca, 0) {$ \triplerightarrow $}; 
		\node (MR) at (\r, 0) {$ \ldots $};  
		
		\node (BLL) at (\ll, -1) {$ X(A) $};
		\node (BLLA) at (\lla, -1) {$ \rightarrow $}; 
		\node (BL) at (\l, -1) {$ X(B) $};
		\node (BLA) at (\la, -1) {$ \rightrightarrows $};
		\node (B) at (\c, -1) {$ X(B \times _{A} B) $}; 
		\node (BA) at (\ca, -1) {$ \triplerightarrow $}; 
		\node (BR) at (\r, -1) {$ \ldots $};  
		
		\draw [->] (MLL) to (TLL);
		\draw [->] (BLL) to (MLL);
		\draw [->] (ML) to (TL);
		\draw [->] (BL) to (ML);
		\draw [->] (M) to (T);
		\draw [->] (B) to (M);	
	\end{tikzpicture}
\end{center}
where the rows are augmented cosimplicial objects. We want to show that the column in cosimplicial degree $-1$ is a fibre sequence. By the assumption of $X$ being a sheaf, these augmented cosimplicial objects are limit diagrams, hence it is enough to show that each column in cosimplicial degree $n \geq 0$ is fibre. The column in cosimplicial degree $n \geq 0$ is induced by the fibre sequence in $\ccat$ of the form 

\begin{center}
$F \times _{F} \times \ldots \times _{F} F \rightarrow (B \times _{A} B) \times _{B} \ldots \times _{B} (B \times _{A} B) \rightarrow B \times _{A} \ldots \times _{A} B$
\end{center}
with $n + 1$ factors. Observe that this is a split sequence in $\ccat$, with the splitting given by $\Delta \times \ldots \times \Delta \colon B \times _{A} \ldots \times _{A} B \rightarrow (B \times _{A} B) \times _{B} \ldots \times _{B} (B \times _{A} B)$, where $\Delta \colon B \rightarrow B \times _{A} B$ is the diagonal. Since $X$ is spherical, it takes any split sequence into a split sequence, in particular a fibre sequence, of connective spectra, which is what we wanted to show. This ends the proof. 
\end{proof}

\begin{cor}
\label{cor:filtered_colimits_in_spherical_sheaves_on_an_additive_site_computed_levelwise}
Let $\ccat$ be an additive $\infty$-site. Then the subcategory $Sh_{\Sigma}(\ccat) \hookrightarrow P(\ccat)$ is closed under filtered colimits. In particular, filtered colimits of spherical sheaves on an additive $\infty$-site are computed levelwise.
\end{cor}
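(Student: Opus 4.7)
The plan is to use the recognition theorem \textbf{Theorem \ref{thm:recognition_of_spherical_sheaves}} together with the fact that filtered colimits commute with finite limits in the $\infty$-category of spaces. Let $(X_i)_{i \in I}$ be a filtered diagram of spherical sheaves on $\ccat$ and let $X = \varinjlim X_i$ denote its colimit computed in $P(\ccat)$, that is, levelwise. It suffices to show that $X$ is a spherical sheaf, since then the inclusion $Sh_{\Sigma}(\ccat) \hookrightarrow P(\ccat)$ preserves this filtered colimit and, in particular, filtered colimits in $Sh_{\Sigma}(\ccat)$ are computed levelwise.

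First I would check that $X$ is spherical. For $c, c' \in \ccat$ the natural map $X(c \sqcup c') \to X(c) \times X(c')$ is the filtered colimit of the maps $X_i(c \sqcup c') \to X_i(c) \times X_i(c')$, each of which is an equivalence by sphericity of $X_i$. Since filtered colimits commute with finite products in $\spaces$, this is again an equivalence.

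Next I would verify the exactness criterion from \textbf{Theorem \ref{thm:recognition_of_spherical_sheaves}}. Let $F \to B \to A$ be a fibre sequence in $\ccat$ with $B \to A$ a covering. For each $i \in I$ the sequence $X_i(A) \to X_i(B) \to X_i(F)$ is a fibre sequence of spaces because $X_i$ is a spherical sheaf. Taking the filtered colimit over $i$ and using that filtered colimits commute with finite limits in $\spaces$, we conclude that $X(A) \to X(B) \to X(F)$ is a fibre sequence as well. By the recognition theorem, $X$ is a spherical sheaf.

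There is no real obstacle here beyond assembling the two commutation facts; the whole point of the recognition theorem is precisely that it characterizes spherical sheaves by a condition involving only finite limits, which is manifestly compatible with filtered colimits. The final sentence of the corollary then follows because we have just shown that the levelwise colimit of $(X_i)$ in $P(\ccat)$ already lies in $Sh_{\Sigma}(\ccat)$, and hence must agree with the colimit computed in $Sh_{\Sigma}(\ccat)$.
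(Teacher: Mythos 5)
Your argument is correct and is essentially the same as the paper's: both invoke \textbf{Theorem \ref{thm:recognition_of_spherical_sheaves}} to express the spherical-sheaf condition via finite limits, and then appeal to the commutation of filtered colimits with finite limits in $\spaces$. You have merely spelled out the two verifications (sphericity and the exactness criterion) that the paper leaves implicit.
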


\begin{proof}
By \cref{thm:recognition_of_spherical_sheaves}, being a spherical sheaf on an additive $\infty$-site is a condition that can be described using finite limits. It follows that it is stable under filtered colimits in $P(\ccat)$, which are computed levelwise. 
\end{proof}
We finish this section by making some basic remarks about the functoriality of the $\infty$-category of spherical sheaves, these results are elementary. Recall that a morphism $f \colon \ccat \rightarrow \dcat$ of additive $\infty$-sites is an additive functor that preserves covering morphisms as well as pullbacks along coverings. 

\begin{prop}
\label{prop:morphism_of_additive_infinity_sites_induces_an_adjunction_on_spherical_sheaf_infinity_categories}
Let $f \colon \ccat \rightarrow \dcat$ be a morphism of additive $\infty$-sites. Then, the induced adjunction $f^{*} \dashv f_{*} \colon Sh(\ccat) \rightleftarrows Sh(\dcat)$ restricts to one on the $\infty$-categories of spherical presheaves. Here, $f^{*} = L \circ \Lan_{f}$, where $\Lan_{f}$ is the left Kan extension of the composite $\ccat \rightarrow \dcat \rightarrow P(\dcat)$ and $L$ is the sheafification, and $f_{*}$ is given by precomposition. An analogous statement holds for the induced adjunction on $\infty$-categories of hypercomplete sheaves. 
\end{prop}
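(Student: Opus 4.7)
The plan is to show that both $f^{*}$ and $f_{*}$ preserve the property of being spherical, so that the adjunction $f_{*} \dashv f^{*}$ on sheaf $\infty$-categories restricts to one between $Sh_{\Sigma}(\ccat)$ and $Sh_{\Sigma}(\dcat)$.

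For the right adjoint $f^{*}$, I would use that since $f$ is additive it preserves finite coproducts, so for any $X \in P_{\Sigma}(\dcat)$ we have $f^{*}X(c \sqcup c^{\prime}) = X(f(c \sqcup c^{\prime})) \simeq X(f(c) \sqcup f(c^{\prime})) \simeq X(f(c)) \times X(f(c^{\prime}))$, showing $f^{*}X$ is spherical. Since $f^{*}$ also carries sheaves to sheaves (this is part of what it means to be a morphism of sites, together with the assumption that $f$ preserves coverings and pullbacks along them), it restricts to the required functor $f^{*}: Sh_{\Sigma}(\dcat) \to Sh_{\Sigma}(\ccat)$.

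For the left adjoint I would first verify that the left Kan extension $f_{!}: P(\ccat) \to P(\dcat)$ preserves sphericity. Every $X \in P_{\Sigma}(\ccat)$ is a sifted colimit of representables by \cite[5.5.8.15]{lurie_higher_topos_theory}, and $f_{!}$ is cocontinuous and sends $y(c)$ to $y(f(c))$, which is spherical as $\dcat$ is additive. Since $P_{\Sigma}(\dcat)$ is closed under sifted colimits in $P(\dcat)$ (sifted colimits commuting with finite products of spaces), $f_{!}X$ is spherical whenever $X$ is. Combining this with \textbf{Proposition \ref{prop:sphercity_preserved_by_sheafification}}, which asserts that the sheafification $L$ preserves sphericity, shows that $f_{*} = L \circ f_{!}$ restricts to a functor $Sh_{\Sigma}(\ccat) \to Sh_{\Sigma}(\dcat)$. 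The unit and counit of the original adjunction between $Sh(\ccat)$ and $Sh(\dcat)$ then automatically restrict, yielding the desired adjunction on spherical sheaves. The hypercomplete case is handled identically upon replacing $L$ by $\widehat{L}$ and invoking the hypercomplete half of \textbf{Proposition \ref{prop:sphercity_preserved_by_sheafification}}.

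There is no real obstacle here; the essential input is the characterization of $P_{\Sigma}$ as the closure of representables under sifted colimits, which automatically transports sphericity along any cocontinuous functor that preserves representables. The only slightly subtle point is keeping the two formulas for $f_{*}$ straight — as a left adjoint on sheaves it is the composite $L \circ f_{!}$ rather than $f_{!}$ itself, so one must invoke both that $f_{!}$ preserves sphericity and that $L$ does.
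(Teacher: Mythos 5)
Your proof is correct and follows essentially the same route as the paper: you check that $f^{*}$ preserves sphericity directly from additivity of $f$, that $f_{!}$ preserves sphericity because it sends representables to representables and preserves sifted colimits, and that $L$ (resp.\ $\widehat{L}$) preserves sphericity by \textbf{Proposition \ref{prop:sphercity_preserved_by_sheafification}}. The paper cites \cite[5.5.8.14]{lurie_higher_topos_theory} where you cite \cite[5.5.8.15]{lurie_higher_topos_theory}, but both deliver the same input about $P_{\Sigma}$ and sifted colimits of representables, so the argument is the same.
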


\begin{proof}
Since $f$ is a morphism of sites, it induces an adjunction of the above form on sheaf $\infty$-categories by \cref{prop:compatible_functors_preserve_sheaves}, we only have to verify that $f_{*}, f^{*}$ take spherical sheaves to spherical sheaves.

 In the case of the latter, we first observe that $\Lan_{f}$ takes representables to representables and preserves sifted colimits, so that it takes spherical presheaves to spherical presheaves by \cite{lurie_higher_topos_theory}[5.5.8.14], then that $L$ preserves sphericity by \cref{prop:sphercity_preserved_by_sheafification}. 

On the other hand, $f_{*}$ is given by a precomposition along an additive functor, so it clearly takes spherical sheaves to spherical sheaves. The proof for hypercomplete sheaves is the same, using \cref{cor:precomposition_with_compatible_functors_preserves_hypercomplete_sheaves} and replacing $L$ by the hypercomplete sheafification functor $\widehat{L}$. 
\end{proof}

\begin{prop}
\label{prop:additive_morphisms_induce_adjunctions_where_the_right_adjoint_is_cocontinuous}
Let $f \colon \ccat \rightarrow \dcat$ be a morphism of additive $\infty$-sites and consider the induced adjunction $f^{*} \dashv f_{*} \colon Sh_{\Sigma}(\ccat) \rightleftarrows Sh_{\Sigma}(\dcat)$ on $\infty$-categories of spherical sheaves. If $f$ has the covering lifting property, then the right adjoint $f_{*}$ is cocontinuous. 
\end{prop}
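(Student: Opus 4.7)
The plan is to decompose all small colimits into finite coproducts and sifted colimits, and to verify $f^{*}$ preserves each family separately. The unifying idea is that on an additive $\infty$-site every flavour of colimit in $Sh_{\Sigma}$ is ultimately computed levelwise in the underlying presheaves, and since $f^{*}$ is precomposition with $f$ it automatically commutes with anything levelwise.

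First I would treat finite coproducts. By \textbf{Lemma \ref{lemma:spherical_presheaves_form_an_additive_category}} the $\infty$-category $P_{\Sigma}(\dcat)$ is additive, and $Sh_{\Sigma}(\dcat)$ inherits additivity as a full subcategory closed under the levelwise finite products (since the sheafification $L$ is left exact). In an additive $\infty$-category finite coproducts coincide with finite products, and $f^{*}$ is a right adjoint by \textbf{Proposition \ref{prop:morphism_of_additive_infinity_sites_induces_an_adjunction_on_spherical_sheaf_infinity_categories}}, so it preserves finite products and hence finite coproducts. Filtered colimits are equally quick: \textbf{Corollary \ref{cor:filtered_colimits_in_spherical_sheaves_on_an_additive_site_computed_levelwise}} exhibits them as levelwise colimits, and precomposition commutes with levelwise colimits automatically.

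The main obstacle will be geometric realizations, since nothing already in the paper directly identifies them with levelwise realizations. The plan is to mimic the proof of \textbf{Corollary \ref{cor:filtered_colimits_in_spherical_sheaves_on_an_additive_site_computed_levelwise}} and argue that on an additive $\infty$-site geometric realizations in $Sh_{\Sigma}$ are levelwise too. Given a simplicial object $X^{\bullet}$ in $Sh_{\Sigma}(\dcat)$ the levelwise realization $|X^{\bullet}|$ in $P(\dcat)$ is spherical because $P_{\Sigma}(\dcat) \subseteq P(\dcat)$ is closed under sifted colimits. To check it is a sheaf I would apply \textbf{Theorem \ref{thm:recognition_of_spherical_sheaves}}: for each fibre sequence $F \to B \to A$ in $\dcat$ with $B \to A$ a covering, the levelwise sequence $X^{(n)}(A) \to X^{(n)}(B) \to X^{(n)}(F)$ is a fibre sequence by the sheaf property of $X^{(n)}$, and by \textbf{Lemma \ref{lemma:spherical_presheaves_form_an_additive_category}} it is in fact a fibre sequence of grouplike $E_{\infty}$-spaces, hence of connective spectra via $\Omega^{\infty}$. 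In the stable setting fibre and cofibre sequences coincide, geometric realization commutes with cofibres, and $\Omega^{\infty}$ preserves both limits and sifted colimits on connective spectra, so the simplicial fibre sequence yields after realization a genuine fibre sequence $|X^{\bullet}(A)| \to |X^{\bullet}(B)| \to |X^{\bullet}(F)|$ of spaces. Therefore $|X^{\bullet}|$ is a spherical sheaf, agrees with the geometric realization computed in $Sh_{\Sigma}(\dcat)$, and is preserved by the levelwise functor $f^{*}$, finishing cocontinuity.
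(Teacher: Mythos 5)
Your decomposition of colimits into finite coproducts, filtered colimits, and geometric realizations is sound, and the first two pieces are handled correctly: the additivity argument for finite coproducts is in fact cleaner than what the paper does, and filtered colimits being levelwise is \textbf{Corollary \ref{cor:filtered_colimits_in_spherical_sheaves_on_an_additive_site_computed_levelwise}}. The gap is in the geometric realization step, and it is genuine.

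You claim that because each $X^{(n)}(A) \to X^{(n)}(B) \to X^{(n)}(F)$ is a fibre sequence of grouplike $E_{\infty}$-spaces, hence of connective spectra, and ``in the stable setting fibre and cofibre sequences coincide,'' the realization is again a fibre sequence. But $\spectra_{\geq 0}$ is not stable, and a fibre sequence of connective spectra is a cofibre sequence in $\spectra$ only when the second map is surjective on $\pi_{0}$; there is no reason for $\pi_{0}X^{(n)}(B) \to \pi_{0}X^{(n)}(F)$ to be surjective. Concretely, the step you need --- that levelwise fibre sequences of connective spectra realize to fibre sequences of connective spectra --- is false: take $H^{\bullet} = 0$ and $K^{\bullet}$ the simplicial Eilenberg--MacLane spectrum with $|K^{\bullet}| \simeq \Sigma H\mathbb{Z}$ (e.g.\ $K^{(n)} = H(\mathbb{Z}^{n})$ modelling $K(\mathbb{Z},1)$). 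Then the levelwise fibre in connective spectra is identically zero, so $|G^{\bullet}| = 0$, but the fibre of $0 \to \Sigma H\mathbb{Z}$ in connective spectra is $H\mathbb{Z} \neq 0$.

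The underlying fact you want --- that levelwise geometric realizations of spherical sheaves are again sheaves --- is true, but proving it essentially forces you onto the paper's route: lift to spherical sheaves of spectra via \textbf{Proposition \ref{prop:spherical_sheaves_canonically_lift_to_sheaves_of_spectra}}, where stability lets you compute the realization through the skeletal filtration as a filtered colimit of cofibres. Cofibres are shifts of fibres and hence levelwise, filtered colimits are levelwise by \textbf{Corollary \ref{cor:filtered_colimits_in_spherical_sheaves_on_an_additive_site_computed_levelwise}}, so realizations are levelwise, and then $\Omega^{\infty}$ brings this back to spaces. Once you are in that stable setting, however, exactness of the precomposition $f^{*}$ on $Sh_{\Sigma}^{\spectra}$ already gives preservation of all finite colimits outright, which is precisely the paper's argument; so the detour through the recognition theorem does not buy you an independent proof.
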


\begin{proof}
As a consequence of the covering lifting property, the precomposition along $f$ is cocontinuous when considered as a functor $f_{*} \colon Sh(\dcat) \rightarrow Sh(\ccat)$ between  $\infty$-categories of all sheaves, as we prove in \cref{prop:morphisms_of_sites_with_covering_lifting_property_is_geometric} in the appendix. We have to verify this is also the case after restricting to spherical sheaves. 

Since sifted colimits of spherical sheaves are computed in sheaves, we deduce that $f_{*} \colon Sh_{\Sigma}(\dcat) \rightarrow Sh_{\Sigma}(\ccat)$ preserves sifted colimits. We are left with finite coproducts, which by additivity coincide with finite products. These are preserved since $f_{*}$ is a right adjoint. 
\end{proof}

\begin{prop}
\label{prop:precomposition_functor_on_spherical_sheaves_preserves_connectivity_and_commutes_with_hypercompletion}
Let $f \colon \ccat \rightarrow \dcat$ be a morphism of additive $\infty$-sites with the covering lifting property. Then $f_{*}$ preserves $n$-truncated and $n$-connected sheaves. Moreover, it commutes with hypercompletion. In particular, the restriction $f_{*} \colon \hpsheaves_{\Sigma}(\dcat) \rightarrow \hpsheaves_{\Sigma}(\ccat)$ to $\infty$-categories of spherical hypercomplete sheaves is cocontinuous, too. 
\end{prop}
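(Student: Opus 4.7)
The plan is to address the three assertions in order — preservation of $n$-truncated objects, preservation of $n$-connective objects, and commutation with hypercompletion — each building on the previous and leaning on cocontinuity of $f^*$ from Proposition \ref{prop:additive_morphisms_induce_adjunctions_where_the_right_adjoint_is_cocontinuous}. For the $n$-truncation statement, I would note that $Sh_{\Sigma}(\ccat)$ is generated under colimits by the sheafified representables $L y(c)$, and thus $Y$ is $n$-truncated if and only if each $Y(c) \simeq \Map(Ly(c), Y)$ is an $n$-truncated space; since $(f^* Y)(c) = Y(f(c))$, this levelwise condition is inherited by precomposition.

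The heart of the proof is the commutation $\tau_{\leq n} f^* \simeq f^* \tau_{\leq n}$. Since $f^*$ is cocontinuous between presentable $\infty$-categories, it admits a further right adjoint $f_?$, and as a right adjoint $f_?$ preserves $n$-truncated objects. For $T \in Sh_{\Sigma}(\ccat)$ an $n$-truncated object and $X \in Sh_{\Sigma}(\dcat)$, the equivalences
\[
\Map(f^* \tau_{\leq n} X, T) \simeq \Map(\tau_{\leq n} X, f_? T) \simeq \Map(X, f_? T) \simeq \Map(f^* X, T),
\]
with the middle step using that $f_? T$ is $n$-truncated, identify $\tau_{\leq n} f^* X \simeq f^* \tau_{\leq n} X$ by Yoneda. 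Since $Sh_{\Sigma}$ is additive (\textbf{Lemma \ref{lemma:spherical_presheaves_form_an_additive_category}} together with \textbf{Proposition \ref{prop:sphercity_preserved_by_sheafification}}), the terminal object is a zero object preserved by the limit-preserving $f^*$, so $\tau_{\leq n-1} X \simeq *$ implies $\tau_{\leq n-1} f^* X \simeq f^* * \simeq *$, yielding preservation of $n$-connective objects.

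For the hypercompletion assertion, I would first observe that $f^*$ preserves $\infty$-connective morphisms: a map $g$ is $\infty$-connective iff $\tau_{\leq n}(g)$ is an equivalence for every $n$, and $\tau_{\leq n}(f^* g) \simeq f^*(\tau_{\leq n}(g))$ by the commutation above. Dually, the left adjoint $f_*$ preserves $n$-connective objects — by additivity $f_*$ preserves the zero object, and the argument dual to the previous paragraph applies using that $f^*$ preserves $(n-1)$-truncated objects — and hence $\infty$-connective morphisms, since cofibres are preserved by any left adjoint. Given this, for a hypercomplete $X \in Sh_{\Sigma}(\dcat)$ and an $\infty$-connective $A \to B$ in $Sh_{\Sigma}(\ccat)$, the adjunction yields $\Map(B, f^* X) \simeq \Map(f_* B, X) \simeq \Map(f_* A, X) \simeq \Map(A, f^* X)$, so $f^* X$ is hypercomplete. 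The universal map $\widehat{L} f^* Y \to f^* \widehat{L} Y$ is then an $\infty$-connective morphism between hypercomplete sheaves, hence an equivalence, establishing $\widehat{L} f^* \simeq f^* \widehat{L}$. Cocontinuity of the restriction $f^* : \hpsheaves_{\Sigma}(\dcat) \to \hpsheaves_{\Sigma}(\ccat)$ follows at once, since colimits in hypercomplete sheaves are obtained by applying $\widehat{L}$ to colimits in $Sh_{\Sigma}$ and $f^*$ now commutes with both. The main obstacle is the commutation $\tau_{\leq n} f^* \simeq f^* \tau_{\leq n}$, on which everything else rests; it depends crucially on cocontinuity of $f^*$ furnishing the auxiliary right adjoint $f_?$ that is otherwise unavailable.
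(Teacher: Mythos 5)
Your proof takes a genuinely different route from the paper's. Where the paper dispatches the truncation statement by citing \cite{lurie_higher_topos_theory}[5.5.6.28] (any left exact, cocontinuous functor between presentable $\infty$-categories commutes with $n$-truncation) and then invokes \textbf{Corollary \ref{cor:precomposition_with_compatible_functors_preserves_hypercomplete_sheaves}} — proven via the hypercover machinery in the appendix — to see that $f^*$ preserves hypercomplete sheaves, you re-derive the truncation commutation by hand using the further right adjoint $f_?$ and then aim to prove hypercompleteness preservation directly through $f_*$. Your levelwise argument for preservation of $n$-truncated objects and the Yoneda argument identifying $\tau_{\leq n} f^* \simeq f^* \tau_{\leq n}$ are both correct, and the deduction of $n$-connectivity preservation from the additivity of $Sh_\Sigma(\ccat)$ is sound. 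This approach buys self-containment (no hypercover input), at the cost of re-proving a standard HTT result and the issue below.

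The gap is in the sentence ``and hence $\infty$-connective morphisms, since cofibres are preserved by any left adjoint.'' Knowing that $f_*$ preserves cofibres (true: they are colimits) and preserves $\infty$-connective \emph{objects} (which your adjunction argument does establish) only gets you to $\infty$-connective \emph{morphisms} if you also know that a morphism $g$ in $Sh_\Sigma(\ccat)$ is $\infty$-connective if and only if $\mathrm{cofib}(g)$ is $\infty$-connective. That equivalence is \emph{not} a general $\infty$-topos fact — in the unstable world $n$-connectivity of a morphism is detected by its fibres (and base-point/$\pi_0$-surjectivity issues), not its cofibre. It does hold here because $Sh_\Sigma(\ccat)$ is additive and in fact equivalent to the connective part of the stable $\infty$-category $Sh_\Sigma^{\spectra}(\ccat)$ (\textbf{Proposition \ref{prop:spherical_sheaves_canonically_lift_to_sheaves_of_spectra}}), where fibres and cofibres are interchangeable; but you neither cite this identification nor argue that cofibres computed in $Sh_\Sigma(\ccat)$ agree with cofibres in the stable enhancement, so the step as written is unjustified. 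The remainder of the argument — that $f^*$ then preserves hypercomplete objects, that the comparison map $\widehat{L} f^* Y \to f^* \widehat{L} Y$ is $\infty$-connective between hypercomplete sheaves hence an equivalence, and that cocontinuity of the hypercomplete restriction follows — is fine once this step is repaired.
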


\begin{proof}
By \cref{prop:additive_morphisms_induce_adjunctions_where_the_right_adjoint_is_cocontinuous}, $f_{*}$ is a left exact, cocontinuous functor between presentable $\infty$-categories, so that it commutes with $n$-truncation by \cite{lurie_higher_topos_theory}[5.5.6.28]. This immediately implies that it preserves $n$-connective and $n$-truncated objects. 

Since $f_{*} \colon Sh_{\Sigma}(\dcat) \rightarrow Sh_{\Sigma}(\ccat)$ preserves hypercomplete sheaves by \cref{cor:precomposition_with_compatible_functors_preserves_hypercomplete_sheaves}, to show the second part it is enough to verify that it preserves $\infty$-connective morphisms, which is immediate from the first part.
\end{proof}

\subsection{Sheaves of spectra}

In this section we study the basic properties of the $\infty$-categories of spherical sheaves of spectra on an additive $\infty$-category $\ccat$. The results we prove is the existence of a natural $t$-structure and the fact that connective sheaves of spectra can be identified with sheaves of spaces.

\begin{prop}
\label{prop:sheaves_of_spectra_as_stabilization}
The $\infty$-category $Sh_{\Sigma}^{\spectra}(\ccat)$ of spherical sheaves of spectra is the stabilization of $Sh_{\Sigma}(\ccat)$; that is, we a have a canonical equivalence of $\infty$-categories of the form 
\[
Sh_{\Sigma}^{\spectra}(\ccat) \simeq \varprojlim \ \ldots \rightarrow Sh_{\Sigma}(\ccat)_{*} \rightarrow ^{\Omega} Sh_{\Sigma}(\ccat)_{*}
\]
In particular, it is a presentable, stable $\infty$-category. An analogous statement holds for $\infty$-categories of spherical hypercomplete sheaves.
\end{prop}

\begin{proof}
The argument is identical to the case of (not necessarily spherical) sheaves, which is done in \cite[1.3.3.2]{lurie_spectral_algebraic_geometry}. The spherical case follows as the functors $\Omega^{\infty -n} \colon Sh^{\spectra}(\ccat) \rightarrow Sh(\ccat)_{*}$ preserve and jointly detect the property of being spherical. To observe presentability, notice that we've proven in \cref{cor:spherical_sheaves_as_localization} that $Sh_{\Sigma}(\ccat)$ is presentable, and the stabilization of a presentable $\infty$-category is presentable by \cite[1.4.4.4]{higher_algebra}.
\end{proof}

\begin{rem}
\label{rem:hypercompleteness_of_a_sheaf_detected_by_omega_infty}
Since being a sheaf is a limit condition, a presheaf of spectra $X$ is a sheaf if and only if $\Omega^{\infty - n} X$ are sheaves of spaces for $n \geq 0$. For shypercomplete sheaves, a stronger statement is possible: a presheaf $X$ of spectra is a hypercomplete sheaf if and only if it is a sheaf and $\Omega^{\infty} X$ is a hypercomplete sheaf of spaces, see \cite[1.3.3.3]{lurie_spectral_algebraic_geometry}.
\end{rem}

We will now describe the natural $t$-structure on spherical sheaves of spectra induced from the corresponding $t$-structure on the $\infty$-category of all sheaves of spectra.

\begin{defin}
\label{defin:homotopy_groups_of_a_hypercomplete_sheaf}
If $X \in Sh_{\Sigma}^{\spectra}(\ccat)$ is a spherical sheaf of spectra then its $n$-th \emph{homotopy group}, denoted by $\pi_{n}X$, is a sheaf of discrete abelian groups on $\ccat$ obtained as the sheafification of the presheaf given by the formula 
\[
c \in \ccat \mapsto \pi_{n} X(c).
\]

We say that a spherical sheaf of spectra $X \in Sh_{\Sigma}^{\spectra}(\ccat)$ is \emph{connective} if $\pi_{n}X = 0$ for $n < 0$. We say it is \emph{coconnective} if $\Omega^{\infty} X$ is a discrete sheaf of spaces. We denote the subcategories of connective, respectively coconnective spherical sheaves by $Sh_{\Sigma}^{\spectra}(\ccat)_{\geq 0}$ and $Sh_{\Sigma}^{\spectra}(\ccat)_{\leq 0}$. 
\end{defin}

Notice that the homotopy sheaves of \cref{defin:homotopy_groups_of_a_hypercomplete_sheaf} are always spherical by \cref{prop:sphercity_preserved_by_sheafification}, since they're defined as a sheafification of a spherical presheaf. 

\begin{prop}
\label{prop:tstructure_on_spherical_sheaves_of_spectra}
The pair $(Sh_{\Sigma}^{\spectra}(\ccat)_{\geq 0}, Sh_{\Sigma}^{\spectra}(\ccat)_{\leq 0})$ of full subcategories determines a right complete $t$-structure on $Sh_{\Sigma}^{\spectra}(\ccat)$ compatible with filtered colimits. Moreover, there is a canonical equivalence $Sh_{\Sigma}^{\spectra}(\ccat)^{\heartsuit} \simeq Sh_{\Sigma}^{\sets}(\ccat)$ between the heart of this $t$-structure and the category of spherical sheaves of sets. 
\end{prop}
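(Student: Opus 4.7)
The plan is to inherit the $t$-structure from the ambient $\infty$-category $Sh^{\spectra}(\ccat)$ of all sheaves of spectra, which carries a right complete $t$-structure compatible with filtered colimits and defined by exactly the same formulas (the existence and basic properties of this ambient $t$-structure are standard, see e.g.\ \cite{lurie_spectral_algebraic_geometry}[1.3.2]). The two subcategories $Sh^{\spectra}_{\Sigma}(\ccat)_{\geq 0}$ and $Sh^{\spectra}_{\Sigma}(\ccat)_{\leq 0}$ appearing in the proposition are just the intersections of $Sh^{\spectra}_{\Sigma}(\ccat)$ with the corresponding subcategories of $Sh^{\spectra}(\ccat)$. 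The orthogonality axiom and the existence of the truncation fiber sequence $\tau_{\geq 0} X \rightarrow X \rightarrow \tau_{\leq -1} X$ for each spherical sheaf of spectra $X$ are therefore inherited for free, provided one knows that the ambient truncation functors preserve sphericity. This is the only nontrivial step.

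For the connective cover $\tau_{\geq 0}$, invoke the classical equivalence $\Omega^{\infty}: Sh^{\spectra}(\ccat)_{\geq 0} \simeq Sh(\ccat)_{*}$ via which $\tau_{\geq 0} X$ corresponds to $\Omega^{\infty} X$. Presheaf-level $\Omega^{\infty}$ is computed levelwise and $\Omega^{\infty}: \spectra \rightarrow \spaces$ is a right adjoint, so it preserves finite products and hence sphericity. The formula $\tau_{\geq n} X \simeq \Sigma^{n} \tau_{\geq 0} \Omega^{n} X$, together with the fact that $\Sigma$ and $\Omega$ act as shifts computed levelwise in $Sh^{\spectra}(\ccat)$, extends this to all $\tau_{\geq n}$. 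Coconnective truncation then follows by considering the fiber sequence $\tau_{\geq n+1} X \rightarrow X \rightarrow \tau_{\leq n} X$ in the stable $\infty$-category $Sh^{\spectra}(\ccat)$ and observing that the inclusion $Sh^{\spectra}_{\Sigma}(\ccat) \hookrightarrow Sh^{\spectra}(\ccat)$ is closed under small limits, since sphericity is itself a limit condition verified pointwise. Right completeness and compatibility with filtered colimits are then both transferred directly from the ambient $t$-structure, using that filtered colimits of spherical sheaves of spectra are computed levelwise (the spectral analogue of \textbf{Corollary \ref{cor:filtered_colimits_in_spherical_sheaves_on_an_additive_site_computed_levelwise}}, obtained by applying \textbf{Theorem \ref{thm:recognition_of_spherical_sheaves}} to each $\Omega^{\infty-n}$).

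For the heart, the standard $t$-structure on $Sh^{\spectra}(\ccat)$ identifies $Sh^{\spectra}(\ccat)^{\heartsuit}$ with the ordinary category $Sh^{\abeliangroups}(\ccat)$ of sheaves of abelian groups. Intersecting with sphericity gives $Sh^{\spectra}_{\Sigma}(\ccat)^{\heartsuit} \simeq Sh^{\abeliangroups}_{\Sigma}(\ccat)$. To pass from abelian group sheaves to set sheaves, apply the $0$-truncated analogue of \textbf{Lemma \ref{lemma:spherical_presheaves_form_an_additive_category}}: on an additive $\infty$-site, a spherical presheaf of sets $X$ is canonically a presheaf of abelian groups, with the addition on $X(c)$ induced by $X(\text{fold}): X(c \oplus c) \simeq X(c) \times X(c) \rightarrow X(c)$ and the inverse descending from additivity of the homotopy category of $\ccat$. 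The forgetful functor from spherical sheaves of abelian groups to spherical sheaves of sets is therefore an equivalence, yielding $Sh^{\spectra}_{\Sigma}(\ccat)^{\heartsuit} \simeq Sh^{\sets}_{\Sigma}(\ccat)$.

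The main technical obstacle is the preservation of sphericity by $\tau_{\geq 0}$, which reduces through the standard identification of connective sheaves of spectra with pointed sheaves of spaces to the elementary fact that $\Omega^{\infty}$ preserves finite products. Once this is in hand everything else is formal: the ambient $t$-structure does all the work for the axioms, filtered colimit compatibility comes from the levelwise computation, and the heart identification is a direct consequence of the additivity lemma applied at the discrete level.
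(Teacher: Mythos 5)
Your proof follows the same route as the paper: inherit the $t$-structure from $Sh^{\spectra}(\ccat)$ via Lurie's spectral algebraic geometry, check that the truncation functors preserve sphericity, transfer filtered-colimit compatibility and right completeness, and identify the heart using the additivity of $\ccat$. You actually supply more detail on why the truncations preserve sphericity than the paper does (the paper merely asserts it is ``easily seen''), and the connective-cover argument via $\Omega^{\infty}$ preserving products is clean.

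One small reasoning gap in the coconnective step: you invoke closure of $Sh^{\spectra}_{\Sigma}(\ccat)$ under limits to conclude $\tau_{\leq n}X$ is spherical from the fiber sequence $\tau_{\geq n+1}X \to X \to \tau_{\leq n}X$, but $\tau_{\leq n}X$ sits in the cofiber position, so closure under limits alone does not immediately apply. The fix is immediate: rotate the sequence to exhibit $\Omega\tau_{\leq n}X$ as the fiber of $\tau_{\geq n+1}X \to X$, so it is spherical by closure under limits, and then observe that $\Sigma$ of a spherical sheaf is spherical (since suspension of spectra is computed levelwise and commutes with finite products); equivalently, note that $Sh^{\spectra}_{\Sigma}(\ccat)$ is a \emph{stable} full subcategory, hence closed under cofibers as well as fibers. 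You should also make explicit for right completeness that you are combining the trivial intersection condition (which passes to the full subcategory) with the filtered-colimit compatibility and closure under countable coproducts of the connective part, as the paper does via \cite{higher_algebra}[1.2.1.19]; ``transferred directly'' is a little compressed, though the conclusion is correct.
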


\begin{proof}
By \cite[1.3.2.7]{lurie_spectral_algebraic_geometry}, the corresponding statements are true for the $\infty$-category $Sh^{\spectra}(\ccat)$ of sheaves of spectra. The corresponding truncation functors $(-)_{\leq 0}, (-)_{\geq 0} \colon Sh^{\spectra}(\ccat) \rightarrow Sh^{\spectra}(\ccat)$ are easily seen to take spherical sheaves to spherical sheaves and we deduce that that there is an induced $t$-structure on $Sh_{\Sigma}^{\spectra}(\ccat)$. Lurie shows that this $t$-structure on $Sh^{\spectra}(\ccat)$ is compatible with filtered colimits; that is, $Sh^{\spectra}(\ccat)_{\leq 0}$ is closed under filtered colimits and we see the same is true for $Sh_{\Sigma}^{\spectra}(\ccat)_{\leq 0}$ as spherical sheaves are closed under filtered colimits. Right completness is immediate from the criterion of \cite[1.2.1.19]{higher_algebra}.

We're left with the assertion about the heart. In the proof of \cite[1.3.2.7]{lurie_spectral_algebraic_geometry}, Lurie observes that the Eilenberg-MacLane spectrum functor induces an equivalence between heart of $Sh^{\spectra}(\ccat)$ and the category of sheaves of discrete abelian groups on $\ccat$. Such a sheaf will belong to the heart $Sh_{\Sigma}(\ccat)^{\heartsuit}$ if it is additonally spherical, yielding the needed result, since $Sh_{\Sigma}^{\abeliangroups}(\ccat) \simeq Sh^{\sets}_{\Sigma}(\ccat)$ by additivity of $\ccat$. 
\end{proof}

\begin{rem}
\label{rem:tstructure_on_hypercomplete_spherical_sheaves}
One can see that the above $t$-structure restricts to one on $\hpssheavesofspectra(\ccat)$, the $\infty$-category of spherical hypercomplete sheaves of spectra, the restriction is also right complete and compatible with filtered colimits. Moreover, \cref{rem:hypercompleteness_of_a_sheaf_detected_by_omega_infty} implies that the inclusion
\[
\hpssheavesofspectra(\ccat) \hookrightarrow Sh_{\Sigma}^{\spectra}(\ccat)
\]
induces an equivalence $\hpssheavesofspectra(\ccat)_{\leq 0} \simeq Sh_{\Sigma}^{\spectra}(\ccat)_{\leq 0}$ between the coconnective parts. In particular, the hearts are also equivalent. 
\end{rem}

\begin{rem}
\label{rem:homotopy_groups_with_respect_to_natural_t_structure_are_given_by_shaefications_of_homotopy_groups_presheaves}
To any $t$-structure on a stable $\infty$-category $\ccat$ there's an associated notion of homotopy groups, see \cite[1.2.1.11]{higher_algebra}. One can verify that the homotopy groups associated to the above $t$-structure coincide with those of \cref{defin:homotopy_groups_of_a_hypercomplete_sheaf}. 
\end{rem}

We have seen in \cref{prop:sheaves_of_spectra_as_stabilization} that the $\infty$-category $Sh_{\Sigma}^{\spectra}(\ccat)$ is the stabilization of $Sh_{\Sigma}(\ccat)$; that is, the relation between the two is analogous to the relation between spectra and spaces. However, in the case of spherical presheaves on an additive $\infty$-category the stabilization is of a more benign nature, as we show below.

\begin{prop}
\label{prop:spherical_sheaves_canonically_lift_to_sheaves_of_spectra}
Consider the adjunction $\sigmainfty \dashv \omegainfty \colon Sh_{\Sigma}(\ccat) \rightleftarrows Sh_{\Sigma}^{\spectra}(\ccat)$, where $\omegainfty$ is computed levelwise. Then, the left adjoint $\sigmainfty$ is fully faithful and identifies its domain with the $\infty$-category $Sh_{\Sigma}(\ccat)_{\geq 0}$ of connective spherical sheaves of spectra. An analogous statement is true for $\infty$-categories of spherical hypercomplete sheaves. 
\end{prop}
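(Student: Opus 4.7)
The plan is to deduce the proposition from general facts about the stabilization of presentable additive $\infty$-categories. First, I would verify that $Sh_{\Sigma}(\ccat)$ is a presentable additive $\infty$-category. Presentability is recorded in \textbf{Corollary \ref{cor:spherical_sheaves_as_localization}}. Additivity is inherited from the analogous property for $P_{\Sigma}(\ccat)$ given by \textbf{Lemma \ref{lemma:spherical_presheaves_form_an_additive_category}}: by \textbf{Proposition \ref{prop:sphercity_preserved_by_sheafification}}, the sheafification $L: P_{\Sigma}(\ccat) \to Sh_{\Sigma}(\ccat)$ is a left exact localization that preserves sphericity, so finite products in $Sh_{\Sigma}(\ccat)$ are computed as in $P_{\Sigma}(\ccat)$, while finite coproducts of spherical sheaves are obtained by sheafifying finite coproducts of spherical presheaves. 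The coincidence of the two, together with the canonical grouplike commutative monoid structure on each object, therefore passes from $P_{\Sigma}(\ccat)$ to $Sh_{\Sigma}(\ccat)$.

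Next, I would combine this with \textbf{Proposition \ref{prop:sheaves_of_spectra_as_stabilization}}, which identifies $Sh_{\Sigma}^{\spectra}(\ccat)$ with the stabilization of $Sh_{\Sigma}(\ccat)$ via the levelwise $\omegainfty$. The key abstract input is then the fact that, for a presentable $\infty$-category $\mathcal{D}$ in which every object is canonically a grouplike commutative monoid, the functor $\omegainfty$ restricts to an equivalence from the connective part of the stabilization onto $\mathcal{D}$, with inverse given by $\sigmainfty$; see the discussion of stabilization of preadditive presentable $\infty$-categories in \cite{higher_algebra}. Applying this to $\mathcal{D} = Sh_{\Sigma}(\ccat)$ immediately yields that $\sigmainfty$ is fully faithful with essential image the connective part of the stabilization.

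Finally, to match this abstract notion of connective part with $Sh_{\Sigma}^{\spectra}(\ccat)_{\geq 0}$ as defined via the $t$-structure of \textbf{Proposition \ref{prop:tstructure_on_spherical_sheaves_of_spectra}}, I would appeal to \textbf{Remark \ref{rem:homotopy_groups_with_respect_to_natural_t_structure_are_given_by_shaefications_of_homotopy_groups_presheaves}}: the $t$-structure homotopy groups are sheafifications of the levelwise homotopy presheaves, and an object of the stabilization lies in the essential image of $\sigmainfty$ precisely when all its negative homotopy sheaves vanish. The hypercomplete case follows by the same argument applied to $\hpsheaves_{\Sigma}(\ccat)$, which inherits presentability and additivity from $Sh_{\Sigma}(\ccat)$ via the left exact localization $\widehat{L}$; one then uses \textbf{Remark \ref{rem:tstructure_on_hypercomplete_spherical_sheaves}} to pass to the hypercomplete $t$-structure. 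The main subtlety is the identification of the levelwise $\omegainfty$ appearing in the statement with the $\omegainfty$ of the abstract stabilization, but this identification is exactly the content of the proof of \textbf{Proposition \ref{prop:sheaves_of_spectra_as_stabilization}}.
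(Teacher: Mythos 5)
Your strategy is essentially the right idea — reduce to the identification of $Sh_{\Sigma}^{\spectra}(\ccat)$ as a stabilization and then argue that $\sigmainfty$ is fully faithful onto the connective part — but the ``key abstract input'' you invoke does not exist in the form you state it, and this is where the argument breaks. It is not true that for an arbitrary presentable additive $\infty$-category $\dcat$ the functor $\omegainfty: Sp(\dcat)_{\geq 0} \to \dcat$ is an equivalence. The simplest counterexample is $\dcat = \abeliangroups$ regarded as a discrete $\infty$-category: it is presentable, additive, and every object is canonically a grouplike commutative monoid, yet $\Omega = 0$ so that $Sp(\abeliangroups) = 0 \neq \abeliangroups$. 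Even among non-discrete $\infty$-categories this fails: $\dcat = \tau_{\leq 1}\spectra_{\geq 0}$ is presentable and additive but $\Omega^{2} = 0$ on it, so again $Sp(\dcat) = 0$. Thus additivity of $Sh_\Sigma(\ccat)$, which you carefully establish, is not a sufficient hypothesis, and \cite{higher_algebra} does not contain the general theorem you appeal to.

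What actually does the work in the paper's proof is a strictly stronger consequence of \textbf{Lemma \ref{lemma:spherical_presheaves_form_an_additive_category}}: not merely that $P_\Sigma(\ccat)$ is additive, but that spherical presheaves of spaces on an additive $\ccat$ canonically lift to presheaves of \emph{connective spectra}, giving an equivalence $P_\Sigma(\ccat) \simeq P_\Sigma^{\spectra_{\geq 0}}(\ccat)$. This is the content that distinguishes $Sh_\Sigma(\ccat)$ from the counterexamples above (whose objects are not unbounded connective spectra). The paper then passes this equivalence through the sheaf condition, which is a limit condition preserved by the continuous functor $\omegainfty: \spectra_{\geq 0} \to \spaces$, to get $Sh_\Sigma(\ccat) \simeq Sh^{\spectra_{\geq 0}}_\Sigma(\ccat)$, and finally invokes \cite[1.3.5.7]{lurie_spectral_algebraic_geometry} to identify $Sh^{\spectra_{\geq 0}}(\ccat)$ with the connective part of $Sh^{\spectra}(\ccat)$. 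Your route stops short of the first of these steps — you extract only additivity from \textbf{Lemma \ref{lemma:spherical_presheaves_form_an_additive_category}} and then reach for a general principle that would have to be proved by exactly this chain of identifications anyway. To repair the proof, replace the appeal to an abstract stabilization theorem with the explicit equivalence $Sh_\Sigma(\ccat) \simeq Sh^{\spectra_{\geq 0}}_\Sigma(\ccat)$, which is what the paper does.
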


\begin{proof}
Both $Sh_{\Sigma}(\ccat)$ and $\hpsheaves(\ccat)$ are both left exact localizations of $P_{\Sigma}(\ccat)$ by \cref{prop:sphercity_preserved_by_sheafification} and hence are Grothendieck prestable in the sense of Lurie \cite[C.1.5.7, C.2.3.1]{lurie_spectral_algebraic_geometry}, the statement follows from \cite[C.1.2.10]{lurie_spectral_algebraic_geometry}. 
\end{proof}

\begin{warning}
\label{warning:sigma_infty_plus_not_computed_by_levelwise_sigma_infty}
Notice that even though in the adjunction $\sigmainfty \dashv \omegainfty \colon Sh_{\Sigma}(\ccat) \rightleftarrows Sh_{\Sigma}^{\spectra}(\ccat)$ the right adjoint $\omegainfty$ is computed levelwise, the same is not true for the left adjoint $\sigmainfty$.

In the case of spherical sheaves, $\sigmainfty \colon Sh_{\Sigma}(\ccat) \rightarrow Sh_{\Sigma}^{\spectra}(\ccat)$ is computed by observing that any spherical sheaf is canonically a sheaf of infinite loop spaces, see \cref{lemma:spherical_presheaves_form_an_additive_category}, so that we can deloop it to a sheaf of connective spectra, and sheafify.
\end{warning}

\begin{rem}
Even though \cref{prop:spherical_sheaves_canonically_lift_to_sheaves_of_spectra} looks innocent at first sight, it is incredibly useful, as it allows one to reduce considerations about sheaves of spectra to sheaves of spaces, where the Yoneda lemma is available.
\end{rem}
We finish the section by observing some functoriality properties of the $\infty$-category of spherical sheaves of spectra. 

\begin{prop}
\label{prop:additive_morphisms_induce_adjunctions_on_infty_categories_of_sheaves_of_spectra}
Let $f \colon \ccat \rightarrow \dcat$ be a morphism of additive $\infty$-sites. Then, there's an induced adjunction $f^{*} \dashv f_{*} \colon Sh_{\Sigma}^{\spectra}(\ccat) \rightleftarrows Sh_{\Sigma}^{\spectra}(\dcat)$ between the $\infty$-categories of spherical sheaves of spectra, where $f^{*}$ is the unique cocontinuous functor such that $f^{*} \sigmainfty y(c) \simeq \sigmainfty y(f(c))$ for $c \in \ccat$ and $f_{*}$ is given by precomposition. If $f$ has the covering lifting property, then $f_{*}$ is cocontinuous. An analogous statement holds for $\infty$-categories of hypercomplete sheaves. 
\end{prop}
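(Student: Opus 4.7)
The plan is to construct $f^{*}$ directly as precomposition, then obtain $f_{*}$ via the adjoint functor theorem, and finally deduce cocontinuity of $f^{*}$ by combining the filtered case (handled levelwise) with exactness (forced by stability).

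First, the levelwise precomposition functor $f^{*}: P^{\spectra}(\dcat) \rightarrow P^{\spectra}(\ccat)$ clearly preserves sphericity, since $f$ is additive, and preserves the sheaf condition: a presheaf of spectra $X$ is a sheaf iff $\Omega^{\infty - n} X$ is a sheaf of spaces for all $n \geq 0$, and $\Omega^{\infty - n}$ commutes with $f^{*}$, so this reduces to the statement for sheaves of spaces proved in \textbf{Proposition \ref{prop:morphism_of_additive_infinity_sites_induces_an_adjunction_on_spherical_sheaf_infinity_categories}}. Hence $f^{*}: Sh_{\Sigma}^{\spectra}(\dcat) \rightarrow Sh_{\Sigma}^{\spectra}(\ccat)$ is well-defined. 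Limits in $Sh_{\Sigma}^{\spectra}$ are computed levelwise (the subcategory is closed under limits in presheaves of spectra), so $f^{*}$ preserves limits, and by \textbf{Corollary \ref{cor:filtered_colimits_in_spherical_sheaves_on_an_additive_site_computed_levelwise}} it also preserves filtered colimits, hence is accessible. Since both $\infty$-categories are presentable by \textbf{Proposition \ref{prop:sheaves_of_spectra_as_stabilization}}, the adjoint functor theorem produces the desired left adjoint $f_{*}$.

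To identify $f_{*}$ on objects of the form $\sigmainfty y(c)$, I compute, for any $Y \in Sh_{\Sigma}^{\spectra}(\dcat)$:
\begin{equation*}
\Map(f_{*} \sigmainfty y(c), Y) \simeq \Map(\sigmainfty y(c), f^{*} Y) \simeq \omegainfty(f^{*} Y)(c) \simeq \omegainfty Y(f(c)) \simeq \Map(\sigmainfty y(f(c)), Y),
\end{equation*}
where the second and fourth equivalences combine the adjunction $\sigmainfty \dashv \omegainfty$ of \textbf{Proposition \ref{prop:spherical_sheaves_canonically_lift_to_sheaves_of_spectra}} with the Yoneda lemma on spherical sheaves of spaces. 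The Yoneda lemma in the target then yields $f_{*} \sigmainfty y(c) \simeq \sigmainfty y(f(c))$. For uniqueness: the connective part $Sh_{\Sigma}^{\spectra}(\ccat)_{\geq 0} \simeq Sh_{\Sigma}(\ccat)$ is generated under sifted colimits by the representables $y(c) \simeq \omegainfty \sigmainfty y(c)$, and consequently the objects $\Sigma^{-n} \sigmainfty y(c)$ for $c \in \ccat$, $n \geq 0$ generate $Sh_{\Sigma}^{\spectra}(\ccat)$ under colimits. A cocontinuous functor between stable presentable $\infty$-categories preserves shifts, so its value on such generators determines it completely.

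Finally, for cocontinuity of $f^{*}$: filtered colimits of spherical sheaves of spectra are levelwise by \textbf{Corollary \ref{cor:filtered_colimits_in_spherical_sheaves_on_an_additive_site_computed_levelwise}} applied to the connective covers, or alternatively since $Sh_{\Sigma}^{\spectra}$ is the stabilization of $Sh_{\Sigma}$; either way $f^{*}$, being levelwise precomposition, preserves them. For finite colimits I exploit stability: both source and target are stable, and $f^{*}$ being a right adjoint preserves finite limits, so it is exact and in particular preserves finite colimits. Combining the two gives full cocontinuity. The main obstacle is really only bookkeeping around sphericity vs.\ sheafhood in the spectra setting; once that is sorted out, the argument is essentially formal. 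The hypercomplete variant is identical, invoking \textbf{Corollary \ref{cor:precomposition_with_compatible_functors_preserves_hypercomplete_sheaves}} for preservation of hypercompleteness under $f^{*}$ and \textbf{Remark \ref{rem:tstructure_on_hypercomplete_spherical_sheaves}} for the stabilization picture.
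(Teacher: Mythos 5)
Your proof is correct, but it takes a genuinely different route from the paper's. The paper obtains the adjunction in one stroke by \emph{stabilizing} the adjunction on spherical sheaves of spaces from \textbf{Proposition \ref{prop:morphism_of_additive_infinity_sites_induces_an_adjunction_on_spherical_sheaf_infinity_categories}}, then "chasing through definitions" to see that the stabilized left adjoint agrees with the stated description on $\sigmainfty y(c)$. You instead build $f^{*}$ directly as precomposition on spherical sheaves of spectra, verify it is continuous and accessible, invoke the adjoint functor theorem to manufacture $f_{*}$ out of thin air, and only afterwards identify $f_{*}$ on the generators $\sigmainfty y(c)$ via the $\sigmainfty \dashv \omegainfty$ adjunction and Yoneda. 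What the paper's approach buys is brevity — it piggybacks on the already-constructed adjunction — but it implicitly relies on the fact that adjunctions between presentable $\infty$-categories pass functorially to stabilizations, which it does not spell out. What your approach buys is explicitness and self-containment: every step is a concrete verification, and the identification $f_{*}\sigmainfty y(c) \simeq \sigmainfty y(f(c))$ falls out of the computation rather than being "chased through definitions." The cocontinuity argument at the end (exactness for finite colimits, levelwise for filtered colimits) is essentially the same in both. Two small housekeeping points: your displayed equivalence chain mixes the mapping \emph{spectrum} $\Map$ with what is really a mapping-\emph{space} computation ($\omegainfty$ lands in spaces), so $\map$ would be the cleaner notation there; and your appeal to \textbf{Corollary \ref{cor:filtered_colimits_in_spherical_sheaves_on_an_additive_site_computed_levelwise}} is strictly a statement about sheaves of spaces, so it is worth noting (as you do in passing) that one either applies it level-by-level via $\Omega^{\infty - n}$ or observes directly that the same finite-limit argument from \textbf{Theorem \ref{thm:recognition_of_spherical_sheaves}} extends to spectrum values — the paper itself makes this same silent extension.
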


\begin{proof}
The adjunction is obtained by stabilizing the adjunction on the $\infty$-categories of spherical sheaves of spaces of \cref{prop:morphism_of_additive_infinity_sites_induces_an_adjunction_on_spherical_sheaf_infinity_categories}, chasing through definitions one sees that it is of the above form.  If $f$ has the covering lifting property, then the cocontinuity of the right adjoint is proven in the same way as in the case of sheaves of spaces, which we tackled in \cref{prop:additive_morphisms_induce_adjunctions_where_the_right_adjoint_is_cocontinuous}.
\end{proof}

\begin{rem}
\label{rem:right_adjoint_on_infinity_categories_of_sheaves_of_spectra_preserves_connectivity}
Let $f \colon \ccat \rightarrow \dcat$ be a morphism of additive $\infty$-sites with the covering lifting property. Then, since $f_{*} \colon Sh_{\Sigma}(\dcat) \rightarrow Sh_{\Sigma}(\ccat)$ preserves both $n$-truncated and $n$-connective objects as a consequence of \cref{prop:precomposition_functor_on_spherical_sheaves_preserves_connectivity_and_commutes_with_hypercompletion}, it follows that the precomposition functor between $\infty$-categories of sheaves of spectra preserves both the connective and coconnective parts of the $t$-structure of \cref{prop:tstructure_on_spherical_sheaves_of_spectra}; in other words, it is $t$-exact.  
\end{rem}

\subsection{Symmetric monoidal structure}

In this section we study excellent $\infty$-sites; that is, additive $\infty$-sites equipped with a compatible symmetric monoidal structure all of whose objects admit duals. We show that under these conditions the $\infty$-category of spherical sheaves admits a very well-behaved symmetric monoidal structure on its own. It is almost certain that the assumption of admitting duals can be weakened, but since all the examples we have in mind do satisfy it, we work at this level of generality. 

\begin{defin}
\label{defin:excellent_infty_site}
An \emph{excellent} $\infty$-site $\ccat$ is an additive $\infty$-site equipped with a symmetric monoidal structure such that all objects of $\ccat$ admit duals and such that for any $c \in \ccat$, the functor $- \otimes c \colon \ccat \rightarrow \ccat$ takes coverings to coverings. 
\end{defin}

The way we will endow $Sh_{\Sigma}(\ccat)$ with a symmetric monoidal structure is the standard one, namely we will use that it is a localization of a presheaf $\infty$-category. The latter always inherits a well-behaved symmetric monoidal structure through the classical construction of Day convolution, which we now recall. 

\begin{prop}
\label{prop:symmetric_monoidal_structure_on_presheaves}
Let $\ccat$ be a symmetric monoidal $\infty$-category. Then, the presheaf $\infty$-category $P(\ccat)$ admits a unique symmetric monoidal structure such that $y \colon \ccat \hookrightarrow P(\ccat)$ is symmetric monoidal and such that the tensor product $\otimes \colon P(\ccat) \times P(\ccat) \rightarrow P(\ccat)$ preserves colimits in each variable.
\end{prop}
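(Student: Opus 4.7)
The plan is to deduce both uniqueness and existence from the universal property of $P(\ccat)$ as the free cocompletion of $\ccat$ under colimits, combined with the theory of Day convolution. The essential input is Lurie's result (\cite{higher_algebra}[5.1.5.6]) that $P(\ccat)$ is the universal presentable $\infty$-category equipped with a functor from $\ccat$, so that restriction along $y$ induces an equivalence
\begin{equation*}
\Fun^{L}(P(\ccat), \dcat) \xrightarrow{\simeq} \Fun(\ccat, \dcat)
\end{equation*}
for any presentable $\inftyd$, where the left hand side denotes cocontinuous functors.

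For uniqueness, I would argue as follows. Suppose $P(\ccat)$ is equipped with any symmetric monoidal structure whose tensor product is separately cocontinuous and with respect to which $y$ is symmetric monoidal. Then for fixed $F \in P(\ccat)$ the functor $F \otimes -$ is cocontinuous, and its value on representables is determined by the symmetric monoidality of $y$ together with the identity $F \simeq \varinjlim_{y(c) \to F} y(c)$, namely $F \otimes y(c') \simeq \varinjlim_{y(c) \to F} y(c \otimes c')$. Iterating, the full bifunctor $\otimes$ is determined up to contractible ambiguity. A more careful version of this argument, carried out at the level of symmetric monoidal $\infty$-categories and operadic left Kan extensions (\cite{higher_algebra}[2.2.6]), promotes the essentially unique bifunctor to an essentially unique symmetric monoidal structure.

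For existence, I would invoke the Day convolution construction of \cite{higher_algebra}[4.8.1.10, 4.8.1.12]. Given a small symmetric monoidal $\inftyc$, Lurie constructs on $P(\ccat) = \Fun(\ccat^{op}, \spaces)$ a symmetric monoidal structure, the Day convolution, characterized universally by the fact that it is the unique symmetric monoidal structure on $P(\ccat)$ which is separately cocontinuous and for which the Yoneda embedding $y: \ccat \hookrightarrow P(\ccat)$ extends to a symmetric monoidal functor. Concretely, for $F, G \in P(\ccat)$ the convolution is computed by the coend
\begin{equation*}
    (F \otimes G)(c) \simeq \varinjlim_{c_{1} \otimes c_{2} \to c} F(c_{1}) \times G(c_{2}),
\end{equation*}
and separate cocontinuity is immediate from the fact that colimits in $P(\ccat)$ are computed objectwise and commute with colimits in $\spaces$.

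The main obstacle, if one were to develop this from scratch rather than cite Lurie, would be verifying the coherence data required to make Day convolution a genuine symmetric monoidal $\infty$-category rather than merely a homotopy-commutative bifunctor; this is what consumes most of \cite{higher_algebra}[4.8.1] and is precisely where one needs the operadic framework. Since the result is already established in full generality there, the proof in our setting reduces to citation and the verification, done above, that the characterizing properties pin down the structure uniquely.
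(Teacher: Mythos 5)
Your proof is correct and matches the paper's approach: the paper's entire proof is a one-line citation to \cite[4.8.1]{higher_algebra}, the Day convolution construction, which is exactly the result you invoke. Your additional unwinding of the uniqueness argument via the free-cocompletion universal property of $P(\ccat)$ is accurate but is already packaged inside Lurie's statement, so nothing beyond the citation is required.
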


\begin{proof}
This is \cite[4.8.1]{higher_algebra}.
\end{proof}
To induce a symmetric monoidal structure on the $\infty$-category $Sh_{\Sigma}(\ccat)$, we will check that the Day convolution is compatible with localization functor $L_{\Sigma} \colon P(\ccat) \rightarrow Sh_{\Sigma}(\ccat)$. Under our strong assumptions, this will turn out to not be too difficult. 

\begin{lemma}
\label{lemma:tensoring_with_an_object_in_excellent_infty_site_a_morphism_of_additive_sites}
Let $\ccat$ be an excellent $\infty$-site and $c \in \ccat$. Then, the tensoring functor $- \otimes c \colon \ccat \rightarrow \ccat$ is a morphism of additive $\infty$-sites. 
\end{lemma}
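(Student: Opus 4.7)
The plan is to check directly the three conditions that constitute a morphism of additive $\infty$-sites, using the hypothesis that every object of $\ccat$ admits a dual. The heart of the argument is that $-\otimes c$ is simultaneously a left and right adjoint, so it automatically preserves all the limits and colimits in sight; the condition on coverings is built into the definition of an excellent $\infty$-site.

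First, I would observe that since $c$ admits a dual $c^{\vee}$, the functor $-\otimes c: \ccat \rightarrow \ccat$ is both left adjoint and right adjoint to $-\otimes c^{\vee}$. In particular, it preserves finite products and finite coproducts, so it is an additive functor (additivity of a functor between additive $\infty$-categories is equivalent to preservation of finite coproducts, since the additive structure is uniquely determined by the underlying $\infty$-categorical structure; compare \cite[2.4.5]{higher_algebra}).

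Next, the fact that $-\otimes c$ carries coverings to coverings is immediate from \textbf{Definition \ref{defin:excellent_infty_site}}, which is exactly the compatibility of the monoidal structure with the topology. Finally, I need to show that $-\otimes c$ preserves pullbacks along coverings. Since $-\otimes c$ has a right adjoint (namely $-\otimes c^{\vee}$), it preserves all limits, and in particular it preserves arbitrary pullbacks, which is strictly stronger than what is required.

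I do not expect any serious obstacle here: the proof is essentially a bookkeeping exercise combining the duality hypothesis with the definition of an excellent $\infty$-site. The only minor subtlety is making sure that ``additive morphism'' is understood in the $\infty$-categorical sense, i.e.\ as preservation of finite coproducts, so that the adjointness argument applies without passing to homotopy categories.
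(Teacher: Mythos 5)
Your proof is correct and takes essentially the same approach as the paper: both arguments observe that duality of $c$ makes $-\otimes c$ part of an ambidextrous adjunction with $-\otimes c^{*}$, hence it preserves all limits and colimits that exist in $\ccat$, while the covering condition is built into the definition of an excellent $\infty$-site.
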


\begin{proof}
Since by assumption $- \otimes c$ takes coverings to coverings, we only have to check that it is additive and that it preserves pullbacks along coverings. This is clear, since $c$ admits a dual $c^{*}$ and so we have an ambidexterous adjunction $- \otimes c \dashv - \otimes c^{*} \dashv \colon \ccat \rightleftarrows \ccat$. We deduce that $- \otimes c$ preserves all limits and colimits that exist in $\ccat$. 
\end{proof}

\begin{lemma}
\label{lemma:tensoring_with_representable_same_as_precomposing_with_dual}
Let $\ccat $ be an excellent $\infty$-site and $c \in \ccat$. Then, the Day convolution functor $y(c) \otimes - \colon P(\ccat) \rightarrow P(\ccat)$ is naturally equivalent to the precomposition functor along $- \otimes c^{*} \colon \ccat \rightarrow \ccat$, where $c^{*}$ is the dual. In particular, it preserves the properties of being discrete, spherical, a sheaf, an $\infty$-connective sheaf and of being a hypercomplete sheaf.
\end{lemma}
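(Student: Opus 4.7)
The plan is to identify $y(c) \otimes -$ with the precomposition functor along $- \otimes c^{*}$ by verifying the two agree on representables and extending by cocontinuity, after which the preservation statements fall out of the functoriality results proved earlier in the section. First, I would observe that both functors are cocontinuous endofunctors of $P(\ccat)$: Day convolution preserves colimits in each variable by \textbf{Proposition \ref{prop:symmetric_monoidal_structure_on_presheaves}}, while precomposition along any functor is cocontinuous on presheaf categories because colimits of presheaves are computed levelwise. Since $P(\ccat)$ is freely generated under colimits by the Yoneda image, it therefore suffices to construct a natural equivalence between the two functors after restriction to representables.

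Next I would compute both sides on a representable $y(d)$. The Yoneda embedding into presheaves equipped with Day convolution is symmetric monoidal by \textbf{Proposition \ref{prop:symmetric_monoidal_structure_on_presheaves}}, so one has $y(c) \otimes y(d) \simeq y(c \otimes d)$. On the other hand, for $d' \in \ccat$ the duality adjunction $- \otimes c \dashv - \otimes c^{*}$ supplies $\map(d' \otimes c^{*}, d) \simeq \map(d', d \otimes c)$, which identifies the value at $d'$ of the precomposed presheaf $y(d) \circ (- \otimes c^{*})$ with $y(c \otimes d)(d')$. Naturality of this identification in both $d$ and $d'$ provides the desired equivalence on representables, and cocontinuity then extends it to the whole of $P(\ccat)$.

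Finally, the preservation statements should follow formally from the identification with a precomposition. The preceding \textbf{Lemma \ref{lemma:tensoring_with_an_object_in_excellent_infty_site_a_morphism_of_additive_sites}} shows that $- \otimes c^{*}: \ccat \rightarrow \ccat$ is a morphism of additive $\infty$-sites, so \textbf{Proposition \ref{prop:morphism_of_additive_infinity_sites_induces_an_adjunction_on_spherical_sheaf_infinity_categories}} guarantees that precomposition along it takes sheaves to sheaves and hypercomplete sheaves to hypercomplete sheaves, while \textbf{Proposition \ref{prop:precomposition_functor_on_spherical_sheaves_preserves_connectivity_and_commutes_with_hypercompletion}} ensures preservation of $n$-connective objects, and hence of $\infty$-connective ones. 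Preservation of discreteness and of sphericality is immediate from the pointwise formula, since discreteness is a levelwise condition and $- \otimes c^{*}$ is additive by duality. I do not foresee a serious obstacle; the most delicate point is ensuring that the equivalence on representables is natural enough, bifunctorially in $c$ and $d$, to extend to a natural equivalence of cocontinuous functors on $P(\ccat)$, but this is essentially automatic once one views the duality identification as a natural isomorphism of bifunctors.
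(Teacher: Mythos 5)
Your identification of $y(c)\otimes -$ with precomposition along $-\otimes c^{*}$ is correct and matches the paper's argument: both are cocontinuous, $P(\ccat)$ is freely generated by representables, and the duality adjunction gives agreement on representables. Your observations on preservation of discreteness and sphericality are also fine.

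The trouble is the last two properties. Both citations you lean on — \textbf{Proposition \ref{prop:morphism_of_additive_infinity_sites_induces_an_adjunction_on_spherical_sheaf_infinity_categories}} and \textbf{Proposition \ref{prop:precomposition_functor_on_spherical_sheaves_preserves_connectivity_and_commutes_with_hypercompletion}} — are statements about \emph{spherical} sheaves, whereas the present lemma asserts preservation of $\infty$-connectivity and hypercompleteness for arbitrary (not necessarily spherical) sheaves. That generality is genuinely used downstream: \textbf{Theorem \ref{thm:day_convolution_compatible_with_different_kinds_of_equivalences}} invokes this lemma for all six localizations $Sh^{\sets}(\ccat)$, $Sh(\ccat)$, $\hpsheaves(\ccat)$, etc., including the non-spherical ones. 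Moreover, Proposition \ref{prop:precomposition_functor_on_spherical_sheaves_preserves_connectivity_and_commutes_with_hypercompletion} derives cocontinuity of $f^*$ from the levelwise computation of filtered colimits of \emph{spherical} sheaves (Corollary \ref{cor:filtered_colimits_in_spherical_sheaves_on_an_additive_site_computed_levelwise}), a fact that has no analogue for general sheaves of spaces, so you cannot simply transport the argument.

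The paper's mechanism for the general case is the ambidextrous adjunction. Because $y$ is symmetric monoidal, $y(c)$ is dual to $y(c^*)$ in $P(\ccat)$, so $y(c)\otimes -$ and $y(c^*)\otimes -$ are both left and right adjoint to each other. Having already established preservation of sheaves, this adjunction restricts to $Sh(\ccat)$, where both functors are therefore simultaneously continuous and cocontinuous, hence left-exact left adjoints to geometric morphisms of $\infty$-topoi — this is what gives preservation of $\infty$-connective morphisms of sheaves, without any sphericality hypothesis. And since $y(c)\otimes -$ is also \emph{right} adjoint to $y(c^*)\otimes -$, which preserves $\infty$-connectivity, it preserves objects that are local against $\infty$-connective maps, i.e.\ hypercomplete objects. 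This self-duality step is the missing idea in your write-up; without it, your references only cover the spherical case.
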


\begin{proof}
By definition of the Day convolution, the functor $y(c) \otimes - \colon P(\ccat) \rightarrow P(\ccat)$ can be described as the unique colimit-preserving extension of $c \otimes - \colon \ccat \rightarrow \ccat$. Notice that if $d, e \in \ccat$, then 

\begin{center}
$(y(c) \otimes y(d))(e) \simeq map(e, c \otimes d) \simeq map(e \otimes c^{*}, d)$,
\end{center}
where $c^{*}$ is the monoidal dual of $c$. It follows that $y(c) \otimes -$ is equivalent to the precomposition functor along $- \otimes c^{*} \colon \ccat \rightarrow \ccat$ as they agree on representables and are both cocontinuous.

To see the second part, observe that the precomposition along $- \otimes c^{*} \colon \ccat \rightarrow \ccat$ clearly preserves the property of being discrete and since $- \otimes c^{*}$ is a morphism of additive $\infty$-sites by \cref{lemma:tensoring_with_an_object_in_excellent_infty_site_a_morphism_of_additive_sites}, it also preserves the properties of being a sheaf and of being spherical. 

We're left with hypercompleteness and $\infty$-connectedness. Since $y \colon \ccat \hookrightarrow P(\ccat)$ is monoidal, $y(c)$ is dual to $y(c^{*})$. It follows that we have an ambidexterous adjunction 

\begin{center}
$(y(c) \otimes -) \dashv (y(c^{*}) \otimes -) \colon P(\ccat) \rightleftarrows P(\ccat)$,
\end{center}
which by the above restricts to an ambidexterous adjunction 

\begin{center}
$(y(c) \otimes -) \dashv (y(c^{*}) \otimes -) \colon Sh(\ccat) \rightleftarrows Sh(\ccat)$.
\end{center}
In particular, both functors are left adjoint to geometric morphisms of $\infty$-topoi and so preserve $\infty$-connected morphisms. Since they're also right adjoint to each other, it follows that they preserve hypercomplete objects, which is what we wanted to show. 
\end{proof}

\begin{thm}
\label{thm:day_convolution_compatible_with_different_kinds_of_equivalences}
Let $\ccat$ be a an excellent $\infty$-site. Then, the Day convolution symmetric monoidal structure on $P(\ccat)$ preserves $L_{\euscr{X}}$-equivalences in both variables, where $L_{\euscr{X}} \colon P(\ccat) \rightarrow \euscr{X}$ is the left adjoint to the inclusion and $\euscr{X}$ is either of the $\infty$-categories $Sh^{\sets}(\ccat)$, $Sh^{\sets}_{\Sigma}(\ccat)$, $Sh(\ccat)$, $Sh_{\Sigma}(\ccat)$, $\hpsheaves(\ccat)$ or $\hpsheaves_{\Sigma}(\ccat)$ of, respectively, (spherical) discrete sheaves, (spherical) sheaves or (spherical) hypercomplete sheaves.
\end{thm}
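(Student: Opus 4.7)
The plan is to prove the stronger statement that for every fixed $X \in P(\ccat)$, the functor $X \otimes -: P(\ccat) \to P(\ccat)$ preserves $L_{\euscr{X}}$-equivalences; by symmetry of the Day convolution, this yields preservation in both variables simultaneously, uniformly for each of the six choices of $\euscr{X}$.

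The first step is to treat the case $X = y(c)$ of a representable. Lemma \ref{lemma:tensoring_with_representable_same_as_precomposing_with_dual} supplies an ambidexterous adjunction $(y(c) \otimes -) \dashv (y(c^{*}) \otimes -)$ on $P(\ccat)$ and identifies the right adjoint with precomposition along $- \otimes c$. The same lemma guarantees that this precomposition preserves each of the six properties cutting out $\euscr{X}$, so that $y(c^{*}) \otimes Z \in \euscr{X}$ whenever $Z \in \euscr{X}$. Given an $L_{\euscr{X}}$-equivalence $f: Y \to Y'$ and any $Z \in \euscr{X}$, the natural equivalence $\Map(y(c) \otimes f, Z) \simeq \Map(f, y(c^{*}) \otimes Z)$ is then an equivalence of spaces, since the latter is obtained by pairing $f$ with an $\euscr{X}$-local object. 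Thus $y(c) \otimes f$ induces equivalences on mapping spaces into every object of $\euscr{X}$, and is therefore itself an $L_{\euscr{X}}$-equivalence.

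The second step is to extend from representables to arbitrary $X$. Every presheaf admits a canonical presentation as a colimit of representables, $X \simeq \varinjlim y(c_i)$. Because the Day convolution is cocontinuous in each variable (Proposition \ref{prop:symmetric_monoidal_structure_on_presheaves}), for any morphism $f: Y \to Y'$ we have $X \otimes f \simeq \varinjlim (y(c_i) \otimes f)$, viewed as a colimit in the arrow $\infty$-category $P(\ccat)^{\Delta^{1}}$. The class of $L_{\euscr{X}}$-equivalences is strongly saturated and hence closed under colimits in the arrow $\infty$-category, and each $y(c_i) \otimes f$ is an $L_{\euscr{X}}$-equivalence by the first step; consequently $X \otimes f$ is an $L_{\euscr{X}}$-equivalence as well.

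The main obstacle is the first step, where the key leverage is precisely the dualizability hypothesis built into the definition of an excellent $\infty$-site: without duals, the adjunction $(y(c) \otimes -) \dashv (y(c^{*}) \otimes -)$ would not exist in the required ambidexterous form, and there would be no clean way to import preservation of $\euscr{X}$-locality from one side to the other. Once this step is in hand, the passage to general presheaves is purely formal, relying only on the strong saturation of $L_{\euscr{X}}$-equivalences and the colimit-preservation of the Day convolution in each variable.
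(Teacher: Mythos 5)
Your proof is correct and follows essentially the same route as the paper: both reduce to one variable by symmetry, handle representables via the ambidexterous adjunction $(y(c)\otimes -)\dashv(y(c^{*})\otimes -)$ together with Lemma \ref{lemma:tensoring_with_representable_same_as_precomposing_with_dual}, and pass to general $X$ by writing it as a colimit of representables and invoking closure of $L_{\euscr{X}}$-equivalences under colimits. The only cosmetic difference is the order of the two reductions and the fact that you unpack the formal adjunction step (right adjoint lands in $\euscr{X}$, so pairing against $\euscr{X}$-local objects detects the equivalence), whereas the paper leaves that implicit.
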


\begin{proof}
 We have to show that for any $X \in P(\ccat)$, the functors $X \otimes -, - \otimes X \colon P(\ccat) \rightarrow P(\ccat)$ preserve $L_{\euscr{X}}$-equivalences. Because the tensor product is symmetric, it is enough to consider $X \otimes -$. Since any presheaf $X$ can be written as a colimit of representables, and $L_{\euscr{X}}$-equivalences are stable under colimits, it is enough to assume that $X$ is representable. 

Thus, let $c \in \ccat$. Again, since $y(c)$ is dual to $y(c^{*})$, we have an ambidexterous adjunction 

\begin{center}
$(y(c) \otimes -) \dashv (y(c^{*}) \otimes -) \colon P(\ccat) \rightleftarrows P(\ccat)$.
\end{center}
By \cref{lemma:tensoring_with_representable_same_as_precomposing_with_dual}, $y(c^{*}) \otimes - \colon P(\ccat) \rightarrow P(\ccat)$ preserves the properties of being discrete, being spherical, being a sheaf and being a hypercomplete sheaf. It follows formally that its left adjoint, $y(c) \otimes -$, preserves $L_{\euscr{X}}$-equivalences, which is what we wanted to show. 
\end{proof}

\begin{cor}
\label{cor:symmetric_monoidal_structure_on_different_sheaf_variants}
The $\infty$-category $\euscr{X}$, where $\euscr{X}$ is either of the $\infty$-categories $Sh^{\sets}(\ccat)$, $Sh^{\sets}_{\Sigma}(\ccat)$, $Sh(\ccat)$, $Sh_{\Sigma}(\ccat)$, $\hpsheaves(\ccat)$ or $\hpsheaves_{\Sigma}(\ccat)$, admits a unique symmetric monoidal structure which preserves colimits in each variable and such that the corresponding Yoneda embedding $L_{\euscr{X}} \circ y \colon \ccat \rightarrow \euscr{X}$ is symmetric monoidal. 
\end{cor}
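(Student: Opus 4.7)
The plan is to transfer the Day convolution symmetric monoidal structure of \textbf{Proposition \ref{prop:symmetric_monoidal_structure_on_presheaves}} on $P(\ccat)$ to $\euscr{X}$ via the localization $L_{\euscr{X}}: P(\ccat) \rightarrow \euscr{X}$. The crucial ingredient has already been established as \textbf{Theorem \ref{thm:day_convolution_compatible_with_different_kinds_of_equivalences}}: the Day convolution preserves $L_{\euscr{X}}$-equivalences in each variable. This is precisely the hypothesis required to invoke Lurie's localization principle for symmetric monoidal $\infty$-categories (\cite{higher_algebra}[4.1.7.4, 2.2.1.9]), which promotes $L_{\euscr{X}}$ to a symmetric monoidal functor and equips $\euscr{X}$ with a symmetric monoidal structure uniquely characterized by this property.

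First I would invoke this localization principle. The required accessibility of $L_{\euscr{X}}$ is classical for $Sh^{\sets}(\ccat)$, $Sh(\ccat)$ and $\hpsheaves(\ccat)$, and follows from \textbf{Corollary \ref{cor:spherical_sheaves_as_localization}} for the spherical variants. Next I would verify that the induced tensor product $\otimes_{\euscr{X}}$ preserves colimits in each variable. For $X, Y \in \euscr{X}$ and a colimit $X \simeq \varinjlim X_{\alpha}$ of local objects, one has
\begin{center}
$X \otimes_{\euscr{X}} Y \simeq L_{\euscr{X}}(X \otimes_{\mathrm{Day}} Y) \simeq L_{\euscr{X}}(\varinjlim (X_{\alpha} \otimes_{\mathrm{Day}} Y)) \simeq \varinjlim (X_{\alpha} \otimes_{\euscr{X}} Y)$,
\end{center}
using that the Day convolution preserves colimits in each variable by \textbf{Proposition \ref{prop:symmetric_monoidal_structure_on_presheaves}}, that $L_{\euscr{X}}$ is cocontinuous, and that colimits in $\euscr{X}$ are computed by applying $L_{\euscr{X}}$ to colimits in $P(\ccat)$.

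The composite $L_{\euscr{X}} \circ y: \ccat \rightarrow \euscr{X}$ is then symmetric monoidal as a composite of the symmetric monoidal $y$ and the just-constructed symmetric monoidal $L_{\euscr{X}}$. For uniqueness, suppose $\otimes^{\prime}$ is any other symmetric monoidal structure on $\euscr{X}$ preserving colimits in each variable and rendering $L_{\euscr{X}} \circ y$ symmetric monoidal. Any object of $\euscr{X}$ has the form $L_{\euscr{X}}(F)$ for some presheaf $F$, and $F$ is a colimit of representables; applying the cocontinuous $L_{\euscr{X}}$, every object of $\euscr{X}$ is a colimit of objects of the form $(L_{\euscr{X}} \circ y)(c)$. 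Both $\otimes_{\euscr{X}}$ and $\otimes^{\prime}$ agree on such objects by the symmetric monoidality of $L_{\euscr{X}} \circ y$, and both preserve colimits in each variable, so they must coincide.

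The hard part has already been absorbed into \textbf{Theorem \ref{thm:day_convolution_compatible_with_different_kinds_of_equivalences}}, so this corollary is a formal consequence. The only thing to watch is that Lurie's localization machinery applies uniformly across all six sheaf variants; this is guaranteed by accessibility of each localization together with the compatibility with the Day convolution already packaged into \textbf{Theorem \ref{thm:day_convolution_compatible_with_different_kinds_of_equivalences}}.
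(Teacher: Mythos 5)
Your proof is correct and follows the same route as the paper, which simply cites \cite[2.2.1.9]{higher_algebra}; you invoke the same localization principle and spell out the cocontinuity and uniqueness details that the reference packages.
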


\begin{proof}
This is \cite[2.2.1.9]{higher_algebra}.
\end{proof}

To extend this symmetric monoidal structure to sheaves of spectra, it is convenient to use the theory of the tensor product of presentable $\infty$-categories of \cite[4.8]{higher_algebra}, so let $\mathrm{Pr}^{L}$ denote the $\infty$-category of presentable $\infty$-categories and cocontinuous functors.

\begin{prop}
\label{prop:symmetric_monoidal_structure_on_spherical_sheaves_of_spectra}
The $\infty$-category $Sh^{\spectra}_{\Sigma}(\ccat)$ admits a symmetric monoidal structure which is cocontinuous in each variable and such that the functor $\Sigma_{+}^{\infty} \colon Sh_{\Sigma}(\ccat) \rightarrow Sh^{\spectra}_{\Sigma}(\ccat)$ is symmetric monoidal. An analogous statement holds for hypercomplete sheaves of spectra. 
\end{prop}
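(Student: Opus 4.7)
The plan is to derive the symmetric monoidal structure on $Sh^{\spectra}_{\Sigma}(\ccat)$ from the one already produced on $Sh_{\Sigma}(\ccat)$ via stabilization, treated abstractly as a tensor product in $Pr^{L}$. First I would note that by \textbf{Corollary \ref{cor:symmetric_monoidal_structure_on_different_sheaf_variants}}, the $\infty$-category $Sh_{\Sigma}(\ccat)$ is a commutative algebra in $Pr^{L}$, and it is in addition pointed since it is additive by \textbf{Lemma \ref{lemma:spherical_presheaves_form_an_additive_category}}.

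Next I would invoke the standard fact from \cite[4.8.1, 4.8.2]{higher_algebra} that for any pointed presentable $\infty$-category $\dcat$, the stabilization $Sp(\dcat)$ is equivalent to the tensor product $\dcat \otimes \spectra$ computed in $Pr^{L}$. Since $\spectra$ is itself a commutative algebra in $Pr^{L}$, the tensor product $Sh_{\Sigma}(\ccat) \otimes \spectra$ inherits the structure of a commutative algebra in $Pr^{L}$, i.e., a presentable symmetric monoidal $\infty$-category whose tensor product preserves colimits separately in each variable. Moreover, the canonical map $Sh_{\Sigma}(\ccat) \to Sh_{\Sigma}(\ccat) \otimes \spectra$ arising from the unit $\spaces \to \spectra$ is then automatically symmetric monoidal and cocontinuous. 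Combining this with \textbf{Proposition \ref{prop:sheaves_of_spectra_as_stabilization}}, which identifies the stabilization of $Sh_{\Sigma}(\ccat)$ with $Sh^{\spectra}_{\Sigma}(\ccat)$, transports the symmetric monoidal structure to $Sh^{\spectra}_{\Sigma}(\ccat)$, and the canonical map becomes $\Sigma_{+}^{\infty}$, the left adjoint to the levelwise $\Omega^{\infty}$ functor in the sense of \textbf{Warning \ref{warning:sigma_infty_plus_not_computed_by_levelwise_sigma_infty}}.

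For the hypercomplete version the same argument goes through verbatim: $\hpsheaves_{\Sigma}(\ccat)$ is additive by a sheafification argument analogous to \textbf{Lemma \ref{lemma:spherical_presheaves_form_an_additive_category}} and carries a suitable presentable symmetric monoidal structure by \textbf{Corollary \ref{cor:symmetric_monoidal_structure_on_different_sheaf_variants}}, while \textbf{Proposition \ref{prop:sheaves_of_spectra_as_stabilization}} identifies its stabilization with $\hpssheavesofspectra(\ccat)$. The main obstacle, while not a deep one, is to check that the abstract identification of $Sh_{\Sigma}(\ccat) \otimes \spectra$ with $Sh^{\spectra}_{\Sigma}(\ccat)$ carries the canonical map coming from $\spaces \to \spectra$ to precisely the $\Sigma_{+}^{\infty}$ of the statement; this is a matter of unwinding the universal properties of the tensor product in $Pr^{L}$ and of stabilization, and identifying both with the left adjoint to the levelwise $\Omega^{\infty}$.
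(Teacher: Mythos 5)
Your proposal is correct and follows essentially the same route as the paper: both arguments realize $Sh_{\Sigma}^{\spectra}(\ccat)$ as the tensor product $Sh_{\Sigma}(\ccat) \otimes \spectra$ in $Pr^{L}$, use that this tensor product of commutative algebras is again a commutative algebra, and identify the unit map with $\Sigma^{\infty}_{+}$ via \textbf{Proposition \ref{prop:sheaves_of_spectra_as_stabilization}}. The paper phrases the identification of $Sh_{\Sigma}(\ccat) \otimes \spectra$ with the stabilization via \cite[4.8.2.18]{higher_algebra} (the adjunction $Pr^{L}_{St} \hookrightarrow Pr^{L}$) rather than via pointedness, but this is a cosmetic difference.
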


\begin{proof}
We work with sheaves, the hypercomplete case is the same. By \cref{cor:symmetric_monoidal_structure_on_different_sheaf_variants}, $Sh_{\Sigma}(\ccat)$ admits a symmetric monoidal structure compatible with colimits and since we also know it's presentable by \cref{cor:spherical_sheaves_as_localization}, it follows that it is a commutative algebra in $\mathrm{Pr}^{L}$. The same is true for the $\infty$-category $\spectra$ of spectra, hence the tensor product $Sh_{\Sigma}(\ccat) \otimes \spectra$ is also a commutative algebra, in other words, a presentable symmetric monoidal $\infty$-category whose tensor product preserves colimits. 

Notice that this symmetric monoidal structure is unique since $Sh_{\Sigma}(\ccat) \rightarrow Sh_{\Sigma}(\ccat) \otimes \spectra$ is a unit of the adjunction induced by the inclusion $\mathrm{Pr}^{L}_{St} \hookrightarrow \mathrm{Pr}^{L}$ of stable presentable $\infty$-categories, see \cite[4.8.2.18]{higher_algebra}. In other words, $Sh_{\Sigma}(\ccat) \otimes \spectra$ is the stabilization of $Sh_{\Sigma}(\ccat)$, which we've seen in \cref{prop:sheaves_of_spectra_as_stabilization} is exactly $Sh^{\spectra}_{\Sigma}(\ccat)$. This ends the argument. 
\end{proof}

\begin{rem}
\label{rem:universal_property_of_spherical_sheaves_of_spectra}
The symmetric monoidal structure on $Sh_{\Sigma}^{\spectra}(\ccat)$ can be also described by the following universal property: to give a cocontinuous, symmetric monoidal functor $Sh_{\Sigma}^{\spectra}(\ccat) \rightarrow \dcat$ into a symmetric monoidal presentable stable $\infty$-category $\dcat$ whose tensor product is cocontinuous in each variable is the same as to give a cocontinuous symmetric monoidal functor $Sh_{\Sigma}(\ccat) \rightarrow \dcat$. 

The latter can be in practice easier to write down, as $Sh_{\Sigma}(\ccat)$ is a symmetric monoidal localization of the presheaf $\infty$-category $P(\ccat)$ equipped with the Day convolution. In particular, a cocontinuous functor $Sh^{\spectra}_{\Sigma}(\ccat) \rightarrow \dcat$ is uniquely determined by the composite

\begin{center}
$\ccat \hookrightarrow P(\ccat) \rightarrow Sh_{\Sigma}(\ccat) \rightarrow Sh^{\spectra}_{\Sigma}(\ccat) \rightarrow \dcat$,
\end{center}
although not all functors $\ccat \rightarrow \dcat$ give rise to cocontinuous functor as above, only those whose left Kan extension factors through $Sh_{\Sigma}(\ccat)$. This is the same as asking for $\ccat^{op} \rightarrow \dcat^{op}$ to be spherical $\dcat^{op}$-valued sheaf. An analogous statement holds for $\infty$-categories of hypercomplete spherical sheaves, with the condition instead being that $\ccat^{op} \rightarrow \dcat^{op}$ is a hypercomplete $\dcat^{op}$-valued sheaf. 
\end{rem}

\subsection{Discrete sheaves on excellent $\infty$-sites}
\label{section:discrete_sheaves_on_excellent_infty_sites}

In this section we present a criterion for an equivalence between sheaf categories over two different excellent $\infty$-sites. This  equivalence will be induced by a morphism of sites in the following sense. 

\begin{defin}
\label{defin:morphism_of_excellent_infty_sites}
A \emph{morphism} $f \colon \ccat \rightarrow \dcat$ of excellent $\infty$-sites is a monoidal morphism of additive $\infty$-sites; that is, it is a monoidal, additive functor which takes coverings to coverings.  
\end{defin} 

\begin{warning}
Note that we do not assume that $f$ is symmetric monoidal, only that is monoidal. Somewhat surprisingly, homology theories provide an interesting source of examples of functors in which both source and target are symmetric, but the functor is only monoidal, see \cref{warning:homology_not_necessarily_homotopy_commutative}.
\end{warning}
If $f \colon \ccat \rightarrow \dcat$ is a morphism of $\infty$-sites, then we have an induced adjunction 
\[
f^{*} \dashv f_{*} \colon Sh(\ccat) \rightleftarrows Sh(\dcat),
\]
where $f_{*}$ is given by precomposition and $f^{*} = L \circ \mathrm{Lan}_{f}$, see \cref{prop:compatible_functors_preserve_sheaves}. If $f$ is  a morphism of excellent $\infty$-sites, then $f_{*}$ acquires a canonical monoidal structure using the Day convolution monoidal structures on the sheaf $\infty$-categories of \cref{cor:symmetric_monoidal_structure_on_different_sheaf_variants}. If $f$ is symmetric, then $f^{*}$ is also canonically symmetric. 

\begin{notation}
Throughout this section, if $\ccat$ is an excellent $\infty$-site and $c \in \ccat$, by $y(c)$ we will denote the \emph{representable sheaf}; that is, the sheaf associated to the presheaf defined by the formula $\map(c^{\prime}, c)$ for $c^\prime \in \ccat$. Similarly, by $y(c)_{\leq 0}$ we denote the truncation in the sheaf $\infty$-category, this is a sheaf associated to the presheaf $\pi_{0} \map(c^\prime, c)$. 
\end{notation}

In what follows, we will need to work with $\Ind$-objects; that is, presheaves which can be written as a filtered colimit of representables, see \cite[5.3]{lurie_higher_topos_theory} for a comprehensive treatment. One can treat $\Ind$-objects as formal expressions of the form $\varinjlim c_{\alpha}$, where the index $\alpha$ runs through a filtered category, such that the mapping space between them is given by the formula

\begin{center}
$\map(\varinjlim c_{\alpha}, \varinjlim d_{\beta}) \simeq \varprojlim \varinjlim \map(c_{\alpha}, d_{\beta})$.
\end{center}
In particular, in the case of a constant diagram, which we can identify with an object $c \in \ccat$, we have $\map(c, \varinjlim d_{\beta}) \simeq \varinjlim \map(c, d_{\beta})$. 

\begin{defin}
Let $\ccat$ be a an excellent $\infty$-site. We say that a map $c \rightarrow d$ is an \emph{embedding} if its dual $d^{*} \rightarrow c^{*}$ is a covering.  We say that a map $c \rightarrow \varinjlim d_{\alpha}$ into an Ind-object is an \emph{embedding} if the set of indices $\beta$ for which the map has a representative $c \rightarrow d_{\beta}$ which is an embedding is cofinal.
\end{defin}

\begin{rem}
Since coverings are closed under composition and tensor product, the same is true for embeddings. Similarly, the existence of pullbacks along coverings implies that one also has pushouts along embeddings. 
\end{rem}

\begin{defin}
Let $\ccat$ be an excellent $\infty$-site. We say an $\Ind$-object $\varinjlim e_{\alpha}$ satisfies \emph{discrete descent} if the presheaf of sets on $\ccat$ defined by the formula
\[
c \mapsto \varinjlim \pi_{0} \map(c, e_{\alpha})
\]
is a sheaf. We say $\varinjlim e_{\alpha}$ is an \emph{envelope} if it satisfies discrete descent and any $c \in \ccat$ admits an embedding $c \rightarrow \varinjlim e_{\alpha}$.
\end{defin}
Intuitively, an envelope is an $\Ind$-object that is big enough so that proving something about the envelope alone allows one to extend the result to all objects of $\ccat$. The assumption of discrete descent is technical, and can be rephrased as saying that 

\begin{center}
$(\varinjlim y(e_{\alpha}))_{\leq 0}(c) \simeq \varinjlim \pi_{0} \map(c, e_{\alpha})$
\end{center}
on the nose, rather than only holding up to sheafification. This is useful as it allows one to construct maps in $\ccat$ from maps of sheaves, without needing to pass to a cover. 

\begin{example}
\label{example:envelope_in_an_excellent_infinity_site}
This example is the main application of the results we give here in a rather abstract setting. We present it here as keeping it in mind can perhaps make some of the arguments more transparent. 

Let $\spectrafp$ be the $\infty$-category of finite spectra with finitely generated, projective $E$-homology, where $E$ is an Adams-type homology theory, so that it is a homotopy ring spectrum and we have a filtered colimit $E \simeq \varinjlim E_{\alpha}$, where $E_{\alpha} \in \spectrafp$. One can make $\spectrafp$ into an additive $\infty$-site by declaring coverings to be $E_{*}$-surjections, together with the tensor product of spectra this makes $\spectrafp$ into an excellent $\infty$-site, see \cref{prop:homology_surjections_make_fpspectra_into_an_excellent_inftysite}. 

This topology is subcanonical as a consequence of \cref{thm:recognition_of_spherical_sheaves}, so that if $Q \in \spectrafp$, then $\map(P, Q)$ defines a sheaf as $P$ runs through finite $E$-projective spectra. However, taking path components does not in general preserve the sheaf property, so that the formula $\pi_{0} \map(P, Q)$ does not in general define a sheaf. On the other hand, discrete descendt holds for the $\Ind$-object $\varinjlim E_{\alpha}$, as we have

\begin{center}
$\varinjlim \pi_{0} \map(P, E_{\alpha}) \simeq \pi_{0} \map(P, E) \simeq E^{*}P \simeq \Hom_{E_{*}}(E_{*}P, E_{*})$,
\end{center}
where the last line is the universal coefficient isomorphism; this clearly defines a sheaf. The object  $\varinjlim E_{\alpha}$ is not necessarily an envelope, although it is close - we will show that a filtered diagram of objects of $\spectrafp$ whose colimit is the countable direct sum of shifts $E$ is in fact an envelope. 
\end{example}

\begin{defin}
\label{defin:common_envelope_for_a_morphism_of_excellent_infty_sites}
Let $f \colon \ccat \rightarrow \dcat$ be a morphism of excellent $\infty$-sites. We say a filtered diagram $\varinjlim e_{\alpha}$ in $\ccat$ is a \emph{common envelope} for $f$ if: 

\begin{enumerate}
\item $\varinjlim e_{\alpha}$ satisfies discrete descent,
\item $\varinjlim f(e_{\alpha})$ is an envelope and 
\item the unit map $\varinjlim y(e_{\alpha}) \rightarrow \varinjlim f_{*} f^{*} y(e_{\alpha})$ is a $0$-equivalence in $Sh(\ccat)$. 
\end{enumerate}
\end{defin}

\begin{rem}
\label{rem:last_condition_in_the_definition_of_a_common_envelope}
Since in the definition of a common envelope we assume that both $\varinjlim e_{\alpha}$ and $\varinjlim f(e_{\alpha})$ satisfy discrete descent, the last condition can be rephrased as saying that for any $c \in \ccat$ the natural map $\varinjlim \pi_{0} \map(c, e_{\alpha}) \rightarrow \varinjlim \pi_{0} \map(f(c), f(e_{\alpha}))$ is a bijection. 
\end{rem}

\begin{example}
\label{example:countable_sum_of_e_a_common_envelope}
This example is an extension of \cref{example:envelope_in_an_excellent_infinity_site}. We take $\ccat$ to be $\spectrafp$, the $\infty$-category of finite spectra with projective $E$-homology and $\dcat$ to be the category $\ComodE^{fp}$ of dualizable $E_{*}E$-comodules, the latter becomes an excellent $\infty$-site if we let the symmetric monoidal structure be the tensor product and take coverings to be surjections. It is not hard to see that $E_{*} \colon \spectrafp \rightarrow \ComodE^{fp}$ is a morphism of excellent $\infty$-sites. 

Moreover, if $\varinjlim E_{\alpha}$ is a filtered diagram in $\spectrafp$ whose colimit is a countable direct sum $\bigoplus \Sigma^{k_{i}} E$, then $\varinjlim E_{\alpha}$ can be shown to be the common envelope of $E_{*}$, see \cref{lemma:countable_sum_of_cofree_comodules_is_a_common_envelope}. The last condition in this case boils down to the universal coefficient isomorphism again, as we have

\begin{center}
$\varinjlim \pi_{0} \map(P, E_{\alpha}) \simeq \Hom_{E_{*}}(E_{*}P, \bigoplus E_{*}[k_{i}]) \simeq \varinjlim \Hom_{E_{*}E}(E_{*}P, E_{*}E_{\alpha})$,
\end{center}
where we use that $\bigoplus E_{*}E[k_{i}] \simeq \varinjlim E_{*}E_{\alpha}$ is the cofree comodule on $\bigoplus E_{*}[k_{i}]$. 
\end{example}
The following simple lemma shows the origin of the terminology ``common envelope'', note that it requires the rather strong assumption of being cover-reflecting, but this is the only context in which we will need this notion.

\begin{lemma}
\label{lemma:a_common_envelope_is_also_an_envelope_on_the_base}
Let $f \colon \ccat \rightarrow \dcat$ be a morphism of excellent $\infty$-sites and assume moreover that it reflects coverings in the sense that $c \rightarrow d$ is a covering if and only if $f(c) \rightarrow f(d)$ is. Then, any common envelope $\varinjlim e_{\alpha}$ for $f$ is in particular an envelope in $\ccat$. 
\end{lemma}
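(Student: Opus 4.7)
The plan is to check the two clauses in the definition of envelope: that $\varinjlim e_{\alpha}$ satisfies discrete descent, and that every $c \in \ccat$ admits an embedding into $\varinjlim e_{\alpha}$. The first is precisely condition (1) in the definition of a common envelope, so all the content of the proof lies in producing the embeddings.

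A preliminary observation I would record first is that, under the cover-reflection hypothesis, $f$ both preserves and reflects embeddings. Since $f$ is symmetric monoidal, it commutes with duality up to canonical isomorphism, so a map $g: c \rightarrow c'$ is an embedding precisely when $(c')^{*} \rightarrow c^{*}$ is a covering, precisely when $f((c')^{*}) \rightarrow f(c^{*})$ is a covering by hypothesis, precisely when $f(g)$ is an embedding in $\dcat$. Note also that being an embedding is a property of the morphism in the homotopy category and hence depends only on its $\pi_{0}$-class, a fact I will use freely.

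Now, given $c \in \ccat$, condition (2) provides an embedding $\eta: f(c) \rightarrow \varinjlim f(e_{\alpha})$ represented by some $h: f(c) \rightarrow f(e_{\beta_{0}})$, while condition (3), via \textbf{Remark \ref{rem:last_condition_in_the_definition_of_a_common_envelope}}, gives a bijection $\varinjlim \pi_{0} \map(c, e_{\alpha}) \cong \varinjlim \pi_{0} \map(f(c), f(e_{\alpha}))$. Under this bijection the class of $h$ has a unique preimage, represented by some $\psi: c \rightarrow e_{\beta_{1}}$, and this $\psi$ determines an Ind-morphism $\widetilde{\eta}: c \rightarrow \varinjlim e_{\alpha}$. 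My claim is that $\widetilde{\eta}$ is an embedding, which would complete the argument.

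To verify the claim I would compare the two compatible systems of representatives $\beta \mapsto [h_{\beta}]$ (transitions of $h$ through the filtered diagram) and $\beta \mapsto f([\psi_{\beta}])$ in $\pi_{0} \map(f(c), f(e_{\beta}))$. Both represent the class $\eta$ in the colimit, so by filteredness they coincide in $\pi_{0} \map(f(c), f(e_{\beta_{3}}))$ at some index $\beta_{3} \geq \beta_{0}, \beta_{1}$, and hence agree at every $\beta \geq \beta_{3}$. Since $\eta$ is an embedding in the Ind-sense, the set of $\beta$'s at which $[h_{\beta}]$ is an embedding is cofinal; intersecting with $\{\beta \geq \beta_{3}\}$, and using the equality $f([\psi_{\beta}]) = [h_{\beta}]$ together with the reflection property established above, the representatives $[\psi_{\beta}]$ of $\widetilde{\eta}$ are embeddings for cofinally many $\beta$, as required. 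The main obstacle is simply bookkeeping the fact that representatives of a given Ind-morphism become equal only after passing to a sufficiently large common index; once one is careful about this, the argument is essentially formal.
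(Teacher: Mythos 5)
Your proof is correct and follows essentially the same route as the paper's: discrete descent is immediate from the definition, an embedding $f(c)\rightarrow \varinjlim f(e_\alpha)$ is lifted along the bijection on $\pi_0$-colimits to a map $c\rightarrow\varinjlim e_\alpha$, and cover-reflection transports the embedding property back. The extra bookkeeping you do (showing that $f$ of a representative of the lift eventually agrees with a representative of the original embedding, and that intersecting two cofinal conditions stays cofinal) is precisely the point the paper's proof compresses into one sentence, so your write-up is a more careful rendering of the same argument rather than a different one.
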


\begin{proof}
By assumption $\varinjlim e_{\alpha}$ satifies discrete descent, hence it is enough to show that any $c \in \ccat$ admits an embedding into $\varinjlim e_{\alpha}$. Since $\varinjlim f(e_{\alpha})$ is assumed to be an envelope, we have an embedding $f(c) \rightarrow \varinjlim f(e_{\alpha})$. 

Since $\varinjlim \pi_{0}(f(c), f(e_{\alpha})) \simeq \varinjlim \pi_{0}(c, e_{\alpha})$, see \cref{rem:last_condition_in_the_definition_of_a_common_envelope}, the given homotopy class lifts to a map $c \rightarrow \varinjlim e_{\alpha}$ and it is enough to show that it is in fact an embedding. However, by the reflection of coverings, a representative $c \rightarrow e_{\alpha}$ is an embedding if and only if $f(c) \rightarrow f(e_{\alpha})$ is, hence this follows immediately from $f(c) \rightarrow \varinjlim f(e_{\alpha})$ being an embedding. 
\end{proof}

\begin{prop}
\label{prop:cover_reflecting_morphism_of_excellent_sites_with_envelope_is_clp}
Let $f \colon \ccat \rightarrow \dcat$ be a morphism of excellent $\infty$-sites and assume that $f$ reflects coverings and admits a common envelope. Then, $f$ has the covering lifting property. 
\end{prop}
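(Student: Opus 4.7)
The plan is to dualize the problem to one about embeddings, use the envelope to absorb the given object of $\dcat$ into the image of $f$, and then lift the resulting map back to $\ccat$ via the common envelope hypothesis. I take the covering lifting property to assert that for every object $c \in \ccat$ and every covering $d \to f(c)$ in $\dcat$, there exists a covering $c' \to c$ in $\ccat$ together with a factorization $f(c') \to d \to f(c)$.

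First, I would dualize the given covering $d \to f(c)$ to obtain an embedding $f(c^{*}) \simeq f(c)^{*} \to d^{*}$ in $\dcat$, using that $f$ is symmetric monoidal and hence preserves duals. Since by hypothesis $\varinjlim f(e_{\alpha})$ is an envelope in $\dcat$, the object $d^{*}$ admits an embedding $d^{*} \to \varinjlim f(e_{\alpha})$, represented by embeddings $d^{*} \to f(e_{\beta})$ for $\beta$ in a cofinal set of indices. Composing, the maps $f(c^{*}) \to d^{*} \to f(e_{\beta})$ are compositions of embeddings (so are embeddings themselves, by stability of coverings under composition), and therefore assemble into an embedding $f(c^{*}) \to \varinjlim f(e_{\alpha})$ of $\textnormal{Ind}$-objects.

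Next, condition (iii) in \textbf{Definition \ref{defin:common_envelope_for_a_morphism_of_excellent_infty_sites}}, rephrased via \textbf{Remark \ref{rem:last_condition_in_the_definition_of_a_common_envelope}}, yields a bijection $\varinjlim \pi_{0} \map(c^{*}, e_{\alpha}) \to \varinjlim \pi_{0} \map(f(c^{*}), f(e_{\alpha}))$. The class of $f(c^{*}) \to d^{*} \to f(e_{\beta})$ on the right thus lifts to a unique class on the left, represented by some morphism $c^{*} \to e_{\gamma}$. Since $f$ reflects coverings by hypothesis and preserves duals, it also reflects embeddings, so the induced map of $\textnormal{Ind}$-objects $c^{*} \to \varinjlim e_{\alpha}$ is itself an embedding, with embedding representatives $c^{*} \to e_{\gamma}$ on a cofinal set of indices.

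Finally, I combine these facts to produce the required factorization. For $\gamma$ large enough the two composites $f(c^{*}) \to f(e_{\gamma})$ and $f(c^{*}) \to d^{*} \to f(e_{\beta}) \to f(e_{\gamma})$ already agree in $\pi_{0} \map(f(c^{*}), f(e_{\gamma}))$, since they agree after passing to the filtered colimit. Choosing $\gamma$ simultaneously in the cofinal set on which $c^{*} \to e_{\gamma}$ is an embedding and large enough that this $\pi_{0}$-equality holds, and then dualizing back by setting $c' = e_{\gamma}^{*}$, I obtain a covering $c' \to c$ in $\ccat$ whose $f$-image admits a factorization through $d$, which is exactly the covering lifting property. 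The main technical obstacle is this last index-juggling step: one must intersect the cofinal set of embedding-indices with a cofinal set on which the relevant $\pi_{0}$-relation is witnessed, which requires careful bookkeeping with the filtered diagram. Everything else reduces to the formal manipulations already codified in the definitions of envelope and common envelope.
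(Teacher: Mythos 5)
Your proof is correct and follows essentially the same route as the paper's: dualize the covering to an embedding, use the envelope property of $\varinjlim f(e_\alpha)$ to embed the target, lift the resulting class through the bijection supplied by the common envelope, choose a sufficiently large index at which all constraints are simultaneously satisfied, and dualize back. The only cosmetic difference is that you work with explicit duals $c^{*}$, $d^{*}$ throughout while the paper silently renames and restates the dual problem, and you package the reflection-of-embeddings argument once at the Ind-object level rather than pointwise at a chosen index; on your index-juggling worry, the point that makes it go through is that the set of indices where the $\pi_0$-relation holds is upward-closed (not merely cofinal), so intersecting it with the cofinal set of embedding-indices is again cofinal.
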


\begin{proof}
Let $c \in \ccat$ and let $d \rightarrow f(c)$ be a covering. We have to show that there exists a covering $c^{\prime} \rightarrow c$ such that one can find a factorization

\begin{center}
	\begin{tikzpicture}
		\node (TL) at (0, 1) {$ f(c^{\prime}) $};
		\node (TR) at (2.4, 1) {$ f(c) $};
		\node (BM) at (1.2, 0) {$ d $};
		
		\draw [->] (TL) to (TR);
		\draw [->] (TL) to (BM);
		\draw [->] (BM) to (TR);
	\end{tikzpicture}.
\end{center}
By taking duals, we can show instead that for any embedding $f(c) \rightarrow d$ there exists some embedding $c \rightarrow c^{\prime}$ such that we have a factorization 

\begin{center}
	\begin{tikzpicture}
		\node (TL) at (0, 1) {$ f(c) $};
		\node (TR) at (2.4, 1) {$ f(c^{\prime}) $};
		\node (BM) at (1.2, 0) {$ d $};
		
		\draw [->] (TL) to (TR);
		\draw [->] (TL) to (BM);
		\draw [->] (BM) to (TR);
	\end{tikzpicture}.
\end{center}
Let $\varinjlim e_{\alpha}$ be a common envelope for $f \colon \ccat \rightarrow \dcat$ in the sense of \cref{defin:common_envelope_for_a_morphism_of_excellent_infty_sites}. It follows that there is an embedding $d \rightarrow \varinjlim f(e_{\alpha})$, since the latter is an envelope. This specifies a class in $\varinjlim \pi_{0}(d, f(e_{\alpha}))$ and hence, by composition, in $\varinjlim \pi_{0}(f(c), f(e_{\alpha}))$. The class in the latter can be lifted to $\varinjlim \pi_{0}(c, e_{\alpha})$, see \cref{rem:last_condition_in_the_definition_of_a_common_envelope}. 

After taking large enough index $\alpha$, we can choose representatives $c \rightarrow e_{\alpha}$, $d \rightarrow f(e_{\alpha})$ such that the diagram 

\begin{center}
	\begin{tikzpicture}
		\node (TL) at (0, 1) {$ f(c) $};
		\node (TR) at (2.4, 1) {$ f(e_{\alpha}) $};
		\node (BM) at (1.2, 0) {$ d $};
		
		\draw [->] (TL) to (TR);
		\draw [->] (TL) to (BM);
		\draw [->] (BM) to (TR);
	\end{tikzpicture}
\end{center}
commutes. Since $d \rightarrow \varinjlim f(e_{\alpha})$ was chosen to be an embedding, we can assume that the chosen representative $d \rightarrow f(e_{\alpha})$ is. Since embeddings are closed under composition, we deduce that $f(c) \rightarrow f(e_{\alpha})$ is an embedding and hence, by reflection of coverings, the same holds for $c \rightarrow e_{\alpha}$. Thus, the diagram above is the one we were looking for. 
\end{proof}

\begin{cor}
\label{cor:precomposition_along_a_morphism_of_excellents_sites_with_common_envelope_commutes_with_shafification}
Let $f \colon \ccat \rightarrow \dcat$ be a morphism of excellent $\infty$-sites and assume that $f$ reflects covers and admits a common envelope. Then, the precomposition functor $f_{*} \colon P(\ccat) \rightarrow P(\dcat)$ commutes with sheafification. In particular, it takes sheaves to sheaves and restricts to a continuous functor $f_{*} \colon Sh(\ccat) \rightarrow Sh(\dcat)$.
\end{cor}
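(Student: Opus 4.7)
The plan is to leverage \textbf{Proposition \ref{prop:cover_reflecting_morphism_of_excellent_sites_with_envelope_is_clp}}, which gives $f$ the covering lifting property (CLP), and to deduce that the precomposition functor $f^{*}: P(\dcat) \rightarrow P(\ccat)$ preserves local equivalences. Once this is established, commutation of $f^{*}$ with sheafification follows formally, and the two ``in particular'' assertions are immediate consequences. (Note: the direction of $f^{*}$ as stated in the corollary appears to be a typo, since precomposition along $f: \ccat \rightarrow \dcat$ naturally goes from $P(\dcat)$ to $P(\ccat)$, matching the convention of \textbf{Proposition \ref{prop:morphism_of_additive_infinity_sites_induces_an_adjunction_on_spherical_sheaf_infinity_categories}}.)

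By \textbf{Corollary \ref{cor:characterization_of_sheaves_in_terms_of_pretopology}}, the sheafification $L_{\dcat}$ is the localization at the set $S_{\dcat}$ of generating sieve inclusions $|\check{C}(e \rightarrow d)| \rightarrow y_{\dcat}(d)$ for coverings $e \rightarrow d$ in $\dcat$, with an analogous description of $L_{\ccat}$ via $S_{\ccat}$. Now $f^{*}$ preserves all colimits, being left adjoint to the right Kan extension $f_{*}$, so the class of arrows in $P(\dcat)$ that $f^{*}$ carries to $L_{\ccat}$-equivalences is strongly saturated. To conclude that $f^{*}$ preserves local equivalences it therefore suffices to check this on every generator in $S_{\dcat}$.

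The key step is this last check, which is exactly where CLP is used. Fix a cover $e \rightarrow d$ in $\dcat$, an object $c \in \ccat$, and a map $\varphi: f(c) \rightarrow d$ representing a class in $f^{*}y_{\dcat}(d)(c)$. The pullback $e \times_{d} f(c) \rightarrow f(c)$ is a cover in $\dcat$, so by CLP there exists a cover $c^{\prime} \rightarrow c$ in $\ccat$ together with a factorization $f(c^{\prime}) \rightarrow e \times_{d} f(c) \rightarrow f(c)$ of $f(c^{\prime}) \rightarrow f(c)$. Composing with the projection $e \times_{d} f(c) \rightarrow e$ shows that $\varphi$ pulls back along $c^{\prime} \rightarrow c$ to a class lying in $f^{*}|\check{C}(e \rightarrow d)|(c^{\prime})$. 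The same argument applied to the iterated pullbacks underlying the \v{C}ech nerve provides the higher coherence data needed to conclude that $f^{*}$ applied to the generator is inverted by $L_{\ccat}$.

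With commutation in hand the remaining claims are formal. For any sheaf $X \in Sh(\dcat)$, $f^{*}X \simeq f^{*}L_{\dcat}X \simeq L_{\ccat}f^{*}X$, so $f^{*}X$ is itself a sheaf; and continuity of the restricted functor $f^{*}: Sh(\dcat) \rightarrow Sh(\ccat)$ follows from continuity of $f^{*}$ on presheaves (as a right adjoint to $f_{!}$) together with the pointwise computation of limits of sheaves. The principal obstacle lies in the third paragraph: while CLP yields the pointwise local-surjectivity statement directly, promoting it to the assertion that the entire sieve inclusion becomes invertible after $L_{\ccat}$ requires careful bookkeeping with the $\infty$-categorical coherences in the \v{C}ech nerve, ultimately reducing to a cofinality argument on iterated covers.
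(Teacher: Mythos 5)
Your proposal is correct and follows the same overall strategy as the paper — get the covering lifting property from \textbf{Proposition \ref{prop:cover_reflecting_morphism_of_excellent_sites_with_envelope_is_clp}}, then deduce commutation with sheafification — but the paper dispatches the second step in one line by citing \textbf{Proposition \ref{prop:morphisms_of_sites_with_covering_lifting_property_is_geometric}} in the appendix, which proves CLP $\Rightarrow$ commutation via the plus construction, \textbf{Proposition \ref{prop:plus_construction_in_terms_of_pretopology}}, and a Quillen Theorem~A cofinality argument on pretopological sieves. You instead re-derive that implication through the localization calculus: $f^{*}$ preserves all colimits, so the class of maps it sends to $L_{\ccat}$-equivalences is strongly saturated, and one checks the generating sieve inclusions directly using CLP. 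That is a perfectly valid and arguably cleaner route; the paper's appendix argument has the advantage of being phrased so that it transparently produces cocontinuity of the restricted $f^{*}$, which is the substantive output. Two small remarks on your write-up. First, you are right that the direction of $f^{*}$ in the statement is a typo; the paper's own conventions (e.g.\ \textbf{Proposition \ref{prop:compatible_functors_preserve_sheaves}}) have $f^{*}: P(\dcat) \to P(\ccat)$. Second, your closing sentence about ``higher coherence data'' is unnecessary and slightly misleading: $f^{*}$, being a right adjoint, preserves monomorphisms, so $f^{*}|\check{C}(e \to d)| \to f^{*}y(d)$ is a monomorphism, and your pointwise lifting argument shows it is a local epimorphism; a map of presheaves that is a monomorphism and a local epimorphism is already an $L_{\ccat}$-equivalence, so no further bookkeeping with the iterated pullbacks is needed. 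Relatedly, the word ``continuous'' in the corollary's statement is almost certainly a slip for ``cocontinuous'' (continuity of $f^{*}$ on sheaves is automatic from it being right adjoint to $f_{*}$; the paper later cites this corollary precisely for colimit-preservation), so the final sentence of your proof addresses the trivial claim rather than the one the author presumably intended.
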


\begin{proof}
This is immediate from the the covering lifting property, see \cref{prop:morphisms_of_sites_with_covering_lifting_property_is_geometric}.
\end{proof}

We now move on to the main result of this section, which will be a comparison of categories of sheaves of sets. Notice that even though for $f \colon \ccat \rightarrow \dcat$ to be a cover-reflecting morphism of excellent $\infty$-sites that admits a common envelope is a rather long list of assumptions, such a functor can still be very far from an equivalence. 

Our topological example of $E_{*} \colon \spectra_{E}^{fp} \rightarrow \ComodE^{fp}$, see \cref{example:envelope_in_an_excellent_infinity_site}, \cref{example:countable_sum_of_e_a_common_envelope}, satisfies all of these assumptions, but is not an equivalence, even on homotopy categories. For example, we always have that $S^{0} \in \spectra_{E}^{fp}$, so that the latter $\infty$-category has the knowledge of the stable homotopy groups of spheres. We prove, however, that at least in the case of discrete sheaves, our assumptions are enough to enforce an honest equivalence of categories. 

\begin{lemma}
\label{lemma:descent_for_injections}
Let $\ccat$ be an excellent $\infty$-site and $c \rightarrow d$ be an embedding. Then, the diagram 

\begin{center}
$y(c)_{\leq 0} \rightarrow y(d) _{\leq 0} \rightrightarrows y(d \oplus _{c} d) _{\leq 0}$
\end{center}
of discrete representable sheaves is a limit diagram in $Sh^{\sets}(\ccat)$. 
\end{lemma}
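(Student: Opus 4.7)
The plan is to reduce the statement to the sheaf condition for the covering $d^{*} \to c^{*}$ dual to the embedding $c \to d$, using the monoidal duality on $Sh^{\sets}(\ccat)$. First, by the Yoneda lemma for the $1$-category $Sh^{\sets}(\ccat)$, I will rephrase the claim as asserting that, for every $F \in Sh^{\sets}(\ccat)$, the diagram of sets
\begin{center}
$\Hom(F, y(c)_{\leq 0}) \to \Hom(F, y(d)_{\leq 0}) \rightrightarrows \Hom(F, y(d \oplus_{c} d)_{\leq 0})$
\end{center}
is an equalizer.

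Since all objects of $\ccat$ are dualizable and, by \textbf{Corollary \ref{cor:symmetric_monoidal_structure_on_different_sheaf_variants}}, the functor $y(-)_{\leq 0}: \ccat \to Sh^{\sets}(\ccat)$ is symmetric monoidal, each representable sheaf $y(e)_{\leq 0}$ is dualizable with dual $y(e^{*})_{\leq 0}$. Applying the adjunction isomorphism $\Hom(F, y(e)_{\leq 0}) \simeq \Hom(F \otimes y(e^{*})_{\leq 0}, \mathbf{1})$, together with the fact that contravariant duality on $\ccat$ sends the pushout $d \oplus_{c} d$ to the pullback $d^{*} \times_{c^{*}} d^{*}$, the diagram above is identified with
\begin{center}
$\Hom(F \otimes y(c^{*})_{\leq 0}, \mathbf{1}) \to \Hom(F \otimes y(d^{*})_{\leq 0}, \mathbf{1}) \rightrightarrows \Hom(F \otimes y(d^{*} \times_{c^{*}} d^{*})_{\leq 0}, \mathbf{1})$.
\end{center}
Since the tensor product is cocontinuous in each variable and $\Hom(-, \mathbf{1})$ takes colimits to limits, this is an equalizer exactly when $y(c^{*})_{\leq 0}$ is the coequalizer of $y(d^{*} \times_{c^{*}} d^{*})_{\leq 0} \rightrightarrows y(d^{*})_{\leq 0}$ in $Sh^{\sets}(\ccat)$.

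This last coequalizer assertion will follow from the sheaf property: since $d^{*} \to c^{*}$ is a covering, its augmented \cech\ nerve $\ldots y(d^{*} \times_{c^{*}} d^{*}) \rightrightarrows y(d^{*}) \to y(c^{*})$ is a colimit diagram in $Sh(\ccat)$, and applying the left adjoint $(-)_{\leq 0}$ yields a colimit diagram in $Sh^{\sets}(\ccat)$; in the latter $1$-category, a simplicial colimit reduces to the coequalizer of the first two face maps. The main point to handle carefully will be the duality bookkeeping, particularly ensuring that the identification $(d \oplus_{c} d)^{*} \simeq d^{*} \times_{c^{*}} d^{*}$ and the face maps on either side line up correctly; once that is in place, the remainder of the argument is essentially formal.
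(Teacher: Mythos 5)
Your proposal is correct and follows essentially the same route as the paper: dualize the embedding to the covering $d^{*} \to c^{*}$, use the sheaf condition to get a colimit from the \cech\ nerve, apply the colimit-preserving truncation to obtain a reflexive coequalizer of discrete sheaves, and convert this back to an equalizer via duality. Your "test against all $F$" step is simply the unpacked form of the paper's appeal to self-duality of the dualizable subcategory together with the fact that representables generate under colimits.
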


\begin{proof}
Recall that $Sh^{\sets}(\ccat)$ admits a symmetric monoidal structure by \cref{cor:symmetric_monoidal_structure_on_different_sheaf_variants}, it is the unique symmetric monoidal structure where the tensor preserves colimits in both variables and such that $y(-)_{\leq 0} \colon \ccat \rightarrow Sh^{\sets}(\ccat)$ is a symmetric monoidal functor. Since every object of $\ccat$ is dualizable, the same is true for discrete representable sheaves.

By definition of an embedding, the dual $d^{*} \rightarrow c^{*}$ is a covering and hence we have a colimit diagram

\begin{center}
$\ldots \triplerightarrow y(d^{*} \times _{c^{*}} d^{*}) \rightrightarrows y(d^{*}) \rightarrow y(c^{*})$
\end{center}
of sheaves of spaces. Because truncation preserves colimits, we have a reflexive coequalizer

\begin{center}
$y(d^{*} \times _{c^{*}} d^{*})_{\leq 0} \rightrightarrows y(d^{*})_{\leq 0} \rightarrow y(c^{*})_{\leq 0}$
\end{center}
of discrete sheaves, where we have omitted the terms that can't affect a colimit in an ordinary category. The category of dualizable discrete sheaves is self-dual, as is any symmetric monoidal category all of whose objects are dualizable, it follows that in this category we have a limit diagram 

\begin{center}
$y(c)_{\leq 0} \rightarrow y(d) _{\leq 0} \rightrightarrows y(d \oplus _{c} d) _{\leq 0}$
\end{center}
as needed. However, since representable discrete sheaves are dualizable and generate all sheaves under colimits, we see that it is in fact a limit diagram in all of $Sh^{\sets}(\ccat)$, as needed. 
\end{proof}

\begin{lemma}
\label{lemma:descent_for_injections_from_the_monoidal_unit}
Let $\ccat$ be an excellent $\infty$-site and $i \rightarrow e$ be an embedding, where $i$ is the monoidal unit. Then, the diagram 

\begin{center}
$y(i)_{\leq 0} \rightarrow y(e) _{\leq 0} \rightrightarrows y(e \otimes e) _{\leq 0}$
\end{center} 
of discrete representable sheaves is a limit diagram in $Sh^{\sets}(\ccat)$. 
\end{lemma}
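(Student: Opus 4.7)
The plan is to derive this statement as a direct specialization of the preceding Lemma (Descent for injections) to the case $c = i$ and $d = e$. That lemma immediately yields a limit diagram
\begin{equation*}
y(i)_{\leq 0} \to y(e)_{\leq 0} \rightrightarrows y(e \oplus_i e)_{\leq 0}
\end{equation*}
in $Sh^{\sets}(\ccat)$. The leftmost term is automatically the monoidal unit of $Sh^{\sets}(\ccat)$, because the functor $y(-)_{\leq 0}\colon \ccat \to Sh^{\sets}(\ccat)$ is symmetric monoidal (by the Corollary giving $Sh^{\sets}(\ccat)$ its Day-convolution symmetric monoidal structure), and it sends the monoidal unit $i$ of $\ccat$ to the monoidal unit of $Sh^{\sets}(\ccat)$.

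The remaining task, and the main substantive point, is to identify the pushout $e \oplus_i e$ in $\ccat$ with the tensor product $e \otimes e$, in such a way that the two canonical structure maps $e \to e \oplus_i e$ are matched with the maps $e \otimes f, f \otimes e\colon e \to e \otimes e$ induced by the embedding $f\colon i \to e$. My plan is to establish this identification via duality: since $e$ is dualizable in $\ccat$, the pushout $e \oplus_i e$ is dual to the pullback $e^{*} \times_i e^{*}$ in $\ccat$, formed along the covering $e^{*} \to i$ dual to $f$. The heart of the argument is then to identify this pullback with $e^{*} \otimes e^{*}$, using that $i$ is the monoidal unit and that tensoring is bilinear with respect to the additive structure.

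The main obstacle is precisely this identification of the pullback (or equivalently the pushout) with the tensor product. It is not automatic from the abstract properties of an excellent $\infty$-site, and requires careful bookkeeping of how the symmetric monoidal structure interacts with pullbacks over the unit. As an alternative route that avoids this step, one can forgo appealing to the preceding lemma and instead apply \v{C}ech descent directly to the covering $e^{*} \to i$, observing that the $n$-th level of the resulting \v{C}ech nerve simplifies to $(e^{*})^{\otimes n+1}$ when the augmentation target is the monoidal unit; dualising this colimit decomposition then produces the limit diagram of the statement. In either approach, the essential content is the interplay between the additive and symmetric monoidal structures in an excellent $\infty$-site when evaluated at the monoidal unit.
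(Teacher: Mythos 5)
There is a genuine gap, and it sits exactly where you flagged the main obstacle: the claim that the pushout $e \oplus_i e$ (equivalently, the pullback $e^* \times_i e^*$ along the cover $e^* \to i$) can be identified with the tensor product $e \otimes e$ (resp.\ $e^* \otimes e^*$). This is simply false in general. Take, for example, $\ccat = \ComodE^{fp}$ with $E_* = \mathbb{Z}$ and $e^* = \mathbb{Z} \oplus \mathbb{Z}$ covering $\mathbb{Z}$ by the fold map: then $e^* \times_{\mathbb{Z}} e^*$ has rank $3$, while $e^* \otimes e^*$ has rank $4$, so the natural comparison map is surjective but not an isomorphism. The same flaw infects your alternative route: the $n$-th level of the \v{C}ech nerve of $e^* \to i$ is the $(n{+}1)$-fold fiber product over $i$, not the $(n{+}1)$-fold tensor power, and these do not coincide even when the target is the unit.

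What the paper's proof actually does is weaker and more careful: it establishes only that the comparison map $y(e \oplus_i e)_{\leq 0} \to y(e \otimes e)_{\leq 0}$ is a \emph{monomorphism} (equivalently, that $y(e^* \otimes e^*)_{\leq 0} \to y(e^* \times_i e^*)_{\leq 0}$ is an epimorphism), and this is enough: post-composing the two parallel maps of an equalizer diagram with a monomorphism does not change the equalizer. To prove the epimorphism claim the paper passes to the abelian tensor category $Sh_{\Sigma}^{\sets}(\ccat)$, takes the kernel short exact sequence of the epimorphism $b \to i$, tensors it with itself, compares against the fiber product sequence, and runs the four-lemma. So the missing idea in your proposal is precisely the replacement of an (unobtainable) isomorphism by the correct one-sided statement, monomorphism after $0$-truncation, together with the observation that monomorphisms into the cokernel term of an equalizer preserve the equalizer. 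You need this extra step; the identification you propose cannot be made to work.
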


\begin{proof}
By \cref{lemma:descent_for_injections}, there is a limit diagram $y(i)_{\leq 0} \rightarrow y(e) _{\leq 0} \rightrightarrows y(e \oplus _{i} e) _{\leq 0}$, our claim is that one can replace $e \oplus _{i} e$ by $e \otimes e$. It is enough to show that $y(e \oplus_{i} e)_{\leq 0} \rightarrow y(e \otimes e)_{\leq 0}$ is a monomorphism, which would follow from the dual $y(e^{*} \otimes e^{*})_{\leq 0} \rightarrow y(e^{*} \times_{i} e^{*})_{\leq 0}$ being an epimorphism, as dualizable objects span $Sh^{\sets}(\ccat)$ under colimits. 

To show that $y(e^{*} \otimes e^{*})_{\leq 0} \rightarrow y(e^{*} \times_{i} e^{*})_{\leq 0}$ is an effective epimorphism, we can work in $Sh^{\sets}_{\Sigma}(\ccat)$, which is an abelian tensor category, as reflective coequalizers coincide in spherical and non-spherical sheaves. We will in fact prove that in any abelian tensor category whose tensor product is right exact in each variable, an epimorphism $b \rightarrow i$ onto the monoidal unit induces an epimorphism $b \otimes b \rightarrow b \times _{i} b$. 

Complete the given epimorphism to an exact sequence $0 \rightarrow k \rightarrow b \rightarrow i \rightarrow 0$. Using the right exactness of the tensor product, see \cite[3.1.20]{brandenburg2014tensor}, we deduce that there is an exact sequence of the form 

\begin{center} 
$(k \otimes b) \oplus (b \otimes k) \rightarrow b \otimes b \rightarrow i \otimes i \rightarrow 0$. 
\end{center}
The two maps $b \otimes b \rightarrow b$ induce a map from the above exact sequence to

\begin{center}
$k \times k \rightarrow b \times _{i} b \rightarrow i \times_{i} i \rightarrow c$, 
\end{center}
which is obtained by taking the kernel and cokernel of the map $b \times _{i} b \rightarrow i \times _{i} i$. In the map of these exact sequences, the left-most map is surjective, since $b \rightarrow i$ was and the tensor product preserves surjections, the third map from the left is an isomorphism, since $i \otimes i \simeq i \simeq i \times _{i} i$, and the right-most map is a mono, since the domain is zero. We deduce by the four-lemma that $b \otimes b \rightarrow b \times _{i} b$ is an epimorphism, which is what we wanted to show. 
\end{proof}

We will now work only with sheaves of sets so by abuse of notation, if $f \colon \ccat \rightarrow \dcat$ is a morphism of excellent $\infty$-sites, let us denote the induced adjunction by $f^{*} \dashv f_{*} \colon Sh^{\sets}(\ccat) \rightleftarrows Sh^{\sets}(\dcat)$. Here, $f_{*}$ is given by precomposition, while $f^{*}$ is the unique cocontinuous functor satisfying $f^{*}(y(c)_{\leq 0}) \simeq y(f(c))_{\leq 0}$ for $c \in \ccat$. We start with the following lemma.

\begin{lemma}
\label{lemma:in_a_morphism_of_excellent_sites_unit_isos_stable_under_tensor_product}
Let $f \colon \ccat \rightarrow \dcat$ be a morphism of excellent $\infty$-sites and suppose that $X \in Sh^{\sets}(\ccat)$ is such that the unit $X \rightarrow f_{*} f^{*} X$ is an isomorphism. Then, so is the unit of $y(c)_{\leq 0} \otimes X$ for any $c \in \ccat$. 
\end{lemma}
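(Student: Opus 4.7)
The plan is to establish a projection formula identifying $f^{*}f_{*}(y(c)_{\leq 0} \otimes X)$ with $y(c)_{\leq 0} \otimes f^{*}f_{*}X$ in a way that carries the unit at $y(c)_{\leq 0} \otimes X$ to $y(c)_{\leq 0}$ tensored with the unit at $X$, and then to invoke the hypothesis.

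First I would record the pointwise description of tensoring with a representable. By \textbf{Lemma \ref{lemma:tensoring_with_representable_same_as_precomposing_with_dual}} the endofunctor $y(c) \otimes -$ on presheaves is precomposition along $- \otimes c^{*}$, and since this operation preserves both the property of being a sheaf and of being discrete, the same formula descends to give $(y(c)_{\leq 0} \otimes Z)(c') \simeq Z(c' \otimes c^{*})$ for any $Z \in Sh^{\sets}(\ccat)$, and analogously in $Sh^{\sets}(\dcat)$.

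Next I would use that the left adjoint $f_{*}$ is strong symmetric monoidal: this follows from the fact that it is the unique cocontinuous extension of the symmetric monoidal composite $y(-)_{\leq 0} \circ f : \ccat \rightarrow Sh^{\sets}(\dcat)$ together with the universal property of the symmetric monoidal structure on $Sh^{\sets}(\ccat)$ recorded in \textbf{Corollary \ref{cor:symmetric_monoidal_structure_on_different_sheaf_variants}}. Thus $f_{*}(y(c)_{\leq 0} \otimes X) \simeq y(f(c))_{\leq 0} \otimes f_{*}X$. Combining this with the pointwise formula above and with the fact that the symmetric monoidal functor $f$ preserves duals, one computes
\[
f^{*}f_{*}(y(c)_{\leq 0} \otimes X)(c') \simeq f_{*}X(f(c') \otimes f(c)^{*}) \simeq f_{*}X(f(c' \otimes c^{*})) \simeq (y(c)_{\leq 0} \otimes f^{*}f_{*}X)(c').
\]

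Finally I would verify that, under this natural identification, the unit of $f_{*} \dashv f^{*}$ at $y(c)_{\leq 0} \otimes X$ is carried to $y(c)_{\leq 0} \otimes \eta_{X}$, where $\eta_{X}\colon X \rightarrow f^{*}f_{*}X$ is the unit at $X$; this is a formal diagram chase using the naturality of $\eta$ together with the coherence isomorphisms witnessing the strong monoidality of $f_{*}$. The assumption that $\eta_{X}$ is an isomorphism together with the functoriality of $y(c)_{\leq 0} \otimes (-)$ then gives the conclusion. The only genuinely delicate point is this last compatibility check; everything else reduces to the pointwise formula for $y(c)_{\leq 0} \otimes (-)$ and the monoidality of $f_{*}$.
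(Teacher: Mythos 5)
Your proposal is correct and takes essentially the same route as the paper's: reduce to a projection-formula isomorphism $f^{*}f_{*}(y(c)_{\leq 0}\otimes X)\simeq y(c)_{\leq 0}\otimes f^{*}f_{*}X$ via the description of $y(c)_{\leq 0}\otimes -$ as precomposition along $- \otimes c^{*}$, the symmetric monoidality of $f_{*}$, and $f$ preserving duals. You are somewhat more careful than the paper in flagging that one must also check the identification is compatible with the unit $\eta$ (the paper asserts "it's enough to show" the isomorphism without remarking on this), but as you note this is a formal consequence of $\eta$ being a monoidal natural transformation for the monoidal adjunction $f_{*}\dashv f^{*}$, so the two proofs are the same in substance.
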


\begin{proof}
It's enough to show that $f_{*} f^{*} (y(c)_{\leq 0} \otimes X) \simeq y(c)_{\leq 0} \otimes (f_{*} f^{*} X)$. Notice that by \cref{lemma:tensoring_with_representable_same_as_precomposing_with_dual}, $y(c)_{\leq 0} \otimes -$ can be described as precomposition along the functor $c^{\vee} \otimes - \colon \ccat \rightarrow \ccat$ of tensoring with the dual. 

Then, we have $f^{*} y(c)_{\leq 0} \otimes X \simeq y(f(c))_{\leq 0} \otimes f^{*} X$ since $f^{*}$ is symmetric monoidal, and $f_{*} (y(f(c))_{\leq 0} \otimes f^{*} X) \simeq y(c)_{\leq 0} \otimes f_{*} f^{*} X$ follows from $f(c^{\vee} \otimes -) \simeq f(c)^{\vee} \otimes f(-)$, as $f$ is symmetric monoidal and hence preserves duals. 
\end{proof}

\begin{prop}
\label{prop:the_unit_map_on_sheaves_of_sets_an_isomorphism}
Let $f \colon \ccat \rightarrow \dcat$ be a morphism of excellent $\infty$-sites which reflects covers and admits a common envelope. Then, the unit map $X \rightarrow f_{*} f^{*} X$ is an isomorphism for any $X \in Sh^{\sets}(\ccat)$. 
\end{prop}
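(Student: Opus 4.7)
The plan is to reduce first to representable sheaves $y(c)_{\leq 0}$ and then to build a descent-type equalizer from the common envelope. Both $f_{*}$ and $f^{*}$ are cocontinuous on $Sh^{\sets}(\ccat)$: the former as a left adjoint, and the latter because under our hypotheses \textbf{Proposition \ref{prop:cover_reflecting_morphism_of_excellent_sites_with_envelope_is_clp}} and \textbf{Corollary \ref{cor:precomposition_along_a_morphism_of_excellents_sites_with_common_envelope_commutes_with_shafification}} identify it with precomposition by $f$ followed by sheafification, both of which preserve colimits. Hence $f^{*}f_{*}$ is cocontinuous, and the full subcategory of sheaves on which the unit is an isomorphism is closed under colimits. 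Since the representables generate $Sh^{\sets}(\ccat)$ under colimits, I would reduce to the case $X = y(c)_{\leq 0}$ for $c \in \ccat$.

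Write $E := (\varinjlim y(e_{\alpha}))_{\leq 0}$. By \textbf{Lemma \ref{lemma:a_common_envelope_is_also_an_envelope_on_the_base}}, $\varinjlim e_{\alpha}$ is itself an envelope, so the monoidal unit $i$ admits an embedding $i \to e_{\beta}$ for some $\beta$. \textbf{Lemma \ref{lemma:descent_for_injections_from_the_monoidal_unit}} then produces an equalizer $y(i)_{\leq 0} \to y(e_{\beta})_{\leq 0} \rightrightarrows y(e_{\beta} \otimes e_{\beta})_{\leq 0}$, and passing to the filtered colimit over the cofinal system of such $\beta$ --- using that finite limits commute with filtered colimits in the Grothendieck 1-topos $Sh^{\sets}(\ccat)$ --- yields an equalizer $y(i)_{\leq 0} \to E \rightrightarrows E \otimes E$. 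Tensoring with the dualizable object $y(c)_{\leq 0}$ preserves limits, giving an equalizer
\begin{equation*}
y(c)_{\leq 0} \to y(c)_{\leq 0} \otimes E \rightrightarrows y(c)_{\leq 0} \otimes E \otimes E.
\end{equation*}

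Next I would verify the unit is an isomorphism on the two right-hand terms. Condition 3 in the definition of a common envelope, rephrased at the sheaf-of-sets level via the discrete descent clauses and \textbf{Remark \ref{rem:last_condition_in_the_definition_of_a_common_envelope}}, says precisely that the unit is an isomorphism on $E$. \textbf{Lemma \ref{lemma:in_a_morphism_of_excellent_sites_unit_isos_stable_under_tensor_product}} then gives the unit is an isomorphism on $y(c')_{\leq 0} \otimes E$ for every $c'$; writing $E \otimes E = \varinjlim_{\alpha} y(e_{\alpha})_{\leq 0} \otimes E$ and using cocontinuity of $f^{*}f_{*}$, I conclude the unit is an isomorphism on $E \otimes E$, and once more by \textbf{Lemma \ref{lemma:in_a_morphism_of_excellent_sites_unit_isos_stable_under_tensor_product}} on $y(c)_{\leq 0} \otimes E \otimes E$.

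The main obstacle will be to check that $f^{*}f_{*}$ carries the equalizer above to an equalizer; once this is in hand, comparison with the unit natural transformation forces the unit on $y(c)_{\leq 0}$ to be an isomorphism. Since $f^{*}$ is a right adjoint it preserves limits, so the question reduces to $f_{*}$, for which preservation is not automatic. The key point will be that $f_{*}$ is symmetric monoidal and cocontinuous: these properties identify its image with
\begin{equation*}
y(f(c))_{\leq 0} \to y(f(c))_{\leq 0} \otimes E' \rightrightarrows y(f(c))_{\leq 0} \otimes E' \otimes E',
\end{equation*}
where $E' := (\varinjlim y(f(e_{\alpha})))_{\leq 0}$. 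Since $\varinjlim f(e_{\alpha})$ is an envelope in $\dcat$ by condition 2 of the common envelope, applying the construction of the second paragraph inside $\dcat$ verifies that this diagram really is an equalizer, completing the argument.
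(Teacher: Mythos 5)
Your argument is essentially the paper's proof: the same reduction to representables, the same use of Lemma \ref{lemma:descent_for_injections_from_the_monoidal_unit}, Lemma \ref{lemma:in_a_morphism_of_excellent_sites_unit_isos_stable_under_tensor_product}, and the preservation of the equalizer by $f_{*}$ via the excellence of $\dcat$. The only cosmetic difference is that you pass to the colimit over $\beta$ before tensoring with $y(c)_{\leq 0}$ and before checking preservation by $f_{*}$, whereas the paper works at finite stages first and takes the colimit last; both arrangements rely on filtered colimits commuting with finite limits and are interchangeable.
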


\begin{proof}
Observe that both $f^{*}$ and $f_{*}$ preserve colimits, the latter by \cref{cor:precomposition_along_a_morphism_of_excellents_sites_with_common_envelope_commutes_with_shafification}. Since sheaves of the form $y(c)_{\leq 0}$ for $c \in \ccat$ generate the category of discrete sheaves under colimits, it's enough to verify that the unit map is an isomorphism in this case.

Let $\varinjlim e_{\alpha}$ be a common envelope for $f$. We put $e = \varinjlim y(e_{\alpha})_{\leq 0}$, notice that by assumption the unit map $e \rightarrow f_{*} f^{*} e$ is an isomorphism. Our goal will be to show that from this alone it follows that the unit map is always an isomorphism, we start by choosing an embedding $i \rightarrow \varinjlim e_{\alpha}$, where $i$ is the monoidal unit of $\ccat$. By making the diagram of $e_{\alpha}$ smaller if necessary we can assume that the given embedding is a colimit of compatible embeddings $i \rightarrow e_{\alpha}$. 

By tensoring the limit diagram of \cref{lemma:descent_for_injections_from_the_monoidal_unit} with $y(c)_{\leq 0}$, for any $c \in \ccat$ and any $\alpha$ we have a limit diagram 

\begin{center}
$y(c)_{\leq 0} \rightarrow y(e_{\alpha}) _{\leq 0} \otimes y(c)_{\leq 0} \rightrightarrows y(e_{\alpha}) _{\leq 0} \otimes y(e_{\alpha}) _{\leq 0} \otimes y(c)_{\leq 0} $.
\end{center}
Notice that this limit is preserved by $f^{*}$, as it is taken to an analogous diagram with $c, e_{\alpha}$ replaced by $f(c), f(e_{\alpha})$ and \cref{lemma:descent_for_injections_from_the_monoidal_unit} applies in $\dcat$ as well, as the latter is also assumed to be excellent. By passing to the colimit in $\alpha$, which is filtered and hence commutes with finite limits, we deduce that for any $c \in \ccat$ there is a limit diagram of the form 

\begin{center}
$y(c)_{\leq 0} \rightarrow \varinjlim \ y(e_{\alpha}) _{\leq 0} \otimes y(c)_{\leq 0} \rightrightarrows \varinjlim \ y(e_{\alpha}) _{\leq 0} \otimes y(e_{\alpha}) _{\leq 0} \otimes y(c)_{\leq 0}$,
\end{center}
which is likewise preserved by $f^{*}$ since the finite stages were and $f^{*}$ is cocontinuous. Its image is then a limit diagram which is necessarily preserved by $f_{*}$, as the latter is continuous. 

We deduce that for any $c \in \ccat$, the unit maps of $f^{*} \dashv f_{*}$ yield a transformation

\begin{center}
	% map of short exact sequences
	\begin{tikzpicture}
		\node (T2) at (2, 0) {$ y(c)_{\leq 0} $};
		\node (T3) at (5.5, 0) {$ \varinjlim \ y(e_{\alpha}) _{\leq 0} \otimes y(c)_{\leq 0}  $};
		\node (T4) at (11.5, 0) {$ \varinjlim \ y(e_{\alpha}) _{\leq 0} \otimes y(e_{\alpha}) _{\leq 0} \otimes y(c)_{\leq 0} $};
		
		\draw [->] (T2) to (T3);
		\draw [->] (T3) to (T4);
		
		\node (B2) at (2, -1) {$ f_{*} f^{*} y(c)_{\leq 0}$};
		\node (B3) at (5.5, -1) {$ f_{*} f^{*}  \varinjlim \ y(e_{\alpha}) _{\leq 0} \otimes y(c)_{\leq 0}  $};
		\node (B4) at (11.5, -1) {$ f_{*} f^{*} \varinjlim \ y(e_{\alpha}) _{\leq 0} \otimes y(e_{\alpha}) _{\leq 0} \otimes y(c)_{\leq 0} $};

		\draw [->] (B2) to (B3);
		\draw [->] (B3) to (B4);
		
		\draw [->] (T2) to (B2);
		\draw [->] (T3) to (B3);
		\draw [->] (T4) to (B4);
	\end{tikzpicture}
\end{center} 
of limit diagrams. Now, the middle vertical map can be identified with the unit map of 
\[
(\varinjlim y(e_{\alpha})_{\leq 0}) \otimes y(c) _{\leq 0},
\]
while the right-most one with the colimit along $\beta$ of unit maps of 
\[
\varinjlim y(e_{\alpha})_{\leq 0}) \otimes y(c)_{\leq 0} \otimes y(e_{\beta})_{\leq 0}.
\]
Both are isomorphisms by \cref{lemma:in_a_morphism_of_excellent_sites_unit_isos_stable_under_tensor_product}, since the unit map of $\varinjlim y(e_{\alpha})_{\leq 0}$ is by assumption, and we deduce the same is true for the unit map of $y(c)_{\leq 0}$. This ends the argument. 
\end{proof}

\begin{thm}
\label{thm:equivalence_of_categories_of_discrete_sheaves}
Let $f \colon \ccat \rightarrow \dcat$ be a morphism of excellent $\infty$-sites which reflects covers and admits a common envelope. Then, the induced adjunction $f^{*} \dashv f_{*} \colon Sh^{\sets}(\ccat) \rightleftarrows Sh^{\sets}(\dcat)$ between the categories of sheaves of sets is an adjoint equivalence.
\end{thm}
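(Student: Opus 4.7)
By Proposition \ref{prop:the_unit_map_on_sheaves_of_sets_an_isomorphism}, the unit of $f_{*} \dashv f^{*}$ is a natural isomorphism, so $f_{*}$ is fully faithful. The triangle identity $(f^{*}\epsilon) \circ (\eta f^{*}) = \mathrm{id}_{f^{*}}$ then forces $f^{*}\epsilon$ to be inverse to the isomorphism $\eta f^{*}$, and hence itself a natural isomorphism. To conclude that the counit $\epsilon \colon f_{*}f^{*} \to \mathrm{id}$ is a natural isomorphism---equivalently, that the adjunction is an adjoint equivalence---it therefore suffices to show that the right adjoint $f^{*}$ is conservative.

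To this end, I would fix a morphism $\phi \colon Y_{1} \to Y_{2}$ in $Sh^{\sets}(\dcat)$ with $f^{*}\phi$ an isomorphism, so that $\phi_{f(c)} \colon Y_{1}(f(c)) \to Y_{2}(f(c))$ is a bijection for every $c \in \ccat$, and verify that $\phi_{d}$ is a bijection for arbitrary $d \in \dcat$. The starting point is to produce a covering of $d$ by an object of $f(\ccat)$: applying the envelope property of $\varinjlim f(e_{\alpha})$ to the dual $d^{*}$ gives an embedding $d^{*} \to f(e_{\gamma})$ for some $\gamma$, which dualises---using $f(e_{\gamma})^{*} \simeq f(e_{\gamma}^{*})$ from symmetric monoidality of $f$---into a covering $f(e_{\gamma}^{*}) \to d$ with source in $f(\ccat)$. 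Iterating this argument, covering each successive matching object $M_{n}U \in \dcat$ by an object of $f(\ccat)$ via the same envelope argument, yields a hypercover $U_{\bullet} \to d$ all of whose levels lie in $f(\ccat)$; the relevant matching objects exist in $\dcat$ because an excellent $\infty$-site admits pullbacks along coverings.

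Because $Y_{1}$ and $Y_{2}$ are sheaves of sets they are $0$-truncated, and any $0$-truncated sheaf of spaces is automatically local with respect to $\infty$-connective morphisms, hence hypercomplete. Hyperdescent along $U_{\bullet}$ therefore gives
\[ Y_{i}(d) \simeq \varprojlim_{[n] \in \thickdelta} Y_{i}(U_{n}), \]
and since each $\phi_{U_{n}}$ is a bijection by hypothesis, the induced map on cosimplicial limits $\phi_{d}$ is a bijection as well. As $d$ was arbitrary, $\phi$ is an isomorphism of sheaves, so $f^{*}$ is conservative and the adjunction is an equivalence. The main obstacle I anticipate is the inductive construction of the hypercover $U_{\bullet}$ with levels in $f(\ccat)$: the envelope hypothesis together with symmetric monoidality of $f$ furnishes exactly the covers needed at each stage, but one must take some care to verify that matching objects genuinely exist in $\dcat$ and that the envelope argument can be applied to them uniformly.
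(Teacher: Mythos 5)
Your proof is correct, but it decomposes the problem differently from the paper, so a comparison is worthwhile. Both arguments begin identically: \textbf{Proposition \ref{prop:the_unit_map_on_sheaves_of_sets_an_isomorphism}} shows the unit is an isomorphism, so $f_*$ is fully faithful. From there the paper proves \emph{essential surjectivity of $f_*$} directly: using the common envelope, it produces a covering $f(c) \to d$, and then a second covering $f(c') \to f(c) \times_d f(c)$, to exhibit the representable sheaf $y(d)_{\leq 0}$ as a coequalizer of $y(f(c'))_{\leq 0} \rightrightarrows y(f(c))_{\leq 0}$, both in the image of $f_*$. You instead prove \emph{conservativity of $f^*$}: given $\phi$ with $f^*\phi$ invertible, you build a hypercover $U_\bullet \to d$ with every level in $f(\ccat)$ by iterating the envelope argument against successive matching objects (which exist by the pullback-along-covers axiom), and then conclude via hyperdescent that $\phi_d$ is an isomorphism. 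The logical passage from ``unit iso and $f^*$ conservative'' to ``adjoint equivalence'' via the triangle identity is correct. The two routes are dual — essential surjectivity of the left adjoint vs.\ conservativity of the right — and both are sound.

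What each buys: the paper's argument only needs the first two stages of the cover, is closer to the surrounding machinery in that section, and never invokes hypercompleteness. Your argument deploys the hypercover machinery from the appendix, which is heavier, but the conceptual framing (conservativity) is clean and the hypercover construction itself is a nice illustration that the common-envelope hypothesis gives control at every cosimplicial level, not just level zero. One fact you use that the paper does not state explicitly: that a $0$-truncated object of an $\infty$-topos is automatically hypercomplete. This is a standard consequence of \cite[6.5.1.12]{lurie_higher_topos_theory} (an $\infty$-connective morphism induces an equivalence on $\tau_{\leq n}$ for all $n$, hence on mapping spaces into any truncated object); since it is not recorded in the paper, you should cite it. Finally, note that since the sheaves involved are $0$-truncated, the cosimplicial limit in your hyperdescent step degenerates to an equalizer of the first two levels — so your argument, once unwound, uses exactly the same two covers that the paper produces, and the extra levels of the hypercover are along for the ride.
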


\begin{proof}
We've proven in \cref{prop:the_unit_map_on_sheaves_of_sets_an_isomorphism} that the unit of this adjunction is a natural isomorphism; it follows that $f^{*}$ is fully faithful and hence it is enough to show that it is essentially surjective. Being cocontinuous, the essential image is closed under colimits, and since we have $f^{*} y(c)_{\leq 0} \simeq y(f(c))_{\leq 0}$, we just have to check that the sheaves of the latter form generate $Sh^{\sets}(\dcat)$ under colimits. 

By assumption of the existence of the common envelope $\varinjlim e_{\alpha}$, any $d \in \dcat$ admits a embedding $d \rightarrow f(c)$ for some $c \in \ccat$, in fact, one can take $c$ to be one of the $e_{\alpha}$.Taking duals, we know that for any $d \in \dcat$ there exists a covering of the form $f(c) \rightarrow d$, this implies that there is a coequalizer in $Sh^{\sets}(\dcat)$ of the form 

\begin{center}
$y(f(c) \times _{d} f(c))_{\leq 0} \rightrightarrows y(f(c))_{\leq 0} \rightarrow y(d)_{\leq 0}$.
\end{center}
Simillarly, we can choose a $c^\prime$ such that there exists a covering $f(c^{\prime}) \rightarrow f(c) \times _{d} f(c)$, the same reasoning implies that $y(f(c^{\prime}))_{\leq 0} \rightarrow y(f(c) \times _{d} f(c))_{\leq 0}$ is an epimorphism, hence we can replace the latter with the former to obtain the colimit diagram 

\begin{center}
$y(f(c^{\prime}))_{\leq 0} \rightrightarrows y(f(c))_{\leq 0} \rightarrow y(d)_{\leq 0}$.
\end{center}
This shows that any discrete representable presheaf on $\dcat$ is a colimit of sheaves in the image of $f^{*}$. As the former generate $Sh^{\sets}(\dcat)$ under colimits, this ends the argument. 
\end{proof}

\begin{rem}
\label{rem:induced_adjoint_equivalence_on_discrete_spherical_sheaves}
In the adjunction induced by a morphism $f \colon \ccat \rightarrow \dcat$ of additive $\infty$-sites, both functors take spherical sheaves to spherical sheaves, see \cref{prop:morphism_of_additive_infinity_sites_induces_an_adjunction_on_spherical_sheaf_infinity_categories}. It follows that under the assumptions of \cref{thm:equivalence_of_categories_of_discrete_sheaves}, the adjunction $f^{*} \dashv f_{*}$ restricts to an adjoint equivalence $Sh_{\Sigma}^{\sets}(\ccat) \rightleftarrows Sh_{\Sigma}^{\sets}(\dcat)$.
\end{rem}

\subsection{Compactly generated Grothendieck categories}

In this section we give a description of any Grothendieck abelian category $\acat$ which admits a system of compact generators in the sense of \cref{defin:choice_of_compact_generators} as a category of spherical sheaves of sets. We use this to describe its derived $\infty$-category using sheaves of spectra. 

Throughout, we assume that $\acat$ is a Grothendieck category. In other words, $\acat$ is an abelian category which is presentable and such that filtered colimits in $\acat$ are exact. 

\begin{defin}
\label{defin:choice_of_compact_generators}
We say a subcategory $\pcat \hookrightarrow \acat$ is a \emph{system of compact generators} if it consists of compact objects, generates $\acat$ under colimits, contains zero and is closed under pullbacks along epimorphisms. 
\end{defin}

\begin{example}
If $R$ is a discrete ring, then the category $\Mod_{R}$ of $R$-modules is Grothendieck abelian, and the subcategory $\Mod^{fg, proj}_{R}$ of finitely generated, projective $R$-modules forms of system of compact generators. 

This is often not the only choice. For example, If $R$ is noetherian, then the subcategory $\Mod^{fg}_{R}$ of finitely generatede $R$-modules is also a system of compact generators. In particular, a given Grothendieck abelian category can admit many different systems of compact generators. 

Likewise, it is possible that $\acat$ doesn't admit any system of compact generators; for example, this is necessarily the case whenever $\acat$ is not compactly generated. 
\end{example}

\begin{defin}
\label{defin:epimorphism_topology_on_a_choice_of_compact_generators}
Let $\pcat$ be a system of compact generators. We say that a family of maps $\{ Q_{i} \rightarrow P \}$ in $\pcat$ with common target is an covering family in the \emph{epimorphism Grothendieck pretopology} if and only if it consists of a single epimorphism. 
\end{defin}
It is immediate to prove that this is indeed a Grothendieck pretopology and we leave it to the reader, note that the needed pullbacks along covering families exist by our assumption of $\pcat$ being closed under such pullbacks. Moroever, $\pcat$ is clearly an additive $\infty$-site in the sense of \cref{defin:additive_infinity_site}. 

\begin{rem}
It might be tempting to endow $\pcat$ with a stronger Grothendieck pretopology where a family $\{ Q_{i} \rightarrow P \}$ of maps of is a covering whenever $\bigoplus M_{i} \rightarrow N$ is a surjection, without requiring the family to consist of only a single map. One can verify easily that this stronger topology is usually not subcanonical, even in the case of the category $\mathrm{Vect}_{k}^{fd}$ of finite-dimensional vector spaces over a field.
\end{rem}

Note that the topology on $\pcat$ is in fact a restriction of an analogous epimorphism topology on all of $\acat$, which is often used in proving that any abelian category can be embedded in an exact way into a category of sheaves of abelian groups, in fact, one needs much less than abelian, see \cite{buhler2010exact}[A.1]. The result we're looking is a more precise variant of that statement. 

\begin{defin}
The \emph{projectively generated envelope} of $\acat$ with respect to a system of compact generators $\pcat$ is the category $P_{\Sigma}^{\sets}(\pcat)$ of spherical presheaves of sets on $\pcat$.
\end{defin}

The terminology comes from the fact that the images of the objects of $\pcat$ are projective inside the projectively generated envelope, as the following proposition shows. 

\begin{prop}
\label{prop:projective_envelope_grothendieck_abelian_with_projective_generators}
The projectively generated envelope $P_{\Sigma}^{\sets}(\pcat)$ is Grothendieck abelian, generated by the set of compact, projective generators $y(P)$ where $P \in \pcat$. 
\end{prop}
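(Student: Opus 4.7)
The plan is to identify $P_{\Sigma}^{\sets}(\pcat)$ with the classical category $\Fun^{\mathrm{add}}(\pcat^{op}, \abeliangroups)$ of additive functors to abelian groups, and then verify each of the required properties by transporting standard facts across this equivalence.

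First, I would note that because $\pcat$ is additive, the discrete analogue of \textbf{Lemma \ref{lemma:spherical_presheaves_form_an_additive_category}} applies: any spherical set-valued presheaf on $\pcat$ canonically lifts to an additive presheaf of abelian groups, and there is a natural equivalence $P_{\Sigma}^{\sets}(\pcat) \simeq P_{\Sigma}^{\abeliangroups}(\pcat)$, the right-hand side being identified with the category of additive functors $\pcat^{op} \rightarrow \abeliangroups$. The proof of this identification is a direct translation of the argument used in \textbf{Proposition \ref{prop:tstructure_on_spherical_sheaves_of_spectra}} to recognise the heart of the $t$-structure, so I would only briefly indicate it.

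Once this identification is made, the verification proceeds levelwise. Because finite products and finite coproducts of spherical presheaves are computed levelwise and agree (by additivity), and because colimits in presheaf categories are likewise levelwise, kernels, cokernels, and arbitrary colimits in $P_{\Sigma}^{\sets}(\pcat)$ are all inherited from $\abeliangroups$. The abelian axioms (exactness of $\mathrm{im} = \mathrm{coim}$) and the Grothendieck axiom (exactness of filtered colimits) then reduce to the corresponding facts for abelian groups. For the Grothendieck axiom one needs that filtered colimits of spherical presheaves remain spherical, which is immediate since filtered colimits commute with finite products in $\abeliangroups$.

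Next, Yoneda gives $\Hom_{P_{\Sigma}^{\sets}(\pcat)}(y(P), F) \cong F(P)$ for any spherical presheaf $F$ and any $P \in \pcat$ (this uses that $y(P)$ really is spherical, which holds because $\pcat$ is additive and $y$ preserves finite products automatically). Since exactness and filtered colimits in $P_{\Sigma}^{\sets}(\pcat)$ are detected levelwise, the functor $F \mapsto F(P)$ is exact and commutes with filtered colimits; hence each $y(P)$ is compact and projective. Finally, any spherical presheaf is a sifted colimit of representables by the universal property of $P_{\Sigma}(\pcat)$ recalled in the introduction (\cite{lurie_higher_topos_theory}[5.5.8.15]), so the set $\{y(P) : P \in \pcat\}$ is a set of generators, and $\pcat$ being small makes this a genuine set.

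The only mild subtlety is ensuring that everything in sight — in particular the sifted colimits used to express a spherical presheaf in terms of representables, and filtered colimits in the Grothendieck axiom — may be computed levelwise; this is the one place where one genuinely uses the combination of additivity of $\pcat$ with the fact that sifted colimits commute with finite products in $\sets$, but it is otherwise a routine check rather than a real obstacle.
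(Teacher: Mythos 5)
Your proof is correct and takes essentially the same route as the paper: identify spherical set-valued presheaves with additive $\abeliangroups$-valued presheaves via additivity of $\pcat$, verify the abelian and Grothendieck axioms levelwise, and use Yoneda together with the levelwise computation of filtered colimits and reflexive coequalizers to deduce that each $y(P)$ is compact and projective, with generation coming from the universal property of $P_{\Sigma}(\pcat)$. The paper's proof is terser (it appeals to the projective envelope being an abelian subcategory of $P^{\abeliangroups}(\pcat)$ and says "one easily sees"), but the underlying structure of the argument is the same.
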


\begin{proof}
Notice that since $\pcat$ is additive, any spherical presheaf can be canonically lifted to a presheaf of abelian group and we have an equivalence $P_{\Sigma}^{\sets}(\pcat) \simeq P_{\Sigma}^{\abeliangroups}(\pcat)$. One then easily sees that the projectively generated envelope is an abelian subcategory of $P^{\abeliangroups}(\pcat)$, it is clearly generated by $y(P)$ under colimits. 

By the Yoneda lemma, $\Hom(y(P), X) \simeq X(P)$. It is standard that filtered colimits and reflexive coequalizers in $P_{\Sigma}^{\sets}(\pcat)$ are computed levelwise, hence we deduce that $\Hom(y(P), -)$ preserves these types of colimits. It follows immediately that $y(P)$ is compact and projective.
\end{proof}

\begin{prop}
\label{prop:sheaves_as_an_exact_localization}
The sheafification functor $L \colon P_{\Sigma}^{\sets}(\pcat) \rightarrow Sh_{\Sigma}^{\sets}(\pcat)$ presents the category of spherical sheaves of sets as an exact, accessible localization of the projectively generated envelope. In particular, $Sh_{\Sigma}^{\sets}(\pcat)$ is also Grothendieck abelian.
\end{prop}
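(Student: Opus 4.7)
The plan is to deduce the statement from the $\infty$-categorical analogue given in Corollary~\ref{cor:spherical_sheaves_as_localization} by restricting to discrete ($0$-truncated) objects on both sides. Since $\pcat$ is additive, any spherical presheaf of sets lifts canonically to a presheaf of abelian groups, and I would identify $P_\Sigma^\sets(\pcat)$ and $Sh_\Sigma^\sets(\pcat)$ with the full subcategories of discrete objects in $P_\Sigma(\pcat)$ and $Sh_\Sigma(\pcat)$ respectively. The sheafification $L_\infty : P_\Sigma(\pcat) \to Sh_\Sigma(\pcat)$ is by Corollary~\ref{cor:spherical_sheaves_as_localization} an accessible, left exact localization, and because left exact functors preserve $n$-truncated objects, $L_\infty$ restricts to a functor $L: P_\Sigma^\sets(\pcat) \to Sh_\Sigma^\sets(\pcat)$ which is left adjoint to the inclusion.

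Next I would check the two adjectives. Accessibility of $L$ follows from accessibility of $L_\infty$ together with accessibility of the inclusion $P_\Sigma^\sets(\pcat) \hookrightarrow P_\Sigma(\pcat)$, since discrete objects are exactly those local with respect to a small set of maps. For exactness, $L$ is a left adjoint, hence preserves all colimits; moreover its extension $L_\infty$ is left exact, hence $L$ preserves finite limits when computed in the $\infty$-categorical target. Inside a $1$-category all limits and colimits agree with their $\infty$-categorical counterparts, so $L$ preserves both finite limits and finite colimits, which is exactness in the abelian sense.

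For the Grothendieck abelian conclusion I would proceed as follows. The projective envelope $P_\Sigma^\sets(\pcat)$ is Grothendieck abelian by Proposition~\ref{prop:projective_envelope_grothendieck_abelian_with_projective_generators}; cocompleteness of $Sh_\Sigma^\sets(\pcat)$ follows immediately from the existence of the left adjoint $L$, with colimits computed by sheafifying the corresponding presheaf colimits. A generating set is provided by $\{L(y(P))\}_{P\in\pcat}$: any sheaf $X$ receives an epimorphism from a coproduct of such generators because the same is true in presheaves and $L$ is exact. Finally, filtered colimits of spherical sheaves on an additive $\infty$-site are computed levelwise by Corollary~\ref{cor:filtered_colimits_in_spherical_sheaves_on_an_additive_site_computed_levelwise}, so filtered colimits in $Sh_\Sigma^\sets(\pcat)$ are exact because filtered colimits of sets are.

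The main obstacle I anticipate is the passage from the $\infty$-categorical left exactness of $L_\infty$ to honest abelian exactness of $L$: one must be careful that what we call a kernel in $P_\Sigma^\sets(\pcat)$ really agrees with the $\infty$-categorical pullback against the zero map, so that $L$ preserving the latter entails preserving the former. This is however automatic once both categories are viewed as full subcategories of discrete objects in a presentable $\infty$-category, since $n$-truncated limits coincide with ordinary limits in that setting. The other bookkeeping — generators and exactness of filtered colimits — reduces cleanly to results already proved in the preceding sections.
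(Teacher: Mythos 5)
Your argument is correct and follows essentially the same route as the paper, which simply observes that sheafification preserves sphericity (Proposition~\ref{prop:sphercity_preserved_by_sheafification}) and is left exact and accessible, so the restriction to discrete objects gives the needed localization. The paper leaves the Grothendieck-abelian conclusion to the reader as an immediate consequence of being an exact accessible localization of a Grothendieck category; your explicit verification of the generating set and of exactness of filtered colimits is compatible with and fills in that step.
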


\begin{proof}
We've proven in \cref{prop:sphercity_preserved_by_sheafification} that the sheafification functor takes spherical presheaves to spherical presheaves, hence it gives the needed exact localization.
\end{proof}

\begin{thm}[Goerss-Hopkins]
\label{thm:compactly_generated_grothendieck_category_as_spherical_sheaves}
Let $\acat$ be Grothendieck abelian with a system of compact generators $\pcat$. Then, the restricted Yoneda embedding $y \colon \acat \rightarrow P_{\Sigma}^{\sets}(\pcat)$ given by the formula $y(M)(P) = \Hom(P, M)$ is fully faithful. Moreover, its essential image is exactly the category of spherical sheaves with respect to the epimorphism topology, so that there is an induced equivalence $\acat \simeq Sh_{\Sigma}^{\sets}(\pcat)$. 
\end{thm}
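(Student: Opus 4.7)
The plan is to verify three assertions in turn: that $y(M) = \Hom(-, M)$ lands in $Sh_{\Sigma}^{\sets}(\pcat)$, that the resulting functor $y \colon \acat \to Sh_{\Sigma}^{\sets}(\pcat)$ is fully faithful, and that it is essentially surjective. Sphericity of $y(M)$ is automatic from additivity of $\pcat$ and of $\Hom(-, M)$. For the sheaf condition I would invoke the recognition theorem \textbf{Theorem \ref{thm:recognition_of_spherical_sheaves}}: covers in the epimorphism topology consist of single morphisms, so it suffices to check that for every fibre sequence $F \to B \to A$ in $\pcat$ with $B \to A$ an epimorphism (equivalently, a short exact sequence $0 \to F \to B \to A \to 0$ in $\acat$), the sequence $\Hom(A, M) \to \Hom(B, M) \to \Hom(F, M)$ is a fibre sequence of abelian groups, which is precisely left-exactness of $\Hom(-, M)$.

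For fully faithfulness, I would invert the natural map $\Hom_{\acat}(M, N) \to \Hom_{P_{\Sigma}^{\sets}(\pcat)}(y(M), y(N))$ by resolving $M$ through compact generators. Since $\pcat$ generates $\acat$ under colimits, there is an exact presentation $\bigoplus_{j} Q_{j} \xrightarrow{\phi} \bigoplus_{i} P_{i} \xrightarrow{\pi} M \to 0$ with $P_{i}, Q_{j} \in \pcat$. Given $f \colon y(M) \to y(N)$, the restrictions $\pi_{i} \colon P_{i} \hookrightarrow \bigoplus P_{i} \xrightarrow{\pi} M$, regarded as elements of $y(M)(P_{i})$, map under $f_{P_{i}}$ to morphisms $g_{i} \colon P_{i} \to N$ that assemble into $g \colon \bigoplus P_{i} \to N$. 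For each $j$, naturality of $f$ along every component $\phi_{ji} \colon Q_{j} \to P_{i}$ gives $f_{Q_{j}}(\pi_{i} \phi_{ji}) = g_{i} \phi_{ji}$; summing over $i$ and using $\pi \phi = 0$ together with the automatic additivity of $f$ (maps of spherical presheaves on an additive category are additive) forces $g\phi = 0$, so $g$ factors uniquely through $M = \mathrm{coker}\,\phi$. Independence from the choice of resolution follows by comparing two presentations through a common refinement, and the composite $\Hom_{\acat}(M, N) \to \Hom_{P_{\Sigma}^{\sets}}(y(M), y(N)) \to \Hom_{\acat}(M, N)$ is the identity by inspection.

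For essential surjectivity, I would apply the same resolution strategy internally to $Sh_{\Sigma}^{\sets}(\pcat)$. The representables $y(P)$ remain generators after sheafification, as $Sh_{\Sigma}^{\sets}(\pcat)$ is an exact localization of the projective envelope by \textbf{Proposition \ref{prop:sheaves_as_an_exact_localization}}, so any $X \in Sh_{\Sigma}^{\sets}(\pcat)$ admits an exact presentation $\bigoplus y(Q_{j}) \to \bigoplus y(P_{i}) \to X \to 0$. Compactness of the $P_{i}$ and $Q_{j}$ gives $\bigoplus y(P_{i}) \simeq y(\bigoplus P_{i})$ levelwise, so the already-established fully faithfulness lifts the connecting arrow to a morphism $\psi \colon \bigoplus Q_{j} \to \bigoplus P_{i}$ in $\acat$; setting $M := \mathrm{coker}\,\psi$, universality produces a canonical map $X \to y(M)$. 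Testing against $y(P)$ for $P \in \pcat$ identifies the two sides: $\Hom_{Sh}(y(P), X) = X(P)$ by Yoneda, while $\Hom_{Sh}(y(P), y(M)) = \Hom_{\acat}(P, M)$ by full faithfulness. The main obstacle lies in matching these two sides: the naive presheaf cokernel $\Hom(P, \bigoplus P_{i})/\mathrm{image}$ disagrees with $\Hom(P, M)$ when $P$ is not projective (only left exactness of $\Hom(P, -)$ holds), so bridging the gap requires the sheaf axiom to let covering-local lifts of $P \to M$ through $\bigoplus P_{i}$ in the epimorphism topology assemble back to matching elements of $X(P)$ and $y(M)(P)$.
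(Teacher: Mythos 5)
Your verification that $y(M)$ is a spherical sheaf via the recognition theorem is correct and matches the paper in spirit, and your observation that full faithfulness can be attempted by resolving $M$ through compact generators is reasonable. However, both the full faithfulness and the essential surjectivity steps have the same underlying gap, and you only acknowledge it in the second: you are implicitly using that $y$ preserves cokernels (equivalently, that it takes epimorphisms in $\acat$ to epimorphisms of sheaves), which you never establish, and which is precisely the nontrivial content.

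For full faithfulness, constructing $\tilde g \colon M \to N$ out of $f \colon y(M) \to y(N)$ via a presentation is fine, but you still must show $y(\tilde g) = f$, and this is \emph{not} ``by inspection''. For a general $h \colon P \to M$ with $P \in \pcat$, $h$ need not lift through $\pi \colon \bigoplus P_i \to M$, so you cannot directly compute $f_P(h)$ from the $g_i$. What saves you is the sheaf axiom: there is a covering $q \colon Q \to P$ in $\pcat$ along which $hq$ does lift (take a finite sub-sum of a cover of the pullback $P \times_M \bigoplus P_i$ using compactness of $P$ and the generating property of $\pcat$), and since $q$ is an epimorphism, $\Hom(P,N) \to \Hom(Q,N)$ is injective, forcing $f_P(h) = \tilde g \circ h$. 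This covering argument is exactly what is missing from your sketch.

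For essential surjectivity you name the obstacle, but the missing lemma is again the same: you need $y$ to be right exact, so that $y(\operatorname{coker}\psi)$ \emph{is} the sheaf-theoretic cokernel of $y(\bigoplus Q_j) \to y(\bigoplus P_i)$. The paper proves exactly this: given an epimorphism $M \to N$ in $\acat$ and a section $x \in y(N)(P)$, one lifts $x$ along a cover $Q \to P$ obtained by taking a finite sub-sum of generators surjecting onto $P \times_N M$. Once $y$ is exact and one checks it preserves filtered colimits (compactness of $P$ gives that $y$ takes filtered colimits to levelwise colimits, and those are colimits of spherical sheaves), $y$ is cocontinuous; then its essential image is closed under colimits and contains the $y(P)$, which generate, so surjectivity is formal, and full faithfulness also becomes a one-line colimit argument reducing to Yoneda on $\pcat$. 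In short, the structural difference is that the paper front-loads the covering argument into a single exactness lemma, after which everything is categorical, whereas your route scatters the same difficulty across both the full faithfulness and essential surjectivity steps and resolves neither.
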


\begin{proof}
This result is proven as \cite[2.1.12]{moduli_problems_for_structured_ring_spectra} in the context of comodules. The same proof works here, but we give it for completeness. We first verify that for any $M \in \acat$, $y(M) \in Sh_{\Sigma}^{\sets}(\pcat)$. Since $y(M)$ is clearly spherical, it is enough to verify the sheaf condition, but it follows immediately from the fact that in abelian category any epimorphism is effective. 

We will now show that $y \colon \acat \rightarrow Sh_{\Sigma}^{\sets}(\pcat)$ is an exact functor. It is clearly left exact, since in sheaf categories kernels are computed levelwise and $\Hom(P, -)$ preserves kernels for any $P \in \pcat$. We have to check that it is right exact, it is enough to show it takes epimorphisms to epimorphisms. 

Let $M \rightarrow N$ be an epimorphism in $\acat$. To show that $y(M) \rightarrow y(N)$ is an epimorphism, it is enough to show that for any $P \in \pcat$ and any element $x \in y(N)(P)$, there is a covering $p \colon Q \rightarrow P$ such that $p^{*}x \in y(N)(Q)$ can be lifted to $y(M)(Q)$. This is the same as showing that for any map $P \rightarrow N$ there is some cover $Q \rightarrow P$ such that there is a commutative diagram 

\begin{center}
	\begin{tikzpicture}
		\node (TL) at (0, 1) {$ Q $};
		\node (TR) at (1, 1) {$ M $};
		\node (BL) at (0, 0) {$ P $};
		\node (BR) at (1, 0) {$ N $};
		
		\draw [->] (TL) to (TR);
		\draw [->] (TL) to (BL);
		\draw [->] (TR) to (BR);
		\draw [->] (BL) to (BR);
	\end{tikzpicture}.
\end{center}
Consider the pullback $P \times _{N} M$ which clearly fits into the left top spot of a diagram as above and notice that $P \times _{N} M \rightarrow P$ is an epimorphism. Since we assumed that $\pcat$ generates $\acat$ under colimits, there is a collection of maps $Q_{\alpha} \rightarrow P \times _{N} M$ such that the map $\bigoplus Q_{\alpha} \rightarrow P \times_{N} M$ from the direct sum is an epimorphism. It follows that the composite $\bigoplus Q_{\alpha} \rightarrow P\times _{N} M \rightarrow P$ is also an epimorphism. Since $P$ is compact, there is a finite number of indices $\alpha_{1}, \ldots, \alpha_{k}$ such that $Q_{\alpha_{1}} \oplus \ldots \oplus Q_{\alpha_{k}} \rightarrow P$ is an epimorphism and this is the cover we need. 

Observe that $y \colon \acat \rightarrow Sh_{\Sigma}^{\sets}(\pcat)$ also preserves filtered colimits. Indeed, since all $P \in \pcat$ are compact, it follows that $y$ takes filtered colimits in $\acat$ to levelwise filtered colimits. This is enough, as filtered colimits in spherical sheaves of sets are computed levelwise, as both the sheaf and sphericity conditions are expressed using finite limits. It follows that $y$ is in fact cocontinuous, because we've verified exactness above. Since by \cref{prop:projective_envelope_grothendieck_abelian_with_projective_generators} objects of the form $y(P)$ generate $P_{\Sigma}^{\sets}(\pcat)$ under colimits, the same is true for the localization $Sh_{\Sigma}^{\sets}(\pcat)$ and we deduce that all spherical sheaves are in the essential image of $y$. 

We are now only left with showing that $y \colon \acat \rightarrow Sh_{\Sigma}^{\sets}(\pcat)$ is fully faithful. In other words, we have to show that for any $M, N \in \acat$, the map $\Hom(M, N) \rightarrow \Hom(y(M), y(N))$ is a bijection. We consider $N$ as fixed, we will show that the collection of $M$ such that this map is an isomorphism is all of $\acat$. By the Yoneda lemma, it is clearly an isomorphism when $M \in \pcat$ and since it is closed under colimits by cocontinuity, the result follows.
\end{proof}

\begin{rem}
\label{rem:grothendieck_category_generated_by_compact_projectives_as_a_presheaf_category}
Note that if all objects of $\pcat$ are projective, then any epimorphism between them is split and it follows that any spherical presheaf on $\pcat$ is already a sheaf. Then, \cref{thm:compactly_generated_grothendieck_category_as_spherical_sheaves} implies that $\acat \simeq P_{\Sigma}^{\sets}(\pcat)$, so that $\acat$ coincides with its projectively generated envelope. 
\end{rem}

We move on to derived $\infty$-categories. Recall that in \cite[1.3.5]{higher_algebra}, following the ideas of Spaltenstein \cite{spaltenstein1988resolutions}, Lurie constructs $\dcat(\acat)$, the unbounded derived $\infty$-category, as the differential graded nerve of $Ch(\acat)^{\circ}$, the category of unbounded chain complexes in $\acat$ that are fibrant in the injective model structure. The $\infty$-category $\dcat(\acat)$ is stable and admits a right complete $t$-structure $(\dcat_{\geq 0}(\acat), \dcat_{\leq 0}(\acat))$ whose heart is equivalent to $\acat$, the equivalence established by taking homology.

Our goal is to describe $\dcat(\acat)$ in terms of sheaves on a chosen system of compact generators. We first tackle the special case when $\acat$ is generated by compact \emph{projectives}. 

\begin{lemma}
\label{lemma:connective_derived_category_of_a_grothendieck_cat_generated_by_projectives}
Suppose that $\acat$ admits a system $\pcat$ of compact generators all of which are projective. Then, the inclusion $\pcat \hookrightarrow \acat$ induces a unique equivalence $P_{\Sigma}(\pcat) \simeq \dcat_{\geq 0}(\acat)$ between the connective part and the $\infty$-category of spherical presheaves on $\pcat$.  
\end{lemma}

\begin{proof}
This appears in \cite{higher_algebra}[1.3.3.14] in only a slightly different form. Consider the Quillen model structure on $P_{\Sigma}^{s\sets}(\pcat)$, the category of product-preserving presheaves of simplicial sets. By a theorem of Bergner, see \cite[5.5.9.3]{lurie_higher_topos_theory}, \cite{bergner_rigidification_of_algebras}, this is a simplicial model category whose underlying $\infty$-category can be identified with $P_{\Sigma}(\pcat)$. In other words, we have an equivalence of $\infty$-categories $P_{\Sigma}(\pcat) \simeq N(P_{\Sigma}^{s\sets}(\pcat)^{\circ})$ between spherical presheaves of spaces and the coherent nerve of the category of fibrant-cofibrant spherical presheaves of simplicial sets. 

By \cref{rem:grothendieck_category_generated_by_compact_projectives_as_a_presheaf_category}, $P_{\Sigma}^{\sets}(\pcat) \simeq \acat$, so that $P_{\Sigma}^{s\sets}(\pcat) \simeq s \acat \simeq Ch_{\geq 0}(\acat)$, where the latter equivalence is the Dold-Kan correspondence, one sees that all objects of $Ch_{\geq 0}(\acat)$ are fibrant and that an object is cofibrant if and only if it is a complex of projectives. By \cite[1.3.2.22]{higher_algebra}, $\dcat_{\geq 0}(\acat)$ can be described as the coherent nerve of the category of complexes of projectives and we deduce it must be equivalent to $P_{\Sigma}(\pcat)$. Tracing through the identifications we see that this is the equivalence we need, it is necessarily unique since $P_{\Sigma}(\pcat)$ is generated by the image of $\pcat$ under colimits. 
\end{proof}

\begin{lemma}
\label{lemma:derived_category_of_a_grothendieck_category_generated_by_compact_projectives}
Suppose that $\acat$ admits a system $\pcat$ of compact generators all of which are projective. Then, the inclusion $\pcat \hookrightarrow \acat$ induces a unique equivalence $P_{\Sigma}^{\spectra}(\pcat) \simeq \dcat(\acat)$ between the derived $\infty$-category and the $\infty$-category of spherical presheaves of spectra on $\pcat$.  
\end{lemma}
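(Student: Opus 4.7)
The plan is to stabilize the equivalence from the previous lemma. First I would observe that $P_{\Sigma}^{\spectra}(\pcat)$ is the stabilization of $P_{\Sigma}(\pcat)$, by exactly the argument of \textbf{Proposition \ref{prop:sheaves_of_spectra_as_stabilization}} applied in the trivial-topology case (where every presheaf is a sheaf): the levelwise $\omegainfty$ functors yield an equivalence $P_{\Sigma}^{\spectra}(\pcat) \simeq \varprojlim(\cdots \xrightarrow{\Omega} P_{\Sigma}(\pcat)_{*})$. The analogues of \textbf{Propositions \ref{prop:tstructure_on_spherical_sheaves_of_spectra}} and \textbf{\ref{prop:spherical_sheaves_canonically_lift_to_sheaves_of_spectra}}, whose proofs go through verbatim after dropping every reference to sheafification, then equip $P_{\Sigma}^{\spectra}(\pcat)$ with a right complete $t$-structure compatible with filtered colimits whose connective part is identified with $P_{\Sigma}(\pcat)$ via the fully faithful $\sigmainfty$. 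On the other side, Lurie's construction recalled above equips $D(\acat)$ with a right complete $t$-structure compatible with filtered colimits whose connective part is $D_{\geq 0}(\acat)$.

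Next I would produce the comparison functor $\Phi: P_{\Sigma}^{\spectra}(\pcat) \to D(\acat)$. The $\infty$-category $P_{\Sigma}^{\spectra}(\pcat)$ has the universal property of being freely generated under colimits by $\pcat$ among presentable stable $\infty$-categories receiving direct-sum-preserving functors from $\pcat$, so the composite $\pcat \hookrightarrow \acat \simeq D(\acat)^{\heartsuit} \hookrightarrow D(\acat)$, which preserves direct sums, induces a unique cocontinuous $\Phi$ sending $\sigmainfty y(P) \mapsto P$. This $\Phi$ is right-$t$-exact because it is cocontinuous and its generators $\sigmainfty y(P)$ land in the heart of $D(\acat)$, while $D_{\geq 0}(\acat)$ is closed under colimits. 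The restriction $\Phi|_{P_{\Sigma}(\pcat)}$ is then a cocontinuous functor $P_{\Sigma}(\pcat) \to D_{\geq 0}(\acat)$ sending $y(P) \mapsto P$, which by the uniqueness clause of the previous lemma must coincide with the equivalence established there; in particular this restriction is already an equivalence.

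The main obstacle, and final step, is to promote the equivalence on connective parts to an equivalence on the full stable $\infty$-categories. Here right-completeness of both $t$-structures is decisive: for any object $X$ one has $X \simeq \varinjlim_n \tau_{\geq -n} X \simeq \varinjlim_n \Sigma^{-n} \tau_{\geq 0}(\Sigma^n X)$, exhibiting $X$ as a filtered colimit of desuspensions of connective objects. Since $\Phi$ is cocontinuous, commutes with $\Sigma$ and $\Sigma^{-1}$, and is an equivalence on the connective part (hence commutes with the truncation $\tau_{\geq 0}$), applying $\Phi$ to this expression for $X$ yields the analogous expression for $\Phi(X)$; it follows that $\Phi$ is an equivalence on every object. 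This produces the desired equivalence $P_{\Sigma}^{\spectra}(\pcat) \simeq D(\acat)$, whose uniqueness is immediate from the universal property of $P_{\Sigma}^{\spectra}(\pcat)$.
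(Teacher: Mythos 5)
Your overall strategy matches the paper's: both sides are stabilizations of their connective parts, so the equivalence of \textbf{Lemma \ref{lemma:connective_derived_category_of_a_grothendieck_cat_generated_by_projectives}} should promote to the stable level. The paper dispatches this in one line by invoking that $D(\acat)$ is the stabilization of $D_{\geq 0}(\acat)$, which it attributes to right-completeness of the $t$-structure (\cite[1.3.5.21]{higher_algebra}); applying the stabilization functor to the equivalence of connective parts then immediately produces the desired equivalence, with no need to verify anything further about the comparison functor. You instead build the comparison $\Phi$ by hand from the universal property and then try to bootstrap via right-completeness.

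The bootstrap step contains a genuine gap: the parenthetical ``hence commutes with the truncation $\tau_{\geq 0}$'' does not follow from $\Phi$ restricting to an equivalence on connective parts. That restriction only gives right $t$-exactness; commuting with $\tau_{\geq 0}$ requires $\Phi$ to be $t$-exact, i.e.\ also to send coconnective objects to coconnective objects, and this is not formal. (One can check it here, but it requires an argument: for instance, observe that the right adjoint $\Psi$, given by $\Psi(M)(P) \simeq \Map_{D(\acat)}(P, M)$, preserves connectivity because $\Ext^k(P, -)$ vanishes for $k > 0$ when $P$ is projective, which makes $\Phi$ left $t$-exact.) Without this the identity $X \simeq \varinjlim_n \Sigma^{-n}\tau_{\geq 0}(\Sigma^n X)$ is not visibly carried to the corresponding identity for $\Phi(X)$, and the final ``it follows that $\Phi$ is an equivalence'' also deserves a sentence — having a formula for $\Phi(X)$ in terms of $\Phi|_{\geq 0}$ lets you write down an inverse functor $Y \mapsto \varinjlim_n \Sigma^{-n}(\Phi|)^{-1}\tau_{\geq 0}(\Sigma^n Y)$, but you should say so. The paper's route, treating the whole step as an application of the functoriality of $\mathrm{Sp}(-)$, avoids both issues and is the simpler path.
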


\begin{proof}
This equivalence is induced from that of \cref{lemma:connective_derived_category_of_a_grothendieck_cat_generated_by_projectives} by stabilizing both sides. We have proven that $P_{\Sigma}^{\spectra}(\pcat)$ is the stabilization of $P_{\Sigma}(\pcat)$ in \cref{prop:sheaves_of_spectra_as_stabilization}, the corresponding statement for the derived $\infty$-category is an immediate consequence of the right completeness of its $t$-structure, which is \cite[1.3.5.21]{higher_algebra}.
\end{proof}

\begin{rem}
By \cref{prop:tstructure_on_spherical_sheaves_of_spectra}, there is a standard $t$-structure on $P_{\Sigma}^{\spectra}(\pcat)$ where coconnectivity is measured levelwise. Since the equivalence of \cref{lemma:derived_category_of_a_grothendieck_category_generated_by_compact_projectives} is induced from one on $\infty$-categories of connective objects, it is an equivalence of stable $\infty$-categories equipped with a choice of a $t$-structure. 
\end{rem}

\begin{rem}
The description of the derived $\infty$-category of a Grothendieck abelian category generated by compact projectives given in \cref{lemma:connective_derived_category_of_a_grothendieck_cat_generated_by_projectives} and \cref{lemma:derived_category_of_a_grothendieck_category_generated_by_compact_projectives} is folklore, it is the reason why $P_{\Sigma}(\ccat)$ is often referred in the literature as the \emph{non-abelian derived category} of $\ccat$.
\end{rem}

\begin{thm}
\label{thm:derived_category_of_a_cg_grothendieck_abelian_category}
Let $\pcat$ be an arbitrary system of compact generators for a Grothendieck abelian category $\acat$. Then, the inclusion $\pcat \hookrightarrow \acat$ extends to a unique equivalence $\hpssheavesofspectra(\pcat) \simeq \dcat(\acat)$ between the $\infty$-categories of spherical hypercomplete sheaves of spectra on $\pcat$ with respect to the epimorphism topology and the derived $\infty$-category.
\end{thm}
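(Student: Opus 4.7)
The plan is to bootstrap from Lemma~\ref{lemma:derived_category_of_a_grothendieck_category_generated_by_compact_projectives}, which handles the case of compact projective generators, and then descend by a localization argument. By Theorem~\ref{thm:compactly_generated_grothendieck_category_as_spherical_sheaves} we may identify $\acat \simeq Sh_{\Sigma}^{\sets}(\pcat)$, so that $\acat$ is an exact, accessible localization of the projective envelope $\acat' := P_{\Sigma}^{\sets}(\pcat)$ via sheafification $L \dashv i$. By Proposition~\ref{prop:projective_envelope_grothendieck_abelian_with_projective_generators}, $\acat'$ is itself Grothendieck abelian and is generated by the compact projectives $y(P)$ for $P \in \pcat$. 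Applying Lemma~\ref{lemma:derived_category_of_a_grothendieck_category_generated_by_compact_projectives} to $\acat'$ with this choice of generators yields a canonical $t$-exact equivalence $\Psi: P_{\Sigma}^{\spectra}(\pcat) \simeq D(\acat')$ extending $\pcat \hookrightarrow \acat' \simeq D(\acat')^{\heartsuit}$.

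Next, the exact localization $L \dashv i$ of Grothendieck abelian categories extends, by standard homological machinery (cf.\ \cite[\S 1.3.5]{higher_algebra}), to an accessible, $t$-exact localization $LD \dashv iD: D(\acat') \leftrightarrows D(\acat)$ with fully faithful right adjoint. In parallel, the hypercomplete sheafification $\widehat{L}: P_{\Sigma}^{\spectra}(\pcat) \to \hpssheavesofspectra(\pcat)$ is an accessible localization by Corollary~\ref{cor:spherical_sheaves_as_localization}. The theorem will follow once we match these two localizations under $\Psi$, since any such match automatically restricts to $\pcat \hookrightarrow \acat$ on representables, and the extension is uniquely determined because both targets are generated under colimits by the images of $\pcat$.

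To match the localizations it suffices to verify that they share the same class of local equivalences. A morphism $f$ of spherical presheaves of spectra is a $\widehat{L}$-equivalence exactly when its cofibre is $\infty$-connective, i.e.\ when the sheafified homotopy presheaves $L(\pi_n \mathrm{cofib}(f)) \in \acat$ vanish for all $n$. Under $\Psi$ the presheaf $\pi_n X$, canonically lifted to $\acat' \simeq P_{\Sigma}^{\abeliangroups}(\pcat)$ by additivity of $\pcat$, should correspond to the homology $H_n \Psi(X)$; granting this, $t$-exactness of $LD$ together with right completeness of the $t$-structure on $D(\acat)$ would translate the vanishing condition above into $LD \Psi(\mathrm{cofib}(f)) \simeq 0$, i.e.\ into $\Psi(f)$ being an $LD$-equivalence.

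The main obstacle is precisely the translation in the last paragraph. One must check carefully that $\Psi$ sends the presheaf of homotopy groups $\pi_n X$ to the homology $H_n \Psi(X)$ and that abelian sheafification in $\acat'$ corresponds to the operation of taking $\pi_n$ after hypercomplete sheafification, so that the two vanishing conditions really coincide. Both assertions should reduce to the compatibility of $\Psi$ with the standard $t$-structures and to the right completeness on each side, but packaging them into a clean equivalence of localizations is the technical heart of the argument.
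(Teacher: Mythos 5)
Your proposal follows essentially the same route as the paper's proof: reduce to compact projectives via Theorem~\ref{thm:compactly_generated_grothendieck_category_as_spherical_sheaves}, Proposition~\ref{prop:projective_envelope_grothendieck_abelian_with_projective_generators} and Lemma~\ref{lemma:derived_category_of_a_grothendieck_category_generated_by_compact_projectives} to get $\Psi\colon P_{\Sigma}^{\spectra}(\pcat)\simeq D(P_{\Sigma}^{\sets}(\pcat))$, and then match the two localizations (hypercomplete sheafification on one side, the derived sheafification on the other) by comparing the classes of maps they invert.

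Where the two arguments part ways is precisely at the point you flag as ``the technical heart.'' You defer to ``standard homological machinery'' for the claim that the exact localization $L\dashv i$ of abelian categories extends to an accessible $t$-exact localization $D(L)\dashv iD$ of derived $\infty$-categories with fully faithful right adjoint, and for the explicit identification of the class of $D(L)$-equivalences. The paper makes this concrete: it passes through the model structure on $Ch(-)$, uses that $L$ lifts to a left Quillen functor $Ch(P_{\Sigma}^{\sets}(\pcat))\to Ch(Sh_{\Sigma}^{\sets}(\pcat))$ which is a localization, and since the underlying $\infty$-category functor $Ch(-)\to D(-)$ is itself a localization, concludes that $D(L)$ is a localization which inverts precisely those $X\to Y$ with $LH_k(X)\to LH_k(Y)$ an isomorphism for all $k$. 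Once that is in hand, the matching with $\widehat{L}$-equivalences is immediate because the $t$-exactness of $\Psi$ (a by-product of how $\Psi$ is built in Lemma~\ref{lemma:connective_derived_category_of_a_grothendieck_cat_generated_by_projectives} and~\ref{lemma:derived_category_of_a_grothendieck_category_generated_by_compact_projectives}) identifies levelwise homotopy presheaves with levelwise homology presheaves, and then one applies the sheafification $L$ to both. So your outline is correct, and you have correctly located the nontrivial input; to close the gap you need an argument such as the paper's chain-complex/model-category one for why $D(L)$ is a localization at the stated class, together with the observation that $\Psi$ is $t$-exact by construction.
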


\begin{proof}
By \cref{thm:compactly_generated_grothendieck_category_as_spherical_sheaves}, the restricted Yoneda embedding $y \colon \acat \hookrightarrow P_{\Sigma}^{\sets}(\pcat)$ induces an equivalence $\acat \simeq Sh_{\Sigma}^{\sets}(\pcat)$ with the category of of spherical sheaves of sets. It follows that it is enough to prove the statement for $Sh_{\Sigma}^{\sets}(\pcat)$. 

By \cref{prop:projective_envelope_grothendieck_abelian_with_projective_generators}, the projectively generated envelope $P_{\Sigma}^{\sets}(\pcat)$ is generated by the image of $\pcat$ which consists of compact, projective objects. Then, by \cref{lemma:derived_category_of_a_grothendieck_category_generated_by_compact_projectives} the composite $y \colon \pcat \hookrightarrow P_{\Sigma}^{\sets}(\pcat)$ induces an equivalence $P_{\Sigma}^{\spectra}(\pcat) \simeq \dcat(P_{\Sigma}^{\sets}(\pcat))$ compatible with $t$-structures. Our goal is to show that this equivalence descends in a natural way to one on sheaf $\infty$-categories. 

Let $L \colon P_{\Sigma}^{\sets}(\pcat) \rightarrow Sh_{\Sigma}^{\sets}(\pcat)$ be the sheafification functor, since it is exact, it has a derived functor $\dcat(L) \colon \dcat(P_{\Sigma}^{\sets}(\pcat)) \rightarrow \dcat(Sh_{\Sigma}^{\sets}(\pcat))$. We claim that $\dcat(L)$ is a localization and moreover, that it is a localization at the class of maps $X \rightarrow Y$ such that $LH_{k}(X) \rightarrow LH_{k}(Y)$ is an isomorphism for all $k \in \mathbb{Z}$. 

By \cite[1.3.5.15]{higher_algebra}, the derived $\infty$-category of a Grothendieck abelian category is the underlying $\infty$-category of the model category of chain complexes. It follows that we have a diagram 

\begin{center}
	\begin{tikzpicture}
		\node (TL) at (0, 1) {$ Ch(P_{\Sigma}^{\sets}(\pcat)) $};
		\node (TR) at (3, 1) {$ Ch(Sh_{\Sigma}^{\sets}(\pcat)) $};
		\node (BL) at (0, 0) {$ \dcat(P_{\Sigma}^{\sets}(\pcat)) $};
		\node (BR) at (3, 0) {$ \dcat(Sh_{\Sigma}^{\sets}(\pcat)) $};
		
		\draw [->] (TL) to (TR);
		\draw [->] (TL) to (BL);
		\draw [->] (TR) to (BR);
		\draw [->] (BL) to (BR);
	\end{tikzpicture}
\end{center}
where the vertical maps are localizations, note that here we do not need to restrict ourselves to fibrant chain complexes as the relevant model structures are combinatorial, see \cite[1.3.4.15]{higher_algebra}. Since $L$ is a localization, so is the top horizontal map. We deduce that the bottom horizontal map is also a localization and chasing through definitions we see it that it inverts precisely the class of maps described above. 

Similarly, the hypercomplete sheafification functor $\widehat{L} \colon P_{\Sigma}^{\spectra}(\pcat) \rightarrow \hpssheavesofspectra(\pcat)$ is a localization at the class of maps $X \rightarrow Y$ of spherical presheaves of spectra such that $L \pi_{k}X \rightarrow L \pi_{k}Y$ is an isomorphism. Here, $\pi_{k}X$ and $\pi_{k}Y$ denote the presheaves of homotopy groups computed levelwise, so that $L \pi_{k} X$ and $L \pi_{k} Y$ are the homotopy sheaves in the sense of \cref{defin:homotopy_groups_of_a_hypercomplete_sheaf}. 

Under the equivalence $P_{\Sigma}^{\spectra}(\pcat) \simeq \dcat(P_{\Sigma}^{\sets}(\pcat))$, the homotopy presheaves of a presheaf of spectra and the homology presheaves of an object of the derived category correspond to each other, it follows that the composite $P_{\Sigma}^{\spectra}(\pcat) \rightarrow \dcat(P_{\Sigma}^{\sets}(\pcat)) \rightarrow \dcat(Sh_{\Sigma}^{\sets}(\pcat))$ is a localization at exactly the same set of maps as $P_{\Sigma}^{\spectra}(\pcat) \rightarrow \hpssheavesofspectra(\pcat)$. This implies that there is an induced equivalence $\hpssheavesofspectra(\pcat) \simeq \dcat(Sh_{\Sigma}^{\sets}(\pcat))$, which is what we wanted to show. 
\end{proof}

\begin{rem}
The identification of the derived $\infty$-category $\dcat(\acat)$ as the $\infty$-category of hypercomplete spherical sheaves of spectra given by \cref{thm:derived_category_of_a_cg_grothendieck_abelian_category} raises a natural question as to what can be said about the $\infty$-category $Sh_{\Sigma}^{\spectra}(\pcat)$ of spherical sheaves of spectra, not necessarily hypercomplete. Here one has to be careful, as the answer is not necessarily independent from the choice of the system of compact generators. 

If all objects of $\pcat$ are projective, \cref{rem:grothendieck_category_generated_by_compact_projectives_as_a_presheaf_category} implies that all spherical presheaves are sheaves and so in this case we get the derived $\infty$-category again. A more interesting example is given by considering the category $\ComodGamma$ of comodules over an Adams Hopf algebroid together with the choice of generators given by the category $\ComodGamma^{fp}$ of dualizable comodules, we will show in \cref{thm:hovey_stable_homotopy_theory_of_comodules_as_spherical_sheaves} that in this case there's an equivalence $Sh_{\Sigma}^{\spectra}(\ComodGamma^{fp}) \simeq \euscr{S}table_{\Gamma}$, where the latter is Hovey's stable $\infty$-category of comodules. 
\end{rem}

%%%%%%%%%%%%%
% Adams-type homology theories
%%%%%%%%%%%%%

\section{Foundations of synthetic spectra}
\label{section:foundations_of_synthetic_spectra}

In this section we set up the necessary foundation for the construction of the $\infty$-category of synthetic spectra. We review the theory of comodules over an Adams Hopf algebroid and of Adams-type homology theories, then introduce the $\infty$-sites $\spectra_{E}^{fp}$ of finite projective spectra and $\ComodE^{fp}$ of dualizable comodules and discuss their relation. 

\subsection{Comodules over a Hopf algebroid}

In this section we discuss the theory of Adams Hopf algebroids, reviewing the basic notions and properties and describing the identification of the category of comodules with spherical sheaves of sets due to Goerss and Hopkins. We then prove a few technical lemmas which will be used later. 

A classical introduction to the theory of comodules can be found in \cite{ravenel_complex_cobordism}[Appendix 1], a comprehensive study has also been done by Hovey  in \cite{hovey2003homotopy} and all facts given here without proof can be found there. 

A Hopf algebroid $(A, \Gamma)$ is a cogroupoid object in graded commutative rings, we say that $(A, \Gamma)$ is \emph{flat} if $\Gamma$ is flat as a left, equivalently right, $A$-module. Under these assumptions, one can show that the category $\ComodGamma$ of $\Gamma$-comodules is Grothendieck abelian with finite limits and all colimits computed in $A$-modules, the latter following from the existence of the cofree comodule functor $\Gamma \otimes_{A} -$ right adjoint to the forgetful functor.

The category $\ComodGamma$ can be equipped with a symmetric monoidal structure where the underlying $A$-module of the tensor product of two comodules $M, N$ is just the usual tensor product $M \otimes _{A} N$. One shows that a comodule is dualizable with respect to this symmetric monoidal structure if and only if it is dualizable as an $A$-module; that is, finitely generated and projective.

\begin{defin}
\label{defin:adams_hopf_algebroid}
We say that a Hopf algebroid $(A, \Gamma)$ is \emph{Adams} if the comodule $\Gamma$ can be written as a filtered colimit $\varinjlim \Gamma_{i} \simeq \Gamma$ of dualizable comodules $\Gamma_{i}$.
\end{defin}

Let us denote the category of dualizable comodules by $\ComodGamma^{fp}$, one of the consequences of being Adams is that this category generates $\ComodGamma$ under colimits. Moreover, since  colimits of comodules are computed in $A$-modules, dualizable comodules are compact and it follows that $\ComodGamma^{fp}$ is a choice of compact generators in the sense of \cref{defin:choice_of_compact_generators}.

Following \cref{defin:epimorphism_topology_on_a_choice_of_compact_generators}, we say that a map $M \rightarrow N$ of dualizable comodules is a \emph{covering} if it is an epimorphism. Since all objects of $\ComodGamma^{fp}$ are by definition dualizable, one sees easily that together with the tensor product of comodules this makes $\ComodGamma^{fp}$ into an excellent $\infty$-site in the sense of \cref{defin:excellent_infty_site}. 

\begin{thm}[Goerss-Hopkins]
\label{thm:equivalence_of_comodules_with_spherical_sheaves}
The Yoneda embedding $y \colon \ComodGamma \hookrightarrow P_{\Sigma}^{\sets}(\ComodGamma^{fp})$ induces a symmetric monoidal equivalence $\ComodGamma \simeq Sh_{\Sigma}^{\sets}(\ComodGamma^{fp})$ between $(A, \Gamma)$-comodules and spherical sheaves of sets on the site of dualizable comodules.
\end{thm}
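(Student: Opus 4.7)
The plan is to deduce this as a special case of the general Goerss--Hopkins result \textbf{Theorem \ref{thm:compactly_generated_grothendieck_category_as_spherical_sheaves}}, which represents any compactly generated Grothendieck abelian category as spherical sheaves of sets on a choice of compact generators, and then to upgrade the resulting equivalence of underlying categories to a symmetric monoidal one via the universal property of Day convolution from \textbf{Corollary \ref{cor:symmetric_monoidal_structure_on_different_sheaf_variants}}.

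First I would check the hypotheses of \textbf{Theorem \ref{thm:compactly_generated_grothendieck_category_as_spherical_sheaves}}. We already know that $\ComodGamma$ is Grothendieck abelian and, by the Adams assumption together with the fact that colimits of comodules are computed in $A$-modules, that $\ComodGamma^{fp}$ consists of compact objects generating $\ComodGamma$ under colimits. So it remains to verify that $\ComodGamma^{fp}$ is a choice of compact generators in the sense of \textbf{Definition \ref{defin:choice_of_compact_generators}}, i.e.\ that dualizable comodules are closed under pullback along epimorphisms. Given an epimorphism $M \twoheadrightarrow L$ of dualizable comodules and any map $N \to L$ of dualizable comodules, the pullback $M \times_L N$ sits in a short exact sequence $0 \to M \times_L N \to M \oplus N \to L \to 0$ in $\ComodGamma$; since $L$ is projective over $A$ by dualizability, this sequence splits as a sequence of $A$-modules, so $M \times_L N$ is a summand of a finitely generated projective $A$-module and hence itself finitely generated projective, i.e.\ dualizable. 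Applying \textbf{Theorem \ref{thm:compactly_generated_grothendieck_category_as_spherical_sheaves}} then produces the desired equivalence of underlying categories given by $M \mapsto (N \mapsto \Hom_{\Gamma}(N, M))$.

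For the symmetric monoidal refinement, I would invoke the universal property. By \textbf{Corollary \ref{cor:symmetric_monoidal_structure_on_different_sheaf_variants}}, $Sh_{\Sigma}^{\sets}(\ComodGamma^{fp})$ carries a unique symmetric monoidal structure which preserves colimits in each variable and for which the Yoneda embedding $\ComodGamma^{fp} \hookrightarrow Sh_{\Sigma}^{\sets}(\ComodGamma^{fp})$ is symmetric monoidal; this makes sense because $\ComodGamma^{fp}$ is an excellent $\infty$-site. On the other hand, $\ComodGamma$ is itself a symmetric monoidal presentable category whose tensor product $- \otimes_A -$ preserves colimits in each variable (colimits and the tensor being computed on underlying $A$-modules), and the inclusion $\ComodGamma^{fp} \hookrightarrow \ComodGamma$ is symmetric monoidal. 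The equivalence $\ComodGamma \simeq Sh_{\Sigma}^{\sets}(\ComodGamma^{fp})$ produced above restricts to the identity on $\ComodGamma^{fp}$ via the Yoneda embedding, so transporting the symmetric monoidal structure along it gives a second colimit-preserving symmetric monoidal structure on $Sh_{\Sigma}^{\sets}(\ComodGamma^{fp})$ restricting to the given one on $\ComodGamma^{fp}$. Uniqueness then identifies the two, so the equivalence is symmetric monoidal.

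I expect the main technical point to be the closure of dualizable comodules under pullbacks along epimorphisms, since it is the one place where being dualizable (rather than merely compact) really enters via the splitting argument. The other steps are essentially bookkeeping: the underlying equivalence is handed to us by \textbf{Theorem \ref{thm:compactly_generated_grothendieck_category_as_spherical_sheaves}}, and the symmetric monoidal comparison is forced by the universal property of the Day convolution because both sides agree on the generating symmetric monoidal subcategory $\ComodGamma^{fp}$ and both have colimit-preserving tensor products.
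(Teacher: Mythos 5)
Your proposal is correct and follows essentially the same route as the paper: reduce the underlying equivalence to \textbf{Theorem \ref{thm:compactly_generated_grothendieck_category_as_spherical_sheaves}}, then use the universal property (or equivalently the uniqueness) of the Day convolution from \textbf{Corollary \ref{cor:symmetric_monoidal_structure_on_different_sheaf_variants}} to upgrade to a symmetric monoidal equivalence. Your explicit splitting argument verifying that dualizable comodules are closed under pullback along epimorphisms is a welcome addition, since the paper only asserts (just before the theorem statement) that $\ComodGamma^{fp}$ is a choice of compact generators without spelling out that particular closure property; the symmetric monoidal step is the same reasoning as the paper's, phrased via transport-and-uniqueness rather than via the extension property of $y^{-1}$.
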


\begin{proof}
The fact that the Yoneda embedding induces an equivalence of the above form is precisely \cref{thm:compactly_generated_grothendieck_category_as_spherical_sheaves}. To see that it can be promoted to a symmetric one, observe that the inverse $y^{-1} \colon Sh_{\Sigma}^{\sets}(\ComodGamma^{fp}) \rightarrow \ComodGamma$ is a cocontinuous functor which on representables coincides with the usual inclusion $\ComodGamma^{fp} \hookrightarrow \ComodGamma$. Since the latter is symmetric monoidal, it follows that $y^{-1}$ acquires a canonical symmetric monoidal structure if we equip the category of sheaves with the Day convolution symmetric monoidal structure of  \cref{cor:symmetric_monoidal_structure_on_different_sheaf_variants}. 
\end{proof}

The equivalence of \cref{thm:equivalence_of_comodules_with_spherical_sheaves} is a particular instance of \cref{thm:compactly_generated_grothendieck_category_as_spherical_sheaves}, which applied to all compactly generated Grothendieck categories. However, in the case of comodules, the inverse to $y \colon \ComodGamma \hookrightarrow Sh_{\Sigma}^{\sets}(\ComodGamma^{fp})$ can be given the following explicit form. Here, $M[k]$ denotes the shifted comodule defined by $(M[k])_{*} := M_{*-k}$. 

\begin{lemma}
\label{lemma:recovering_comodules_from_sheaves}
Let $M \in \ComodGamma$ be a comodule and $y(M) \in Sh_{\Sigma}^{\sets}(\ComodGamma^{fp})$ the corresponding spherical sheaf of sets. Then, the underlying graded abelian group $U(M)$ is described by the formula $U(M)_{k} \simeq \varinjlim y(M)(D\Gamma_{\alpha}[k])$, where the colimit is taken over the shifts of duals of any filtered diagram of dualizables such that $\varinjlim \Gamma _{\alpha} \simeq \Gamma$.
\end{lemma}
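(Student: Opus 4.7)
The plan is to compute $\varinjlim_\alpha y(M)(D\Gamma_\alpha[k])$ through a chain of standard identifications in comodules. First, by the Yoneda lemma, $y(M)(D\Gamma_\alpha[k]) \cong \Hom_{\ComodGamma}(D\Gamma_\alpha[k], M)$. Each $\Gamma_\alpha$ is dualizable, so $D\Gamma_\alpha[k]$ is dualizable with dual $\Gamma_\alpha[-k]$, and the symmetric monoidal identity $\Hom(DX, Y) \cong \Hom(\mathbf{1}, X \otimes Y)$ together with shifting rewrites this as $\Hom_{\ComodGamma}(A[k], \Gamma_\alpha \otimes M)$, where $\otimes$ is the (diagonal-coaction) comodule tensor product and $A$ is the monoidal unit.

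Next, I would push the filtered colimit through the Hom. Since the comodule tensor product and all colimits of comodules are computed on underlying $A$-modules, where filtered colimits are exact by flatness of $(A, \Gamma)$, one has $\varinjlim_\alpha (\Gamma_\alpha \otimes M) \cong \Gamma \otimes M$. Moreover $A[k]$ is dualizable and hence compact in $\ComodGamma$, so $\Hom_{\ComodGamma}(A[k], -)$ commutes with filtered colimits. The result is $\varinjlim_\alpha y(M)(D\Gamma_\alpha[k]) \cong \Hom_{\ComodGamma}(A[k], \Gamma \otimes M)$.

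Finally, the standard twist isomorphism of Hopf-algebroid comodule theory identifies the diagonal-coaction comodule $\Gamma \otimes M$ with the cofree comodule $\Gamma \otimes U(M)$ on the underlying module of $M$; the cofree-forgetful adjunction $U \dashv \Gamma \otimes (-)$ then produces $\Hom_{\ComodGamma}(A[k], \Gamma \otimes U(M)) \cong \Hom_A(A[k], U(M)) = U(M)_k$, as required. The main subtle point is invoking this twist isomorphism, which converts from the intrinsic symmetric monoidal structure of $\ComodGamma$ to the underlying-module adjunction; the rest is formal bookkeeping given dualizability and the flatness hypothesis.
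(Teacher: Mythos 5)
Your proof is correct and takes essentially the same route as the paper's, just traversed in the opposite direction: the paper starts from $U(M)_k = \Hom_A(A[k],M)$ and unwinds via the cofree adjunction, the twist isomorphism $\Gamma\otimes U(M)\cong \Gamma\otimes M$ (Hovey~[1.1.5]), compactness of $A$, and duality to reach $\varinjlim y(M)(D\Gamma_\alpha[k])$, which are precisely the ingredients you invoke in reverse. There is no substantive difference.
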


\begin{proof}
Notice that we can rewrite $U(M)_{k}$ as 

\begin{center}
$\Hom_{A}(A[k], M) \simeq \Hom_{\Gamma}(A[k], \Gamma \otimes M) \simeq \Hom_{\Gamma}(A[k], \varinjlim \Gamma_{\alpha} \otimes M)$,
\end{center}
where the first isomorphism is an application of the universal property of $\Gamma \otimes M$. Here, by the latter we mean the tensor product of comodules, rather then the cofree comodule, but these two are in fact isomorphic by \cite{hovey2003homotopy}[1.1.5], which is why the universal property holds for either. We then rewrite further

\begin{center}
$\Hom_{\Gamma}(A[k], \varinjlim \Gamma_{\alpha} \otimes M) \simeq \varinjlim \Hom(A[k], \Gamma_{\alpha} \otimes M) \simeq \varinjlim \Hom(D\Gamma_{\alpha}[k], M)$,
\end{center}
using that $A$ is finitely generated, and observe that the last term is of the needed form. 
\end{proof}

Later on, we will use the theory of common envelopes of \cref{defin:common_envelope_for_a_morphism_of_excellent_infty_sites} to compare sheaves on certain additive $\infty$-sites of topological origin with sheaves on dualizable comodules. The following technical results give an explicit description of an envelope for $\ComodGamma^{fp}$. 
\begin{lemma}
\label{lemma:dualizable_comodule_embeds_into_a_cofree_one}
Let $M$ be a dualizable comodule. Then, there exists a monomorphism $M \hookrightarrow \bigoplus \Gamma[l_{i}]$ into a cofree comodule on a finitely generated free module. Moreover, this monomorphism can be chosen so that for any dualizable submodule $N$ of $\bigoplus \Gamma[l_{i}]$ containing the image of $M$, the induced map $M \rightarrow N$ is an embedding; that is, is dual to a surjection.
\end{lemma}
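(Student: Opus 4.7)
The plan is to build the desired monomorphism by combining the coaction of $M$ with an embedding of $M$ into a finitely generated free $A$-module. Since $M$ is dualizable, it is finitely generated and projective as a graded $A$-module, so there is a split monomorphism $\iota: M \hookrightarrow \bigoplus_{i=1}^{n} A[l_{i}]$ of $A$-modules with retraction $\pi$. I would then form the composite
\[
f: M \xrightarrow{\psi} \Gamma \otimes M \xrightarrow{\Gamma \otimes \iota} \Gamma \otimes \bigoplus A[l_{i}] \simeq \bigoplus \Gamma[l_{i}],
\]
where $\psi$ is the coaction of $M$. This is a map of comodules, and it is injective because $\psi$ is split by $\epsilon \otimes \mathrm{id}$ on the $A$-module level (by the counit axiom), while $\Gamma \otimes \iota$ is split by $\Gamma \otimes \pi$.

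The crucial observation is that $f$ is itself a split monomorphism of $A$-modules, with retraction $\epsilon \otimes \pi$; this follows from the counit axiom $(\epsilon \otimes \mathrm{id}) \psi = \mathrm{id}_{M}$ combined with $\pi \iota = \mathrm{id}_{M}$. Given this, for any dualizable $N$ sandwiched as $M \hookrightarrow N \hookrightarrow \bigoplus \Gamma[l_{i}]$, the restriction $(\epsilon \otimes \pi)|_{N} : N \to M$ automatically retracts $M \hookrightarrow N$. Hence $M \hookrightarrow N$ is a split monomorphism of $A$-modules, and the cokernel $N/M$ is a direct summand of the finitely generated projective $A$-module $N$, so is itself finitely generated projective.

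From here the embedding property follows quickly. The short exact sequence of comodules $0 \to M \to N \to N/M \to 0$ is split as a sequence of $A$-modules, so applying $\mathrm{Hom}_{A}(-,A)$ preserves exactness and yields a surjection $N^{*} \to M^{*}$ of $A$-modules. Since a map of dualizable comodules is surjective in $\ComodGamma^{fp}$ precisely when it is surjective on the underlying $A$-modules, $N^{*} \to M^{*}$ is a covering, which exhibits $M \hookrightarrow N$ as an embedding. The only real input beyond routine bookkeeping is the counit axiom of a Hopf algebroid; the main subtlety to keep in mind is that the splittings exist only as $A$-modules, not as comodules, which is exactly what one needs to guarantee $A$-projectivity of $N/M$ and hence surjectivity of the dualised map.
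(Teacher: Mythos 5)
Your proof is correct, and it takes a genuinely different and cleaner route to the second assertion than the paper. Both proofs use the same monomorphism $f = (\Gamma \otimes \iota) \circ \psi$. The paper then verifies the embedding property by a cofinality argument: it shows the property is downward closed in $N$, produces the cofinal family $\bigoplus \Gamma_j[l_i]$ from the Adams filtration $\Gamma \simeq \varinjlim \Gamma_j$, and checks the property separately on each piece of the composite $M \to \Gamma_j \otimes M \to \bigoplus \Gamma_j[l_i]$. You instead observe that $f$ is $A$-split with retraction $\epsilon \otimes \pi$, that this retraction restricts to any intermediate sub-comodule $N$, and that an $A$-split inclusion dualizes to a surjection of $A$-modules, which suffices since epimorphisms in $\ComodGamma^{fp}$ are detected on underlying modules. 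What your approach buys is that it handles every $N$ uniformly without invoking the filtered presentation of $\Gamma$, avoiding the need to know that the $\Gamma_j$ can be taken to be sub-comodules of $\Gamma$ and that they assemble into a cofinal system of sub-comodules of $\bigoplus \Gamma[l_i]$. It is also slightly more general, since it does not use the Adams condition in this step. The detour through projectivity of $N/M$ is unnecessary (split mono already gives the surjection on duals directly), but it does no harm.
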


\begin{proof}
We will construct a monomorphism with the latter property to begin with. Choose a surjection $\bigoplus A[-l_{i}] \rightarrow \Hom_{A}(M, A)$ of $A$-modules for some finite number of $l_{i}$, the dual defines an injection $M \hookrightarrow \bigoplus A[l_{i}]$ and we claim that the composite 

\begin{center}
$M \rightarrow \Gamma \otimes M \rightarrow \Gamma \otimes (\bigoplus A[l_{i}]) \simeq \bigoplus \Gamma[l_{i}]$ 
\end{center}
is the monomorphism into a cofree comodule we're looking for. Notice that here $\Gamma \otimes M$ doesn't denote the tensor product of comodules, but rather the cofree comodule; that is, the comodule structure is coming only from the $\Gamma$ factor. 

Notice that the set of submodules containing the image of $M$ with the required property is closed under taking submodules, since if $M \rightarrow N \rightarrow N^{\prime}$ is dual to a surjection, so is $M \rightarrow N$. It follows it is enough to find a cofinal system of such submodules. Write $\Gamma$ as a filtered colimit $\Gamma \simeq \varinjlim \Gamma_{i}$, since $M$ is finitely generated, the map $M \rightarrow \Gamma \otimes M$ factors through $\Gamma_{i} \otimes M$ for all sufficiently large $i$. We claim that the composites 

\begin{center}
$M \rightarrow \Gamma_{i} \otimes M \rightarrow \Gamma_{j} \otimes M \rightarrow \bigoplus \Gamma_{i}[l_{i}]$
\end{center}
are dual to the surjection, this is enough, since submodules of the form $\bigoplus \Gamma_{i}[l_{i}]$ are cofinal in $\bigoplus \Gamma[l_{i}]$. 

The composite $M \rightarrow \Gamma_{j} \otimes M$ of the first two maps is a split injection, with partial inverse given by $\Gamma_{j} \otimes M \rightarrow \Gamma \otimes M \rightarrow M$ by the counitality of the comultiplication on $M$, it follows it is dual to a surjection. On the other hand, $\Gamma_{j} \otimes M \rightarrow \bigoplus \Gamma_{j}[l_{i}]$ is dual to a surjection since it is a tensor product of two such maps, namely the identity of $\Gamma_{j}$ and the embedding $M \rightarrow \bigoplus A[l_{i}]$ which was chosen to have this property. 
\end{proof}

\begin{prop}
\label{prop:an_explicit_envelope_for_dualizable_comodules}
Let $M_{\alpha}$ be a filtered diagram of dualizable comodules whose colimit $\varinjlim M_{\alpha}$ is isomorphic to a cofree comodule of the form $\bigoplus \Gamma[k_{i}]$, where each integer occurs as $k_{i}$ infinitely many times. Then, the $\Ind$-object $\varinjlim M_{\alpha}$ is an envelope for the site $\ComodGamma^{fp}$ of dualizable comodules. 
\end{prop}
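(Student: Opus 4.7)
The plan is to verify the two defining conditions of an envelope for $\varinjlim M_\alpha$ separately: discrete descent, and the existence of an embedding from any dualizable comodule.

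Discrete descent is immediate from compactness. Every dualizable comodule $N$ is compact in $\ComodGamma$, so $\varinjlim \pi_0 \map(N, M_\alpha) = \varinjlim \Hom(N, M_\alpha) \simeq \Hom(N, \bigoplus \Gamma[k_i])$; as $N$ varies over $\ComodGamma^{fp}$, this is precisely the restriction to $\ComodGamma^{fp}$ of the Yoneda image of $\bigoplus \Gamma[k_i] \in \ComodGamma$, which is a sheaf by the Goerss--Hopkins identification of \textbf{Theorem \ref{thm:equivalence_of_comodules_with_spherical_sheaves}}.

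The embedding condition is the technical heart of the argument, and I would establish it by adapting the proof of \textbf{Lemma \ref{lemma:dualizable_comodule_embeds_into_a_cofree_one}} with $\bigoplus \Gamma[k_i]$ in place of a cofree comodule on a \emph{finitely generated} free module. The hypothesis that each integer appears infinitely often as some $k_i$ is precisely what makes this substitution possible: given a dualizable $M_0$, its dual $\Hom_A(M_0,A)$ is finitely generated projective, so one can choose a surjection $\bigoplus_{i \in S} A[-k_i] \twoheadrightarrow \Hom_A(M_0,A)$ with $S$ finite. Dualising yields an injection $M_0 \hookrightarrow \bigoplus_{i \in S} A[k_i] \subseteq \bigoplus A[k_i]$, and composing with the comodule structure produces the desired map $M_0 \to \Gamma \otimes M_0 \to \bigoplus \Gamma[k_i]$. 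Writing $\Gamma \simeq \varinjlim \Gamma_j$ with $\Gamma_j$ dualizable, the sub-comodules of the form $\bigoplus_{i \in S'} \Gamma_j[k_i]$ with $S'$ finite are dualizable and cofinal among dualizable submodules of $\bigoplus \Gamma[k_i]$. The lemma's factorisation argument then applies verbatim: for $j$ large, the map $M_0 \to \bigoplus_{i \in S'} \Gamma_j[k_i]$ factors as $M_0 \to \Gamma_j \otimes M_0 \to \bigoplus_{i \in S'} \Gamma_j[k_i]$, where the first arrow is a split injection by counitality of the comodule structure and the second is $\Gamma_j \otimes (-)$ applied to the dualised surjection; both are duals of surjections of dualizable comodules, hence embeddings in $\ComodGamma^{fp}$. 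Closure of the embedding property under passage to sub-comodules then ensures that $M_0 \to N$ is an embedding for \emph{every} dualizable submodule $N$ of $\bigoplus \Gamma[k_i]$ containing the image of $M_0$.

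To conclude, for any dualizable $M_0$ the map $M_0 \to \varinjlim M_\alpha \simeq \bigoplus \Gamma[k_i]$ just constructed factors through $M_\alpha$ for all sufficiently large $\alpha$ by compactness of $M_0$, and each such $M_\alpha$ is a dualizable submodule of $\bigoplus \Gamma[k_i]$ containing the image of $M_0$, so every sufficiently large representative $M_0 \to M_\alpha$ is an embedding in $\ComodGamma^{fp}$. This exhibits $M_0 \to \varinjlim M_\alpha$ as an embedding of $\textnormal{Ind}$-objects in the sense required. The principal obstacle is essentially just the bookkeeping needed to transfer the lemma's construction to a cofree comodule on an infinitely generated free module; the hypothesis that every integer appears infinitely often as some $k_i$ is doing precisely the work of guaranteeing that any finite list of grading shifts can be realised inside $\{k_i\}$, without which both the adapted construction and the cofinality argument could fail.
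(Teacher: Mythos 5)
Your proof takes essentially the same approach as the paper's, just repackaged: the paper invokes \textbf{Lemma \ref{lemma:dualizable_comodule_embeds_into_a_cofree_one}} as a black box to produce $N \hookrightarrow \bigoplus\Gamma[l_j]$ into a cofree comodule on a finitely generated free module, and then includes this finite sum as a direct summand of $\bigoplus\Gamma[k_i]$ (which is where the hypothesis that each integer occurs infinitely often among the $k_i$ is used), whereas you re-run the lemma's construction directly targeting $\bigoplus\Gamma[k_i]$ by choosing the shifts of the dualising surjection from $\{k_i\}$. The discrete descent part is identical. The allocation of work is different but the content is the same, so this is not a genuinely independent route.

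One small imprecision in your concluding paragraph: you claim that ``each such $M_\alpha$ is a dualizable submodule of $\bigoplus\Gamma[k_i]$ containing the image of $M_0$,'' but the hypothesis of the proposition does not guarantee that the structure maps $M_\alpha \to \varinjlim M_\alpha$ are injective, so $M_\alpha$ need not be a submodule of the colimit. The conclusion survives for a slightly different reason: being ``dual to a surjection'' is stable under precomposition, not merely under passage to sub-comodules. If the composite $M_0 \to M_\alpha \to \bigoplus\Gamma[k_i]$ is an embedding, i.e.\ its $A$-linear dual is a split surjection, then restricting the splitting shows that $M_0 \to M_\alpha$ already has split surjective dual, hence is an embedding — no injectivity of $M_\alpha \to \bigoplus\Gamma[k_i]$ is required. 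The same observation is implicitly what is being appealed to when the paper's proof says ``one easily verifies.''
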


\begin{proof}
Let $N \in \ComodGamma^{fp}$ be dualizable, then in particular it is finitely generated so that we have 

\begin{center}
$\varinjlim y(M_{\alpha})(N) \simeq \Hom(N, \varinjlim M_{\alpha}) \simeq \varinjlim(N, \bigoplus \Gamma[k_{i}])$,
\end{center}
which is a sheaf by \cref{thm:equivalence_of_comodules_with_spherical_sheaves}. It is already discrete, so that $\varinjlim M_{\alpha}$ satisfies discrete descent. 

We now have to check that any dualizable embeds into $\varinjlim M_{\alpha}$. Let $N$ be dualizable, by \cref{lemma:dualizable_comodule_embeds_into_a_cofree_one} there exists a monomorphism $N \rightarrow \bigoplus \Gamma[l_{j}]$ into a cofree comodule on a finitely generated module $\bigoplus A[l_{i}]$. Since $\bigoplus \Gamma[l_{j}]$ is a direct summand of $\bigoplus \Gamma[k_{i}]$ in the obvious fashion, we can consider the composite $N \rightarrow \bigoplus \Gamma[l_{j}] \rightarrow \Gamma[k_{i}]$, which one easily verifies is an embedding if we choose a monomorphism satisfying the second property stated in \cref{lemma:dualizable_comodule_embeds_into_a_cofree_one}.
\end{proof}

\begin{rem}
\label{rem:working_with_even_comodules_categories_envelopes}
It is sometimes convenient to work only with even comodules, since many Hopf algebroids coming from topology are concentrated in even degrees. Here we say that a comodule $M$ is \emph{even} if it is concentrated in even degree, and that a Hopf algebroid $(A, \Gamma)$ is \emph{even Adams} if $\Gamma$ is a filtered colimit of even dualizable comodules. 

Assuming the latter, we can consider the site $\ComodGamma^{fpe}$ of even dualizable comodules equipped with the epimorphism topology. Analogously to the non-even case, this is an excellent $\infty$-site and one shows just as in \cref{thm:equivalence_of_comodules_with_spherical_sheaves} that there is an equivalence $Sh_{\Sigma}^{\sets}(\ComodGamma^{fpe}) \simeq \ComodGamma^{ev}$ between spherical sheaves of sets on even dualizables and the category of even comodules. 

Moreover, we have an even analogue of \cref{prop:an_explicit_envelope_for_dualizable_comodules}, namely any filtered diagram $M_{\alpha}$ of even dualizables whose colimit $\varinjlim M_{\alpha} \simeq \bigoplus \Gamma[k_{i}]$ is a cofree comodule such that any even integer occurs as $k_{i}$ infinitely many times is an envelope for $\ComodGamma^{fpe}$. The proof is the same, using \cref{lemma:dualizable_comodule_embeds_into_a_cofree_one}, where we observe that any even dualizable can be embedded into a cofree comodule on even generators. 
\end{rem}

\subsection{Hovey's stable $\infty$-category of comodules}

In this section we discuss Hovey's stable $\infty$-category of comodules, the main result will be an identification of $\euscr{S}table_{\Gamma}$ as an $\infty$-category of spherical sheaves of spectra, mimicking \cref{thm:derived_category_of_a_cg_grothendieck_abelian_category} which gave an analogous description of the derived $\infty$-category. We will see that this allows one to prove some results about Hovey's stable $\infty$-category of comodules which were only known before under more restrictive hypotheses. 

One expects that there should be a purely algebraic version of chromatic homotopy theory, where the role of spectra is played by chain complexes of comodules \cite{barthel2017algebraic}. As first observed by Hovey, the $\infty$-category $\dcat(\ComodGamma)$ is badly behaved from this perspective, for example, the monoidal unit, which is equivalent to $A$ considered as a chain complex concentrated in degree zero, can fail to be compact. As a replacement, Hovey constructs a different model structure on chain complexes of comodules, where the weak equivalences are given by homotopy isomorphisms, see \cite{hovey2003homotopy}, the underlying $\infty$-category is what we call Hovey's stable $\infty$-category of comodules and denote by $\euscr{S}table_{\Gamma}$. 

We will show that there is an equivalence of $\infty$-categories $Sh_{\Sigma}^{\spectra}(\ComodGamma^{fp}) \simeq \euscr{S}table_{\Gamma}$ between spherical sheaves of spectra on dualizable comodules and the $\infty$-category underlying Hovey's model structure on chain complexes of comodules. Our work rests on the identification of the latter in terms of more familiar $\infty$-categories due to Barthel, Heard and Valenzuela, see \cite{barthel2015local}. 

\begin{thm}
\label{thm:hovey_stable_homotopy_theory_of_comodules_as_spherical_sheaves}
If $(A, \Gamma)$ is an Adams Hopf algebroid, there is an equivalence of $\infty$-categories $Sh_{\Sigma}^{\spectra}(\ComodGamma^{fp}) \simeq \euscr{S}table_{\Gamma}$ between spherical sheaves of spectra on dualizable comodules and the stable $\infty$-category of comodules. 
\end{thm}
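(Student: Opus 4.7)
The plan is to construct a cocontinuous symmetric monoidal functor $F \colon Sh_{\Sigma}^{\spectra}(\ComodGamma^{fp}) \to \euscr{S}table_{\Gamma}$ and show it is an equivalence by comparing compact generators on the two sides. By Proposition \ref{prop:sheaves_of_spectra_as_stabilization} and the recognition theorem \ref{thm:recognition_of_spherical_sheaves}, the left-hand side is compactly generated by the objects $\sigmainfty y(M)$ for $M \in \ComodGamma^{fp}$: indeed, dualizable comodules are compact in $\ComodGamma$, and Corollary \ref{cor:filtered_colimits_in_spherical_sheaves_on_an_additive_site_computed_levelwise} guarantees that filtered colimits of spherical sheaves are computed levelwise, so sphericity is preserved. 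The analogous fact for $\euscr{S}table_{\Gamma}$ is the starting point of the analysis of Barthel, Heard and Valenzuela~\cite{barthel2015local}: the dualizable comodules, viewed as chain complexes concentrated in degree zero, form a set of compact generators.

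To construct $F$, I use the universal property of Remark \ref{rem:universal_property_of_spherical_sheaves_of_spectra}: a cocontinuous symmetric monoidal functor out of $Sh^{\spectra}_{\Sigma}(\ComodGamma^{fp})$ into a presentable stable symmetric monoidal $\infty$-category is equivalent to a symmetric monoidal functor out of $\ComodGamma^{fp}$ such that the induced opposite functor defines a spherical sheaf on $(\ComodGamma^{fp})^{op}$ valued in $\euscr{S}table_{\Gamma}^{op}$. The inclusion $\ComodGamma^{fp} \hookrightarrow \euscr{S}table_{\Gamma}$ as degree-zero complexes qualifies: sphericity amounts to direct sums of dualizables going to coproducts in $\euscr{S}table_{\Gamma}$, and the sheaf condition reduces, via the recognition theorem \ref{thm:recognition_of_spherical_sheaves}, to checking that a short exact sequence of dualizable comodules gives rise to a fibre sequence in Hovey's stable category, which holds by construction of the model structure.

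With $F$ in hand, essential surjectivity is immediate from both sides being generated under colimits by the image of $\ComodGamma^{fp}$. For fully faithfulness, since $F$ is cocontinuous and both sides are compactly generated, it suffices to check that for $M, N \in \ComodGamma^{fp}$ the induced map
\begin{equation*}
\map_{Sh_{\Sigma}^{\spectra}}(\sigmainfty y(M), \sigmainfty y(N)) \longrightarrow \map_{\euscr{S}table_{\Gamma}}(M, N)
\end{equation*}
is an equivalence. On the left, the source is by Proposition \ref{prop:spherical_sheaves_canonically_lift_to_sheaves_of_spectra} canonically connective and the Yoneda lemma gives $\pi_{0} = \Hom_{\ComodGamma}(M,N)$, with higher homotopy groups obtained by resolving $N$ by the cofree comodules $\varinjlim \Gamma_{\alpha}[k]$ and applying \textbf{Lemma \ref{lemma:recovering_comodules_from_sheaves}}. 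On the right, Barthel-Heard-Valenzuela compute the same spectrum as the spectrum of derived comodule maps. Both answers are $\Ext^{*}_{\ComodGamma}(M,N)$.

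The main obstacle is the subtle distinction between $\euscr{S}table_{\Gamma}$ and the usual derived category $D(\ComodGamma)$. Theorem \ref{thm:derived_category_of_a_cg_grothendieck_abelian_category} already identifies $D(\ComodGamma)$ with the \emph{hypercomplete} variant $\hpssheavesofspectra(\ComodGamma^{fp})$; thus the present statement must account for the fact that $Sh_{\Sigma}^{\spectra}(\ComodGamma^{fp})$ is strictly larger and encodes phantom-type phenomena that Hovey's construction was precisely designed to capture. The resolution is that $\euscr{S}table_{\Gamma}$, being compactly generated with generators the dualizable comodules, is the Ind-completion of its compact part, and this Ind-completion is precisely what one gets without hypercompletion: any morphism of presheaves of spectra on $\ComodGamma^{fp}$ that becomes an equivalence after evaluation on every dualizable is already an equivalence of spherical sheaves, without needing descent along hypercovers.
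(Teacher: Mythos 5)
Your high-level strategy (identify compact generators, compare with Barthel--Heard--Valenzuela's $\euscr{S}table_{\Gamma} \simeq \textnormal{Ind}(\dcat)$) is close to the paper's, but the full-faithfulness step has a genuine gap, and it is exactly in the place where the interesting content of the theorem lives.

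You want to show that
\[
\map_{Sh_{\Sigma}^{\spectra}(\ComodGamma^{fp})}(\sigmainfty y(M), \sigmainfty y(N)) \longrightarrow \map_{\euscr{S}table_{\Gamma}}(M, N)
\]
is an equivalence for dualizable $M, N$. The right-hand side computes $\Ext_{\Gamma}(M,N)$ via BHV, so the burden is to show the left-hand side does too. You invoke ``resolving $N$ by the cofree comodules $\varinjlim \Gamma_{\alpha}[k]$ and applying Lemma \ref{lemma:recovering_comodules_from_sheaves},'' but that lemma recovers the underlying abelian group of a comodule from its sheaf — it says nothing about the mapping \emph{spectrum}. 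More importantly, a cobar-type resolution of $N$ is a \emph{hypercover}, not a \v{C}ech cover, and $Sh_{\Sigma}^{\spectra}(\ComodGamma^{fp})$ is not hypercomplete: mapping spectra in it do not a priori commute with totalizations of hypercovers. This is precisely the ``phantom'' phenomenon you flag in your last paragraph, and the resolution you propose there (``evaluation on every dualizable detects equivalences'') is true but irrelevant — it does not tell you the homotopy groups of the mapping spectrum.

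The missing observation, which the paper supplies and which makes everything click, is that the representables $\sigmainfty y(M)$ are themselves \emph{hypercomplete}: $\omegainfty \sigmainfty y(M) \simeq y(M)$ is discrete, so Remark \ref{rem:hypercompleteness_of_a_sheaf_detected_by_omega_infty} applies. Hypercompleteness is closed under fibres, suspensions and retracts, so the thick subcategory of $Sh_{\Sigma}^{\spectra}(\ComodGamma^{fp})$ generated by the $\sigmainfty y(M)$ lives entirely inside $\hpsheaves_{\Sigma}^{\spectra}(\ComodGamma^{fp}) \simeq D(\ComodGamma)$, and in particular agrees with the thick subcategory $\dcat$ of $D(\ComodGamma)$ generated by dualizables. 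Since mapping into a hypercomplete object factors through hypercompletion, this identifies the mapping spectrum on the sheaf side with the one in $D(\ComodGamma)$, which is what you actually need. Once you have this, the paper bypasses constructing $F$ explicitly and instead invokes \cite{lurie_higher_topos_theory}[5.3.5.11] to recognize $Sh_{\Sigma}^{\spectra}(\ComodGamma^{fp})$ as $\textnormal{Ind}(\dcat)$ directly; your construction of a comparison functor via the universal property of Remark \ref{rem:universal_property_of_spherical_sheaves_of_spectra} is a viable alternative, but either way you need the hypercompleteness of representables, and that is what the proposal is missing.
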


\begin{proof}
Let us denote by $\Perf_{\Gamma}$ the thick subcategory of $\dcat(\ComodGamma)$ generated by the image of the inclusion $\ComodGamma^{fp} \hookrightarrow \dcat(\ComodGamma)$ of dualizable comodules into the heart. By \cite{barthel2015local}[4.8], there's an equivalence $\euscr{S}table_{\Gamma} \simeq \Ind(\Perf_{\Gamma})$ between Hovey's stable $\infty$-category and the $\infty$-category of $\Ind$-objects in $\Perf_{\Gamma}$. We will show that there is also an equivalence $\Perf_{\Gamma} \simeq Sh_{\Sigma}^{\spectra}(\ComodGamma^{fp})$ of the latter with the $\infty$-category of spherical sheaves of spectra, this is clearly enough.

By \cref{thm:derived_category_of_a_cg_grothendieck_abelian_category}, the inclusion $\ComodGamma^{fp} \hookrightarrow \dcat(\ComodGamma)^{\heartsuit}$ of dualizable comodules into the heart extends to an equivalence $\hpsheaves_{\Sigma}^{\spectra}(\ComodGamma^{fp}) \simeq \dcat(\ComodGamma)$ between hypercomplete spherical sheaves and the derived $\infty$-category. Using this equivalence, we see that one can identify $\Perf_{\Gamma}$ with the thick subcategory of $\hpsheaves_{\Sigma}^{\spectra}(\ComodGamma^{fp})$ generated by the hypercompletions of representable sheaves of spectra $\sigmainfty y(M)$, where $M$ is a dualizable comodule.  

Since $\omegainfty \sigmainfty y(M) \simeq y(M)$ is discrete, \cref{rem:hypercompleteness_of_a_sheaf_detected_by_omega_infty} implies that the representable sheaves $\sigmainfty y(M)$ are already hypercomplete. Since being hypercomplete is a condition closed under suspensions, fibres and direct summands, we deduce that $\dcat$ is equivalent to the thick subcategory of $Sh_{\Sigma}^{\spectra}(\ComodGamma^{fp})$ generated by the representables. 

By \cite{lurie_higher_topos_theory}[5.3.5.11], to prove that the resulting embedding $\Perf_{\Gamma} \hookrightarrow Sh_{\Sigma}^{\spectra}(\ComodGamma^{fp})$ extends to an equivalence $\Ind(\dcat) \simeq Sh_{\Sigma}^{\spectra}(\ComodGamma^{fp})$ we have to verify that all objects in the image of the embedding are compact in $Sh_{\Sigma}^{\spectra}(\ComodGamma^{fp})$ and that they generate it under filtered colimits. The latter is clear, since the $\infty$-category of sheaves of spectra is generated under colimits by the suspensions $\Sigma^{\infty+n}_{+} y(M)$ of representables, where $M \in \ComodGamma^{fp}$ and $n \in \mathbb{Z}$. 

To verify that that all objects in the image of the embedding are compact, we just have to check it for objects of the form $\sigmainfty y(M)$ as the condition of being compact in a stable $\infty$-category is closed under suspensions, fibres and direct summands. If $X \in Sh_{\Sigma}^{\spectra}(\ComodGamma^{fp})$, then 
\begin{center}
$\map(\Sigma^{\infty}_{+} y(M), X) \simeq \map(y(M), \Omega^{\infty}X) \simeq \Omega^{\infty} X(M)$, 
\end{center}
where the last line is an application of the Yoneda lemma. Since $\Omega^{\infty}$ commutes with filtered colimits, it is enough to show that filtered colimits in spherical sheaves are computed levelwise, which is a consequence of the recognition principle by \cref{cor:filtered_colimits_in_spherical_sheaves_on_an_additive_site_computed_levelwise}. This ends the argument. 
\end{proof}

\begin{cor}
\label{cor:t_structure_on_hoveys_stable_infty_category}
There exists a natural $t$-structure on $\euscr{S}table_{\Gamma}$ such that $\dcat(\ComodGamma) \hookrightarrow \euscr{S}table_{\Gamma}$ is $t$-exact and induces an equivalence $(\euscr{S}table_{\Gamma})_{\leq k} \rightarrow (\dcat(\acat))_{\leq k}$ on the $\infty$-categories of $k$-coconnective objects for each $k \in \mathbb{Z}$. In particular, $\euscr{S}table_{\Gamma}^{\heartsuit} \simeq \ComodGamma$. 
\end{cor}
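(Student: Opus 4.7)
The plan is to transport the natural $t$-structure on spherical sheaves of spectra across the equivalence of \textbf{Theorem \ref{thm:hovey_stable_homotopy_theory_of_comodules_as_spherical_sheaves}} and then exploit the already-established comparison with hypercomplete sheaves to identify $D(\ComodGamma)$ with a subcategory that behaves well with respect to coconnectivity. Concretely, first I would use the equivalence $\euscr{S}table_{\Gamma} \simeq Sh_{\Sigma}^{\spectra}(\ComodGamma^{fp})$ of \textbf{Theorem \ref{thm:hovey_stable_homotopy_theory_of_comodules_as_spherical_sheaves}} to pull back the right-complete, filtered colimit compatible $t$-structure of \textbf{Proposition \ref{prop:tstructure_on_spherical_sheaves_of_spectra}}; call its connective and coconnective parts $(\euscr{S}table_{\Gamma})_{\geq 0}$ and $(\euscr{S}table_{\Gamma})_{\leq 0}$.

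Next, combine \textbf{Theorem \ref{thm:derived_category_of_a_cg_grothendieck_abelian_category}} with \textbf{Theorem \ref{thm:hovey_stable_homotopy_theory_of_comodules_as_spherical_sheaves}} to identify the inclusion $D(\ComodGamma) \hookrightarrow \euscr{S}table_{\Gamma}$ with the inclusion $\hpsheaves_{\Sigma}^{\spectra}(\ComodGamma^{fp}) \hookrightarrow Sh_{\Sigma}^{\spectra}(\ComodGamma^{fp})$ of hypercomplete sheaves into all spherical sheaves. This identification requires a small compatibility check, namely that the embedding constructed in the proof of \textbf{Theorem \ref{thm:hovey_stable_homotopy_theory_of_comodules_as_spherical_sheaves}} restricts on $\dcat \subseteq \euscr{S}table_{\Gamma}$ to the thick subcategory of $\hpsheaves_{\Sigma}^{\spectra}(\ComodGamma^{fp})$ generated by the representables $\sigmainfty y(M)$ for $M \in \ComodGamma^{fp}$; this was essentially observed in the proof, since representables are already hypercomplete and $\hpsheaves_{\Sigma}^{\spectra}(\ComodGamma^{fp})$ is freely generated under colimits from its compact objects in the same way.

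Having made this identification, $t$-exactness and the equivalence on coconnective parts are precisely the content of \textbf{Remark \ref{rem:tstructure_on_hypercomplete_spherical_sheaves}}: the $t$-structure on $Sh_{\Sigma}^{\spectra}(\ComodGamma^{fp})$ restricts to one on $\hpsheaves_{\Sigma}^{\spectra}(\ComodGamma^{fp})$, and the inclusion is an equivalence on the coconnective subcategories, in particular on the hearts. Combined with the heart identification $D(\ComodGamma)^{\heartsuit} \simeq \ComodGamma$ from \textbf{Proposition \ref{prop:tstructure_on_spherical_sheaves_of_spectra}} (via \textbf{Theorem \ref{thm:equivalence_of_comodules_with_spherical_sheaves}}, which gives $Sh_{\Sigma}^{\sets}(\ComodGamma^{fp}) \simeq \ComodGamma$), this yields $\euscr{S}table_{\Gamma}^{\heartsuit} \simeq \ComodGamma$.

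The only genuinely non-formal step is the compatibility check that the equivalence of \textbf{Theorem \ref{thm:hovey_stable_homotopy_theory_of_comodules_as_spherical_sheaves}} matches the inclusion $D(\ComodGamma) \hookrightarrow \euscr{S}table_{\Gamma}$ (coming from the algebraic construction of Hovey) with the hypercompletion inclusion on the sheaf side. I expect this to be the main obstacle: both sides are built by $\textnormal{Ind}$-completing the same thick subcategory of compact objects indexed by dualizable comodules, so the check reduces to identifying these thick subcategories compatibly, but it must be done carefully to ensure the $t$-structures (rather than just the underlying stable $\infty$-categories) are identified. Once in place, the rest of the argument is a direct appeal to the already-proven results on $t$-structures for sheaves of spectra.
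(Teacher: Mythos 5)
Your proposal is correct and follows essentially the same route as the paper: transport the $t$-structure of \textbf{Proposition \ref{prop:tstructure_on_spherical_sheaves_of_spectra}} across the equivalence of \textbf{Theorem \ref{thm:hovey_stable_homotopy_theory_of_comodules_as_spherical_sheaves}}, identify $D(\ComodGamma) \hookrightarrow \euscr{S}table_{\Gamma}$ with the inclusion of hypercomplete sheaves via \textbf{Theorem \ref{thm:derived_category_of_a_cg_grothendieck_abelian_category}}, and conclude with \textbf{Remark \ref{rem:tstructure_on_hypercomplete_spherical_sheaves}}. You are somewhat more explicit than the paper about the compatibility check — the paper dispatches it in one clause ("the embedding above is the inclusion of hypercomplete sheaves") — but your account of what that check requires, and why it goes through given that both sides are built as $\textnormal{Ind}$ of the same thick subcategory on representables, is accurate.
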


\begin{proof}
By \cref{thm:derived_category_of_a_cg_grothendieck_abelian_category}, \cref{thm:hovey_stable_homotopy_theory_of_comodules_as_spherical_sheaves} we have equivalences $\dcat(\ComodGamma) \simeq \hpsheaves_{\Sigma}^{\spectra}(\ComodGamma^{fp})$ and $\euscr{S}table_{\Gamma} \simeq Sh_{\Sigma}^{\spectra}(\ComodGamma^{fp})$, the embedding above is the inclusion of hypercomplete sheaves. The two $t$-structures in question are those of \cref{prop:tstructure_on_spherical_sheaves_of_spectra} and the embedding has the claimed property by \cref{rem:tstructure_on_hypercomplete_spherical_sheaves}. 
\end{proof}

\begin{cor}
\label{cor:homotopy_classes_of_maps_in_hoveys_stable_category_compute_ext_groups}
Let $M, N$ be comodules and $\sigmainfty y(M), \sigmainfty y(N) \in \euscr{S}table_{\Gamma}^{\heartsuit}$ the corresponding elements of the heart of the stable $\infty$-category. Then, $[\sigmainfty y(M), \sigmainfty y(N)]_{k} \simeq \Ext_{\Gamma}^{-k}(M, N)$.
\end{cor}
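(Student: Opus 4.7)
The plan is to use the embedding $D(\ComodGamma) \hookrightarrow \euscr{S}table_{\Gamma}$ of Corollary \ref{cor:t_structure_on_hoveys_stable_infty_category} to reduce the claim to a tautological $\Ext$ identification inside the derived $\infty$-category.

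First, by Theorem \ref{thm:hovey_stable_homotopy_theory_of_comodules_as_spherical_sheaves} together with Theorem \ref{thm:derived_category_of_a_cg_grothendieck_abelian_category} I would rewrite $\euscr{S}table_{\Gamma} \simeq Sh_{\Sigma}^{\spectra}(\ComodGamma^{fp})$ and $D(\ComodGamma) \simeq \hpsheaves_{\Sigma}^{\spectra}(\ComodGamma^{fp})$. Under these identifications the embedding of Corollary \ref{cor:t_structure_on_hoveys_stable_infty_category} is simply the inclusion of hypercomplete spherical sheaves of spectra into all spherical sheaves of spectra, and hence is manifestly fully faithful.

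Next, since both $\sigmainfty y(M)$ and $\sigmainfty y(N)$ lie in $\euscr{S}table_{\Gamma}^{\heartsuit} \subseteq (\euscr{S}table_{\Gamma})_{\leq 0}$, and Corollary \ref{cor:t_structure_on_hoveys_stable_infty_category} asserts that the embedding is an equivalence on $(\leq 0)$, both objects are hypercomplete and arise as the images of $M, N \in D(\ComodGamma)^{\heartsuit}$ regarded as chain complexes concentrated in degree zero. Compatibility of the two heart identifications with the embedding is automatic, since in both cases it factors through the Yoneda equivalence $\ComodGamma \simeq Sh_{\Sigma}^{\sets}(\ComodGamma^{fp})$ of Theorem \ref{thm:equivalence_of_comodules_with_spherical_sheaves} composed with the canonical lift of Proposition \ref{prop:spherical_sheaves_canonically_lift_to_sheaves_of_spectra}.

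Full faithfulness of the embedding then yields
\[
\map_{\euscr{S}table_{\Gamma}}(\sigmainfty y(M), \sigmainfty y(N)) \simeq \map_{D(\ComodGamma)}(M, N),
\]
and taking $\pi_{k}$ together with the standard identification $\pi_{k}\map_{D(\ComodGamma)}(M, N) \simeq \Ext_{\Gamma}^{-k}(M, N)$ of derived $\Hom$ with $\Ext$ in an abelian category finishes the proof. I do not expect a genuine obstacle here: once the previous two results are in hand, the computation is essentially formal, the only moment requiring any thought being the compatibility of the two heart identifications noted above.
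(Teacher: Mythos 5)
Your proof is correct and takes essentially the same approach as the paper: reduce to the classical identification of derived Hom with $\Ext$ in $D(\ComodGamma)$ via the fully faithful embedding $D(\ComodGamma)\hookrightarrow \euscr{S}table_{\Gamma}$ of hypercomplete spherical sheaves and the matching of hearts from Corollary \ref{cor:t_structure_on_hoveys_stable_infty_category}. The paper's proof is one terse sentence invoking exactly this reduction; you have merely spelled out the full faithfulness of the localization and the compatibility of heart identifications, both of which the paper uses implicitly.
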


\begin{proof}
The inclusion $\dcat(\ComodGamma) \hookrightarrow \euscr{S}table_{\Gamma}$ induces an equivalence on subcategories of objects bounded from above by \cref{cor:t_structure_on_hoveys_stable_infty_category}, so the above reduces to the classical formula for homotopy classes in the derived $\infty$-category.
\end{proof}

\begin{rem}
Since $Sh_{\Sigma}^{\spectra}(\ComodGamma^{fp})$ is an $\infty$-category of sheaves of spectra on an excellent $\infty$-site, \cref{prop:symmetric_monoidal_structure_on_spherical_sheaves_of_spectra} implies that it acquires a unique symmetric monoidal structure preserving colimits in each variable. Similarly, the universal property of the $\infty$-category of $\Ind$-objects endows $\euscr{S}table_{\Gamma}$ with a symmetric monoidal structure of its own, see \cite{barthel2015local}[4.9]. 

Using the universal property of either of these two symmetric monoidal structures one sees that under the equivalence given by \cref{thm:hovey_stable_homotopy_theory_of_comodules_as_spherical_sheaves} these correspond to each other; that is, $Sh_{\Sigma}^{\spectra}(\ComodGamma^{fp}) \simeq \euscr{S}table_{\Gamma}$ is an equivalence of symmetric monoidal $\infty$-categories. 
\end{rem}

\begin{rem}[$\mathrm{Ind}$-coherent sheaves]
The equivalence $\euscr{S}table_{\Gamma} \simeq \mathrm{Ind}(\mathrm{Perf}_{\Gamma})$ \cite{barthel2015local}[4.8], where $\Perf$ is the thick subcategory of $\dcat(\Gamma)$ generated by dualizable comodules, allows us to identify $\euscr{S}table_{\ComodGamma}$ with the $\infty$-category of $\mathrm{Ind}$-coherent sheaves over the associated algebraic stack, which is the language we used in the introduction.  
\end{rem}

\begin{rem}
The identification of \cref{thm:hovey_stable_homotopy_theory_of_comodules_as_spherical_sheaves} allows one to deduce some properties of $\euscr{S}table_{\Gamma}$ which were previously known only under more restrictive hypotheses. For example, \cref{cor:t_structure_on_hoveys_stable_infty_category} appears in the work of Barthel, Heard and Valenzuela as \cite[4.17]{barthel2015local} under the additional assumption of $(A, \Gamma)$ being a Landweber Hopf algebroid over a noetherian base. 
\end{rem}

\subsection{Adams-type homology theories}

In this section we review the basic theory of Adams-type homology theories. Then, we introduce a notion of a finite $E$-projective spectrum, show that their $\infty$-category can be naturally made into an $\infty$-site, and verify its basic properties. 

Throughout this section, $E$ is a fixed homotopy associative ring spectrum such that $E_{*}$ and $E_{*}E$ are graded-commutative. 

\begin{defin}
We say a spectrum $X$ is \emph{$E$-projective} if $E_{*}X$ is projective as an $E_{*}$-module. We say $X$ is \emph{finite $E$-projective} if it is finite and $E_{*}P$ is finitely generated and projective. We denote the full subcategory of spectra spanned by finite $E$-projectives by $\spectra_{E}^{fp}$. 
\end{defin}
We will sometimes abuse terminology and simply say \emph{projective} or \emph{finite projective} instead of $E$-projective, the spectrum $E$ being understood implicitly. A basic example is given by the spheres $S^{k}$, which are finite projective for any choice of homology theory $E$. 

\begin{defin}
\label{defin:adams_type_ring_spectrum}
We say that a homotopy associative ring spectrum $E$ is \emph{Adams-type} if 

\begin{enumerate}
\item (\emph{Adams condition}) $E$ can be written as a filtered colimit $\varinjlim E_{\alpha} \simeq E$ of spectra, where 
\item (\emph{universal coefficient}) each $E_{\alpha}$ is finite projective and the natural map 

\begin{center}
$E^{*}E_{\alpha} \rightarrow \mathrm{Hom}_{E_{*}}(E_{*}E_{\alpha}, E_{*})$
\end{center}
is an isomorphism.
\end{enumerate}
\end{defin}

\begin{example}
The sphere, any Landweber exact homology theory, and any field are examples of Adams-type homology theories. A non-example is given by the integral Eilenberg-MacLane spectrum $H\mathbb{Z}$, as an Adams-type spectrum is necessarily topologically flat, see \cref{rem:adams_ring_spectrum_is_flat}.
\end{example}
Intuitively, an Adams-type homology theory is one for which there are enough finite projective spectra and for which finite projective spectra behave well homologically. Historically, these are the two conditions that Adams wrote down to obtain universal coefficient and K\"{u}nneth spectral sequences, they had been later used by Goerss and Hopkins to develop an obstruction theory to realizing commutative ring spectra \cite{moduli_spaces_of_commutative_ring_spectra}. 

\begin{rem}
\label{rem:adams_ring_spectrum_is_flat}
Notice that if $E \simeq \varinjlim E_{\alpha}$ is a filtered colimit of finite projective spectra, then $E_{*}E \simeq \varinjlim E_{*} E_{\alpha}$ is a filtered colimit of projective $E_{*}$-modules, in particular, $E_{*}E$ is flat over $E_{*}$. It follows in the usual way that $(E_{*}, E_{*}E)$ defines a Hopf algebroid and that $E$-homology canonically takes values in $E_{*}E$-comodules. This Hopf algebroid is clearly Adams in the sense of \cref{defin:adams_hopf_algebroid}.
\end{rem}
Adams shows that under these conditions, for any spectrum $X$ we have the universal coefficient spectral sequence

\begin{center}
$\textnormal{Ext}^{s, t}_{E_{*}}(E_{*}X, E_{*}) \Rightarrow E^{t-s}X$,
\end{center}
see \cite[III.13]{adams1995stable}. In particular, if $X$ is projective then $E^{*}X \simeq \Hom_{E_{*}}(E_{*}X, E_{*})$. Similarly, for any spectra $X, Y$ we have a K\"{u}nneth spectral sequence of the form

\begin{center}
$\textnormal{Tor}_{s, t}^{E_{*}}(E_{*}X, E_{*}Y) \Rightarrow E_{t+s}(X \otimes Y)$.
\end{center}
In particular, this implies that if either $X$ or $Y$ is projective, then $E_{*}(X \otimes Y) \simeq E_{*}X \otimes E_{*} Y$, which of course can be proven to hold without the use of a spectral sequence.

The following lemma will not be used further, but it shows that the more restrictive condition of Adams (who requires the universal coefficient isomorphism for all homotopy $E$-modules, rather than just $E$ itself, see \cite[Condition 13.3]{adams1995stable}) is in fact equivalent to \cref{defin:adams_type_ring_spectrum}. Here, by a \emph{homotopy module} we mean a module over $E$ in the stable homotopy category. 
\begin{lemma}
\label{lemma:kunneth_for_all_homotopy_modules}
Let $E$ be a homotopy associative ring spectrum and let $X \in \spectra_{E}^{fp}$ be a finite projective spectrum that satisfies the universal coefficient isomorphism in the sense that the natural map $E^{*}X \rightarrow \Hom_{E_{*}}(E_{*}X, E_{*})$ is an isomorphism. Then, for any homotopy $E$-module $M$, the natural map $M^{*}X \rightarrow \Hom_{E_{*}}(E_{*}X, M_{*})$ is an isomorphism. 
\end{lemma}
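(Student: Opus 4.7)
The plan is to exhibit the natural map $\Phi_M \colon M^*X \to \Hom_{E_*}(E_*X, M_*)$, defined on a homotopy class $[f\colon X \to M]$ as the composite $E_*X \xrightarrow{E_*f} E_*M \to M_*$ (the second map coming from the $E$-action on $M$), as an isomorphism of functors of the homotopy $E$-module $M$. The hypothesis gives that $\Phi_E$ is an isomorphism; I will propagate this to all homotopy $E$-modules by an exactness-plus-cellular-resolution argument.

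First I would verify that both sides are exact and additive functors of $M$, in the sense that they take cofibre sequences of homotopy $E$-modules to long exact sequences and preserve arbitrary direct sums and sequential colimits. For the topological functor $M \mapsto M^*X$, this uses that $X$ is finite: $F(X,-) \simeq DX \wedge -$ preserves all colimits and cofibre sequences. For the algebraic functor $M \mapsto \Hom_{E_*}(E_*X, M_*)$, it uses that $E_*X$ is finitely generated and projective over $E_*$, so that $\Hom_{E_*}(E_*X, -)$ is exact on short exact sequences of $E_*$-modules and preserves direct sums and filtered colimits. Combined with the K\"unneth hypothesis, this shows $\Phi_{\Sigma^k E}$ is an isomorphism for every $k$, and hence by additivity $\Phi$ is an isomorphism on any wedge of shifts of $E$.

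Next I would resolve an arbitrary homotopy $E$-module $M$ by free ones: choose a map $P_0 \to M$ from a wedge of shifts of $E$ that is surjective on $\pi_*$, form its fibre $K_0$, and iterate this process on $K_0$. Assembled into a tower, this presents $M$ as a sequential homotopy colimit of iterated cofibres in free homotopy $E$-modules. A five-lemma argument applied stagewise to the long exact sequences coming from each cofibre sequence $K_n \to P_n \to K_{n-1}$, combined with compatibility of both sides with sequential colimits, then forces $\Phi_M$ to be an isomorphism.

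The main obstacle is the convergence and bookkeeping of this cellular tower --- ensuring that the resolution genuinely reconstructs $M$, which typically requires arranging increasing connectivity of the fibres $K_n$ as one resolves, and taking care that the five-lemma comparison can be propagated through all stages. This is standard material on Adams-type resolutions in a triangulated category, and once in place the comparison becomes essentially formal.
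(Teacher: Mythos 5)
Your approach is genuinely different from the paper's, and it has two real gaps.

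First, the resolution step. You propose to choose a $\pi_*$-surjection $P_0 \to M$ from a wedge of shifts of $E$, form its fibre $K_0$, and iterate. For the comparison map $\Phi$ to be defined on $K_0$ at all, $K_0$ must itself be a homotopy $E$-module, and the map $K_0 \to P_0$ must be a map of homotopy $E$-modules. But the category of homotopy $E$-modules over a merely homotopy associative ring spectrum is not triangulated: the fibre of an arbitrary module map need not carry a compatible homotopy module structure, and when one exists it need not be unique. You cannot freely build a "cellular tower" of homotopy $E$-modules without addressing this. The paper sidesteps this entirely by using only the canonical map $E \wedge M \to M$, whose fibre $F$ inherits a homotopy module structure precisely because the sequence is \emph{split} as spectra (the splitting $M \to E \wedge M$ coming from the unit); this is the unique kind of cofibre sequence that always behaves.

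Second, even granting the tower, the five-lemma step does not close. From $K_n \to P_n \to K_{n-1}$ you know $\Phi_{P_n}$ is an iso, but to deduce anything about $\Phi_{K_{n-1}}$ you also need information about $\Phi_{K_n}$ — which is of the same nature as what you are trying to prove. You'd have to break the symmetry somehow: either a convergence statement (the $K_n$ vanish in the limit, requiring boundedness of $M$, which is not assumed), or a bootstrap that only demands one-sided information about $K_n$. The paper does exactly the latter: from the split exact sequence $F \to E \wedge M \to M$ and the isomorphism in the middle, it concludes $\Phi_M$ is \emph{surjective} for every homotopy $E$-module $M$ at once; then injectivity of $\Phi_M$ follows from surjectivity of $\Phi_F$, which is already established universally. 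No resolution, no convergence, no five lemma.

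The paper's treatment of the free case is also slightly more general than yours: it proves $\Phi_{E \wedge Y}$ is an isomorphism for an arbitrary spectrum $Y$ by observing that both sides are homology theories in $Y$ (using finiteness of $X$ on the topological side and finite projectivity of $E_*X$ on the algebraic side) agreeing when $Y = S^0$. You restrict to $Y$ a wedge of spheres, which is enough for your intended tower but not for the paper's reduction.

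Your opening observation — that both sides are exact, additive functors of $M$ commuting with sums and filtered colimits — is correct and is implicitly the engine of the paper's diagram chase as well, so that part of your thinking is on the right track. The missing insight is to replace the general Adams-type resolution by the single split sequence $F \to E \wedge M \to M$ and to prove surjectivity before injectivity.
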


\begin{proof}
We first claim that we can reduce to the case $M \simeq E \otimes Y$ being a free homotopy $E$-module on some spectrum $Y$. Indeed, for any homotopy $E$-module $M$, the multiplication map $E \otimes M \rightarrow M$ is a map of modules which is a split epimorphism as a map of spectra. Let us denote the fibre by $F := \mathrm{fib}(E \otimes M \rightarrow M)$, one verifies immediately that in this case $F$ admits a unique structure of a homotopy $E$-module so that the inclusion $F \rightarrow E \otimes M$ is a map of homotopy $E$-modules. We now look at the diagram 

\begin{center}
	% map of short exact sequences
	\begin{tikzpicture}
		\node (T1) at (0, 0) {$ 0 $};
		\node (T2) at (2, 0) {$ F^{*}X $};
		\node (T3) at (5.5, 0) {$ (E \otimes M)^{*} X $};
		\node (T4) at (9, 0) {$ M^{*} X $};
		\node (T5) at (11, 0) {$ 0 $};
		
		\draw [->] (T1) to (T2);
		\draw [->] (T2) to (T3);
		\draw [->] (T3) to (T4);
		\draw [->] (T4) to (T5);
		
		\node (B1) at (0, -1) {$ 0 $};
		\node (B2) at (2, -1) {$ \Hom_{E_{*}}(E_{*}X, F_{*}) $};
		\node (B3) at (5.5, -1) {$ \Hom_{E_{*}}(E_{*}X, E_{*}M) $};
		\node (B4) at (9, -1) {$ \Hom_{E_{*}}(E_{*}X, M_{*} )$};
		\node (B5) at (11, -1) {$ 0 $};
		
		\draw [->] (B1) to (B2);
		\draw [->] (B2) to (B3);
		\draw [->] (B3) to (B4);
		\draw [->] (B4) to (B5);
		
		\draw [->] (T2) to (B2);
		\draw [->] (T3) to (B3);
		\draw [->] (T4) to (B4);
	\end{tikzpicture},
\end{center} 
where the bottom row is also exact by the assumption that $E_{*}X$ is projective. The middle vertical map is an isomorphism and it follows that the one on the right is surjective. To show that it is injective, we need to know that the left one is surjective, too, but to show this we observe that $F$ is also a homotopy $E$-module so we can use the same argument applied to $E \otimes F \rightarrow F$. 

We are now left with verifying that the claim holds for modules of the form $E \otimes Y$; that is, we have to show that for any spectrum $Y$ the natural map

\begin{center}
$(E \otimes Y)^{*}X \rightarrow \Hom_{E_{*}}(E_{*}X, E_{*}Y)$ 
\end{center}
is an isomorphism. We claim that as $Y$ varies, both sides are homology theories in $Y$. This is clear for the right hand side, since $E_{*}X$ is projective and finitely generated and similarly clear for the left hand side as $X$ is finite. As this natural transformation is an isomorphism for $X = S^{0}$ by assumption, we are done.
\end{proof}

\begin{rem}
\label{rem:kunneth_iso_also_holds_in_comodule_form}
Notice that in the proof of \cref{lemma:kunneth_for_all_homotopy_modules} we can replace $\Hom_{E_{*}}(E_{*}X, M_{*})$ by $\Hom_{E_{*}E}(E_{*}X, E_{*}M)$. In particular, we also have a universal coefficient isomorphism of the form $M_{*}X \simeq \Hom_{E_{*}E}(E_{*}X, E_{*}M)$. Equivalently, $E_{*}M \simeq E_{*}E \otimes_{E_{*}} M_{*}$ for any homotopy $E$-module.
\end{rem}

We will now equip the $\infty$-category $\spectra_{E}^{fp}$ of finite projective spectra with a symmetric monoidal structure and a topology that make it into an excellent $\infty$-site in the sense of \cref{defin:excellent_infty_site}. The topology in question is induced from the surjection topology on the category $\ComodE^{fp}$ of dualizable comodules along the homology functor. In other words, we will declare a collection $\{ Q_{i} \rightarrow P \}$ of maps of finite projective spectra to be covering family if and only if it consists of a single $E_{*}$-surjective map. 

\begin{construction}
\label{construction:homology_is_lax_monoidal}
The $E$-homology functor $E_{*} \colon \spectra \rightarrow \ComodE$ associated to a homotopy associative ring spectrum has a canonical lax monoidal structure, which we now recall.

For any two spectra $X, Y$, we define the exterior tensor product of two homology classes $S^{k} \rightarrow E \otimes X$ and $S^{l} \rightarrow E \otimes Y$ as the composite 
\[
S^{k} \otimes S^{l} \rightarrow (E \otimes X) \otimes (E \otimes Y) \simeq E \otimes E \otimes X \otimes Y \rightarrow E \otimes X \otimes Y,
\]
where the middle equivalence is the symmetry of spectra and the second map is induced by multiplication. This gives a pairing which one shows is bilinear and hence induces a map
\[
E_{*}X \otimes_{E_{*}} E_{*}Y \rightarrow E_{*}(X \otimes Y).
\]
\end{construction}

\begin{warning}
\label{warning:homology_not_necessarily_homotopy_commutative}
If $E$ is homotopy commutative, then lax monoidal structure of \cref{construction:homology_is_lax_monoidal} is lax symmetric monoidal. This is not necessarily the case if $E$ is not homotopy commutative. 

For example, one can show that Morava $K$-theories $K(n)$ at prime two can never be made homotopy commutative \cite{wurgler1991morava}, and in fact the homology functor $K(n)_{*} \colon \spectra \rightarrow \Comod_{K(n)_{*}K(n)}$ cannot be made symmetric monoidal\footnote{Private communication with Jacob Lurie.}. As we would like to include these homology theories in our construction of synthetic spectra, we will be careful about monoidal functors being symmetric. 
\end{warning}

\begin{lemma}
\label{lemma:smash_products_makes_finite_projective_spectra_rigid_symmetric_monoidal_and_homology_monoidal}
The tensor product of spectra restricts to a symmetric monoidal structure on $\spectra_{E}^{fp}$ such that all objects have duals. The homology functor $E_{*} \colon \spectrafp \rightarrow \ComodE$ is monoidal, and symmetric monoidal if $E$ is homotopy commutative.  
\end{lemma}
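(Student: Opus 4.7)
The plan is to prove three claims: (i) $\spectrafp$ is closed under the smash product of spectra and contains $S^0$, so inherits a symmetric monoidal structure; (ii) the homology functor $E_*\colon \spectrafp \rightarrow \ComodE$ is symmetric monoidal; and (iii) every object of $\spectrafp$ admits a dual. Items (i) and (ii) both run on the Künneth isomorphism for the smash product, while (iii) is the main work and is where the Adams-type hypothesis is used essentially.

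For (i), the unit $S^0$ lies in $\spectrafp$ because $E_*S^0 \simeq E_*$ is free of rank one. If $P, Q \in \spectrafp$, then $P \wedge Q$ is finite, and because at least one of $E_*P, E_*Q$ is projective the Künneth isomorphism gives $E_*(P \wedge Q) \simeq E_*P \otimes_{E_*} E_*Q$. A tensor product of finitely generated projective $E_*$-modules is again finitely generated projective, so $P \wedge Q \in \spectrafp$. A full subcategory of a symmetric monoidal $\infty$-category containing the unit and closed under the tensor product inherits a symmetric monoidal structure, yielding (i). For (ii), the comparison map $E_*P \otimes_{E_*} E_*Q \rightarrow E_*(P \wedge Q)$ is the structure map of the canonical lax monoidal structure on $E_*$; the Künneth isomorphism just invoked says it is an iso on $\spectrafp$, and symmetry and unitality are immediate.

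For (iii), given $P \in \spectrafp$, I would take $DP := F(P, S^0)$, the Spanier-Whitehead dual, which exists in $\spectra$ because $P$ is finite and is again a finite spectrum. It remains to show $E_*DP$ is finitely generated projective. Finiteness of $P$ gives a natural identification $E_n DP \simeq E^{-n}P$, so it suffices to control $E^*P$. For this I would invoke the Adams Künneth spectral sequence
\begin{center}
$\Ext^{s,t}_{E_*}(E_*P, E_*) \Rightarrow E^{t-s}P$,
\end{center}
available for Adams-type $E$ and finite $P$ by \cite[III.13]{adams1995stable}. Since $E_*P$ is projective, the higher Ext groups vanish and the spectral sequence collapses to $E^*P \simeq \Hom_{E_*}(E_*P, E_*)$, which is finitely generated projective as the $E_*$-linear dual of a finitely generated projective module. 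Hence $DP \in \spectrafp$, and the evaluation and coevaluation maps automatically live in $\spectrafp$ by step (i), so $P$ is dualizable inside $\spectrafp$.

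The main obstacle is (iii), and more precisely the collapse of the Künneth spectral sequence, which is where the Adams-type hypothesis does the real work: without it one has no control of $E^*P$ in terms of $E_*P$ and there is no reason for the Spanier-Whitehead dual of a finite $E$-projective spectrum to remain $E$-projective.
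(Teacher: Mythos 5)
Your proof is correct and follows essentially the same route as the paper: closure under $\wedge$ via the Künneth isomorphism, and dualizability by identifying $E_*DP \simeq E^*P \simeq \Hom_{E_*}(E_*P, E_*)$ via the collapsing universal coefficient spectral sequence (which the paper invokes in the preceding discussion rather than inside the proof itself). The only cosmetic difference is that you spell out the spectral sequence collapse explicitly where the paper states the resulting isomorphism directly.
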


\begin{proof}
We have to show that if $P_{1}, P_{2}$ are finite projective spectra, then so is $P_{1} \otimes P_{2}$. It is clear that it is finite. 

Since $E_{*}P_{1}$ is flat, $E_{*} P_{1} \otimes E_{*}X \rightarrow E_{*}(P_{1} \otimes X)$ is a natural transformation of homology theories in $X$ which is an isomorphism when $X = S^{0}$, and hence a natural isomorphism. In particular, $E_{*}(P_{1} \otimes P_{2}) \simeq E_{*}(P_{1}) \otimes_{E_{*}} E_{*}(P_{2})$ which is finitely generated projective, as needed. This also shows that $E_{*}$ is (strongly) monoidal when restricted to $\spectrafp$.

To verify that any $P \in \spectrafp$ has a dual, it is enough to verify that the Spanier-Whitehead dual is again finite projective. It is clearly finite, and since $E_{*}DP \simeq E^{*}P \simeq \Hom_{E_{*}}(E_{*}P, E_{*})$ we see that $E_{*}DP$ is the linear dual of $E_{*}P$, hence is projective itself. 
\end{proof}

\begin{lemma}
\label{lemma:homology_preserves_pullbacks_along_coverings}
Let $Q, P, R \in \spectra_{E}^{fp}$ and let $Q \rightarrow P$ be an $E_{*}$-surjection and $R \rightarrow P$ arbitrary. Then $Q \times_{P} R$ is finite projective with $E_{*}(Q \times _{P} R) \simeq E_{*}Q \times _{E_{*}P} E_{*}R$, so that $Q \times _{P} R \rightarrow R$ is an $E_{*}$-surjection. 
\end{lemma}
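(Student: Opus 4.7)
The plan is to use stability of the ambient $\infty$-category of spectra to replace the pullback square with a fibre sequence, and then extract both the finiteness and the projectivity from a short exact sequence on $E$-homology.

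First I would observe that finite spectra are closed under finite limits, so $Q \times_{P} R$ is finite. Next, since $\spectra$ is stable, the pullback $Q \times_{P} R$ may be identified with the fibre of the map $Q \oplus R \to P$ given by the difference of the two structure maps. This yields a fibre sequence $Q \times_{P} R \to Q \oplus R \to P$ and hence a long exact sequence on $E$-homology. The map $Q \oplus R \to P$ is an $E_{*}$-surjection because its restriction to $Q$ already is, so the long exact sequence breaks into a short exact sequence
\begin{equation*}
0 \to E_{*}(Q \times_{P} R) \to E_{*}Q \oplus E_{*}R \to E_{*}P \to 0,
\end{equation*}
which exhibits $E_{*}(Q \times_{P} R)$ as the pullback $E_{*}Q \times_{E_{*}P} E_{*}R$ in the category of $E_{*}$-modules.

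For projectivity I would argue as follows. Since $E_{*}P$ is projective, the surjection $E_{*}Q \to E_{*}P$ splits, giving a decomposition $E_{*}Q \simeq K \oplus E_{*}P$ with $K$ the kernel. As a direct summand of a finitely generated projective $E_{*}$-module, $K$ is itself finitely generated and projective. A routine check using this splitting shows that the pullback in modules fits into an isomorphism $E_{*}Q \times_{E_{*}P} E_{*}R \simeq K \oplus E_{*}R$, which is finitely generated and projective since both summands are. This already puts $Q \times_{P} R$ in $\spectrafp$. The last claim, that $Q \times_{P} R \to R$ is $E_{*}$-surjective, is immediate from this description, since the map corresponds under the isomorphism to the projection $K \oplus E_{*}R \to E_{*}R$.

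No step looks like a serious obstacle; the only mild subtlety is translating the $\infty$-categorical pullback into a fibre sequence of the form $Q \times_{P} R \to Q \oplus R \to P$ so that the long exact sequence on $E$-homology is directly available, but this is standard in any stable $\infty$-category.
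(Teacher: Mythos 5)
Your proof is correct and takes essentially the same approach as the paper: both use the fibre sequence $Q \times_{P} R \to Q \oplus R \to P$ and the observation that the long exact sequence on $E$-homology splits into short exact sequences because $E_{*}Q \to E_{*}P$ is surjective. The paper leaves the projectivity verification implicit where you spell it out via the splitting $E_{*}Q \simeq K \oplus E_{*}P$, but this is exactly the intended argument.
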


\begin{proof}
We have a fibre sequence $Q \times _{P} R \rightarrow Q \oplus R \rightarrow P$ and the claim is immediate from the long exact sequence of homology, which here splits into short exact sequences because $E_{*}Q \rightarrow E_{*}P$ is surjective. 
\end{proof}

\begin{prop}
\label{prop:homology_surjections_make_fpspectra_into_an_excellent_inftysite}
Let us say that a map $Q \rightarrow P$ of finite projective spectra is a \emph{covering} if $E_{*}Q \rightarrow E_{*}P$ is surjective. Then, the class of $E_{*}$-surjections together with the tensor product of spectra make $\spectrafp$ into an excellent $\infty$-site in the sense of \cref{defin:excellent_infty_site}.
\end{prop}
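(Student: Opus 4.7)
The proof is essentially a bookkeeping exercise assembling the lemmas that immediately precede the proposition, so the plan is to verify each clause of \textbf{Definition \ref{defin:excellent_infty_site}} in turn. The main things to check are: (i) $\spectrafp$ is a (small) additive $\infty$-category; (ii) the $E_{*}$-surjections form a Grothendieck pretopology whose covering families all consist of a single map, and for which the relevant pullbacks exist in $\spectrafp$; (iii) the smash product makes $\spectrafp$ into a rigid symmetric monoidal $\infty$-category; and (iv) tensoring with any object preserves coverings. Essential smallness of $\spectrafp$ is immediate since it is a full subcategory of finite spectra, and additivity follows since $\spectrafp$ is closed under finite sums in $\spectra$ (sums of finite spectra are finite and $E_{*}(P \oplus Q) \simeq E_{*}P \oplus E_{*}Q$ is again finitely generated projective).

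For the pretopology, I would verify the three standard axioms. Stability under isomorphism and the fact that every identity map is a covering are immediate, because $E_{*}$ of an equivalence is an isomorphism and in particular surjective. Closure under composition follows from the fact that the composite of two surjections of $E_{*}$-modules is surjective. Finally, stability under pullback is exactly the content of \textbf{Lemma \ref{lemma:homology_preserves_pullbacks_along_coverings}}: given a covering $Q \to P$ and an arbitrary map $R \to P$ in $\spectrafp$, the pullback $Q \times_{P} R$ taken in $\spectra$ already lies in $\spectrafp$, and the map $Q \times_{P} R \to R$ is again an $E_{*}$-surjection. Since every covering family in our topology consists of a single morphism by definition, this makes $\spectrafp$ into an additive $\infty$-site in the sense of \textbf{Definition \ref{defin:additive_infinity_site}}.

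For the monoidal part, \textbf{Lemma \ref{lemma:smash_products_makes_finite_projective_spectra_rigid_symmetric_monoidal_and_homology_monoidal}} already gives us that the smash product restricts to a symmetric monoidal structure on $\spectrafp$ and that every object has a Spanier--Whitehead dual lying in $\spectrafp$. So the one remaining condition to verify is that for each $P \in \spectrafp$ the functor $- \wedge P \colon \spectrafp \to \spectrafp$ preserves coverings. This is the only honest content left: if $Q \to R$ is an $E_{*}$-surjection, then applying $- \wedge P$ yields a map whose effect on $E$-homology is $E_{*}Q \otimes_{E_{*}} E_{*}P \to E_{*}R \otimes_{E_{*}} E_{*}P$ via the K\"unneth isomorphism (which holds since $E_{*}P$ is projective), and this is surjective because $E_{*}P$ is in particular flat over $E_{*}$.

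None of the steps is genuinely difficult, and no step really qualifies as the ``main obstacle'' --- the proposition is assembled from the preceding two lemmas together with a one-line K\"unneth/flatness argument. The only minor subtlety is to remember to use $E_{*}$-projectivity of $P$ to justify both the K\"unneth identification of $E_{*}(Q \wedge P)$ with $E_{*}Q \otimes_{E_{*}} E_{*}P$ and the preservation of surjections when tensoring with $E_{*}P$; both follow from projectivity, which implies flatness.
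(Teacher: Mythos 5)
Your proof follows essentially the same route as the paper's: both assemble \textbf{Lemma \ref{lemma:smash_products_makes_finite_projective_spectra_rigid_symmetric_monoidal_and_homology_monoidal}} for the rigid symmetric monoidal structure, \textbf{Lemma \ref{lemma:homology_preserves_pullbacks_along_coverings}} for the pretopology, and finish with the K\"{u}nneth identification $E_*(Q \wedge P) \simeq E_*Q \otimes_{E_*} E_*P$ to show tensoring preserves coverings. One small imprecision worth noting: you justify that $- \otimes_{E_*} E_*P$ preserves surjections by appealing to flatness of $E_*P$, but this is automatic from right-exactness of the tensor product over any module whatsoever — flatness is only needed to preserve \emph{injections}, so you are reaching for a stronger hypothesis than the step requires.
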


\begin{proof}
The fact that $E_{*}$-surjections define a Grothendieck pretopology is an immediate consequence of \cref{lemma:homology_preserves_pullbacks_along_coverings}, it makes $\spectrafp$ into an additive $\infty$-site since the covering families are singleton by definition. 

We verified that the tensor product restricts to a  symmetric monoidal structure such that all objects admit duals in \cref{lemma:smash_products_makes_finite_projective_spectra_rigid_symmetric_monoidal_and_homology_monoidal}, we just have to check that it is compatible with the topology. Since the symmetric monoidal structure is rigid, $P \otimes - \colon \spectrafp \rightarrow \spectrafp$ is a right adjoint and hence preserves all pullbacks that exist in $\spectrafp$, in particular pullbacks along coverings. Hence, it's enough to know that it takes coverings to coverings, but this is clear since $E_{*}(P \otimes Q) \simeq E_{*}P \otimes E_{*}Q$.
\end{proof}
We now show that the presheaf on $\spectrafp$ represented by a spectrum $X$ is a spherical sheaf, this is the basis of the embedding of the $\infty$-category of spectra into that of synthetic spectra, which will be discussed later. In addition, we prove that $y(X)$ is hypercomplete if $X$ is $E$-local.

\begin{prop}
\label{prop:spectra_define_sheaves_on_finite_projective_spectra}
Let $X$ be a spectrum and let $y(X) \colon (\spectrafp)^{op} \rightarrow \spaces$ be the presheaf defined by $y(X)(P) \simeq \map(P, X)$. Then $y(X)$ is a spherical sheaf, and it is hypercomplete if $X$ is $E$-local. 
\end{prop}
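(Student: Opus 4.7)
The proof splits naturally into three claims: sphericity, the sheaf property, and hypercompleteness under $E$-locality. I will address each in turn.

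Sphericity is immediate: the functor $\map(-,X)\colon \spectra^{op}\to \spaces$ carries the finite sums of $\spectrafp$—which coincide with the finite products in this additive $\infty$-category—to products of mapping spaces. For the sheaf condition, my plan is to invoke the recognition theorem for spherical sheaves (\textbf{Theorem \ref{thm:recognition_of_spherical_sheaves}}): it suffices to verify that for every fibre sequence $F\to B\to A$ in $\spectrafp$ whose right-hand map is a covering, the sequence $\map(A,X)\to \map(B,X)\to \map(F,X)$ is a fibre sequence of spaces. By \textbf{Lemma \ref{lemma:homology_preserves_pullbacks_along_coverings}} the pullback $F$ is already computed in the ambient stable $\infty$-category $\spectra$, so $F\to B\to A$ is both a fibre and a cofibre sequence there; the contravariant mapping functor $\map(-,X)\colon \spectra^{op}\to \spaces$ then converts it to the desired fibre sequence of spaces.

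For the hypercomplete claim, I first reduce to showing that for every hypercover $U_\bullet \to P$ in $\spectrafp$, the augmentation $|U_\bullet|\to P$ computed in $\spectra$ is an $E_*$-equivalence. Granting this, the $E$-locality of $X$ yields an equivalence $\map(P,X) \xrightarrow{\sim} \map(|U_\bullet|,X) \simeq \varprojlim_{\thickdelta} \map(U_\bullet,X)$, which is precisely descent of $y(X)$ along the chosen hypercover. Since hypercomplete sheaves are characterized by descent along hypercovers (\textbf{Proposition \ref{prop:recognition_of_hypercomplete_sheaves}}), this is exactly what is needed.

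The main obstacle is therefore the reduction step: proving that the realization $|U_\bullet|\to P$ of a hypercover in $\spectrafp$ is an $E_*$-equivalence. My plan is to smash with $E$ and examine the skeletal filtration of the simplicial $E$-module $E\wedge U_\bullet$. By the definition of a hypercover, each matching map $U_n\to M_n(U)$ is an $E_*$-surjection between finite projective spectra; applying $E\wedge -$ and invoking the K\"unneth isomorphism, this becomes a surjection of $E$-modules whose fibres remain finite and $E_*$-projective. The associated graded of the skeletal filtration of $|E\wedge U_\bullet|$ is then controlled by the normalized Moore object of $E\wedge U_\bullet$, whose homotopy groups form a resolution of $E_*P$ by projective $E_*E$-comodules—acyclic by the general theory of hypercovers in sites of projectives, equivalently visible after transporting along $E_*\colon \spectrafp \to \ComodE^{fp}$ to a hypercover in the epimorphism topology on dualizable comodules. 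Filtered convergence of the resulting spectral sequence then forces $E_*|U_\bullet|\xrightarrow{\sim} E_*P$, completing the proof.
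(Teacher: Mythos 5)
Your treatment of sphericity and the sheaf condition coincides with the paper's: the recognition principle of \textbf{Theorem \ref{thm:recognition_of_spherical_sheaves}} together with \textbf{Lemma \ref{lemma:homology_preserves_pullbacks_along_coverings}} is exactly the paper's route, and your reduction of hypercompleteness to the statement that every hypercover $U_\bullet \to P$ in $\spectrafp$ realizes to an $E_*$-equivalence (equivalently, that $E\wedge U$ is a colimit diagram) is also the paper's reduction.

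Where you diverge is in the last step, and that is where the gap sits. The assertion that the chain complex $E_*U_\bullet \to E_*P$ is acyclic ``by the general theory of hypercovers in sites of projectives'' is not a reference to an established general fact; it is precisely the content that needs proof, and it is as hard as the original claim. What actually makes it true is a concrete splitting observation: each matching map $U_n \to M_n(U)$ is an $E_*$-surjection with finite projective target, so after smashing with $E$ it becomes a map of homotopy $E$-modules that is surjective onto a retract of a finite wedge of suspensions of $E$ and therefore admits a section. This is exactly the hypothesis of \textbf{Lemma \ref{lemma:split_hypercovers_in_presentable_inftycats_are_colimit_diagrams}}, which the paper applies directly to conclude $E\wedge U$ is a colimit diagram in $\spectra$, no spectral sequence required. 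Your skeletal-filtration argument is not wrong in spirit, but it repackages this splitting into a chain-complex statement and then elides the proof of that statement; if you unwind it you are back to checking the sections of the matching maps exist, at which point the paper's lemma already finishes. Two further small issues: hypercovers here are semisimplicial (indexed by $\thickdelta_{s,+}^{op}$), so there is no normalized Moore object — no degeneracies to normalize against — and the object that governs the associated graded of the skeletal filtration is the unnormalized complex; and the limit in your descent equivalence should be taken over $\thickdelta_s$ rather than $\thickdelta$.
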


\begin{proof}
Notice that $y(X)$ is manifestly spherical, so we only have to check that it is a sheaf. By the recognition principle of \cref{thm:recognition_of_spherical_sheaves}, it is enough to verify that if $F \rightarrow Q \rightarrow P$ is a fibre sequence in $\spectrafp$ with $Q \rightarrow P$ an $E_{*}$-surjection, then $y(X)(P) \rightarrow y(X)(Q) \rightarrow y(X)(F)$ is a fibre sequence of spaces. This is clear, since by \cref{lemma:homology_preserves_pullbacks_along_coverings} fibres in $\spectrafp$ along $E_{*}$-surjections are computed in spectra. 

Now assume that $X$ is $E$-local, we will show that $y(X)$ is hypercomplete. By \cref{prop:recognition_of_hypercomplete_sheaves}, we have to check that $y(X)$ takes any hypercover $U \colon \thickdelta_{s, +}^{op} \rightarrow \spectrafp$ in finite projective spectra to a limit diagram of spaces. This is the same as saying that the $E$-localization of $U$ is a colimit diagram of $E$-local spectra or, equivalently, that the canonical map $\varinjlim U |_{\thickdelta_{s}} \rightarrow U_{-1}$ is an $E_{*}$-isomorphism. 

We have a homology of geometric realization spectral sequence of signature 
\[
H_{s}(E_{t} (U |_{\thickdelta_{s}})) \Rightarrow E_{s+t}(\varinjlim U |_{\thickdelta_{s}})
\]
Under the assumption that $U$ is a hypercover, $E_{*} (U |_{\thickdelta_{s}})$ is an $E_{*}$-projective resolution of $E_{*}(U_{-1})$ and hence the $E^{2}$-term vanishes outside of $s \neq 0$. We deduce that the spectral sequence collapses on the second page and yields the needed isomorphism. 
\end{proof}

We will later show that spherical sheaves of sets on $\spectra_{E}^{fp}$ correspond to comodules, the following technical lemma will be be useful in helping to identify the comodule corresponding to a given sheaf.

\begin{lemma}
\label{lemma:homotopy_classes_of_maps_into_homotopy_e_modules_define_sheaves}
Let $E_{\alpha}$ be a filtered diagram of finite projective spectra such that $\varinjlim E_{\alpha} \simeq E$. Then, for any $k \in \mathbb{Z}$ the functor $U_{k} \colon P(\spectra_{E}^{fp}) \rightarrow \sets$ defined by $U_{k}(X) = \varinjlim \pi_{0} X(\Sigma^{k} DE_{\alpha})$ takes sheafifications to isomorphisms.
\end{lemma}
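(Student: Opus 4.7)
The plan is to reduce the statement to a claim about sheafification of presheaves of sets, and then to use the filtered structure of $\{E_{\alpha}\}$ to produce the required lifts along coverings.

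\emph{Reduction to the set-valued case.} Extend $U_{k}$ to presheaves of sets by the same formula, so that $U_{k}(X) = U_{k}(\pi_{0}^{pre}X)$, where $\pi_{0}^{pre}X$ is the presheaf of connected components. A left-adjoint argument shows that $\pi_{0}$ commutes with sheafification: both $\pi_{0}^{sh} \circ L$ and $L^{\sets} \circ \pi_{0}^{pre}$ are the left adjoint to the inclusion $Sh^{\sets}(\spectrafp) \hookrightarrow P(\spectrafp)$, hence agree. Consequently $L^{\sets}(\pi_{0}^{pre}X) \simeq L^{\sets}(\pi_{0}^{pre}LX) \simeq \pi_{0}^{sh}LX$. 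Thus, if one knows that $U_{k}$ sends set-level sheafification maps $F \to L^{\sets}F$ to bijections for every presheaf of sets $F$, then a 2-of-3 argument applied to $F = \pi_{0}^{pre}X$ and $F = \pi_{0}^{pre}LX$ yields that $U_{k}(X) \to U_{k}(LX)$ is a bijection.

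\emph{The set-level claim reduces to a factorization.} Using the explicit plus-construction $L^{\sets}F = F^{++}$, where $F^{+}(P) = \varinjlim_{R \to P} \mathrm{eq}(F(R) \rightrightarrows F(R \times_{P} R))$ and the colimit runs over the filtered poset of $E_{*}$-surjections, both injectivity and surjectivity of $U_{k}(F) \to U_{k}(L^{\sets}F)$ will follow from the \textbf{key factorization}: for every $E_{*}$-surjection $R \to \Sigma^{k}DE_{\alpha}$ in $\spectrafp$, there exists $\beta \geq \alpha$ such that the structure map $\Sigma^{k}DE_{\beta} \to \Sigma^{k}DE_{\alpha}$ in the filtered diagram factors through $R$. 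Granting this, given a class in $(L^{\sets}F)(\Sigma^{k}DE_{\alpha})$ represented by a double cover $S \to R \to \Sigma^{k}DE_{\alpha}$ and a section $z \in F(S)$, choose $\beta$ so that $\Sigma^{k}DE_{\beta} \to \Sigma^{k}DE_{\alpha}$ factors through $S$ and pull $z$ back to a preimage in $F(\Sigma^{k}DE_{\beta})$; injectivity is analogous.

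\emph{The key factorization is the main obstacle.} Letting $F_{R}$ denote the fibre of $R \to \Sigma^{k}DE_{\alpha}$ and $\delta \colon \Sigma^{k}DE_{\alpha} \to \Sigma F_{R}$ the connecting map, the desired factorization is equivalent to the composite $\Sigma^{k}DE_{\beta} \to \Sigma^{k}DE_{\alpha} \to \Sigma F_{R}$ being null-homotopic. Using duality, $[\Sigma^{k}DE_{\beta}, \Sigma F_{R}] \simeq \pi_{k-1}(E_{\beta} \wedge F_{R}) = (E_{\beta})_{k-1}(F_{R})$, and as $\beta$ grows these composites assemble into an element of the filtered colimit $\varinjlim_{\beta}(E_{\beta})_{k-1}(F_{R}) = E_{k-1}(F_{R})$. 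Under the identification $E_{k}\Sigma^{k}DE_{\alpha} \simeq E^{0}E_{\alpha}$ coming from K\"{u}nneth, this limiting class is the image of a distinguished coevaluation class $u_{\alpha}$ under the $E$-homology connecting map $(\delta \wedge E)_{*} \colon E_{k}\Sigma^{k}DE_{\alpha} \to E_{k-1}F_{R}$. The latter vanishes because $E_{*}R \twoheadrightarrow E_{*}\Sigma^{k}DE_{\alpha}$ is surjective by hypothesis, forcing the connecting map in the $E$-homology long exact sequence of $F_{R} \to R \to \Sigma^{k}DE_{\alpha}$ to be zero. By the definition of filtered colimits in sets, vanishing in the colimit implies vanishing at some finite stage $\beta$, which is precisely the desired factorization.
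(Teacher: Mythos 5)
Your proof is correct, but it takes a genuinely different and rather longer route than the paper's. The paper observes that $U_{k}$ is cocontinuous (as a filtered colimit of evaluations composed with $\pi_{0}$), so that it inverts sheafification maps as soon as it sends the Čech nerve of any $E_{*}$-surjection $Q \to P$ to a colimit diagram of sets; this is then immediate from the Künneth identification $U_{k}(y(Q)) \simeq E_{k}Q$, \textbf{Lemma \ref{lemma:homology_preserves_pullbacks_along_coverings}}, and the fact that surjections are effective epimorphisms of sets. Your route instead first reduces to presheaves of sets via commutation of $\pi_{0}$ with sheafification, passes to the explicit plus construction, and reduces both injectivity and surjectivity to a cofinality statement: every $E_{*}$-surjective cover of $\Sigma^{k}DE_{\alpha}$ is refined by a transition map $\Sigma^{k}DE_{\beta} \to \Sigma^{k}DE_{\alpha}$ of the dualised filtered diagram. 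You establish this factorization by a duality argument identifying the obstruction with the image of a canonical class under the $E$-homology connecting map of the fibre sequence of the cover, which vanishes because the cover is an $E_{*}$-surjection; vanishing in the filtered colimit $E_{k-1}(F_{R}) \simeq \varinjlim_{\beta}(E_{\beta})_{k-1}(F_{R})$ then forces vanishing at a finite stage. The underlying mechanism, K\"{u}nneth duality together with surjectivity of the induced map on $E$-homology, is the same in both proofs, but yours makes a geometrically stronger observation about the site, namely that the dualised filtered diagram is cofinal among covers, while the paper's proof sidesteps this by appealing to cocontinuity and effective epimorphisms. Your version is more hands-on and isolates a cofinality fact about the site that may be of independent interest, at the cost of substantially more bookkeeping, particularly in the plus-construction unwinding; the paper's proof is shorter and more formal.
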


\begin{proof}
Observe that $U_{k}$ is clearly cocontinuous. It follows that to show that it factors through the sheaf $\infty$-category, it is enough to verify that it takes the \v{C}ech nerve of any $E_{*}$-surjection $Q \rightarrow P$ of finite projective spectra to a colimit diagram of sets. However, this is immediate from \cref{lemma:homology_preserves_pullbacks_along_coverings}, since if $Q \in \spectra_{E}^{fp}$, then $U_{k}(y(Q)) = \varinjlim [\Sigma^{k}DE_{\alpha}, Q] \simeq [S^{k}, \varinjlim E_{\alpha} \otimes Q] \simeq E_{k}Q$, and surjections are effective epimorphisms in the category of sets. 
\end{proof}

\subsection{Sheaves on spectra and sheaves on comodules}

In this short technical section we verify that the homology functor $E_{*} \colon \spectra_{E}^{fp} \rightarrow \ComodE^{fp}$ between the $\infty$-sites of, respectively, finite projective spectra and dualizable comodules satisfies the technical conditions of \cref{thm:equivalence_of_categories_of_discrete_sheaves}, so that it induces an equivalence on the categories of sheaves of sets. This will not be difficult, because the hard work already went into the proof of the aforementioned theorem. 

This result is interesting as by \cref{thm:equivalence_of_comodules_with_spherical_sheaves} spherical sheaves of sets on $\ComodE^{fp}$ can be identified with $E_{*}E$-comodules and it follows that the same must be true for $\spectrafp$, an $\infty$-category of homotopical origin. This observation is what has sparked the interest of the author in studying sheaves on finite projective spectra, leading to the current work. 

\begin{lemma}
\label{lemma:countable_sum_of_cofree_comodules_is_a_common_envelope}
Suppose that $E$ is an Adams-type homology theory and let $E_{\alpha}$ be a filtered diagram of finite projective spectra whose colimit is the countable sum of shifts $\bigoplus \Sigma^{k_{i}} E$, where each integer occurs as $k_{i}$ infinitely many times. Then, the Ind-object $\varinjlim E_{\alpha}$ is a common envelope for $E_{*} \colon \spectrafp \rightarrow \ComodE^{fp}$.
\end{lemma}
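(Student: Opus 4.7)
The plan is to verify each of the three clauses in \textbf{Definition \ref{defin:common_envelope_for_a_morphism_of_excellent_infty_sites}} in turn. I begin with clause (2), since it is the most direct. Because $E_{*}$ commutes with filtered colimits, the hypothesis $\varinjlim E_{\alpha} \simeq \bigoplus \Sigma^{k_{i}} E$ gives
\[
  \varinjlim E_{*}E_{\alpha} \;\simeq\; \bigoplus E_{*}E[k_{i}],
\]
a cofree $E_{*}E$-comodule on a free $E_{*}$-module whose indexing contains each integer infinitely many times. Clause (2), that $\varinjlim E_{*}E_{\alpha}$ be an envelope for $\ComodE^{fp}$, is then immediate from \textbf{Proposition \ref{prop:an_explicit_envelope_for_dualizable_comodules}}.

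The remaining clauses (1) and (3) will be dispatched together using a single K\"unneth-type identification. The spectrum $\bigoplus \Sigma^{k_{i}} E$ is a direct sum of shifts of $E$ and hence a homotopy $E$-module, so \textbf{Remark \ref{rem:kunneth_iso_also_holds_in_comodule_form}} supplies a natural isomorphism
\[
  \bigl[P,\, \bigoplus \Sigma^{k_{i}} E\bigr] \;\cong\; \Hom_{E_{*}E}\!\bigl(E_{*}P,\, \bigoplus E_{*}E[k_{i}]\bigr)
\]
for each $P \in \spectrafp$. Since $P$ is compact in spectra and $E_{*}P$ is compact (in fact dualizable) in comodules, both sides commute with the filtered colimit presentation $\varinjlim E_{\alpha} \simeq \bigoplus \Sigma^{k_{i}}E$, yielding $\varinjlim_{\alpha} [P, E_{\alpha}] \cong \varinjlim_{\alpha} \Hom_{E_{*}E}(E_{*}P, E_{*}E_{\alpha})$. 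By \textbf{Remark \ref{rem:last_condition_in_the_definition_of_a_common_envelope}} this is precisely the unit map required by clause (3).

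Clause (1) follows from the same identification: the presheaf $P \mapsto \varinjlim_{\alpha} \pi_{0}\map(P, E_{\alpha})$ is identified with the precomposition along $E_{*}$ of the functor $M \mapsto \Hom_{E_{*}E}(M, \bigoplus E_{*}E[k_{i}]) \simeq \varinjlim_{\alpha} y(E_{*}E_{\alpha})_{\leq 0}(M)$. The latter is a sheaf of sets on $\ComodE^{fp}$ by clause (2) together with \textbf{Proposition \ref{prop:an_explicit_envelope_for_dualizable_comodules}}. Since $E_{*}: \spectrafp \to \ComodE^{fp}$ is a morphism of additive $\infty$-sites by \textbf{Proposition \ref{prop:homology_surjections_make_fpspectra_into_an_excellent_inftysite}} (with \textbf{Lemma \ref{lemma:homology_preserves_pullbacks_along_coverings}} ensuring that fibre products along coverings are preserved), precomposition along $E_{*}$ sends sheaves to sheaves via \textbf{Proposition \ref{prop:morphism_of_additive_infinity_sites_induces_an_adjunction_on_spherical_sheaf_infinity_categories}}, establishing the discrete descent required.

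The only delicate point, and the only place where the Adams-type condition on $E$ is truly used, is recognising $\bigoplus \Sigma^{k_{i}} E$ as a homotopy $E$-module whose homology is the cofree comodule $\bigoplus E_{*}E[k_{i}]$ and applying the K\"unneth isomorphism of \textbf{Remark \ref{rem:kunneth_iso_also_holds_in_comodule_form}} to it. Once that is in hand the entire argument is formal, with compactness of $P$ in spectra and $E_{*}P$ in comodules enabling the passage to the filtered colimit over $\alpha$ on both sides simultaneously.
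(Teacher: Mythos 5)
Your proof is correct, and the central computation is the same K\"unneth/cofree-comodule identification used in the paper; the differences are organizational rather than substantive. Two points worth noting. First, you verify discrete descent for $\varinjlim E_{\alpha}$ (clause (1)) indirectly: having established clause (2), you observe that the formula $P \mapsto \Hom_{E_{*}E}(E_{*}P, \bigoplus E_{*}E[k_{i}])$ is the precomposition along $E_{*}$ of a sheaf on $\ComodE^{fp}$, and precomposition along a morphism of sites preserves sheaves. The paper gives a more elementary, self-contained argument at the same spot: it writes the colimit as $\Hom_{E_{*}}(E_{*}P, \bigoplus E_{*}[k_{i}])$ via the ordinary $E_{*}$-module K\"unneth and observes that this is a sheaf simply because epimorphisms of $E_{*}$-modules are effective. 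Your route avoids invoking the $E_{*}$-module side at all but depends on clause (2) already being in place; the paper's route is direct and makes clause (1) logically independent. Second, for the unit bijection (clause (3)) you apply \textbf{Remark \ref{rem:kunneth_iso_also_holds_in_comodule_form}} once to $\bigoplus \Sigma^{k_{i}} E$ and then use compactness of $P$ and of $E_{*}P$ to push the colimit in and out; the paper reaches the same endpoint by chaining the $E_{*}$-module K\"unneth with the cofree-comodule adjunction $\Hom_{E_{*}}(-, \bigoplus E_{*}[k_{i}]) \simeq \Hom_{E_{*}E}(-, \bigoplus E_{*}E[k_{i}])$. These are the same isomorphism bundled differently. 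The one caveat to your final paragraph: the Adams-type hypothesis does more than supply the K\"unneth isomorphism for homotopy $E$-modules; it also guarantees that a filtered diagram $E_{\alpha}$ of the required shape exists in the first place (the paper opens the proof with a one-line aside on this), so calling the K\"unneth step ``the only place'' Adams-type is used slightly undersells its role.
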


\begin{proof}
First observe that such a filtered diagram exists, since by the Adams-type assumption there is a filtered diagram in $\spectrafp$ whose colimit is $E$ itself. However, $\spectrafp$ is closed under finite sums and by taking larger and larger such sums one constructs a filtered diagram whose colimit is the countable sum $\bigoplus \Sigma^{k_{i}} E$. 

We now verify that $\varinjlim E_{\alpha}$ and $\varinjlim E_{*}E_{\alpha}$ satisfy discrete descent. The latter case is clear, since $y(E_{*}E_{\alpha})$ is already a discrete sheaf on dualizable comodules. To prove the former, we have to check that $\varinjlim \pi_{0} \map(P, E_{\alpha})$ defines a sheaf as $P$ runs through finite projective spectra. We have 

\begin{center}
$\varinjlim \pi_{0} \map(P, E_{\alpha}) \simeq \pi_{0} \map(P, \varinjlim E_{\alpha}) \simeq \pi_{0} \map(P, \bigoplus \Sigma^{k_{i}} E) \simeq \Hom_{E_{*}}(E_{*}P, \bigoplus E_{*}[k_{i}])$,
\end{center}
where the last one is the universal coefficient isomorphism. The last term clearly defines a sheaf, as any epimorphism of $E_{*}$-modules is effective, this covers both discrete descent conditions. 

Taking the last term as above and rewriting further, we see that $\varinjlim \pi_{0} \map(P, E_{\alpha})$ is isomorphic to

\begin{center}
$\Hom_{E_{*}}(E_{*}P, \bigoplus E_{*}[k_{i}]) \simeq \Hom_{E_{*}E}(E_{*}P, \bigoplus E_{*}E[k_{i}]) \simeq \varinjlim \pi_{0} \Hom(E_{*}P, E_{*}E_{\alpha}[k_{i}])$,
\end{center}
where in the middle we've used the fact that $\bigoplus E_{*}E[k_{i}]$ is the cofree comodule on the module $\bigoplus E_{*}[k_{i}]$. This shows that $\varinjlim y(E_{\alpha}) \rightarrow (E_{*})_{*} (E_{*})^{*} \varinjlim y(E_{\alpha})$ is a $0$-equivalence, leaving only the fact that $\varinjlim E_{*} E_{\alpha}[k_{i}]$ is an envelope in $\ComodE^{fp}$, which is precisely \cref{prop:an_explicit_envelope_for_dualizable_comodules}. 
\end{proof}

\begin{thm}
\label{thm:homology_functor_has_covering_lifting_property}
Suppose that $E$ is an Adams-type homology theory. Then, the morphism \[
E_{*} \colon \spectrafp \rightarrow \ComodE^{fp}
\]
of $\infty$-sites induces a monoidal equivalence $Sh^{\sets}(\spectrafp) \simeq Sh^{\sets}(\ComodE^{fp})$ between categories of sheaves of sets. If $E$ is homotopy commutative, then this equivalence is canonically symmetric. 
\end{thm}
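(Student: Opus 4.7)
The plan is to deduce the theorem from the abstract machinery developed earlier, namely \textbf{Proposition \ref{prop:cover_reflecting_morphism_of_excellent_sites_with_envelope_is_clp}} (which yields the covering lifting property) and \textbf{Theorem \ref{thm:equivalence_of_categories_of_discrete_sheaves}} (which yields the equivalence on categories of sheaves of sets). Both results require two hypotheses on the morphism of excellent $\infty$-sites $E_{*}: \spectrafp \rightarrow \ComodE^{fp}$: that it reflects coverings, and that it admits a common envelope in the sense of \textbf{Definition \ref{defin:common_envelope_for_a_morphism_of_excellent_infty_sites}}.

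First, I would observe that $E_{*}$ is indeed a morphism of excellent $\infty$-sites. The source is excellent by \textbf{Proposition \ref{prop:homology_surjections_make_fpspectra_into_an_excellent_inftysite}} and the target by the discussion preceding \textbf{Theorem \ref{thm:equivalence_of_comodules_with_spherical_sheaves}}. The functor $E_{*}$ is additive and symmetric monoidal by \textbf{Lemma \ref{lemma:smash_products_makes_finite_projective_spectra_rigid_symmetric_monoidal_and_homology_monoidal}}, it takes coverings to coverings tautologically (since the topology on $\spectrafp$ was defined as the pullback of the epimorphism topology along $E_{*}$), and it preserves pullbacks along coverings by \textbf{Lemma \ref{lemma:homology_preserves_pullbacks_along_coverings}}.

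Second, the reflection of coverings is immediate from the definition of the $E_{*}$-surjection topology on $\spectrafp$: a morphism $Q \rightarrow P$ of finite projective spectra is declared a covering precisely when $E_{*}Q \rightarrow E_{*}P$ is an epimorphism of $E_{*}E$-comodules. Third, a common envelope exists by \textbf{Lemma \ref{lemma:countable_sum_of_cofree_comodules_is_a_common_envelope}}, which exhibits a filtered diagram of finite projective spectra with colimit $\bigoplus \Sigma^{k_{i}}E$ as the required common envelope (its construction uses the Adams-type assumption in an essential way).

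With these three verifications in hand, \textbf{Proposition \ref{prop:cover_reflecting_morphism_of_excellent_sites_with_envelope_is_clp}} immediately gives the covering lifting property, and \textbf{Theorem \ref{thm:equivalence_of_categories_of_discrete_sheaves}} gives the asserted adjoint equivalence $Sh^{\sets}(\spectrafp) \simeq Sh^{\sets}(\ComodE^{fp})$. There is essentially no obstacle in this argument beyond organising citations; all genuine difficulty has been absorbed into the earlier preparatory results, in particular the K\"{u}nneth isomorphism, which is what makes the third condition in the definition of a common envelope hold and is the reason the Adams-type hypothesis on $E$ is needed.
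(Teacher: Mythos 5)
Your proof is correct and follows the same route as the paper: verify that $E_{*}$ is a cover-reflecting morphism of excellent $\infty$-sites with a common envelope (the latter via \textbf{Lemma \ref{lemma:countable_sum_of_cofree_comodules_is_a_common_envelope}}), then invoke the abstract results on such morphisms. The only cosmetic difference is that you cite \textbf{Proposition \ref{prop:cover_reflecting_morphism_of_excellent_sites_with_envelope_is_clp}} directly for the covering lifting property whereas the paper cites the downstream \textbf{Corollary \ref{cor:precomposition_along_a_morphism_of_excellents_sites_with_common_envelope_commutes_with_shafification}}; your citation is actually the more precise one for that clause.
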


\begin{proof}
The homology functor $E_{*} \colon \spectrafp \rightarrow \ComodE^{fp}$ is a morphism of excellent $\infty$-sites which clearly reflects coverings and by \cref{lemma:countable_sum_of_cofree_comodules_is_a_common_envelope} admits a common envelope. The above is a formal consequence of these two properties by \cref{cor:precomposition_along_a_morphism_of_excellents_sites_with_common_envelope_commutes_with_shafification} and 
\cref{thm:equivalence_of_categories_of_discrete_sheaves}. 
\end{proof}

\begin{rem}
\label{rem:discrete_spherical_sheaves_on_fpspectra_are_comodules}
The equivalence of \cref{thm:homology_functor_has_covering_lifting_property} restricts to an equivalence on categories of spherical sheaves, see \cref{rem:induced_adjoint_equivalence_on_discrete_spherical_sheaves}. Since $Sh^{\sets}_{\Sigma}(\ComodE^{fp})$ is equivalent to the category $\ComodE$ of comodules by \cref{thm:equivalence_of_comodules_with_spherical_sheaves}, we deduce that the same is true for $Sh^{\sets}_{\Sigma}(\spectrafp)$. 
\end{rem}

\section{Synthetic spectra}
\label{section:synthetic_spectra}

In this section we introduce the notion of a synthetic spectrum based on an Adams-type homology theory $E$; these form an $\infty$-category which we denote by $\synspectra_{E}$. We then perform a study of the basic constructions in this context, including homotopy, homology, as well as the relation between synthetic spectra, spectra and comodules exhibited by the colimit-to-limit comparison map.

Let us give an informal picture. In broad terms, the relation between $\spectra$ and $\synspectra_{E}$ can be described as similar to the relation between an abelian category $\acat$ and its derived category $\dcat(\acat)$ in that synthetic spectra can be thought of as ``well-behaved'' resolutions of spectra. There are of course substantial differences, for one thing, the $\infty$-category $\spectra$ is stable, rather than abelian, we will see that this leads to a bigrading on $\synspectra_{E}$. 

More importantly, $\synspectra_{E}$ depends on the choice of the Adams-type homology theory $E$ and so is not naturally attached to $\spectra$ itself. Intuitively, this is because one has to give a meaning to the notion of a ``well-behaved'' resolution, which in the case of $\synspectra_{E}$ means \emph{well-behaved with respect to $E_{*}$}. This makes $\synspectra_{E}$ behave like a thickened version of $\dcat(\ComodE)$, the derived $\infty$-category of $E_{*}E$-comodules, thick enough to fit in the whole $\infty$-category of spectra in the generic fibre. 

\subsection{What is a synthetic spectrum?}
In this section we define the $\infty$-category of synthetic spectra and establish its basic properties. We also discuss the homotopy and $E$-homology of synthetic spectra and show that any spectrum can be extended to a synthetic one. 

\begin{defin}
\label{defin:synthetic_spectrum_based_on_e}
A \emph{synthetic spectrum based on $E$} is a spherical sheaf of spectra on the $\infty$-category $\spectra_{E}^{fp}$ of finite $E_{*}$-projective spectra. We denote the $\infty$-category of synthetic spectra based on $E$ by $\synspectra _{E}$.
\end{defin}

We will usually abuse the terminology and say just \emph{synthetic spectrum}, the choice of the homology theory $E$ being understood implicitly. Notice that a synthetic spectrum is, by definition, a spherical sheaf on an excellent $\infty$-site, a notion we have studied extensively in the first part of the note, so we can draw a lot of consequences rather quickly. 

\begin{prop}
\label{prop:synthetic_spectra_is_a_stable_presentable_infty_category}
The $\infty$-category $\synspectra_{E}$ is a presentable, stable $\infty$-category. Moreover, the tensor product of finite projective spectra induces a symmetric monoidal structure on $\synspectra_{E}$ that is cocontinuous in each variable. 
\end{prop}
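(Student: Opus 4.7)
The plan is to observe that this proposition is essentially a formal consequence of the general results on excellent $\infty$-sites developed in \textbf{Chapter \ref{section:cat_theory}} applied to the specific site $\spectra_E^{fp}$. By \textbf{Proposition \ref{prop:homology_surjections_make_fpspectra_into_an_excellent_inftysite}}, the $\infty$-category $\spectra_E^{fp}$ equipped with the $E_*$-surjection topology and the smash product is an excellent $\infty$-site, so all the machinery applies.

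First I would invoke \textbf{Proposition \ref{prop:sheaves_of_spectra_as_stabilization}} for $\ccat = \spectra_E^{fp}$, which identifies $\synspectra_E = Sh_\Sigma^{\spectra}(\spectra_E^{fp})$ with the stabilization of $Sh_\Sigma(\spectra_E^{fp})$ and asserts that this stabilization is presentable and stable. Combined with \textbf{Corollary \ref{cor:spherical_sheaves_as_localization}}, which gives presentability of $Sh_\Sigma(\spectra_E^{fp})$ as an accessible left exact localization of $P_\Sigma(\spectra_E^{fp})$, this immediately yields the first assertion.

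For the symmetric monoidal structure, I would apply \textbf{Proposition \ref{prop:symmetric_monoidal_structure_on_spherical_sheaves_of_spectra}} to the excellent $\infty$-site $\spectra_E^{fp}$. This produces a symmetric monoidal structure on $\synspectra_E$ which is cocontinuous in each variable and is compatible with the symmetric monoidal structure on $Sh_\Sigma(\spectra_E^{fp})$ of \textbf{Corollary \ref{cor:symmetric_monoidal_structure_on_different_sheaf_variants}} via $\Sigma_+^\infty$. To check that this tensor product is genuinely induced from the smash product of finite projective spectra, I would trace through the construction: the Day convolution symmetric monoidal structure on $P(\spectra_E^{fp})$ of \textbf{Proposition \ref{prop:symmetric_monoidal_structure_on_presheaves}} is characterized by the Yoneda embedding being symmetric monoidal, and \textbf{Theorem \ref{thm:day_convolution_compatible_with_different_kinds_of_equivalences}} shows this descends to $Sh_\Sigma$ and hence to $\synspectra_E$, so that $L \circ y \colon \spectra_E^{fp} \to \synspectra_E$ is symmetric monoidal with respect to the smash product on the source.

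Since essentially every ingredient has been handled in full generality earlier in the paper, there is no real obstacle here. The only thing requiring any care is making sure the preconditions of the general results are met; but \textbf{Proposition \ref{prop:homology_surjections_make_fpspectra_into_an_excellent_inftysite}} supplies exactly that, and \textbf{Lemma \ref{lemma:smash_products_makes_finite_projective_spectra_rigid_symmetric_monoidal_and_homology_monoidal}} guarantees that the smash product genuinely restricts to $\spectra_E^{fp}$ with all objects dualizable. So the proof is in effect a citation to the general theory.
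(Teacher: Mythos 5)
Your proposal is correct and follows the paper's own proof almost verbatim: cite \textbf{Proposition \ref{prop:homology_surjections_make_fpspectra_into_an_excellent_inftysite}} to verify $\spectrafp$ is an excellent $\infty$-site, then invoke \textbf{Proposition \ref{prop:sheaves_of_spectra_as_stabilization}} for presentability and stability and \textbf{Proposition \ref{prop:symmetric_monoidal_structure_on_spherical_sheaves_of_spectra}} for the symmetric monoidal structure. The extra tracing through Day convolution is harmless but not needed beyond what those citations already supply.
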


\begin{proof}
We have verified that $\spectra_{E}^{fp}$ is an excellent $\infty$-site in \cref{prop:homology_surjections_make_fpspectra_into_an_excellent_inftysite}. Then, the presentability and stability of the $\infty$-category of spherical sheaves of spectra are immediate from \cref{prop:sheaves_of_spectra_as_stabilization}, while the symmetric monoidal structure is \cref{prop:symmetric_monoidal_structure_on_spherical_sheaves_of_spectra}.
\end{proof}
We will now introduce what is perhaps the most important class of synthetic spectra, namely those that are induced from an ordinary spectrum. If $X$ is a spectrum, we have the presheaf of spaces $y(X)$ on $\spectra_{E}^{fp}$ defined by the formula $y(X) = \map(P, X)$, where $P$ is finite projective. This is in fact a spherical sheaf of spaces by \cref{prop:spectra_define_sheaves_on_finite_projective_spectra} and so lifts to a unique connective spherical sheaf of spectra by \cref{prop:spherical_sheaves_canonically_lift_to_sheaves_of_spectra}.

\begin{defin}
Suppose that $X$ is a spectrum. Its \emph{synthetic analogue}, denoted by $\nu X$, is the unique lift of the sheaf of spaces $y(X)$ to a connective sheaf of spectra. 
\end{defin}
In other words, $\nu X$ is the sheafification of the presheaf of spectra defined by the formula $F(P, X)_{\geq 0}$, where $P$ is finite projective. Notice that $F(P, X)_{\geq 0}$ is just the connective spectrum underlying the infinite loop space $\map(P, X) \simeq y(X)(P)$, we write it in this way to make it clear that this should be considered as a presheaf of spectra rather than spaces.

The synthetic analogue construction clearly extends to a functor $\nu \colon \spectra \rightarrow \synspectra_{E}$; we will see later that it is in fact a full and faithful embedding of $\infty$-categories. For now, we establish some of the more basic properties. 

\begin{lemma}
\label{lemma:synthetic_analogue_construction_preserves_filtered_colimits_and_is_lax_symmetric_monoidal}
The synthetic analogue construction $\nu \colon \spectra \rightarrow \synspectra_{E}$ is canonically lax symmetric monoidal and preserves filtered colimits. 
\end{lemma}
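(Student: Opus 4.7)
The plan is to factor $\nu$ as a composite whose two pieces are individually easier to analyse. By construction $\nu X$ is the unique connective sheaf of spectra lifting the spherical sheaf of spaces $y(X) = \map(-,X)|_{\spectrafp}$, and by Proposition \ref{prop:spherical_sheaves_canonically_lift_to_sheaves_of_spectra} this lift is precisely $\sigmainfty y(X)$. Thus we may write
\begin{equation*}
\nu \;\simeq\; \sigmainfty \circ y,
\end{equation*}
where $y: \spectra \to \Sh_{\Sigma}(\spectrafp)$ is the restricted Yoneda embedding (well-defined by Proposition \ref{prop:spectra_define_sheaves_on_finite_projective_spectra}) and $\sigmainfty: \Sh_{\Sigma}(\spectrafp) \to \synspectra_{E}$ is the fully faithful embedding as connective sheaves. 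Because $\sigmainfty$ is cocontinuous and symmetric monoidal (Proposition \ref{prop:symmetric_monoidal_structure_on_spherical_sheaves_of_spectra}), it will suffice to prove both claims for $y$.

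For the lax symmetric monoidal structure, my strategy is to apply the standard Day convolution yoga. The inclusion $\spectrafp \hookrightarrow \spectra$ is symmetric monoidal (Lemma \ref{lemma:smash_products_makes_finite_projective_spectra_rigid_symmetric_monoidal_and_homology_monoidal}), and by the universal property of Day convolution (Proposition \ref{prop:symmetric_monoidal_structure_on_presheaves}) its left Kan extension $P(\spectrafp) \to \spectra$ is a cocontinuous symmetric monoidal functor. The adjoint functor theorem then gives that its right adjoint, which is exactly the restricted Yoneda $y: \spectra \to P(\spectrafp)$, is canonically lax symmetric monoidal. Now compose with the sheafification-and-sphericalisation $L: P(\spectrafp) \to \Sh_{\Sigma}(\spectrafp)$; this is a symmetric monoidal localisation (Corollary \ref{cor:symmetric_monoidal_structure_on_different_sheaf_variants}), so the composite $L \circ y$ acquires a lax symmetric monoidal structure by composition. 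Since $y(X)$ is already a spherical sheaf for every $X \in \spectra$, we have $L y(X) \simeq y(X)$ naturally in $X$, and this identifies $L \circ y$ with the factored-through functor $y: \spectra \to \Sh_{\Sigma}(\spectrafp)$, endowing the latter with the desired lax symmetric monoidal structure.

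For the filtered colimit statement, every $P \in \spectrafp$ is compact in $\spectra$ (being finite), so $\map(P, -): \spectra \to \spaces$ preserves filtered colimits. Hence $X \mapsto y(X)$ preserves filtered colimits computed levelwise in $P(\spectrafp)$. By Corollary \ref{cor:filtered_colimits_in_spherical_sheaves_on_an_additive_site_computed_levelwise}, filtered colimits of spherical sheaves on the additive $\infty$-site $\spectrafp$ are computed levelwise, so this commutation passes to $\Sh_{\Sigma}(\spectrafp)$, yielding that $y: \spectra \to \Sh_{\Sigma}(\spectrafp)$ preserves filtered colimits. Since $\sigmainfty$ is cocontinuous, the composite $\nu = \sigmainfty \circ y$ does too.

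The only nontrivial conceptual point is the descent of the lax symmetric monoidal structure along the symmetric monoidal localisation $L$; the rest is formal manipulation. The cleanest way to make this rigorous is in the operadic language of \cite{higher_algebra}: a symmetric monoidal localisation $L: \euscr{C} \to \euscr{D}$ is a morphism of symmetric monoidal $\infty$-categories, and post-composition preserves lax symmetric monoidal functors, while the identification $L y \simeq y$ is simply the observation that a sheaf is unchanged under sheafification. Carrying this out carefully---so that the associator and unit coherences are inherited---is the only nontrivial step, but it is a direct application of the formalism already in place.
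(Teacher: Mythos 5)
Your proof is correct and follows essentially the same route as the paper: both factor $\nu$ as $\sigmainfty \circ y$, reduce to showing $y$ preserves filtered colimits (via compactness of finite spectra) and is lax symmetric monoidal (as the right adjoint to a symmetric monoidal cocontinuous extension of $\spectrafp \hookrightarrow \spectra$). The only cosmetic difference is that you first endow $y: \spectra \to P(\spectrafp)$ with its lax structure and then compose with the symmetric monoidal localisation $L: P(\spectrafp) \to Sh_{\Sigma}(\spectrafp)$, whereas the paper works directly with the left adjoint $Sh_{\Sigma}(\spectrafp) \to \spectra$; both rest on the same formal fact from \cite{higher_algebra}[7.3.2.7].
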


\begin{proof}
By definition of $\nu$, we can rewrite it as a composite

\begin{center}
$\spectra \rightarrow Sh_{\Sigma}(\spectra_{E}^{fp}) \rightarrow \synspectra_{E}$,
\end{center}
where the first arrow is the Yoneda embedding and the second is $\sigmainfty \colon Sh_{\Sigma}(\spectra_{E}^{fp}) \rightarrow Sh_{\Sigma}^{\spectra}(\spectra_{E}^{fp})$ left adjoint to the functor $\omegainfty \colon Sh_{\Sigma}^{\spectra}(\spectra_{E}^{fp}) \rightarrow Sh_{\Sigma}(\spectra_{E}^{fp})$ computed levelwise. Since the functor $\sigmainfty$ is symmetric monoidal and cocontinuous, it is enough to verify that $y \colon \spectra \rightarrow Sh_{\Sigma}(\spectra_{E}^{fp})$ is lax symmetric monoidal and preserves filtered colimits. 

The latter is clear, as any finite spectrum is compact in the $\infty$-category of spectra and so $y$ takes filtered colimits to levelwise colimits, in particular colimits of sheaves. To see that $y$ is lax symmetric monoidal, observe that it has a left adjoint $L \colon Sh_{\Sigma}(\spectra_{E}^{fp}) \rightarrow \spectra$ which is seen to be the unique cocontinuous functor extending the inclusion $\spectra_{E}^{fp} \hookrightarrow \spectra$. Since the latter is symmetric monoidal, $L$ acquires a canonical symmetric monoidal structure. It is formal that this forces the right adjoint $y$ to be lax symmetric monoidal, see \cite{higher_algebra}[7.3.2.7].
\end{proof}
Note that $\nu \colon \spectra \rightarrow \synspectra_{E}$ does not necessarily preserve cofibre sequences, in particular, it is a non-exact functor between stable $\infty$-categories. We will give later a sufficient and necessary criterion for a cofibre sequence of spectra to yield a cofibre sequence under $\nu$. 

\begin{rem}
\label{rem:universal_property_of_tensor_product_of_synthetic_spectra}
One can use the synthetic analogue construction to express the universal property of the tensor product of synthetic spectra, namely, the symmetric monoidal structure is the unique one which is cocontinuous in each variable and such that $\nu |_{\spectra_{E}^{fp}} \colon \spectra_{E}^{fp} \rightarrow \synspectra_{E}$ is symmetric monoidal. 
\end{rem}

We now discuss the bigrading on the $\infty$-category synthetic spectra, which comes from a bigraded family of sphere-like objects. 

\begin{defin}
The \emph{bigraded spheres} $S^{t, w}$ are the synthetic spectra defined by $S^{t, w} = \Sigma^{t-w} \nu S^{w}$. 
\end{defin}
Notice that since $\nu \colon \spectra_{E}^{fp} \rightarrow \synspectra_{E}$ is symmetric monoidal and $S^{l}$ is an invertible finite projective spectrum, the above bigraded spheres are invertible under the tensor product of synthetic spectra. Thus, tensoring with them defines autoequivalences $\Sigma^{k, l} \colon \synspectra_{E} \rightarrow \synspectra_{E}$ and hence a bigrading on the $\infty$-category of synthetic spectra. 

\begin{rem}
The bigrading on the $\infty$-category of synthetic spectra can be thought as coming from the fact that we study sheaves of spectra on a certain $\infty$-category of finite spectra, so that both the source and target admit their own invertible suspension functors. These two need not coincide, since a spherical sheaf need not take suspensions to loops. 
\end{rem}
Synthetically, we will usually bigrade things using $(t, w)$, where $t$ is the \emph{topological degree} and $w$ is the \emph{weight}. This is motivated by the motivic conventions and we will see that it fits our framework, too. Another important role is played by the following degree. 

\begin{defin}
If $(t, w)$ is a synthetic bigrading, the associated \emph{Chow degree} is $t-w$. 
\end{defin}
This is again motivated by the motivic conventions, although there is a discrepancy in the lack of the factor of two in front of the weight, this will be explained later, when we compare the synthetic and motivic categories. The importance of the Chow degree is perhaps highlighted by the fact that the only spheres in Chow degree zero are exactly those in the image of $\nu$. 

The definition of the spheres leads to a definition of homotopy and, since $\synspectra_{E}$ is symmetric monoidal, homology groups of a synthetic spectrum. Here, again, everything will be bigraded using $(t, w)$ as explained above. 

\begin{defin}
Let $Y, X$ be synthetic spectra. Then, the $Y$-homology of $X$ is a bigraded abelian group defined by $Y_{t, w}X = \pi_{0} \ \map(S^{t, w}, Y \otimes X)$, while the $Y$-cohomology of $X$ is defined by $Y^{t, w}X = \pi_{0} \ \map(\Sigma^{-t, -w} X , Y)$.
\end{defin}
As usual, we call the $S^{0, 0}$-homology groups the \emph{homotopy groups} of $X$ and denote them by $\pi_{*, *}X$ or $X_{*, *}$. Another important example is homology taken with respect to $\nu E$, which we will later see controls the natural $t$-structure on the $\infty$-category of synthetic spectra. 

\begin{rem}[Sign conventions]
\label{rem:associativity_equivalence_for_synthetic_spheres_and_sign_rule_for_commutative_bigraded_rings}
We have $S^{t, w} \otimes S^{t^\prime, w^\prime} \simeq S^{t + t^\prime, w + w^\prime}$, since we have a canonical equivalence
\begin{equation}
\label{equation:canonical_associativity_constraint} 
\Sigma^{t-w} \nu S^{w} \otimes \Sigma^{t^\prime - w^\prime} \nu S^{w^\prime} \simeq \Sigma^{t-w} \Sigma^{t^\prime - w^\prime} (\nu S^{w} \otimes \nu S^{w^\prime}) \simeq \Sigma^{t+t^\prime-(w+w^\prime)} \nu S^{w+w^\prime}, 
\end{equation}
where we use that $\nu$ is symmetric monoidal when restricted to finite projective spectra and that it commutes with colimits in both variables. This means that the homotopy groups $A_{*, *}$ of an algebra $A$ in synthetic spectra have a structure of a bigraded ring. 

The symmetry of synthetic spectra together with the above identification induces a self-equivalence of $S^{t+t', w+w'}$ which we can identify with 
\begin{equation}
\label{equation:natural_symmetry_of_synthetic_spheres}
S^{t+t', w+w'} \simeq \Sigma^{t-w} \Sigma^{t^\prime - w^\prime} (\nu S^{w} \otimes \nu S^{w^\prime}) \simeq \Sigma^{t^\prime-w^\prime} \Sigma^{t-w} (\nu S^{w^\prime} \otimes \nu S^{w}) \simeq S^{t'+t, w'+w},
\end{equation}
where the middle map exchanges the two suspension coordinates and applies the symmetry to the synthetic analogues (which coincides with the symmetry of spectra as $\nu$ is symmetric monoidal on spheres). In particular, the sign of (\ref{equation:natural_symmetry_of_synthetic_spheres}) is $(-1)^{(t-w)(t^\prime-w^\prime) + w w^\prime}$.

However, it will be more convenient for us to employ the Koszul sign convention and agree that the preferred equivalence $S^{t, w} \otimes S^{t^\prime, w^\prime} \simeq S^{t+t^\prime, w+w^\prime}$ is $(-1)^{wt^\prime}$ times the equivalence of (\ref{equation:canonical_associativity_constraint}). This guarantees that under the topological realization of synthetic spectra, which we discuss later in \S\ref{subsection:tau_invertible_synthetic_spectra}, this preferred map reduces to the usual equivalence $S^{t} \otimes S^{t^\prime} \simeq S^{t + t^\prime}$. As another consequence of the this convention, the sign of the composite

\begin{center}
$S^{t + t^\prime, w+w^\prime} \simeq S^{t, w} \otimes S^{t^\prime, w^\prime} \simeq S^{t^\prime, w^\prime} \otimes S^{t, w} \simeq S^{t + t^\prime, w+w^\prime}$. 
\end{center}
is $(-1)^{(t-w)(t^\prime - w^\prime) + tw^\prime + t^\prime w + ww^\prime} = (-1)^{tt^\prime}$. In particular, if $A$ is a commutative algebra in synthetic spectra, then its homotopy groups $\pi_{*, *}(A)$ form a bigraded ring which is commutative in the sense that the Koszul sign rule applies in the topological degree, but not in the weight.
\end{rem}

We will now prove a simple result which plays the role of the Yoneda lemma for synthetic spectra. Using it, we obtain an explicit formula for the homotopy groups and compute them in a range for synthetic analogues of ordinary spectra. 

\begin{lemma}
\label{lemma:yoneda_lemma_for_synthetic_spectra_and_explicit_formula_for_homotopy_groups}
Let $P \in \spectra_{E}^{fp}$ and $X$ be a synthetic spectrum. Then, $\map(\nu P, X) \simeq \Omega^{\infty}X(P)$. In particular, we have an isomorphism $\pi_{t, w}X \simeq \pi_{t-w} X(S^{w})$.
\end{lemma}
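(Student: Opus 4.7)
The plan is to reduce the statement to the Yoneda lemma via the adjunction $\sigma^{\infty} \dashv \Omega^{\infty}$ between spherical sheaves of spaces and spherical sheaves of spectra of \textbf{Proposition \ref{prop:spherical_sheaves_canonically_lift_to_sheaves_of_spectra}}, and then unwind the definition of the bigraded spheres to get the formula for homotopy groups.

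For the first statement, I would begin by noting that by construction $\nu P \simeq \sigma^{\infty} y(P)$, since $\nu P$ is defined as the unique lift of the spherical sheaf of spaces $y(P)$ to a connective spherical sheaf of spectra, and this lift is precisely given by the left adjoint $\sigma^{\infty}: Sh_{\Sigma}(\spectrafp) \rightarrow Sh_{\Sigma}^{\spectra}(\spectrafp)$. Applying the $\sigma^{\infty} \dashv \Omega^{\infty}$ adjunction and then the Yoneda lemma in $Sh_{\Sigma}(\spectrafp)$ yields
\[
\map(\nu P, X) \simeq \map_{Sh_{\Sigma}(\spectrafp)}(y(P), \Omega^{\infty} X) \simeq (\Omega^{\infty} X)(P),
\]
where the Yoneda step uses that $y(P)$ is already a sheaf by \textbf{Proposition \ref{prop:spectra_define_sheaves_on_finite_projective_spectra}}. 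Since $\Omega^{\infty}: Sh_{\Sigma}^{\spectra}(\spectrafp) \rightarrow Sh_{\Sigma}(\spectrafp)$ is computed levelwise, $(\Omega^{\infty} X)(P) \simeq \Omega^{\infty}(X(P))$, which is the desired identification.

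For the homotopy group formula, I would simply unwind the definitions. By the definition of the bigraded spheres and the homotopy groups,
\[
\pi_{t, w} X = \pi_{0} \map(S^{t, w}, X) = \pi_{0} \map(\Sigma^{t-w} \nu S^{w}, X).
\]
Stability of $\synspectra_{E}$ (\textbf{Proposition \ref{prop:synthetic_spectra_is_a_stable_presentable_infty_category}}) allows us to rewrite this as $\pi_{t-w} \map(\nu S^{w}, X)$, and the first part of the lemma identifies this with $\pi_{t-w} \Omega^{\infty} X(S^{w}) \simeq \pi_{t-w} X(S^{w})$.

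There is no real obstacle here; the only thing to be careful about is making sure that the $\Omega^{\infty}$ appearing in the adjunction $\sigma^{\infty} \dashv \Omega^{\infty}$ of \textbf{Proposition \ref{prop:spherical_sheaves_canonically_lift_to_sheaves_of_spectra}} agrees with the one computed levelwise, which is precisely the content of \textbf{Warning \ref{warning:sigma_infty_plus_not_computed_by_levelwise_sigma_infty}}: the right adjoint is levelwise, while the left adjoint $\sigma^{\infty}$ is not. This is exactly what makes the Yoneda argument go through cleanly without any sheafification appearing in the final formula.
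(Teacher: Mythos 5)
Your proof is correct and follows essentially the same route as the paper's: rewrite $\nu P \simeq \sigmainfty y(P)$, apply the $\sigmainfty \dashv \omegainfty$ adjunction, conclude by the Yoneda lemma, and then derive the homotopy group formula by unwinding the definition of the bigraded spheres. The added remarks about stability and the levelwise nature of $\omegainfty$ simply make explicit what the paper leaves implicit.
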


\begin{proof}
The first part follows from the chain of equivalences
\[
\map(\nu P, X) \simeq \map(y(P), \Omega^{\infty} X) \simeq (\Omega^{\infty} X)(P) \simeq \Omega^{\infty}X(P),
\]
where the first one uses that $\nu P \simeq \sigmainfty y(P)$, with $\sigmainfty$ interpreted internally to spherical sheaves and so given by the delooping of an infinite loop space, see \cref{warning:sigma_infty_plus_not_computed_by_levelwise_sigma_infty}. The second part is immediate from the first one, since $S^{t, w} \simeq \Sigma^{t -w} \nu S^{w}$.
\end{proof}

\begin{cor}
\label{cor:homotopy_of_synthetic_analogues_in_non_negative_chow_degree}
Let $X$ be a spectrum. Then $\pi_{t, w} \nu X \simeq \pi_{t} X$ in non-negative Chow degrees; that is, when $t - w \geq 0$. 
\end{cor}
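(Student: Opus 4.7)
The plan is to unpack both sides using the two tools already established: the Yoneda-type formula of \textbf{Lemma \ref{lemma:yoneda_lemma_for_synthetic_spectra_and_explicit_formula_for_homotopy_groups}} and the defining property of $\nu X$ as a connective lift of the representable sheaf $y(X)$. First I would apply the lemma to rewrite $\pi_{t,w}\nu X \simeq \pi_{t-w}(\nu X)(S^{w})$, converting a question about bigraded synthetic homotopy into a question about ordinary homotopy groups of the value of the sheaf $\nu X$ at a sphere.

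Next, I would use the construction of $\nu X$: by \textbf{Proposition \ref{prop:spectra_define_sheaves_on_finite_projective_spectra}} the presheaf $y(X) = \map(-,X)$ is already a spherical sheaf of spaces on $\spectrafp$, and $\nu X$ is its unique lift to a connective spherical sheaf of spectra via \textbf{Proposition \ref{prop:spherical_sheaves_canonically_lift_to_sheaves_of_spectra}}. In particular $\Omega^{\infty}(\nu X)(P) \simeq \map(P,X)$ for every finite projective $P$, so for $n \geq 0$ we have $\pi_{n}(\nu X)(P) \simeq \pi_{n}\map(P,X)$. Specialising to $P = S^{w}$ and $n = t - w$, this gives $\pi_{t-w}(\nu X)(S^{w}) \simeq \pi_{t-w}\map(S^{w},X) \simeq \pi_{t}X$ whenever $t - w \geq 0$, which combined with the first step yields the claim.

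There is no real obstacle: the whole argument is a direct computation, and the only place the hypothesis $t - w \geq 0$ enters is in the last step, where connectivity of $\nu X$ forces $\pi_{n}(\nu X)(P)$ to vanish for $n < 0$ rather than agreeing with $\pi_{n}\map(P,X)$. This also foreshadows the companion statement for homotopy $E$-modules appearing later, where a genuine vanishing (rather than merely an agreement) is obtained in negative Chow degree.
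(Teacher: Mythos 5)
Your proof is correct and matches the paper's own argument essentially verbatim: both apply \textbf{Lemma \ref{lemma:yoneda_lemma_for_synthetic_spectra_and_explicit_formula_for_homotopy_groups}} to reduce to $\pi_{t-w}(\nu X)(S^w)$, then use $\Omega^{\infty}\nu X \simeq y(X)$ and the hypothesis $t-w \geq 0$ to identify this with $\pi_{t-w}\map(S^w,X) \simeq \pi_t X$.
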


\begin{proof}
Observe that we have $\omegainfty \nu X \simeq y(X)$ by definition, hence by \cref{lemma:yoneda_lemma_for_synthetic_spectra_and_explicit_formula_for_homotopy_groups}

\begin{center}
$\pi_{t, w} \nu X \simeq \pi_{t-w} \nu X(S^{w}) \simeq \pi_{t-w} \omegainfty \nu X(S^{w}) \simeq \pi_{t-w} y(X)(S^{w}) \simeq \pi_{t-w} \map(S^{w}, X) \simeq \pi_{t}X$,
\end{center}
which is what we wanted to show. 
\end{proof}

\begin{rem}
Even though \cref{cor:homotopy_of_synthetic_analogues_in_non_negative_chow_degree} looks innocent enough, it is in fact very important. An analogous result holds for homotopy of $p$-complete finite motivic spectra by \cref{thm:gheorghe_isaksen_p_complete_homotopy_in_nonneg_chow_degree_is_topological} of Gheorghe-Isaksen, this observation is one of the main ingredients of the comparison we will make between synthetic spectra based on $\MU$ and the cellular motivic category. 
\end{rem}

\begin{rem}
\label{rem:synthetic_spectra_compactly_generated_by_suspensions_of_synthetic_analogues_of_finite_projectives}
Observe that it follows easily from \cref{lemma:yoneda_lemma_for_synthetic_spectra_and_explicit_formula_for_homotopy_groups} that synthetic spectra of the form $\Sigma^{k, 0} \nu P$, where $k \in \mathbb{Z}$ and $P \in \spectra_{E}^{fp}$, generate $\synspectra_{E}$ under colimits. These generators are in fact compact since filtered colimits in spherical sheaves over an additive $\infty$-site are computed levelwise, see \cref{cor:filtered_colimits_in_spherical_sheaves_on_an_additive_site_computed_levelwise}. In particular, $\synspectra_{E}$ is compactly generated. 

We will prove later in \cref{thm:synthetic_spectra_based_on_mu_are_cellular} that the $\infty$-category of synthetic spectra based on $\MU$ is generated under colimits by the bigraded spheres. However, this property should perhaps not be expected to hold for an arbitrary Adams-type homology theory. 
\end{rem}

\begin{rem}
We will later extend \cref{cor:homotopy_of_synthetic_analogues_in_non_negative_chow_degree} by proving an analogous result about homotopy classes of maps $\nu Y \rightarrow \nu X$, where $Y$ is not necessarily a sphere. We will also show that the structure of the maps in negative Chow degree is controlled by the homological algebra of $E_{*}E$-comodules $E_{*}Y, E_{*}X$, see \cref{prop:long_exact_sequence_relating_synthetic_homotopy_with_ext_groups} and \cref{thm:homotopy_of_synthetic_analogues_is_topological_in_non_negative_chow_degree}. 
\end{rem}

\subsection{The natural $t$-structure} 
In this section we describe a natural $t$-structure on the $\infty$-category of synthetic spectra and identify its heart with the category of $E_{*}E$-comodules. We then give an explicit formula for the homotopy groups associated to that $t$-structure and identify them with $E$-homology. Lastly, we compute the $\nu E$-homology of synthetic analogues of ordinary spectra. 

Note that since $\synspectra_{E}$ is an $\infty$-category of sheaves, it inherits a $t$-structure from the $\infty$-category $\spectra$ of spectra, this is the one we have in mind so that its existence is completely formal. The non-trivial part is that the heart can be identified with comodules, this is where our study of discrete sheaves on excellent $\infty$-sites comes into play. We will see that this identification allows one to describe the $t$-structure in purely homological (rather than homotopical) terms.

\begin{prop}
\label{prop:synthetic_spectra_admit_a_t_structure}
The $\infty$-category $\synspectra_{E}$ admits a right complete $t$-structure compatible with filtered colimits in which the coconnective part is the $\infty$-category of spherical sheaves valued in coconnective spectra. Moreover, we have an equivalence $\synspectra_{E}^{\heartsuit} \simeq \ComodE$ between the heart of this $t$-structure and the category of $E_{*}E$-comodules. 
\end{prop}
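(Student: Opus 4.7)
The plan is to assemble the statement from three pieces of machinery already developed: the abstract $t$-structure on spherical sheaves of spectra over an additive $\infty$-site (Proposition \ref{prop:tstructure_on_spherical_sheaves_of_spectra}), the comparison of sheaves of sets on $\spectrafp$ with sheaves of sets on $\ComodE^{fp}$ (Theorem \ref{thm:homology_functor_has_covering_lifting_property} and Remark \ref{rem:discrete_spherical_sheaves_on_fpspectra_are_comodules}), and the Goerss--Hopkins identification $Sh^{\sets}_{\Sigma}(\ComodE^{fp}) \simeq \ComodE$ (Theorem \ref{thm:equivalence_of_comodules_with_spherical_sheaves}). Since $\spectrafp$ was shown to be an excellent $\infty$-site in Proposition \ref{prop:homology_surjections_make_fpspectra_into_an_excellent_inftysite}, and in particular an additive $\infty$-site, all three inputs apply.

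First I would invoke Proposition \ref{prop:tstructure_on_spherical_sheaves_of_spectra} applied to $\ccat = \spectrafp$ to produce a $t$-structure $(\synspectra_{E,\geq 0}, \synspectra_{E,\leq 0})$ on $\synspectra_{E} = Sh_{\Sigma}^{\spectra}(\spectrafp)$. That proposition delivers, simultaneously, (i)~right completeness, (ii)~compatibility with filtered colimits, (iii)~the description of the coconnective part as spherical sheaves valued in coconnective spectra, and (iv)~a canonical equivalence
\[
\synspectra_{E}^{\heartsuit} \;\simeq\; Sh_{\Sigma}^{\sets}(\spectrafp).
\]
So the only remaining content is to rewrite the right-hand side in terms of comodules.

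For the heart, I would compose the chain
\[
Sh_{\Sigma}^{\sets}(\spectrafp) \;\simeq\; Sh_{\Sigma}^{\sets}(\ComodE^{fp}) \;\simeq\; \ComodE,
\]
where the first equivalence is Remark \ref{rem:discrete_spherical_sheaves_on_fpspectra_are_comodules} (the restriction to the spherical part of the equivalence of Theorem \ref{thm:homology_functor_has_covering_lifting_property}, which in turn relied on the common envelope produced by Lemma \ref{lemma:countable_sum_of_cofree_comodules_is_a_common_envelope}), and the second is the Goerss--Hopkins description of comodules as spherical sheaves on dualizable comodules. The composite equivalence sends a representable $\nu P$ in the heart to the comodule $E_{*}P$, since this is how the equivalences were constructed on representable objects; this will be important later but is not needed for the bare existence statement.

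Honestly, there is no serious obstacle here: all the hard work---verifying the excellent $\infty$-site axioms for $\spectrafp$, identifying $\pi_{n}$ of sheaves of spectra, and exhibiting the common envelope needed to compare sheaves on $\spectrafp$ and on $\ComodE^{fp}$---has been done in the preceding chapters. The subtle point worth emphasising is only that the equivalence on hearts uses sphericity in an essential way: without it one would get all sheaves of abelian groups, not comodules, and the reduction from $Sh_{\Sigma}^{\abeliangroups}(\ccat)$ to $Sh_{\Sigma}^{\sets}(\ccat)$ along additivity of $\spectrafp$ is what makes the identification with $\ComodE$ possible.
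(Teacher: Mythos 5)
Your proof is correct and follows exactly the same route as the paper: invoke Proposition \ref{prop:tstructure_on_spherical_sheaves_of_spectra} for the abstract $t$-structure on spherical sheaves of spectra, then identify the heart via $Sh_{\Sigma}^{\sets}(\spectrafp) \simeq Sh_{\Sigma}^{\sets}(\ComodE^{fp}) \simeq \ComodE$ using Theorem \ref{thm:homology_functor_has_covering_lifting_property} (with Remark \ref{rem:discrete_spherical_sheaves_on_fpspectra_are_comodules}) and the Goerss--Hopkins equivalence. You spell out the chain of equivalences a bit more explicitly than the paper does, but the argument is the same.
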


\begin{proof}
By \cref{prop:tstructure_on_spherical_sheaves_of_spectra}, on $\synspectra_{E}$ there exists a $t$-structure of the above form whose heart is equivalent to the category $Sh^{\sets}_{\Sigma}(\spectrafp)$ of discrete spherical sheaves on $\spectrafp$. The latter is equivalent to the category of comodules by \cref{rem:discrete_spherical_sheaves_on_fpspectra_are_comodules}. 
\end{proof}

By retracing the proof, the equivalence $\synspectra_{E}^{\heartsuit} \simeq \ComodE$ of \cref{prop:synthetic_spectra_admit_a_t_structure} can be given a fairly explicit form which we now describe. By standard considerations, the heart is equivalent to the category of spherical sheaves of abelian groups, the equivalence induced by the Eilenberg-MacLane spectrum construction. 

If $M$ is an $E_{*}E$-comodule, then it defines a spherical sheaf $(E_{*})_{*}y(M)$ of abelian groups on $\spectrafp$ by the formula 

\begin{center}
$((E_{*})_{*}y(M))(P) = \Hom_{E_{*}E}(E_{*}P, M)$. 
\end{center}
One shows that any sheaf of abelian groups on $\spectra_{E}^{fp}$ is necessarily of this form, as any sheaf on spectra is induced from a sheaf on comodules by \cref{thm:homology_functor_has_covering_lifting_property} and any spherical sheaf on comodules is representable by \cref{thm:compactly_generated_grothendieck_category_as_spherical_sheaves}. This gives rise to the description of the heart as comodules.

Any $t$-structure on a stable $\infty$-category allows one to define homotopy groups valued in the heart. It follows that in the case of $\synspectra_{E}$ these t-structure homotopy groups are valued in $E_{*}E$-comodules and so are themselves, in particular, graded abelian groups. This reflects the bigraded nature of the $\infty$-category of synthetic spectra. 

If $X \in \synspectra_{E}$ is a synthetic spectrum, let us denote the homotopy comodules with respect to the natural $t$-structure by $\pi_{k}^{\heartsuit}X$, we will show that they admit a description in terms of synthetic $E$-homology. 

\begin{lemma}
\label{lemma:formula_for_homotopy_comodules_in_terms_of_a_filtered_colimit}
Let $X$ be a synthetic spectrum. Then, the graded components of the homotopy $E_{*}E$-comodule $\pi_{k}^{\heartsuit}X$ are given by $(\pi_{k}^{\heartsuit} X)_{l} \simeq \varinjlim \pi_{k} X(\Sigma^{l} DE_{\alpha})$, where $E_{\alpha}$ is any filtered diagram of finite projective spectra such that $\varinjlim E_{\alpha} \simeq E$. 
\end{lemma}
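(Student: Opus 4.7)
The plan is to identify $\pi_{k}^{\heartsuit}X$ with a comodule via the heart equivalence of \textbf{Proposition \ref{prop:synthetic_spectra_admit_a_t_structure}} and then recover its graded components using the explicit formulas available for discrete spherical sheaves on $\spectrafp$. By \textbf{Remark \ref{rem:homotopy_groups_with_respect_to_natural_t_structure_are_given_by_shaefications_of_homotopy_groups_presheaves}}, the sheaf $\pi_{k}^{\heartsuit}X$ is the sheafification of the levelwise presheaf $P \mapsto \pi_{k}X(P)$, so the problem is to interpret this sheafification as a comodule and read off the $l$-th graded piece.

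My first step is to transfer everything along the equivalence $E_{*}: Sh^{\sets}_{\Sigma}(\spectrafp) \simeq Sh^{\sets}_{\Sigma}(\ComodE^{fp})$ of \textbf{Theorem \ref{thm:homology_functor_has_covering_lifting_property}} and \textbf{Remark \ref{rem:discrete_spherical_sheaves_on_fpspectra_are_comodules}}, combined with the Goerss--Hopkins identification $Sh^{\sets}_{\Sigma}(\ComodE^{fp}) \simeq \ComodE$ of \textbf{Theorem \ref{thm:equivalence_of_comodules_with_spherical_sheaves}}. Under this chain of equivalences, $\pi_{k}^{\heartsuit}X$ corresponds to a comodule $N$ whose associated sheaf on $\spectrafp$ sends $P$ to $\Hom_{E_{*}E}(E_{*}P, N)$.

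Next, I apply \textbf{Lemma \ref{lemma:recovering_comodules_from_sheaves}} to the comodule $N$ with the filtered diagram $\Gamma_{\alpha} = E_{*}E_{\alpha}$ of dualizables, whose colimit is $E_{*}E$. This yields
\begin{center}
$N_{l} \simeq \varinjlim \Hom_{E_{*}E}(D\Gamma_{\alpha}[l], N).$
\end{center}
The K\"unneth isomorphism for the finite projective spectrum $E_{\alpha}$ identifies $E_{*}(\Sigma^{l}DE_{\alpha})$ with $\Hom_{E_{*}}(E_{*}E_{\alpha}, E_{*})[l] = D\Gamma_{\alpha}[l]$, so the right-hand side is exactly $\varinjlim y(N)(\Sigma^{l}DE_{\alpha})$, the colimit of the associated sheaf on $\spectrafp$ evaluated on the shifted duals of the $E_{\alpha}$. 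Since $y(N)$ is discrete, this equals $\varinjlim \pi_{0}\, y(N)(\Sigma^{l}DE_{\alpha})$.

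Finally, I invoke \textbf{Lemma \ref{lemma:homotopy_classes_of_maps_into_homotopy_e_modules_define_sheaves}}, which asserts precisely that the functor $F \mapsto \varinjlim \pi_{0}\, F(\Sigma^{l}DE_{\alpha})$ on presheaves of spectra takes sheafifications to isomorphisms. Applied to the sheafification $\pi_{k}^{\heartsuit}X$ of the presheaf $\pi_{k}X$, this gives $\varinjlim \pi_{0}\, y(N)(\Sigma^{l}DE_{\alpha}) \simeq \varinjlim \pi_{k}X(\Sigma^{l}DE_{\alpha})$, completing the identification. The main technical point to be careful about is tracking the comodule structure through the two equivalences and confirming that the K\"unneth identification really matches the abstract duality $D\Gamma_{\alpha}$ in $\ComodE^{fp}$; beyond that the argument is a direct assembly of results from \textbf{Chapter \ref{section:cat_theory}} and \textbf{Chapter \ref{section:foundations_of_synthetic_spectra}}.
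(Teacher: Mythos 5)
Your proposal is correct and follows essentially the same route as the paper's proof: identify $\pi_k^{\heartsuit}X$ with the sheafification of the levelwise presheaf, transfer it to a comodule via the Goerss--Hopkins and $E_*$-site equivalences, recover the graded components via \textbf{Lemma \ref{lemma:recovering_comodules_from_sheaves}} together with the K\"unneth identification $E_*DE_\alpha \simeq D(E_*E_\alpha)$, and then undo the sheafification using \textbf{Lemma \ref{lemma:homotopy_classes_of_maps_into_homotopy_e_modules_define_sheaves}}. You spell out the transfer across the two equivalences more explicitly than the paper does, but the choice of intermediate lemmas and the overall structure are identical.
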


\begin{proof}
As a spherical sheaf of abelian groups, $\pi_{k}^{\heartsuit}X$ can be described as the sheafification of the presheaf defined by the formula $\pi_{k} X(P)$ as $P$ runs through finite projective spectra, see \cref{rem:homotopy_groups_with_respect_to_natural_t_structure_are_given_by_shaefications_of_homotopy_groups_presheaves}. As observed above, any such sheaf can be written in the form $\Hom_{E_{*}E}(E_{*}P, M)$ for a unique comodule $M$, what we want to prove is that we have an isomorphism of abelian groups $M_{l} \simeq \varinjlim \pi_{k} X(\Sigma^{l} DE_{\alpha})$. 

As $\varinjlim E_{*} E_{\alpha} \simeq E_{*}E$ and $E_{*} DE_{\alpha} \simeq \Hom_{E_{*}}(E_{*} E_{\alpha}, E_{*})$, where the latter is the $E_{*}$-linear dual, it follows immediately from \cref{lemma:recovering_comodules_from_sheaves} that $M_{l} \simeq \varinjlim \pi_{k}^{\heartsuit}(\Sigma^{l} DE_{\alpha})$. We deduce that to prove the claim, we only need to show that $\varinjlim \ \pi_{k}^{\heartsuit}(\Sigma^{l} DE_{\alpha}) \simeq \varinjlim \pi_{k}(\Sigma^{l} DE_{\alpha})$, which is exactly \cref{lemma:homotopy_classes_of_maps_into_homotopy_e_modules_define_sheaves}.
\end{proof}

\begin{thm}
\label{thm:t_structure_homotopy_groups_coincide_with_synthetic_homology}
Let $X$ be a synthetic spectrum. Then, there is an isomorphism of bigraded abelian groups of the form $(\pi_{k}^{\heartsuit} X)_{l} \simeq \nu E_{k+l, l} X$ between the homotopy of $X$ with respect to the natural $t$-structure and its synthetic $E$-homology. 
\end{thm}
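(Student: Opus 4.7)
The plan is to reduce the theorem to the explicit formula for the homotopy comodules proved in Lemma 3.2.2, unfolding the synthetic homology side by a duality argument to get exactly the same filtered colimit.

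First, I would unpack the right-hand side. By the definition of the bigraded homotopy groups together with the identification $S^{k+l,l}\simeq\Sigma^k\nu S^l$, we have
\begin{equation*}
\nu E_{k+l,l}X \;=\; \pi_k\,\Map_{\synspectra_E}(\nu S^l,\,\nu E\otimes X).
\end{equation*}
By Lemma \ref{lemma:yoneda_lemma_for_synthetic_spectra_and_explicit_formula_for_homotopy_groups} (and its spectrum-level refinement, noting that a sheaf of spectra satisfies $Y(P)\simeq\Map(\nu P,Y)$), this equals $\pi_k(\nu E\otimes X)(S^l)$.

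Second, I would invoke Lemma \ref{lemma:synthetic_analogue_construction_preserves_filtered_colimits_and_is_lax_symmetric_monoidal} to write $\nu E\simeq\varinjlim\nu E_\alpha$ for any filtered diagram of finite projective spectra converging to $E$. Since $E_\alpha\in\spectrafp$ is dualizable with dual $DE_\alpha$ and $\nu$ is symmetric monoidal on $\spectrafp$ (Remark \ref{rem:universal_property_of_tensor_product_of_synthetic_spectra}), $\nu E_\alpha$ is dualizable in $\synspectra_E$ with dual $\nu(DE_\alpha)$. Combining dualizability with the symmetric monoidality $\nu(S^l\wedge DE_\alpha)\simeq\nu S^l\otimes\nu DE_\alpha$ gives
\begin{equation*}
\Map(\nu S^l,\,\nu E_\alpha\otimes X) \;\simeq\; \Map(\nu S^l\otimes\nu DE_\alpha,\,X)\;\simeq\; X(\Sigma^l DE_\alpha).
\end{equation*}
Commuting the filtered colimit past $\pi_k$ and past the tensor product (both cocontinuous in each variable) yields
\begin{equation*}
\nu E_{k+l,l}X \;\simeq\; \varinjlim_\alpha\pi_k\,X(\Sigma^l DE_\alpha).
\end{equation*}

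Third, I would compare with Lemma \ref{lemma:formula_for_homotopy_comodules_in_terms_of_a_filtered_colimit}, which gives precisely $(\pi_k^\heartsuit X)_l\simeq\varinjlim\pi_k X(\Sigma^l DE_\alpha)$ for the same family $E_\alpha$. This identifies the two sides degreewise as abelian groups; to promote this to an isomorphism of bigraded comodules one checks the identification is natural in $X$ and compatible with the underlying module structure, which is automatic from the derivation since both sides are built from the same representing data $\{\Sigma^l DE_\alpha\}$.

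The only genuinely substantive step is the dualizability manipulation in step two, and this is formal given that the symmetric monoidal structure on $\synspectra_E$ was set up so that $\nu|_{\spectrafp}$ is symmetric monoidal and that every object of $\spectrafp$ admits a dual (Lemma \ref{lemma:smash_products_makes_finite_projective_spectra_rigid_symmetric_monoidal_and_homology_monoidal}). Everything else is bookkeeping with the explicit formula already supplied by the preceding lemma.
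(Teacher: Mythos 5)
Your proposal is correct and follows essentially the same argument as the paper's proof: reduce to the filtered-colimit formula of Lemma \ref{lemma:formula_for_homotopy_comodules_in_terms_of_a_filtered_colimit}, unwind the synthetic $\nu E$-homology via the Yoneda identification $\Map(\nu P, X)\simeq X(P)$, and use dualizability of $\nu E_{\alpha}$ together with symmetric monoidality of $\nu$ on $\spectrafp$ to rewrite at each $\alpha$. The one step worth stating more precisely is the passage of the filtered colimit out of $\Map(\nu S^{l}, -)$: this requires compactness of $\nu S^{l}$ (Remark \ref{rem:synthetic_spectra_compactly_generated_by_suspensions_of_synthetic_analogues_of_finite_projectives}), equivalently that filtered colimits of spherical sheaves are computed levelwise (Corollary \ref{cor:filtered_colimits_in_spherical_sheaves_on_an_additive_site_computed_levelwise}), and is not covered by the cocontinuity of $\pi_{k}$ or of the tensor product as your phrasing suggests.
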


\begin{proof}
We have given an explicit formula for $\pi_{k}^{\heartsuit} X$ in \cref{lemma:formula_for_homotopy_comodules_in_terms_of_a_filtered_colimit}, we will show that the $E$-homology groups can be computed in the same way. Notice that by suspending $X$ appropriately, we can assume that $k = 0$, so we only have to verify that $(\pi_{0}^{\heartsuit} X)_{l} \simeq \nu E_{l, l} X$. 

We have $\nu E_{*, *} X \simeq \pi_{*, *} \nu E \otimes X$ by definition. Choose a filtered diagram $E_{\alpha}$ of finite projective spectra such that $\varinjlim E_{\alpha} \simeq E$, since $\nu$ preserves filtered colimits by \cref{lemma:synthetic_analogue_construction_preserves_filtered_colimits_and_is_lax_symmetric_monoidal}, we have $\nu E \otimes X \simeq \varinjlim \nu E_{\alpha} \otimes X$. One can then rewrite the synthetic homology as 

\begin{center}
$\nu E_{l, l} X \simeq [\nu S^{l}, \varinjlim \nu E_{\alpha} \otimes X] \simeq \varinjlim [\nu \Sigma^{l} DE_{\alpha}, X] \simeq \varinjlim \pi_{0} X(\Sigma^{l}DE_{\alpha})$,
\end{center}
which is what we wanted to show. Here we've used that $\nu$ is symmetric monoidal when restricted to finite projective spectra, \cref{lemma:yoneda_lemma_for_synthetic_spectra_and_explicit_formula_for_homotopy_groups} and that synthetic spheres are compact, see \cref{rem:synthetic_spectra_compactly_generated_by_suspensions_of_synthetic_analogues_of_finite_projectives}. 
\end{proof}
As observed above, \cref{prop:synthetic_spectra_admit_a_t_structure} implies that $\pi_{k}^{\heartsuit} X$ have more structure than just that of a graded abelian group; rather, they are $E_{*}E$-comodules. By observing that the Chow degree zero part of $\nu E_{*, *} \nu E$ coincides with $E_{*}E$, one can similarly endow $\nu E_{*, *} \nu X$ with a graded $E_{*}E$-comodule structure, one can then show that \cref{thm:t_structure_homotopy_groups_coincide_with_synthetic_homology} can be promoted to an isomorphism of graded comodules, although we will not need this in the current work. 

The grading shift $(\pi_{k}^{\heartsuit} X)_{l} \simeq \nu E_{k+l, l}X$ might seem strange at the first sight, but observe that it can be phrased succinctly as saying that $\pi_{k}^{\heartsuit}$ captures the Chow degree $k$ part of the synthetic $E$-homology of $X$. This gives one the following pleasantly sounding corollary. 

\begin{cor}
\label{cor:homological_criterion_of_connectivity}
A synthetic spectrum $X$ is connective if and only if $\nu E_{*, *}X$ is concentrated in non-negative Chow degrees. 
\end{cor}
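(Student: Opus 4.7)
The plan is to combine the right completeness of the natural $t$-structure with the identification of $t$-structure homotopy groups in terms of synthetic $E$-homology that was established in Theorem \ref{thm:t_structure_homotopy_groups_coincide_with_synthetic_homology}. The whole argument is essentially a rewriting of indices.

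First, I would record the following general observation: for a right complete $t$-structure on a stable $\infty$-category $\dcat$, an object $X$ lies in $\dcat_{\geq 0}$ if and only if $\pi_k^{\heartsuit} X = 0$ for all $k < 0$. The ``only if'' direction is immediate from the definition of $\pi_k^{\heartsuit}$. For the converse, if $\pi_k^{\heartsuit} X = 0$ for $k < 0$, then setting $Y = \tau_{\leq -1} X$ we have $Y \in \dcat_{\leq -1}$ with $\pi_k^{\heartsuit} Y = 0$ for all $k$; applying right completeness $Y \simeq \varinjlim \tau_{\geq -n} Y$ and an induction on $n$ using the fibre sequences $\pi_{-n}^{\heartsuit}Y[n] \to \tau_{\geq -n} Y \to \tau_{\geq -n+1} Y$ shows $Y \simeq 0$, so $X \in \dcat_{\geq 0}$.

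Next, by Proposition \ref{prop:synthetic_spectra_admit_a_t_structure} the natural $t$-structure on $\synspectra_E$ is right complete, so the above applies: $X$ is connective if and only if $\pi_k^{\heartsuit} X = 0$ for all $k < 0$. By Theorem \ref{thm:t_structure_homotopy_groups_coincide_with_synthetic_homology}, this comodule has graded components $(\pi_k^{\heartsuit} X)_l \simeq \nu E_{k+l,\, l} X$. Substituting $t = k + l$ and $w = l$, so that $k = t - w$ is precisely the Chow degree, the vanishing condition $\pi_k^{\heartsuit} X = 0$ for all $k < 0$ translates into $\nu E_{t, w} X = 0$ whenever $t - w < 0$, i.e.\ $\nu E_{*, *} X$ is concentrated in non-negative Chow degree.

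There is no real obstacle here; the corollary is a direct bookkeeping consequence of Theorem \ref{thm:t_structure_homotopy_groups_coincide_with_synthetic_homology} and the standard recognition of connective objects in a right complete $t$-structure. The only point worth spelling out carefully, as above, is the change of variables that matches the internal grading $l$ of the homotopy comodule with the synthetic weight $w$ and identifies the homological index $k$ with the Chow degree $t - w$.
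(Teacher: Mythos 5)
Your argument is correct and matches the paper's proof, which combines the identification of connectivity with vanishing of negative $t$-structure homotopy groups with the re-indexing supplied by Theorem \ref{thm:t_structure_homotopy_groups_coincide_with_synthetic_homology}. The one difference is that you re-derive the equivalence between connectivity and vanishing of $\pi_k^\heartsuit X$ for $k<0$ via a right-completeness argument, but here this is definitional rather than a theorem: Definition \ref{defin:homotopy_groups_of_a_hypercomplete_sheaf} \emph{defines} a sheaf of spectra to be connective precisely when its negative homotopy sheaves vanish, and Remark \ref{rem:homotopy_groups_with_respect_to_natural_t_structure_are_given_by_shaefications_of_homotopy_groups_presheaves} identifies these sheaves with the $t$-structure homotopy groups, so the paper cites Proposition \ref{prop:synthetic_spectra_admit_a_t_structure} and proceeds directly (as an aside, in your interpolated lemma the fibre sequence should read $\tau_{\geq -n+1} Y \to \tau_{\geq -n} Y \to \pi_{-n}^\heartsuit Y[-n]$, with the shift $[-n]$ rather than $[n]$, though this does not affect the conclusion).
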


\begin{proof}
In the natural $t$-structure a synthetic spectrum is connective if and only if its negative $t$-structure homotopy groups vanish, see \cref{prop:tstructure_on_spherical_sheaves_of_spectra}. This translates into the above condition by \cref{thm:t_structure_homotopy_groups_coincide_with_synthetic_homology}. 
\end{proof}
Notice that this means, perhaps surprisingly, that connectivity of synthetic spectra with respect to the natural $t$-structure is controlled by homology, rather than homotopy. Moreover, we see that it is measured by the Chow rather than the topological degree. In particular, we have the following corollary which can be also be proven by other means. 

\begin{cor}
\label{cor:suspension_raises_connectivity_by_chow_degree}
If $X$ is $a$-connective, then $\Sigma^{k, l}X$ is $(k-l+a)$-connective. An analogous result holds for coconnectivity. 
\end{cor}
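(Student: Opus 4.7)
The plan is to reduce everything to the homological criterion of \textbf{Corollary \ref{cor:homological_criterion_of_connectivity}} via a straightforward shift argument. Upgrading that corollary, a synthetic spectrum $X$ is $a$-connective precisely when $\Sigma^{-a} X$ is connective, i.e.\ when $\nu E_{t,w} X = 0$ for all bidegrees with $t - w < a$. So the task becomes tracking what the autoequivalence $\Sigma^{k,l} = S^{k,l} \otimes -$ does to the bigraded $\nu E$-homology.

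For this, I would use that the bigraded spheres are invertible and compute directly
\begin{align*}
\nu E_{t, w}(\Sigma^{k,l} X)
&\simeq \pi_{0} \map(S^{t, w}, \nu E \otimes S^{k, l} \otimes X) \\
&\simeq \pi_{0} \map(S^{t-k, w-l}, \nu E \otimes X) \\
&\simeq \nu E_{t-k,\, w-l}(X),
\end{align*}
where the middle step tensors with $S^{-k, -l}$ and uses the symmetric monoidal structure of \textbf{Proposition \ref{prop:synthetic_spectra_is_a_stable_presentable_infty_category}} together with the identification $S^{t,w} \otimes S^{-k,-l} \simeq S^{t-k, w-l}$ from \textbf{Remark \ref{rem:associativity_equivalence_for_synthetic_spheres_and_sign_rule_for_commutative_bigraded_rings}}. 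Now assume $X$ is $a$-connective, so that $\nu E_{t',w'} X = 0$ whenever $t' - w' < a$. Setting $t' = t - k$ and $w' = w - l$, the Chow degree becomes $t' - w' = (t - w) - (k - l)$, so $\nu E_{t, w}(\Sigma^{k,l} X)$ vanishes whenever $(t - w) - (k - l) < a$, i.e.\ whenever $t - w < a + (k - l)$. The upgraded form of \textbf{Corollary \ref{cor:homological_criterion_of_connectivity}} then identifies $\Sigma^{k,l} X$ as $(k - l + a)$-connective.

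The coconnective half proceeds identically, once one records a coconnective version of the homological criterion. By \textbf{Proposition \ref{prop:synthetic_spectra_admit_a_t_structure}}, coconnectivity is measured levelwise in spectra, and a formal dualisation of the proof of \textbf{Corollary \ref{cor:homological_criterion_of_connectivity}} (again via \textbf{Theorem \ref{thm:t_structure_homotopy_groups_coincide_with_synthetic_homology}}) shows that $X$ is $a$-coconnective iff $\nu E_{t,w} X = 0$ for $t - w > a$. The same bigraded shift identity applied to this vanishing region yields that $\Sigma^{k,l} X$ is $(k - l + a)$-coconnective.

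There is no real obstacle here: the whole content has already been done in \textbf{Theorem \ref{thm:t_structure_homotopy_groups_coincide_with_synthetic_homology}} and \textbf{Corollary \ref{cor:homological_criterion_of_connectivity}}; the only thing to be mildly careful about is the sign and shift bookkeeping in the bigrading, in particular that suspension by $S^{k,l}$ shifts the Chow degree by exactly $k - l$ rather than by $k$ or $l$ alone. This is precisely why the connectivity of $\Sigma^{k,l} X$ is governed by the Chow degree of the suspension, consistent with the general philosophy established throughout the section that the natural $t$-structure on $\synspectra_{E}$ is Chow-graded.
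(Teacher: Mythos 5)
Your argument for the connective half is correct, and it is exactly the route the paper gestures at: translate $a$-connectivity into vanishing of $\nu E_{t,w}X$ in Chow degrees $<a$ via \textbf{Corollary \ref{cor:homological_criterion_of_connectivity}}, compute the effect of $\Sigma^{k,l} = S^{k,l}\otimes -$ on bigraded $\nu E$-homology, and read off the Chow-degree shift. The coconnective half, however, has a gap. You invoke ``a formal dualisation'' to assert that $X$ is $a$-coconnective \emph{if and only if} $\nu E_{t,w}X = 0$ for $t-w>a$, and your deduction needs the ``if'' direction. That direction is not formal, because the $t$-structure of \textbf{Proposition \ref{prop:synthetic_spectra_admit_a_t_structure}} is asymmetric: connectivity is defined via vanishing of sheafified homotopy (so \textbf{Corollary \ref{cor:homological_criterion_of_connectivity}} is essentially a translation of the definition), whereas coconnectivity is the \emph{levelwise} condition that $\Omega^{\infty}X$ be a discrete sheaf of spaces, equivalently that $X(P)$ be a coconnective spectrum for every $P$. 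To pass from ``$\pi_k^{\heartsuit}X = 0$ for $k > a$'' to ``$X$ is $a$-coconnective'' one would need $\tau_{\geq a+1}X = 0$, but the hypothesis only gives that $\tau_{\geq a+1}X$ is $\infty$-connective --- and $\synspectra_{E}$ is not hypercomplete in general (this is the whole point of \textbf{Chapter \ref{section:variants_of_synthetic_spectra}}), so an $\infty$-connective synthetic spectrum need not vanish. Any nonzero $\infty$-connective synthetic spectrum is a direct counterexample to the proposed dual criterion.

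The coconnective half actually has a simpler proof requiring no sheafified homotopy at all: by \textbf{Proposition \ref{prop:explicit_description_of_bigraded_suspensions}}, $(\Sigma^{k,l}X)(P) \simeq \Sigma^{k-l}X(\Sigma^{-l}P)$ for every $P \in \spectrafp$, so if $X(Q)$ is $a$-coconnective as a spectrum for every $Q$ then $(\Sigma^{k,l}X)(P)$ is $(k-l+a)$-coconnective for every $P$, which is exactly the levelwise condition defining $(k-l+a)$-coconnectivity. Note that this levelwise argument is \emph{not} available for the connective half, since connectivity is not a levelwise condition --- so the two halves of the corollary really do require different arguments, mirroring the asymmetry of the $t$-structure.
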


A little care is sometimes needed, as the notion of connectivity of a synthetic spectrum does not coincide with the usual connectivity of spectra under the embedding we have studied. Rather, the synthetic analogue $\nu X$ of an ordinary spectrum $X$ is always connective. In fact, we can compute its homology explicitly, which we do now.

\begin{prop}
\label{prop:homology_of_synthetic_analogues}
Let $X$ be an ordinary spectrum and $\nu X$ its synthetic analogue. Then $\nu E_{*, *} (\nu X)$ vanishes in negative Chow degree and we have $\nu E_{k, l} \nu X \simeq E_{k} X$ otherwise. 
\end{prop}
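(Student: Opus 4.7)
The proof splits naturally along the Chow degree divide, and both halves are accessible using results already established.

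For the vanishing in negative Chow degree, the plan is to invoke the homological criterion of connectivity \textbf{Corollary \ref{cor:homological_criterion_of_connectivity}}. The synthetic analogue $\nu X$ is by construction in the image of the fully faithful embedding $\sigma^{\infty}_{+}: Sh_{\Sigma}(\spectra_{E}^{fp}) \hookrightarrow \synspectra_{E}$ of \textbf{Proposition \ref{prop:spherical_sheaves_canonically_lift_to_sheaves_of_spectra}}, so it is connective with respect to the natural $t$-structure. By \textbf{Corollary \ref{cor:homological_criterion_of_connectivity}}, this is equivalent to $\nu E_{*, *} \nu X$ being concentrated in non-negative Chow degree, which is exactly the first half of the statement.

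For the identification in non-negative Chow degree, the plan is to combine \textbf{Theorem \ref{thm:t_structure_homotopy_groups_coincide_with_synthetic_homology}} with the explicit formula for the $t$-structure homotopy groups given in \textbf{Lemma \ref{lemma:formula_for_homotopy_comodules_in_terms_of_a_filtered_colimit}}. By the former, $\nu E_{k, l} \nu X \simeq (\pi^{\heartsuit}_{k - l} \nu X)_{l}$, and by the latter, this is computed as the filtered colimit $\varinjlim \pi_{k-l}(\nu X)(\Sigma^{l} DE_{\alpha})$, where $E \simeq \varinjlim E_{\alpha}$ is a filtered colimit witnessing the Adams condition. The crucial point is that, because $k - l \geq 0$, these homotopy groups can be read off from the underlying sheaf of spaces $\omega^{\infty} \nu X = y(X)$: for any spectrum $Y$, one has $\pi_{n} Y \simeq \pi_{n} \Omega^{\infty} Y$ for $n \geq 0$, so
\begin{equation*}
\pi_{k-l}(\nu X)(\Sigma^{l} DE_{\alpha}) \simeq \pi_{k-l} \map(\Sigma^{l} DE_{\alpha}, X) \simeq [\Sigma^{k} DE_{\alpha}, X].
\end{equation*}
Dualizing $\Sigma^{k} DE_{\alpha} \simeq D(\Sigma^{-k} E_{\alpha})$ and passing to the colimit rewrites this as $\varinjlim \pi_{k}(E_{\alpha} \wedge X) \simeq \pi_{k}(E \wedge X) \simeq E_{k} X$, which is the stated identification.

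The main obstacle is purely conceptual: $(\nu X)(P)$ is not a priori a connective spectrum, only the underlying sheaf of spaces $\omega^{\infty} \nu X$ coincides with $y(X)$ on the nose (\textbf{Proposition \ref{prop:spherical_sheaves_canonically_lift_to_sheaves_of_spectra}}). This is why the non-negativity of $k - l$ is essential in the computation above — it guarantees that the homotopy group in question is visible through the infinite loop space, thereby reducing the problem to the elementary K\"{u}nneth-type manipulation with the filtered diagram $E_{\alpha}$ that already appeared in \textbf{Lemma \ref{lemma:countable_sum_of_cofree_comodules_is_a_common_envelope}} and \textbf{Lemma \ref{lemma:homotopy_classes_of_maps_into_homotopy_e_modules_define_sheaves}}. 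The two halves of the proposition are thus complementary and together exhaust all bidegrees.
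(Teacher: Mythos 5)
Your proof is correct, and the second half takes a genuinely different route from the paper's. The paper first establishes $\nu E \otimes \nu X \simeq \nu(E \wedge X)$ (the case of \textbf{Lemma \ref{lemma:tensoring_with_a_filtered_colimit_of_finite_projectives_commutes_with_synthetic_analogue_construction}} where the first factor is a filtered colimit of finite projectives, which $E$ is by the Adams condition), and then applies \textbf{Corollary \ref{cor:homotopy_of_synthetic_analogues_in_non_negative_chow_degree}} to the synthetic analogue $\nu(E \wedge X)$ in non-negative Chow degree. You instead unwind $\nu E_{k,l} \nu X$ directly via \textbf{Theorem \ref{thm:t_structure_homotopy_groups_coincide_with_synthetic_homology}} and the filtered colimit formula of \textbf{Lemma \ref{lemma:formula_for_homotopy_comodules_in_terms_of_a_filtered_colimit}}, then use the non-negativity of $k-l$ to read the answer off the underlying sheaf of spaces. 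Both approaches ultimately rest on the same elementary observation --- that for $n \geq 0$ the $n$-th homotopy of a spectrum is visible through $\Omega^{\infty}$, so that the levelwise identity $\Omega^{\infty}\nu X = y(X)$ gives full access to the non-negative Chow range --- but you use it at a different stage. The paper's route has the advantage of isolating the reusable equivalence $\nu E \otimes \nu X \simeq \nu(E \wedge X)$, which is needed elsewhere; yours is a shade more self-contained and avoids invoking the symmetric monoidal structure of $\sigma^{\infty}_{+}$ in this argument. The identification of the vanishing range via connectivity and \textbf{Corollary \ref{cor:homological_criterion_of_connectivity}} matches the paper exactly.
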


\begin{proof}
The vanishing in negative Chow degree is an immediate consequence of \cref{cor:homological_criterion_of_connectivity}; notice that $\nu X$ is connective since it is a lift of a sheaf of spaces. By the second part of \cref{lemma:tensoring_with_a_filtered_colimit_of_finite_projectives_commutes_with_synthetic_analogue_construction}, we have $\nu E \otimes \nu X \simeq \nu (E \otimes X)$ and the result is immediate from \cref{cor:homotopy_of_synthetic_analogues_in_non_negative_chow_degree}. 
\end{proof}

\begin{rem}
Using a more careful argument one shows that if $k \geq 0$, the isomorphism $\pi_{k}^{\heartsuit} \nu X \simeq E_{*}X[-k]$ of graded abelian groups can be promoted to an isomorphism of comodules. We leave the details to the reader. 
\end{rem}

We now use the calculation of \cref{prop:homology_of_synthetic_analogues} to give the promised homological criterion for a fibre sequence of spectra to yield a fibre sequence of synthetic spectra. 

\begin{lemma}
\label{lemma:fibre_sequences_that_are_short_exaft_sequences_on_homology_preserved_by_synthetic_analogue_construction}
Suppose that $F \rightarrow Y \rightarrow X$ is a fibre sequence of spectra. Then $\nu F \rightarrow \nu Y \rightarrow \nu X$ is a fibre sequence of synthetic spectra if and only if $0 \rightarrow E_{*}F \rightarrow E_{*}Y \rightarrow E_{*}X \rightarrow 0$ is a short exact sequence of comodules. 
\end{lemma}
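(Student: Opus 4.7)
The plan is to study the synthetic spectrum $C := \mathrm{fib}(\nu Y \to \nu X)$ together with the canonical map $\phi \colon \nu F \to C$, and to detect when $\phi$ is an equivalence through the natural $t$-structure on $\synspectra_E$ of \textbf{Proposition \ref{prop:synthetic_spectra_admit_a_t_structure}}. By the long exact sequence of $E$-homology for the spectral fibre sequence $F \to Y \to X$, short exactness of $0 \to E_*F \to E_*Y \to E_*X \to 0$ is equivalent to the single condition that $E_*Y \to E_*X$ is surjective in every degree, so it suffices to characterize when this holds.

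For the direction $(\Rightarrow)$, I would run the long exact sequence in the heart associated to the fibre sequence $\nu F \to \nu Y \to \nu X$. Its segment $\pi_0^\heartsuit \nu Y \to \pi_0^\heartsuit \nu X \to \pi_{-1}^\heartsuit \nu F$ terminates in zero since $\nu F$ is connective. Under the identifications $\pi_0^\heartsuit \nu Y \simeq E_*Y$ and $\pi_0^\heartsuit \nu X \simeq E_*X$ (which follow from \textbf{Theorem \ref{thm:t_structure_homotopy_groups_coincide_with_synthetic_homology}} combined with \textbf{Proposition \ref{prop:homology_of_synthetic_analogues}} in Chow degree zero), this map is the natural $E_*Y \to E_*X$, whose surjectivity therefore follows.

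For the direction $(\Leftarrow)$, assume $E_*Y \to E_*X$ is surjective, and thus that the topological long exact sequence breaks into short exact sequences $0 \to E_n F \to E_n Y \to E_n X \to 0$. I would use \textbf{Theorem \ref{thm:t_structure_homotopy_groups_coincide_with_synthetic_homology}} to rewrite $\pi_k^\heartsuit C$ bidegree-wise as $\nu E_{k+l, l} C$, then run the long exact sequence of synthetic $E$-homology for $C \to \nu Y \to \nu X$, using \textbf{Proposition \ref{prop:homology_of_synthetic_analogues}} to identify $\nu E_{*,*}$ of $\nu Y$ and $\nu X$. In negative Chow degree both synthetic analogues contribute zero, so the segment computing $(\pi_{-1}^\heartsuit C)_l$ reduces to $E_l Y \to E_l X \to (\pi_{-1}^\heartsuit C)_l \to 0$, hence vanishes by hypothesis; the tail of the long exact sequence forces $\pi_k^\heartsuit C = 0$ for all $k < 0$, so $C$ is connective. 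In non-negative Chow degree a five-lemma chase against the topological short exact sequences identifies $\pi_k^\heartsuit C$ with $E_{k+*}F \simeq \pi_k^\heartsuit \nu F$, and naturality forces the isomorphism to be the one induced by $\phi$. Since $\phi$ is a map between connective synthetic spectra inducing isomorphisms on all homotopy comodules, right completeness of the $t$-structure forces $\phi$ to be an equivalence.

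The main obstacle is not the calculation but verifying that every isomorphism appearing in the synthetic long exact sequence is the "obvious" one induced by $\phi$ and by the natural transformations of \textbf{Theorem \ref{thm:t_structure_homotopy_groups_coincide_with_synthetic_homology}} and \textbf{Proposition \ref{prop:homology_of_synthetic_analogues}}. This naturality amounts to the observation that tensoring with $\nu E$ converts the fibre sequence $C \to \nu Y \to \nu X$ into a fibre sequence of spherical sheaves of spectra whose values on the spheres unwind to the topological fibre sequence $E \wedge F \to E \wedge Y \to E \wedge X$ via the identification $\nu E \otimes \nu W \simeq \nu(E \wedge W)$, at which point the comparison of long exact sequences is routine.
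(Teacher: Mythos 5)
Your $(\Rightarrow)$ direction is fine, and your computation showing that $\pi_k^\heartsuit C = 0$ for $k < 0$ under the surjectivity hypothesis is correct and matches the paper's. The gap is in the final step of $(\Leftarrow)$: you claim that because $\phi \colon \nu F \to C$ is a map of connective synthetic spectra inducing isomorphisms on all homotopy comodules, right completeness of the $t$-structure forces $\phi$ to be an equivalence. This is not valid. Right completeness (as in \textbf{Proposition \ref{prop:tstructure_on_spherical_sheaves_of_spectra}}) asserts that the only object which is $n$-coconnective for all $n$ is zero; it says nothing about $\infty$-\emph{connective} objects. What your argument actually produces is an $\infty$-connective map $\phi$, and concluding that such a map is an equivalence would require left completeness (equivalently, hypercompleteness), which the $t$-structure on $\synspectra_E$ does \emph{not} have. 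Indeed, the whole distinction the paper draws between $\synspectra_E$ and $\hpsynspectra_E$ (\textbf{Proposition \ref{prop:synthetic_spectrum_nue_local_iff_hypercomplete}}) is exactly the failure of $\infty$-connective maps to be equivalences. So the five-lemma chase you run in non-negative Chow degree, plus the naturality discussion you flag as "the main obstacle," do not by themselves close the argument.

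The missing ingredient, which also makes most of that work unnecessary, is the observation the paper leads with: $\nu = \sigmainfty \circ y$, the Yoneda embedding $y$ preserves all limits, and $\sigmainfty$ is an equivalence from $Sh_\Sigma(\spectra_E^{fp})$ onto the \emph{connective} part of $\synspectra_E$ (\textbf{Proposition \ref{prop:spherical_sheaves_canonically_lift_to_sheaves_of_spectra}}). Since the inclusion of connective objects is a left adjoint to $(-)_{\geq 0}$, the fibre of $\nu Y \to \nu X$ computed inside the connective part is the connective cover $C_{\geq 0}$ of the fibre $C$ computed in all of $\synspectra_E$. Hence $\nu F \simeq C_{\geq 0}$ on the nose, and your map $\phi$ is identified with the truncation map $C_{\geq 0} \to C$. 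The lemma then reduces at once to the statement that $C$ is connective, which is precisely the content of your long-exact-sequence argument combined with the homological criterion of \textbf{Corollary \ref{cor:homological_criterion_of_connectivity}}. With this in hand, the identification of $\pi_k^\heartsuit C$ with $\pi_k^\heartsuit \nu F$ in non-negative degree and the naturality check become superfluous.
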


\begin{proof}
Recall that $\nu X \simeq \sigmainfty y(X)$ for any spectrum $X$, since the Yoneda embedding $y$ preserves all limits, $y(F) \rightarrow y(Y) \rightarrow y(X)$ is a fibre sequence of sheaves of spaces. Because $\sigmainfty \colon Sh_{\Sigma}(\spectra_{E}^{fp}) \rightarrow \synspectra_{E}$ restricts to an equivalence with the connective part of synthetic spectra by \cref{prop:spherical_sheaves_canonically_lift_to_sheaves_of_spectra}, we deduce that $\nu F \rightarrow \nu Y \rightarrow \nu X$ is a fibre sequence in the $\infty$-category of connective synthetic spectra. 

It follows that $\nu F \rightarrow \nu Y \rightarrow \nu X$ is a fibre sequence in synthetic spectra if and only if the fibre $G$ of $\nu Y \rightarrow \nu X$ in synthetic spectra is connective. By \cref{cor:homological_criterion_of_connectivity}, this is the same as $\nu E_{*, *} G$ vanishing in negative Chow degree, which by the long exact sequence of homology and the calculation of \cref{prop:homology_of_synthetic_analogues} happens precisely when $E_{*}Y \rightarrow E_{*}X$ is surjective.
\end{proof}

As an application of \cref{lemma:fibre_sequences_that_are_short_exaft_sequences_on_homology_preserved_by_synthetic_analogue_construction}, we prove that in some cases the tensor product of synthetic spectra coincides with the usual tensor product of spectra. 

\begin{lemma}
\label{lemma:tensoring_with_a_filtered_colimit_of_finite_projectives_commutes_with_synthetic_analogue_construction}
Let $X$ be a spectrum with $E_{*}X$ flat as an $E_{*}$-module. Then, the natural map $\nu X \otimes \nu Y \rightarrow \nu (X \otimes Y)$ is an $\nu E_{*, *}$-isomorphism. If $X$ is a filtered colimit of finite projectives, then this map is an equivalence. 
\end{lemma}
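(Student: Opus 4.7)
The plan is to first prove the second assertion in full and then use it to derive the first. The natural map comes from the lax symmetric monoidal structure on $\nu$ given in \textbf{Lemma \ref{lemma:synthetic_analogue_construction_preserves_filtered_colimits_and_is_lax_symmetric_monoidal}}.

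I begin with the case $X = P \in \spectra_E^{fp}$ finite projective and $Y$ arbitrary: I claim $\nu P \otimes \nu Y \simeq \nu(P \wedge Y)$. Since $\nu = \sigma^{\infty} \circ y$ and $\sigma^{\infty}$ is symmetric monoidal by \textbf{Proposition \ref{prop:symmetric_monoidal_structure_on_spherical_sheaves_of_spectra}}, it suffices to verify the corresponding equivalence $y(P) \otimes y(Y) \simeq y(P \wedge Y)$ for the Day convolution on spherical sheaves of spaces. By \textbf{Lemma \ref{lemma:tensoring_with_representable_same_as_precomposing_with_dual}}, tensoring with the representable $y(P)$ is naturally equivalent to precomposition along $-\wedge DP$, so evaluation at $Q \in \spectra_E^{fp}$ yields $(y(P) \otimes y(Y))(Q) \simeq \map(Q \wedge DP, Y) \simeq \map(Q, P \wedge Y) \simeq y(P \wedge Y)(Q)$. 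The second assertion then follows by writing $X \simeq \varinjlim X_{\alpha}$ with $X_{\alpha} \in \spectra_E^{fp}$: since $\nu$ preserves filtered colimits and the tensor product is cocontinuous, both $\nu X \otimes \nu Y$ and $\nu(X \wedge Y)$ are identified with $\varinjlim \nu(X_{\alpha} \wedge Y)$ compatibly with the natural map.

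For the first assertion, a map is a $\nu E_{*,*}$-isomorphism precisely when it becomes an equivalence after tensoring with $\nu E$, so it suffices to show $\nu E \otimes \nu X \otimes \nu Y \to \nu E \otimes \nu(X \wedge Y)$ is an equivalence. Since $E$ is a filtered colimit of finite projective spectra by the Adams condition, the second assertion identifies the right-hand side with $\nu(E \wedge X \wedge Y)$. The strategy is to exhibit $E \wedge X$ itself as a filtered colimit of finite projective spectra when $E_*X$ is flat, after which a further application of the second assertion identifies the left-hand side also with $\nu(E \wedge X \wedge Y)$. The construction proceeds via $E$-localization: the flatness hypothesis implies that $X_E$ can be presented as a filtered colimit of finite projective spectra (a consequence of Lazard's theorem together with realizability of finitely generated projective $E_*E$-comodules by finite projective spectra in the Adams-type setting), and then $E \wedge X \simeq E \wedge X_E \simeq \varinjlim_{\alpha, \beta} E_{\alpha} \wedge P_{\beta}$, a filtered colimit of finite projectives since the smash product of two finite projectives is again finite projective.

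The principal technical obstacle lies in the coherent lift of Lazard's algebraic decomposition to spectra: although $E_*X$ decomposes as a filtered colimit of finitely generated free $E_*$-modules, the module maps between these free stages must be realized by spectrum maps in a way compatible with the assembly into a filtered diagram whose colimit computes $E \wedge X$. The resolution exploits the Adams-type structure in a manner analogous to the envelope arguments of \textbf{Section \ref{section:foundations_of_synthetic_spectra}} (cf. \textbf{Lemma \ref{lemma:countable_sum_of_cofree_comodules_is_a_common_envelope}}), realizing free comodules on the spectrum level through finite wedges of shifts of the Adams-type approximations $E_{\alpha}$ and using compatibility of the resulting cofree structure on $E_*E \otimes_{E_*} E_*X$ to glue the realizations coherently.
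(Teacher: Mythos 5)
Your proof of the second assertion is correct and essentially identical to the paper's. The proof of the first assertion, however, has a genuine gap. You reduce to showing that tensoring with $\nu E$ makes the comparison map an equivalence, and then try to deduce this from the second assertion by presenting $X_E$ as a filtered colimit of finite $E$-projective spectra. This rests on the assertion that, in the Adams-type setting, every finitely generated projective $E_*E$-comodule is realized by a finite $E$-projective spectrum, which is false: the functor $E_*: \spectrafp \to \ComodE^{fp}$ is not essentially surjective. Already for $E = H\fieldp$, where $\spectrafp$ consists of all finite spectra and $\ComodE^{fp}$ of all finite-dimensional comodules over the dual Steenrod algebra, non-realizable comodules exist classically. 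What the Adams-type hypothesis and the covering lifting property do supply is that every dualizable comodule admits a \emph{surjection} from some $E_*P$, which is strictly weaker. Moreover, Lazard's theorem produces a filtered diagram of finitely generated free $E_*$-\emph{modules}, not comodules, and even granting object-level realizability you would still need to coherently lift the entire filtered diagram to a diagram in $\spectrafp$ --- a problem you acknowledge as the ``principal technical obstacle'' but only gesture at resolving.

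The paper's argument avoids realizability altogether. Writing $c(Y)$ for the cofibre of $\nu X \otimes \nu Y \to \nu(X \wedge Y)$, it uses flatness of $E_*X$ and \textbf{Lemma \ref{lemma:fibre_sequences_that_are_short_exaft_sequences_on_homology_preserved_by_synthetic_analogue_construction}} to see that $c$ sends cofibre sequences of spectra which are short exact on $E$-homology to cofibre sequences of synthetic spectra, and then shows that $c(Y)$ is $\infty$-connective by induction on $k$: a theorem of Adams supplies an $E_*$-surjection $W \to Y$ with $W$ a (possibly infinite) sum of finite projectives and fibre $K$; the second assertion gives $c(W) \simeq 0$, hence $c(Y) \simeq \Sigma c(K)$, and $k$-connectivity of $c(K)$ upgrades to $(k+1)$-connectivity of $c(Y)$. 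This requires only a \emph{surjection} onto $Y$ from a sum of finite projectives, which holds unconditionally, rather than a presentation of $X_E$ or $E \wedge X$ as a filtered colimit of finite projectives, which need not exist.
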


\begin{proof}
We first prove the second part. Since $\nu$ preserves filtered colimits by \cref{lemma:synthetic_analogue_construction_preserves_filtered_colimits_and_is_lax_symmetric_monoidal}, we can assume that $X$ is equivalent to a finite projective $P$. Because $\nu P \simeq \sigmainfty y(P)$, $\nu Y \simeq \sigmainfty y(Y)$ and $\sigmainfty \colon Sh_{\Sigma}(\spectra_{E}^{fp}) \rightarrow \synspectra_{E}$ is symmetric monoidal, it is enough to verify that the natural morphism $y(X) \otimes y(P) \rightarrow y(X \otimes P)$ is an equivalence. However, using the description of tensoring with a representable of \cref{lemma:tensoring_with_representable_same_as_precomposing_with_dual}, we have 

\begin{center}
$(y(X) \otimes y(P))(Q) \simeq y(X)(DP \otimes Q) \simeq \map(DP \otimes Q, X) \simeq \map(Q, X \otimes P) \simeq y(X \otimes P)(Q)$
\end{center}
for any finite projective $Q \in \spectra_{E}^{fp}$. This ends the proof in this case.

We move on to the first part, so assume that $E_{*}X$ is flat. If $Y$ is a spectrum, by $c(Y)$ let us denote the cofibre of $\nu X \otimes \nu Y \rightarrow \nu (X \otimes Y)$. Since $E_{*}(X \otimes Y) \simeq E_{*} X \otimes E_{*}Y$, we deduce from \cref{lemma:fibre_sequences_that_are_short_exaft_sequences_on_homology_preserved_by_synthetic_analogue_construction} that $c$ takes cofibre sequence of spectra $Y_{1} \rightarrow Y_{2} \rightarrow Y_{3}$ which are short exact on $E$-homology to cofibre sequences of synthetic spectra. We claim that $c(Y)$ is $\infty$-connective for any $Y$, this will finish the proof. 

We show that it is $k$-connective by induction. The case $k = 0$ is clear, as $c(Y)$ is a cofibre of connective synthetic spectra and thus it is connective itself. Now assume that $k > 0$. By \cite{adams1995stable}[13.8], there exists an $E_{*}$-surjective map $W \rightarrow Y$ where $W$ is a possibly infinite sum of finite projectives, in particular a filtered colimit of finite projectives. Let $K$ denote the fibre of this map, so that we have a cofibre sequence $c(K) \rightarrow c(W) \rightarrow c(Y)$. Since $c(W) = 0$ by the second part of the statement of the lemma, we deduce that $c(Y) \simeq \Sigma(c(K))$. As $c(K)$ is $(k-1)$-connective by the inductive assumption, we deduce that $c(Y)$ is $k$-connective, ending the argument. 
\end{proof}

\begin{rem}
In the context of \cref{lemma:tensoring_with_a_filtered_colimit_of_finite_projectives_commutes_with_synthetic_analogue_construction}, we do not know whether $\nu X \otimes \nu Y \rightarrow \nu (X \otimes Y)$ is an honest equivalence for any $Y$ when $E_{*}X$ is flat, rather than just an $\nu E_{*,*}$-isomorphism. We consider this to plausible, but we were unable to show this. 
\end{rem}

\subsection{Colimit-to-limit comparison map} 

In this section we construct for any synthetic spectrum $X$ a map $\tau \colon \Sigma^{0, -1} X \rightarrow X$ which measures the degree to which $X$, as a sheaf, takes suspensions to loops. This is not apparent at first sight, but we will later see that this morphism controls whether $X$ exhibits topological or algebraic behaviour. 

As the map $\tau$ will be defined on a certain suspension of a synthetic spectrum, we begin with an explicit description of how the bigraded suspensions look like. 

\begin{prop}
\label{prop:explicit_description_of_bigraded_suspensions}
If $X$ is a synthetic spectrum, then the bigraded suspension $\Sigma^{k, l} X$ is the spherical sheaf defined by the formula $(\Sigma^{k, l} X)(P) \simeq \Sigma^{k-l} X(\Sigma^{-l} P)$, where $P \in \spectrafp$.
\end{prop}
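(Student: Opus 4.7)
The plan is to decompose the bigraded suspension into its two constituent pieces and handle them separately. By definition of the bigraded spheres and the monoidal structure,
\begin{equation*}
\Sigma^{k,l} X \;=\; S^{k,l} \otimes X \;=\; \Sigma^{k-l}(\nu S^l) \otimes X \;\simeq\; \Sigma^{k-l}(\nu S^l \otimes X),
\end{equation*}
so it suffices to show two things: first, that the spectrum-level suspension $\Sigma^{k-l}$ on a spherical sheaf of spectra is computed levelwise, and second, that $(\nu S^l \otimes X)(P) \simeq X(\Sigma^{-l}P)$ for every $P \in \spectrafp$.

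The first point is routine: the suspension in any stable $\infty$-category is a colimit (equivalently, the $\Omega$-inverse) and the functor $\Omega^\infty$ on presheaves of spectra is computed levelwise, so $(\Sigma Y)(P) \simeq \Sigma Y(P)$ for any $Y \in \synspectra_E$; iterating (or using that the shifts $[n]$ on $\spectra$ are equivalences) handles arbitrary integer suspensions. For the second point I would invoke \textbf{Lemma \ref{lemma:tensoring_with_representable_same_as_precomposing_with_dual}}, which identifies the Day convolution functor $y(c) \otimes -$ on $P(\spectrafp)$ with precomposition along $- \otimes c^{*}$, for any $c$ in the excellent $\infty$-site $\spectrafp$. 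Taking $c = S^l$, whose Spanier--Whitehead dual in $\spectrafp$ is $S^{-l}$, this gives $(y(S^l) \otimes Y)(P) \simeq Y(P \wedge S^{-l}) = Y(\Sigma^{-l}P)$ on presheaves of spaces.

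To transport this identification to sheaves of spectra, I would argue that both endofunctors of $\synspectra_E$ in question, namely $\nu S^l \otimes -$ and the precomposition $X \mapsto X(\Sigma^{-l}(-))$, are cocontinuous (the latter because $\Sigma^{-l}: \spectrafp \rightarrow \spectrafp$ is a morphism of additive $\infty$-sites, and colimits of spherical sheaves on such sites are computed levelwise by \textbf{Corollary \ref{cor:filtered_colimits_in_spherical_sheaves_on_an_additive_site_computed_levelwise}}). They agree on the generators $\nu P$ for $P \in \spectrafp$, since both reduce to the calculation on representable sheaves of spaces by \textbf{Lemma \ref{lemma:tensoring_with_representable_same_as_precomposing_with_dual}} followed by $\sigmainfty$, and $\sigmainfty y(P \wedge S^{-l}) \simeq \nu(\Sigma^{-l}P)$. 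Since the suspensions $\Sigma^{k,0}\nu P$ generate $\synspectra_E$ under colimits by \textbf{Remark \ref{rem:synthetic_spectra_compactly_generated_by_suspensions_of_synthetic_analogues_of_finite_projectives}}, the two functors must be equivalent, yielding the desired formula after combining with the levelwise computation of $\Sigma^{k-l}$.

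The main obstacle is not conceptual but bookkeeping: one has to be careful that the Day convolution identification, proven at the level of presheaves of spaces, really does descend through sheafification and stabilization to the identification of tensoring with $\nu S^l$ in $\synspectra_E$. The cleanest route is the cocontinuity-plus-agreement-on-generators argument above, which avoids any direct manipulation of sheafification.
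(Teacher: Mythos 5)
Your proposal is correct and follows the same route as the paper: decompose $\Sigma^{k,l} \simeq \Sigma^{k-l}\Sigma^{l,l}$ and invoke \textbf{Lemma \ref{lemma:tensoring_with_representable_same_as_precomposing_with_dual}} to identify tensoring with $\nu S^l$ as precomposition along $- \wedge S^{-l}$. The paper declares the descent from presheaves of spaces to spherical sheaves of spectra immediate, where you supply the cocontinuity-plus-generators argument; your only slip is that the common value of the two functors on $\nu P$ is $\nu(\Sigma^{l} P)$, not $\nu(\Sigma^{-l} P)$ (tensoring with $\nu S^l$ raises rather than lowers), but this is cosmetic and does not affect the conclusion.
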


\begin{proof}
Notice that we have

\begin{center}
$\Sigma^{k, l} X \simeq S^{k, l} \otimes X \simeq (\Sigma^{k-l} S^{l, l}) \otimes X \simeq \Sigma^{k-l} (S^{l, l} \otimes X) \simeq \Sigma^{k-l} (\Sigma^{l, l} X)$,
\end{center}
so that it is enough to show that the formula holds when $k = l$. However, to show that $(\Sigma^{l, l} X)(P) \simeq X(\Sigma^{-l} P)$ naturally in $X, P$ is the same as verifying that $\Sigma^{l, l}$ and precomposition along $\Sigma^{-l} \colon \spectra_{E}^{fp} \rightarrow \spectra_{E}^{fp}$ define equivalent functors on synthetic spectra. This is immediate from \cref{lemma:tensoring_with_representable_same_as_precomposing_with_dual}, since $\Sigma^{l, l}$ is tensoring with $\nu S^{l}$. 
\end{proof}

Note that the above description conforms to the intuition that the bigraded nature of synthetic spectra is explained by the existence of an invertible suspension functor on both the source and the target of the sheaves we study. As mentioned before, the two induced functors do not coincide, as spherical sheaves need not take suspensions to loops. Rather, there is always a comparison map. 

\begin{defin}
We denote by $\tau \colon \nu S^{-1} \rightarrow \Omega (\nu S^{0})$ the canonical limit comparison map induced by the identification $\Omega S^{0} \simeq S^{-1}$. 
\end{defin}
Notice that by definition, $S^{-1, -1} \simeq \nu S^{-1}$, $S^{-1, 0} \simeq \Omega \nu S^{0}$, so that $\tau$ can be considered as an element of $\pi_{0, -1} S^{0,0}$, the $(0, -1)$-th synthetic stable stem. As is traditional, we will usually not distinguish notationally between $\tau$ and its suspensions. The following shows that $\tau$ is in a strong sense a universal map that measures the degree to which a given synthetic spectrum preserves suspensions.

\begin{prop}
\label{prop:transfer_map_can_be_identified_with_tau}
If $X$ is a synthetic spectrum, the map $\tau \otimes X \colon \Sigma^{-1, -1} X \rightarrow \Sigma^{-1, 0}X$ can be identified with the canonical colimit-to-limit comparison map $X(\Sigma P) \rightarrow \Omega X(P)$, where $P$ runs through finite projectives. 
\end{prop}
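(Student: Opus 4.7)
The plan is to construct the colimit-to-limit map intrinsically as a natural transformation between two cocontinuous endofunctors of $\synspectra_{E}$, then identify it with $\tau \otimes (-)$ by checking agreement at $X = S^{0,0}$.

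By \textbf{Proposition \ref{prop:explicit_description_of_bigraded_suspensions}}, the synthetic spectra $\Sigma^{-1,-1}X$ and $\Sigma^{-1,0}X$ are, as sheaves on $\spectrafp$, the assignments $P \mapsto X(\Sigma P)$ and $P \mapsto \Omega X(P)$. The colimit-to-limit map $\mu(X) \colon \Sigma^{-1,-1}X \to \Sigma^{-1,0}X$ can be constructed intrinsically from the pushout square $\{0 \leftarrow P \to 0\}$ in $\spectrafp$, whose pushout is $\Sigma P$: applying the spherical sheaf $X$ and using $X(0) \simeq 0$ yields a canonical square in $\spectra$, and the universal property of the pullback $\Omega X(P) \simeq 0 \times_{X(P)} 0$ produces a natural comparison $X(\Sigma P) \to \Omega X(P)$. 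Both $\tau \otimes X$ and $\mu(X)$ are thus natural transformations of sheaves between the same source and target, and I must verify they coincide.

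The next step is to reduce to the case $X = S^{0,0}$ using tensor-compatibility. The map $\tau \otimes X$ is tautologically the tensor product of $\tau$ with $X$, so $\tau \otimes X = (\tau \otimes S^{0,0}) \otimes X$. For $\mu(X)$, the pushout square at $P$ in $\spectrafp$ is obtained by smashing the pushout square at $S^0$ with $P$, and tensoring with a finite projective in $\synspectra_{E}$ corresponds to evaluation on the dual by \textbf{Lemma \ref{lemma:tensoring_with_representable_same_as_precomposing_with_dual}}. Tracing the construction through the tensor structure gives $\mu(X) = \mu(S^{0,0}) \otimes X$ as maps of synthetic spectra. So it remains to show $\mu(S^{0,0}) = \tau$.

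Both $\mu(S^{0,0})$ and $\tau$ are maps $\nu S^{-1} \to \Omega \nu S^{0}$ in $\synspectra_{E}$. Since $\nu S^{-1}$ is connective, \textbf{Proposition \ref{prop:spherical_sheaves_canonically_lift_to_sheaves_of_spectra}} shows that such a map is uniquely determined by its image under $\omegainfty$ as a map of sheaves of spaces. For $\mu(S^{0,0})$, this underlying map is, by the pushout-to-pullback construction applied to the spherical sheaf $\omegainfty \nu S^{0} = y(S^{0})$, precisely the canonical identification $y(S^{-1}) \simeq \Omega y(S^{0}) = \omegainfty \Omega \nu S^{0}$ arising from the fact that $y$ preserves limits as a right adjoint to sheafification. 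For $\tau$, the same identification $\omegainfty \Omega \nu S^{0} \simeq y(\Omega S^{0}) \simeq y(S^{-1})$ is exactly the one coming from the definition of $\tau$ as the canonical limit comparison. Hence $\omegainfty \mu(S^{0,0}) = \omegainfty \tau$, and so $\mu(S^{0,0}) = \tau$.

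The main obstacle is cleanly establishing the tensor-compatibility $\mu(X) = \mu(S^{0,0}) \otimes X$. This requires care because the pointwise construction of $\mu(X)$ uses the pushout square in $\spectrafp$, while $\tau \otimes X$ is defined symbolically via the tensor product; the comparison rests on the fact that the relevant square at $P$ is the smash product of the square at $S^{0}$ with $P$, combined with the description of tensoring with representables as precomposition on the dual.
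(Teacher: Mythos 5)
Your proof is correct and uses essentially the same argument as the paper: the decisive step in both is identifying the levelwise behaviour of tensoring with $\nu S^{-1}$ via \textbf{Lemma \ref{lemma:tensoring_with_representable_same_as_precomposing_with_dual}}, together with the observation that the square at $P$ is the smash of the square at $S^0$ with $P$. Your reorganization into a reduction to $X = S^{0,0}$ followed by a tensor-compatibility check does not actually offload work onto the base case — the tensor-compatibility $\mu(X) = \mu(S^{0,0}) \otimes X$ \emph{is} the content of the proposition and is established by exactly the computation the paper carries out directly, though you do correctly identify all the ingredients and the $\omegainfty$-argument for the base case is a clean way to see $\mu(S^{0,0}) = \tau$.
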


\begin{proof}
This statement implicitly uses the explicit description of bigraded suspensions of \cref{prop:explicit_description_of_bigraded_suspensions}, what we claim here is that under this identification the map $\tau \otimes X$ corresponds to the canonical colimit-to-limit comparison. The map $\tau$ is by definition induced by the diagram 

\begin{center}
	\begin{tikzpicture}
		\node (TL) at (0, 1.4) {$ \nu S^{-1} $};
		\node (TR) at (1.4, 1.4) {$ 0 $};
		\node (BL) at (0, 0) {$ 0 $};
		\node (BR) at (1.4, 0) {$ \nu S^{0} $};
		
		\draw [->] (TL) to (TR);
		\draw [->] (TL) to (BL);
		\draw [->] (TR) to (BR);
		\draw [->] (BL) to (BR);
	\end{tikzpicture}
\end{center}
obtained by applying $\nu$ to a pullback square which witnesses the equivalence $S^{-1} \simeq \Omega S^{0}$. It follows that $\tau \otimes X$ is induced from the square 

\begin{center}
	\begin{tikzpicture}
		\node (TL) at (0, 1.4) {$ \nu S^{-1} \otimes X $};
		\node (TR) at (1.4, 1.4) {$ 0 $};
		\node (BL) at (0, 0) {$ 0 $};
		\node (BR) at (1.4, 0) {$ \nu S^{0} \otimes X $};
		
		\draw [->] (TL) to (TR);
		\draw [->] (TL) to (BL);
		\draw [->] (TR) to (BR);
		\draw [->] (BL) to (BR);
	\end{tikzpicture}.
\end{center}
Recall that \cref{prop:explicit_description_of_bigraded_suspensions} followed from \cref{lemma:tensoring_with_representable_same_as_precomposing_with_dual} which described tensoring with a representable in terms of sheaves as precomposition along the tensoring with the dual. It follows that under this identification, over a finite projective $P$ the map $\tau \otimes X$ is induced by applying $X$ to the diagram 

\begin{center}
	\begin{tikzpicture}
		\node (TL) at (0, 1.5) {$ S^{1} \otimes P $};
		\node (TR) at (1.5, 1.5) {$ 0 $};
		\node (BL) at (0, 0) {$ 0 $};
		\node (BR) at (1.5, 0) {$ P $};
		
		\draw [->] (TR) to (TL);
		\draw [->] (BL) to (TL);
		\draw [->] (BR) to (TR);
		\draw [->] (BR) to (BL);
	\end{tikzpicture}
\end{center}
obtained by dualizing the the pullback square witnessing $S^{-1} \simeq \Omega S^{0}$ and tensoring with $P$. Since the above is a pushout square witnessing $S^{1} \otimes P \simeq \Sigma P$, applying $X$ yields exactly the canonical colimit-to-limit comparison map. 
\end{proof}
The following interesting lemma relates the map $\tau$ to the natural $t$-structure in the case of synthetic analogues of ordinary spectra, it will be used later to study modules over the cofibre of $\tau$. 

\begin{lemma}
\label{lemma:tensoring_with_cofibre_of_tau_a_discretization_on_representable_connective_synthetic_spectra}
If $X$ is a spectrum, the cofibre sequence $\Sigma^{0, -1} \nu X \rightarrow \nu X \rightarrow C\tau \otimes \nu X$, where the first map is $\tau \otimes \nu X$, induces an identification $C\tau \otimes \nu X \simeq (\nu X)_{\leq 0}$. In particular, $C\tau \otimes \nu X$ is contained in the heart and $\Sigma^{0, -1} \nu X \simeq (\nu X)_{\geq 1}$.
\end{lemma}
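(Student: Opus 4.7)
The plan is to identify the asserted cofibre sequence levelwise on finite $E$-projective spectra and recognise it as the canonical fibre sequence of the natural $t$-structure of \textbf{Proposition \ref{prop:tstructure_on_spherical_sheaves_of_spectra}}. First, since the presheaf $P \mapsto F(P,X)_{\geq 0}$ of connective spectra (with $F(P,X)$ the mapping spectrum) has underlying presheaf of spaces $y(X)$, which is already a sheaf by \textbf{Proposition \ref{prop:spectra_define_sheaves_on_finite_projective_spectra}}, the uniqueness of the lift in \textbf{Proposition \ref{prop:spherical_sheaves_canonically_lift_to_sheaves_of_spectra}} forces $\nu X(P) \simeq F(P,X)_{\geq 0}$ for every $P \in \spectrafp$. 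Applying \textbf{Proposition \ref{prop:explicit_description_of_bigraded_suspensions}} and tracking homotopy groups then yields
\begin{equation*}
(\Sigma^{0,-1}\nu X)(P) \simeq \Sigma F(\Sigma P, X)_{\geq 0} \simeq \Sigma \bigl(\Omega F(P,X)\bigr)_{\geq 0} \simeq F(P,X)_{\geq 1},
\end{equation*}
where the last identification uses that $\bigl(\Omega F(P,X)\bigr)_{\geq 0}$ has $\pi_n \simeq \pi_{n+1}F(P,X)$ for $n \geq 0$ and vanishes below, hence is equivalent to $F(P,X)_{\geq 1}[-1]$.

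Next, I would identify the pointwise action of $\tau \otimes \nu X$ at $P$. By \textbf{Proposition \ref{prop:transfer_map_can_be_identified_with_tau}}, $(\tau \otimes \nu X)(P)$ is the suspension of the canonical colimit-to-limit comparison $\nu X(\Sigma P) \to \Omega \nu X(P)$, which in the present situation is the natural map $\bigl(\Omega F(P,X)\bigr)_{\geq 0} \to \Omega F(P,X)_{\geq 0}$. The target is $(-1)$-connective with $\pi_n \simeq \pi_{n+1}F(P,X)$ for $n \geq -1$, while the source is precisely its connective cover, so the comparison is the canonical inclusion. Desuspending identifies $(\tau \otimes \nu X)(P)$ with the canonical inclusion $F(P,X)_{\geq 1} \hookrightarrow F(P,X)_{\geq 0}$ of the $1$-connective cover into the connective cover.

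Taking cofibres pointwise then yields $(C\tau \otimes \nu X)(P) \simeq H\pi_0 F(P,X) \simeq H[P,X]$, a discrete spectrum. Hence $C\tau \otimes \nu X$ takes values in coconnective spectra and is $0$-coconnective in $\synspectra_{E}$ by \textbf{Proposition \ref{prop:tstructure_on_spherical_sheaves_of_spectra}}, while $\Sigma^{0,-1}\nu X$ takes values in $1$-connective spectra and therefore has vanishing sheafified homotopy groups below degree $1$. The uniqueness of the canonical $t$-structure cofibre sequence $(\nu X)_{\geq 1} \to \nu X \to (\nu X)_{\leq 0}$ among cofibre sequences with $1$-connective fibre and $0$-truncated cofibre then gives the identifications $\Sigma^{0,-1}\nu X \simeq (\nu X)_{\geq 1}$ and $C\tau \otimes \nu X \simeq (\nu X)_{\leq 0}$. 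The main obstacle is the sectionwise identification of $\tau \otimes \nu X$ with the connective-cover inclusion: this requires carefully chasing the defining pushout square $\Sigma P \simeq 0 \sqcup_P 0$ through the presheaf $F(-,X)_{\geq 0}$ in tandem with the diagram describing $\tau$ in the proof of \textbf{Proposition \ref{prop:transfer_map_can_be_identified_with_tau}}; once that identification is in hand, everything else is formal.
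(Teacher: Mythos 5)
The gap is in the first step, the claim that $\nu X(P) \simeq F(P,X)_{\geq 0}$. The presheaf $P \mapsto F(P,X)_{\geq 0}$ does have levelwise $\Omega^\infty$ equal to the sheaf $y(X)$, but it is not itself a sheaf of spectra, so the uniqueness clause of Proposition \ref{prop:spherical_sheaves_canonically_lift_to_sheaves_of_spectra} does not apply: that proposition characterises $\nu X = \sigmainfty y(X)$ among connective \emph{sheaves} of spectra, not among presheaves of connective spectra with the right underlying sheaf of spaces. The presheaf $F(-,X)_{\geq 0}$ satisfies descent when limits are computed in $\spectra_{\geq 0}$, but not in $\spectra$; the equivalence $Sh_{\Sigma}^{\spectra}(\ccat)_{\geq 0} \simeq Sh_{\Sigma}^{\spectra_{\geq 0}}(\ccat)$ invoked in that proposition is not the identity on underlying presheaves. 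Concretely, the descent condition of Theorem \ref{thm:recognition_of_spherical_sheaves} applied to $F(-,X)_{\geq 0}$ would require that for every covering $B \to A$ with fibre $F'$, the map $[B,X] \to [F',X]$ is surjective. This fails already for $E = MU$ and $X = S^0$: take $B = \Sigma^\infty\mathbb{CP}^2 \to S^4 = A$ (an $MU_*$-surjection of finite projectives with fibre $F' = S^2$); the class $\eta^2 \in [S^2, S^0]$ does not lift to $[\Sigma^\infty\mathbb{CP}^2, S^0]$ because the obstruction $\eta^3 \in \pi_3 S^0$ is nonzero. Hence $\nu X$ is genuinely the sheafification of $F(-,X)_{\geq 0}$, whose values need not be connective spectra (only the homotopy sheaves of $\nu X$ vanish in negative degrees), and the levelwise identification of $\tau \otimes \nu X$ with the connective-cover inclusion $F(P,X)_{\geq 1} \to F(P,X)_{\geq 0}$ does not hold as stated.

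The argument is easily patched, and once patched is essentially the paper's own proof. Carry out the cofibre computation at the level of presheaves of spectra and sheafify at the end: the desuspension of $\tau \otimes \nu X$ is $\nu\Omega X \to \Omega\nu X$, which is the sheafification of the presheaf map $F(P,\Omega X)_{\geq 0} \to \Omega(F(P,X)_{\geq 0})$. The levelwise cofibre of this presheaf map is, exactly as you compute, $\Sigma^{-1}H[P,X]$, a $(-1)$-coconnective presheaf. Since sheafification preserves levelwise coconnectivity (the plus construction involves limits and filtered colimits, both of which preserve coconnectivity of spectra), the cofibre in sheaves is still $(-1)$-coconnective, so $C\tau \otimes \nu X$ is $0$-coconnective. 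Combined with your observations that $\nu X$ is connective and that $\Sigma^{0,-1}\nu X \simeq \Sigma\,\nu(\Sigma^{-1}X)$ is $1$-connective, the uniqueness of the $t$-structure decomposition then gives the stated identifications. In short, your formal outline is right; the missing ingredient is to work with presheaves and sheafify at the end, rather than asserting that the sheaf $\nu X$ agrees with the presheaf $F(-,X)_{\geq 0}$ levelwise.
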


\begin{proof}
Clearly, $\nu X$ is $0$-connective and $\Sigma^{0, -1} \nu X \simeq \Sigma \nu (\Sigma^{-1} X)$ is $1$-connective, so it is enough to show that $C \tau \otimes \nu X$ is $0$-coconnective. The desuspension of $\Sigma^{0, -1} \nu X \rightarrow \nu X$ can be identified by \cref{prop:transfer_map_can_be_identified_with_tau} with the limit comparison map $\nu \Omega X \rightarrow \Omega \nu X$, it is enough to show that the cofibre $C$ of this map is $-1$-coconnective. 

Since sheafification is exact (when considered as a functor valued in sheaves), $C$ can be computed as the sheafification of the levelwise cofibre of the canonical map of presheaves of spectra of the form

\begin{center}
$F(P, \Omega X) _{\geq 0} \rightarrow \Omega (F(P, X)_{\geq 0})$,
\end{center}
where $P \in \spectra_{E}^{fp}$. This levelwise cofibre is clearly valued in $(-1)$-coconnective spectra (in fact, it is given by the presheaf $\Sigma^{-1} H[P, X]$ of Eilenberg-MacLane spectra) and we deduce that sheafification is also $(-1)$-coconnective. This ends the argument.
\end{proof}

\begin{cor}
\label{cor:ctau_is_an_algebra}
The cofibre $C\tau := \mathrm{cofib}(S^{0, -1} \rightarrow S^{0, 0})$ has a unique structure of a commutative algebra in synthetic spectra compatible with the canonical map $S^{0, 0} \rightarrow C\tau$. 
\end{cor}

\begin{proof}
By \cref{lemma:tensoring_with_cofibre_of_tau_a_discretization_on_representable_connective_synthetic_spectra}, the canonical map induces an identification $C\tau \simeq \tau_{\leq 0} (S^{0, 0})$ and the result follows as $S^{0, 0}$ is connective so that its $t$-structure truncations are canonically commutative algebras. 
\end{proof}

\begin{rem}
\label{rem:postnikov_filtration_of_synthetic_analogues_coincides_with_filtration_by_powers_of_ctau}
Let us describe one implication of \cref{lemma:tensoring_with_cofibre_of_tau_a_discretization_on_representable_connective_synthetic_spectra}. Observe that using the canonical map $S^{0, 0} \rightarrow C\tau$ we can build for any synthetic spectrum $Y_{0}$ a tower of "cofibres of powers of $\tau$" of the form 

\begin{center}
	\begin{tikzpicture}
		\node (TLL) at (0, 0) {$ Y_{0} $};
		\node (BLL) at (0, -1) {$ C \tau \otimes Y_{0} $};
		\node (TL) at (-2, 0) {$ Y_{1} $};
		\node (BL) at (-2, -1) {$ C \tau \otimes Y_{1} $};
		\node (T) at (-4, 0) {$ \ldots $};
		
		\draw [->] (TLL) to (BLL);
		\draw [->] (TL) to (BL);
		\draw [->] (TL) to (TLL);
		\draw [->] (T) to (TL);
	\end{tikzpicture},
\end{center}
with the property that each $Y_{n+1} \rightarrow Y_{n} \rightarrow C\tau \otimes Y_{n}$ is a fibre sequence. Then, \cref{lemma:tensoring_with_cofibre_of_tau_a_discretization_on_representable_connective_synthetic_spectra} implies that if $X$ is a spectrum and $Y_{0} = \nu X$ its synthetic analogue, the upper row is exactly the Postnikov tower of $\nu X$. In other words, in this case the Postnikov filtration and the filtration ``by powers of $\tau$'' coincide. 
\end{rem}

\subsection{$\tau$-invertible synthetic spectra}
\label{subsection:tau_invertible_synthetic_spectra}

Our goal in this section is to give a description of $\tau$-invertible spectra; that is, those on which $\tau$ acts invertibly.  The main result will be to show that the full subcategory $\synspectra_{E}(\tau^{-1})$ of $\synspectra_{E}$ spanned by $\tau$-invertible synthetic spectra can be canonically identified with the $\infty$-category $\spectra$ of (classical) spectra, the equivalence established by a spectral version of the Yoneda embedding. 

One application of this equivalence will be that for any synthetic spectrum $X$, the $\tau$-inversion $\tau^{-1} X$ can be considered as a spectrum, this is what we call the \emph{underlying spectrum} of $X$. Under the equivalence between synthetic spectra and motivic spectra which we will establish in \S\ref{section_comparison_with_the_cellular_motivic_category}, the $\tau$-inversion plays the role of Betti realization of a complex motivic spectrum.

\begin{defin}
We say a synthetic spectrum $X$ is \emph{$\tau$-invertible} if the map $\tau \colon \Sigma^{0, -1} X \rightarrow X$ is an equivalence. We denote the $\infty$-category of $\tau$-invertible synthetic spectra by $\synspectra_{E}(\tau^{-1})$. 
\end{defin}
Since by definition, $\synspectra_{E}(\tau^{-1})$ is the $\infty$-category of objects on which an endomorphism of the unit acts invertibly, we can draw a lot of conclusions on purely formal grounds. 

\begin{prop}
\label{prop:tau_inversion_functor_exists}
The $\infty$-category of $\tau$-invertible synthetic spectra is a localization of $\synspectra_{E}$; that is, the inclusion $\synspectra_{E}(\tau^{-1}) \hookrightarrow \synspectra_{E}$ admits a left adjoint $\tau^{-1}(-) \colon \synspectra_{E} \rightarrow \synspectra_{E}(\tau^{-1})$, the $\tau$-inversion. Moreover, it is a smashing localization in the sense that colimits of $\tau$-invertible synthetic spectra are $\tau$-invertible. 
\end{prop}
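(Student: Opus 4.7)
The plan is to exploit the fact that $\synspectra_{E}$ is a presentable, stable, symmetric monoidal $\infty$-category whose tensor product preserves colimits in each variable (\textbf{Proposition \ref{prop:synthetic_spectra_is_a_stable_presentable_infty_category}}), and that $\tau$ is, up to invertible bigraded suspension, an endomorphism of the monoidal unit. The general principle is that inverting a self-map of the unit in such an $\infty$-category can always be realized by a smashing localization, obtained by tensoring with the sequential colimit along $\tau$.

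First I would introduce the auxiliary object
\[
S^{0,0}[\tau^{-1}] := \varinjlim \bigl( S^{0,0} \xrightarrow{\tau} S^{0,1} \xrightarrow{\tau} S^{0,2} \xrightarrow{\tau} \ldots \bigr),
\]
where the structure maps are the appropriate bigraded suspensions of $\tau$. Because the bigraded suspension $\Sigma^{0,1}$ is an autoequivalence, it commutes with the colimit and multiplication by $\tau$ on $S^{0,0}[\tau^{-1}]$ factors as the shift map of this filtered system, which is tautologically an equivalence. Hence $S^{0,0}[\tau^{-1}]$ is $\tau$-invertible. Define $\tau^{-1}(-) := S^{0,0}[\tau^{-1}] \otimes -$. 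Since the tensor product preserves colimits in each variable, $\tau^{-1}X$ is $\tau$-invertible for every $X \in \synspectra_{E}$, and the unit $X \to S^{0,0} \otimes X \to S^{0,0}[\tau^{-1}] \otimes X$ is natural in $X$.

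Next I would check the adjunction. If $Y$ is $\tau$-invertible, then $\tau \otimes Y$ is an equivalence, so tensoring $Y$ with the colimit system for $S^{0,0}[\tau^{-1}]$ yields a filtered diagram consisting of equivalences, whose colimit is computed as $Y \simeq S^{0,0} \otimes Y \to S^{0,0}[\tau^{-1}] \otimes Y \simeq \tau^{-1} Y$; this shows that the unit is an equivalence on $\tau$-invertible objects, so the inclusion $\synspectra_{E}(\tau^{-1}) \hookrightarrow \synspectra_{E}$ admits $\tau^{-1}(-)$ as a left adjoint. That the localization is smashing is immediate from the formula $\tau^{-1} X \simeq S^{0,0}[\tau^{-1}] \otimes X$ and the fact that the tensor product is cocontinuous in each variable; equivalently, the condition of being $\tau$-invertible is the condition that $\tau \otimes X$ is an equivalence, which is preserved by any colimit since $\tau \otimes -$ is cocontinuous and equivalences form a saturated class.

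No step poses a serious obstacle here, as the entire argument is formal from presentable stable symmetric monoidal structure; the only point requiring mild care is verifying that the shift map on the defining filtered colimit really represents multiplication by $\tau$, which boils down to a cofinality argument comparing the system $(S^{0,n}, \tau)$ with its one-step shift.
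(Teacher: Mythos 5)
Your proof is correct and is essentially the same as the paper's: both compute $\tau^{-1}X$ as the filtered colimit $\varinjlim \Sigma^{0,n}X$ along $\tau$, which is the same thing as $S^{0,0}[\tau^{-1}] \otimes X$ since the tensor product is cocontinuous in each variable. The only organizational difference is in how the adjunction is verified. The paper computes $\map(\tau^{-1}X, Y) \simeq \varprojlim \map(X, \Sigma^{0,-k}Y) \simeq \map(X, Y)$ directly from the colimit formula, whereas you check that the unit is an equivalence on $\tau$-invertible objects and appeal to the formal theory of Bousfield localizations/idempotent monads; this is fine but leaves implicit the standard idempotence check $S^{0,0}[\tau^{-1}] \otimes S^{0,0}[\tau^{-1}] \simeq S^{0,0}[\tau^{-1}]$ (which, to be fair, follows from your observation applied to $Y = S^{0,0}[\tau^{-1}]$ itself). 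For the smashing part, the paper's argument via $C\tau \otimes X \simeq 0$ being closed under colimits is a bit cleaner than your appeal to saturation, but both work; the sharpest version of your first argument is simply that $\tau^{-1}(-)$ is cocontinuous by the formula, so $\tau^{-1}(\varinjlim X_i) \simeq \varinjlim \tau^{-1}X_i \simeq \varinjlim X_i$ when the $X_i$ are already $\tau$-invertible.
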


\begin{proof}
We construct the left adjoint $\tau^{-1}(-) \colon \synspectra_{E} \rightarrow \synspectra_{E}(\tau^{-1})$ explicitly. If $X \in \synspectra_{E}$, we let 

\begin{center}
$\tau^{-1}X = \varinjlim \ (\ X \rightarrow \Sigma^{0, 1} X \rightarrow \Sigma^{0, 2} X \rightarrow \ldots \ )$,
\end{center}
where the colimit is taken over the poset of the natural numbers and the connecting maps are given by $\tau$. Then, for any $Y \in \synspectra_{E}(\tau^{-1})$ we have

\begin{center}
$\map(\tau^{-1}X, Y) \simeq \map(\varinjlim \Sigma^{0, k} X, Y) \simeq \varprojlim \map(X, \Sigma^{0, -k} Y) \simeq \map(X, Y)$, 
\end{center}
where in the last step we have used that all maps in the diagram $\Sigma^{0, -k-1}Y \rightarrow \Sigma^{0, -k}Y$ are equivalences since $Y$ was assumed to be $\tau$-invertible. Here, we implicitly use that $\tau$ is self-dual (up to a shift), as the diagram defining it is the image under $\nu$ of the pullback diagram in finite spectra
\begin{center}
	\begin{tikzpicture}
		\node (TL) at (0, 1.4) {$ S^{-1} $};
		\node (TR) at (1.4, 1.4) {$ 0 $};
		\node (BL) at (0, 0) {$ 0 $};
		\node (BR) at (1.4, 0) {$ S^{0} $};
		
		\draw [->] (TL) to (TR);
		\draw [->] (TL) to (BL);
		\draw [->] (TR) to (BR);
		\draw [->] (BL) to (BR);
	\end{tikzpicture}
\end{center}
witnessing $S^{-1} \simeq \Omega S^{0}$, whose dual is also a pullback diagram witnessing $S^{0} \simeq \Omega S^{1}$. 

To show that this localization is smashing, we observe that $X$ belongs to $\synspectra_{E}(\tau^{-1})$ if and only if $C \tau \otimes X = 0$, a condition clearly closed under colimits. 
\end{proof}

\begin{cor}
The synthetic spectrum $\tau^{-1} S^{0, 0}$ has a canonical structure of a commutative algebra and there is a canonical equivalence $\synspectra_{E}(\tau^{-1}) \simeq \Mod_{\tau^{-1}S^{0,0}}(\synspectra_{E})$.
\end{cor}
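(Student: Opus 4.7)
The plan is to apply the general theory of smashing localizations of presentably symmetric monoidal $\infty$-categories, as developed in \cite{higher_algebra}[4.8.2]. The previous proposition already did the real work by establishing that $\tau$-inversion is a smashing localization of $\synspectra_{E}$; what remains is essentially the standard observation that any smashing localization is given by tensoring with an idempotent algebra.

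First I would check that for any synthetic spectrum $X$ there is a canonical equivalence $\tau^{-1} X \simeq \tau^{-1} S^{0,0} \otimes X$. This follows from the explicit construction of $\tau^{-1}(-)$ as the colimit along powers of $\tau$: since the tensor product is cocontinuous in each variable by \textbf{Proposition \ref{prop:synthetic_spectra_is_a_stable_presentable_infty_category}}, one has
\begin{equation*}
\tau^{-1} S^{0,0} \otimes X \simeq \varinjlim (\Sigma^{0,k} S^{0,0} \otimes X) \simeq \varinjlim \Sigma^{0,k} X \simeq \tau^{-1} X.
\end{equation*}
In particular, $\tau^{-1}(-)$ is equivalent to the functor $\tau^{-1} S^{0,0} \otimes -$.

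Next I would deduce that the unit map $u: S^{0,0} \to \tau^{-1} S^{0,0}$ exhibits $\tau^{-1} S^{0,0}$ as an idempotent object of $\synspectra_{E}$ in the sense of \cite{higher_algebra}[4.8.2.1], i.e.\ that $u \otimes \tau^{-1} S^{0,0}: \tau^{-1} S^{0,0} \to \tau^{-1} S^{0,0} \otimes \tau^{-1} S^{0,0}$ is an equivalence. This is immediate from the previous step: both sides compute $\tau^{-1}(\tau^{-1} S^{0,0})$, which equals $\tau^{-1} S^{0,0}$ because $\tau^{-1} S^{0,0}$ is already $\tau$-invertible (the localization is idempotent as an endofunctor, by \textbf{Proposition \ref{prop:tau_inversion_functor_exists}}).

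Finally I would invoke \cite{higher_algebra}[4.8.2.9, 4.8.2.10], which states that an idempotent object in a presentably symmetric monoidal $\infty$-category lifts uniquely to a commutative algebra, and that the associated smashing localization identifies the full subcategory of local objects with the $\infty$-category of modules over this algebra. Applied to $\tau^{-1} S^{0,0}$, this gives both the canonical commutative algebra structure and the equivalence $\synspectra_{E}(\tau^{-1}) \simeq \Mod_{\tau^{-1} S^{0,0}}(\synspectra_{E})$. The only substantive verification is the idempotency established above; everything else is a direct citation of Lurie's machinery, so I do not expect any genuine obstacle here.
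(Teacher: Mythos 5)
Your proof is correct and takes essentially the same route as the paper, which simply says the result follows "by standard arguments after observing that $\tau^{-1} X \simeq X \otimes \tau^{-1} S^{0,0}$." You have spelled out exactly what those standard arguments are (idempotency of the unit map, then Lurie's theory of idempotent objects and smashing localizations), so this is a faithful expansion of the paper's terse proof rather than a different approach.
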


\begin{proof}
This is proven by standard arguments after observing that \cref{prop:tau_inversion_functor_exists} implies that $\tau^{-1} X \simeq X \otimes \tau^{-1} S^{0, 0}$ for any synthetic spectrum $X$. 
\end{proof}
Using the explicit formula one proves the following little lemma that shows that $\tau$-invertible synthetic spectra are necessarily unbounded in the natural $t$-structure. 
\begin{lemma}
\label{lemma:tau_inversion_of_a_coconnective_synthetic_spectrum_vanishes}
Let $X$ be a synthetic spectrum which is $k$-coconnective for some $k \in \mathbb{Z}$. Then, $\tau^{-1} X \simeq 0$; that is, the $\tau$-inversion vanishes.
\end{lemma}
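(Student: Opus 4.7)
The plan is to combine the explicit formula for $\tau$-inversion with the behaviour of bigraded suspensions on coconnective objects, and then invoke right completeness of the natural $t$-structure. Concretely, I would begin by recalling from the proof of \textbf{Proposition \ref{prop:tau_inversion_functor_exists}} that $\tau^{-1} X$ is computed as the filtered colimit
\begin{center}
$\tau^{-1} X \simeq \varinjlim \ (X \rightarrow \Sigma^{0, 1} X \rightarrow \Sigma^{0, 2} X \rightarrow \ldots)$,
\end{center}
so the entire question reduces to controlling where the terms $\Sigma^{0, n} X$ sit in the natural $t$-structure as $n$ grows.

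The crucial observation is that bigraded suspension shifts coconnectivity in a predictable way. By the coconnective half of \textbf{Corollary \ref{cor:suspension_raises_connectivity_by_chow_degree}}, if $X$ is $k$-coconnective then $\Sigma^{0, n} X$ is $(k - n)$-coconnective. In particular, for any fixed $m \in \mathbb{Z}$, once $n$ is large enough that $k - n \leq m$, the entire tail of the directed diagram lies in $\synspectra_{E}^{\leq m}$. Since the $t$-structure on $\synspectra_{E}$ is compatible with filtered colimits by \textbf{Proposition \ref{prop:synthetic_spectra_admit_a_t_structure}}, the subcategory $\synspectra_{E}^{\leq m}$ is closed under filtered colimits, and we conclude that $\tau^{-1} X \in \synspectra_{E}^{\leq m}$.

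Since this holds for every $m \in \mathbb{Z}$, the spectrum $\tau^{-1} X$ belongs to the intersection $\bigcap_{m} \synspectra_{E}^{\leq m}$. By right completeness, which was established as part of \textbf{Proposition \ref{prop:synthetic_spectra_admit_a_t_structure}} via the criterion of \cite{higher_algebra}[1.2.1.19], this intersection contains only the zero object, so $\tau^{-1} X \simeq 0$ as claimed.

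The argument is essentially formal once one has the compatibility of the $t$-structure with filtered colimits together with the connectivity behaviour of bigraded suspensions; the only point requiring attention is the sign conventions, namely that the transition maps in the colimit defining $\tau^{-1} X$ are of the form $\Sigma^{0, n} X \rightarrow \Sigma^{0, n+1} X$ and that increasing the weight in $\Sigma^{0, n}$ drives coconnectivity downwards rather than upwards, which is exactly what makes the tail of the diagram disappear into arbitrarily low truncations.
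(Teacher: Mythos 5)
Your proof is correct and follows exactly the same route as the paper's: express $\tau^{-1}X$ as the filtered colimit $\varinjlim \Sigma^{0,n}X$, use \textbf{Corollary \ref{cor:suspension_raises_connectivity_by_chow_degree}} to see that the $n$-th term is $(k-n)$-coconnective, invoke compatibility of the $t$-structure with filtered colimits to place the colimit in $\synspectra_E^{\leq m}$ for every $m$, and conclude by right completeness. Your extra care in passing to the cofinal tail of the diagram before applying closure of $\synspectra_E^{\leq m}$ under filtered colimits is a slight tightening of the paper's phrasing but is the same argument.
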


\begin{proof}
As observed in the proof of \cref{prop:tau_inversion_functor_exists}, we have $\tau^{-1} X \simeq \varinjlim \Sigma^{0, n} X$. Now, by \cref{cor:suspension_raises_connectivity_by_chow_degree}, $\Sigma^{0, n} X$ is $(k-n)$-coconnective and since the natural $t$-structure is compatible with filtered colimits, we deduce that $\tau^{-1} X$ is $(k-n)$-coconnective for all $n \geq 0$. By right completeness, this implies that $\tau^{-1} X$ is necessarily zero. 
\end{proof}
We now give a basic example of a $\tau$-invertible synthetic spectrum, we will later prove in \cref{thm:tau_invertible_synthetic_spectra_are_just_spectra} that it is in fact the only kind. If $X$ is a spectrum, we denote by $Y(X)$ the presheaf of spectra defined by $Y(X)(P) \simeq F(P, X)$, where $P \in \spectra_{E}^{fp}$. This is the \emph{spectral Yoneda embedding}. 

Notice that $Y(X)$ is clearly spherical and a sheaf by \cref{prop:spectra_define_sheaves_on_finite_projective_spectra}, so that it defines a synthetic spectrum. Moreover, it admits a canonical map from $\nu X$, since the latter is the sheafification of the presheaf defined by $F(P, X)_{\geq 0}$. Conveniently, this map can be characterized in two distinct ways.

\begin{prop}
\label{prop:spectral_yoneda_embedding_the_tau_inversion_of_the_synthetic_analogue}
If $X$ is a spectrum, then the canonical map $\nu X \rightarrow Y(X)$ is a $\tau$-inversion; that is, it induces an equivalence $\tau^{-1} \nu X \simeq Y(X)$. In particular, $Y(X)$ is $\tau$-invertible. Moreover, it is a connective cover; that is, it induces an equivalence $\nu X \simeq (Y(X))_{\geq 0}$. 
\end{prop}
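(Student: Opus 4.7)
The plan is to establish the three claims in sequence, leveraging the natural $t$-structure on synthetic spectra together with the smashing nature of $\tau$-inversion.

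First, I would verify that $Y(X)$ is $\tau$-invertible by applying \textbf{Proposition \ref{prop:transfer_map_can_be_identified_with_tau}}, which identifies the map $\tau \otimes Y(X): \Sigma^{-1,-1} Y(X) \to \Sigma^{-1,0} Y(X)$ with the colimit-to-limit comparison map $Y(X)(\Sigma P) \to \Omega Y(X)(P)$ as $P$ runs through $\spectra_{E}^{fp}$. Since $Y(X)(P) = F(P,X)$, this comparison is literally the canonical equivalence $F(\Sigma P, X) \simeq \Omega F(P, X)$, and hence $\tau \otimes Y(X)$ is an equivalence.

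Second, I would argue that the canonical map $\nu X \to Y(X)$ identifies $\nu X$ with the connective cover $(Y(X))_{\geq 0}$ with respect to the natural $t$-structure. Because $\omegainfty: \synspectra_{E} \to Sh_{\Sigma}(\spectra_{E}^{fp})$ is computed levelwise, we have $\omegainfty Y(X)(P) = \Omega^{\infty} F(P, X) \simeq \map(P, X) = y(X)(P)$, so $\omegainfty Y(X) \simeq y(X) \simeq \omegainfty \nu X$. By \textbf{Proposition \ref{prop:spherical_sheaves_canonically_lift_to_sheaves_of_spectra}}, $\sigmainfty$ is fully faithful with essential image exactly the connective part of the $t$-structure, which forces the counit $\sigmainfty \omegainfty (-) \to (-)$ to be the connective cover functor on all of $\synspectra_{E}$. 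Unwinding the adjunction $\sigmainfty \dashv \omegainfty$, this counit applied to $Y(X)$ coincides with the map obtained by sheafifying the levelwise inclusions $F(P, X)_{\geq 0} \to F(P, X)$, namely our given map $\nu X = \sigmainfty y(X) \to Y(X)$. Hence it exhibits $\nu X \simeq (Y(X))_{\geq 0}$.

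Third, to identify $Y(X)$ with $\tau^{-1}\nu X$, I would apply the $\tau$-inversion functor to the $t$-structure fibre sequence $(Y(X))_{\geq 0} \to Y(X) \to (Y(X))_{\leq -1}$. By \textbf{Proposition \ref{prop:tau_inversion_functor_exists}}, $\tau$-inversion is a smashing localization, in particular cocontinuous, and being a cocontinuous functor between stable $\infty$-categories it is exact and preserves the fibre sequence. The middle term is invariant by Step 1, while $\tau^{-1}(Y(X))_{\leq -1}$ vanishes by \textbf{Lemma \ref{lemma:tau_inversion_of_a_coconnective_synthetic_spectrum_vanishes}} since $(Y(X))_{\leq -1}$ is coconnective. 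Therefore $\tau^{-1}(Y(X))_{\geq 0} \simeq Y(X)$, and combined with Step 2 this yields $\tau^{-1} \nu X \simeq Y(X)$, with the equivalence induced by the canonical map $\nu X \to Y(X)$ via the universal property of $\tau$-inversion. The main subtlety is in Step 2, namely tracing through the adjunction to recognize the explicit sheafification map as the counit; everything else is a formal consequence of the infrastructure already assembled.
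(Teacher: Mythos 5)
Your proof is correct, and the overall architecture matches the paper's: establish $\tau$-invertibility of $Y(X)$ via Proposition \ref{prop:transfer_map_can_be_identified_with_tau}, show $\nu X \to Y(X)$ is a connective cover, and deduce the $\tau$-inversion statement by killing the coconnective piece with Lemma \ref{lemma:tau_inversion_of_a_coconnective_synthetic_spectrum_vanishes}. The genuine difference is in the connective-cover step. The paper argues directly with the cofibre: $\nu X$ is connective by construction, and the cofibre of $\nu X \to Y(X)$ is the sheafification of the levelwise presheaf $F(P,X)_{\leq -1}$, which is $(-1)$-coconnective, hence so is the cofibre. You instead argue by abstract nonsense: since Proposition \ref{prop:spherical_sheaves_canonically_lift_to_sheaves_of_spectra} identifies $\sigmainfty$ as a fully faithful embedding onto the connective part, the counit $\sigmainfty\omegainfty(-) \to (-)$ is automatically the connective cover, and $\omegainfty Y(X) \simeq y(X)$ by levelwise inspection. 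This is clean and avoids invoking that sheafification preserves coconnectivity, which the paper's argument uses silently. The small cost is the "unwinding" you flag: one has to check that the counit $\sigmainfty\omegainfty Y(X) \to Y(X)$ really is the same map as the one obtained by sheafifying the levelwise truncation maps $F(P,X)_{\geq 0} \to F(P,X)$, which is what the paper uses to define the canonical map in the first place. This identification is true (it amounts to tracing through the three adjunctions composing $\sigmainfty$), but a reader would want the step spelled out rather than asserted; the paper sidesteps it by working with the explicit cofibre throughout. Both routes are valid, and yours generalizes a bit more cleanly to other $\infty$-sites, at the cost of that one bookkeeping check.
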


\begin{proof}
We first show the connective cover statement. Notice that $\nu X$ is connective, so that it is enough to show that the cofibre $C$ of $\nu X \rightarrow Y(X)$ is $(-1)$-coconnective. This is clear, since the cofibre is given by the sheafification of the presheaf defined by $F(P, X)_{\leq -1}$,  and because the latter is $(-1)$-coconnective, the same must be true for $C$.

Moving to $\tau$-inversion, notice that because $\tau$ can be identified with the canonical colimit comparison map by \cref{prop:transfer_map_can_be_identified_with_tau}, $Y(X)$ is clearly $\tau$-invertible. We deduce that it is enough to show that $\tau^{-1}C = 0$, which is immediate from the first part and \cref{lemma:tau_inversion_of_a_coconnective_synthetic_spectrum_vanishes}.
\end{proof}

The following description of the $\infty$-category of $\tau$-invertible synthetic spectra is the main result of this section, it can be interpreted as saying that any non-topological phenomena occuring in synthetic spectra are necessarily ``$\tau$-torsion''. 

\begin{thm}
\label{thm:tau_invertible_synthetic_spectra_are_just_spectra}
The spectral Yoneda embedding $Y \colon \spectra \rightarrow \synspectra_{E}$ is fully faithful and its essential image is the full subcategory of $\tau$-invertible synthetic spectra. Moreover, the induced equivalences $\spectra \simeq \synspectra_{E}(\tau^{-1}) \simeq \Mod_{\tau^{-1}S^{0,0}}(\synspectra_{E})$ are canonically symmetric monoidal. 
\end{thm}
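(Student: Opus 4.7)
I would establish fully faithfulness by computing mapping spaces via Yoneda on finite projective generators and a density argument, then deduce the essential image via cocontinuity of $Y$ combined with Remark \ref{rem:synthetic_spectra_compactly_generated_by_suspensions_of_synthetic_analogues_of_finite_projectives}, and finally transport the symmetric monoidal structure through the universal property of Day convolution.

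For fully faithfulness, since $Y(X')$ is $\tau$-invertible and $Y(X) \simeq \tau^{-1} \nu X$ by Proposition \ref{prop:spectral_yoneda_embedding_the_tau_inversion_of_the_synthetic_analogue}, the canonical map $\nu X \to Y(X)$ induces an equivalence $\map(Y(X), Y(X')) \simeq \map(\nu X, Y(X'))$. When $X = P \in \spectrafp$, using $\nu P \simeq \sigmainfty y(P)$ together with the $\sigmainfty \dashv \omegainfty$ adjunction and the levelwise identification $\omegainfty Y(X') \simeq y(X')$ (since $\Omega^{\infty} F(P, X') \simeq \map(P, X')$) reduces the computation to ordinary Yoneda in $Sh_{\Sigma}(\spectrafp)$, giving $\map(\nu P, Y(X')) \simeq \map(P, X')$. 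To extend to arbitrary $X$, I would observe that $Y$ is cocontinuous: each $F(P, -)$ is exact and preserves filtered colimits by compactness of $P$ in $\spectra$, filtered colimits in $\synspectra_{E}$ are levelwise by Corollary \ref{cor:filtered_colimits_in_spherical_sheaves_on_an_additive_site_computed_levelwise}, and the levelwise cofiber of a map $Y(f)$ is already a spherical sheaf (being $Y$ of the cofiber by exactness of $F(P,-)$), so coincides with the sheaf cofiber. Consequently both $\map(Y(-), Y(X'))$ and $\map(-, X')$ turn colimits in the first variable into limits; since $\spectrafp$ contains the spheres and hence generates $\spectra$ under colimits, the natural transformation $\map(X, X') \to \map(Y(X), Y(X'))$ is an equivalence throughout.

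For the essential image, one direction is recorded in Proposition \ref{prop:spectral_yoneda_embedding_the_tau_inversion_of_the_synthetic_analogue}. For the converse, since $Y$ is cocontinuous and fully faithful, its essential image in $\synspectra_{E}$ is closed under colimits. This image is contained in $\synspectra_{E}(\tau^{-1})$ and contains $Y(\Sigma^{k} P) \simeq \tau^{-1} \Sigma^{k, 0} \nu P$ for all $k \in \mathbb{Z}$ and $P \in \spectrafp$. By Remark \ref{rem:synthetic_spectra_compactly_generated_by_suspensions_of_synthetic_analogues_of_finite_projectives}, $\synspectra_{E}$ is generated under colimits by $\Sigma^{k, 0} \nu P$; applying the smashing localization $\tau^{-1}$ (Proposition \ref{prop:tau_inversion_functor_exists}) yields that $\synspectra_{E}(\tau^{-1})$ is generated under colimits (computed equivalently in $\synspectra_{E}$ or in $\synspectra_{E}(\tau^{-1})$, by smashing-ness) by the objects $Y(\Sigma^{k} P)$. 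Hence the colimit-closure of the image of $Y$ exhausts $\synspectra_{E}(\tau^{-1})$, and the equivalence $\spectra \simeq \synspectra_{E}(\tau^{-1})$ follows; the further identification with $\Mod_{\tau^{-1} S^{0, 0}}(\synspectra_{E})$ is automatic from the general theory of smashing Bousfield localizations.

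For the symmetric monoidal enhancement, Remark \ref{rem:universal_property_of_spherical_sheaves_of_spectra} provides a canonical cocontinuous symmetric monoidal functor $\Phi \colon \synspectra_{E} \to \spectra$ extending the symmetric monoidal inclusion $\spectrafp \hookrightarrow \spectra$, which sends $\nu P \mapsto P$ and hence inverts $\tau$ since Proposition \ref{prop:transfer_map_can_be_identified_with_tau} shows that $\tau$ arises from the structural equivalence $\Sigma S^{-1} \simeq S^{0}$ in $\spectrafp$ itself. The induced cocontinuous symmetric monoidal functor $\Phi \colon \synspectra_{E}(\tau^{-1}) \to \spectra$ is inverse to $Y$ on generators and hence is the inverse of the equivalence established above, so the equivalence $\spectra \simeq \synspectra_{E}(\tau^{-1}) \simeq \Mod_{\tau^{-1} S^{0, 0}}(\synspectra_{E})$ is canonically symmetric monoidal. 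The main expected obstacle is cocontinuity of $Y$, which hinges on verifying that the levelwise cofiber of $Y(f)$ remains a spherical sheaf so that no nontrivial sheafification intervenes.
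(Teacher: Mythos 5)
Your proposal is correct and follows the paper's overall architecture closely: full faithfulness is established by reducing to the generators via cocontinuity of $Y$ and a Yoneda computation, and the symmetric monoidal enhancement is obtained from the cocontinuous left adjoint extending the inclusion $\spectrafp \hookrightarrow \spectra$, exactly as the paper does.

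The one place where you take a genuinely different route is essential surjectivity. The paper constructs the right adjoint $R$ of the restricted functor $Y\colon \spectra \to \synspectra_{E}(\tau^{-1})$ and argues that $R$ is conservative: if $RX = 0$ then $\Omega^{\infty} X(P) = 0$ for all $P \in \spectrafp$, so $X$ is $(-1)$-coconnective, and a $\tau$-invertible synthetic spectrum that is coconnective must vanish by Lemma \ref{lemma:tau_inversion_of_a_coconnective_synthetic_spectrum_vanishes}. You instead push the known generating set $\{\Sigma^{k,0}\nu P\}$ of $\synspectra_E$ through the smashing localization $\tau^{-1}$ to conclude that $\synspectra_E(\tau^{-1})$ is generated under colimits by $Y(\Sigma^k P)$, then observe that the essential image of a cocontinuous fully faithful functor from a presentable $\infty$-category is closed under colimits. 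The two arguments are dual to one another (conservativity of the right adjoint versus generation by the image), but yours avoids invoking the coconnectivity vanishing lemma and uses instead Remark \ref{rem:synthetic_spectra_compactly_generated_by_suspensions_of_synthetic_analogues_of_finite_projectives}; this is marginally more self-contained. Your cocontinuity argument (via exactness of $F(P,-)$ and the observation that the levelwise cofiber is already the sheaf $Y$ of the cofiber, so no sheafification intervenes) is also more explicit than the paper's terse "continuity plus filtered colimits" route, but both lead to the same conclusion. Everything checks out.
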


\begin{proof}
We first show that $Y$ is fully faithful and identify the essential image. The functor $Y \colon \spectra \rightarrow \synspectra_{E}$ is clearly continuous, in particular, it is an exact functor between stable $\infty$-categories. We claim that it preserves filtered colimits, together with exactness it shows that it is in fact cocontinuous. However, since each $P \in \spectra_{E}^{fp}$ is finite, $Y$ takes filtered colimits to levelwise filtered colimits and the claim follows from the fact that any levelwise colimit diagram of sheaves is necessarily a colimit diagram. 

By \cref{prop:spectral_yoneda_embedding_the_tau_inversion_of_the_synthetic_analogue}, the image of $Y$ is contained in $\tau$-invertible synthetic spectra. Since $\synspectra_{E}(\tau^{-1})$ is a smashing localization by \cref{prop:tau_inversion_functor_exists}, the restriction $Y \colon \spectra \rightarrow \synspectra_{E}(\tau^{-1})$ is also continuous and cocontinuous. We now show that $Y \colon \spectra \rightarrow \synspectra_{E}(\tau^{-1})$ is fully faithful; that is, that for any $A, B \in \spectra$ the induced morphism

\begin{center}
$\map(A, B) \rightarrow \map(Y(A), Y(B))$
\end{center}
is an equivalence. Let us fix $B$ and consider those $A$ for which this holds. By cocontinuity of $Y$, the subcategory of those $A \in \spectra$ for which this holds is closed under colimits. Hence, it is enough to show that it contains all spheres $S^{k}$. However, we have 

\begin{center}
$\map(Y(S^{k}), Y(B)) \simeq \map(\nu S^{k}, Y(B)) \simeq \Omega^{\infty} Y(B) (S^{k}) \simeq \map(S^{k}, B)$,
\end{center}
where first we use that $Y(S^{k})$ is the $\tau$-inversion of $\nu S^{k}$, which was \cref{prop:spectral_yoneda_embedding_the_tau_inversion_of_the_synthetic_analogue}, and then the Yoneda lemma of \cref{lemma:yoneda_lemma_for_synthetic_spectra_and_explicit_formula_for_homotopy_groups}.

We're now only left with verifying that $\synspectra_{E}(\tau^{-1})$ is the essential image of $Y$. Denote by $R \colon \synspectra_{E}(\tau^{-1}) \rightarrow \spectra$ the right adjoint of $Y$ which necessarily exists since the latter is cocontinuous and both $\infty$-categories are presentable. It is enough to show that if $X \in \synspectra_{E} (\tau^{-1})$ is such that $RX = 0$, then $X = 0$. However, if $RX = 0$, then by an argument used above for the sphere we have that $\map(P, RX) \simeq \map(Y(P), X) \simeq \map(\nu P, X) \simeq \Omega^{\infty} X(P)$ vanishes for all $P \in \spectrafp$. It follows that $X$ is $(-1)$-coconnective and since it is also $\tau$-invertible it must necessarily vanish by \cref{lemma:tau_inversion_of_a_coconnective_synthetic_spectrum_vanishes}.

The above shows that $Y$ induces an equivalence $\spectra \simeq \synspectra_{E}(\tau^{-1})$, we now show that it is symmetric monoidal. As we've verified above, $Y \colon \spectra \rightarrow \synspectra_{E}$ is cocontinuous and so it admits a left adjoint $L \colon \synspectra_{E} \rightarrow \spectra$ which through the equivalence we constructed corresponds to $\tau^{-1} \colon \synspectra_{E} \rightarrow \synspectra_{E}(\tau^{-1})$ .

By \cref{prop:spectral_yoneda_embedding_the_tau_inversion_of_the_synthetic_analogue}, we have $L(\nu P) \simeq P$ for finite projective $P$, so that the composition $\spectra_{E}^{fp} \rightarrow \synspectra_{E} \rightarrow \spectra$ coincides with the usual inclusion $\spectra_{E}^{fp} \hookrightarrow \spectra$. Since this inclusion is symmetric monoidal, $L$ acquires a canonical symmetric monoidal structure by the universal property of the Day convolution, see \cref{rem:universal_property_of_tensor_product_of_synthetic_spectra}. Since it takes $\tau$-inversions to equivalences, we deduce that the induced functor $\synspectra_{E}(\tau^{-1}) \rightarrow \spectra$ is symmetric monoidal, too. This induced functor is an explicit inverse to $Y$ and we are done. 
\end{proof}

\begin{cor}
\label{cor:synthetic_analogue_a_fully_faithful_embedding_of_infinity_categories}
The synthetic analogue construction $\nu \colon \spectra \rightarrow \synspectra_{E}$ is a fully faithful embedding of $\infty$-categories. 
\end{cor}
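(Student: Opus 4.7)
The plan is to deduce this immediately from \textbf{Theorem \ref{thm:tau_invertible_synthetic_spectra_are_just_spectra}}, which shows that the spectral Yoneda embedding $Y: \spectra \to \synspectra_E$ is fully faithful, together with the two compatibility statements of \textbf{Proposition \ref{prop:spectral_yoneda_embedding_the_tau_inversion_of_the_synthetic_analogue}} relating $\nu X$ to $Y(X)$: namely that $\nu X \to Y(X)$ exhibits $Y(X)$ as $\tau^{-1}\nu X$ and $\nu X$ as $(Y(X))_{\geq 0}$. Our task is to produce, naturally in spectra $A,B$, a chain of equivalences from $\map_{\synspectra_E}(\nu A, \nu B)$ to $\map_{\spectra}(A,B)$.

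First I would observe that $\nu A$ is connective in the natural $t$-structure, since by construction it lifts a sheaf of spaces (this uses \textbf{Proposition \ref{prop:spherical_sheaves_canonically_lift_to_sheaves_of_spectra}}), and hence is zero on negative homotopy sheaves. The canonical map $\nu B \to Y(B)$ is a connective cover by \textbf{Proposition \ref{prop:spectral_yoneda_embedding_the_tau_inversion_of_the_synthetic_analogue}}, so its fibre lies in $\synspectra_{E, \leq -1}$. Mapping from the connective object $\nu A$ therefore yields an equivalence
\begin{equation*}
\map_{\synspectra_E}(\nu A, \nu B) \xrightarrow{\simeq} \map_{\synspectra_E}(\nu A, Y(B)).
\end{equation*}

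Next, $Y(B)$ is $\tau$-invertible by \textbf{Proposition \ref{prop:spectral_yoneda_embedding_the_tau_inversion_of_the_synthetic_analogue}}, so it is local with respect to the $\tau$-inversion functor constructed in \textbf{Proposition \ref{prop:tau_inversion_functor_exists}}. Consequently the unit map $\nu A \to \tau^{-1}\nu A \simeq Y(A)$ induces an equivalence
\begin{equation*}
\map_{\synspectra_E}(Y(A), Y(B)) \xrightarrow{\simeq} \map_{\synspectra_E}(\nu A, Y(B)).
\end{equation*}
Combining these two equivalences with the full faithfulness of $Y$ supplied by \textbf{Theorem \ref{thm:tau_invertible_synthetic_spectra_are_just_spectra}} gives a natural equivalence $\map_{\spectra}(A,B) \simeq \map_{\synspectra_E}(\nu A, \nu B)$, which is what we needed.

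There is no substantial obstacle to expect here: the proof is entirely formal once one has the identification of $Y(B)$ simultaneously as the $\tau$-inversion of $\nu A$ on the source and as a synthetic spectrum having $\nu B$ as its connective cover on the target. The only point that deserves a moment's care is verifying that the composite equivalence we produce is induced by $\nu$ itself on mapping spaces, which follows by naturality from the observation that $Y$ factors through $\nu$ via the unit of $\tau$-inversion.
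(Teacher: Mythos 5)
Your argument is correct and follows the paper's own proof essentially verbatim: both use that $\nu A$ is connective and $\nu B \to Y(B)$ is a connective cover to identify $\map(\nu A, \nu B)$ with $\map(\nu A, Y(B))$, then that $Y(B)$ is $\tau$-invertible and $\nu A \to Y(A)$ is the $\tau$-inversion to identify the latter with $\map(Y(A), Y(B))$, and finally conclude by full faithfulness of $Y$. Your closing remark about naturality is a reasonable explicit gesture at what the paper leaves implicit.
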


\begin{proof}
By \cref{prop:spectral_yoneda_embedding_the_tau_inversion_of_the_synthetic_analogue}, $Y(X) \simeq \tau^{-1} \nu X$ for any spectrum $X$ and since the spectral Yoneda embedding is fully faithful by \cref{thm:tau_invertible_synthetic_spectra_are_just_spectra}, it is enough to show that for any $A, B \in \spectra$, the morphism $\map(\nu A, \nu B) \rightarrow \map (Y(A), Y(B))$ induced by $\tau$-inversion is an equivalence. 

However, we have $\map(\nu A, \nu B) \simeq \map(\nu A, Y(B)) \simeq \map(Y(A), Y(B))$, where in the first equivalence we observe that $\nu A$ is connective and $\nu B \simeq (Y(B))_{\geq 0}$ and in the second that $Y(B)$ is $\tau$-invertible and $\nu A \rightarrow Y(A)$ is a $\tau$-inversion. This ends the proof. 
\end{proof}

One application of the equivalence of \cref{thm:tau_invertible_synthetic_spectra_are_just_spectra} is the construction of the underlying spectrum functor, which we now describe. 

\begin{defin}
\label{defin:tau_inverison_aka_underlying_spectrum}
Let $X$ be a synthetic spectrum. Then, its \emph{$\tau$-inversion} or the \emph{underlying spectrum} is given by $\tau^{-1} X \simeq \varinjlim \Sigma^{-n} X(S^{-n})$. 
\end{defin}
Notice that we choose to abuse the notation and depending on the context $\tau^{-1} X$ might either mean the $\tau$-invertible synthetic spectrum associated to $X$ or the spectrum given by the formula above. These two correspond to each other under the equivalence of \cref{thm:tau_invertible_synthetic_spectra_are_just_spectra} and so this ambiguity is relatively mild. We now list some of the properties of this construction, all of which follow immediately from what we have already proven. 

\begin{prop}
\label{prop:tau_inversion_cocontinuous_symmetric_monoidal_left_inverse_to_synthetic_analogue}
The underlying spectrum functor $\tau^{-1} \colon \synspectra_{E} \rightarrow \spectra$ is canonically symmetric monoidal and left adjoint to the spectral Yoneda embedding $Y \colon \spectra \rightarrow \synspectra_{E}$. Moreover, for any spectrum $X \in \spectra$ we have a canonical equivalence $\tau^{-1} \nu X \simeq X$.
\end{prop}

\begin{proof}
The functor $\tau^{-1} \colon \synspectra_{E} \rightarrow \spectra$ is the composite of $\tau^{-1} \colon \synspectra_{E} \rightarrow \synspectra_{E}(\tau^{-1})$ and the equivalence of \cref{thm:tau_invertible_synthetic_spectra_are_just_spectra}. As both are symmetric monoidal left adjoints, so is $\tau^{-1}$, and \cref{thm:tau_invertible_synthetic_spectra_are_just_spectra} implies that its right adjoint is the spectral Yoneda embedding. The last part is an immediate consequence of \cref{prop:spectral_yoneda_embedding_the_tau_inversion_of_the_synthetic_analogue}. 
\end{proof}

\begin{rem}
\label{rem:homotopy_of_underlying_spectrum_of_a_synthetic_one}
Notice that $\tau^{-1} S^{t, w} \simeq S^{t}$, this is one of the reasons we work with the chosen grading convention. It follows that for any synthetic $X$ there's a natural map $\pi_{t, w} X \rightarrow \pi_{t} \tau^{-1} X$ given by passing to underlying spectra. By construction, we have $\pi_{*} \tau^{-1} X \simeq \tau^{-1} \pi_{*, *} X$ in the sense that there's an isomorphism $\pi_{t} \tau^{-1} X \simeq \varinjlim \pi_{t, k} X$, where the connecting maps in the colimit are induced by $\tau$.  
\end{rem}

\begin{rem}
Recall that in \cref{cor:homotopy_of_synthetic_analogues_in_non_negative_chow_degree} we have shown that if $X$ is a spectrum, then there's an isomorphism $\pi_{t, w} \nu X \simeq \pi_{t} X$ in non-negative Chow degrees. A chase through definitions shows that this isomorphism is actually induced by the maps $\pi_{t, w} \nu X \rightarrow \pi_{t} X$ of \cref{rem:homotopy_of_underlying_spectrum_of_a_synthetic_one}. 
\end{rem}

\subsection{Modules over the cofibre of $\tau$}

We have previously observed in \cref{cor:ctau_is_an_algebra} that the synthetic spectrum $C\tau$ has a unique structure of a commutative algebra. In this section, we construct an embedding of the $\infty$-category $\Mod_{C\tau}(\synspectra_{E})$ into Hovey's stable $\infty$-category of comodules $\stableE$ and give sufficient and necessary criteria for this embedding to be an equivalence. In particular, we will see the latter holds when $E$ is Landweber exact.

For more background on Hovey's stable $\infty$-category of comodules, see \cite{hovey2003homotopy}, \cite{barthel2015local} and the discussion preceding \cref{thm:hovey_stable_homotopy_theory_of_comodules_as_spherical_sheaves}. Throughout this section, we will think of $\stableE$ using the description supplied by the latter result, namely as the $\infty$-category of spherical sheaves of spectra on the site of dualizable comodules. 

\begin{lemma}
\label{lemma:there_exists_an_adjunction_between_synthetic_spectra_and_stable_category_of_comodules}
The homology functor $E_{*} \colon \spectrafp \rightarrow \ComodE^{fp}$ induces an adjunction 

\begin{center}
$\epsilon^{*} \dashv \epsilon_{*} \colon \synspectra_{E} \rightleftarrows \stableE$
\end{center}
where the left adjoint $\epsilon^{*}$ is the left Kan extension of
\[
\spectrafp \rightarrow \ComodE \simeq \stableE^{\heartsuit} \hookrightarrow \stableE
\]
along $\nu |_{\spectrafp}: \spectrafp \rightarrow \synspectra_{E}$.
\end{lemma}

\begin{proof}
By \cref{thm:hovey_stable_homotopy_theory_of_comodules_as_spherical_sheaves}, there's a symmetric monoidal equivalence $\stableE \simeq Sh_{\Sigma}^{\spectra}(\ComodE^{fp})$ between Hovey's stable $\infty$-category and spherical hypercomplete sheaves of spectra on dualizable comodules. The above adjunction is the one induced by the morphism of $\infty$-sites $E_{*} \colon \spectrafp \rightarrow \ComodE^{fp}$, see \cref{prop:additive_morphisms_induce_adjunctions_on_infty_categories_of_sheaves_of_spectra}.
\end{proof}

\begin{lemma}
\label{lemma:all_properties_of_homology_functor_on_synthetic_spectra}
The left adjoint $\epsilon^{*}$ is canonically monoidal. The right adjoint $\epsilon_{*}$ is cocontinuous, lax monoidal and induces an equivalence $\synspectra_{E}^{\heartsuit} \simeq \stableE^{\heartsuit}$ on the hearts. If $E$ is homotopy commutative, then  $\epsilon^{*}$ is symmetric monoidal and $\epsilon_{*}$ is lax symmetric monoidal. 
\end{lemma}

\begin{proof}
The left adjoint is monoidal with respect to the Day convolution symmetric monoidal structure because it extends $E_{*} \colon \spectrafp \rightarrow \ComodE^{fp}$, which is monoidal by \cref{lemma:smash_products_makes_finite_projective_spectra_rigid_symmetric_monoidal_and_homology_monoidal} and symmetric when $E$ is homotopy commutative. It follows formally that the right adjoint acquires a lax (symmetric) monoidal structure, see \cite{higher_algebra}[7.3.2.7]. The cocontinuity of the the right adjoint is part of \cref{prop:additive_morphisms_induce_adjunctions_on_infty_categories_of_sheaves_of_spectra} and the induced equivalence on the hearts is \cref{rem:discrete_spherical_sheaves_on_fpspectra_are_comodules}.
\end{proof}

\begin{lemma}
\label{lemma:cofibre_of_tau_tensored_with_representable_equivalent_to_right_adjoint_of_its_homology_in_the_heart_of_hoveys_category}
Let $P$ be a finite projective spectrum. Then, there's a canonical equivalence $C\tau \otimes \nu P \simeq \epsilon_{*}(E_{*}P)$, where we view $E_{*}P$ as an element of the heart of $\stableE$. 
\end{lemma}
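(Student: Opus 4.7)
The plan is to show that both $C\tau \otimes \nu P$ and $\epsilon^{*}(E_{*}P)$ lie in the heart $\synspectra_{E}^{\heartsuit}$ and correspond to the \emph{same} $E_{*}E$-comodule, namely $E_{*}P$, after which a canonical comparison map will close the argument.

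First, I would verify that both sides are concentrated in degree zero with respect to the natural $t$-structure. By \textbf{Lemma \ref{lemma:tensoring_with_cofibre_of_tau_a_discretization_on_representable_connective_synthetic_spectra}} applied to the spectrum $P$, the cofibre sequence $\Sigma^{0,-1}\nu P \xrightarrow{\tau} \nu P \rightarrow C\tau \otimes \nu P$ identifies $C\tau \otimes \nu P$ with $(\nu P)_{\leq 0}$, so the left side lies in $\synspectra_{E}^{\heartsuit}$. For the right side, the functor $\epsilon^{*}$ induces an equivalence $\stableE^{\heartsuit} \simeq \synspectra_{E}^{\heartsuit}$ by \textbf{Lemma \ref{lemma:all_properties_of_homology_functor_on_synthetic_spectra}}, so $\epsilon^{*}(E_{*}P) \in \synspectra_{E}^{\heartsuit}$ since $E_{*}P$ sits in $\stableE^{\heartsuit} \simeq \ComodE$.

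Next, I would construct a canonical map between them. Since $\nu P \simeq \sigmainfty y(P)$ and $\epsilon_{*}$ is the cocontinuous extension of $E_{*}: \spectrafp \rightarrow \ComodE^{fp}$, the description of the left adjoint in \textbf{Proposition \ref{prop:additive_morphisms_induce_adjunctions_on_infty_categories_of_sheaves_of_spectra}} gives $\epsilon_{*}(\nu P) \simeq \sigmainfty y(E_{*}P)$. Because $y(E_{*}P)$ is a discrete sheaf on $\ComodE^{fp}$ (mapping sets in $\ComodE^{fp}$ are discrete), this lies in the heart of $\stableE$, and under $\stableE^{\heartsuit} \simeq \ComodE$ corresponds to the comodule $E_{*}P$. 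The unit map $\nu P \rightarrow \epsilon^{*}\epsilon_{*}(\nu P) \simeq \epsilon^{*}(E_{*}P)$ targets a $0$-truncated synthetic spectrum and therefore factors uniquely through $\nu P \rightarrow (\nu P)_{\leq 0} \simeq C\tau \otimes \nu P$, producing the canonical comparison $\chi: C\tau \otimes \nu P \rightarrow \epsilon^{*}(E_{*}P)$.

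Finally, I would check that $\chi$ is an equivalence by comparing the underlying comodules. Combining \textbf{Theorem \ref{thm:t_structure_homotopy_groups_coincide_with_synthetic_homology}} with \textbf{Proposition \ref{prop:homology_of_synthetic_analogues}}, the homotopy comodule $\pi_{0}^{\heartsuit}(\nu P)$ has $l$-th graded piece $\nu E_{l,l}\nu P \simeq E_{l}P$; a little care (using that the description of $\pi_0^{\heartsuit}$ as a sheaf of abelian groups matches the comodule structure coming from the coaction) upgrades this to an isomorphism of comodules $\pi_{0}^{\heartsuit}(C\tau \otimes \nu P) \simeq E_{*}P$. On the right, the equivalence $\synspectra_{E}^{\heartsuit} \simeq \stableE^{\heartsuit} \simeq \ComodE$ sends $\epsilon^{*}(E_{*}P)$ to $E_{*}P$ by construction. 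Tracing through the adjunction, the map $\chi$ induces the identity on $E_{*}P$, hence is an equivalence.

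The main technical obstacle is verifying that the identification $\pi_{0}^{\heartsuit}(\nu P) \simeq E_{*}P$ respects comodule structures, not merely underlying graded abelian groups; if needed, this can be bypassed by unwinding both sides as spherical sheaves of sets on $\spectrafp$ and observing that each evaluates at $Q \in \spectrafp$ to $\Hom_{E_{*}E}(E_{*}Q, E_{*}P)$, giving the equivalence directly via \textbf{Remark \ref{rem:discrete_spherical_sheaves_on_fpspectra_are_comodules}}.
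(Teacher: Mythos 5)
Your proof is correct and follows essentially the same route as the paper's: both hinge on identifying $\nu P \to C\tau \otimes \nu P$ and the unit $\nu P \to \epsilon^{*}(E_{*}P)$ as $0$-truncations of $\nu P$ (via \textbf{Lemma \ref{lemma:tensoring_with_cofibre_of_tau_a_discretization_on_representable_connective_synthetic_spectra}} and \textbf{Proposition \ref{prop:homology_of_synthetic_analogues}} respectively), from which the canonical equivalence follows. The paper simply invokes uniqueness of $0$-truncations rather than explicitly constructing the comparison map and verifying it on heart homotopy, but these are presentations of the same argument; your fallback via \textbf{Remark \ref{rem:discrete_spherical_sheaves_on_fpspectra_are_comodules}} also correctly addresses the comodule-structure subtlety.
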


\begin{proof}
By construction of $\epsilon^{*}$ as a left Kan extension, we have $\epsilon^{*}(\nu P) \simeq E_{*}P$, so that there's a natural map $\nu P \rightarrow \epsilon_{*}(E_{*}P)$ given by the unit of the adjunction. The target is contained in the heart and the given map is a $0$-truncation by our computation of the homology of $\nu P$, which was \cref{prop:homology_of_synthetic_analogues}.

By \cref{lemma:tensoring_with_cofibre_of_tau_a_discretization_on_representable_connective_synthetic_spectra}, the natural map $\nu P \rightarrow C\tau \otimes \nu P$ is also a $0$-truncation. This implies that there is an equivalence $C\tau \otimes \nu P \simeq \epsilon_{*}(E_{*}P)$, in fact a distinguished one commuting with the maps from $\nu P$. 
\end{proof}

We're now ready to prove the main result of this section, which identifies the $\infty$-category of modules over the cofibre of $\tau$ with a subcategory of Hovey's $\infty$-category of comodules. 

\begin{thm}
\label{thm:topological_part_of_hovey_stable_category_of_comodules_as_modules_over_the_cofibre_of_tau}
The right adjoint $\epsilon_{*}$ has a canonical lift $\chi_{*}:~\stableE~\rightarrow \Mod_{C\tau}(\synspectra_{E})$ to $C\tau$-modules whose left adjoint $\chi^{*} \colon \Mod_{C\tau}(\synspectra_{E}) \hookrightarrow \stableE$ is fully faithful. Both functors are canonically monoidal, and symmetric monoidal if $E$ is homotopy commutative.
\end{thm}

\begin{proof}
Since $E_{*}$ is the unit of $\stableE$, all objects are canonically modules over it and because $\epsilon_{*}$ is lax monoidal, it takes values in $\epsilon_{*}(E_{*})$-modules. However, we have $\epsilon_{*}(E_{*}) \simeq C\tau$ by \cref{lemma:cofibre_of_tau_tensored_with_representable_equivalent_to_right_adjoint_of_its_homology_in_the_heart_of_hoveys_category}, which shows that there's a canonical lax monoidal lift to $C\tau$-modules which we will denote by $\chi_{*}$. It follows that we have a commutative diagram 
\begin{equation}
\label{equation:diagram_of_right_adjoints_in_proof_of_identification_of_special_fibre}
\begin{tikzcd}
	{\stableE} && {\Mod_{C\tau}(\synspectra_{E})} \\
	& {\synspectra_{E}}
	\arrow["{\epsilon_{*}}", from=1-1, to=2-2]
	\arrow["{\chi_{*}}"', from=1-1, to=1-3]
	\arrow[from=1-3, to=2-2]
\end{tikzcd}
\end{equation}
of lax monoidal right adjoints, where the right vertical arrow is the forgetful functor. Both vertical arrows induce equivalences on the hearts, the left one by \cref{lemma:all_properties_of_homology_functor_on_synthetic_spectra} and the right one by \cite[Lemma 3.1]{pstrkagowski2022abstract}, and hence so does $\chi_{*}$.  We will first  show that its left adjoint $\chi^{*}$ is fully faithful. 

Observe that both vertical arrows in the diagram above are cocontinuous; this is clear in the case of $\Mod_{C\tau}(\synspectra_{E})$ and is part of \cref{lemma:all_properties_of_homology_functor_on_synthetic_spectra} in the case of $\stableE$. By \cite[4.7.3.16]{higher_algebra}, to prove that $\chi^{*}$ is fully faithful it is enough to verify that that for any $X \in \synspectra_{E}$, the induced map $C\tau \otimes X \rightarrow \epsilon_{*} \epsilon^{*} X$ coming from the $C\tau$-module structure on $\epsilon_{*} \epsilon^{*} X$ is an equivalence. Since both the composite $\epsilon_{*} \epsilon^{*}$ and tensoring with $C\tau$ preserve colimits, it is enough to check that the map is an equivalence for synthetic spectra of the form $\nu P$, where $P \in \spectrafp$. This is exactly \cref{lemma:cofibre_of_tau_tensored_with_representable_equivalent_to_right_adjoint_of_its_homology_in_the_heart_of_hoveys_category}.

We are left with monoidality. As $\chi_{*}$ is lax monoidal, it follows formally that $\chi^{*}$ is oplax monoidal. We claim that $\chi^{*}$ is in fact monoidal. To see this, observe that the class of $M, N~\in~\Mod_{C\tau}(\synspectra_{E})$ such that the structure map 
\[
\chi^{*}(M \otimes N) \rightarrow \chi^{*}(M) \otimes \chi^{*}(N) 
\]
is an equivalence is closed under colimits in each variable separately. Since $\Mod_{C\tau}(\synspectra_{E})$ is generated under colimits by modules of the form $C\tau \otimes X$ for $X \in \synspectra_{E}$, it is enough to verify that the structure map is an equivalence for modules of such form. The claim then follows from the fact that $\epsilon^{*} \colon \synspectra \rightarrow \stableE$ is monoidal, which is \cref{lemma:all_properties_of_homology_functor_on_synthetic_spectra}, and the equivalence $\chi^{*}(C\tau \otimes X) \simeq \epsilon^{*}(X)$, which holds by commutativity of the diagram (\ref{equation:diagram_of_right_adjoints_in_proof_of_identification_of_special_fibre}) after taking left adjoints. 

We move to monoidality of the right adjoint. As $\chi_{*}$ is cocontinuous, by  \cref{rem:universal_property_of_spherical_sheaves_of_spectra}, and since the image of $\ComodE^{fp} \hookrightarrow \stableE$ generates the latter under colimits, the same argument as above shows that it is enough to show that the structure map 
\[
\chi_{*}(P) \otimes \chi_{*}(Q) \rightarrow \chi_{*}(P \otimes Q) 
\]
is an equivalence for $P, Q \in \ComodE^{fp}$. As $\chi^{*}$ is fully faithful, it is enough to show that the image of the above map under $\chi^{*}$, which is 
\begin{equation}
\label{equation:monoidality_of_chi_lower_star_tested_by_chi_upper_star}
\chi^{*} \chi_{*}(P) \otimes \chi^{*} \chi_{*}(Q) \rightarrow \chi^{*} \chi_{*}(P \otimes Q),
\end{equation}
by monoidality of $\chi^{*}$, is an equivalence. However, as $P, Q, P \otimes Q \in \ComodE^{fp} \subseteq \stableE^{\heartsuit}$ , these three objects are in the image of the fully faithful left adjoint $\chi^{*}$ and hence their counit maps are equivalences. It follows that the counit maps of $\chi^{*} \dashv \chi_{*}$ identify (\ref{equation:monoidality_of_chi_lower_star_tested_by_chi_upper_star}) with the identity of $P \otimes Q$ which is an equivalence as needed. This shows that $\chi_{*}$ is monoidal. 

If $E$ is homotopy commutative, then $\epsilon^{*}$ is symmetric monoidal. The same argument as above then establishes that both $\chi^{*}$ and $\chi_{*}$ are in fact symmetric monoidal. 
\end{proof}

\begin{rem}
The equivalence of \cref{thm:topological_part_of_hovey_stable_category_of_comodules_as_modules_over_the_cofibre_of_tau} is most naturally compared to a theorem of Gheorghe-Wang-Xu which states that in the context of $p$-complete cellular motivic spectra there is an equivalence between the stable $\infty$-category of even $BP_{*}BP$-comodules and the $\infty$-category of motivic $C\tau$-modules \cite{gheorghe2018special}. We will discuss the relationship between synthetic and motivic homotopy theory in more detail in \S\ref{section_comparison_with_the_cellular_motivic_category}. 
\end{rem}

Both $\infty$-categories participating in the adjunction $\chi^{*} \dashv \chi_{*} \colon \Mod_{C\tau}(\synspectra_{E}) \rightleftarrows \stableE$ of  \cref{thm:topological_part_of_hovey_stable_category_of_comodules_as_modules_over_the_cofibre_of_tau} can be equipped with $t$-structures. In the case of Hovey's stable $\infty$-category, this is a consequence \cref{cor:t_structure_on_hoveys_stable_infty_category}. On the other hand, $\Mod_{C\tau}(\synspectra_{E})$ admits a $t$-structure induced from the one on synthetic spectra. 

\begin{prop}
\label{prop:a_t_structure_on_modules_in_synthetic_spectra}
Let $A$ be an associative algebra in synthetic spectra and assume that $A$ is connective. Then the $\infty$-category $\Mod_{A}(\synspectra_{E})$ of $A$-modules admits a natural $t$-structure in which a module is (co)connective if and only if the underlying synthetic spectrum is. 
\end{prop}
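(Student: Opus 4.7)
The plan is to invoke a general result from Higher Algebra which produces a $t$-structure on modules over a connective algebra, given suitable compatibility between the $t$-structure and the tensor product on the ambient presentable symmetric monoidal stable $\infty$-category. More specifically, I will appeal to \cite{higher_algebra}[7.1.1.13]: if $\mathcal{C}$ is a presentable symmetric monoidal stable $\infty$-category whose tensor product preserves colimits separately in each variable and is equipped with a right-complete $t$-structure compatible with filtered colimits and such that $\mathcal{C}_{\geq 0}$ is closed under tensor products, and if $A \in \textnormal{Alg}(\mathcal{C})$ is connective, then $\Mod_A(\mathcal{C})$ admits a unique $t$-structure in which the forgetful functor $\Mod_A(\mathcal{C}) \to \mathcal{C}$ both preserves and reflects (co)connectivity.

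I have already established most of the input data for $\mathcal{C} = \synspectra_E$. Stability and presentability are Proposition \ref{prop:synthetic_spectra_is_a_stable_presentable_infty_category}; the tensor product is cocontinuous in each variable by the same result; and the natural $t$-structure on $\synspectra_E$ is right complete and compatible with filtered colimits by Proposition \ref{prop:synthetic_spectra_admit_a_t_structure}. The only genuinely non-formal verification is that $\synspectra_{E, \geq 0}$ is closed under the tensor product, which will be the main step of the argument.

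To check closure of connectivity under tensor product, I will first observe that under the equivalence $\synspectra_{E, \geq 0} \simeq Sh_{\Sigma}(\spectra_{E}^{fp})$ of \textbf{Proposition \ref{prop:spherical_sheaves_canonically_lift_to_sheaves_of_spectra}}, the connective part is generated under colimits by the objects $\nu P \simeq \sigmainfty y(P)$ for $P \in \spectra_{E}^{fp}$, since spherical sheaves of spaces are generated under colimits by representables. Because the symmetric monoidal structure on $\synspectra_E$ is cocontinuous in each variable, it suffices to verify that $\nu P \otimes \nu Q$ lies in $\synspectra_{E, \geq 0}$ for any $P, Q \in \spectra_{E}^{fp}$. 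By \textbf{Remark \ref{rem:universal_property_of_tensor_product_of_synthetic_spectra}} we have $\nu P \otimes \nu Q \simeq \nu (P \wedge Q)$, and synthetic analogues are always connective by construction; a standard cocontinuity argument then shows that the full subcategory of pairs $(X, Y)$ with $X \otimes Y$ connective is closed under colimits in both variables, hence contains all pairs of connective objects.

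With these ingredients in place, the cited result of Lurie immediately produces the required $t$-structure on $\Mod_A(\synspectra_E)$ for any connective associative algebra $A$, and the characterization of (co)connectivity via the forgetful functor is built into the statement. I do not expect a significant obstacle here beyond correctly setting up the ambient hypotheses; the real content is the monotonicity of the tensor product with respect to the $t$-structure, which reduces cleanly to the observation that $\nu$ lands in connective objects.
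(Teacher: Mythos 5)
Your proof is correct and takes essentially the same route as the paper, which also invokes a general Lurie result (there, \cite{lurie_spectral_algebraic_geometry}[2.1.1.1]) and then recalls the argument via \cite{higher_algebra}[4.7.4.14] and \cite{higher_algebra}[1.4.4.11]. The one place you go further than the paper is the explicit verification that $(\synspectra_E)_{\geq 0}$ is closed under the tensor product, which the paper's proof uses but leaves implicit ("Since connective synthetic spectra are closed under the tensor product and $A$ is connective..."). Your reduction to $\nu P \otimes \nu Q \simeq \nu(P \wedge Q)$ via \textbf{Remark \ref{rem:universal_property_of_tensor_product_of_synthetic_spectra}} and cocontinuity is the right way to nail this down, and since the unit $S^{0,0} \simeq \nu S^0$ is itself a synthetic analogue, the connectivity of the unit needed for the compatibility hypothesis also falls out for free.
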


\begin{proof}
This is identical to the case of modules over a connective ring spectrum, as in \cite{lurie_spectral_algebraic_geometry}[2.1.1.1]. 
\end{proof}

Notice that it is immediate from the definition of the $t$-structure of \cref{prop:a_t_structure_on_modules_in_synthetic_spectra} that in the free-forgetful adjunction $\synspectra_{E} \rightleftarrows \Mod_{A}(\synspectra_{E})$, the right adjoint is $t$-exact; that is, it preserves both the connective and coconnective parts. It follows formally that the left adjoint is right $t$-exact; that is, preserves the connective parts. It is not in general left $t$-exact. An analogous statement can be made about the adjunction involving Hovey's $\infty$-category. 

\begin{prop}
In the adjunction $\chi^{*} \dashv \chi_{*} \colon \Mod_{C\tau}(\synspectra_{E}) \rightleftarrows \stableE$, the right adjoint $\chi_{*}$ is $t$-exact. In particular, the left adjoint $\chi^{*}$ is right exact. 
\end{prop}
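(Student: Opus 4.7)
The plan is to reduce the $t$-exactness of $\chi^{*}$ to that of the underlying functor $\epsilon^{*}: \stableE \to \synspectra_{E}$, and then deduce the latter formally from the description of both $\infty$-categories as spherical sheaves of spectra on additive $\infty$-sites.

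First, I would exploit the definition of the $t$-structure on $\Mod_{C\tau}(\synspectra_{E})$ given in \textbf{Proposition \ref{prop:a_t_structure_on_modules_in_synthetic_spectra}}: a $C\tau$-module is (co)connective precisely when its underlying synthetic spectrum is. Since $\chi^{*}$ is, by construction in \textbf{Theorem \ref{thm:topological_part_of_hovey_stable_category_of_comodules_as_modules_over_the_cofibre_of_tau}}, a lift of $\epsilon^{*}$ along the forgetful functor $\Mod_{C\tau}(\synspectra_{E}) \to \synspectra_{E}$, it follows that $\chi^{*}$ preserves (co)connective objects if and only if $\epsilon^{*}$ does, reducing the problem to establishing $t$-exactness of $\epsilon^{*}$.

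Next, I would invoke the equivalence $\stableE \simeq Sh_{\Sigma}^{\spectra}(\ComodE^{fp})$ of \textbf{Theorem \ref{thm:hovey_stable_homotopy_theory_of_comodules_as_spherical_sheaves}} together with the construction in \textbf{Lemma \ref{lemma:there_exists_an_adjunction_between_synthetic_spectra_and_stable_category_of_comodules}} to identify the adjunction $\epsilon_{*} \dashv \epsilon^{*}$ with the adjunction on sheaves of spectra induced by the morphism of additive $\infty$-sites $E_{*}: \spectrafp \to \ComodE^{fp}$. Under this identification, $\epsilon^{*}$ is precisely the precomposition functor appearing in \textbf{Proposition \ref{prop:additive_morphisms_induce_adjunctions_on_infty_categories_of_sheaves_of_spectra}}. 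The $t$-structures on both sides are the canonical ones of \textbf{Proposition \ref{prop:tstructure_on_spherical_sheaves_of_spectra}} (matching Hovey's via \textbf{Corollary \ref{cor:t_structure_on_hoveys_stable_infty_category}}), and \textbf{Remark \ref{rem:right_adjoint_on_infinity_categories_of_sheaves_of_spectra_preserves_connectivity}} asserts that precomposition along a morphism of additive $\infty$-sites preserves both the connective and coconnective parts. This delivers the $t$-exactness of $\epsilon^{*}$, and hence of $\chi^{*}$.

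Finally, the ``in particular'' clause is a purely formal consequence of $\chi^{*}$ being left $t$-exact: for any connective $M \in \Mod_{C\tau}(\synspectra_{E})$ and any $N \in \stableE_{\leq -1}$, we have $\chi^{*}N \in \Mod_{C\tau}(\synspectra_{E})_{\leq -1}$, so the adjunction gives $\map_{\stableE}(\chi_{*}M, N) \simeq \map_{C\tau}(M, \chi^{*}N) \simeq 0$; thus $\chi_{*}M$ lies in $\stableE_{\geq 0}$, showing $\chi_{*}$ is right $t$-exact. There is essentially no serious obstacle in this argument: the only point requiring mild vigilance is to confirm that the $t$-structure on $\stableE$ inherited from Hovey's model agrees with the one on spherical sheaves of spectra under the identification of \textbf{Theorem \ref{thm:hovey_stable_homotopy_theory_of_comodules_as_spherical_sheaves}}, which is exactly the content of \textbf{Corollary \ref{cor:t_structure_on_hoveys_stable_infty_category}}.
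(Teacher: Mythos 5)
Your proposal follows exactly the paper's own argument: reduce to $t$-exactness of $\epsilon^{*}$ via the lifting from \textbf{Theorem \ref{thm:topological_part_of_hovey_stable_category_of_comodules_as_modules_over_the_cofibre_of_tau}} and the definition of the module $t$-structure, then invoke \textbf{Remark \ref{rem:right_adjoint_on_infinity_categories_of_sheaves_of_spectra_preserves_connectivity}} on precomposition along morphisms of additive $\infty$-sites, and finish with the formal adjunction argument for the left adjoint. The only difference is that you spell out the last formal step and the $t$-structure matching more explicitly; both are correct.
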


\begin{proof}
The $t$-structure on $C\tau$-modules here is the one of \cref{prop:a_t_structure_on_modules_in_synthetic_spectra}, induced from the one on synthetic spectra. Since $\chi_{*} \colon \stableE \rightarrow \Mod_{C\tau}(\synspectra_{E})$ is by construction a lift of the functor $\epsilon_{*} \colon \stableE \rightarrow \synspectra_{E}$, it is enough to show that the latter preserves the properties of being (co)connective. This is a consequence of being a precomposition functor along a morphism of additive $\infty$-sites with the covering lifting property, see \cref{rem:right_adjoint_on_infinity_categories_of_sheaves_of_spectra_preserves_connectivity}. The right $t$-exactness of the left adjoint is a formal consequence of the left $t$-exactness of the right adjoint. 
\end{proof}

Note that \cref{thm:topological_part_of_hovey_stable_category_of_comodules_as_modules_over_the_cofibre_of_tau} asserts that there exists a fully faithful embedding $\Mod_{C\tau}(\synspectra_{E}) \hookrightarrow \stableE$, but we do not know if this is always an equivalence. We will show that this is indeed the case under the following additional assumption on $E$, which is satisfied in many examples. 

\begin{defin}
We say $E$ has \emph{plenty of finite projectives} if comodules of the form $E_{*}P$, where $P$ is a finite projective, generate Hovey's $\infty$-category $\stableE$ under colimits and suspensions. 
\end{defin}
Here, we consider any comodule as an object of Hovey's $\infty$-category through the equivalence $\stableE^{\heartsuit} \simeq \ComodE$ of \cref{cor:t_structure_on_hoveys_stable_infty_category}. Let us give some examples of homology theories which satisfy this condition.

\begin{lemma}
\label{lemma:for_landweber_exact_homology_theories_hoveys_stable_infty_cat_is_topological}
If $E$ is Landweber exact or the sphere $S^{0}$, then it has plenty of finite projectives. 
\end{lemma}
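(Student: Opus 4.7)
The plan is to reduce to a statement about dualizable comodules and then treat the two cases separately. First I would invoke \textbf{Theorem \ref{thm:hovey_stable_homotopy_theory_of_comodules_as_spherical_sheaves}}, which identifies $\stableE$ with $Sh_{\Sigma}^{\spectra}(\ComodE^{fp})$; the latter is generated under colimits by representables coming from dualizable comodules, so once we include suspensions, $\stableE$ is generated under colimits and suspensions by the image of $\ComodE^{fp}$ in the heart $\stableE^{\heartsuit} \simeq \ComodE$. Hence it suffices to show that every dualizable $E_{*}E$-comodule lies in the localizing subcategory generated by $\{ E_{*}P : P \in \spectrafp \}$.

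For $E = S^{0}$, the Hopf algebroid $(\pi_{*}S^{0}, \pi_{*}S^{0})$ has trivial comultiplication, so a comodule is simply a graded $\pi_{*}S^{0}$-module and a dualizable comodule is a finitely generated projective graded module. For any collection of integers $n_{1}, \ldots, n_{k}$, the wedge $P = S^{n_{1}} \vee \cdots \vee S^{n_{k}}$ is finite $S^{0}$-projective with $E_{*}P$ free on generators in the chosen degrees, so every finitely generated free graded $\pi_{*}S^{0}$-module lies in the generating set. Since every finitely generated projective graded module is a retract of a finitely generated free one, and retracts are split colimits in any stable $\infty$-category, this case is complete.

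For $E$ Landweber exact over $MU$, I would combine two inputs: (i) every finite $MU$-projective spectrum $P$ is automatically finite $E$-projective, since $E_{*}P \simeq E_{*} \otimes_{MU_{*}} MU_{*}P$ is finitely generated projective over $E_{*}$ whenever $MU_{*}P$ is finitely generated projective over $MU_{*}$; and (ii) the classical realization theorem of Landweber, which states that every dualizable $MU_{*}MU$-comodule is of the form $MU_{*}P$ for some finite $MU$-projective $P$. Together these imply that every comodule of the form $E_{*} \otimes_{MU_{*}} M$ with $M$ dualizable over $MU_{*}MU$ already lies in $\{E_{*}P : P \in \spectrafp\}$. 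It then remains to show that every dualizable $E_{*}E$-comodule $N$ is built by extensions from such extended comodules, for which I would apply the Landweber filtration theorem: $N$ admits a finite filtration with subquotients of the form $(E_{*}/I_{n})[k]$ for invariant prime ideals $I_{n} \subset MU_{*}$, and each such subquotient is $E_{*} \otimes_{MU_{*}} (MU_{*}/I_{n})[k]$, hence lies in the generating set by (i) and (ii).

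The hard part will be the realization theorem for dualizable $MU_{*}MU$-comodules and the careful application of Landweber's filtration at the level of $E_{*}E$-comodules; both are classical but require attention to whether the invariant ideals appearing admit topological realizations, which one handles via generalized Moore spectra or, more robustly, via $BP$-based Smith--Toda constructions. The reduction step and the sphere case are formal, so essentially all the content sits in invoking these structural results about $MU_{*}MU$-comodules.
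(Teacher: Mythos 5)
Your reduction step and the $S^{0}$ case are sound: the identification $\stableE \simeq Sh_{\Sigma}^{\spectra}(\ComodE^{fp})$ shows it suffices to get dualizable comodules, and for the discrete Hopf algebroid $(\pi_{*}S^{0},\pi_{*}S^{0})$ these are finitely generated projective graded modules, which are retracts of $E_{*}$ applied to finite wedges of spheres.

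The Landweber exact case has a genuine gap. Your input (ii) --- that every dualizable $MU_{*}MU$-comodule is of the form $MU_{*}P$ for a finite $MU$-projective $P$ --- is not a known theorem; the realization problem for $MU_{*}MU$-comodules by finite spectra is subtle and open in general. Moreover the Landweber filtration produces subquotients $E_{*}/I_{n}[k]$ which are \emph{not} dualizable for $n \geq 1$ (they are not projective over $E_{*}$), so (ii) would not apply to them even if it were true. Your fallback to Smith--Toda complexes does not salvage this, since $V(n)$ fails to exist at small primes. What does work --- and this is the key observation you are missing --- is that the cofiber of $v_{i}$-multiplication on $E_{*}/I_{i}$ can be formed \emph{internally in $\stableE$}, without any topological realization, so the quotients $E_{*}/I_{n}$ lie in the thick subcategory generated by $E_{*}$ alone. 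Tracing this through the filtration is precisely Hovey's proof that $\stableE$ is monogenic for Landweber exact $E$, which the paper simply cites (\cite{hovey2003homotopy}[6.7]). Since $E_{*} \simeq E_{*}S^{0}$ and $S^{0} \in \spectrafp$, the unit already lies in the image and the lemma follows immediately, with no need to realize arbitrary dualizable comodules or invariant-ideal quotients by finite spectra.
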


\begin{proof}
If $E$ is Landweber exact, it is a theorem of Hovey that $\stableE$ is generated under colimits and suspensions by the unit comodule $E_{*} \simeq E_{*}S^{0}$, see \cite{hovey2003homotopy}[6.7]. If $E \simeq S^{0}$, then the associated Hopf algebroid $(\pi_{*} S^{0}, \pi_{*}S^{0})$ is discrete, so that $\euscr{S}table_{\pi_{*}S^{0}}$ coincides with the derived $\infty$-category of $\pi_{*}S^{0}$-modules, which is generated by the monoidal unit. 
\end{proof}

\begin{rem}
We do not know whether every Adams-type homology theory $E$ has plenty of finite projectives.
\end{rem}

\begin{prop}
\label{prop:hoveys_category_and_ctau_modules_equivalent_iff_we_have_plenty_of_projectives}
The adjunction $\chi^{*} \dashv \chi_{*} \colon \Mod_{C\tau}(\synspectra_{E}) \rightleftarrows \stableE$ is an adjoint equivalence if and only if $E$ has plenty of finite projectives. In particular, it is an equivalence if $E$ is Landweber exact. 
\end{prop}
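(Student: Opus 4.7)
The plan is to use that, by \textbf{Theorem \ref{thm:topological_part_of_hovey_stable_category_of_comodules_as_modules_over_the_cofibre_of_tau}}, the left adjoint $\chi_{*}$ is already known to be fully faithful. Consequently, the adjunction is an adjoint equivalence if and only if $\chi_{*}$ is essentially surjective, which since $\chi_{*}$ is cocontinuous is equivalent to asking that its essential image, closed under colimits, coincides with all of $\stableE$.

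The next step is to identify an explicit family of generators in the essential image. Since $\chi^{*}$ is a lift of $\epsilon^{*}$ through the forgetful functor $U: \Mod_{C\tau}(\synspectra_{E}) \to \synspectra_{E}$, we have a factorisation $\epsilon_{*} \simeq \chi_{*} \circ F$, where $F(-) \simeq C\tau \otimes -$ is the free $C\tau$-module functor left adjoint to $U$. By \textbf{Remark \ref{rem:synthetic_spectra_compactly_generated_by_suspensions_of_synthetic_analogues_of_finite_projectives}}, $\synspectra_{E}$ is generated under colimits by the bigraded suspensions $\Sigma^{k,0} \nu P$ with $P \in \spectrafp$, so $\Mod_{C\tau}(\synspectra_{E})$ is generated under colimits by the free modules $C\tau \otimes \Sigma^{k,0} \nu P$. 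Applying $\chi_{*}$ and using the factorisation, we obtain
\begin{center}
$\chi_{*}(C\tau \otimes \Sigma^{k,0} \nu P) \simeq \epsilon_{*}(\Sigma^{k,0} \nu P) \simeq \Sigma^{k} E_{*}P$,
\end{center}
so the essential image of $\chi_{*}$ is the smallest colimit-closed subcategory of $\stableE$ containing the objects $\Sigma^{k} E_{*}P$ for $P \in \spectrafp$ and $k \in \mathbb{Z}$. The final step is then tautological: this essential image is all of $\stableE$ precisely when the family $\{E_{*}P\}_{P \in \spectrafp}$ generates $\stableE$ under colimits and suspensions, which is the definition of $E$ having plenty of finite projectives. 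The Landweber exact case then follows by invoking \textbf{Lemma \ref{lemma:for_landweber_exact_homology_theories_hoveys_stable_infty_cat_is_topological}}.

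The only mildly non-trivial ingredient is the cocontinuity of $\chi_{*}$, but this is automatic as it is a left adjoint. No step presents a real obstacle here, since the hard analytic work has already been absorbed into \textbf{Theorem \ref{thm:topological_part_of_hovey_stable_category_of_comodules_as_modules_over_the_cofibre_of_tau}} and the identification $\epsilon_{*} \simeq \chi_{*} \circ F$ of left adjoints is formal from the construction of $\chi^{*}$ as a lift of $\epsilon^{*}$.
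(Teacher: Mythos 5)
Your proposal is correct and follows essentially the same route as the paper: reduce to essential surjectivity via the fully faithfulness established in Theorem \ref{thm:topological_part_of_hovey_stable_category_of_comodules_as_modules_over_the_cofibre_of_tau}, identify free $C\tau$-modules on the generators $\Sigma^{k,0}\nu P$ as generators of $\Mod_{C\tau}(\synspectra_{E})$, and compute their images in $\stableE$ as $\Sigma^{k}E_{*}P$. The explicit factorisation $\epsilon_{*} \simeq \chi_{*} \circ F$ you spell out is left implicit in the paper but is the same observation.
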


\begin{proof}
By \cref{thm:topological_part_of_hovey_stable_category_of_comodules_as_modules_over_the_cofibre_of_tau}, the left adjoint $\chi^{*}$ is fully faithful, hence the adjunction is an adjoint equivalence if and only if the image of $\Mod_{C\tau}(\synspectra_{E})$ generates all of $\stableE$ under colimits. 

By \cite{higher_algebra}[4.7.3.14] applied to the adjunction $\synspectra_{E}  \rightleftarrows \Mod_{C\tau}(\synspectra_{E})$, the latter $\infty$-category is generated by modules of the form $C\tau \otimes X$, where $X$ is a synthetic spectrum. Since $\synspectra_{E}$ itself is generated by suspensions of $\nu P$ for $P$ finite projective, see \cref{rem:synthetic_spectra_compactly_generated_by_suspensions_of_synthetic_analogues_of_finite_projectives}, it follows we can restrict to $X$ of such form. This ends the first part, since $\chi^{*} (C\tau \otimes \nu P) \simeq \epsilon^{*} \nu P \simeq E_{*}P$. The second one is immediate from \cref{lemma:for_landweber_exact_homology_theories_hoveys_stable_infty_cat_is_topological}. 
\end{proof}
At the risk of proving things slightly out of order, we will now show that the assumption of having plenty of finite projectives is not really critical, as one can get rid of it by working with hypercomplete objects. 

Let us first describe what are the hypercomplete objects we have in mind. In \S\ref{subsection:nue_local_spectra} below, we introduce the notion of an $\nu E$-local synthetic spectrum; that is, one local with respect to $\nu E \otimes -$ equivalences, and show that in terms of sheaves on finite projective spectra this corresponds to being hypercomplete, see \cref{prop:synthetic_spectrum_nue_local_iff_hypercomplete}. Due to the latter fact, we denote the $\infty$-category of $\nu E$-local synthetic spectra by $\hpsynspectra_{E}$.

On the algebraic side, we've described Hovey's $\infty$-category as the $\infty$-category of spherical sheaves of spectra on dualizable comodules and the subcategory of hypercomplete sheaves can be identified with the derived $\infty$-category $\dcat(\ComodE)$ by \cref{thm:derived_category_of_a_cg_grothendieck_abelian_category}, in particular, the latter is a localization of $\stableE$. In this context, the hypercomplete analogue of  \cref{thm:topological_part_of_hovey_stable_category_of_comodules_as_modules_over_the_cofibre_of_tau} is as follows. 

\begin{thm}
\label{thm:ctau_modules_in_hypercomplete_synthetic_spectra_same_as_the_derived_infty_category}
The functor $\chi_{*} \colon \stableE \rightarrow \Mod_{C\tau}(\synspectra_{E})$ restricts to a monoidal, $t$-exact equivalence $\dcat(\ComodE) \simeq \Mod_{C\tau}(\hpsynspectra_{E})$ between the derived $\infty$-category of comodules and $C\tau$-modules in $\nu E$-local synthetic spectra. If $E$ is homotopy commutative, this equivalence is symmetric monoidal. 
\end{thm}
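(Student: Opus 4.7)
The plan is to show that the adjunction $\chi_{*} \dashv \chi^{*}$ from \textbf{Theorem \ref{thm:topological_part_of_hovey_stable_category_of_comodules_as_modules_over_the_cofibre_of_tau}} restricts to an adjoint equivalence between the hypercomplete subcategories $D(\ComodE) \subseteq \stableE$ and $\Mod_{C\tau}(\hpsynspectra_{E}) \subseteq \Mod_{C\tau}(\synspectra_{E})$. The first step will be to check that $\chi^{*}$ sends $D(\ComodE)$ into $\Mod_{C\tau}(\hpsynspectra_{E})$. This reduces to the observation that the underlying functor $\epsilon^{*}: \stableE \rightarrow \synspectra_{E}$ is precomposition along the morphism of additive $\infty$-sites $E_{*}: \spectrafp \rightarrow \ComodE^{fp}$, and hence commutes with hypercompletion by \textbf{Proposition \ref{prop:precomposition_functor_on_spherical_sheaves_preserves_connectivity_and_commutes_with_hypercompletion}}; in particular it preserves hypercompleteness. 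For this to yield a functor into $C\tau$-modules in hypercomplete synthetic spectra one also needs $C\tau$ itself to be hypercomplete, which follows because $C\tau \simeq \chi^{*}(E_{*})$ by \textbf{Lemma \ref{lemma:cofibre_of_tau_tensored_with_representable_equivalent_to_right_adjoint_of_its_homology_in_the_heart_of_hoveys_category}} and $E_{*}$ is a discrete object of $\stableE$, so a discrete sheaf of spectra, so hypercomplete.

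Next, I would define the candidate left adjoint $\widehat{\chi}_{*}: \Mod_{C\tau}(\hpsynspectra_{E}) \rightarrow D(\ComodE)$ by $\widehat{\chi}_{*}(M) = \widehat{L}(\chi_{*} M)$, where $\widehat{L}: \stableE \rightarrow D(\ComodE)$ is hypercompletion. Adjointness is then immediate from Step 1, since $\chi^{*} X$ is already hypercomplete for $X \in D(\ComodE)$. The unit of this restricted adjunction is an equivalence: for $M \in \Mod_{C\tau}(\hpsynspectra_{E})$, the map $M \rightarrow \chi^{*}(\widehat{L}(\chi_{*} M))$ factors, using that $\chi^{*}$ commutes with $\widehat{L}$, as $M \rightarrow \widehat{L}(\chi^{*} \chi_{*} M) \simeq \widehat{L}(M) \simeq M$, where the first equivalence is the full faithfulness of $\chi_{*}$ from \textbf{Theorem \ref{thm:topological_part_of_hovey_stable_category_of_comodules_as_modules_over_the_cofibre_of_tau}} and the second is the hypercompleteness of $M$.

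The crux of the proof will be showing the counit $\widehat{L}(\chi_{*} \chi^{*} X) \rightarrow X$ is an equivalence for $X \in D(\ComodE)$. Here my plan is to sidestep the "plenty of finite projectives" hypothesis of \textbf{Proposition \ref{prop:hoveys_category_and_ctau_modules_equivalent_iff_we_have_plenty_of_projectives}} by exploiting the rigidity of the hypercomplete setting. The functor $\chi^{*}$ is $t$-exact with respect to the $t$-structures of \textbf{Corollary \ref{cor:t_structure_on_hoveys_stable_infty_category}} and \textbf{Proposition \ref{prop:a_t_structure_on_modules_in_synthetic_spectra}}, and its restriction to hearts is the identity of $\ComodE$ under the identifications $D(\ComodE)^{\heartsuit} \simeq \ComodE \simeq \Mod_{C\tau}(\hpsynspectra_{E})^{\heartsuit}$ (the second using $\pi_{0}^{\heartsuit} C\tau \simeq E_{*}$, the unit of $\ComodE$, so that modules over it in the heart are just comodules). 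Both target $\infty$-categories are right complete, and crucially both are left complete: $D(\ComodE)$ by standard properties of derived $\infty$-categories of Grothendieck abelian categories, and $\Mod_{C\tau}(\hpsynspectra_{E})$ because $\hpsynspectra_{E}$ is left complete (Postnikov towers converge for hypercomplete sheaves of spectra) and the forgetful functor to it is conservative and $t$-exact. A $t$-exact functor between right and left complete stable $\infty$-categories which induces an equivalence on the hearts is an equivalence: one uses left completeness to reduce to bounded-below objects, right completeness to further reduce to bounded objects, and then inducts on the Postnikov tower to reduce to the heart. The $t$-exactness asserted in the statement of the theorem is then immediate from that of $\chi^{*}$.

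The main obstacle I anticipate lies in carefully verifying left completeness of $\Mod_{C\tau}(\hpsynspectra_{E})$, which hinges on the left completeness of $\hpsynspectra_{E}$ as an $\infty$-category of hypercomplete sheaves of spectra on the excellent $\infty$-site $\spectrafp$; any subtlety in the interaction between hypercompletion and the $t$-structure on modules would surface here. Should that approach prove delicate, an alternative plan is to check the counit directly on homotopy sheaves $\pi_{n}$, using $t$-exactness of $\chi^{*}$ and the fact that both $\widehat{L}$ and the relevant forgetful functors are compatible with the $t$-structures on hearts.
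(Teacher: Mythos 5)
Your unit argument is clean and correct: using commutativity of $\chi^{*}$ with hypercompletion and full faithfulness of $\chi_{*}$ from Theorem \ref{thm:topological_part_of_hovey_stable_category_of_comodules_as_modules_over_the_cofibre_of_tau}, the factorization $M \rightarrow \widehat{L}(\chi^{*}\chi_{*}M) \simeq \widehat{L}(M) \simeq M$ does show the unit is an equivalence for every hypercomplete $M$. This is in fact slightly slicker than the paper's version, which reduces to checking on the generators $C\tau \otimes \nu P$.

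The counit argument, however, has a genuine gap. The claim that a $t$-exact functor between left and right complete stable $\infty$-categories inducing an equivalence on hearts must be an equivalence is false. The forgetful functor $D(\mathbb{Z}) \simeq \Mod_{H\mathbb{Z}}(\spectra) \rightarrow \spectra$ is $t$-exact, cocontinuous, both source and target are left and right complete, and the functor is an equivalence on hearts (both are abelian groups), yet it is not an equivalence, nor even fully faithful — the Steenrod operations live in the target but not in the source. The inductive argument you sketch on Postnikov towers breaks down precisely at the step where one would need $\pi_{-i}\Map(A,B)$, $i > 0$, for $A,B$ in the heart to agree on both sides; that is information about $\Ext$-groups in the ambient categories, not about the hearts alone. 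Left completeness is also being asked to do the wrong job: it is a statement about Postnikov \emph{limits}, whereas what you actually need for essential surjectivity (given full faithfulness and cocontinuity of $\widehat{\chi}_{*}$) is that $D(\ComodE)$ is generated by its heart under \emph{colimits} and shifts, which left completeness does not supply.

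The fix is close at hand, and is what the paper does: since $\widehat{\chi}_{*}$ is fully faithful and cocontinuous with $\widehat{\chi}_{*}(C\tau \otimes \nu P) \simeq E_{*}P$, it suffices to show the objects $E_{*}P$ with $P$ finite projective generate $D(\ComodE)$ under colimits and suspensions. This is where the covering lifting property of $E_{*}: \spectrafp \rightarrow \ComodE^{fp}$ (Theorem \ref{thm:homology_functor_has_covering_lifting_property}) earns its keep: every dualizable comodule admits a surjection from some $E_{*}P$, the dualizables generate $\ComodE$ under colimits, and $\ComodE$ generates $D(\ComodE)$ under colimits and suspensions because under the identification $D(\ComodE) \simeq \hpssheavesofspectra(\ComodE^{fp})$ the representables $\sigmainfty y(M)$ are discrete, hence hypercomplete, hence lie in the heart and generate. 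The $t$-exactness of the resulting equivalence is then immediate from $t$-exactness of $\chi^{*}$, which you had already observed — so your opening and closing observations are sound, but the middle step needs to be replaced by the generation argument.
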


\begin{proof}
We use the identification $\stableE \simeq Sh_{\Sigma}^{\spectra}(\ComodE^{fp})$ of \cref{thm:hovey_stable_homotopy_theory_of_comodules_as_spherical_sheaves} and further identify the derived $\infty$-category with the subcategory of hypercomplete sheaves. 

First we show that $\chi_{*}$ does restrict to a functor as above, so let $X$ be a hypercomplete object of Hovey's stable $\infty$-category. By \cref{prop:precomposition_functor_on_spherical_sheaves_preserves_connectivity_and_commutes_with_hypercompletion}, $\epsilon_{*} \colon \stableE \rightarrow \synspectra_{E}$ commutes with hypercompletion, in particular preserves hypercomplete objects and we deduce that $\epsilon_{*} X$ is hypercomplete. This is enough, since the latter is the underlying synthetic spectrum of $\chi_{*} X$.

It follows that there's an induced adjunction $\widehat{\chi}^{*} \dashv \chi_{*} \colon \Mod_{C\tau}(\hpsynspectra_{E}) \rightleftarrows \dcat(\ComodE)$, where $\widehat{\chi}^{*} = \widehat{L} \circ \chi^{*}$ with $\widehat{L}$ the hypercompletion functor. Again by \cref{prop:precomposition_functor_on_spherical_sheaves_preserves_connectivity_and_commutes_with_hypercompletion}, this restriction of $\chi_{*}$ is also cocontinuous, so that to verify that the unit of $\widehat{\chi}^{*} \dashv \chi_{*}$ is an equivalence it is enough to check that's the case for modules of the form $C \tau \otimes \nu P$ with $P$ finite projective. Note that the latter are hypercomplete by \cref{lemma:tensoring_with_cofibre_of_tau_a_discretization_on_representable_connective_synthetic_spectra} and generate all modules under colimits and suspensions by the argument given in the proof of \cref{thm:topological_part_of_hovey_stable_category_of_comodules_as_modules_over_the_cofibre_of_tau}. Then, since $\chi_{*}$ commutes with hypercompletion, we have 

\begin{center}
$\chi_{*} \widehat{\chi}^{*} (C\tau \otimes \nu P) \simeq \widehat{L} \chi_{*} \chi^{*} (C\tau \otimes \nu P) \simeq \widehat{L} (C\tau \otimes \nu P) \simeq C \tau \otimes \nu P$,
\end{center} 
where've used that the unit of $\chi^{*} \dashv \chi_{*}$ is an equivalence by \cref{thm:topological_part_of_hovey_stable_category_of_comodules_as_modules_over_the_cofibre_of_tau}. We deduce that the same is true for $\widehat{\chi}^{*} \dashv \chi_{*}$. 

This shows that $\widehat{\chi^{*}}$ is fully faithful, we now check that it is also essentially surjective. Since $\widehat{\chi^{*}} (C \tau \otimes \nu P) \simeq E_{*}P$, this is the same as verifying that $\dcat(\ComodE)$ is generated under colimits and suspensions by comodules of the form $E_{*}P$. By \cref{thm:homology_functor_has_covering_lifting_property}, the morphism of $\infty$-sites $E_{*} \colon \spectra_{E}^{fp} \rightarrow \ComodE^{fp}$ has the covering lifting property, it follows that any dualizable comodule admits a surjection from one of the form $E_{*}P$. Because dualizables generate all comodules under colimits, we deduce the same is true about comodules of the form $E_{*}P$. This implies that their suspensions must generate the whole derived $\infty$-category and we're done. 
\end{proof}

\begin{rem}[Tensor product of synthetic spectra and the derived tensor product]
Here we discuss one way in which the tensor product of synthetic spectra can be considered as a derived version of the usual tensor product of spectra. For simplicity, we assume that $E$ has plenty of finite projectives and is homotopy commutative, so that we have a symmetric monoidal equivalence $\stableE \simeq \Mod_{C\tau}(\synspectra_{E})$ by \cref{prop:hoveys_category_and_ctau_modules_equivalent_iff_we_have_plenty_of_projectives}.

In this context, for any synthetic spectrum $X$ one can consider $C \tau \otimes X$ as an object of the stable $\infty$-category of $E_{*}E$-comodules. The latter is essentially a variant on the $\infty$-category $\dcat(\ComodE)$ so that the functor $C\tau \otimes -$ can be considered as some form of ``derived $E$-homology'' of a synthetic spectrum. 

Note that this ``derived $E$-homology'' functor is not the same as the synthetic homology we have studied before, however, using \cref{lemma:tensoring_with_cofibre_of_tau_a_discretization_on_representable_connective_synthetic_spectra} one can show that if $X = \nu Y$, where $Y$ is an ordinary spectrum, then $C\tau \otimes \nu Y$ is contained in the heart of $\stableE$ and can be identified with the comodule $E_{*}Y$, so that $C\tau \otimes -$ extends the usual homology functor on spectra. 

An interesting feature of the synthetic approach is that this ``derived $E$-homology'' functor $C\tau \otimes - \colon \synspectra_{E} \rightarrow \stableE$ is strictly symmetric monoidal, unlike the ordinary homology functor defined on spectra, which is in general only lax symmetric monoidal. This is what we mean by saying that the tensor product of synthetic spectra is better behaved than the tensor product of spectra - the price to be paid here being that $\nu \colon \spectra \rightarrow \synspectra_{E}$ is also not in general symmetric monoidal. 
\end{rem}

\subsection{More on homotopy of synthetic spectra}

In this short section we compute certain homotopy classes of maps between synthetic spectra and relate them to homological algebra of comodules. More specifically, we will be interested in synthetic analogues $\nu X$ and homotopy classes of maps between them, the fundamental result will be that in non-negative Chow degrees, these are in fact topological.

The crucial tool to perform computations will be the relation between $C\tau$-modules and Hovey's stable $\infty$-category of \cref{thm:topological_part_of_hovey_stable_category_of_comodules_as_modules_over_the_cofibre_of_tau}, \cref{thm:ctau_modules_in_hypercomplete_synthetic_spectra_same_as_the_derived_infty_category}. This shows that these identifications are important not only for formal reasons, but also because they allow one to perform calculations by reducing homotopy to homological algebra.

Throughout this section, we will use $[-, -]$ to denote homotopy classes of maps, and we will use the subscript to denote the homological grading notation for these so that 
\[
[X, Y]_{t, w} \simeq [S^{t, w} \otimes X, Y]_{0, 0} \simeq Y^{-t, -w}(X).
\]

\begin{lemma}
\label{lemma:maps_from_synthetic_analogue_to_synthetic_analogue_tensor_ctau_compute_ext_groups_in_comodules}
Let $X, Y$ be spectra. Then, $[\nu Y, C\tau \otimes \nu X]_{t, w} \simeq \Ext_{E_{*}E}^{w-t, w}(E_{*}Y, E_{*}X)$, where on the left we have homotopy classes of maps of synthetic spectra. 
\end{lemma}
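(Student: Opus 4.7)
The plan is to reduce the computation to the identification of the heart of Hovey's stable $\infty$-category with comodules, together with the fact that $C\tau$-tensoring on synthetic analogues agrees with the right adjoint of $\epsilon_{*}$ applied to the $E$-homology comodule. The key input is the observation from the proof of \textbf{Theorem \ref{thm:topological_part_of_hovey_stable_category_of_comodules_as_modules_over_the_cofibre_of_tau}} that the unit map $C\tau \otimes Z \rightarrow \epsilon^{*} \epsilon_{*} Z$ is an equivalence for any $Z \in \synspectra_{E}$. Applied to $Z = \nu X$, this gives a natural equivalence $C\tau \otimes \nu X \simeq \epsilon^{*}(E_{*}X)$, where we view $E_{*}X$ as an object of the heart of $\stableE$.

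With this in hand, I would compute directly via the $\epsilon_{*} \dashv \epsilon^{*}$ adjunction of \textbf{Lemma \ref{lemma:there_exists_an_adjunction_between_synthetic_spectra_and_stable_category_of_comodules}}. Unwinding the definition of the bigraded homotopy classes, we have
\begin{align*}
[\nu Y, C\tau \otimes \nu X]^{t,w} &\simeq \pi_{0} \map_{\synspectra_{E}}(\Sigma^{-t,-w} \nu Y, \epsilon^{*}(E_{*}X)) \\
&\simeq \pi_{0} \map_{\stableE}(\epsilon_{*} \Sigma^{-t,-w} \nu Y, E_{*}X).
\end{align*}
Since $\epsilon_{*}$ is symmetric monoidal by \textbf{Lemma \ref{lemma:all_properties_of_homology_functor_on_synthetic_spectra}} and satisfies $\epsilon_{*} \nu P \simeq E_{*}P$ on finite projectives, it takes the synthetic sphere $S^{-t,-w} = \Sigma^{w-t} \nu S^{-w}$ to $\Sigma^{w-t}(E_{*}[-w])$, where $\Sigma^{w-t}$ is the topological shift in $\stableE$ and $[-w]$ is the internal grading shift of comodules. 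Therefore $\epsilon_{*}(\Sigma^{-t,-w} \nu Y) \simeq \Sigma^{w-t}(E_{*}Y)[-w]$.

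Combining, we reduce the problem to identifying $\pi_{0}\map_{\stableE}(\Sigma^{w-t}(E_{*}Y)[-w], E_{*}X)$ with the appropriate $\Ext$ group. This is immediate from \textbf{Corollary \ref{cor:homotopy_classes_of_maps_in_hoveys_stable_category_compute_ext_groups}}, which computes homotopy classes between objects of the heart of $\stableE$ as $\Ext$-groups over $E_{*}E$, together with the standard translation between internal shifts of the source and internal gradings of $\Ext$, namely $\Ext^{s}_{E_{*}E}(M[-w], N) \simeq \Ext^{s}_{E_{*}E}(M, N)_{w}$. This yields the desired bigraded identification, where the second grading coming out of the computation is $w$, confirming that the $s$ appearing in the statement is a typo for $w$.

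No step is really an obstacle; the only point requiring care is the bookkeeping of the two gradings, specifically tracking how the bigraded synthetic suspension $\Sigma^{-t,-w}$ translates under $\epsilon_{*}$ into a combination of topological suspension $\Sigma^{w-t}$ and internal comodule shift $[-w]$ in $\stableE$, and then how these two shifts reassemble into the bigraded $\Ext^{w-t,w}_{E_{*}E}(E_{*}Y, E_{*}X)$ on the algebraic side. Once the conventions are pinned down this is purely formal.
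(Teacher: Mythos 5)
Your proof takes a genuinely different route from the paper's. The paper passes to $C\tau$-modules via the free--forgetful adjunction, observes that $C\tau \otimes \nu Y$ and $C\tau \otimes \nu X$ both lie in the heart of $\synspectra_E$ by \textbf{Lemma \ref{lemma:tensoring_with_cofibre_of_tau_a_discretization_on_representable_connective_synthetic_spectra}} and are hence hypercomplete, and then transports the computation across the $t$-exact equivalence $\Mod_{C\tau}(\hpsynspectra_E) \simeq D(\ComodE)$ of \textbf{Theorem \ref{thm:ctau_modules_in_hypercomplete_synthetic_spectra_same_as_the_derived_infty_category}}. You instead adjoin across $\epsilon_* \dashv \epsilon^*$ and compute in $\stableE$ directly.

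There is a gap in the step $\epsilon_*(\Sigma^{-t,-w}\nu Y) \simeq \Sigma^{w-t}(E_*Y)[-w]$, which implicitly requires $\epsilon_*\nu Y$ to coincide with $E_*Y$ regarded as an object of the heart of $\stableE$. The paper establishes $\epsilon_*\nu P \simeq E_*P$ only for $P$ finite projective. For general $Y$, since $\epsilon_* \simeq \chi_* \circ (C\tau \otimes -)$ and $\chi_*$ is only fully faithful and right $t$-exact unless $E$ has plenty of finite projectives (\textbf{Proposition \ref{prop:hoveys_category_and_ctau_modules_equivalent_iff_we_have_plenty_of_projectives}}), the object $\epsilon_*\nu Y$ is connective with homotopy sheaves concentrated in degree zero, but this alone does not place it in the heart: the $t$-structure on $\stableE$ is not left complete in general, as $\stableE$ strictly contains $D(\ComodE)$ and so admits nonzero $\infty$-connective objects. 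Since $E_*X$ is coconnective, $\pi_0\map(-, E_*X)$ only sees $\tau_{\leq 0}$ of the source, and for $t - w > 0$, where the $\Ext$ groups become interesting, $\tau_{\leq 0}\Sigma^{w-t}\epsilon_*\nu(\Sigma^{-w}Y)$ depends on $\tau_{\leq t-w}\epsilon_*\nu(\Sigma^{-w}Y)$, i.e., on exactly the unknown higher truncations. Your opening step has the same difficulty: from the unit equivalence $C\tau\otimes\nu X \simeq \epsilon^*\epsilon_*\nu X$ one cannot conclude $\epsilon_*\nu X \simeq E_*X$ without the conservativity of $\epsilon^*$, which is not available for general $E$; the correct justification of $C\tau\otimes\nu X \simeq \epsilon^*(E_*X)$ goes instead through $C\tau \otimes \nu X \simeq (\nu X)_{\leq 0}$ and the equivalence of hearts induced by $\epsilon^*$. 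The paper's passage through $C\tau \otimes -$ (producing a visibly heart object of $\synspectra_E$ before any functor is applied) and the genuine $t$-exact equivalence with $D(\ComodE)$ sidesteps the issue entirely.
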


\begin{proof}
Since $C\tau \otimes \nu X$ is a $C\tau$-module, we have $[\nu Y, C\tau \otimes \nu X]_{t, w} \simeq [C\tau \otimes \nu Y, C\tau \otimes \nu X]^{C\tau}_{t, w}$, where on the right hand side we have homotopy classes of maps of $C\tau$-modules.  By \cref{lemma:tensoring_with_cofibre_of_tau_a_discretization_on_representable_connective_synthetic_spectra}, $C\tau \otimes \nu Y$ and $C\tau \otimes \nu X$ are contained in the heart of synthetic spectra, in particular are coconnective and hence hypercomplete as sheaves of spectra by a combination  \cite[1.3.3.3, (1)]{lurie_spectral_algebraic_geometry} and  \cite[6.5.2.9]{lurie_higher_topos_theory}. 

By \cref{thm:ctau_modules_in_hypercomplete_synthetic_spectra_same_as_the_derived_infty_category}, there exists a $t$-exact equivalence $\Mod_{C\tau}(\hpsynspectra_{E}) \simeq \dcat(\ComodE)$ between $C\tau$-modules in hypercomplete synthetic spectra and the derived $\infty$-category of $E_{*}E$-comodules. It follows that $[C\tau \otimes \nu Y, C\tau \otimes \nu X]^{C\tau}_{t, w}$ can be identified with homotopy classes of maps between two objects of $\dcat(\ComodE)^{\heartsuit}$ and thus are isomorphic to $\Ext$-groups in comodules.  One checks that the gradings work out as above. 
\end{proof}

\begin{prop}
\label{prop:long_exact_sequence_relating_synthetic_homotopy_with_ext_groups}
Let $X, Y$ be spectra and consider their synthetic analogues, $\nu Y$ and $\nu X$. Then, we have a long exact sequence

\begin{center}
$\ldots \rightarrow [\nu Y, \nu X]_{t, w+1} \rightarrow [\nu Y, \nu X]_{t, w} \rightarrow \textnormal{Ext}_{E_{*}E}^{w-t, w}(E_{*}Y, E_{*}X) \rightarrow [\nu Y, \nu X]_{t-1, w+1} \rightarrow \ldots$
\end{center}
where the maps $ [\nu Y, \nu X]_{t, w+1} \rightarrow [\nu Y, \nu X]_{t, w}$ are given by multiplication by $\tau$. 
\end{prop}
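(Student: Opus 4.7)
The proof plan is essentially to apply $\mathrm{map}(\nu Y, -)$ to a cofibre sequence and identify the terms, so the work has already been done by the preceding lemma. The starting point is the defining cofibre sequence
\[
S^{0,-1} \xrightarrow{\tau} S^{0,0} \longrightarrow C\tau
\]
in $\synspectra_{E}$. Since tensoring with $\nu X$ preserves cofibre sequences (the tensor product is cocontinuous in each variable by \textbf{Proposition \ref{prop:synthetic_spectra_is_a_stable_presentable_infty_category}}), this produces a cofibre sequence
\[
\Sigma^{0,-1}\nu X \xrightarrow{\tau\otimes \nu X} \nu X \longrightarrow C\tau \otimes \nu X
\]
in synthetic spectra, where I am using \textbf{Proposition \ref{prop:transfer_map_can_be_identified_with_tau}} to identify the leftmost map as tensoring with $\tau$.

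Next, I would apply the functor $\map(\nu Y, -)$ and pass to bigraded homotopy classes. Because $\synspectra_{E}$ is stable, this yields a long exact sequence
\[
\ldots \to [\nu Y, \Sigma^{0,-1}\nu X]^{t,w} \to [\nu Y, \nu X]^{t,w} \to [\nu Y, C\tau\otimes \nu X]^{t,w} \to [\nu Y, \Sigma^{1,-1}\nu X]^{t,w} \to \ldots
\]
The two suspended terms unwind as $[\nu Y,\nu X]^{t,w+1}$ and $[\nu Y,\nu X]^{t-1,w+1}$ under the bigraded suspension conventions fixed above, and the middle term was already computed by \textbf{Lemma \ref{lemma:maps_from_synthetic_analogue_to_synthetic_analogue_tensor_ctau_compute_ext_groups_in_comodules}} to be $\Ext_{E_{*}E}^{w-t,w}(E_{*}Y,E_{*}X)$. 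Substituting yields the asserted long exact sequence.

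Finally, I would verify that the map $[\nu Y,\nu X]^{t,w+1}\to [\nu Y,\nu X]^{t,w}$ in this sequence is indeed multiplication by $\tau$. But this is immediate from the construction: that map is induced by post-composition with $\tau\otimes \nu X: \Sigma^{0,-1}\nu X \to \nu X$, and under the $\pi_{*,*}S^{0,0}$-module structure on $[\nu Y,\nu X]^{*,*}$, this post-composition is precisely the action of $\tau \in \pi_{0,-1}S^{0,0}$. There is no genuine obstacle; the only thing to be careful about is matching the synthetic grading conventions and the statement of \textbf{Lemma \ref{lemma:maps_from_synthetic_analogue_to_synthetic_analogue_tensor_ctau_compute_ext_groups_in_comodules}} to the $(w-t,w)$ bidegree appearing in $\Ext$.
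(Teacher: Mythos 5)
Your proof is correct and follows the paper's own argument essentially verbatim: the paper also tensors the cofibre sequence $S^{0,-1}\xrightarrow{\tau}S^{0,0}\to C\tau$ with $\nu X$, applies $[\nu Y,-]^{*,*}$ to obtain the long exact sequence, and then substitutes the identification $[\nu Y, C\tau\otimes\nu X]^{t,w}\simeq\Ext_{E_{*}E}^{w-t,w}(E_{*}Y,E_{*}X)$ from \textbf{Lemma \ref{lemma:maps_from_synthetic_analogue_to_synthetic_analogue_tensor_ctau_compute_ext_groups_in_comodules}}. Your closing remark about checking that the connecting map is really multiplication by $\tau$ is a sensible sanity check that the paper leaves implicit.
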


\begin{proof}
We have a cofibre sequence $\Sigma^{0, -1} X \rightarrow X \rightarrow C\tau \otimes X$, which induces a long exact sequence of the form 

\begin{center}
$\ldots \rightarrow [\nu Y, \nu X]_{t, w+1} \rightarrow [\nu Y, \nu X]_{t, w} \rightarrow [\nu Y, C\tau \otimes \nu X]_{t, w} \rightarrow [\nu Y, \nu X]_{t-1, w+1} \rightarrow \ldots$,
\end{center}
so that we only have to show $[\nu Y, C\tau \otimes \nu X]_{t, w} \simeq \textnormal{Ext}_{E_{*}E}^{w-t, w}(E_{*}Y, E_{*}X)$, which is exactly \cref{lemma:maps_from_synthetic_analogue_to_synthetic_analogue_tensor_ctau_compute_ext_groups_in_comodules}. 
\end{proof}

\begin{thm}
\label{thm:homotopy_of_synthetic_analogues_is_topological_in_non_negative_chow_degree}
Let $X, Y$ be spectra. Then, the natural map $[\nu Y, \nu X]_{t, w} \rightarrow [Y, X]_{t}$ induced by the $\tau$-inversion is an isomorphism in non-negative Chow degrees; that is, when $t - w \geq 0$. 
\end{thm}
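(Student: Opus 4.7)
The plan is to combine the long exact sequence of Proposition~\ref{prop:long_exact_sequence_relating_synthetic_homotopy_with_ext_groups} with the vanishing of $\Ext$ in negative cohomological degree. Extending the displayed portion one step to the left using the natural periodicity of the sequence, it reads
\begin{equation*}
\cdots \to \Ext_{E_*E}^{w-t-1,\,w}(E_*Y, E_*X) \to [\nu Y, \nu X]^{t, w+1} \xrightarrow{\tau} [\nu Y, \nu X]^{t, w} \to \Ext_{E_*E}^{w-t,\,w}(E_*Y, E_*X) \to \cdots,
\end{equation*}
so multiplication by $\tau$ is an isomorphism as soon as both flanking $\Ext$-groups vanish. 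Because $\Ext^{s,\,*}_{E_*E}$ is trivial for $s < 0$, both vanish whenever $w \leq t - 1$.

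Next I would identify the map of the theorem with the canonical map into the filtered colimit obtained by iterating $\tau$-multiplication. By Proposition~\ref{prop:tau_inversion_cocontinuous_symmetric_monoidal_left_inverse_to_synthetic_analogue} and Theorem~\ref{thm:tau_invertible_synthetic_spectra_are_just_spectra}, the map $[\nu Y, \nu X]^{t, w} \to [Y, X]^t$ is induced by the unit $\nu X \to Y(X)$ of the $\tau$-inversion localization; and by Proposition~\ref{prop:tau_inversion_functor_exists}, $Y(X) \simeq \varinjlim_{k \geq 0} \Sigma^{0, k}\nu X$ with connecting maps given by $\tau$. Under this presentation our map factors as the canonical map $[\nu Y, \nu X]^{t, w} \to \varinjlim_k [\nu Y, \nu X]^{t, w-k}$ into the colimit, which one identifies with $[Y, X]^t$.

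Granted the identification, the conclusion is immediate: assuming $t - w \geq 0$, each subsequent $\tau$-map $[\nu Y, \nu X]^{t, w - k + 1} \to [\nu Y, \nu X]^{t, w - k}$ for $k \geq 1$ has target weight $w - k \leq w - 1 \leq t - 1$ and is thus an isomorphism by the first paragraph. The tower is therefore constant from $[\nu Y, \nu X]^{t, w}$ downward, and this value must equal $[Y, X]^t$. The main obstacle is precisely the filtered-colimit identification in the middle paragraph, since $\nu Y$ fails to be compact for arbitrary $Y$ and mapping out of $\nu Y$ need not commute with the colimit defining $Y(X)$; one circumvents this by using that the stabilization already proved forces the $\tau$-tower to be eventually constant, so its colimit is computed by any single entry in the stable range, and a direct comparison through the adjunction $\tau^{-1} \dashv \iota$ of Theorem~\ref{thm:tau_invertible_synthetic_spectra_are_just_spectra} confirms that the stable value is indeed $[Y, X]^t$.
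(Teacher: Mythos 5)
Your core strategy — use the long exact sequence of Proposition~\ref{prop:long_exact_sequence_relating_synthetic_homotopy_with_ext_groups} together with the vanishing of $\Ext$ in negative cohomological degree to show $\tau$ acts by isomorphisms in the relevant range, then reduce to a base case settled via the $\tau$-inversion functors — is exactly the paper's, and your range analysis of the $\Ext$-groups is correct.

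The middle of the argument, however, takes an unnecessary detour. Writing the map as $[\nu Y, \nu X]^{t,w} \to \varinjlim_k [\nu Y, \nu X]^{t,w-k}$ and then worrying about whether that colimit computes $[\nu Y, Y(X)]^{t,w}$ is precisely the compactness problem you flag, and your proposed ``circumvent'' never actually re-establishes the colimit identification: what you end up using is simply that the $\tau$-multiplications form a commutative ladder between $[\nu Y, \nu X]^{t,\ast}$ and $[\nu Y, Y(X)]^{t,\ast}$, that all the maps on the $Y(X)$ side are isomorphisms since $Y(X)$ is $\tau$-invertible, that the maps on the $\nu X$ side are isomorphisms for weight $\le t$ by the $\Ext$ vanishing, and that the vertical map is an isomorphism at the single stage $w=t$. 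This last point — the Chow degree zero base case — is exactly where the paper applies the full faithfulness of $\nu$ (Corollary~\ref{cor:synthetic_analogue_a_fully_faithful_embedding_of_infinity_categories}, itself a consequence of Theorem~\ref{thm:tau_invertible_synthetic_spectra_are_just_spectra} plus the connective cover statement in Proposition~\ref{prop:spectral_yoneda_embedding_the_tau_inversion_of_the_synthetic_analogue}). Your phrase ``a direct comparison through the adjunction $\tau^{-1}\dashv\iota$ confirms that the stable value is $[Y,X]^t$'' is gesturing at this, but as written it leaves the key step unargued; spelling it out, one needs $[\nu Y, \nu X]^{t,t} = [\nu\Sigma^{-t}Y, \nu X] = [\nu\Sigma^{-t}Y, (Y(X))_{\ge 0}] = [\nu\Sigma^{-t}Y, Y(X)] = [Y(\Sigma^{-t}Y), Y(X)] = [\Sigma^{-t}Y, X]$, using connectivity of $\nu\Sigma^{-t}Y$, $\tau$-invertibility of $Y(X)$, and full faithfulness of $Y$ in turn. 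Once that is in place the filtered colimit can be dropped from the argument entirely, which is what the paper does.
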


\begin{proof}
Let's first assume that $t - w = 0$. In this case, we're claiming that the natural map
\[
[\nu Y, \nu X]_{t, t} \simeq [\nu (\Sigma^{t}Y), \nu X]_{0, 0} \rightarrow [\Sigma^{t} Y, X]_{0} \simeq [Y, X]_{t}
\]
given by $\tau$-inversion of is an isomorphism. This is a consequence of full faithfulness of $\nu$, which was \cref{cor:synthetic_analogue_a_fully_faithful_embedding_of_infinity_categories}, since $\tau$-inversion is a one-sided inverse to $\nu$ by \cref{prop:tau_inversion_cocontinuous_symmetric_monoidal_left_inverse_to_synthetic_analogue}. 

It's now enough to show that when $t - w \geq 0$, then the map $[\nu Y, \nu X]_{t, w} \rightarrow [\nu Y, \nu X]_{t, w-1}$ given by multiplication by $\tau$ is an isomorphism. By \cref{prop:long_exact_sequence_relating_synthetic_homotopy_with_ext_groups}, this map participates in a long exact sequence whose fragment is given by 

\begin{center}
$\textnormal{Ext}^{w-t-2, w-1}_{E_{*}E}(E_{*}Y, E_{*}X) \rightarrow [\nu Y, \nu X]_{t, w} \rightarrow [\nu Y, \nu X]_{t, w-1} \rightarrow \textnormal{Ext}^{w-t-1, w-1}_{E_{*}E}(E_{*}Y, E_{*}X)$
\end{center}
and we see that under the assumption of $t - w \geq 0$, both outer $\textnormal{Ext}$-groups necessarily vanish, ending the proof. 
\end{proof}

\begin{rem}
\label{rem:non_negative_chow_degree_homotopy_of_synthetic_analogues_as_a_ctau_module}
If we set the source $Y$ to be $S^{0,0} \simeq \nu S^{0}$, then \cref{thm:homotopy_of_synthetic_analogues_is_topological_in_non_negative_chow_degree} is a more elaborate version of \cref{cor:homotopy_of_synthetic_analogues_in_non_negative_chow_degree}, which said that $\pi_{*, *} \nu X$ coincides with homotopy groups of $X$ in non-negative Chow degrees.

More precisely, we see that if by $(\pi_{*, *} \nu X)_{\mathrm{Chow} \geq 0}$ we denote the part concentrated in non-negative Chow degrees, then we have an isomorphism $(\pi_{*, *} \nu X)_{\mathrm{Chow} \geq 0} \simeq \pi_{*} X \otimes _{\mathbb{Z}} \mathbb{Z}[\tau]$ of bigraded $\mathbb{Z}[\tau]$-modules, with the copy of $\pi_{*}X$ concentrated in Chow degree zero. Moreover, the proof of \cref{thm:homotopy_of_synthetic_analogues_is_topological_in_non_negative_chow_degree} makes it clear that the negative Chow degree homotopy groups of $\nu X$ are controlled by $\Ext_{E_{*}E}(E_{*}, E_{*}X)$. 
\end{rem}

We can say more about the homotopy of $\nu X$ if we put more assumptions, it is clear that what is needed is some control over the relation between the homotopy of $X$ and its $E$-homology. We will prove a result of this type assuming that $X$ is a homotopy $E$-module, although a diligent reader will notice that slightly less is needed. 

\begin{prop}
\label{prop:homotopy_of_synthetic_analogues_of_homotopy_e_modules}
Let $M$ be a spectrum which is a homotopy $E$-module. Then $\pi_{*, *} \nu M$ vanishes in negative Chow degree, so that $\pi_{*, *} \nu M \simeq \pi_{*}M \otimes _{\mathbb{Z}} \mathbb{Z}[\tau]$. 
\end{prop}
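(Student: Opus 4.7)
The plan is to feed the extra structure of $E_*M$ as an extended comodule into the long exact sequence of \textbf{Proposition \ref{prop:long_exact_sequence_relating_synthetic_homotopy_with_ext_groups}} with $Y=S^{0}$ and $X=M$. Since $M$ is a homotopy $E$-module, the comodule K\"unneth isomorphism (\textbf{Remark \ref{rem:kunneth_iso_also_holds_in_comodule_form}}) gives $E_*M\simeq E_*E\otimes_{E_*}M_*$, so $E_*M$ is cofree. The cofree-forgetful adjunction together with the projectivity of $E_*$ over itself then yields $\Ext^{s,w}_{E_*E}(E_*,E_*M)=0$ for all $s>0$, while for $s=0$ the group is canonically identified with $M_w$ via the primitives.

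Plugging this into the long exact sequence, both $\Ext$-groups flanking $\tau\colon \pi_{t,w+1}\nu M\to \pi_{t,w}\nu M$ vanish whenever $w-t\geq 2$, so $\tau$ is an isomorphism throughout Chow degree $\leq -2$. At $w=t$ only the two degree-zero $\Ext$-groups survive, and combining this with \textbf{Theorem \ref{thm:homotopy_of_synthetic_analogues_is_topological_in_non_negative_chow_degree}} to identify $\pi_{t,t}\nu M=\pi_t M$ produces the five-term exact sequence
$$0\to \pi_{t,t+1}\nu M\xrightarrow{\tau}\pi_t M\xrightarrow{\rho}M_t\to \pi_{t-1,t+1}\nu M\xrightarrow{\tau}\pi_{t-1,t}\nu M\to 0.$$

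The technical crux of the plan is the identification of $\rho$ with the Hurewicz map. Chasing the composite through the $C\tau\otimes(-) \dashv \mathrm{forget}$ adjunction and the embedding $\chi_*$ of \textbf{Theorem \ref{thm:topological_part_of_hovey_stable_category_of_comodules_as_modules_over_the_cofibre_of_tau}} shows that $\rho$ carries $[f\colon S^{t}\to M]$ to $E_* f\in \Hom_{E_*E}(E_*S^{t},E_*M)$, i.e.\ to the Hurewicz image $f_*([S^t])\in (E_*M)_t$, which is automatically primitive because the fundamental class $[S^t]$ is. For $M$ a homotopy $E$-module the unit axiom of the action supplies a splitting of the Hurewicz map; under the canonical identification $M_* \xrightarrow{\simeq} (E_*M)^{\mathrm{prim}}$, $m\mapsto 1\otimes m$, the map $\rho$ becomes the tautological identification $\pi_t M = M_t$, hence an isomorphism.

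With $\rho$ an isomorphism, exactness forces $\pi_{t,t+1}\nu M=0$ for all $t$ and makes $\tau\colon \pi_{t-1,t+1}\nu M\xrightarrow{\simeq}\pi_{t-1,t}\nu M$. These two facts together yield $\pi_{s,s+2}\nu M=0$ for all $s$, and the isomorphism in Chow degree $\leq -2$ then cascades: $\pi_{s,s+k}\nu M\simeq \pi_{s,s+k-1}\nu M\simeq\cdots\simeq \pi_{s,s+2}\nu M=0$ for every $k\geq 2$. Combined with the Chow $\geq 0$ description noted in \textbf{Remark \ref{rem:non_negative_chow_degree_homotopy_of_synthetic_analogues_as_a_ctau_module}}, this gives the decomposition $\pi_{*,*}\nu M\simeq \pi_* M\otimes_{\mathbb{Z}}\mathbb{Z}[\tau]$. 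The main obstacle will be the clean bookkeeping identifying $\rho$ with the Hurewicz map through the intervening adjunctions; the rest is formal propagation in the long exact sequence.
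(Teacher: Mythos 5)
Your argument is correct and takes essentially the same route as the paper's: both feed the cofree identification $E_*M \simeq E_*E \otimes_{E_*} M_*$ into the long exact sequence of Proposition \ref{prop:long_exact_sequence_relating_synthetic_homotopy_with_ext_groups} to kill the higher $\Ext$-groups, and identify the remaining Chow-degree-zero boundary map with the Hurewicz isomorphism of Remark \ref{rem:kunneth_iso_also_holds_in_comodule_form}. The only cosmetic difference is in how the vanishing propagates: the paper deduces that $\tau$ acts injectively in all bidegrees from surjectivity of every boundary map and pushes the Chow-degree $-1$ vanishing downward, whereas you handle Chow degrees $-1$ and $-2$ explicitly via a five-term sequence and then cascade via the $\tau$-isomorphisms available in lower Chow degree.
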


\begin{proof}
By \cref{rem:non_negative_chow_degree_homotopy_of_synthetic_analogues_as_a_ctau_module}, the non-negative Chow degree homotopy of $\nu M$ is as given above, so that the second part follows from the first. Again, by \cref{prop:long_exact_sequence_relating_synthetic_homotopy_with_ext_groups} we have a long exact sequence of the form

\begin{center}
$\ldots \rightarrow \textnormal{Ext}_{E_{*}E}^{w-t-1, w}(E_{*}, E_{*}M) \rightarrow \pi_{t, w+1} \nu M \rightarrow \pi_{t, w} \nu M \rightarrow \textnormal{Ext}^{w-t, w}_{E_{*}E}(E_{*}Y, E_{*}M) \rightarrow \ldots$.
\end{center}
If $t - w = 0$, the left $\textnormal{Ext}$-group vanishes and we have $\pi_{t, w} \nu M \simeq \pi_{t} M$. Under the latter identification, the right map corresponds to the Hurewicz homomorphism $\pi_{t} M \rightarrow \textnormal{Hom}_{E_{*}E}(E_{*}, E_{*}M)$ which is an isomorphism since $M$ is a homotopy $E$-module, see \cref{rem:kunneth_iso_also_holds_in_comodule_form}. We deduce that $\pi_{*, *} \nu M$ vanishes in Chow degree $t-w = -1$. 

By \cref{rem:kunneth_iso_also_holds_in_comodule_form} we have $E_{*}M \simeq E_{*}E \otimes _{E_{*}} M_{*}$, so that $\textnormal{Ext}_{E_{*}E}(E_{*}, E_{*}M) \simeq \textnormal{Ext}_{E_{*}}(E_{*}, M_{*})$ vanishes in positive homological degree. It follows from that and the observation above that the boundary maps $\pi_{t, w} \nu M \rightarrow \textnormal{Ext}_{E_{*}E}(E_{*}, E_{*}M)$ are always surjective. We deduce from the long exact sequence above that $\tau$ acts injectively on $\pi_{*, *} \nu M$ and since we've proven that the latter group vanishes in Chow degree $t - w = -1$, it must vanish in all negative Chow degrees.
\end{proof}

\begin{rem}
\label{rem:synthetic_e_homology_of_e}
In the particular case of  a free homotopy module $M \simeq E \otimes X$, \cref{prop:homotopy_of_synthetic_analogues_of_homotopy_e_modules} shows that $\nu E_{*, *} \nu X \simeq \pi_{*, *} \nu (E \otimes X) \simeq E_{*}X[\tau]$. In particular, $\nu E_{*, *} \simeq E_{*}[\tau]$ and similarly $\nu E_{*, *} \nu E \simeq E_{*}E[\tau]$. This is a restatement of our previous result of this type, which was \cref{prop:homology_of_synthetic_analogues}. Notice this implies that $(\nu E_{*, *}, \nu E_{*, *} \nu E)$ is a flat Hopf algebroid. 
\end{rem}

We finish the section by describing two ways in which homotopy theory of synthetic spectra intertwines with homotopy theory of spectra, the ``$C\tau$-philosophy'' of Gheorghe, Isaksen, Wang and Xu, and a certain spectral sequence associated to the synthetic Whitehead towers.

\begin{rem}
\label{rem:ctau_philosophy}
Let $X$ be a spectrum and $A$ a homotopy associative ring spectrum. In this context, one can consider the $A$-based Adams tower of the form 

\begin{center}
	\begin{tikzpicture}
		\node (TLL) at (0, 0) {$ X_{0} $};
		\node (BLL) at (0, -1) {$ A \otimes X_{0} $};
		\node (TL) at (-2, 0) {$ X_{1} $};
		\node (BL) at (-2, -1) {$ A \otimes X_{1} $};
		\node (T) at (-4, 0) {$ \ldots $};
		
		\draw [->] (TLL) to (BLL);
		\draw [->] (TL) to (BL);
		\draw [->] (TL) to (TLL);
		\draw [->] (T) to (TL);
	\end{tikzpicture},
\end{center}
where $X_{0} \simeq X$ and each $X_{n+1} \rightarrow X_{n} \rightarrow A \otimes X_{n}$ is fibre. Applying $\pi_{*}(-) \simeq [S^{0}, -]$ to this tower yields a spectral sequence which converges to the homotopy groups of the $A$-nilpotent completion $X^{A}$, which in reasonable cases coincides with $A$-localization. In fact, there is little special about the $\infty$-category $\spectra$, as this construction is formal enough to yield a spectral sequence whenever $X, A$ are objects of a presentable, symmetric monoidal stable $\infty$-category; the Adams spectral sequence is developed in this generality in \cite{mathew2017nilpotence}.

In particular, we can work with $\synspectra_{E}$ and take as our objects the synthetic analogues $\nu X$, $\nu A$ of spectra $X$, $A$ as above. If $\nu A_{*, *} \nu A$ is flat over $\nu A_{*, *}$, then under technical conditions similar to the usual topological ones one can show that the resulting spectral sequence is of the form 

\begin{center}
$\Ext^{s,t,w}_{\nu A_{*, *} \nu A}(\nu A_{*, *}, \nu A_{*, *} \nu X) \rightarrow \pi_{t-s, w} (\nu X)^{\wedge}_{\nu A}$,
\end{center}
where the latter is the nilpotent completion of $\nu X$; this is the $\nu A$-based synthetic Adams spectral sequence. Note that there is another variable implicit here, namely the choice of the Adams-type homology theory $E$ to base synthetic spectra on and although we've seen for example in \cref{thm:homotopy_of_synthetic_analogues_is_topological_in_non_negative_chow_degree} that a lot of the theory is independent of that choice, one could argue that the interesting part are the things that are not independent. 

In any case, by \cref{thm:topological_part_of_hovey_stable_category_of_comodules_as_modules_over_the_cofibre_of_tau} and \cref{thm:tau_invertible_synthetic_spectra_are_just_spectra} we have a span of symmetric monoidal stable $\infty$-categories 

\begin{center}
	\begin{tikzpicture}
		\node (L) at (-3, 0) {$ \spectra $};
		\node (M) at (0, 0) {$ \synspectra_{E} $};
		\node (R) at (3, 0) {$ \stableE $};
		
		\draw [->] (M) to node[above]  {$ \tau^{-1} $} (L); 
		\draw [->] (M) to node[above] {$ C\tau \otimes - $} (R);
	\end{tikzpicture},
\end{center}
notice that we have $\tau^{-1} \nu X \simeq X$, $\tau^{-1} \nu A \simeq A$ and similarly $C\tau \otimes \nu A \simeq E_{*}A$, $C\tau \otimes \nu X \simeq E_{*}X$, at least in the case when $E$ has plenty of finite projectives, where we view $E_{*}A$ and $E_{*}X$ as comodules contained in the heart of Hovey's stable $\infty$-category. It follows that the synthetic $\nu A$-based Adams spectral sequence maps into the usual $A$-based topological Adams-spectral sequence, as well as a purely algebraic Adams spectral sequence computing $\Ext_{E_{*}E}(E_{*}, E_{*}X)$. Moreover, these maps are not arbitrary, but rather are given by $\tau$-inversion and ``killing'' $\tau$ in a precise sense and so in some sense they complement each other. This relation allows one to deduce topological Adams differentials from algebraic ones and vice versa. 

This method of computing differentials in the topological Adams spectral sequence is due to Gheorghe, Isaksen, Wang and Xu and is what we call the ``$C\tau$-philosophy''. In their work, they develop a formal context like above where the $\infty$-category of synthetic spectra is replaced by the $p$-complete cellular motivic category. We will prove later in \cref{thm:after_p_completion_motivic_category_coincides_with_even_synthetic_spectra} that after $p$-completion, the cellular motivic category coincides with even synthetic spectra based on $\MU$ in the sense of \cref{defin:even_synthetic_spectrum}. Through this equivalence, the motivic $C\tau$-context can be considered as a special case of the synthetic one. 

The comparison with the motivic category also establishes that the relation between the topological and algebraic Adams spectral sequences exhibited by the synthetic $\infty$-category is in general non-trivial, as that is known to be the case in the motivic context. In fact, a consistent use of the motivic $C\tau$-methods has led to dramatic advances in the knowledge of the classical stable homotopy groups at $p = 2$ \cite{isaksen_stable_stems, more_stable_stems}. 
\end{rem}

\begin{rem}
As described in \cref{rem:ctau_philosophy}, if $A$ is a ring spectrum, then the $\nu A$-based Adams spectral sequence in synthetic spectra interpolates between the $A$-based Adams spectral sequence of spectra and an algebraic one based on $E_{*}A$. In the particular case of $A = E$, \cref{rem:synthetic_e_homology_of_e} implies that 

\begin{center}
$\pi_{*, *} (\nu E \otimes \ldots \otimes \nu X) \simeq \pi_{*, *} \nu (E \otimes \ldots \otimes X) \simeq \pi_{*} (E \otimes \ldots \otimes X) \otimes _{\mathbb{Z}} \mathbb{Z}[\tau]$,
\end{center}
so that both the first and second page of the synthetic $\nu E$-Adams spectral sequence coincide with the topological ones extended to $\mathbb{Z}[\tau]$. The $p$-complete motivic analogue of this statement is \cref{lemma:motivic_adams_novikov_as_topological_extended_by_tau}, notice that the synthetic version holds integrally and with no restriction on the spectrum $X$. 
\end{rem}

\begin{rem}
\label{rem:postnikov_filtration_of_spectral_embedding_of_a_spectrum}
We now describe the connection between the Whitehead towers in synthetic spectra and the topological Adams spectral sequence. For simplicity, let us assume that $E$ is an associative ring spectrum and let $X$ be an $E$-nilpotent complete spectrum in the sense that the cosimplicial object

\begin{center}
$X \rightarrow E \otimes X \rightrightarrows E \otimes E \otimes X \triplerightarrow \ldots$
\end{center}
is a limit diagram. Applying the spectral Yoneda embedding we obtain a limit diagram 

\begin{center}
$Y(X) \rightarrow Y(E \otimes X) \rightrightarrows Y(E \otimes E \otimes X) \triplerightarrow \ldots$
\end{center}
of synthetic spectra. One can show that taking $k$-connective covers preserves this limit, this is equivalent to the limit of 

\begin{center}
$Y(E \otimes X)_{\geq k} \rightrightarrows Y(E \otimes E \otimes X)_{\geq k} \triplerightarrow \ldots$
\end{center}
being $k$-connective as a synthetic spectrum, which one verifies using \cref{thm:t_structure_homotopy_groups_coincide_with_synthetic_homology} and the homotopy of a totalization spectral sequence. As a consequence, there's an equivalence of spectra

\begin{center}
$Y(X)_{\geq k}(P) \simeq \ \varprojlim \ [Y(E \otimes X)_{\geq k}(P) \rightrightarrows Y(E \otimes E \otimes X)_{\geq k}(P) \triplerightarrow \ldots]$
\end{center}
for any finite projective $P$. Since $E \otimes \ldots \otimes E \otimes X$ are $E$-modules, one shows similarly to \cref{prop:homotopy_of_synthetic_analogues_of_homotopy_e_modules} that $Y(E \otimes \ldots \otimes E \otimes X)_{\geq k}(P) \simeq F(P, E \otimes \ldots \otimes E \otimes X)_{\geq k}$ on the nose, rather than just up to sheafification. Then, the above limit statement implies that 

\begin{center}
$Y(X)_{\geq k}(P) \simeq \varprojlim \ [(DP \otimes E \otimes X)_{\geq k} \rightrightarrows (DP \otimes E \otimes E \otimes X)_{\geq k} \triplerightarrow \ldots]$,
\end{center}
which means that Whitehead towers in synthetic spectra arise by truncating the Adams resolution rather than the spectrum itself. It follows that the tower of spectra 

\begin{center}
$\ldots \rightarrow Y(X)_{\geq k}(S^{0}) \rightarrow Y(X)_{\geq k-1} (S^{0}) \rightarrow \ldots$
\end{center}
is the \emph{d\'{e}calage} in the sense of Deligne \cite{deligne1971theorie} of the $E$-based Adams tower of $X$ and thus the associated spectral sequence agrees, up to suitable reindexing, with the usual $E$-Adams spectral sequence computing $\pi_{*}X$.

It is a deep result of Levine that the spectral sequence associated to the realization of the slice filtration of the motivic sphere can be also identified with the \emph{d\'{e}calage} of the classical Adams-Novikov spectral sequence, see \cite{levine2015adams}. This suggests that there should be a connection between the motivic slice filtration and the Postnikov towers in $\MU$-based synthetic spectra. 
\end{rem}

\begin{rem}
By \cref{rem:postnikov_filtration_of_synthetic_analogues_coincides_with_filtration_by_powers_of_ctau}, the Postnikov filtration of $Y(X)$ studied in \cref{rem:postnikov_filtration_of_spectral_embedding_of_a_spectrum} can be identified with filtration by ``powers of $\tau$'', in particular the subquotients are $C\tau$-modules. Then, \cref{lemma:maps_from_synthetic_analogue_to_synthetic_analogue_tensor_ctau_compute_ext_groups_in_comodules} gives an independent proof that the first page of this spectral sequence can be identified with $\Ext$-groups in comodules. 
\end{rem}

\section{Variants of synthetic spectra}
\label{section:variants_of_synthetic_spectra}

In this section we give a description of two important classes of synthetic spectra. First, we study $\nu E$-local spectra which are defined in the expected way, the surprising result is that this condition corresponds to hypercompletness. Secondly, if $E$ is even in a suitable sense, we study what we call the even synthetic spectra. 

\subsection{$\nu E$-local synthetic spectra}
\label{subsection:nue_local_spectra}

In this section we study $\nu E$-local synthetic spectra, a condition which we show corresponds to being hypercomplete as a sheaf. Then, we derive basic properties of the $\infty$-category of $\nu E$-local synthetic spectra, in particular with respect to $\tau$-phenomena.

\begin{defin}
\label{defin:nulocal_synthetic_spectrum}
We say a map $X \rightarrow Y$ of synthetic spectra is a \emph{$\nu E$-equivalence} when the induced map $\nu E \otimes X \rightarrow \nu E \otimes Y$ is an equivalence. We say $X$ is \emph{$\nu E$-local} if it is local with respect to the class of $\nu E$-equivalences.
\end{defin}
We will motivate our choice of the notation in a second. Notice that by \cref{thm:t_structure_homotopy_groups_coincide_with_synthetic_homology}, as a bigraded abelian group, the synthetic homology $\nu E_{*, *}X$ is isomorphic to the homotopy groups $\pi_{*}^{\heartsuit} X$ with respect to the natural $t$-structure. 

\begin{lemma}
\label{lemma:nue_equivalences_detected_by_homotopy_groups}
A map $X \rightarrow Y$ of synthetic spectra is an $\nu E$-equivalence if and only if the induced map $\nu E_{*, *} X \rightarrow \nu E_{*, *} Y$ is an isomorphism.
\end{lemma}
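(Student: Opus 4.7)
The forward direction is immediate: if $\nu E \otimes X \to \nu E \otimes Y$ is an equivalence, then applying $\pi_{*,*}$ yields an isomorphism on $\nu E_{*,*}$, since by definition $\nu E_{t,w}(-) = \pi_{t,w}(\nu E \otimes -)$. For the reverse direction, let $C$ be the cofibre of $X \to Y$; the long exact sequence of $\nu E_{*,*}$-homology gives $\nu E_{*,*} C = 0$, and the task reduces to showing $\nu E \otimes C \simeq 0$. By \textbf{Remark~\ref{rem:synthetic_spectra_compactly_generated_by_suspensions_of_synthetic_analogues_of_finite_projectives}}, $\synspectra_E$ is generated under colimits and shifts by the objects $\nu P$ with $P \in \spectrafp$, so it suffices to verify $\map(\nu P, \nu E \otimes C) \simeq 0$ for each such $P$.

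The plan is to exploit the dualizability of $\nu P$, with dual $\nu DP$, together with \textbf{Lemma~\ref{lemma:tensoring_with_a_filtered_colimit_of_finite_projectives_commutes_with_synthetic_analogue_construction}}, which gives $\nu E \otimes \nu DP \simeq \nu(E \wedge DP)$ because $E$ is a filtered colimit of finite projectives. This rewrites the mapping spectrum as $\map(S^{0,0}, \nu(E \wedge DP) \otimes C)$. Now $E \wedge DP$ is a homotopy $E$-module whose homotopy $E_*(DP) \simeq E^{*}P$ is a finitely generated projective $E_*$-module, by the K\"unneth isomorphism built into the Adams-type hypothesis (see \textbf{Lemma~\ref{lemma:kunneth_for_all_homotopy_modules}} and \textbf{Remark~\ref{rem:kunneth_iso_also_holds_in_comodule_form}}). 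I would then exhibit $E \wedge DP$ as a retract of a finite wedge $\bigvee_i \Sigma^{n_i} E$ at the spectrum level, lifting the $E_*$-module retract to $E$-module spectra. Applying $\nu$, which preserves finite coproducts and which sends $\Sigma^n E$ to $\Sigma^{n,n} \nu E$ (again via \textbf{Lemma~\ref{lemma:tensoring_with_a_filtered_colimit_of_finite_projectives_commutes_with_synthetic_analogue_construction}} together with the definition of the bigraded spheres), one obtains $\nu(E \wedge DP)$ as a retract of $\bigoplus_i \Sigma^{n_i,n_i} \nu E$. Tensoring with $C$, the mapping spectrum becomes a retract of $\bigoplus_i \Sigma^{n_i,n_i}(\nu E \otimes C)$, whose bigraded homotopy groups vanish by the hypothesis $\nu E_{*,*} C = 0$, forcing the mapping spectrum to be zero.

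The main obstacle will be the spectrum-level lift of the projective summand decomposition of $E \wedge DP$, since $E$ is only assumed to be homotopy associative rather than $E_\infty$; this is standard for Adams-type $E$ but requires some care with vanishing of obstructions. A perhaps cleaner alternative avoids this step entirely: one computes $(\nu E \otimes C)(P) \simeq \varinjlim_\alpha C(P \wedge DE_\alpha)$ directly, using \textbf{Lemma~\ref{lemma:tensoring_with_representable_same_as_precomposing_with_dual}} together with $\nu E \simeq \varinjlim \nu E_\alpha$ and the fact that filtered colimits of spherical sheaves are computed levelwise; for $P = S^w$ this colimit recovers $(\pi_*^\heartsuit C)_w$ via \textbf{Lemma~\ref{lemma:formula_for_homotopy_comodules_in_terms_of_a_filtered_colimit}}, which vanishes by the hypothesis and \textbf{Theorem~\ref{thm:t_structure_homotopy_groups_coincide_with_synthetic_homology}}. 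Vanishing then propagates from the sphere case to arbitrary $P$ via the sheaf property of $C$ afforded by \textbf{Theorem~\ref{thm:recognition_of_spherical_sheaves}}.
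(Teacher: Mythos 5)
Your reduction is the same as the paper's: it suffices to show $\nu E_{*,*}C = 0$ implies $\nu E\otimes C\simeq 0$, checked on the generators $\nu P$ ($P\in\spectrafp$) using dualizability of $\nu P$ and $\nu E\otimes\nu DP\simeq\nu(E\wedge DP)$. After that the two arguments diverge, and both of your routes have issues that the paper's proof is designed to avoid.

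Your primary route hinges on exhibiting $\nu(E\wedge DP)$ as a retract of $\bigoplus_i\Sigma^{n_i,n_i}\nu E$. This is plausible: by the K\"unneth isomorphism one can realize the inclusion of $E_*(DP)$ into a finite free $E_*$-module as a map $E\wedge DP\to\bigvee\Sigma^{n_i}E$ and realize a splitting by lifting generators, then appeal to Whitehead to see the composite is an equivalence. That works, but it is a genuinely extra lemma, and as you note it needs care since $E$ is only homotopy associative. The paper simply does not need this: it observes that by Theorem~\ref{thm:t_structure_homotopy_groups_coincide_with_synthetic_homology}, vanishing of $\nu E_{*,*}$ is literally the statement that the sheaf is $\infty$-connective, and then cites Lemma~\ref{lemma:tensoring_with_representable_same_as_precomposing_with_dual} to see that $\nu DP\otimes(-)$, being precomposition along $(-)\wedge P$, preserves $\infty$-connectivity. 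That kills $\nu E_{*,*}(\nu DP\otimes C)$ in one line, with no spectrum-level splitting.

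Your ``cleaner alternative'' has a real gap. Showing $(\nu E\otimes C)(S^w)\simeq 0$ for all $w$ only gives $\pi_{*,*}(\nu E\otimes C)=0$, and you cannot conclude $\nu E\otimes C\simeq 0$ from that alone: $\synspectra_E$ is \emph{not} generated by the bigraded spheres for a general Adams-type $E$ (the paper proves cellularity only for $MU$, in Theorem~\ref{thm:synthetic_spectra_based_on_mu_are_cellular}). The appeal to ``the sheaf property of $C$'' does not supply the missing step: the $E_*$-surjection topology does not let you write an arbitrary $P\in\spectrafp$ as a limit along coverings of wedges of spheres, so the recognition theorem gives no way to propagate vanishing from spheres to general $P$. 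What actually makes the propagation go through is precisely the precomposition/$\infty$-connectivity argument the paper uses; that is the content you were missing.
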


\begin{proof}
Clearly, it's enough to prove that if $A$ is a synthetic spectrum such that $\nu E_{*, *}A = 0$, then it is $\nu E$-acyclic in the sense that $\nu E \otimes A \simeq 0$. Let $P$ be a finite projective spectrum, using \cref{lemma:yoneda_lemma_for_synthetic_spectra_and_explicit_formula_for_homotopy_groups} we see that we have to show that 

\begin{center}
$\pi_{k} (\nu E \otimes A)(P) \simeq [\nu P, \nu E \otimes A]_{k, 0} \simeq [\nu S^{0}, \nu DP \otimes \nu E \otimes A]_{k, 0} \simeq \nu E_{k, 0} (\nu DP \otimes A)$
\end{center}
vanishes for all $k \in \mathbb{Z}$, so that it is sufficient to show the vanishing of $\nu E_{*, *}(\nu DP \otimes A)$ for all $P$.  

By \cref{thm:t_structure_homotopy_groups_coincide_with_synthetic_homology}, the vanishing of $\nu E$-homology is the same as being $\infty$-connective as a sheaf of spectra, so we just have to prove that if $A$ is $\infty$-connective, then so is $\nu DP \otimes A$. This is immediate from \cref{lemma:tensoring_with_representable_same_as_precomposing_with_dual}.
\end{proof}

\begin{rem}
Notice that \cref{lemma:nue_equivalences_detected_by_homotopy_groups} is immediate when $\synspectra_{E}$ is generated under colimits by the bigraded spheres, so that the bigraded homotopy groups detect equivalences. We will show that this holds for synthetic spectra based on $\MU$ in \cref{thm:synthetic_spectra_based_on_mu_are_cellular}.
\end{rem}

\begin{prop}
\label{prop:synthetic_spectrum_nue_local_iff_hypercomplete}
Let $X$ be a synthetic spectrum. Then $X$ is $\nu E$-local if and only if it is a hypercomplete sheaf of spectra on $\spectra_{E}^{fp}$. 
\end{prop}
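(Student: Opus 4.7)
The plan is to show that the class of $\nu E$-equivalences and the class of $\infty$-connective morphisms of sheaves of spectra coincide; since hypercompleteness is by definition locality with respect to the latter, the equivalence of the two notions of locality follows immediately.

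First I would use \textbf{Lemma \ref{lemma:nue_equivalences_detected_by_homotopy_groups}} to identify $\nu E$-equivalences with maps inducing isomorphisms on the bigraded synthetic homology groups $\nu E_{*, *}$. By \textbf{Theorem \ref{thm:t_structure_homotopy_groups_coincide_with_synthetic_homology}} the graded group $\nu E_{*, *}X$ can, up to a reindexing, be identified with $\pi_{*}^{\heartsuit} X$, the homotopy of $X$ with respect to the natural $t$-structure. Hence $\nu E$-equivalences are the same as maps inducing isomorphisms on all $\pi_{k}^{\heartsuit}$.

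Next I would recall from \textbf{Remark \ref{rem:homotopy_groups_with_respect_to_natural_t_structure_are_given_by_shaefications_of_homotopy_groups_presheaves}} that the homotopy groups $\pi_{k}^{\heartsuit} X$ agree with the homotopy sheaves $\pi_{k} X$ of $X$ considered as a sheaf of spectra in the sense of \textbf{Definition \ref{defin:homotopy_groups_of_a_hypercomplete_sheaf}}, that is, sheafifications of the corresponding levelwise homotopy presheaves. In any $\infty$-topos a morphism is $\infty$-connective precisely when it induces isomorphisms on every such homotopy sheaf, and the same then holds for morphisms of sheaves of spectra since one may check on each $\Omega^{\infty - n}$. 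Combining these steps, the $\nu E$-equivalences are exactly the $\infty$-connective morphisms in $\synspectra_{E}$.

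Finally, a synthetic spectrum is hypercomplete if and only if it is local with respect to $\infty$-connective morphisms (this is the standard characterization of hypercompletion), so the two classes of local objects coincide. The only step requiring any care is verifying that the two a priori different notions of "homotopy sheaf" — those associated to the $t$-structure and those given by sheafifying levelwise homotopy presheaves — genuinely agree; but this is already contained in the discussion of the natural $t$-structure on $Sh^{\spectra}_{\Sigma}(\ccat)$ recorded above, so no new work is needed.
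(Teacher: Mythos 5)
Your core idea is correct and uses exactly the same ingredients the paper does: \textbf{Lemma \ref{lemma:nue_equivalences_detected_by_homotopy_groups}} to reduce $\nu E$-equivalences to isomorphisms on $\nu E_{*,*}$, \textbf{Theorem \ref{thm:t_structure_homotopy_groups_coincide_with_synthetic_homology}} and \textbf{Remark \ref{rem:homotopy_groups_with_respect_to_natural_t_structure_are_given_by_shaefications_of_homotopy_groups_presheaves}} to translate this into isomorphisms on the homotopy sheaves $\pi_{k}X$. Packaging these as ``the class of $\nu E$-equivalences in $\synspectra_{E}$ coincides with the class of $\infty$-connective maps'' is a clean restatement of the argument.

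However, the very last step --- ``a synthetic spectrum is hypercomplete if and only if it is local with respect to $\infty$-connective morphisms, so the two classes of local objects coincide'' --- glosses over the point where the real care is needed, and it is not the one you flag. Hypercompleteness of $X$ is, by definition, locality with respect to $\infty$-connective morphisms in the ambient $\infty$-category $Sh^{\spectra}(\spectrafp)$ of \emph{all} sheaves of spectra, whereas your identification of classes of morphisms takes place entirely inside the full subcategory $\synspectra_{E}$ of spherical sheaves. A priori, being local against $\infty$-connective maps of spherical sheaves is a weaker condition. To close this gap you need that the hypercompletion $\widehat{L}X$ of a spherical sheaf is again spherical, which is \textbf{Proposition \ref{prop:sphercity_preserved_by_sheafification}} (via \textbf{Corollary \ref{cor:spherical_sheaves_as_localization}}): then $X \to \widehat{L}X$ is an $\infty$-connective map of spherical sheaves with hypercomplete (hence local) target, and the standard localization argument forces it to be an equivalence. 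This is precisely the argument the paper gives for the harder implication, so you are not missing a new idea, only the explicit appeal to sphericity of $\widehat{L}$. In contrast, the comparison of homotopy sheaves with $t$-structure homotopy, which you identify as the delicate step, really is handled by \textbf{Remark \ref{rem:homotopy_groups_with_respect_to_natural_t_structure_are_given_by_shaefications_of_homotopy_groups_presheaves}} as you say, so there is no issue there.
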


\begin{proof}
By \cref{lemma:nue_equivalences_detected_by_homotopy_groups}, a synthetic spectrum $X$ is $\nu E$-local if and only if for any $Y$ which is acyclic in the sense that $\nu E_{*, *}Y = 0$, we have $map(Y, X) \simeq 0$. It follows immediately from the identification $\nu E_{*, *} Y \simeq \pi_{*}^{\heartsuit} Y$ that any acyclic $Y$ is $\infty$-connective as a sheaf of spectra and so if $X$ is hypercomplete, then $\map(Y, X) \simeq 0$. We deduce that if $X$ is hypercomplete, then it is $\nu E$-local. 

Now suppose that $X$ is $\nu E$-local. Consider the map $X \rightarrow \widehat{L}X$ into the hypercomplete sheafification of $X$, by \cref{cor:spherical_sheaves_as_localization}, $\widehat{L}X$ is spherical again and so defines a synthetic spectrum. Since the hypercomplete sheafification functor doesn't change the homotopy sheaves, we deduce that $\nu E_{*, *} X \rightarrow \nu E_{*, *} \widehat{L}X$ is an isomorphism. Since both $X$ and $\widehat{L}X$ are $\nu E$-local, the former by assumption and the latter by the part already done above, we deduce that $X \simeq \widehat{L}X$, so that $X$ is hypercomplete. 
\end{proof}
Keeping \cref{prop:synthetic_spectrum_nue_local_iff_hypercomplete} in mind, we will use the terminology \emph{hypercomplete} and \emph{$\nu E$-local} interchangeably, and denote their $\infty$-category by $\hpsynspectra_{E}$. Note that the inclusion 
\[
\hpsynspectra_{E} \hookrightarrow \synspectra_{E}
\]
admits a left adjoint
\[
(-)^{\wedge} \colon \synspectra_{E} \rightarrow \synspectra_{E}
\]
given by the hypercompletion functor, which we can identify with  $\nu E$-Bousfield localization. 

\begin{rem}
We decided to choose to use both the terminology \emph{hypercomplete} and \emph{$\nu E$-local} instead of fixing one as both offer useful viewpoints, depending on whether one wants to stress the sheaf nature of synthetic spectra or their analogy to the usual $\infty$-category of spectra. 
\end{rem}

It is immediate from the definition that the class of $\nu E$-equivalences is compatible with the tensor product of synthetic spectra. It follows that $\hpsynspectra_{E}$ acquires a unique symmetric monoidal structure such that the hypercompletion functor $(-)^{\wedge} \colon \synspectra_{E} \rightarrow \hpsynspectra_{E}$ is symmetric monoidal. In cases where a distinction has to be made, we will denote the tensor product of hypercomplete synthetic spectra by $\widehat{\otimes}$; it is given by the usual formula $X \widehat{\otimes} Y \simeq (X \otimes Y)^{\wedge}$. 

The $\infty$-category $\hpsynspectra_{E}$ has similar formal properties to the $\infty$-category $\synspectra_{E}$ and inherits a $t$-structure from the latter. Moreover, the inclusion induces equivalences $(\hpsynspectra_{E})_{\leq k} \simeq (\synspectra_{E})_{\leq k}$ on the $k$-coconnective parts for each $k \in \mathbb{Z}$, see \cref{rem:tstructure_on_hypercomplete_spherical_sheaves}. In particular, there's an induced equivalence on the hearts and we have $\hpsynspectra_{E}^{\heartsuit} \simeq \ComodE$, as in the non-hypercomplete case. Whether one should work with hypercomplete or non-hypercomplete synthetic spectra depends on the particular application; on one hand, it is easier to detect equivalences in $\hpsynspectra_{E}$, on the other, $\synspectra_{E}$ is compactly generated, see \cref{rem:synthetic_spectra_compactly_generated_by_suspensions_of_synthetic_analogues_of_finite_projectives}. 

We will now describe how the relation between synthetic spectra, spectra and comodules plays out in the hypercomplete case. Our first result shows that the condition of being $E$-local for a spectrum is very closely related to $\nu E$-locality in the synthetic case. 

\begin{prop}
\label{prop:synthetic_analogue_of_e_local_spectrum_nue_local}
If $X$ is a spectrum, then $\nu X$ is $\nu E$-local if and only if $X$ is $E$-local. Moreover, an $E$-localization map $X \rightarrow X_{E}$ induces a $\nu E$-localization $\nu X \rightarrow \nu X_{E}$, so that $(\nu X)^{\wedge} \simeq \nu (X_{E})$. 
\end{prop}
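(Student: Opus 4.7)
The plan is to prove the forward implication first, then deduce the rest using the identification of $\nu E$-equivalences with $\nu E_{*,*}$-isomorphisms.

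First I would show that if $X$ is $E$-local, then $\nu X$ is hypercomplete (equivalently, $\nu E$-local by \textbf{Proposition \ref{prop:synthetic_spectrum_nue_local_iff_hypercomplete}}). By \textbf{Proposition \ref{prop:spectra_define_sheaves_on_finite_projective_spectra}}, the presheaf $y(X)$ is in fact a hypercomplete sheaf of spaces when $X$ is $E$-local. Since $\nu X$ is a spherical sheaf of spectra with $\Omega^{\infty} \nu X \simeq y(X)$, the criterion of \textbf{Remark \ref{rem:hypercompleteness_of_a_sheaf_detected_by_omega_infty}} (hypercompleteness of a sheaf of spectra is detected by $\Omega^{\infty}$ together with the sheaf property) forces $\nu X$ to be hypercomplete.

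Next I would verify that for any spectrum $X$, an $E$-localization map $X \rightarrow X_{E}$ induces a $\nu E$-equivalence $\nu X \rightarrow \nu X_{E}$. Since $E$ is Adams-type, it is a filtered colimit of finite projectives, so \textbf{Lemma \ref{lemma:tensoring_with_a_filtered_colimit_of_finite_projectives_commutes_with_synthetic_analogue_construction}} gives equivalences $\nu E \otimes \nu X \simeq \nu(E \wedge X)$ and $\nu E \otimes \nu X_{E} \simeq \nu(E \wedge X_{E})$, with the induced map corresponding to $\nu$ applied to $E \wedge X \rightarrow E \wedge X_{E}$. As $X \rightarrow X_{E}$ is by definition an $E_{*}$-equivalence, the latter map of spectra is an equivalence, and applying the functor $\nu$ gives an equivalence. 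Combined with the first step, this shows that $\nu X \rightarrow \nu X_{E}$ is a $\nu E$-equivalence with hypercomplete target, so it is the hypercompletion and $(\nu X)^{\wedge} \simeq \nu X_{E}$.

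For the remaining implication, suppose $\nu X$ is $\nu E$-local. Then both $\nu X$ and $\nu X_{E}$ are hypercomplete, and the previous step shows $\nu X \rightarrow \nu X_{E}$ is a $\nu E$-equivalence; since $\nu E$-equivalences between $\nu E$-local objects are equivalences, we deduce $\nu X \simeq \nu X_{E}$. Applying the $\tau$-inversion functor $\tau^{-1}: \synspectra_{E} \rightarrow \spectra$ and using $\tau^{-1} \nu (-) \simeq \textnormal{id}$ from \textbf{Proposition \ref{prop:tau_inversion_cocontinuous_symmetric_monoidal_left_inverse_to_synthetic_analogue}}, the map $X \rightarrow X_{E}$ becomes an equivalence, so $X$ is $E$-local. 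No step presents a serious obstacle; the main point to be careful about is organizing the argument so that \textbf{Lemma \ref{lemma:tensoring_with_a_filtered_colimit_of_finite_projectives_commutes_with_synthetic_analogue_construction}} is applied to $E$ itself (which is legitimate precisely because $E$ is Adams-type).
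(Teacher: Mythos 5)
Your proof is correct and reaches the same conclusions, but it organizes the argument differently and takes a genuinely different route for the converse direction. The forward direction is identical to the paper's: $\Omega^\infty \nu X \simeq y(X)$ is a hypercomplete sheaf of spaces when $X$ is $E$-local by \textbf{Proposition \ref{prop:spectra_define_sheaves_on_finite_projective_spectra}}, and \textbf{Remark \ref{rem:hypercompleteness_of_a_sheaf_detected_by_omega_infty}} then forces $\nu X$ to be hypercomplete. For the claim that $\nu X \to \nu X_E$ is a $\nu E$-equivalence, the paper simply quotes \textbf{Proposition \ref{prop:homology_of_synthetic_analogues}} (that $\nu E_{*,*}\nu Y \cong E_*Y[\tau]$ in non-negative Chow degree and zero elsewhere), whereas you rederive the essential computation inline via \textbf{Lemma \ref{lemma:tensoring_with_a_filtered_colimit_of_finite_projectives_commutes_with_synthetic_analogue_construction}}; these are the same underlying argument, since the proposition is itself proven from that lemma. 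The real divergence is in the converse. The paper tests $E$-locality of $X$ directly: for $E$-acyclic $A$, it invokes full faithfulness of $\nu$ (\textbf{Corollary \ref{cor:synthetic_analogue_a_fully_faithful_embedding_of_infinity_categories}}) to identify $\map(A,X) \simeq \map(\nu A, \nu X)$, then uses that $\nu A$ is $\nu E$-acyclic while $\nu X$ is $\nu E$-local. You instead observe that a $\nu E$-equivalence between $\nu E$-local objects must be an equivalence, so $\nu X \simeq \nu X_E$, and then retract to spectra via $\tau^{-1}$ and \textbf{Proposition \ref{prop:tau_inversion_cocontinuous_symmetric_monoidal_left_inverse_to_synthetic_analogue}}. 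Both rely on the same core fact that $\nu$ is fully faithful (your $\tau^{-1}\nu \simeq \mathrm{id}$ encodes it), but the paper's argument is a pointwise check against acyclics while yours is a one-shot structural argument via the localization map and its retraction; your version is perhaps cleaner, while the paper's makes the role of $\nu E$-acyclic objects explicit.
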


\begin{proof}
Assume first that $X$ is $E$-local. As hypercompleteness of a sheaf of spectra is detected by $\omegainfty$, see \cref{rem:hypercompleteness_of_a_sheaf_detected_by_omega_infty}, and since $\omegainfty (\nu X) \simeq y(X)$, we deduce that $\nu X$ is hypercomplete by \cref{prop:spectra_define_sheaves_on_finite_projective_spectra}. 

Let now $X$ be a spectrum such that $\nu X$ is hypercomplete. It's enough to show that whenever $A$ is an $E$-acyclic spectrum, then $\map(A, X) \simeq 0$. However, we have $\map(A, X) \simeq \map(\nu A, \nu X)$ by \cref{cor:synthetic_analogue_a_fully_faithful_embedding_of_infinity_categories} and the latter vanishes by the assumption of $\nu X$ being hypercomplete, since $\nu A$ is $\nu E$-acyclic by an explicit calculation of its homology of \cref{prop:homology_of_synthetic_analogues}.

Suppose now that $X \rightarrow X_{E}$ is an $E$-localization. We've seen that $\nu X_{E}$ is hypercomplete, so it's enough to show that the map $\nu X \rightarrow \nu X_{E}$ is an $\nu E$-equivalence, which is again immediate from \cref{prop:homology_of_synthetic_analogues}.
\end{proof}

Let us now discuss $\tau$-phenomena in $\nu E$-local synthetic spectra. By hypercompleting the usual $\tau$, we obtain a map $\tau^{\wedge} \colon (S^{0, -1})^{\wedge} \rightarrow (S^{0, 0})^{\wedge}$. Notice that if $X$ is a hypercomplete synthetic spectrum, then the tensor product $\tau^{\wedge} \widehat{\otimes} X$ coincides with the usual map $\tau \colon \Sigma^{0, -1} X \rightarrow X$ since a suspension of a hypercomplete synthetic spectrum is hypercomplete. Because of that, we will usually not notationally distinguish between $\tau$ and $\tau^{\wedge}$. We first give the description of hypercomplete $\tau$-invertible objects. 

\begin{prop}
The equivalence $\synspectra_{E}(\tau^{-1}) \simeq \spectra$ of $\infty$-categories restricts to an equivalence $\hpsynspectra_{E}(\tau^{-1}) \simeq \spectra_{E}$ between $\nu E$-local synthetic spectra and $E$-local spectra. 
\end{prop}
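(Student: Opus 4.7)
The plan is to combine the equivalence $Y \colon \spectra \simeq \synspectra_{E}(\tau^{-1})$ of Theorem \ref{thm:tau_invertible_synthetic_spectra_are_just_spectra} with results already established on the interaction between $\nu E$-locality and $E$-locality. Since $Y$ is already an equivalence of the full categories, it suffices to show that a $\tau$-invertible synthetic spectrum $Y(X)$ is $\nu E$-local if and only if $X$ is $E$-local, and then the restricted equivalence follows automatically.

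For the forward direction I would observe that by construction $\omegainfty Y(X) \simeq y(X)$, and similarly each $\Omega^{\infty-n} Y(X) \simeq y(\Sigma^{n} X)$ is a sheaf of spaces by Proposition \ref{prop:spectra_define_sheaves_on_finite_projective_spectra}, so $Y(X)$ is a sheaf of spectra. By Remark \ref{rem:hypercompleteness_of_a_sheaf_detected_by_omega_infty}, hypercompleteness of this sheaf of spectra is detected on $\omegainfty Y(X) \simeq y(X)$, which is hypercomplete whenever $X$ is $E$-local by the second part of Proposition \ref{prop:spectra_define_sheaves_on_finite_projective_spectra}. Combined with Proposition \ref{prop:synthetic_spectrum_nue_local_iff_hypercomplete}, this gives that $Y$ carries $\spectra_{E}$ into $\hpsynspectra_{E}(\tau^{-1})$.

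For the converse I would show that if $Y(X)$ is $\nu E$-local, then $\map(A, X) \simeq 0$ for every $E$-acyclic spectrum $A$, which forces $X$ to be $E$-local. Since $E_{*} A = 0$, Proposition \ref{prop:homology_of_synthetic_analogues} together with Lemma \ref{lemma:nue_equivalences_detected_by_homotopy_groups} shows that $\nu A$ is $\nu E$-acyclic. Using the identification $Y(A) \simeq \tau^{-1} \nu A$ of Proposition \ref{prop:spectral_yoneda_embedding_the_tau_inversion_of_the_synthetic_analogue} together with the explicit formula $\tau^{-1} \nu A \simeq \varinjlim \Sigma^{0,k} \nu A$ from the proof of Proposition \ref{prop:tau_inversion_functor_exists}, cocontinuity of $\nu E \otimes -$ yields $\nu E \otimes Y(A) \simeq 0$, so $Y(A)$ is $\nu E$-acyclic. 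Full faithfulness of $Y$ gives $\map(A, X) \simeq \map(Y(A), Y(X)) \simeq 0$ by the assumed $\nu E$-locality of $Y(X)$.

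The two directions together show that $Y$ restricts to a fully faithful functor $\spectra_{E} \hookrightarrow \hpsynspectra_{E}(\tau^{-1})$ which is also essentially surjective: given any $\nu E$-local $\tau$-invertible $Z$, Theorem \ref{thm:tau_invertible_synthetic_spectra_are_just_spectra} furnishes a spectrum $X$ with $Y(X) \simeq Z$, and the second step forces $X$ to be $E$-local. No serious obstacle is expected here, since the argument is a recombination of earlier statements; the only mildly delicate point is verifying that $\nu E$-acyclicity is preserved by $\tau$-inversion, which is where the smashing nature of the localization $\tau^{-1}$ is used.
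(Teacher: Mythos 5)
Your argument is correct and takes essentially the same route as the paper: both proofs reduce to hypercompleteness of the connective cover $\nu X \simeq (Y(X))_{\geq 0}$ via the $\Omega^{\infty}$-detection of Remark \ref{rem:hypercompleteness_of_a_sheaf_detected_by_omega_infty}. The only difference is that you re-derive the converse direction by hand through $Y(A) \simeq \tau^{-1}\nu A$ and the smashing property, whereas the paper cites Proposition \ref{prop:synthetic_analogue_of_e_local_spectrum_nue_local} for both implications at once, which already establishes that $\nu X$ is $\nu E$-local if and only if $X$ is $E$-local.
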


\begin{proof}
The equivalence in question is given by the spectral Yoneda embedding $Y \colon \spectra \rightarrow \synspectra_{E}$, see \cref{thm:tau_invertible_synthetic_spectra_are_just_spectra}. Since hypercompleteness of a sheaf of spectra is detected by $\omegainfty$ by \cref{rem:hypercompleteness_of_a_sheaf_detected_by_omega_infty}, we deduce that $Y(X)$ is hypercomplete if and only if $(Y(X))_{\geq 0} \simeq \nu X$ is. Then, the statement follows immediately from \cref{prop:synthetic_analogue_of_e_local_spectrum_nue_local}. 
\end{proof}
Analogously to the non-hypercomplete case, one can also give an algebraic description of hypercomplete $C\tau$-modules. In fact, we have already done this in \cref{thm:ctau_modules_in_hypercomplete_synthetic_spectra_same_as_the_derived_infty_category}, we recall it here for the convenience of the reader.

Namely, observe that by \cref{lemma:tensoring_with_cofibre_of_tau_a_discretization_on_representable_connective_synthetic_spectra}, $C\tau$ is hypercomplete and so any module which is hypercomplete as a synthetic spectrum admits a unique structure of a $C\tau$-module in $\hpsynspectra_{E}$. Then, one proves that the adjunction $\chi_{*} \dashv \chi^{*} \colon \synspectra_{E} \rightleftarrows \stableE$ of \cref{thm:topological_part_of_hovey_stable_category_of_comodules_as_modules_over_the_cofibre_of_tau} induces an adjoint equivalence $\Mod_{C\tau}(\hpsynspectra_{E}) \simeq \dcat(\ComodE)$ between hypercomplete $C\tau$-modules and the derived $\infty$-category of comodules. Note that, interestingly, one proves that this is an adjoint equivalence for an arbitrary Adams-type $E$, even though the non-hypercomplete version required a slightly stronger assumption.

\subsection{Even synthetic spectra}
\label{subsection:even_synthetic_spectra}

In this section we describe a variant of the construction of synthetic spectra where the indexing $\infty$-category of finite $E$-projective spectra is replaced by finite even projective spectra. The main result is that the resulting $\infty$-category embeds into $\synspectra_{E}$ in a natural way, so that even synthetic spectra can be considered as a particular class of synthetic spectra we have considered up to this point. 

Perhaps the main reason to study this construction is that when $E = \MU$, the $\infty$-category of even synthetic spectra turns out to be strongly related to the cellular motivic category, in fact, we will see the two coincide after $p$-completion. As there is nothing special about $\MU$ that makes the construction of the even category possible, we instead phrase the results in a larger generality. 

\begin{defin}
\label{defin:finite_even_projective_spectrum}
We say a spectrum $P$ is finite \emph{even} projective if it is finite and $E_{*}$ is finitely generated projective, concentrated in even degrees and denote the $\infty$-category of finite even projective spectra by $\spectra_{E}^{fpe}$. We say an Adams-type homology theory $E$ is \emph{even Adams} if $E$ can be written as a filtered colimit $E \simeq \varinjlim P_{\alpha}$ of finite even projective spectra.
\end{defin}

\begin{example}
\label{example:conditions_for_being_even_adams}
The complex bordism spectrum $\MU$ is even Adams, as it is a filtered colimit of Thom spectra of Grassmannians, which are even. In fact, if $E$ is Landweber exact, then the proof of \cite[1.4.9]{hovey2003homotopy} implies that $E_{*}$ being concentrated in even degrees is a sufficient and necessary condition for being even. 

On the other hand, if $E$ is even Adams then clearly $E_{*}, E_{*}E$ are both concentrated in even degree. This implies that the Eilenberg-MacLane spectrum $H \mathbb{F}_{p}$ is Adams, but not even Adams, since the dual Steenrod algebra is not concentrated in even degree.
\end{example}

We have an inclusion of finite even projective spectra $\spectra_{E}^{fpe} \hookrightarrow \spectra_{E}^{fp}$ into all finite projectives, through it $\spectra_{E}^{fpe}$ clearly inherits a topology and a symmetric monoidal structure that make it into an excellent $\infty$-site. In concrete terms, this means that we declare a map $P \rightarrow Q$ of finite even projective spectra to be a covering if it is an $E_{*}$-surjection, the symmetric monoidal structure is simply the tensor product of spectra. 

\begin{defin}
\label{defin:even_synthetic_spectrum}
An \emph{even synthetic spectrum} $X$ is a spherical sheaf of spectra on the site $\spectra_{E}^{fpe}$ of finite even $E$-projective spectra. We denote the $\infty$-category of even synthetic spectra by $\synspectra_{E}^{ev}$.
\end{defin}
As in the non-even case, general results on sheaves of spectra on an excellent $\infty$-site imply that $\synspectra_{E}^{ev}$ is a presentable, stable $\infty$-category equipped with a symmetric monoidal structure induced from that of $\spectra_{E}^{fpe}$ which is cocontinuous in each variable. Moreover, it admits a right complete $t$-structure compatible with filtered colimits whose heart is equivalent to spherical sheaves of abelian groups. 

\begin{rem}
\label{rem:heart_of_even_synthetic_spectra_and_common_envelope_with_comodules}
Let $\ComodE^{fpe}$ denote the category of dualizable $E_{*}E$-comodules concentrated in even degrees, this is an excellent $\infty$-site with respect to the epimorphism topology discussed previously in \cref{rem:working_with_even_comodules_categories_envelopes}. We have the homology functor $E_{*} \colon \spectra_{E}^{fpe} \rightarrow \ComodE^{fpe}$ and just as in the non-even case one, which was \cref{thm:homology_functor_has_covering_lifting_property}, one verifies that it induces an equivalence on categories of sheaves of sets, the common envelope in this case being any filtered diagram $\varinjlim P_{\alpha}$ of finite even projective spectra whose colimit is a countable sum of even suspensions of $E$. 

This, together with the identification $Sh_{\Sigma}^{\sets}(\ComodE^{fpe}) \simeq \ComodE^{ev}$ implies that the heart $(\synspectra_{E}^{ev}) \simeq \ComodE^{ev}$ of even synthetic spectra is given by the category of even $E_{*}E$-comodules. Thus, if one thinks of the $\infty$-category $\synspectra_{E}$ as a thickened version of the derived $\infty$-category of comodules, then $\synspectra_{E}^{ev}$ should be considered as a thickening of the derived $\infty$-category of even comodules. 
\end{rem}

We now show that there exists a natural embedding $\synspectra_{E}^{ev} \hookrightarrow \synspectra_{E}$, so that one can think of even synthetic spectra as of synthetic spectra satisfying a certain property, which we make explicit. This clarifies the relationship between the two $\infty$-categories, showing that not only are they formally analogous, but rather that one is simply an enlargement of the other.

\begin{lemma}
\label{lemma:inclusion_of_finite_even_projectives_has_clp}
The inclusion $i \colon \spectra_{E}^{fpe} \rightarrow \spectra_{E}^{fp}$ is a morphism of excellent $\infty$-sites with the covering lifting property.
\end{lemma}

\begin{proof}
Since $i$ is an inclusion of an additive, symmetric monoidal subcategory closed under taking duals, it is clear that it is a morphism of excellent $\infty$-sites. We will show that $i$ has the covering lifting property. 

Suppose that $P$ is finite even projective, $Q$ is finite projective and that $Q \rightarrow P$ is an $E_{*}$-surjection. We have to show that there exists  a finite even projective $R$ together with a map $R \rightarrow Q$ such that the composite $R \rightarrow P$ is also $E_{*}$-surjective. Take the Spanier-Whitehead dual of $Q \rightarrow P$ and consider the diagram 

\begin{center}
	\begin{tikzpicture}
		\node (TL) at (0, 1.5) {$ DP $};
		\node (TR) at (1.5, 1.5) {$ DQ $};
		\node (BL) at (0, 0) {$ E \otimes DP $};
		
		\draw [->] (TL) to (TR);
		\draw [->] (TL) to (BL);
		\draw [->, dashed] (TR) to (BL); 
	\end{tikzpicture},
\end{center}
where the vertical map is induced by the unit of $E$. The dashed arrow that would make this diagram commute exists by the universal coefficient theorem which implies that 
\[
E^{*}DQ \simeq \Hom_{E_{*}}(E_{*}DQ, E_{*}),
\]
since $E \otimes DP$ is a direct summand of a finite sum of copies of $E$ and $DP \rightarrow DQ$ is a split monomorphism of $E_{*}$-modules on homology. 

We can write $E \simeq \varinjlim E_{\alpha}$ where $E_{\alpha}$ are finite even projective and since $DQ$ is finite the map $DQ \rightarrow E \otimes DP$ factors through one of the $E_{\alpha} \otimes DP$. Then we can take $R := DE_{\alpha} \otimes P$ with the map into $Q$ the dual of the chosen factorization. 
\end{proof}

\begin{thm}
\label{thm:synthetic_even_spectra_a_subcategory_of_all_synthetic_spectra}
The inclusion $i \colon \spectra_{E}^{fpe} \hookrightarrow \spectra_{E}^{fp}$ induces a cocontinuous, symmetric monoidal embedding $\synspectra_{E}^{ev} \hookrightarrow \synspectra_{E}$ whose image is the full subcategory generated under colimits and suspensions by $\nu P$, where $P$ is finite even projective. 
\end{thm}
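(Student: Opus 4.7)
My plan is to realise the desired embedding as the left adjoint in the adjunction induced by the inclusion of sites $f:\spectra_{E}^{fpe}\hookrightarrow\spectra_{E}^{fp}$. This inclusion is a morphism of excellent $\infty$-sites: it is additive, the topology on $\spectra_{E}^{fpe}$ is by definition the restriction of the $E_{*}$-surjection topology, and the smash product of two finite even projective spectra is again finite even projective by the K\"unneth isomorphism. Applying \textbf{Proposition \ref{prop:additive_morphisms_induce_adjunctions_on_infty_categories_of_sheaves_of_spectra}} supplies an adjunction $f_{*}\dashv f^{*}:\synspectra_{E}^{ev}\leftrightarrows\synspectra_{E}$ in which both functors are cocontinuous and $f_{*}\nu P\simeq\nu P$ for every $P\in\spectra_{E}^{fpe}$. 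Symmetric monoidality of $f_{*}$ follows from \textbf{Remark \ref{rem:universal_property_of_spherical_sheaves_of_spectra}}: the composite $\spectra_{E}^{fpe}\hookrightarrow\spectra_{E}^{fp}\xrightarrow{\nu}\synspectra_{E}$ is symmetric monoidal, and by the universal property its unique cocontinuous symmetric monoidal extension to $\synspectra_{E}^{ev}$ must coincide with $f_{*}$.

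To establish full faithfulness, I would verify that the unit $X\to f^{*}f_{*}X$ is an equivalence for every $X\in\synspectra_{E}^{ev}$. Since $f_{*}$ and $f^{*}$ are both cocontinuous and exact, the class of such $X$ is closed under colimits and desuspensions, so by the even analogue of \textbf{Remark \ref{rem:synthetic_spectra_compactly_generated_by_suspensions_of_synthetic_analogues_of_finite_projectives}} it suffices to treat the case $X=\nu P$ for $P\in\spectra_{E}^{fpe}$. Here $f^{*}$, being precomposition along $f$, commutes with the levelwise $\omegainfty$, so $\omegainfty f^{*}\nu P\simeq f^{*}\omegainfty\nu P$; the latter is the restriction of the representable sheaf of spaces $\map(-,P)$ from $\spectra_{E}^{fp}$ to $\spectra_{E}^{fpe}$, which is visibly the corresponding representable sheaf on the smaller site. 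Moreover, $f^{*}$ preserves connectivity by \textbf{Remark \ref{rem:right_adjoint_on_infinity_categories_of_sheaves_of_spectra_preserves_connectivity}}, so $f^{*}\nu P$ is a connective sheaf of spectra; combining this with the previous computation, uniqueness of the connective lift supplied by \textbf{Proposition \ref{prop:spherical_sheaves_canonically_lift_to_sheaves_of_spectra}} gives $f^{*}\nu P\simeq\nu P$ inside $\synspectra_{E}^{ev}$, and the unit is an equivalence.

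Finally, the essential image of $f_{*}$ is closed under colimits by cocontinuity, closed under $\Sigma^{\pm 1}$ by exactness, and contains $\nu P$ for each $P\in\spectra_{E}^{fpe}$, hence it contains the localising subcategory generated by such $\nu P$; conversely, any $X\in\synspectra_{E}^{ev}$ is a colimit of shifts $\Sigma^{k}\nu P$ of these generators, so $f_{*}X$ is the corresponding colimit of $\Sigma^{k}\nu P$ in $\synspectra_{E}$, giving the reverse containment. I expect the step most subject to error to be the identification $f^{*}\nu P\simeq\nu P$, which at first glance would require a delicate comparison between the two sheafifications under restriction; using the sheaf-of-spaces reformulation together with preservation of connectivity sidesteps this question and reduces it to an immediate computation, so no further input is needed.
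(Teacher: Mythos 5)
Your proof is correct and follows essentially the same route as the paper: induce the adjunction from the morphism of additive $\infty$-sites, get symmetric monoidality from the universal property of the Day convolution, reduce full faithfulness of $f_*$ to generators $\nu P$ via cocontinuity, and then reduce the unit-on-generators check to sheaves of spaces using connectivity, where the claim is immediate from full faithfulness of the inclusion of sites. The paper's phrasing is slightly more compact (it invokes subcanonicality of both topologies directly rather than routing through preservation of connectivity by $f^*$), but the underlying argument is the same.
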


\begin{proof}
By \cref{prop:additive_morphisms_induce_adjunctions_on_infty_categories_of_sheaves_of_spectra}, there's an induced adjunction $i^{*} \dashv i_{*} \colon \synspectra_{E}^{ev} \rightleftarrows \synspectra_{E}$ on the $\infty$-categories of spherical sheaves of spectra, where $i_{*}$ is given by precomposition. Moreover, by the universal property of the Day convolution, $i^{*}$ has a canonical symmetric monoidal structure induced from that of $i$, so that the only thing to prove is that it is a fully faithful embedding. Note that as a consequence of \cref{lemma:inclusion_of_finite_even_projectives_has_clp}, the right adjoint $i_{*}$ is cocontinuous. 

Since both functors in question are cocontinuous, it's enough to show that the unit morphism
\[
\nu(P) \rightarrow i_{*} i^{*} \nu(P)
\]
is an equivalence for any finite even projective $P$. The left hand side is the sheafification of the presheaf 
\[
F(-, P)_{\geq 0} \colon \spectra_{E}^{fp} \rightarrow \spectra,
\]
restricted along $i$. Since the latter commutes with sheafification as a consequence of \cref{lemma:inclusion_of_finite_even_projectives_has_clp} and \cref{prop:morphisms_of_sites_with_covering_lifting_property_is_geometric}, we can identify $i_{*} i^{*} \nu(P)$ with the sheafification of the presheaf 
\[
F(-, P)_{\geq 0} \colon \spectra_{E}^{fp} \rightarrow \spectra,
\]
which is exactly $\nu(P)$.
\end{proof}
 
\section{Synthetic spectra based on $\MU$}
\label{section:synthetic_spectra_based_on_mu}

In this section we prove some results specific to the $\infty$-category of synthetic spectra based on $\MU$, namely cellularity and the structure of the dual Steenrod algebra. 

This particular instance of $\synspectra_{\MU}$ is important not only because of the importance of $\MU$ as a spectrum, but also because, as we will see later, even synthetic spectra based on $\MU$ correspond in a strong way to the cellular motivic spectra.

\subsection{Cellularity} 

In this short section we prove the cellularity of the $\infty$-category $\synspectra_{\MU}$ of synthetic spectra based on $\MU$; that is, that the latter is generated under colimits by the bigraded spheres $S^{k, l}$. Intuitively, the result is a consequence of the well-behaved nature of $\MU_{*}$ as a ring and the fact that $\MU$ is strong enough to detect equivalences of finite spectra. 

\begin{lemma}
\label{lemma:projective_mu_module_necessarily_free}
Any graded projective module over $\MU_{*} \simeq \mathbb{Z}[a_{1}, a_{2}, \ldots]$-module is free. 
\end{lemma}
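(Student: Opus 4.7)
The plan is to prove the lemma via graded Nakayama. The ring $MU_* = \mathbb{Z}[a_1, a_2, \ldots]$ with $|a_i| = 2i$ is $\mathbb{N}$-graded and connected over $\mathbb{Z}$, with positively graded augmentation ideal $I = (a_1, a_2, \ldots)$. The key observation I will record first is the graded Nakayama lemma: any bounded-below graded $MU_*$-module $M$ satisfying $IM = M$ is zero. Indeed, a nonzero element of $M$ of lowest degree would have to be an $I$-linear combination of elements of strictly lower degree, contradicting minimality.

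Given a graded projective $MU_*$-module $P$, I will first produce a candidate graded basis by reducing mod $I$. Write $P$ as a direct summand of a graded free module $F_0 = \bigoplus_i \Sigma^{d_i} MU_*$. Then the reduction $\bar{P} := P/IP$ is a direct summand of the graded abelian group $\bar{F}_0 = \bigoplus_i \Sigma^{d_i} \mathbb{Z}$, hence projective, and therefore free as a graded $\mathbb{Z}$-module (projective abelian groups are free in each degree). Choosing a graded basis $\{\bar{e}_\alpha\}$ of $\bar{P}$ and lifting to elements $e_\alpha \in P$ of the same degrees, I obtain a map of graded $MU_*$-modules
\[
\varphi : F := \bigoplus_\alpha \Sigma^{|e_\alpha|} MU_* \longrightarrow P
\]
whose reduction $\varphi \otimes_{MU_*} \mathbb{Z}$ is the identity of $\bar{P}$.

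To conclude, I will show $\varphi$ is an isomorphism. The cokernel $C := \mathrm{coker}(\varphi)$ satisfies $IC = C$, so Nakayama gives $C = 0$; the resulting surjection $\varphi$ then splits by projectivity of $P$, so the kernel $K$ is a direct summand of $F$ and also satisfies $IK = K$, and a second application of Nakayama yields $K = 0$. Hence $P \cong F$ is graded free, as required. The main technical obstacle is ensuring the bounded-below hypothesis needed for Nakayama; this is handled by partitioning the presentation $P \oplus Q = F_0$ into subfamilies indexed by bounded-below sets of degrees, which exhibits $P$ as a direct sum of bounded-below projective summands. Apart from this bookkeeping, the argument is a routine graded Nakayama.
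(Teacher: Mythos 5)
Your core argument via graded Nakayama is the standard one and is essentially the argument underlying the Conner--Smith result the paper cites: the reduction $\bar P = P/IP$ is graded-free over $\mathbb{Z}$, lifting a graded basis gives a map $\varphi \colon F \to P$ whose reduction is an isomorphism, and two applications of Nakayama kill first the cokernel and then (after splitting by projectivity of $P$) the kernel. All of this is correct for bounded-below modules, and your version is a self-contained proof where the paper simply cites a reference.

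The genuine gap is in your last sentence, where you try to remove the bounded-below hypothesis by ``partitioning the presentation $P \oplus Q = F_0$.'' Partitioning the index set of $F_0$ into bounded-below subfamilies does decompose $F_0$ as a direct sum of bounded-below free modules $\bigoplus_k F^{(k)}$, but a direct summand of $\bigoplus_k F^{(k)}$ need not split as a direct sum of summands of the individual $F^{(k)}$'s, so $P$ is not thereby exhibited as a direct sum of bounded-below projectives. And without some such hypothesis the Nakayama step genuinely fails: for instance the graded $MU_*$-module $M = MU_*[a_1^{-1}]/MU_*$ satisfies $IM = M$ and $M \neq 0$, so $\mathbb{Z} \otimes_{MU_*}(-)$ does not reflect epimorphisms on arbitrary graded modules, only on bounded-below ones.

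This is harmless for the paper's purposes, since the lemma is only ever invoked for $MU_*P$ with $P$ a finite spectrum, i.e.\ for finitely generated graded modules, which are automatically bounded below. If you restrict the statement to bounded-below graded projectives (which covers all finitely generated ones), your proof is complete; as written, the reduction to that case needs a real argument and the partitioning sketch does not supply one.
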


\begin{proof}
This is elementary and rests on the fact that $\MU_{*}$ is a graded connected $\mathbb{Z}$-algebra and so the functor $M \rightarrow \mathbb{Z} \otimes _{\MU_{*}} M$ on graded $\MU_{*}$-modules $M$ preserves and reflects epimorphisms, see \cite{conner1969complex}[Proposition 3.2]. 
\end{proof}

\begin{thm}
\label{thm:synthetic_spectra_based_on_mu_are_cellular}
The $\infty$-category $\synspectra_{\MU}$ of synthetic spectra based on $\MU$ is cellular in the sense that it is generated under colimits by the bigraded spheres $S^{k, l}$. 
\end{thm}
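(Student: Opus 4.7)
The plan is to show $\nu P$ lies in the localizing subcategory $\mathcal{L} \subseteq \synspectra_{MU}$ generated by the bigraded spheres, for every finite $MU$-projective $P$. By \textbf{Remark \ref{rem:synthetic_spectra_compactly_generated_by_suspensions_of_synthetic_analogues_of_finite_projectives}}, which identifies the $\Sigma^{k,0}\nu P$ as a set of generators for $\synspectra_{MU}$ under colimits, this reduction suffices. Since $MU_*P$ is a finitely generated free $MU_*$-module by \textbf{Lemma \ref{lemma:projective_mu_module_necessarily_free}}, I would proceed by induction on its rank $k$; the base case $k=0$ is trivial as then $P \simeq 0$ by the nilpotence theorem.

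For the inductive step, let $n$ be the smallest degree in which $MU_*P$ has a generator and pick such a generator $x \in MU_n P$. The core step is to produce a map $f \colon S^n \to P$ with $f_*(1) = x$, so that $f_* \colon MU_*S^n \simeq MU_*[n] \to MU_*P$ is inclusion of the summand $MU_* \cdot x$. The degree estimate below will force $x$ to be a primitive, and hence $\psi(ax) = 1 \otimes ax$ for $a \in MU_*$, so that $MU_* \cdot x$ is in fact a sub-comodule. The cofibre sequence $S^n \xrightarrow{f} P \to C$ is then short exact on $MU_*MU$-comodule homology, so by \textbf{Lemma \ref{lemma:fibre_sequences_that_are_short_exaft_sequences_on_homology_preserved_by_synthetic_analogue_construction}} $\nu$ preserves it, giving a cofibre sequence $S^{n,n} \to \nu P \to \nu C$ in $\synspectra_{MU}$. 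Since $MU_*C$ is free of rank $k-1$, the spectrum $C$ is finite $MU$-projective, and $\nu C \in \mathcal{L}$ by induction, so $\nu P \in \mathcal{L}$.

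The main obstacle is constructing $f$, which amounts to surjectivity of the Hurewicz map $\pi_n P \to MU_n P$. I would deduce this from the $MU$-based Adams--Novikov spectral sequence $E_2^{s,t} = \Ext^{s,t}_{MU_*MU}(MU_*, MU_*P) \Rightarrow \pi_{t-s} P$, which converges strongly as $P$ is finite and hence $MU$-local, noting that the edge homomorphism $\pi_n P \to E_\infty^{0,n} \subseteq E_2^{0,n}$ is precisely the Hurewicz map. A single degree estimate on the cobar complex $\overline{MU_*MU}^{\otimes s} \otimes_{MU_*} MU_*P$, using that the reduced coalgebra $\overline{MU_*MU}$ is concentrated in degrees $\geq 2$ and that $MU_*P$ is concentrated in degrees $\geq n$, yields $\Ext^{s,t}_{MU_*MU}(MU_*, MU_*P) = 0$ whenever $t < 2s + n$. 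This vanishing does double duty: it identifies $E_2^{0,n}$ with all of $MU_n P$, so that every bottom-degree element is primitive; and it kills the targets $E_2^{r, n+r-1}$ of every outgoing differential from the $s=0$ line for $r \geq 2$. Hence $E_\infty^{0,n} = MU_n P$ and the edge map is surjective, producing the desired lift $f$ and closing the induction.
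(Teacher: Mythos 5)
Your argument is correct and achieves the same overall decomposition as the paper's — peel off a $\nu S^n$ from $\nu P$ by induction on the rank of $MU_*P$ — but establishes the key step, the existence of $f \colon S^n \to P$ hitting a generator of bottom-degree $MU$-homology, by a genuinely different method. The paper detours through integral homology: it uses the collapse of the K\"unneth spectral sequence to identify $H_*(P;\mathbb{Z}) \simeq \mathbb{Z}\otimes_{MU_*} MU_*P$, applies the ordinary stable Hurewicz theorem to realize a bottom generator of $H_*(P;\mathbb{Z})$ by a map from a sphere, and then cites Conner--Smith to conclude that the cofibre $P'$ again has free $MU$-homology; the $MU_*$-surjectivity of $P\to P'$ is then read off from the Nakayama-type identification above. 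You instead work directly with $MU$-homology and derive surjectivity of the $MU$-Hurewicz $\pi_n P\to MU_nP$ from the Adams--Novikov spectral sequence, using the cobar vanishing $\Ext^{s,t}_{MU_*MU}(MU_*, MU_*P)=0$ for $t<2s+n$ to see both that bottom-degree elements are primitive and that no differential obstructs the edge map. This avoids the Conner--Smith reference entirely, at the cost of invoking the (classical but nontrivial) convergence of the Adams--Novikov spectral sequence to $\pi_*P$ for finite $P$; the trade-off is essentially whether you prefer citing Conner--Smith or citing Adams--Novikov convergence. One small remark: for the base case $k=0$ you do not need the nilpotence theorem — if $MU_*P=0$ and $P$ is bounded below, then $H_*(P;\mathbb{Z})=0$ (since $H\mathbb{Z}$ is an $MU$-module), so $P\simeq 0$ by the stable Hurewicz theorem.
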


\begin{proof}
Consider the smallest subcategory $\ccat$ of $\synspectra_{\MU}$ containing the bigraded spheres and closed under colimits. Since the bigraded spheres are closed under suspensions, so is $\ccat$. Since $\synspectra_{\MU}$ is clearly generated under colimits by the suspensions of $\nu P \simeq \Sigma^{\infty}_{+} y(P)$, where $P \in \spectra_{\MU}^{fp}$, it is enough to show that $\nu P \in \ccat$. 

Since $\MU_{*}P$ is free, finitely generated by \cref{lemma:projective_mu_module_necessarily_free}, the same must be true for the integral homology, in fact they must be of the same rank. We prove the result by induction on the rank of $H_{*}(P, \mathbb{Z})$. Let the rank be $k \geq 1$ and assume that the result has been proven for all finite projective $Q$ with $H_{*}(Q, \mathbb{Z})$ of rank smaller than $k$.

Let $H_{i}(P, \mathbb{Z})$ be the lowest non-zero homology group, by Hurewicz we have $\pi_{i}P \simeq H_{i}(P, \mathbb{Z})$, hence an inclusion of a free summand determines a map $S^{i} \rightarrow P$. Consider the cofibre sequence

\begin{center}
$S^{i} \rightarrow P \rightarrow P^{\prime}$,
\end{center}
by construction this induces a short exact sequence in integral homology and $H_{*}(P^{\prime}, \mathbb{Z})$ is free of rank $k-1$. By the results of Conner-Smith, more precisely \cite{conner1969complex}[Lemma 3.1] this implies that $\MU_{*}P^{\prime}$ is also free, necessarily of the same rank. 

By the freeness we have $H_{*}(P, \mathbb{Z}) \simeq \mathbb{Z} \otimes _{\MU_{*}} \MU_{*} P$ and likewise for $P^{\prime}$, it follows that since $H_{*}(P, \mathbb{Z}) \rightarrow H_{*}(P^{\prime}, \mathbb{Z})$ is surjective, so is $\MU_{*}P \rightarrow \MU_{*} P^{\prime}$. By \cref{lemma:fibre_sequences_that_are_short_exaft_sequences_on_homology_preserved_by_synthetic_analogue_construction}, this implies that $\nu S^{l} \rightarrow \nu P \rightarrow \nu P^{\prime}$ is a fibre sequence of synthetic spectra. Since $\nu S^{l} \simeq S^{l, l}$, while $\nu P^{\prime}$ belongs to the $\infty$-category generated by the bigraded spheres by the inductive assumption, we deduce that the same must be true for $\nu P$. This ends the proof. 
\end{proof}

\begin{rem}
\label{rem:even_synthetic_spectra_based_on_mu_generated_by_even_weight_spheres}
An analogous cellularity result holds for the $\infty$-category of even synthetic spectra based on $\MU$ introduced in \cref{defin:even_synthetic_spectrum}. More precisely, $\synspectra_{\MU}^{ev}$ is the subcategory of synthetic spectra generated under colimits by $S^{t, w}$ with $w$ even. 

To see this, notice first that by \cref{thm:synthetic_even_spectra_a_subcategory_of_all_synthetic_spectra} we can identify $\synspectra_{\MU}^{ev}$  with the full subcategory of $\synspectra_{\MU}$ generated by synthetic spectra of the form $\nu P$, where $P$ is finite even projective. Then, recall that the proof of \cref{thm:synthetic_spectra_based_on_mu_are_cellular} proceeds by showing that a minimal cell structure on $P$ induces a cell structure on $\nu P$. However, if $\MU_{*}P$ is concentrated in even degree, then the minimal cell structure has only even cells and so the same must be true for $\nu P$. 
\end{rem}

\subsection{The synthetic dual Steenrod algebra}
In this section we give an example of a calculation in $\synspectra_{\MU}$ by computing the synthetic dual Steenrod algebra. Our tool will be the formal relationship between synthetic spectra, spectra and comodules, showing that these abstract results can be used to perform concrete calculation. 

The problem of computing the dual Steenrod algebra internal to synthetic spectra was suggested to the authors by Dan Isaksen, and is a natural starting point for at least two reasons. On one hand, the Steenord algebra controls the behaviour of the Adams spectral sequence, perhaps the most important tool in computing homotopy groups. On the other, we will prove later in \cref{thm:after_p_completion_motivic_category_coincides_with_even_synthetic_spectra} that the notions of an even synthetic spectrum based on $\MU$ coincides with that of a cellular motivic spectrum after $p$-completion, through this correspondence, our calculation is an analogue Voevodsky's computation of the motivic dual Steenrod algebra. 

\begin{defin}
Let $H$ be the mod $p$ Eilenberg-MacLane spectrum. We call $\nu H$ the \emph{synthetic Eilenberg-MacLane spectrum}. 
\end{defin}
Observe that since $H$ is a commutative ring spectrum and the synthetic analogue functor is lax symmetric monoidal by \cref{lemma:synthetic_analogue_construction_preserves_filtered_colimits_and_is_lax_symmetric_monoidal}, $\nu H$ is a commutative algebra in synthetic spectra. This implies that $\nu H_{*, *}$ is a bigraded commutative ring and that $\nu H _{*, *} X$ is a module over it for any synthetic spectrum $X$, see \cref{rem:associativity_equivalence_for_synthetic_spheres_and_sign_rule_for_commutative_bigraded_rings}.
 
In fact, $H$ in an $\MU$-algebra and so \cref{prop:homotopy_of_synthetic_analogues_of_homotopy_e_modules} implies that we have an isomorphism $\nu H_{*, *} \simeq \mathbb{F}_{p}[\tau]$, the motivic analogue of this fact is due to Voevodsky \cite{voevodsky2003motivic}. This settles the case of coefficients, our goal will be to compute the synthetic dual Steenrod algebra $\nu H _{*, *} \nu H$. The method we employ is analogous to one of the ways one can calculate the topological dual Steenrod algebra by starting with the knowledge of the Hopf algebroid associated to $\MU$, or rather $BP$, which we now review.

We have an $\MU$-algebra $BP$ with $BP_{*} \simeq \mathbb{Z}_{(p)}[v_{1}, v_{2}, \ldots]$ given by the Brown-Peterson spectrum. Moreover, by a result of Lazerev \cite{lazarev2003towers}, the spectra $BP/(v_{0}, \ldots, v_{k})$ admit structure of $\MU$-algebras such that the quotient maps $BP/(v_{0}, \ldots, v_{k}) \rightarrow BP/(v_{0}, \ldots, v_{k+1})$ are algebra homomorphisms.

Notice that we can write the Eilenberg-MacLane spectrum as $H \simeq \varinjlim BP/(v_{0}, \ldots, v_{k})$ and since $BP_{*}BP \simeq BP_{*}[b_{1}, b_{2} \ldots]$, it follows that $H_{*}BP \simeq \mathbb{F}_{p}[b_{1}, b_{2}, \ldots]$. This is a starting point of the calculation of $H_{*}H$ by computing $H_{*} BP/(v_{0}, \ldots, v_{k})$ inductively. We have a cofibre sequence of spectra of the form

\begin{center}
$\Sigma^{2p^{k}-2}BP/(v_{0}, \ldots, v_{k-1}) \rightarrow BP/(v_{0}, \ldots, v_{k-1}) \rightarrow BP/(v_{0}, \ldots, v_{k})$,
\end{center}
for each $k \geq 0$, where the first map is multiplication by $v_{k}$, this is in fact a cofibre sequence of $BP/(v_{0}, \ldots, v_{k-1})$-modules. After taking homology, we obtain a short exact sequence

\begin{center}
$0 \rightarrow H_{*} BP/(v_{0}, \ldots, v_{k-1}) \rightarrow H_{*} BP/(v_{0}, \ldots, v_{k}) \rightarrow H_{*} BP/(v_{0}, \ldots, v_{k-1})[2p^{k}-1] \rightarrow 0$ 
\end{center}
of $H_{*}BP/(v_{0}, \ldots, v_{k-1})$-modules, where the square bracket denotes the internal shift. Since we have an isomorphism $H_{*} H \simeq \varinjlim H_{*} BP/ (v_{0}, \ldots, v_{k})$, this determines the additive structure of $H_{*}H$. Now, we can choose preimages $\tau_{k} \in H_{2p^{k}-1} BP /(v_{0}, \ldots, v_{k})$ of the the unit of $H_{*} BP/(v_{0}, \ldots, v_{k-1})[2p^{k}-1]$ in the short exact sequences above, by construction together with $b_{i}$ they generate the whole Steenrod algebra. At odd primes, $\tau_{k}$ necessarily square to zero by commutativity and so there's an induced map 

\begin{center}
$\mathbb{F}_{p}[b_{1}, b_{2}, \ldots] \otimes E[\tau_{0}, \tau_{1}, \ldots] \rightarrow H_{*}H$. 
\end{center}
When $p = 2$, a more elaborate argument, see \cite{jeanneret2012cohomology}, shows that with an appropriate choice of $\tau_{k}$ we have $\tau_{k}^{2} = b_{k+1}$, so that there's an induced map of the form

\begin{center}
$(\mathbb{F}_{2}[b_{1}, b_{2}, \ldots] \otimes \mathbb{F}_{p}[\tau_{0}, \tau_{1}, \ldots]) / (\tau_{k}^{2} = b_{k+1})\rightarrow H_{*}H$. 
\end{center}
In both cases, these maps are easily seen to be injective by dimension counts and so this yields the usual description of the topological dual Steenrod algebra. Our calculation of the synthetic one will follow the above reasoning very closely in terms of the additive structure and to determine the multiplicative one we will reduce to topology. 

We now proceed with the computation of $\nu H_{*, *} \nu H$. Notice that since $BP$ is a direct summand of $\MU_{(p)}$, it satisfies the conditions of \cref{lemma:tensoring_with_a_filtered_colimit_of_finite_projectives_commutes_with_synthetic_analogue_construction}, so that $\nu BP \otimes \nu X \simeq \nu (BP \otimes X)$ for any spectrum $X$. Combining this with \cref{prop:homotopy_of_synthetic_analogues_of_homotopy_e_modules} in the case of the Eilenberg-MacLane spectrum we see that 

\begin{center}
$\nu H _{*, *} \nu BP \simeq \pi_{*, *} \nu (H \otimes BP) \simeq \fieldp[b_{1}, \ldots][\tau]$
\end{center}
with $| b_{i} | = (2p^{i}-2, 2p^{i}-2)$ and $| \tau | = (0, -1)$. We start with the following simple lemma. 

\begin{lemma}
\label{multiplication_by_vk_fibre_sequences_yield_a_short_exact_sequence_on_synthetic_mod_p_homology}
For any $k \geq 0$, the cofibre sequence of $BP/(v_{0}, \ldots, v_{k-1})$-modules 

\begin{center}
$\Sigma ^{2p^{k}-2} BP/(\ldots, v_{k-1}) \rightarrow BP/(\ldots, v_{k-1}) \rightarrow BP/(\ldots, v_{k})$, 
\end{center}
where the first map is multiplication by $v_{k}$, induces a short exact sequence 

\begin{center}
$\nu H _{*, *} \nu BP/(\ldots, v_{k-1}) \rightarrow \nu H_{*, *} \nu BP / (\ldots, v_{k}) \rightarrow \nu H_{*, *} \nu BP / (\ldots, v_{k-1})[2p^{k}-1, 2p^{k}-2]$
\end{center}
of $\nu H_{*, *} \nu BP / (\ldots, v_{k-1})$-modules. 
\end{lemma}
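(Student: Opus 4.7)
The plan is to show that the given fibre sequence of $BP/(\ldots,v_{k-1})$-module spectra induces, after applying $\nu$ and then tensoring with $\nu H$, a cofibre sequence of synthetic spectra whose first map is null-homotopic; the associated long exact sequence then collapses to the desired short exact sequence.

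First, I would verify that the sequence is short exact on $MU$-homology, so that \textbf{Lemma \ref{lemma:fibre_sequences_that_are_short_exaft_sequences_on_homology_preserved_by_synthetic_analogue_construction}} applies and $\nu$ takes it to a cofibre sequence of synthetic spectra. Since $BP/(v_{0}, \ldots, v_{k-1})$ is $p$-local and $MU_{(p)}$ splits as a wedge of shifts of $BP$ by Quillen's theorem, one has $MU_{*}BP/(v_{0}, \ldots, v_{k-1}) \simeq \bigoplus \Sigma^{n_{i}} BP_{*}BP/(v_{0}, \ldots, v_{k-1})$, and $v_{k}$ acts injectively on $BP_{*}BP \simeq BP_{*}[t_{1}, t_{2}, \ldots]$ modulo the regular sequence $(v_{0}, \ldots, v_{k-1})$. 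Combined with \textbf{Lemma \ref{lemma:tensoring_with_a_filtered_colimit_of_finite_projectives_commutes_with_synthetic_analogue_construction}} applied to the finite spectrum $S^{2p^{k}-2}$, which identifies $\nu \Sigma^{2p^{k}-2}(-)$ with $\Sigma^{2p^{k}-2, 2p^{k}-2} \nu(-)$, this yields a cofibre sequence
\begin{center}
$\Sigma^{2p^{k}-2, 2p^{k}-2} \nu BP/(\ldots, v_{k-1}) \xrightarrow{\nu(v_{k})} \nu BP/(\ldots, v_{k-1}) \rightarrow \nu BP/(\ldots, v_{k})$
\end{center}
in $\synspectra_{MU}$. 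Tensoring with $\nu H$ preserves cofibre sequences, and the associated long exact sequence of $\pi_{*,*}$ will give the stated short exact sequence precisely when the first map becomes null-homotopic.

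Next, I would establish this null-homotopy. By \textbf{Lemma \ref{lemma:synthetic_analogue_construction_preserves_filtered_colimits_and_is_lax_symmetric_monoidal}}, $\nu$ is lax symmetric monoidal, so the $MU$-algebra map $BP \to H$ induces on $\nu H$ the structure of a $\nu BP$-algebra. By \textbf{Proposition \ref{prop:homotopy_of_synthetic_analogues_of_homotopy_e_modules}} the induced map $\pi_{*,*}\nu BP \to \pi_{*,*}\nu H$ is identified with the extension by $\tau$ of $\pi_{*} BP \to \pi_{*} H = \fieldp$, which kills every $v_{k}$ (either for dimensional reasons when $k \geq 1$, or because $v_{0} = p = 0$ in $\fieldp$ when $k = 0$). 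Now $\nu H \otimes \nu BP/(\ldots, v_{k-1})$ is canonically a $\nu H$-module, hence a $\nu BP$-module through $\nu BP \to \nu H$. The $\nu BP$-linear self-map $\nu(v_{k}) \otimes \textnormal{id}_{\nu H}$ is multiplication by $v_{k}$ on this module; since $v_{k}$ has zero image in $\pi_{*,*} \nu H$, this multiplication factors through zero and is therefore null-homotopic.

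The main technical point requiring care is the identification of the tensored map $\nu(v_{k}) \otimes \textnormal{id}_{\nu H}$ with the $\nu H$-linear action of the image of $v_{k}$ in $\pi_{*, *}\nu H$. This uses that $\nu BP$ is a commutative algebra in synthetic spectra, so that on a tensor product of two $\nu BP$-modules the two evident actions agree via the multiplication $\nu BP \otimes \nu BP \to \nu BP$, and that one of these factors, $\nu H$, is itself a $\nu BP$-algebra, absorbing the action into multiplication by the image in $\pi_{*, *}\nu H$. Once this is in place, the null map in the cofibre sequence forces the boundary map in the long exact sequence of $\pi_{*, *}$ to vanish, producing the stated short exact sequence of $\nu H_{*,*} \nu BP/(\ldots, v_{k})$-modules with the indicated bigraded shift $[2p^{k}-1, 2p^{k}-2]$ coming from the single suspension of $\Sigma^{2p^{k}-2, 2p^{k}-2}$.
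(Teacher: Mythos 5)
Your overall strategy mirrors the paper's: reduce to a cofibre sequence of synthetic spectra via \textbf{Lemma \ref{lemma:fibre_sequences_that_are_short_exaft_sequences_on_homology_preserved_by_synthetic_analogue_construction}}, then show the $v_{k}$-multiplication map becomes trivial after tensoring with $\nu H$. Your verification of short exactness on $MU$-homology is more detailed than the paper's, which simply asserts it; that part is fine. The gap is in the null-homotopy argument.

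You claim that $\textnormal{id}_{\nu H} \otimes \nu(v_{k}\cdot)$ can be identified with $\nu H$-linear multiplication by the image of $v_{k}$ under the ring map $\pi_{*,*}\nu BP \to \pi_{*,*}\nu H$, which vanishes. But $\nu H \otimes \nu BP/(\ldots, v_{k-1})$ is a tensor product over the unit $S^{0,0}$, not over $\nu BP$, and as such it carries two distinct $\nu BP$-module structures: one through the left factor $\nu BP \to \nu H$, the other through the right factor $\nu BP \to \nu BP/(\ldots, v_{k-1})$. The map $\textnormal{id}_{\nu H} \otimes \nu(v_{k}\cdot)$ is multiplication by $v_{k}$ for the \emph{right} structure, and on the unit $1 \otimes 1$ its value is $1 \otimes \bar v_{k}$, where $\bar v_{k}$ is the image of $v_{k}$ in $\pi_{*,*}\nu BP/(\ldots, v_{k-1})$, which is a nonzero element of $\fieldp[v_{k}, v_{k+1}, \ldots][\tau]$. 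The left-structure multiplication has value $0 \otimes 1 = 0$. The assertion that these coincide on $1 \otimes 1$ is therefore equivalent to $1 \otimes \bar v_{k} = 0$ in $\nu H_{*,*}\nu BP/(\ldots, v_{k-1})$ — which is exactly the statement under proof. The appeal to commutativity of $\nu BP$ does not resolve this: the two module structures are only identified after passing to the relative tensor product $\nu H \otimes_{\nu BP} \nu BP/(\ldots, v_{k-1})$, which is not what appears here.

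The paper's argument avoids this. Since multiplication by $v_{k}$ is a map of $\nu H_{*,*}\nu BP/(\ldots, v_{k-1})$-modules, it vanishes precisely when it kills the unit, i.e.\ when $1 \otimes \bar v_{k} = 0$. One then observes that $1 \otimes \bar v_{k}$ lies in the image of the ring map $\nu H_{*,*}\nu BP \to \nu H_{*,*}\nu BP/(\ldots, v_{k-1})$, with preimage $1 \otimes \tilde v_{k} \in \nu H_{*,*}\nu BP \simeq H_{*}BP[\tau] \simeq \fieldp[b_{1}, b_{2}, \ldots][\tau]$. This preimage is the right-unit, i.e.\ Hurewicz, image of $v_{k}$ in $H_{*}BP$, extended by $\tau$, and the vanishing of the Hurewicz image of $v_{k}$ in $H_{*}BP$ is a classical fact. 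That is a genuinely different vanishing statement from ``$v_{k}$ maps to zero in $\pi_{*,*}\nu H$''; it concerns the right unit of the $\nu H$-based comodule structure, not the ring map $\nu BP \to \nu H$, and that is the fact your proof needs but does not establish.
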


\begin{proof}
Observe that the cofibre sequence of $BP / (\ldots, v_{k-1})$-modules in question induces a short exact sequence on $BP$-, and hence $\MU$-, homology and so we deduce that

\begin{center}
$\nu \Sigma ^{2p^{k}-2} BP/(\ldots, v_{k-1}) \rightarrow \nu BP/(\ldots, v_{k-1}) \rightarrow \nu BP/(\ldots, v_{k})$
\end{center}
is a cofibre sequence of synthetic spectra by \cref{lemma:fibre_sequences_that_are_short_exaft_sequences_on_homology_preserved_by_synthetic_analogue_construction}. This yields a long exact sequence by taking $\nu H_{*, *}$ and to see that this is in fact a short exact sequence as above, we have to show that $v_{k}$ acts by zero on $\nu H_{*, *} \nu BP / (\ldots, v_{k-1})$. 

Since $v_{k}$ induces a map of $\nu H_{*, *} \nu BP / (\ldots, v_{k-1})$-modules, it's enough to show that the unit of $\nu H_{*, *} \nu BP / (\ldots, v_{k-1})$ is taken to zero. However, this follows from the fact that the unit is in the image of $\nu H_{*, *} \nu BP \simeq H_{*}BP[\tau] \simeq \fieldp[b_{1}, \ldots][\tau]$ on which $v_{k}$ clearly acts by zero. 
\end{proof}

For each $k \geq 0$, let $\tau_{k}$ be a lift to $\nu H_{2p^{k}-1, 2p^{k}} \nu BP/(\ldots, v_{k})$ of the unit of the shift of $\nu H_{*, *} \nu BP / (\ldots, v_{k-1})$ in the short exact sequences of \cref{multiplication_by_vk_fibre_sequences_yield_a_short_exact_sequence_on_synthetic_mod_p_homology}. We will consider $\tau_{k}$ as elements of $\nu H_{2p^{k}-1, 2p^{k}-2} \nu H$, notice that by construction, after $\tau$-inversion these reduce to the elements $\tau_{k}^{top} \in H_{2p^{k}-1} H$ in the topological Steenrod algebra considered above, where we have added a superscript to avoid confusion. 

\begin{cor}
\label{cor:additive_structure_of_the_synthetic_dual_steenrod_algebra}
As a bigraded $\fieldp$-vector space, the synthetic dual Steenrod algebra $\nu H_{*, *} \nu H$ is isomorphic to $\fieldp[b_{1}, \ldots][\tau] \otimes_{\fieldp} E[\tau_{0}, \ldots]$ with $| b_{k} | = (2p^{k}-2, 2p^{k}-2)$, $| \tau_{k} | = (2p^{k}-1, 2p^{k}-2)$, $| \tau | = (0, -1)$. 
\end{cor}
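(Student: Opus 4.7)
The plan is to compute $\nu H_{*,*}\nu H$ additively by the same recursive strategy that the authors sketched for the topological dual Steenrod algebra, only now carried out internally to $\synspectra_{MU}$. First I would use the presentation $H\simeq \varinjlim BP/(v_{0},\ldots,v_{k})$ together with \textbf{Lemma \ref{lemma:synthetic_analogue_construction_preserves_filtered_colimits_and_is_lax_symmetric_monoidal}} to write $\nu H\simeq \varinjlim \nu BP/(v_{0},\ldots,v_{k})$. Because the bigraded spheres are compact in $\synspectra_{MU}$ (\textbf{Remark \ref{rem:synthetic_spectra_compactly_generated_by_suspensions_of_synthetic_analogues_of_finite_projectives}}), the functors $\nu H_{t,w}(-)=[S^{t,w},\nu H\otimes -]$ commute with filtered colimits, so it suffices to compute each $\nu H_{*,*}\nu BP/(v_{0},\ldots,v_{k})$ and then pass to the colimit.

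The base case is $\nu H_{*,*}\nu BP$. Since $BP$ is a retract of $MU_{(p)}$ and $MU_{*}BP$ is flat, \textbf{Lemma \ref{lemma:tensoring_with_a_filtered_colimit_of_finite_projectives_commutes_with_synthetic_analogue_construction}} gives $\nu H\otimes \nu BP\simeq \nu(H\wedge BP)$, and then \textbf{Proposition \ref{prop:homotopy_of_synthetic_analogues_of_homotopy_e_modules}} applied to the homotopy $MU$-module $H\wedge BP$ yields $\nu H_{*,*}\nu BP\simeq H_{*}BP\otimes_{\fieldp}\fieldp[\tau]\simeq \fieldp[b_{1},b_{2},\ldots][\tau]$ with $|b_{i}|=(2p^{i}-2,2p^{i}-2)$ and $|\tau|=(0,-1)$. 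I will then induct on $k\geq 0$, with inductive hypothesis that as a bigraded $\fieldp$-vector space,
\[
\nu H_{*,*}\nu BP/(v_{0},\ldots,v_{k-1})\;\simeq\; \fieldp[b_{1},b_{2},\ldots][\tau]\otimes_{\fieldp} E[\tau_{0},\ldots,\tau_{k-1}],
\]
with $|\tau_{l}|=(2p^{l}-1,2p^{l}-2)$.

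For the inductive step I feed the short exact sequence of \textbf{Lemma \ref{multiplication_by_vk_fibre_sequences_yield_a_short_exact_sequence_on_synthetic_mod_p_homology}} into this identification. The right-hand term is $\nu H_{*,*}\nu BP/(v_{0},\ldots,v_{k-1})[2p^{k}-1,2p^{k}-2]$, which by induction has a unique bidegree $(2p^{k}-1,2p^{k}-2)$ generator over $\nu H_{*,*}\nu BP/(v_{0},\ldots,v_{k-1})$. Since we are working with bigraded $\fieldp$-vector spaces, the short exact sequence splits additively; I choose a preimage $\tau_{k}\in \nu H_{2p^{k}-1,2p^{k}-2}\nu BP/(v_{0},\ldots,v_{k})$ of the unit shifted to that bidegree, and declare the isomorphism by sending the exterior generator to $\tau_{k}$. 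This upgrades the inductive identification by one more exterior factor, proving the claim for $k$. Passing to the filtered colimit in $k$ absorbs all of the $\tau_{l}$ into a single exterior algebra and produces the stated bigraded $\fieldp$-vector space isomorphism.

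I expect no serious obstacle, as the key topological input (the existence of the algebra maps $BP/(v_{0},\ldots,v_{k-1})\to BP/(v_{0},\ldots,v_{k})$ after Lazarev, and the identification of $H$ with their colimit) has already been invoked in the text, and \textbf{Lemma \ref{multiplication_by_vk_fibre_sequences_yield_a_short_exact_sequence_on_synthetic_mod_p_homology}} has done the genuine work of turning the $v_{k}$-multiplication cofibre sequences of $MU$-modules into short exact sequences in synthetic homology. The only point that deserves some care is checking that the bigradings in the inductive shift match up exactly: the sphere $S^{2p^{k}-1,2p^{k}-2}$ contributes to the Chow-degree-$1$ part, consistent with the fact that the $\tau_{l}$ are odd elements in the (Koszul-signed) bigraded ring $\nu H_{*,*}\nu H$ and so must square to zero when $p$ is odd, while at $p=2$ they are free to satisfy a nontrivial relation; additively either case gives the asserted tensor product.
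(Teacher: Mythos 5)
Your proof is correct and takes essentially the same approach as the paper: induct using the short exact sequence of \textbf{Lemma \ref{multiplication_by_vk_fibre_sequences_yield_a_short_exact_sequence_on_synthetic_mod_p_homology}} starting from the base case $\nu H_{*,*}\nu BP \simeq \fieldp[b_{1},\ldots][\tau]$, then pass to the filtered colimit. You have simply spelled out the details (compactness of the bigraded spheres, the splitting of short exact sequences of vector spaces, the bidegree bookkeeping) that the paper leaves implicit in its terse ``This is clear\ldots''
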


\begin{proof}
Arguing using induction and \cref{multiplication_by_vk_fibre_sequences_yield_a_short_exact_sequence_on_synthetic_mod_p_homology}, we see that additively $\nu H_{*, *} \nu BP / (\ldots, v_{k})$ is the same as $\fieldp[b_{1}, \ldots][\tau] \otimes_{\fieldp} E[\tau_{0}, \ldots, \tau_{k}]$. The result follows by passing to colimits. 
\end{proof}

\begin{cor}
\label{cor:tau_acts_injectively_on_the_synthetic_dual_steenrod_algebra}
The element $\tau$ acts injectively on the synthetic dual Steenrod algebra $\nu H_{*, *} \nu H$. 
\end{cor}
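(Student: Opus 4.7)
The plan is to establish $\tau$-injectivity inductively along the filtration $H \simeq \varinjlim BP/(v_{0}, \ldots, v_{k})$ used to compute $\nu H_{*, *} \nu H$ above. Setting $A_{k} := \nu H_{*, *} \nu BP/(v_{0}, \ldots, v_{k})$ for $k \geq 0$ and $A_{-1} := \nu H_{*, *} \nu BP$, the fact that $\nu$ preserves filtered colimits (\textbf{Lemma \ref{lemma:synthetic_analogue_construction_preserves_filtered_colimits_and_is_lax_symmetric_monoidal}}) together with cocontinuity of the tensor product in each variable (\textbf{Proposition \ref{prop:synthetic_spectra_is_a_stable_presentable_infty_category}}) gives $\nu H_{*, *} \nu H \simeq \varinjlim_{k} A_{k}$. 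Since filtered colimits of bigraded abelian groups are exact, it suffices to prove that multiplication by $\tau$ is injective on each $A_{k}$ separately.

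The base case is immediate: the computation recalled just before Corollary \ref{cor:additive_structure_of_the_synthetic_dual_steenrod_algebra} identifies $A_{-1}$ with the polynomial ring $\fieldp[b_{1}, b_{2}, \ldots][\tau]$, on which $\tau$ acts injectively by inspection. For the inductive step, assume injectivity on $A_{k-1}$. \textbf{Lemma \ref{multiplication_by_vk_fibre_sequences_yield_a_short_exact_sequence_on_synthetic_mod_p_homology}} supplies a short exact sequence
\begin{equation*}
0 \longrightarrow A_{k-1} \stackrel{f}{\longrightarrow} A_{k} \stackrel{g}{\longrightarrow} A_{k-1}[2p^{k}-1, 2p^{k}-2] \longrightarrow 0
\end{equation*}
arising from a cofibre sequence of synthetic spectra after smashing with $\nu H$. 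Both $f$ and $g$ are induced by maps of synthetic spectra, so since the action of $\tau \in \pi_{0, -1} S^{0, 0}$ is natural with respect to any morphism of synthetic spectra (\textbf{Proposition \ref{prop:transfer_map_can_be_identified_with_tau}}), the maps $f$ and $g$ are $\fieldp[\tau]$-linear.

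From here a standard chase finishes the argument: if $x \in A_{k}$ satisfies $\tau x = 0$, then $\tau g(x) = g(\tau x) = 0$, so $g(x) = 0$ by the inductive hypothesis applied to the shifted copy of $A_{k-1}$; exactness then produces $y \in A_{k-1}$ with $x = f(y)$, and $f(\tau y) = \tau f(y) = 0$ forces $\tau y = 0$ by injectivity of $f$, whence $y = 0$ by induction and $x = 0$. The only subtle point in the whole argument is verifying the $\fieldp[\tau]$-linearity of the short exact sequence, which is why I highlight it as a separate step; once that is in hand the remainder is bookkeeping.
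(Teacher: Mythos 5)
Your proof is correct and follows the same route as the paper: reduce along the filtered colimit $\nu H_{*,*}\nu H \simeq \varinjlim_{k} \nu H_{*,*}\nu BP/(v_{0},\ldots,v_{k})$, take $\nu H_{*,*}\nu BP \simeq \fieldp[b_{1},\ldots][\tau]$ as the base case, and induct using the short exact sequence of \textbf{Lemma \ref{multiplication_by_vk_fibre_sequences_yield_a_short_exact_sequence_on_synthetic_mod_p_homology}}. The only thing you flag as subtle, the $\fieldp[\tau]$-linearity of the short exact sequence, is in fact already part of the Lemma's statement: it asserts the sequence is one of $\nu H_{*,*}\nu BP/(\ldots,v_{k-1})$-modules, and $\tau$ lies in the image of $\pi_{*,*}S^{0,0}\to \nu H_{*,*}\nu BP/(\ldots,v_{k-1})$, so the $\fieldp[\tau]$-module structure is the restriction of that module structure and no separate naturality argument is required.
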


\begin{proof}
Since $\nu H_{*, *} \nu H \simeq \varinjlim \nu H_{*, *} \nu BP / (\ldots, v_{k})$ and the transition maps in the diagram are injective, it's enough to show that $\tau$ acts injectively on $\nu H_{*, *} \nu BP / (\ldots, v_{k})$ for each $k \geq -1$. This follows immediately by induction from \cref{multiplication_by_vk_fibre_sequences_yield_a_short_exact_sequence_on_synthetic_mod_p_homology}, the base case being the injective action on $\nu H_{*, *} \nu BP \simeq \fieldp[b_{1}, \ldots][\tau]$.
\end{proof}

\begin{lemma}
\label{lemma:b_and_tau_and_tauk_generate_the_synthetic_dual_steenrod_algebra}
The algebra $\nu H_{*, *} \nu H$ is generated by the elements $b_{k} \in \nu H_{2p^{k}-2, 2p^{k}-2}$ in the image of $\nu H_{*, *} \nu BP$, where $k \geq 1$, the elements $\tau_{l} \in \nu H_{2p^{l}-1, 2p^{l}-2} \nu H$, where $l \geq 0$ and the element $\tau \in \nu H_{0, -1} \nu H$.
\end{lemma}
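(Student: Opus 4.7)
The plan is to proceed by induction on $k$, showing that the algebra $\nu H_{*, *} \nu BP/(v_0, \ldots, v_k)$ is generated as an $\mathbb{F}_p$-algebra by $\tau$, the elements $b_i$ coming from $\nu H_{*, *} \nu BP$, and the elements $\tau_0, \ldots, \tau_k$; then pass to the colimit along $k$ to conclude. The base case $k = -1$ is the calculation
\[
\nu H_{*, *} \nu BP \simeq \mathbb{F}_p[b_1, b_2, \ldots][\tau]
\]
established before the statement.

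For the inductive step, I would use the short exact sequence of $\nu H_{*, *} \nu BP/(\ldots, v_{k-1})$-modules produced by \textbf{Lemma \ref{multiplication_by_vk_fibre_sequences_yield_a_short_exact_sequence_on_synthetic_mod_p_homology}}. By construction, $\tau_k$ lies in $\nu H_{*, *} \nu BP/(\ldots, v_k)$ and is a lift of the unit of the shifted copy on the right, so multiplication by $\tau_k$ splits the surjection as a map of $\nu H_{*, *} \nu BP/(\ldots, v_{k-1})$-modules. Thus every element of $\nu H_{*, *} \nu BP/(\ldots, v_k)$ can be written as $a + \tau_k \cdot b$ where $a, b$ lie in the image of $\nu H_{*, *} \nu BP/(\ldots, v_{k-1})$. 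Applying the inductive hypothesis to $a$ and $b$ shows that $\tau$, the $b_i$'s, and $\tau_0, \ldots, \tau_k$ generate $\nu H_{*, *} \nu BP/(\ldots, v_k)$ as an $\mathbb{F}_p$-algebra.

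Finally, since $H \simeq \varinjlim BP/(v_0, \ldots, v_k)$ and the synthetic analogue functor commutes with filtered colimits by \textbf{Lemma \ref{lemma:synthetic_analogue_construction_preserves_filtered_colimits_and_is_lax_symmetric_monoidal}}, while $\nu H \otimes -$ preserves filtered colimits and synthetic spheres are compact, one has
\[
\nu H_{*, *} \nu H \simeq \varinjlim_k \, \nu H_{*, *} \nu BP/(v_0, \ldots, v_k).
\]
A colimit of algebras generated by a fixed set of elements (the $b_i$'s, $\tau$, and successively more $\tau_l$'s) is generated by the union of those elements, so the algebra generation statement for the colimit follows.

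There is no serious obstacle: the calculation reduces to the additive structure of \textbf{Corollary \ref{cor:additive_structure_of_the_synthetic_dual_steenrod_algebra}} together with the module-theoretic splitting provided by the lift $\tau_k$. The only mild subtlety is being careful that the short exact sequence is read as a sequence of modules over the \emph{smaller} algebra $\nu H_{*, *} \nu BP/(\ldots, v_{k-1})$, so that multiplication by $\tau_k$ indeed provides a section; once this is noted, the inductive passage and the colimit step are routine.
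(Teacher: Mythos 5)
Your proof is correct and follows essentially the same route as the paper: induction on $k$ using the short exact sequence from Lemma \ref{multiplication_by_vk_fibre_sequences_yield_a_short_exact_sequence_on_synthetic_mod_p_homology}, observing that the unit and $\tau_k$ generate $\nu H_{*,*}\nu BP/(\ldots,v_k)$ as a module over $\nu H_{*,*}\nu BP/(\ldots,v_{k-1})$, and passing to the colimit. The only cosmetic difference is that you make the module-theoretic splitting by $\tau_k$ explicit where the paper simply notes the extension is one of cyclic modules; both observations yield the same conclusion.
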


\begin{proof}
We show a more precise statement that $\nu H_{*, *} \nu BP / (\ldots, v_{k})$ is generated by the $b_{i}$, $\tau$ and $\tau_{i}$ for $i \leq k$, since $\nu H_{*, *} \nu H \simeq \varinjlim \nu H_{*, *} \nu BP / (\ldots, v_{k})$, this is clearly enough. We work by induction, the base case $k = -1$ being clear, since $\nu H_{*, *} \nu BP \simeq \fieldp[b_{1}, \ldots][\tau]$. Now assume that $k \geq 0$, then by \cref{multiplication_by_vk_fibre_sequences_yield_a_short_exact_sequence_on_synthetic_mod_p_homology}, we have a short exact sequence 

\begin{center}
$\nu H _{*, *} \nu BP/(\ldots, v_{k-1}) \rightarrow \nu H_{*, *} \nu BP / (\ldots, v_{k}) \rightarrow \nu H_{*, *} \nu BP / (\ldots, v_{k-1})[2p^{k}-1, 2p^{k}-2]$
\end{center}
of $\nu H _{*, *} \nu BP/(\ldots, v_{k-1})$-modules. By the inductive assumption, it's enough to show that the subalgebra generated by the specified elements contains the generators of $\nu H_{*, *} \nu BP / (\ldots, v_{k})$ as an $\nu H _{*, *} \nu BP/(\ldots, v_{k-1})$-module. This is clear, as $\nu H_{*, *} \nu BP / (\ldots, v_{k})$ is an extension of cyclic $\nu H _{*, *} \nu BP/(\ldots, v_{k-1})$-modules and the generators of both of these are respectively the unit and $\tau_{k}$. 
\end{proof}
We're now ready to prove the structural theorems for the synthetic dual Steenrod algebra. There are two cases to consider, that of an odd and even prime, and we do the former first as it is quite a bit easier. 

\begin{thm}
\label{thm:the_structure_of_synthetic_dual_steenrod_algebra_at_odd_primes}
Let $p$ be an odd prime and $\nu H_{*, *} \nu H$ be the corresponding synthetic dual Steenrod algebra. Then, there's an isomorphism 

\begin{center}
$\nu H_{*, *} \nu H \simeq \fieldp[b_{1}, b_{2}, \ldots, \tau] \otimes _{\fieldp} E(\tau_{0}, \tau_{1}, \ldots)$
\end{center}
of bigraded algebras. In particular, as an algebra $\nu H_{*, *} \nu H$ is isomorphic to the tensor product $H_{*}H \otimes _{\fieldp} \fieldp[\tau]$.
\end{thm}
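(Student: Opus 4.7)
The plan is to produce a bigraded algebra homomorphism from the presented algebra $R := \fieldp[b_1, b_2, \ldots, \tau] \otimes_{\fieldp} E(\tau_0, \tau_1, \ldots)$ to $\nu H_{*,*} \nu H$ and then verify it is an isomorphism by a dimension count, combining the additive calculation already obtained with the Koszul sign rule that is automatic because $\nu H$ is a commutative algebra.

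First I would check that the intended relations genuinely hold in the target. The polynomial subalgebra $\fieldp[b_1, b_2, \ldots, \tau]$ comes for free from Proposition \ref{prop:homotopy_of_synthetic_analogues_of_homotopy_e_modules} via the inclusion $\nu H_{*,*} \nu BP \simeq \fieldp[b_1, b_2, \ldots][\tau] \hookrightarrow \nu H_{*,*} \nu H$, so the $b_k$ pairwise commute and commute with $\tau$ on the nose. For the exterior part I would invoke that $\nu : \spectra \to \synspectra_{MU}$ is lax symmetric monoidal (Lemma \ref{lemma:synthetic_analogue_construction_preserves_filtered_colimits_and_is_lax_symmetric_monoidal}), so $\nu H$ is a commutative algebra in synthetic spectra, and Remark \ref{rem:associativity_equivalence_for_synthetic_spheres_and_sign_rule_for_commutative_bigraded_rings} then says that $\nu H_{*,*} \nu H$ is strictly graded-commutative under the Koszul sign rule applied only to the topological degree. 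Since $|\tau_l| = (2p^l - 1, 2p^l - 2)$ has odd topological degree, this forces $\tau_l^2 = -\tau_l^2$, hence $\tau_l^2 = 0$ at any odd prime, and $\tau_l \tau_{l'} = -\tau_{l'} \tau_l$; and since $|b_k|$, $|\tau|$ are in even topological degree, $\tau_l$ commutes with every $b_k$ and with $\tau$. These are exactly the defining relations of $R$, so there is a well-defined bigraded algebra map $\phi : R \to \nu H_{*,*}\nu H$.

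Next I would observe that $\phi$ is surjective: this is Lemma \ref{lemma:b_and_tau_and_tauk_generate_the_synthetic_dual_steenrod_algebra}, which identifies $\{b_k\}_{k \geq 1}$, $\{\tau_l\}_{l \geq 0}$ together with $\tau$ as algebra generators of the target. Finally, Corollary \ref{cor:additive_structure_of_the_synthetic_dual_steenrod_algebra} establishes that $\nu H_{*,*} \nu H$ and $R$ have the same bigraded $\fieldp$-vector space structure. Because a monomial in the $b_k$ and $\tau_l$ has its topological degree strictly positive on each nontrivial factor, the topological degree of a basis monomial of $R$ pins down the exponents on the $b_k, \tau_l$, and then the weight pins down the power of $\tau$; hence each bidegree of $R$ is finite-dimensional. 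A surjection of finite-dimensional bigraded $\fieldp$-vector spaces with matching dimensions in every bidegree is an isomorphism, so $\phi$ is the desired isomorphism of bigraded algebras. The final statement that $\nu H_{*,*}\nu H \simeq H_{*}H \otimes_{\fieldp} \fieldp[\tau]$ as algebras is then read off from the classical odd-primary description of $H_{*}H$.

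The only genuinely non-routine point is the step establishing the exterior relations for the $\tau_l$; here one must be sure that the Koszul convention of Remark \ref{rem:associativity_equivalence_for_synthetic_spheres_and_sign_rule_for_commutative_bigraded_rings} yields honest strict (anti)commutativity in the bigraded ring rather than just commutativity up to a unit depending on the degrees, and that the sign $(-1)^{tt'}$ depends only on the topological degrees $t, t'$ and not on the weights. Granting that normalization, the argument is purely formal and the prime needs to be odd only to force $\tau_l^2 = 0$ from $2\tau_l^2 = 0$; at $p = 2$ this step collapses, which is precisely why the even prime case must be handled separately.
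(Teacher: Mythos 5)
Your proposal is correct and follows essentially the same route as the paper: construct the map $\fieldp[b_1,\ldots,\tau]\otimes E(\tau_0,\ldots)\to\nu H_{*,*}\nu H$ using the sign conventions of Remark \ref{rem:associativity_equivalence_for_synthetic_spheres_and_sign_rule_for_commutative_bigraded_rings} to force $\tau_l^2=0$ at odd primes, obtain surjectivity from Lemma \ref{lemma:b_and_tau_and_tauk_generate_the_synthetic_dual_steenrod_algebra}, and conclude by the dimension count of Corollary \ref{cor:additive_structure_of_the_synthetic_dual_steenrod_algebra}. Your write-up is slightly more explicit about why the exterior relations define a valid map out of the presented algebra $R$ and why each bidegree is finite-dimensional, but the structure of the argument is identical to the paper's.
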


\begin{proof}
The elements $b_{i}, \tau_{k}$ correspond to the elements we have described before. We've seen that the map $\nu H_{*, *} \nu BP \rightarrow \nu H_{*, *} \nu H$ induces an inclusion $\fieldp[b_{1}, \ldots][\tau]$, since $| \tau_{k} | = (2p^{k}-1, 2p^{k}-2)$ are of odd topological degree, by commutativity we have $\tau_{k}^{2} = 0$, see \cref{rem:associativity_equivalence_for_synthetic_spheres_and_sign_rule_for_commutative_bigraded_rings} for the sign conventions. We deduce that there's an induced map 

\begin{center}
$\fieldp[b_{1}, b_{2}, \ldots, \tau] \otimes _{\fieldp} E(\tau_{0}, \tau_{1}, \ldots) \rightarrow \nu H_{*, *} \nu H$,
\end{center}
it is in fact surjective by \cref{lemma:b_and_tau_and_tauk_generate_the_synthetic_dual_steenrod_algebra}. It follows by dimension count of \cref{cor:additive_structure_of_the_synthetic_dual_steenrod_algebra} that it must be an isomorphism, as both sides are gradewise of the same dimension. 
\end{proof}
We now move on to the even prime, where a slightly more involved argument is needed. 

\begin{thm}
\label{thm:the_structure_of_synthetic_dual_steenrod_algebra_at_even_prime}
Let $p = 2$ and $\nu H_{*, *} \nu H$ be the corresponding synthetic dual Steenrod algebra. Then, there's an isomorphism 

\begin{center}
$\nu H_{*, *} \nu H \simeq \fieldtwo[b_{1}, b_{2}, \ldots, \tau, \tau_{0}, \tau_{1}, \ldots] / (\tau_{k}^{2} = \tau^{2} b_{k+1})$
\end{center}
of bigraded algebras. 
\end{thm}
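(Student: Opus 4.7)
The approach mirrors the proof for odd primes in \textbf{Theorem \ref{thm:the_structure_of_synthetic_dual_steenrod_algebra_at_odd_primes}} with one new ingredient: identifying the square $\tau_k^2$ in $\nu H_{*,*}\nu H$. Concretely, I will show that for a suitable choice of the generators $\tau_k$, the relation $\tau_k^2 = \tau^2 b_{k+1}$ holds. This then yields a map of bigraded algebras
\begin{equation*}
\Phi: \fieldtwo[b_1, b_2, \ldots, \tau, \tau_0, \tau_1, \ldots]/(\tau_k^2 = \tau^2 b_{k+1}) \longrightarrow \nu H_{*,*}\nu H
\end{equation*}
which is surjective by \textbf{Lemma \ref{lemma:b_and_tau_and_tauk_generate_the_synthetic_dual_steenrod_algebra}}. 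Using the given relations, every element of the source can be rewritten as an $\fieldtwo$-linear combination of monomials of the form $b^I \tau^j \tau_0^{\epsilon_0}\tau_1^{\epsilon_1}\cdots$ with $\epsilon_i \in \{0,1\}$, which matches the additive description of $\nu H_{*,*}\nu H$ from \textbf{Corollary \ref{cor:additive_structure_of_the_synthetic_dual_steenrod_algebra}} bidegree by bidegree, so the surjection $\Phi$ is forced to be an isomorphism.

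The heart of the proof is therefore establishing $\tau_k^2 = \tau^2 b_{k+1}$. A direct computation shows both sides lie in bidegree $(2^{k+2}-2, 2^{k+2}-4)$. By \textbf{Corollary \ref{cor:tau_acts_injectively_on_the_synthetic_dual_steenrod_algebra}}, $\tau$ acts injectively on $\nu H_{*,*}\nu H$, and combining this with \textbf{Remark \ref{rem:homotopy_of_underlying_spectrum_of_a_synthetic_one}} together with the identification $\tau^{-1}(\nu H \otimes \nu H) \simeq H \wedge H$ coming from the symmetric monoidality of \textbf{Proposition \ref{prop:tau_inversion_cocontinuous_symmetric_monoidal_left_inverse_to_synthetic_analogue}}, one sees that the natural map
\begin{equation*}
\pi_{t,w}(\nu H \otimes \nu H) \hookrightarrow \varinjlim_n \pi_{t,w-n}(\nu H \otimes \nu H) \simeq \pi_t(H \wedge H) = H_t H
\end{equation*}
is injective for every $(t,w)$. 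Under this injection, $\tau \mapsto 1$, the synthetic $b_i$ maps to the topological generator $b_i^{top}$ (since the $b_i$ already originate in $\nu BP$ via \textbf{Proposition \ref{prop:homotopy_of_synthetic_analogues_of_homotopy_e_modules}}), and a synthetic $\tau_k$ maps to some choice of topological generator $\tau_k^{top}$ in $H_{2^{k+1}-1}H$. By the classical result of Jeanneret \cite{jeanneret2012cohomology}, the topological $\tau_k^{top}$ can be chosen so that $(\tau_k^{top})^2 = b_{k+1}^{top}$, and a synthetic $\tau_k$ realising such a topological choice exists: both the synthetic and the topological lifts in the short exact sequences (compare \textbf{Lemma \ref{multiplication_by_vk_fibre_sequences_yield_a_short_exact_sequence_on_synthetic_mod_p_homology}} with its topological analogue) form torsors over the preceding stage $\nu H_{*,*} \nu BP/(v_0,\ldots,v_{k-1})$ and $H_* BP/(v_0,\ldots,v_{k-1})$ respectively, and the comparison between them is surjective in the relevant bidegree because it lands in positive Chow degree, where \textbf{Theorem \ref{thm:homotopy_of_synthetic_analogues_is_topological_in_non_negative_chow_degree}} identifies the two after passing to a suitable $\nu MU$-module. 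With the choice so arranged, both $\tau_k^2$ and $\tau^2 b_{k+1}$ map to $b_{k+1}^{top}$ in $H_*H$, and injectivity forces equality in $\nu H_{*,*} \nu H$.

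The main obstacle is the indeterminacy-matching step in the previous paragraph. While the topological relation and the $\tau$-injectivity observation are immediate consequences of earlier results, pinning down that the synthetic generators $\tau_k$ can be chosen to lift the Milnor-type topological generators requires identifying $\nu H \otimes \nu BP/(v_0,\ldots,v_{k-1})$ as a $\nu$-synthetic analogue well enough for \textbf{Proposition \ref{prop:homotopy_of_synthetic_analogues_of_homotopy_e_modules}} and \textbf{Theorem \ref{thm:homotopy_of_synthetic_analogues_is_topological_in_non_negative_chow_degree}} to apply to its homotopy in the required bidegree. Once that is in hand, the rest of the argument is formal book-keeping on top of the odd-primary proof.
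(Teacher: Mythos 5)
Your proof follows the same route as the paper: establish generation via \textbf{Lemma \ref{lemma:b_and_tau_and_tauk_generate_the_synthetic_dual_steenrod_algebra}}, match dimensions via \textbf{Corollary \ref{cor:additive_structure_of_the_synthetic_dual_steenrod_algebra}}, and pin down the relation $\tau_k^2 = \tau^2 b_{k+1}$ by combining the classical Jeanneret relation with the injectivity of $\tau$-inversion (\textbf{Corollary \ref{cor:tau_acts_injectively_on_the_synthetic_dual_steenrod_algebra}} plus $\tau^{-1}(\nu H \otimes \nu H) \simeq H \wedge H$). The formulations look different, but ``both map to $b_{k+1}^{top}$ and $\tau$-inversion is injective'' and ``they agree after multiplying by $\tau^l$ and $\tau$ acts injectively'' are the same argument.

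Where you go beyond the paper is in explicitly flagging that one must arrange the synthetic $\tau_k$ so that its $\tau$-inversion is the specific Milnor/Jeanneret generator satisfying $(\tau_k^{top})^2 = b_{k+1}^{top}$; the paper states this as automatic (``reduce to the usual elements''), though it is not, since the lift has genuine indeterminacy and at $p=2$ the square changes under a change of lift. Your attempted justification of the choice, however, does not go through. First, the appeal to \textbf{Theorem \ref{thm:homotopy_of_synthetic_analogues_is_topological_in_non_negative_chow_degree}} requires $\nu H \otimes \nu BP/(v_0,\ldots,v_{k-1})$ to be a synthetic analogue $\nu X$, and it is not: since $v_k$ acts by zero on $H_*BP/(\ldots,v_{k-1})$, the spectrum $H \wedge BP/(\ldots,v_k)$ splits and $\nu(H \wedge BP/(\ldots,v_k))$ has a summand shifted by weight $2p^k-1$ (Chow degree $0$), whereas the cofibre sequence of \textbf{Lemma \ref{multiplication_by_vk_fibre_sequences_yield_a_short_exact_sequence_on_synthetic_mod_p_homology}} shifts the weight by $2p^k-2$ (Chow degree $1$), which is exactly what makes $\tau_k$ live in Chow degree one. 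Second, even granting some substitute, the $\tau$-inversion map on the indeterminacy groups is not surjective for $k \geq 3$: the Chow-degree-one part of $\nu H_{*,*}\nu BP/(\ldots,v_{k-1})$ maps only onto monomials of the form $b^I\tau_l^{top}$ in $H_{2p^k-1}BP/(\ldots,v_{k-1})$, but the latter also contains monomials with three or more odd generators (e.g.\ $\tau_0^{top}\tau_1^{top}\tau_2^{top}(b_1^{top})^2$ in degree $15$ when $k=3$). So what you would actually need is the weaker claim that the particular difference between the $\tau$-inversion of an arbitrary synthetic lift and the Jeanneret generator lies in the image, and this does not follow from the ingredients you cite. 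You honestly flag this as the main obstacle, and it is one --- for your argument, and, implicitly, for the paper's.
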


\begin{proof}
This is similar to the proof of \cref{thm:the_structure_of_synthetic_dual_steenrod_algebra_at_odd_primes}, except we want to prove the relation $\tau_{k}^{2} = \tau^{2} b_{k+1}$ instead of $\tau_{k}^{2} = 0$, which doesn't hold at $p = 2$. Once this is done then again one easily sees that the induced map from $\fieldtwo[b_{1}, b_{2}, \ldots, \tau_{0}, \tau_{1}, \ldots, \tau] / (\tau_{k}^{2} = \tau^{2} b_{k+1})$ to $\nu H_{*, *} \nu H$ is an isomorphism by dimension counts. 

After $\tau$-inversion, $b_{k+1}, \tau_{k}$ reduce to the usual elements $b_{k+1}^{top}, \tau_{k}^{top} \in H_{*}H$. Since the relation $b_{k+1}^{top} = (\tau_{k}^{top})^{2}$ is classical, we deduce that $\tau_{k}^{2}$ and $b_{k+1}$ coincide after multiplying both sides by sufficiently large powers of $\tau$. Since $| b_{k+1} | = (2p^{k+1}-2, 2p^{k+1}-2)$ and $| \tau _{k} | = (2p^{k}-1, 2p^{k}-2)$ with $p = 2$, the elements $\tau^{2} b_{k+1}$ and $\tau_{k}^{2}$ are in the same degree and so we deduce that there exists some $l$ such that $\tau^{l} (\tau^{2} b_{k+1} - \tau_{k}^{2}) = 0$. However, $\tau$ acts injectively on $\nu H_{*, *} \nu H$ by \cref{cor:tau_acts_injectively_on_the_synthetic_dual_steenrod_algebra} and hence we must have $\tau^{2} b_{k+1} - \tau_{k}^{2} = 0$, which is what we wanted to show. 
\end{proof}
Through the correspondence between synthetic and motivic spectra, the even weight part of $\nu H_{*, *} \nu H$ agrees with the motivic dual Steenrod algebra, see \cref{rem:synthetic_eilenberg_maclane_corresponds_to_motivic_one}. In this context \cref{thm:the_structure_of_synthetic_dual_steenrod_algebra_at_even_prime} is analogous to Voevodsky's calculation in the motivic setting \cite{voevodsky2003motivic}, \cite{voevodsky2010motivic}. 

Note that we will show later, in \cref{thm:after_p_completion_motivic_category_coincides_with_even_synthetic_spectra}, that synthetic and complex motivic categories are equivalent after completion at a prime. However, beware that our calculation of the Steenrod algebra does not imply that of Voevodsky's as the latter is one of the many ingredients needed to establish the synthetic-motivic correspondence in the first place. On the other hand, the synthetic calculation is much more simple. 

\begin{rem}
\label{rem:form_of_synthetic_dual_steenrod_algebra_after_killing_tau_at_even_prime}
The synthetic dual Steenrod algebra is more interesting at $p = 2$, as the relation $\tau_{k}^{2} = \tau^{2} b_{k+1}$, although derived from the classical one, is none-the-less of a slightly different form. We know that after inverting $\tau$, $\nu H_{*, *} \nu H$ necessarily coincides with the topological analogue in the sense that we have an isomorphism $\nu H_{*, *} \nu H \otimes _{\fieldtwo[\tau]} \fieldtwo[\tau, \tau^{-1}] \simeq H_{*} H \otimes _{\fieldtwo} \fieldtwo[\tau, \tau^{-1}]$. However, a more interesting phenomenon can be observed when we instead set $\tau$ to zero. 

Notice that by our computation $\nu H_{*, *} \nu H \otimes _{\fieldtwo[\tau]} \fieldtwo$ is a ``$p = 2$''-version of the odd prime topological Steenrod algebra; this observation appears in the motivic context in \cite{isaksen_stable_stems} as a possible explanation for the success of the motivic methods at the even prime specifically. Using the synthetic approach, we can give one possible explanation of these phenomena.

Namely, observe that by \cref{cor:tau_acts_injectively_on_the_synthetic_dual_steenrod_algebra}, $\tau$ acts injectively on $\nu H_{*, *} \nu H$ and so we deduce that there's an isomorphism 

\begin{center}
$\nu H_{*, *} \nu H_{*, *} \otimes _{\fieldp[\tau]} \fieldp \simeq \nu H_{*, *} (\nu H \otimes C\tau)$, 
\end{center}
notice that even in the absence of injectivity of $\tau$ there would be a long exact sequence relating $\nu H_{*, *} \nu H$ with $\nu H_{*, *} (\nu H \otimes C\tau)$. In any case, $\nu H \otimes C\tau$ is canonically $C\tau$-module and using that we can rewrite the right hand side as 

\begin{center}
$\pi_{*, *} \nu H \otimes \nu H \otimes C\tau \simeq \pi_{*, *} (\nu H \otimes C\tau) \otimes _{C\tau} (\nu H \otimes C\tau) \simeq [C\tau, (\nu H \otimes C\tau) \otimes _{C\tau} (\nu H \otimes C\tau)]_{C\tau}^{*, *}$,
\end{center}
where in the last term the brackets denote the homotopy classes of maps of $C\tau$-comodules. By \cref{thm:topological_part_of_hovey_stable_category_of_comodules_as_modules_over_the_cofibre_of_tau}, we can identify the latter with the stable $\infty$-category of $\MU_{*}\MU$-comodules, under this correspondence $\nu H \otimes C\tau$ corresponds to the comodule $\MU_{*}H$, and so we deduce that 

\begin{center}
$\nu H_{*, *} \nu H \otimes _{\fieldp[\tau]} \fieldp \simeq [\MU_{*}, \MU_{*}H \otimes \MU_{*}H]_{\euscr{S}table_{\MU_{*}\MU}}$,
\end{center}
where both the homotopy classes of maps and the tensor product are computed in $\euscr{S}table_{\MU_{*}\MU}$. In other words, after setting $\tau = 0$, the dual synthetic Steenrod algebra coincides with the dual Steenrod algebra internal to Hovey's stable theory of $\MU_{*}\MU$-comodules. 

Now, one can compute this algebraic dual Steenrod algebra with an argument similar to the one we have used in the synthetic case, it turns out that it is always of the ``odd prime form'' of a polynomial algebra tensor an exterior one, irregardless of the prime, due to degree reasons. This is what forces $\nu H_{*, *} \nu H \otimes _{\fieldp[\tau]} \fieldp$ to be of that form as well.
\end{rem}

\section{Comparison with the cellular motivic category}
\label{section_comparison_with_the_cellular_motivic_category}

In this section we compare the synthetic and cellular motivic categories. More precisely, we construct an adjunction $\cspectra \rightleftarrows \synspectra_{\MU}^{ev}$ between the cellular motivic category and even synthetic spectra based on $\MU$ and show that it induces an adjoint equivalence on the $\infty$-categories of $p$-complete objects. In particular, this gives a topological description of the $p$-complete cellular motivic category. 

\subsection{Cellular motivic category} 

In this section we review the needed facts from motivic homotopy theory, in particular concerning the motivic cobordism spectrum $\MGL$ and the Hopkins-Morel-Hoyois theorem. 

We work in the stable motivic homotopy theory, as studied for example in \cite{morel19991}. By $\cspectra$ we denote the cellular motivic category over $\textnormal{Spec}(\mathbb{C})$, this is the smallest subcategory of all complex motivic spectra containing the spheres and closed under colimits. We will usually call the objects of $\cspectra$ just \emph{motivic spectra}, they will be implicitly complex and cellular. 

Since we work over the complex numbers, we have a functor that sends any smooth complex variety $X$ to its topological space $X(\mathbb{C})$ of complex points, equipped with the analytic topology. One can show that this induces an adjunction

\begin{center}
$Re \dashv Sing \colon \spectra_{\mathbb{C}} \rightleftarrows \spectra$, 
\end{center}
between motivic spectra and spectra, we call $Re$ the \emph{Betti realization}. Notice that since $Re$ is cocontinuous and we have $Re(S^{0, 0}) \simeq S^{0}$, the unique cocontinuous functor $c \colon \spectra \rightarrow \spectra_{\mathbb{C}}$ such that $c(S^{0}) \simeq S^{0, 0}$ is a section of $Re$. One usually calls $c$ the \emph{constant motivic spectrum} functor, it is a deep result of Levine that it is in fact a fully faithful embedding, see \cite{levine2014comparison}.

When working motivically, things are usually bigraded using a bidegree $(s, w)$, where the first grading is called \emph{topological} and the second one the \emph{weight}. These two gradings are determined by the spheres $S^{0, 0} \simeq \sigmainfty \mathbb{A}^{0}$ and $S^{2, 1} \simeq \Sigma^{\infty} \mathbb{P}^{1}$. As in the synthetic case, we will also grade things by the Chow degree, which takes the following form. 

\begin{defin} 
The \emph{Chow degree} associated to the motivic bigrading $(s, w)$ is equal to $s - 2w$. 
\end{defin}
If $A$ is a bigraded abelian group and $k \in \mathbb{Z}$, we reserve the right to sometimes denote by $A_{k}$ we denote the group of elements of Chow degree $k$, although we will very explicit when we do so, notice that $A_{k}$ has an internal grading given by topological degree.

We now recall some standard facts about the algebraic cobordism spectrum, for a more detailed account see \cite{naumann2009motivic}. Let $Gr(k, n)$ denote the Grassmannian of $k$-planes in $\mathbb{C}^{n}$, this is a projective complex variety equipped with a tautological vector bundle $E(k, n) \rightarrow $ of rank $k$. Then, analogously to the topological case, the algebraic cobordism spectrum is defined as the colimit 

\begin{center}
$\MGL \simeq \varinjlim \ \Sigma^{-2k, -k} \ \Sigma^{\infty} (Th(E(k, n)))$
\end{center}
where the Thom space is $Th(E(k, n)) = E(k, n) / (E(k, n) \setminus Gr(k, n))$. In other words, $\MGL$ is the colimit of the Thom spectra of virtual bundles $E(k, n) - \mathbb{C}^{k}$ over $Gr(k ,n)$. It has a structure of a commutative algebra in motivic spectra with multiplication induced by the tensor product of vector spaces. 

\begin{rem}
\label{rem:grassmanians_stably_cellular}
The relevant varieties are all stably cellular; that is, $\Sigma^{\infty} _{+} Gr(k, n)$, $\Sigma^{\infty} _{+} E(k, n)$ and $\Sigma^{\infty}_{+} (E(k, n) \setminus Gr(k, n))$ are cellular motivic spectra, in fact, finite cellular \cite{dugger2005motivic}. It follows that $\MGL$ is also cellular.
\end{rem}
One shows that $\MGL$ is an \emph{oriented} motivic spectrum, in fact a universal example of one. This, combined with basic calculations with Grassmannians allows one to determine $\MGL_{*}\MGL$ as an $\MGL_{*, *}$-algebra, a computation which we now review.

\begin{prop}
\label{prop:mgl_of_grassmanians}
We have that 
\begin{enumerate}
\item $\MGL_{*, *} Gr(k, n)$ is a free $\MGL_{*, *}$-module with basis in bijection with sequences $(a_{1}, \ldots, a_{k})$ subject to $n-k \geq a_{1} \geq \ldots a_{k} \geq 0$, where each sequence $(a_{1}, \ldots, a_{k})$ corresponds to a basis element in degree $(2a_{1} + \ldots + 2a_{k}, a_{1} + \ldots + a_{k})$,
\item $\MGL_{*, *} \Sigma ^{-2k, k} \ \Sigma^{\infty} Th(E(k, n))$ is a free $\MGL_{*, *}$-module with basis in bijection with sequences $(a_{i}, \ldots a_{k})$ subject to $n-k \geq a_{1} \geq \ldots a_{k} \geq 0$ where each sequence $(a_{1}, \ldots, a_{k})$ corresponds to a basis element in degree $(2a_{1} + \ldots + 2a_{k}, a_{1} + \ldots + a_{k})$,
\item $\MGL_{*, *} BGL \simeq \MGL_{*, *} \varinjlim Gr(k, n) \simeq \MGL_{*, *}[b_{1}, b_{2}, \ldots]$ as algebras with generators $b_{i}$ in degree $(2i, i)$ and
\item $\MGL_{*, *} \MGL \simeq  \MGL_{*, *}[b_{1}, b_{2}, \ldots]$ as algebras with generators $b_{i}$ in degree $(2i, i)$.
\end{enumerate}
In particular, each of these $\MGL_{*,*}$-modules is free with generators in Chow degree zero. 
\end{prop}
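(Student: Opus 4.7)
The plan is to deduce all four items from the Schubert cell decomposition of $Gr(k,n)$ combined with the Thom isomorphism afforded by the orientation on $MGL$, and then to pass to colimits. These are the motivic analogues of well-known topological calculations going back to Milnor and Novikov in the case of $MU$.

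I first handle item (1). The Grassmannian $Gr(k,n)$ admits a Schubert stratification whose cells are indexed exactly by sequences $(a_1, \ldots, a_k)$ with $n - k \geq a_1 \geq \ldots \geq a_k \geq 0$, with the cell associated to such a sequence being an affine space of complex dimension $\sum_i a_i$. This yields a motivic cell structure on $\Sigma^{\infty}_{+} Gr(k,n)$ whose successive subquotients are motivic spheres $S^{2d, d}$ for $d = \sum_i a_i$, all living in Chow degree zero. The associated motivic Atiyah--Hirzebruch spectral sequence computing $MGL_{*, *} Gr(k, n)$ has $E_{1}$-page a free $MGL_{*, *}$-module with one generator per Schubert cell in bidegree $(2 \sum_i a_i, \sum_i a_i)$. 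Since $MGL$ is complex orientable and the cells are all of the form $S^{2d, d}$, the attaching maps land in groups which vanish on $MGL$-homology, by the standard collapse argument for oriented theories on spaces whose cells all sit in Chow degree zero; the spectral sequence degenerates with no extension problems, producing the claimed free basis.

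Item (2) follows from the motivic Thom isomorphism. Since $MGL$ is oriented in the sense of \cite{naumann2009motivic} and $E(k,n) \to Gr(k,n)$ is a rank-$k$ vector bundle, the Thom class yields an isomorphism $MGL_{*, *} Th(E(k, n)) \simeq MGL_{*-2k, *-k} Gr(k, n)$, and composing with the shift $\Sigma^{-2k, -k}$ recovers the same free basis in the same bidegrees as in item (1). For items (3) and (4) one passes to the colimit. By definition $MGL \simeq \varinjlim \Sigma^{-2k, -k} \Sigma^{\infty} Th(E(k, n))$, and similarly the infinite Grassmannian is $BGL \simeq \varinjlim Gr(k, n)$. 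Because every finite stage is a finite cellular motivic spectrum by \textbf{Remark \ref{rem:grassmanians_stably_cellular}}, $MGL_{*, *}$ commutes with these filtered colimits, so the stated $MGL_{*, *}$-module structure is immediate from items (1) and (2). The algebra structure, and the identification of the generators $b_i$ with the basis elements associated to the single-row partitions $(i, 0, \ldots, 0)$, comes from compatibility of the Schubert basis with the direct sum and tensor product of vector bundles which furnish $BGL$ and $MGL$ with their multiplicative structures.

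The main obstacle, as always in calculations of this sort, is the collapse and splitting of the Atiyah--Hirzebruch spectral sequence in item (1), and this rests on Chow-degree bookkeeping: every Schubert cell of $Gr(k, n)$ sits in Chow degree zero, which combined with complex orientability of $MGL$ is sufficient to rule out both non-trivial differentials and non-trivial extensions. The final assertion of the proposition, that the bases all sit in Chow degree zero, is then visible directly from the exhibited basis elements.
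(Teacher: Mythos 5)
The paper's proof is simply a citation to [NSO, §6.2], whose method is the projective bundle theorem for arbitrary oriented motivic cohomology theories (Chern classes, splitting principle, flag bundle tower). Your proposal takes a genuinely different route via the Schubert cell decomposition and the motivic Atiyah--Hirzebruch spectral sequence. Both are legitimate strategies, but there is a real gap in yours: you appeal to a ``standard collapse argument for oriented theories on spaces whose cells all sit in Chow degree zero,'' yet no such self-contained principle exists to be cited. What actually makes the collapse work is a Chow-degree count: each Schubert cell contributes an $E_1$-generator whose $MGL_{*,*}$-coefficient sits in Chow degree zero, and every differential $d_r$ decreases the Chow degree of the $MGL$-coefficient by one (it drops the topological degree by one while preserving the weight). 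Hence all differentials out of the generators land in $MGL_{*,*}$ in Chow degree $-1$, and degeneration follows \emph{precisely if} one knows $MGL_{*,*}$ vanishes in negative Chow degree.

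That vanishing is not elementary. In this paper it appears later as \textbf{Lemma~\ref{lemma:mgl_concentrated_in_nonnegative_chow_degree}}, and its proof rests on Spitzweck's computation of the slices of $MGL$ and thus ultimately on Hopkins--Morel--Hoyois. So your AHSS argument is sound \emph{modulo} that input, but it is not self-contained, and it implicitly pulls in machinery the paper invokes only afterward — a circularity in exposition even if not in logic. By contrast the projective bundle approach followed by [NSO] is proved directly from localization sequences and the orientation datum, and needs no such vanishing. Your treatment of items (2)--(4) — Thom isomorphism, passage to colimits (with the side remark that $MGL_{*,*}$ commutes with filtered colimits because the bigraded spheres are compact, which needs no finiteness of the stages), and the multiplicative identification of the $b_i$ — is standard and fine.
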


\begin{proof}
This is done in detail in \cite[6.2]{naumann2009motivic}. Note that the second part follows from the first, and the fourth from the third, by an application of the Thom isomorphism. 
\end{proof}
Note that these calculations parallel the topological case of complex bordism of Grassmannians, in fact one can give a uniform proof. As a consequence of the work of Hopkins-Morel and Hoyois, we know that the relation between $\MGL$ and $\MU$ is even stronger. 

\begin{thm}[Hopkins-Morel-Hoyois]
\label{thm:hopkins_morel_conjecture}
The natural map $(\MGL_{*, *}) _{0} \rightarrow \MU_{*}$ from the elements of Chow degree zero in $\MGL_{*, *}$ into the topological complex bordism ring is an isomorphism. In particular, $(\MGL_{*, *}) _{0} \simeq L$, where $L \simeq \mathbb{Z}[a_{1}, a_{2}, \ldots]$ is the Lazard ring. Moreover, $\MGL / (a_{1}, a_{2}, \ldots) \simeq H \mathbb{Z}$, where the latter is the integral motivic cohomology spectrum. 
\end{thm}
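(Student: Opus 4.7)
The plan is to reduce the theorem to the geometric equivalence $MGL/(a_1, a_2, \ldots) \simeq H\mathbb{Z}$ and then establish the latter via Voevodsky's slice filtration. For the reduction, each $a_i \in MGL_{2i,i}$ has Chow degree zero, so the cofibre sequences $\Sigma^{2i,i} MGL/(a_1, \ldots, a_{i-1}) \xrightarrow{\cdot a_i} MGL/(a_1, \ldots, a_{i-1}) \to MGL/(a_1, \ldots, a_i)$ restrict on Chow degree zero homotopy to long exact sequences of the groups $(MGL_{*,*})_0$. Granting the equivalence $MGL/(a_1, a_2, \ldots) \simeq H\mathbb{Z}$ and the known computation that $H\mathbb{Z}_{*,*}$ over $\textnormal{Spec}(\mathbb{C})$ is $\mathbb{Z}$ concentrated in bidegree $(0,0)$ in Chow degree zero, an inductive argument reading the long exact sequences backwards identifies $(MGL_{*,*})_0$ with a polynomial algebra $\mathbb{Z}[a_1, a_2, \ldots]$. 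That this polynomial ring is really the Lazard ring $L$ is detected by the Betti realization $Re: \cspectra \to \spectra$, which sends $MGL$ to $MU$ together with its canonical orientation, so that the composite $L \to (MGL_{*,*})_0 \to MU_*$ agrees with Quillen's classifying map, which is an isomorphism.

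For the equivalence $MGL/(a_1, a_2, \ldots) \simeq H\mathbb{Z}$ I would first construct the comparison map. Voevodsky's motivic Eilenberg-MacLane spectrum $H\mathbb{Z}$ carries a canonical orientation coming from the first Chern class in motivic cohomology, and the associated formal group law is the additive one, classified by the map $L \to \mathbb{Z}$ killing every $a_i$. Universality of the orientation on $MGL$ produces a ring map $\theta: MGL \to H\mathbb{Z}$ along which each $a_i$ acts by zero on homotopy, so $\theta$ factors canonically through $\bar\theta: MGL/(a_1, a_2, \ldots) \to H\mathbb{Z}$.

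To show $\bar\theta$ is an equivalence I would work with the slice filtration. One identifies the slices $s_n(MGL)$ with free $H\mathbb{Z}$-modules whose generators are indexed by the monomials of weight $n$ in the $a_i$; in particular $s_0(MGL) \simeq H\mathbb{Z}$, and the higher slices are generated multiplicatively by the $a_i$'s sitting in positive slice filtration. Killing all of the $a_i$ then collapses the slice tower onto its zeroth layer, so that $MGL/(a_1, a_2, \ldots) \to s_0(MGL) \simeq H\mathbb{Z}$ is an equivalence; by functoriality of slices applied to the orientation of $H\mathbb{Z}$ this map coincides with $\bar\theta$.

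The genuinely hard step — and the actual content of the Hopkins-Morel-Hoyois theorem — is the computation of the slices, above all the identification $s_0(MGL) \simeq H\mathbb{Z}$. Hoyois approaches this by analysing the bar complex $MGL^{\wedge \bullet +1}$, using the computation of $MGL_{*,*}$ of the Thom spectra of tautological bundles on Grassmanians from \textbf{Proposition \ref{prop:mgl_of_grassmanians}} to identify the effective covers term by term, and ultimately reducing to the values of motivic cohomology of $\textnormal{Spec}(\mathbb{C})$ with $\mathbb{Z}$-coefficients. The latter rests on the norm-residue (Bloch-Kato) theorem of Voevodsky-Rost. No elementary route is known, so in practice one applies \textbf{Theorem \ref{thm:hopkins_morel_conjecture}} as a black box, as is done throughout this chapter.
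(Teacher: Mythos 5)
The paper does not prove this theorem at all; the entire ``proof'' in the text is a citation to Hoyois's paper \cite{hoyois2015algebraic} with the remark that it is a deep result. You correctly recognize this in your final paragraph — the theorem is used as a black box throughout the chapter — and your sketch of how one \emph{would} prove it is broadly faithful to the literature, so the comparison here is less ``student vs.\ paper'' and more ``student's sketch vs.\ Hoyois's actual argument.''

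That said, there is a genuine circularity concern in the slice-filtration step as you describe it. You propose to identify the slices $s_n(MGL)$ as free $H\mathbb{Z}$-modules on weight-$n$ monomials in the $a_i$ and then collapse the slice tower by killing the $a_i$. But the full computation of the slices of $MGL$ (Spitzweck's theorem, which this paper quotes in \textbf{Lemma \ref{lemma:mgl_concentrated_in_nonnegative_chow_degree}}) is itself a \emph{consequence} of the Hopkins-Morel-Hoyois equivalence $MGL/(a_1,a_2,\ldots)\simeq H\mathbb{Z}$, not an input to it. Hoyois's actual route does not presuppose the slice computation: he shows directly that the unit $\mathbf{1}\to MGL/(a_1,a_2,\ldots)$ factors through a map $H\mathbb{Z}\to MGL/(a_1,a_2,\ldots)$ which he then proves is an equivalence, using the effective cellular structure of $MGL$ coming from Grassmannians, a careful low-degree homotopy computation, and Voevodsky's results on the motivic Steenrod algebra — the last of which is where Bloch-Kato enters. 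The slice computation then falls out afterward. So your outline has the logical order reversed; one cannot take the slice identification for granted. Your first paragraph (reading off $(MGL_{*,*})_0\simeq L$ from the equivalence, with Betti realization pinning down the ring structure) is sound in spirit, though you should be explicit that the orientation map $L\to(MGL_{*,*})_0$ is split by realization and that surjectivity uses the regular-sequence behaviour together with the nonnegative Chow degree bound.
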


\begin{proof}
This is a deep result, see \cite{hoyois2015algebraic}. 
\end{proof}

This has several consequences, one of which is that the Eilenberg-MacLane spectrum $H \mathbb{Z}$, hence $H \mathbb{Z}/p$ for any prime $p$, is cellular. Another is that $\MGL_{*, *}$ can be considered as an algebra over $\MU_{*}$, which we can identify with the subalgebra of $\MGL_{*, *}$ of elements in Chow degree zero. Together with computations of Spitzweck, the Hopkins-Morel-Hoyois theorem also implies the following. 

\begin{lemma}
\label{lemma:mgl_concentrated_in_nonnegative_chow_degree}
The algebra $\MGL_{*, *}$ is concentrated in non-negative Chow degrees. 
\end{lemma}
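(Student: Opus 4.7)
The plan is to reduce the claim to the analogous vanishing for the motivic Eilenberg-MacLane spectrum and exploit Voevodsky's slice filtration, whose slices for $MGL$ are identified via \textbf{Theorem \ref{thm:hopkins_morel_conjecture}}.

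First, I would record the bigraded homotopy of $H\mathbb{Z}$ over $\textnormal{Spec}(\mathbb{C})$. Combining the Nesterenko-Suslin-Totaro identification $H^{p,p}(\textnormal{Spec}(\mathbb{C}), \mathbb{Z}) \cong K_p^M(\mathbb{C})$ with the standard vanishing $H^{p,q}(\textnormal{Spec}(\mathbb{C}), \mathbb{Z}) = 0$ for $p > q$ or $q < 0$, the only nonzero motivic cohomology groups occur in bidegrees $(p, q) = (n, n)$ with $n \geq 0$. Translating through $\pi_{s,w}(H\mathbb{Z}) = H^{-s,-w}(\textnormal{Spec}(\mathbb{C}), \mathbb{Z})$, the nonvanishing bidegrees take the form $(s, w) = (-n, -n)$ with $n \geq 0$, whose Chow degrees are $s - 2w = n \geq 0$. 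Hence $H\mathbb{Z}_{*,*}$ is concentrated in non-negative Chow degree.

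Second, I would invoke the slice computation of $MGL$, a consequence of \textbf{Theorem \ref{thm:hopkins_morel_conjecture}} worked out by Spitzweck and Hoyois: each slice satisfies
\[ s_q(MGL) \;\simeq\; \Sigma^{2q, q}\!\left(H\mathbb{Z} \otimes_{\mathbb{Z}} L_q\right), \]
where $L_q$ denotes the degree-$2q$ summand of the Lazard ring $L \simeq MU_{*}$ regarded as a free abelian group placed in topological degree zero. Because the bigraded suspension $\Sigma^{2q, q}$ sends $(s,w)$ to $(s+2q, w+q)$, it preserves the Chow degree $s-2w$, so
\[ \pi_{s,w}(s_q(MGL)) \;\cong\; H\mathbb{Z}_{s-2q,\,w-q} \otimes_{\mathbb{Z}} L_q, \]
which vanishes whenever $s - 2w < 0$ by the first step.

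Finally, I would conclude via the slice spectral sequence $E_1^{s,w,q} = \pi_{s,w}(s_q(MGL)) \Rightarrow \pi_{s,w}(MGL)$. Its differentials fix the weight $w$ and change the stem $s$ by $\pm 1$, hence shift Chow degree by at most one. Since every $E_r$-page is a subquotient of the $E_1$-page, which vanishes in negative Chow degree, the $E_\infty$-page is concentrated in non-negative Chow degree as well, and consequently so is $MGL_{*,*}$. The main obstacle is strong convergence of the slice spectral sequence for $MGL$ over $\textnormal{Spec}(\mathbb{C})$; this is established as part of Hoyois's proof of \textbf{Theorem \ref{thm:hopkins_morel_conjecture}}. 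An essentially equivalent, spectral-sequence-free formulation repackages this as the statement that the property of being concentrated in non-negative Chow degree is preserved under cofibre sequences of cellular motivic spectra (by inspection of the long exact sequence of homotopy groups in a fixed weight $w$) and under the appropriate filtered limits; the slice tower for the effective cellular spectrum $MGL$ then transfers the vanishing from each $s_q(MGL)$ to $MGL$ itself.
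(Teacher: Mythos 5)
Your strategy is the one the paper uses — Spitzweck's slice computation $s_q(MGL) \simeq \Sigma^{2q,q} H(MU_{2q})$, the Chow-degree vanishing of $H\mathbb{Z}_{*,*}$, and a pass up the slice tower — but there is a genuine gap at the convergence step, along with a small inaccuracy in step one.

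The inaccuracy: the motivic cohomology of $\textnormal{Spec}(\mathbb{C})$ is \emph{not} supported only on the Milnor diagonal $(n,n)$; the groups $H^{p,q}$ with $0 \le p < q$ are in general nonzero (they see higher algebraic $K$-theory of $\mathbb{C}$). What is true, and what you correctly cite, is $H^{p,q}=0$ for $p>q$ or $q<0$, and that already suffices: writing $\pi_{s,w}(H\mathbb{Z}) \cong H^{-s,-w}(\textnormal{Spec}(\mathbb{C}),\mathbb{Z})$, nonvanishing forces $w\le 0$ and $s\ge w$, hence $s-2w \ge -w \ge 0$. So your first step has the right conclusion but the stated support is wrong.

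The gap is the passage from "each slice has no homotopy in negative Chow degree" to "$MGL$ has none." Your spectral-sequence-free reformulation does not close it: vanishing of homotopy groups in a fixed bidegree passes along cofibre sequences, but it does \emph{not} pass to the limit of a tower in general; the phrase "under the appropriate filtered limits" is the gap restated, not filled, and "strong convergence of the slice spectral sequence" is equivalent to what has to be proved rather than a freestanding input. The paper pins down the precise missing ingredient, also due to Spitzweck: the $q$-effective cover $f_q MGL$ is $q$-connective, so $\varprojlim f_q MGL \simeq 0$. Given that, a map $S^{a,b}\to MGL$ with $a-2b<0$ lifts stage-by-stage up the tower (the obstruction at each stage lies in $\pi_{a,b}\,s_q(MGL)=0$), and the lifts produce a map into $\varprojlim f_q MGL = 0$; equivalently, the surjections $\pi_{a,b}(f_{q+1}MGL)\to\pi_{a,b}(f_qMGL)$ give a Mittag-Leffler system with vanishing limit. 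Cite the $q$-connectivity of $f_q MGL$ explicitly; it is the load-bearing input that your argument gestures at but does not identify.
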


\begin{proof}
We give an argument due to Marc Hoyois which first appeared in \cite{hoyois_mathoverflow_question}. By a computation of Spitzweck, see \cite{spitzweck2010relations}, we have that $s_{q}(\MGL) \simeq \Sigma^{2q, q} H \MU_{2q}$, where by $s_{q}$ we mean the motivic slice and by $H$ we denote the motivic cohomology spectrum on the given abelian group.  In particular, all of the slices have vanishing homotopy in negative Chow degree, as that is true for the integral motivic cohomology spectrum. 

By a computation from the same paper, the $q$-effective cover $f_{q} \MGL$ is $q$-connective and this implies that $\varprojlim f_{q} \MGL \simeq 0$. Now suppose we have a map $S^{a, b} \rightarrow \MGL$ such that $a - 2b < 0$. Since $s_{q}(\MGL)$ have vanishing homotopy in this degree, we deduce that this map lifts through all of the stages of the slice filtration and defines a map into the homotopy limit, which is then necessarily zero. 
\end{proof}

Notice that through Hopkins-Morel-Hoyois, one can reexpress \cref{prop:mgl_of_grassmanians} as saying that if $X$ is either $\Sigma^{\infty}_{+} Gr(k, n)$, $\Sigma^{\infty} Th(E(k, n))$ or $\MGL$ itself, then 

\begin{center}
$\MGL_{*, *} X \simeq \MGL_{*, *} \otimes _{\MU_{*}} \MU_{*} Re(X)$, 
\end{center}
where $Re(X) \in \spectra$ is the Betti realization. Intuitively, it is the $X$ satisfying this property that are well-behaved from our perspective. We will later show in \S\ref{subsection:cellular_motivic_cat_as_spherical_sheaves} that homotopy theory of cellular motivic spectra can be described purely in terms of motivic spectra satisfying the above property. 

\subsection{Finite $\MGL$-projective motivic spectra}

In this section we introduce the notion of a finite $\MGL$-projective spectrum and equip their $\infty$-category with a topology. We then compare the resulting site to the site of finite even $\MU$-projective spectra studied previously in \S\ref{subsection:even_synthetic_spectra}.

\begin{defin}
\label{definition:finite_mgl_projective_spectrum}
We say $M \in \cspectra$ is a \emph{finite $\MGL$-projective} motivic spectrum if it is finite and $\MGL_{*, *} M$ is a free $\MGL_{*, *}$-module, finitely generated with generators in Chow degree zero. We denote the $\infty$-category of finite $\MGL$-projective motivic spectra by $\spectramglfp$. 
\end{defin}
Notice that in the notation $\spectramglfp$ we drop the ``$\mathbb{C}$''-subscript and instead leave only ``$\MGL$'', it should be clear from the latter that these are motivic spectra, rather than spectra. Here, and throughout the rest of the current work, we say that a motivic spectrum is \emph{finite} if it belongs to the thick subcategory generated by the motivic spheres. 

Observe that \cref{definition:finite_mgl_projective_spectrum} is slightly abusive in the sense that we ask for $\MGL_{*, *} M$ to be free over $\MGL_{*, *}$, rather than projective as the name suggests. This is a matter of convenience: one could work with finite spectra with projective homology instead, but nothing would be gained from this additional generality, and it would come at the cost that the condition of having generators in Chow degree zero is slightly more awkward to state. 

\begin{example}
\label{example:grassmanians_and_their_thom_spectra_are_mgl_fpe}
By \cref{prop:mgl_of_grassmanians} and \cref{rem:grassmanians_stably_cellular}, the suspension spectra $\Sigma^{\infty}_{+} Gr(k, n)$ of finite Grassmannians are finite $\MGL$-projective, as are the Thom spectra $\Sigma^{\infty} Th(E(k, n)$ of their tautological bundles. 
\end{example}

\begin{rem}
Notice that finite $\MGL$-projective motivic spectra are not closed under arbitrary suspensions, but they are clearly closed under any suspension of the form $\Sigma^{2k, k}$ for $k \in \mathbb{Z}$; that is, one that doesn't change the Chow degree.
\end{rem}
Recall we have previously, namely in \cref{defin:finite_even_projective_spectrum}, introduced the notion of a finite even $\MU$-projective spectrum; that is, a finite spectrum $X$ such that $\MU_{*}X$ is projective, concentrated in even degree. Since any projective module over $\MU_{*}$ is free, see \cref{lemma:projective_mu_module_necessarily_free}, this in fact implies that $\MU_{*}X$ is finitely, freely generated in even degree. 

The notion of a finite even $\MU$-projective spectrum should be considered as a purely topological analogue of finite $\MGL$-projective spectra, the following simple result relates the two. 

\begin{lemma}
\label{lemma:betti_realization_of_finite_mgl_projective_is_finite_even_mu_projective}
If $M$ is a finite $\MGL$-projective motivic spectrum, then the Betti realization $Re(M)$ is finite even $\MU$-projective. Moreover, the natural map $(\MGL_{*, *}M)_{0} \rightarrow \MU_{*} Re(M)$ is an isomorphism.  
\end{lemma}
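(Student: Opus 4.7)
The plan is to upgrade the computations of $MGL$-homology on $M$ to an honest decomposition of $MGL \otimes M$ into a wedge of $MGL$-cells, then transport this decomposition to topology via Betti realization.

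First, observe that $Re(M)$ is automatically finite: $Re$ is cocontinuous, symmetric monoidal, and sends each motivic sphere $S^{a,b}$ to $S^{a}$, hence carries the thick subcategory generated by the motivic spheres into the thick subcategory of spectra generated by the usual spheres. Next I would upgrade the fact that $MGL_{*,*}M$ is free of finite rank with generators in Chow degree zero to a motivic cell decomposition. Choose basis classes $e_{i} \in MGL_{2n_{i},n_{i}}M$ for $i = 1,\ldots, r$. These assemble into a map
\begin{equation*}
\varphi \colon \bigvee_{i=1}^{r} \Sigma^{2n_{i},n_{i}} MGL \longrightarrow MGL \otimes M
\end{equation*}
of $MGL$-modules, which by construction induces an isomorphism on $\pi_{*,*}$. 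Since $M$ is cellular, so is $MGL \otimes M$, and bigraded stable homotopy groups detect equivalences between cellular motivic spectra, so $\varphi$ is an equivalence.

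Now apply $Re$. Using that $Re$ is cocontinuous, symmetric monoidal, takes $S^{a,b}$ to $S^{a}$, and satisfies $Re(MGL) \simeq MU$ (the classical identification of the Betti realization of algebraic cobordism), we get
\begin{equation*}
MU \wedge Re(M) \simeq Re(MGL \otimes M) \simeq \bigvee_{i=1}^{r} \Sigma^{2n_{i}} MU.
\end{equation*}
Taking homotopy groups yields that $MU_{*}Re(M)$ is free of finite rank with basis in even degrees $2n_{1},\ldots,2n_{r}$, so $Re(M) \in \spectra_{MU}^{fpe}$ as required.

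For the second assertion, the natural map is the bigraded-to-graded comparison $\pi_{2n,n}(MGL \otimes M) \to \pi_{2n}\,Re(MGL \otimes M) \cong MU_{2n}Re(M)$ induced by $Re$, applied degreewise to the Chow degree zero part
\begin{equation*}
(MGL_{*,*}M)_{0} \;\cong\; \bigoplus_{i=1}^{r} (MGL_{*,*})_{0}[2n_{i}] \;\cong\; \bigoplus_{i=1}^{r} MU_{*}[2n_{i}],
\end{equation*}
where in the second isomorphism we use Hopkins-Morel-Hoyois in the form $(MGL_{*,*})_{0} \simeq MU_{*}$. Under the equivalence $\varphi$ and its realization, the basis $\{e_{i}\}$ goes to the tautological basis of $\bigoplus MU_{*}[2n_{i}] \cong MU_{*}Re(M)$, so the natural map is this isomorphism.

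The main technical point, and the only non-formal step, is the passage from the freeness of $MGL_{*,*}M$ to the equivalence $\varphi$: one needs that $\pi_{*,*}$ is conservative on cellular motivic spectra (equivalently, that the bigraded spheres generate $\cspectra$ under colimits, which holds by definition of cellularity). Everything else is a direct consequence of the properties of $Re$, the cellularity of $M$, and the Hopkins-Morel-Hoyois theorem recalled above.
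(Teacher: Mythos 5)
Your proof is correct and takes essentially the same route as the paper: use the freeness of $MGL_{*,*}M$ on Chow-degree-zero generators to split $MGL \wedge M$ as a finite wedge of shifted copies of $MGL$ (justified, as you note, by the fact that bigraded homotopy detects equivalences of cellular motivic spectra), apply the symmetric monoidal, cocontinuous Betti realization to obtain the analogous splitting of $MU \wedge Re(M)$, and deduce the isomorphism $(MGL_{*,*}M)_{0} \simeq MU_{*}Re(M)$ from Hopkins--Morel--Hoyois. The paper compresses the cell-decomposition step into the phrase ``by a standard argument''; you have simply spelled it out.
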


\begin{proof}
By a standard argument, the finitely free generation of $\MGL_{*, *}M$ in Chow degree zero implies that $\MGL \otimes M \simeq \bigoplus \Sigma^{2k_{i}, k_{i}} \MGL$, so that $\MU \otimes Re(M) \simeq \bigoplus \Sigma^{2k_{i}} \MU$, which is what we wanted to show. The second part is immediate from Hopkins-Morel-Hoyois, which we stated as \cref{thm:hopkins_morel_conjecture}.
\end{proof}

\begin{rem}
The converse of \cref{lemma:betti_realization_of_finite_mgl_projective_is_finite_even_mu_projective} is not true. As an example, $Re(S^{2k, n}) \simeq S^{2k}$ is always finite even $\MU$-projective, but $S^{2k, n}$ is only finite $\MGL$-projective when $n = k$. 
\end{rem}
We will now endow the $\infty$-category of finite $\MGL$-projective motivic spectra with a Grothendieck pretopology analogous to the ones we have studied on $\infty$-categories of finite projective spectra, we then show it makes $\spectramglfp$ into an excellent $\infty$-site in the sense of \cref{defin:excellent_infty_site}.

\begin{prop}
Let us say that a map $N \rightarrow M$ of finite $\MGL$-projective motivic spectra is a \emph{covering} if $\MGL_{*}N \rightarrow \MGL_{*}M$ is surjective. Then, coverings together with the tensor product of motivic spectra make $\spectra_{\MGL}^{fp}$ into an excellent $\infty$-site and the realization functor $Re \colon \spectra_{\MGL}^{fp} \rightarrow \spectra_{\MU}^{fpe}$ is a morphism of excellent $\infty$-sites. 
\end{prop}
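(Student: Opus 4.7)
The plan is to mimic \textbf{Proposition \ref{prop:homology_surjections_make_fpspectra_into_an_excellent_inftysite}}, which established the analogous statement for finite $E$-projective spectra, using that by Hopkins--Morel--Hoyois (\textbf{Theorem \ref{thm:hopkins_morel_conjecture}}) the algebra $MGL_{*, *}$ has a well-understood Chow-degree-zero subring $MU_{*}$. The first observation I would make is that $N \in \cspectra$ is finite $MGL$-projective if and only if $MGL \wedge N$ splits as a finite sum $\bigoplus \Sigma^{2k_i, k_i} MGL$ of Chow-degree-zero shifts of $MGL$. Granted this, closure of $\spectra_{MGL}^{fp}$ under finite sums and smash products is immediate, since in the second case $MGL \wedge N \wedge N' \simeq \bigoplus_{i, j} \Sigma^{2(k_i + l_j), k_i + l_j} MGL$ is again of the same form. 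Moreover the motivic Spanier--Whitehead dual $DN$ of a finite $MGL$-projective $N$ satisfies $MGL \wedge DN \simeq F(N, MGL) \simeq \bigoplus \Sigma^{-2k_i, -k_i} MGL$, so duals exist inside $\spectra_{MGL}^{fp}$.

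Next I would verify the pretopology axioms by transporting the proof of \textbf{Lemma \ref{lemma:homology_preserves_pullbacks_along_coverings}}. Given an $MGL_*$-surjection $N \to M$ and an arbitrary map $L \to M$ of finite $MGL$-projectives, the fibre sequence $N \times_M L \to N \oplus L \to M$ produces a short exact sequence $0 \to MGL_{*, *}(N \times_M L) \to MGL_{*, *} N \oplus MGL_{*, *} L \to MGL_{*, *} M \to 0$, which shows in particular that $N \times_M L \to L$ is again a covering. The step that requires a little care, and which I expect to be the main obstacle, is checking that the pullback remains finite $MGL$-projective. I would handle this by lifting the Chow-degree-zero generators of the free $MGL_{*, *}$-module $MGL_{*, *} M$ to maps of $MGL$-modules, splitting the surjection $MGL \wedge (N \oplus L) \to MGL \wedge M$, and thus expressing $MGL \wedge (N \times_M L)$ as a finite sum of Chow-degree-zero shifts of $MGL$. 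Compatibility of the smash product with the topology then follows because tensoring over $MGL_{*, *}$ with a free module preserves surjections.

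Finally, for the Betti realization $Re \colon \spectra_{MGL}^{fp} \to \spectra_{MU}^{fpe}$, the four required conditions go through quickly: $Re$ is symmetric monoidal by construction; it lands in $\spectra_{MU}^{fpe}$ by \textbf{Lemma \ref{lemma:betti_realization_of_finite_mgl_projective_is_finite_even_mu_projective}}; as a cocontinuous functor between stable $\infty$-categories it is exact and so preserves the relevant pullbacks; and the only remaining point is that it takes coverings to coverings. For this last item, \textbf{Lemma \ref{lemma:betti_realization_of_finite_mgl_projective_is_finite_even_mu_projective}} identifies $MU_* Re(X)$ with the Chow-degree-zero part $(MGL_{*, *} X)_0$, and a surjection of bigraded abelian groups restricts to a surjection on any fixed bidegree, hence on the Chow-degree-zero summand, which concludes the argument.
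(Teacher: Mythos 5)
Your outline matches the paper's proof quite closely: both establish closure under smash product, existence of duals, and existence of pullbacks along coverings, and both handle the Betti realization via \textbf{Lemma \ref{lemma:betti_realization_of_finite_mgl_projective_is_finite_even_mu_projective}}. The one step where your argument as written has a gap is the conclusion that $MGL \wedge (N \times_M L)$ is "a finite sum of Chow-degree-zero shifts of $MGL$." Your splitting of $MGL \wedge (N \oplus L) \to MGL \wedge M$ exhibits $MGL \wedge (N \times_M L)$ as a direct summand of a finite free $MGL$-module, which only tells you that $MGL_{*, *}(N \times_M L)$ is finitely generated projective over $MGL_{*, *}$; it does not by itself tell you it is free on Chow-degree-zero generators. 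That extra ingredient is exactly \textbf{Lemma \ref{lemma:projective_mu_module_necessarily_free}}: pass to Chow degree zero, where the summand is a finitely generated projective $MU_*$-module, hence free, and then check that the natural map $MGL_{*, *} \otimes_{MU_*} (MGL_{*, *}(N \times_M L))_0 \to MGL_{*, *}(N \times_M L)$ is an isomorphism (the paper does this with a small diagram chase). Apart from that one unjustified claim the approaches are the same, with the paper working at the level of $MGL_{*, *}$-modules and a diagram chase while you phrase the splitting at the level of $MGL$-module spectra; either vocabulary works once the freeness point is filled in.
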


\begin{proof}
We first claim that finite $\MGL$-projective motivic spectra are closed under the tensor product. If $M, N \in \spectra_{\MGL}^{fp}$, then since all relevant spectra are cellular we have a K\"{u}nneth spectral sequence of the form 

\begin{center}
$\textnormal{Tor}^{\MGL_{*,*}}_{a, b, c}(\MGL_{*, *}M, \MGL_{*, *}N) \Rightarrow \MGL_{a+b,c} (M \otimes N)$,
\end{center}
see \cite[8.6]{dugger2005motivic} or \cite[8.1.1]{joachimi2015thick}. Since $\MGL_{*, *}M$ and $\MGL_{*, *}N$ are assumed to be free over $\MGL_{*, *}$, this spectral sequence collapses and gives an isomorphism 

\begin{center}
$\MGL_{*, *} (M \otimes N) \simeq \MGL_{*, *} M \otimes _{\MGL_{*, *}} \MGL_{*, *} N$.
\end{center}
Since free $\MGL_{*, *}$-modules finitely generated in Chow degree zero are stable under the tensor product, we deduce that $\MGL_{*, *} (M \otimes N)$ is of the needed form. Since $M \otimes N$ is also clearly finite, we deduce that it is finite $\MGL$-projective. 

Any finite motivic spectrum $M$ has a dual given by $M = F(M, S^{0, 0})$, where the latter denotes the motivic function spectrum. To check that all objects of $\spectra_{\MGL}^{fp}$ admit duals it is enough to check that if $M$ is finite $\MGL$-projective, so is $F(M, S^{0, 0})$. This follows from universal coefficient spectral sequence

\begin{center}
$\textnormal{Ext}^{a, b, c} _{\MGL_{*, *}} (\MGL_{*, *} M, \MGL_{*, *}) \Rightarrow \MGL^{a+b, c} M$, 
\end{center}
which also holds in this setting and necessarily collapses, and $\MGL_{*, *} F(M, S^{0, 0}) \simeq \MGL^{*, *} M$. 

We now check that $\MGL_{*, *}$-surjections define a Grothendieck pretopology, all of the axioms are obvious except for the existence of pullbacks. We will show that if $N \rightarrow M, O \rightarrow M$ are maps of finite $\MGL$-projective motivic spectra such that $\MGL_{*, *} N \rightarrow \MGL_{*, *} M$ is surjective, then $N \times_{M} O$ is again finite $\MGL$-projective, this is clearly enough. We have a short exact sequence

\begin{center}
$0 \rightarrow \MGL_{*, *} (N \times _{M} O) \rightarrow \MGL_{*, *}N \oplus \MGL_{*, *}O \rightarrow \MGL_{*, *} M \rightarrow 0$.
\end{center} 
Now, consider the diagram

\begin{center}
	% map of short exact sequences
	\begin{tikzpicture}
		\node (T3) at (8, 0) {$ \MGL_{*, *} \otimes _{\MU_{*}} (\MGL_{*, *} N \oplus \MGL_{*, *} O)_{0} $};
		\node (T4) at (14, 0) {$ \MGL_{*, *} \otimes _{\MU_{*}} (\MGL_{*, *} M)_{0} $};
		\node (T5) at (17, 0) {$ 0 $};
		
		\draw [->] (T3) to (T4);
		\draw [->] (T4) to (T5);
		
		\node (B3) at (8, -1) {$ \MGL_{*, *} N \oplus \MGL_{*, *} O $};
		\node (B4) at (14, -1) {$ \MGL_{*, *} M $};
		\node (B5) at (17, -1) {$ 0 $};
		
		\draw [->] (B3) to (B4);
		\draw [->] (B4) to (B5);
		
		\draw [->] (T3) to (B3);
		\draw [->] (T4) to (B4);
	\end{tikzpicture},
\end{center} 
where here the subscript denotes the Chow degree $0$ part and we implicitly use the identification $(\MGL_{*, *})_{0} \simeq \MU_{*}$, the vertical maps are isomorphisms by assumption of $M, N, O$ being finite $\MGL$-projective. 

Since $\MGL_{*, *} N \oplus \MGL_{*, *} O \rightarrow \MGL_{*, *} M$ is surjective, it is split, because the target is free. We deduce that the kernel of the upper map is $\MGL_{*, *} \otimes _{\MU_{*}} \MGL_{*, *} (N \times _{M} O)_{0}$, hence $\MGL_{*, *} (N \times _{M} O) \simeq \MGL_{*, *} \otimes _{\MU_{*}} \MGL_{*, *} (N \times _{M} O)_{0}$. We deduce that to finish the argument it is enough to prove that $(\MGL_{*, *} (N \times _{M} O))_{0}$ is free, finitely generated as an $\MU_{*}$-module. The latter follows from the fact that it is the kernel of a map of free, finitely generated modules, so it's finitely generated projective, and \cref{lemma:projective_mu_module_necessarily_free}.

To check that the symmetric monoidal structure is compatible with topology, we have to verify that for any $M \in \spectra_{\MGL}^{fp}$, the functor $M \otimes - \colon \spectra_{\MGL}^{fp} \rightarrow \spectra_{\MGL}^{fp}$ preserves coverings, which is immediate from the K\"{u}nneth isomorphism written above. 

Finally, to verify that $Re \colon \spectra_{\MGL}^{fp} \rightarrow \spectra_{\MU}^{fp}$ is a morphism excellent of $\infty$-sites, we have to check that it preserves coverings, pullbacks along coverings, is additive and symmetric monoidal. The last three follow immediately from the exactness of Betti realization, while the first one from \cref{lemma:betti_realization_of_finite_mgl_projective_is_finite_even_mu_projective}.
\end{proof}

\begin{rem}
\label{rem:betti_realization_reflects_covers_of_projective_mspectra}
In fact, the morphism $Re \colon \spectra_{\MGL}^{fp} \rightarrow \spectra_{\MU}^{fpe}$ not only preserves, but also reflects covers. To see this, notice that if $M, N \in \spectra_{\MGL}^{fp}$, then $\MGL_{*, *} M$ and $\MGL_{*, *}N$ are by assumption generated in Chow degree zero and so a map $\MGL_{*, *} N \rightarrow \MGL_{*, *} M$ is surjective if and only if it is surjective in Chow degree zero. The reflection of coverings then follows directly from \cref{lemma:betti_realization_of_finite_mgl_projective_is_finite_even_mu_projective}.
\end{rem}

Our goal will be to prove for the $\infty$-category $\spectramglfp$ of finite $\MGL$-projective spectra a result analogous to \cref{thm:equivalence_of_categories_of_discrete_sheaves} which identifies sheaves of sets on finite $\MU$-projective spectra with $\MU_{*}\MU$-comodules. More precisely, we will be using the even analogue of the statement, which compares discrete sheaves on even finite $\MU$-projective spectra with even $\MU_{*}\MU$-comodules, see \cref{rem:heart_of_even_synthetic_spectra_and_common_envelope_with_comodules}. 

The idea is to leverage our previous work by comparing $\spectramglfp$ with $\spectramufpe$, rather than with even dualizable comodules directly. To obtain the needed comparison, we use the criterion of the existence of common envelopes, which were introduced in \cref{defin:common_envelope_for_a_morphism_of_excellent_infty_sites}.

\begin{lemma}
\label{lemma:common_envelope_for_betti_realization}
Any filtered diagram $\varinjlim M_{\alpha}$ of finite $\MGL$-projective spectra whose colimit is the countable sum $\bigoplus \Sigma^{2k_{i}, k_{i}} \MGL$ such that every integer occurs as $k_{i}$ infinitely many times is a common envelope for $Re \colon \spectra_{\MGL}^{fp} \rightarrow \spectra_{\MU}^{fpe}$. 
\end{lemma}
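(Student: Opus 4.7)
The proof will follow the same template as \textbf{Lemma \ref{lemma:countable_sum_of_cofree_comodules_is_a_common_envelope}}: verify the three conditions of \textbf{Definition \ref{defin:common_envelope_for_a_morphism_of_excellent_infty_sites}} in turn. The main input is a K\"{u}nneth/universal-coefficient computation combined with \textbf{Lemma \ref{lemma:mgl_concentrated_in_nonnegative_chow_degree}} (that $MGL_{*,*}$ is concentrated in non-negative Chow degree) and the Hopkins--Morel--Hoyois identification $(MGL_{*,*})_0 \simeq MU_*$ from \textbf{Theorem \ref{thm:hopkins_morel_conjecture}}, together with the fact (\textbf{Lemma \ref{lemma:betti_realization_of_finite_mgl_projective_is_finite_even_mu_projective}}) that $(MGL_{*,*}M)_0 \simeq MU_* Re(M)$ for $M \in \spectramglfp$.

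For any $M \in \spectramglfp$, I would first compute the key formula. Since $M$ is finite,
\begin{center}
$\varinjlim \pi_0 \map(M, M_\alpha) \simeq \bigoplus_i [M, \Sigma^{2k_i, k_i} MGL] \simeq \bigoplus_i MGL^{-2k_i, -k_i}(M)$.
\end{center}
Because $MGL_{*,*}M$ is free and finitely generated in Chow degree $0$, the universal coefficient collapse gives $MGL^{*,*}(M) \simeq \Hom^{*,*}_{MGL_{*,*}}(MGL_{*,*}M, MGL_{*,*})$, and therefore the above colimit identifies with $\Hom_{MGL_{*,*}}(MGL_{*,*}M, \bigoplus_i MGL_{*,*}[2k_i, k_i])$. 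Since $MGL_{*,*}M$ is concentrated in Chow degree $0$ and $MGL_{*,*}$ lives in non-negative Chow degree, any such bigraded homomorphism factors through the Chow degree $0$ part of the target, which by \textbf{Theorem \ref{thm:hopkins_morel_conjecture}} is the free $MU_*$-module $\bigoplus_i MU_*[k_i]$. Using $(MGL_{*,*}M)_0 \simeq MU_* Re(M)$ one concludes
\begin{center}
$\varinjlim \pi_0 \map(M, M_\alpha) \simeq \Hom_{MU_*}(MU_* Re(M), \bigoplus_i MU_*[k_i])$.
\end{center}

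With this identity in hand, the three conditions follow. Discrete descent for $\varinjlim M_\alpha$ is immediate: as $M$ varies over $\spectramglfp$, the above functor is the pullback along $Re$ of a sheaf on $\spectramufpe$ (since any epimorphism of $MU_*$-modules is effective), and pullback along a morphism of additive $\infty$-sites preserves the sheaf property. Second, $Re$ is cocontinuous and symmetric monoidal, so $\varinjlim Re(M_\alpha) \simeq \bigoplus_i \Sigma^{2k_i} MU$; since every integer occurs infinitely many times among the $k_i$, every even integer occurs infinitely many times among the $2k_i$, so this Ind-object is an envelope for $\spectramufpe$ by the even variant of \textbf{Lemma \ref{lemma:countable_sum_of_cofree_comodules_is_a_common_envelope}} (compare \textbf{Remark \ref{rem:heart_of_even_synthetic_spectra_and_common_envelope_with_comodules}}). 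Third, the target of the unit map appearing in condition (3), by the analogous computation on the $\spectramufpe$-side, is exactly $\Hom_{MU_*}(MU_* Re(M), \bigoplus_i MU_*[k_i])$, and chasing through the identifications shows that the unit induces the identity; in the formulation of \textbf{Remark \ref{rem:last_condition_in_the_definition_of_a_common_envelope}} the required bijection $\varinjlim \pi_0\map(M, M_\alpha) \to \varinjlim \pi_0 \map(Re(M), Re(M_\alpha))$ is this very identification.

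The main obstacle is the careful bookkeeping of bidegrees in step one: one has to confirm that every bigraded $MGL_{*,*}$-homomorphism from $MGL_{*,*}M$ into $\bigoplus_i MGL_{*,*}[2k_i, k_i]$ is forced to land in the Chow-zero subgroup of the target and hence is determined by the Chow-zero part $(MGL_{*,*}M)_0 \simeq MU_* Re(M)$. This is where \textbf{Lemma \ref{lemma:mgl_concentrated_in_nonnegative_chow_degree}} and the hypothesis that the generators $b_i$ of $MGL_{*,*}M$ sit in Chow degree $0$ are essential; without these two inputs one could not collapse the $MGL_{*,*}$-linear homomorphism set down to an $MU_*$-linear one, and the whole comparison to the topological envelope would fail.
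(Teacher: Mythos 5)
Your argument follows essentially the same route as the paper: motivic universal coefficients to identify the $\Hom$-set, the even variant of \textbf{Lemma \ref{lemma:countable_sum_of_cofree_comodules_is_a_common_envelope}} for the middle condition, and the Hopkins--Morel--Hoyois identification of the Chow degree zero part for the third condition; the paper differs only in reducing the third condition to $M = S^{2l,l}$ rather than carrying out the full identification with $\Hom_{MU_*}$. However, your central computation contains a slip worth correcting: $MGL_{*,*}M$ is \emph{not} concentrated in Chow degree zero; it is a free $MGL_{*,*}$-module on generators in Chow degree zero, hence has elements in all non-negative Chow degrees. Consequently a degree-$(0,0)$ $MGL_{*,*}$-linear map does not "factor through" the Chow degree zero part of the target; rather it is \emph{determined by} its restriction to Chow degree zero, via the extension-of-scalars identification $\Hom_{MGL_{*,*}}(MGL_{*,*}\otimes_{MU_*}(MGL_{*,*}M)_0, N) \simeq \Hom_{MU_*}((MGL_{*,*}M)_0, N_0)$. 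There is also a grading slip: the Chow-zero part of $\bigoplus_i MGL_{*,*}[2k_i,k_i]$ is $\bigoplus_i MU_*[2k_i]$, not $\bigoplus_i MU_*[k_i]$. Finally, \textbf{Lemma \ref{lemma:mgl_concentrated_in_nonnegative_chow_degree}} is not in fact used in this argument, nor in the paper's proof; all that is needed is free generation in Chow degree zero together with the Hopkins--Morel--Hoyois isomorphism $(MGL_{*,*})_0 \simeq MU_*$.
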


\begin{proof}
We first verify that $\varinjlim M_{\alpha}$ satisfies discrete descent. If $M \in \spectra_{\MGL}^{fp}$, we have an identification $\varinjlim \pi_{0} \map(M, M_{\alpha}) \simeq \pi_{0} \map(M, \varinjlim M_{\alpha}) \simeq \pi_{0} \map(M, \bigoplus \Sigma^{2k_{i}, k_{i}} \MGL)$, which we can further rewrite as 

\begin{center}
$\pi_{0} \map(M, \bigoplus \Sigma^{2k_{i}, k_{i}} \MGL) \simeq \Hom_{\MGL_{*, *}}(\MGL_{*, *}M, \bigoplus \MGL_{*, *}[2k_{i}, k_{i}])$ 
\end{center}
using the universal coefficient theorem, which we've seen holds motivically in this context since we only work with cellular spectra. The last term clearly defines a sheaf in $M$, since the category of $\MGL_{*, *}$-modules is abelian and in an abelian category any epimorphism is effective. 

The fact that $\varinjlim Re(M_{\alpha})$ is an envelope for $\spectramufpe$ is immediate from \cref{lemma:a_common_envelope_is_also_an_envelope_on_the_base}, since it is taken by the morphism of sites $\MU_{*} \colon \spectramufpe \rightarrow \ComodMU^{fpe}$ into even dualizable comodules to an envelope, see \cref{rem:working_with_even_comodules_categories_envelopes}. 

The last thing to verify is that that the map $\varinjlim \pi_{0} \map(M, M_{\alpha}) \rightarrow \varinjlim \pi_{0} \map(Re(M), Re(M_{\alpha}))$ is a bijection. After rewriting things as above using the universal coefficient, we see that we have to check that 

\begin{center}
$\Hom_{\MGL_{*, *}}(\MGL_{*, *}M, \bigoplus \MGL_{*, *}[2k_{i}, k_{i}]) \rightarrow \Hom_{\MU_{*}}(\MU_{*}Re(M), \bigoplus \MU_{*}\MU[2k_{i}])$
\end{center}
is an isomorphism. Using the second part of \cref{lemma:betti_realization_of_finite_mgl_projective_is_finite_even_mu_projective} one observes that the above map can be identified with restricting a given homomorphism of $\MGL_{*, *}$-comodules to the part in Chow degree zero. Here, both $\MGL_{*, *}M$ and $\bigoplus \MGL_{*, *}[2k_{i}, k_{i}]$ are freely generated in Chow degree zero and so by using additivity and finite generation of the source we reduce to the case of the map

\begin{center}
$\Hom_{\MGL_{*, *}}(\MGL_{*, *}[2l, l], \MGL_{*, *}[2k, k]) \rightarrow \Hom_{\MU_{*}}(\MU_{*}[2l], \MU_{*}[2k])$ 
\end{center}
given by restriction to Chow degree zero. This is an an isomorphism for any $k, l \in \mathbb{Z}$, as each side can be identified with $\MGL_{2k-2l, k-l} \simeq \MU_{2k-2l}$, ending the argument. 
\end{proof}

\begin{thm}
\label{thm:betti_realization_induces_a_coconontinous_right_adjoint_and_equivalence_on_sheaves_of_sets}
The Betti realization functor $Re \colon \spectramglfp \rightarrow \spectramufpe$ has the covering lifting property and induces an equivalence $Sh^{\sets}(\spectramglfp) \simeq Sh^{\sets}(\spectramufpe)$ between categories of sheaves of sets on finite $\MGL$-projective motivic spectra and finite even $\MU$-projective spectra. 
\end{thm}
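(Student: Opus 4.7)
The plan is to reduce the theorem to an application of the general machinery developed in Chapter \ref{section:cat_theory}, specifically \textbf{Proposition \ref{prop:cover_reflecting_morphism_of_excellent_sites_with_envelope_is_clp}} (which gives the covering lifting property) and \textbf{Theorem \ref{thm:equivalence_of_categories_of_discrete_sheaves}} (which gives the equivalence on categories of sheaves of sets). Both statements require the same hypotheses: that $Re: \spectramglfp \rightarrow \spectramufpe$ is a morphism of excellent $\infty$-sites that reflects covers and admits a common envelope. Thus the proof reduces to checking these three properties, after which the conclusions follow completely formally.

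First I would observe that $Re$ is a morphism of excellent $\infty$-sites, which has essentially been established just prior to the theorem statement: the symmetric monoidal exactness of Betti realization gives additivity and preservation of pullbacks along coverings, while \textbf{Lemma \ref{lemma:betti_realization_of_finite_mgl_projective_is_finite_even_mu_projective}} shows that realization sends finite $MGL$-projective motivic spectra to finite even $MU$-projective spectra and preserves coverings. Second, I would cite \textbf{Remark \ref{rem:betti_realization_reflects_covers_of_projective_mspectra}}, which notes that since $MGL_{*,*}M$ and $MGL_{*,*}N$ are generated in Chow degree zero for $M, N \in \spectramglfp$, surjectivity of $MGL_{*,*}N \to MGL_{*,*}M$ can be checked in Chow degree zero, where it coincides with surjectivity of $MU_{*}Re(N) \to MU_{*}Re(M)$ by the second half of \textbf{Lemma \ref{lemma:betti_realization_of_finite_mgl_projective_is_finite_even_mu_projective}}; this gives reflection of covers.

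Third, and this is the step that required genuine work in the excerpt, I would invoke \textbf{Lemma \ref{lemma:common_envelope_for_betti_realization}}, which exhibits an explicit common envelope for $Re$: namely any filtered diagram $\varinjlim M_{\alpha}$ in $\spectramglfp$ whose colimit is a countable sum of bi-suspensions $\bigoplus \Sigma^{2k_i, k_i} MGL$ in which every integer appears as some $k_i$ infinitely often. Such a diagram exists because $MGL$ is a filtered colimit of finite $MGL$-projectives (by the cellularity of Grassmanians, \textbf{Example \ref{example:grassmanians_and_their_thom_spectra_are_mgl_fpe}}) and $\spectramglfp$ is closed under finite sums.

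With these three verifications in hand, \textbf{Proposition \ref{prop:cover_reflecting_morphism_of_excellent_sites_with_envelope_is_clp}} immediately yields the covering lifting property, and \textbf{Theorem \ref{thm:equivalence_of_categories_of_discrete_sheaves}} yields that the induced adjunction $Re_{*} \dashv Re^{*}: Sh^{\sets}(\spectramglfp) \leftrightarrows Sh^{\sets}(\spectramufpe)$ is an adjoint equivalence. The main conceptual obstacle was already absorbed into \textbf{Lemma \ref{lemma:common_envelope_for_betti_realization}}, whose proof rested on two nontrivial inputs: the motivic universal coefficient theorem (valid here because all spectra in sight are cellular) and the Hopkins-Morel-Hoyois identification $(MGL_{*,*})_{0} \simeq MU_{*}$, which allowed restriction to Chow degree zero to be an isomorphism on $\Hom$-sets between free modules generated in Chow degree zero. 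Once that envelope has been verified, the current theorem is a formal consequence.
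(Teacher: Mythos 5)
Your proposal is correct and follows essentially the same route as the paper: reduce to Proposition \ref{prop:cover_reflecting_morphism_of_excellent_sites_with_envelope_is_clp} and Theorem \ref{thm:equivalence_of_categories_of_discrete_sheaves}, check cover-reflection via Remark \ref{rem:betti_realization_reflects_covers_of_projective_mspectra}, and produce the common envelope via Lemma \ref{lemma:common_envelope_for_betti_realization} together with the fact that $MGL$ is a filtered colimit of Thom spectra of finite Grassmanians. Your extra remark that $\spectramglfp$ is closed under finite sums to build the countable direct sum is a small detail the paper leaves implicit, but otherwise the arguments coincide.
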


\begin{proof}
By \cref{thm:equivalence_of_categories_of_discrete_sheaves} and \cref{prop:cover_reflecting_morphism_of_excellent_sites_with_envelope_is_clp}, it is enough to check that Betti realization reflects covers and admits a common envelope, the former criterion is precisely \cref{rem:betti_realization_reflects_covers_of_projective_mspectra}. 

To see that there exists a common envelope, notice that any filtered diagram $M_{\alpha}$ of finite $\MGL$-projectives whose colimit is an infinite sum of shifts of $\MGL$ is a common envelope by \cref{lemma:common_envelope_for_betti_realization}. Such a diagram clearly exists, since $\MGL$ itself is a filtered colimit of finite projectives, namely the Thom spectra of finite Grassmannians, see \cref{example:grassmanians_and_their_thom_spectra_are_mgl_fpe}. 
\end{proof}

\begin{cor}
\label{cor:spherical_sheaves_of_sets_on_spectramglfp_are_even_mumu_comodules}
There is an equivalence $Sh_{\Sigma}^{\sets}(\spectramglfp) \simeq \ComodMU^{ev}$ of categories between spherical sheaves of sets on finite $\MGL$-projective motivic spectra and even $\MU_{*}\MU$-comodules. 
\end{cor}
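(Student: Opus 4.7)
The plan is to assemble this as a chain of equivalences already established in the preceding text, avoiding any fresh computation. All the genuine work has been done in \textbf{Theorem \ref{thm:betti_realization_induces_a_coconontinous_right_adjoint_and_equivalence_on_sheaves_of_sets}} (comparing sheaves on the motivic and topological sites) and \textbf{Theorem \ref{thm:equivalence_of_comodules_with_spherical_sheaves}} (Goerss--Hopkins identification of spherical sheaves with comodules), together with the even analogues indicated in \textbf{Remark \ref{rem:heart_of_even_synthetic_spectra_and_common_envelope_with_comodules}}.

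First I would pass to the spherical subcategories. \textbf{Theorem \ref{thm:betti_realization_induces_a_coconontinous_right_adjoint_and_equivalence_on_sheaves_of_sets}} provides an equivalence $Sh^{\sets}(\spectramglfp) \simeq Sh^{\sets}(\spectramufpe)$ of categories of sheaves of sets induced by the Betti realization, a morphism of excellent $\infty$-sites. By \textbf{Remark \ref{rem:induced_adjoint_equivalence_on_discrete_spherical_sheaves}}, the induced adjunction restricts to an equivalence $Sh^{\sets}_{\Sigma}(\spectramglfp) \simeq Sh^{\sets}_{\Sigma}(\spectramufpe)$ on the spherical parts, since both the left and right adjoints preserve sphericity.

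Next, I would invoke the even version of the comparison between finite projective spectra and dualizable comodules. The even analogue of \textbf{Theorem \ref{thm:homology_functor_has_covering_lifting_property}}, recalled in \textbf{Remark \ref{rem:heart_of_even_synthetic_spectra_and_common_envelope_with_comodules}}, asserts that the homology functor $MU_{*}\colon \spectramufpe \rightarrow \ComodMU^{fpe}$ is a cover-reflecting morphism of excellent $\infty$-sites admitting a common envelope (namely any filtered diagram of finite even $MU$-projective spectra whose colimit is an infinite sum of even suspensions of $MU$), so that it induces an equivalence $Sh^{\sets}_{\Sigma}(\spectramufpe) \simeq Sh^{\sets}_{\Sigma}(\ComodMU^{fpe})$. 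Finally, the even Goerss--Hopkins identification $Sh^{\sets}_{\Sigma}(\ComodMU^{fpe}) \simeq \ComodMU^{ev}$, again from \textbf{Remark \ref{rem:heart_of_even_synthetic_spectra_and_common_envelope_with_comodules}}, closes the chain.

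Concatenating the three equivalences yields the desired identification $Sh^{\sets}_{\Sigma}(\spectramglfp) \simeq \ComodMU^{ev}$. There is no substantial obstacle left at this point: the only thing one might want to double-check is that the composite sends a representable $y(M)_{\leq 0}$ for $M \in \spectramglfp$ to the comodule $(MGL_{*,*}M)_{0} \simeq MU_{*}Re(M)$, which is immediate from \textbf{Lemma \ref{lemma:betti_realization_of_finite_mgl_projective_is_finite_even_mu_projective}} together with the explicit description of the left adjoints in \textbf{Proposition \ref{prop:additive_morphisms_induce_adjunctions_on_infty_categories_of_sheaves_of_spectra}}. The real work --- constructing the common envelope and verifying the cover lifting property for Betti realization --- has already been done in \textbf{Lemma \ref{lemma:common_envelope_for_betti_realization}} and \textbf{Theorem \ref{thm:betti_realization_induces_a_coconontinous_right_adjoint_and_equivalence_on_sheaves_of_sets}}, so the corollary is essentially a bookkeeping exercise.
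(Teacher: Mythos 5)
Your proof is correct and follows essentially the same route as the paper: Betti realization gives $Sh^{\sets}_{\Sigma}(\spectramglfp) \simeq Sh_{\Sigma}^{\sets}(\spectramufpe)$ via \textbf{Theorem \ref{thm:betti_realization_induces_a_coconontinous_right_adjoint_and_equivalence_on_sheaves_of_sets}} and \textbf{Remark \ref{rem:induced_adjoint_equivalence_on_discrete_spherical_sheaves}}, and the even Goerss--Hopkins identification from \textbf{Remark \ref{rem:heart_of_even_synthetic_spectra_and_common_envelope_with_comodules}} closes the chain. You spell out the intermediate spherical-restriction and common-envelope steps more explicitly than the paper's one-line proof, but the substance is identical.
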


\begin{proof}
By \cref{thm:betti_realization_induces_a_coconontinous_right_adjoint_and_equivalence_on_sheaves_of_sets}, we have an equivalence of categories $Sh^{\sets}_{\Sigma}(\spectramglfp) \simeq Sh_{\Sigma}^{\sets}(\spectramufpe)$, while $Sh_{\Sigma}^{\sets}(\spectramufpe) \simeq \ComodMU^{ev}$ follows from \cref{rem:heart_of_even_synthetic_spectra_and_common_envelope_with_comodules}.
\end{proof}

\subsection{Cellular motivic category as spherical sheaves}
\label{subsection:cellular_motivic_cat_as_spherical_sheaves}

In this section we give description of the cellular motivic category $\cspectra$ as a category of spherical sheaves of spectra, the indexing $\infty$-category in this case will be the site of finite $\MGL$-projective motivic spectra. This is a beginning of a comparison with synthetic spectra, which were also constructed as spherical sheaves, although it has a few consequences on its own. 

We first define the spherical sheaf associated to the given motivic spectrum, notice that synthetically this formula would correspond to the spectral Yoneda embedding $Y$, rather than the synthetic analogue construction $\nu$. Nevertheless, this is the right thing to do in this case. We then verify that this does indeed define a sheaf. 

\begin{defin}
If $X \in \cspectra$ is a cellular motivic spectrum, let $\Upsilon X$ be the presheaf of spectra on $\spectramglfp$ defined by $\Upsilon X(M) = \Map(M, X)$, where $\Map$ denotes the mapping spectrum in the stable $\infty$-category $\cspectra$.
\end{defin}

\begin{lemma}
\label{lemma:cellular_motivic_spectrum_defines_a_sheaf_on_mglfp_spectra}
If $X \in \cspectra$ be a cellular motivic spectrum, then $\Upsilon X$ is a spherical sheaf of spectra on $\spectramglfp$ with respect to the $\MGL_{*,*}$-surjection topology. If $X$ is $\MGL$-local, then $\Upsilon X$ is hypercomplete. 
\end{lemma}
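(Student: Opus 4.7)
Sphericity is essentially automatic: finite coproducts in $\spectramglfp$ are coproducts in the stable $\infty$-category $\cspectra$, and $\Map(-,X)$ converts them into products of spectra. Hence $\Upsilon X(M \oplus N) \simeq \Upsilon X(M) \times \Upsilon X(N)$ for all $M,N \in \spectramglfp$.

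To verify that $\Upsilon X$ is a sheaf, I would apply the recognition principle \textbf{Theorem \ref{thm:recognition_of_spherical_sheaves}}. Since a presheaf of spectra is a sheaf iff each $\Omega^{\infty-n}$ of it is a sheaf of spaces (\textbf{Remark \ref{rem:hypercompleteness_of_a_sheaf_detected_by_omega_infty}}), it suffices to check the fibre-sequence condition at the level of spectra: if $F \to N \to M$ is a fibre sequence in $\spectramglfp$ with $N \to M$ a covering, then $\Upsilon X(M) \to \Upsilon X(N) \to \Upsilon X(F)$ must be a fibre sequence. But such a fibre sequence in $\spectramglfp$ is also a fibre sequence in $\cspectra$, since pullbacks along coverings of finite $MGL$-projective spectra are computed in $\cspectra$ (this was verified in the course of showing $\spectramglfp$ is excellent, using the K\"unneth spectral sequence to ensure $N \times_M F$ is again finite $MGL$-projective). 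Since fibre sequences in a stable $\infty$-category are cofibre sequences, and $\Map(-,X)$ converts cofibre sequences to fibre sequences of spectra, we are done.

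For the hypercompleteness statement, assume $X$ is $MGL$-local and mimic the argument of \textbf{Proposition \ref{prop:spectra_define_sheaves_on_finite_projective_spectra}}. By \textbf{Proposition \ref{prop:recognition_of_hypercomplete_sheaves}}, I need to verify that $\Upsilon X$ takes any hypercover $U \colon \thickdelta^{op}_{s, +} \to \spectramglfp$ to a limit diagram. Since $\Map(-,X)$ carries colimit diagrams of motivic spectra to limit diagrams of spectra, it suffices to show that $U$ is itself a colimit diagram in $\cspectra$ after $MGL$-localization, equivalently that $MGL \wedge U$ is a colimit diagram of motivic spectra. The hypercover condition in $\spectramglfp$ means precisely that each map $U_n \to M_n(U)$ is an $MGL_{*,*}$-surjection of finite $MGL$-projective spectra; after smashing with $MGL$ each of these maps then admits a section, so $MGL \wedge U$ is a split hypercover in the sense of \textbf{Lemma \ref{lemma:split_hypercovers_in_presentable_inftycats_are_colimit_diagrams}}, hence a colimit diagram.

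The main obstacle, and the place where the proof is not quite verbatim the topological one, is ensuring that the K\"unneth/pullback compatibility for finite $MGL$-projective spectra is strong enough: specifically, that $MGL_{*,*}$-surjections of such spectra split after smashing with $MGL$. This is where the freeness of $MGL_{*,*}N$ in Chow degree zero for $N \in \spectramglfp$, together with the Hopkins--Morel--Hoyois description of $MGL_{*,*}$ in Chow degree zero, is actually used: an $MGL_{*,*}$-surjection between such free modules is automatically split as an $MGL_{*,*}$-module map, and by reducing to the Chow-degree-zero part one obtains a topological splitting at the level of motivic spectra after smashing with $MGL$.
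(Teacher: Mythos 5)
Your proposal is correct and follows essentially the same route as the paper: sphericity is automatic from $\Map(-,X)$ converting coproducts to products, the sheaf condition is verified via the recognition principle of Theorem~\ref{thm:recognition_of_spherical_sheaves} using that pullbacks along coverings in $\spectramglfp$ are computed in $\cspectra$, and hypercompleteness follows from Proposition~\ref{prop:recognition_of_hypercomplete_sheaves} together with Lemma~\ref{lemma:split_hypercovers_in_presentable_inftycats_are_colimit_diagrams} after observing that $MGL \wedge U$ is a split hypercover. The only difference is that you spell out why an $MGL_{*,*}$-surjection of finite $MGL$-projective motivic spectra splits after smashing with $MGL$ (freeness of the relevant $MGL_{*,*}$-modules, universal coefficients for free $MGL$-modules), a point the paper asserts without comment; your elaboration is correct and not superfluous, but it is not a different argument.
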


\begin{proof}
This is the motivic analogue of \cref{prop:spectra_define_sheaves_on_finite_projective_spectra}, the argument is almost the same. Namely, observe that the presheaf $\Upsilon X$ is clearly spherical, so that we only have to check the sheaf property. By \cref{thm:recognition_of_spherical_sheaves}, it is enough to verify that if $F \rightarrow M \rightarrow N$ is a fibre sequence of finite $\MGL$-projective spectra with the latter map an $\MGL_{*, *}$-surjection, then
\[
\Map(N, X) \rightarrow \Map(M, X) \rightarrow \Map(F, X)
\]
is a fibre sequence of spectra. This is clear. 

Now assume that $X$ is $\MGL$-local. Using \cref{prop:recognition_of_hypercomplete_sheaves} we observe that to show that $\Upsilon X$ is hypercomplete we have to verify that if $U \colon \thickdelta^{op}_{s, +} \rightarrow \spectramglfp$ is a hypercover in finite $\MGL$-projective motivic spectra, then
\[
\Map(U_{-1}, X) \rightarrow \Map(U_{0}, X) \rightrightarrows \ldots
\]
is a limit diagram of spectra. As $X$ is assumed to be $\MGL$-local, it is enough to verify that $\varinjlim_{k \in \Delta_{s}} U_{k} \rightarrow U_{-1}$ is an $\MGL$-local equivalence; that is, an isomorphism on $\MGL$-homology, since all motivic spectra here are cellular. This can be verified using the homology of geometric realization spectral sequence, as in the last part of \cref{prop:spectra_define_sheaves_on_finite_projective_spectra}. 
\end{proof}

As explained above, the target of the equivalence with the cellular motivic category will be the $\infty$-category $Sh_{\Sigma}^{\spectra}(\spectramglfp)$ of spherical sheaves of spectra on $\spectramglfp$, \cref{lemma:cellular_motivic_spectrum_defines_a_sheaf_on_mglfp_spectra} yields the needed functor. Since we have studied $\infty$-categories of spherical sheaves extensively, before proceeding with the construction of the equivalence let us recall the key properties we use. 

By \cref{prop:tstructure_on_spherical_sheaves_of_spectra}, $Sh_{\Sigma}^{\spectra}(\spectramglfp)$ admits a $t$-structure where coconnectivity is measured levelwise. The heart of this $t$-structure is naturally equivalent to the category of spherical sheaves of sets, which by \cref{cor:spherical_sheaves_of_sets_on_spectramglfp_are_even_mumu_comodules} can be identified with $\ComodMU^{ev}$, the category of even $\MU_{*}\MU$-comodules. 

We start by describing the homotopy groups of sheaves of the form $\Upsilon X$, this is essentially the motivic analogue of \cref{prop:homology_of_synthetic_analogues}. Notice that in the statement we implicitly use the isomorphism $(\MGL_{*}\MGL)_{0} \simeq \MU_{*}\MU$ of Hopf algebroids.

\begin{lemma}
\label{lemma:homotopy_groups_of_yoneda_embedding_of_motivic_spectra_as_an_mumu_comodule}
Let $X \in \cspectra$ be a motivic spectrum. Then, for any $k \in \mathbb{Z}$, the $t$-structure homotopy group $\pi^{\heartsuit}_{k} \Upsilon X$ of the sheaf of spectra $\Upsilon X$ can be identified as an even graded abelian group with $(\MGL_{*, *} \Sigma^{-k} X) _{0} \simeq (\MGL_{*,*}X)_{k}$, the Chow degree $k$-part of the $\MGL$-homology of $X$. 
\end{lemma}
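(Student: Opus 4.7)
The plan is to mimic the proof of Theorem \ref{thm:t_structure_homotopy_groups_coincide_with_synthetic_homology} in the motivic setting, using the identification of the heart of the $t$-structure on $Sh_{\Sigma}^{\spectra}(\spectramglfp)$ with the category $\ComodMU^{ev}$ of even $MU_{*}MU$-comodules granted by Proposition \ref{prop:tstructure_on_spherical_sheaves_of_spectra} and Corollary \ref{cor:spherical_sheaves_of_sets_on_spectramglfp_are_even_mumu_comodules}. By Remark \ref{rem:homotopy_groups_with_respect_to_natural_t_structure_are_given_by_shaefications_of_homotopy_groups_presheaves}, $\pi^{\heartsuit}_{k} \Upsilon X$ is the sheafification of the presheaf $M \mapsto \pi_{k} \Map(M, X)$.

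To identify the corresponding even comodule explicitly, I would first establish a motivic analogue of Lemma \ref{lemma:formula_for_homotopy_comodules_in_terms_of_a_filtered_colimit}: for a spherical sheaf of abelian groups $F$ on $\spectramglfp$, the graded component at weight $2l$ of the associated comodule is given by $\varinjlim F(\Sigma^{2l, l} DM_{\alpha})$, where $\{M_{\alpha}\}$ is any filtered diagram of finite $MGL$-projective motivic spectra with $\varinjlim M_{\alpha} \simeq MGL$ (such a diagram exists by Example \ref{example:grassmanians_and_their_thom_spectra_are_mgl_fpe}). The proof transfers Lemma \ref{lemma:recovering_comodules_from_sheaves} across the equivalence $Sh^{\sets}(\spectramglfp) \simeq Sh^{\sets}(\spectramufpe)$ of Theorem \ref{thm:betti_realization_induces_a_coconontinous_right_adjoint_and_equivalence_on_sheaves_of_sets}, using that $\varinjlim Re(M_{\alpha}) \simeq MU$; the key sub-step, analogous to Lemma \ref{lemma:homotopy_classes_of_maps_into_homotopy_e_modules_define_sheaves}, is that the colimit formula lands in sheaves because epimorphisms of $MGL_{*,*}$-modules are effective, and the homology K\"unneth-type identity holds along $\varinjlim M_{\alpha}$.

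Applying this to $F = \pi^{\heartsuit}_{k} \Upsilon X$ yields the direct calculation
\[
(\pi^{\heartsuit}_{k} \Upsilon X)_{2l} \simeq \varinjlim \pi_{k} \Map(\Sigma^{2l, l} DM_{\alpha}, X) \simeq \varinjlim \pi_{2l+k, l}(M_{\alpha} \wedge X) \simeq MGL_{2l+k, l} X,
\]
where the middle equivalence uses Spanier-Whitehead duality of $M_{\alpha}$ in $\cspectra$ and the identification of the simplicial suspension with $\Sigma^{*, 0}$, and the last uses that $MGL \simeq \varinjlim M_{\alpha}$. Since $MGL_{2l+k, l} X$ has Chow degree $(2l+k) - 2l = k$, it sits in $(MGL_{*, *} X)_{k}$, and assembling over $l$ produces the claimed additive identification. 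The isomorphism $(MGL_{*, *} \Sigma^{-k} X)_{0} \simeq (MGL_{*, *} X)_{k}$ is then immediate from the reindexing $\pi_{s, w} (\Sigma^{-k} Y) \simeq \pi_{s+k, w} Y$.

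The main obstacle is not the additive identification but verifying that it respects the $MU_{*}MU$-coaction, where the coaction on the right-hand side is the one induced from the $MGL_{*, *}MGL$-coaction on $MGL$-homology via the Chow-degree-zero identification $(MGL_{*}MGL)_{0} \simeq MU_{*}MU$ supplied by the Hopkins-Morel-Hoyois Theorem \ref{thm:hopkins_morel_conjecture}. This compatibility should be traced directly from the common envelope description of the equivalence $Sh^{\sets}_{\Sigma}(\spectramglfp) \simeq \ComodMU^{ev}$ together with the fact that the coaction on $\varinjlim M_{\alpha} \wedge X$ arises from the coaction on $MGL \wedge X$ in Chow degree zero, but this bookkeeping is the most delicate part of the argument.
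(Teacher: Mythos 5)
Your proposal is correct and follows essentially the same route as the paper's own proof: reduce (implicitly or explicitly) to the key formula recovering the graded components of the comodule from a filtered colimit of evaluations on $\Sigma^{2l,l}DM_\alpha$, justify that this colimit is insensitive to sheafification via the motivic analogue of Lemma \ref{lemma:homotopy_classes_of_maps_into_homotopy_e_modules_define_sheaves}, and compute the result as $MGL_{2l+k,l}X$ by Spanier--Whitehead duality and compactness of the motivic spheres. The paper also only verifies the additive identification and waves at the comodule compatibility, exactly as you flag, so your proof is in line with the intended argument.
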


\begin{proof}
Since $\Upsilon$ is exact, it is enough to prove the statement for $k = 0$. Choose a filtered diagram $M_{\alpha}$ of finite $\MGL$-projectives with  $\varinjlim M_{\alpha} \simeq \MGL$. 

Notice that the equivalence $Sh_{\Sigma}^{\spectra}(\spectramglfp)^{\heartsuit} \simeq \ComodMU^{ev}$ of \cref{cor:spherical_sheaves_of_sets_on_spectramglfp_are_even_mumu_comodules} 
is established by the composite $\MU_{*} \circ Re \colon \spectramglfp \rightarrow \ComodMU^{fpe}$. Applying this composite to $M_{\alpha}$, we see that $\MU_{*} Re(\Sigma^{2l, l} M_{\alpha})$ is a filtered diagram of dualizable even comodules such that $\varinjlim \MU_{*} Re(\Sigma^{2l, l} M_{\alpha}) \simeq \MU_{*}\MU[2l]$. Since $\MU_{*} \circ Re$ is symmetric monoidal, it follows from the above and \cref{lemma:recovering_comodules_from_sheaves} that

\begin{center}
$(\pi_{0}^{\heartsuit}\Upsilon X )_{2l} \simeq \varinjlim (\pi_{0}^{\heartsuit} \Upsilon X)(\Sigma^{2l, l} DM_{\alpha})$,
\end{center}
where $DM_{\alpha} = F(M_{\alpha}, S^{0, 0}$) is the motivic Spanier-Whitehead dual. Chasing through the definitions, we see that $\pi_{0}^{\heartsuit} \Upsilon X$ is the sheaf of sets associated to the presheaf defined by the formula $\pi_{0} \Map(M, X) \simeq [M, X]$, where $M \in \spectramglfp$. Arguing as in the topological analogue, \cref{lemma:homotopy_classes_of_maps_into_homotopy_e_modules_define_sheaves}, we see that the value computed by the filtered colimit above is unchanged by sheafification, so that we can further rewrite 

\begin{center}
$(\pi_{0}^{\heartsuit}\Upsilon X )_{2l}  \simeq \varinjlim [\Sigma^{2l, l} D M_{\alpha}, X] \simeq \varinjlim [S^{2l, l}, M_{\alpha} \otimes X] \simeq \MGL_{2l, l} X$,
\end{center}
which is what we wanted to show. 
\end{proof}

Recall that by \cref{prop:spherical_sheaves_canonically_lift_to_sheaves_of_spectra}, the adjunction $\Sigma^{\infty}_{+} \dashv \Omega^{\infty} \colon  Sh_{\Sigma}(\spectramglfp) \rightleftarrows Sh_{\Sigma}^{\spectra}(\spectramglfp)$ restricts to an equivalence $Sh_{\Sigma}(\spectramglfp) \simeq Sh_{\Sigma}^{\spectra}(\spectramglfp)_{\geq 0}$ between sheaves of spaces and connective sheaves of spectra. This has the following consequence, which we will later use in the proof of \cref{thm:cellular_motivic_category_as_spherical_sheaves}.

\begin{lemma}
\label{lemma:mgl_chow_connective_motivic_spectrum_a_suspension_of_a_sheaf_of_spaces}
Let $X$ be a motivic spectrum such that $\MGL_{*, *}X$ is concentrated in non-negative Chow degrees. Then $\Upsilon X \simeq \sigmainfty y(X)$, where $y(X) \in Sh_{\Sigma}(\spectramglfp)$ is the sheaf of spaces represented by $X$. 
\end{lemma}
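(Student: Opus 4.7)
The plan is to deduce this directly from the computation of the $t$-structure homotopy groups of $\Upsilon X$ together with the identification of connective sheaves of spectra with sheaves of spaces.

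First I would observe that the presheaf $\Omega^{\infty} \Upsilon X$, computed levelwise, sends a finite $MGL$-projective motivic spectrum $M$ to $\Omega^{\infty} \Map(M,X) \simeq \map(M,X)$, so it is canonically equivalent to $y(X)$ as a sheaf of spaces on $\spectramglfp$. By \textbf{Proposition \ref{prop:spherical_sheaves_canonically_lift_to_sheaves_of_spectra}}, the adjunction $\sigmainfty \dashv \omegainfty$ restricts to an equivalence between $Sh_{\Sigma}(\spectramglfp)$ and the connective part $Sh_{\Sigma}^{\spectra}(\spectramglfp)_{\geq 0}$ of the natural $t$-structure. Thus, to conclude that $\Upsilon X \simeq \sigmainfty y(X)$, it suffices to prove that $\Upsilon X$ is connective as a spherical sheaf of spectra, since then the unit map $\Upsilon X \to \sigmainfty \omegainfty \Upsilon X \simeq \sigmainfty y(X)$ is an equivalence.

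Next I would invoke \textbf{Lemma \ref{lemma:homotopy_groups_of_yoneda_embedding_of_motivic_spectra_as_an_mumu_comodule}}, which identifies the $t$-structure homotopy sheaf $\pi^{\heartsuit}_{k} \Upsilon X$ with $(MGL_{*,*}X)_{k}$, the Chow degree $k$ part of the $MGL$-homology. By the hypothesis that $MGL_{*,*}X$ is concentrated in non-negative Chow degree, we obtain $\pi^{\heartsuit}_{k} \Upsilon X = 0$ for all $k<0$, so $\Upsilon X$ is connective by definition of the natural $t$-structure (\textbf{Proposition \ref{prop:tstructure_on_spherical_sheaves_of_spectra}}).

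Combining these two steps gives the claim. No step looks particularly difficult, since the substantive content is packaged into the identification of connective spherical sheaves of spectra with spherical sheaves of spaces and into the prior computation of $\pi^{\heartsuit}_{*} \Upsilon X$ in terms of $MGL_{*,*}X$; the only mild subtlety is to notice that $\omegainfty \Upsilon X$, computed levelwise on presheaves, is already a sheaf and coincides with $y(X)$ on the nose, so that no additional sheafification enters.
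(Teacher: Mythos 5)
Your proof is correct and follows essentially the same route as the paper: both arguments reduce the statement to showing that $\Upsilon X$ is connective via \textbf{Lemma \ref{lemma:homotopy_groups_of_yoneda_embedding_of_motivic_spectra_as_an_mumu_comodule}}, then invoke the equivalence of connective spherical sheaves of spectra with spherical sheaves of spaces from \textbf{Proposition \ref{prop:spherical_sheaves_canonically_lift_to_sheaves_of_spectra}} together with the levelwise identification $\Omega^{\infty}\Upsilon X \simeq y(X)$. The only difference is cosmetic ordering of the two observations.
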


\begin{proof}
We claim that $\Upsilon X$ is connective in the natural $t$-structure on sheaves of spectra, by \cref{lemma:homotopy_groups_of_yoneda_embedding_of_motivic_spectra_as_an_mumu_comodule} this is the same as asking for $\MGL_{*, *} X$ to be concentrated in non-negative Chow degrees, which is our assumption. Since we work with spherical sheaves, it follows that $\Upsilon X \simeq \Sigma^{\infty}_{+} \Omega^{\infty} X$. However, we have 

\begin{center}
$(\Omega^{\infty} \Upsilon X)(M) \simeq \Omega^{\infty} \Map(M, X) \simeq \map(M, X)$,
\end{center}
which is exactly the formula defining $y(X)$.
\end{proof}

\begin{thm}
\label{thm:cellular_motivic_category_as_spherical_sheaves}
The functor $\Upsilon \colon \cspectra \rightarrow Sh_{\Sigma}^{\spectra}(\spectramglfp)$ from the cellular motivic category over $Spec(\mathbb{C})$ into spherical sheaves of spectra on the $\infty$-category $\spectramglfp$ of finite $\MGL$-projective motivic spectra is an equivalence of $\infty$-categories. 
\end{thm}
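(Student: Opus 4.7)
The plan is to produce a cocontinuous left adjoint $L: Sh_\Sigma^\spectra(\spectramglfp) \to \cspectra$ to $\Upsilon$ and then verify separately that the unit and counit of $L \dashv \Upsilon$ are both equivalences.

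I would construct $L$ by applying the universal property of \textbf{Remark \ref{rem:universal_property_of_spherical_sheaves_of_spectra}} to the inclusion $i: \spectramglfp \hookrightarrow \cspectra$. The requirement that its left Kan extension factor through $Sh_\Sigma(\spectramglfp)$ is, by adjunction, precisely the statement that $\Upsilon X$ is a spherical sheaf for every $X \in \cspectra$, which was already established in \textbf{Lemma \ref{lemma:cellular_motivic_spectrum_defines_a_sheaf_on_mglfp_spectra}}. This yields $L$ and the adjunction $L \dashv \Upsilon$ with $L(\sigmainfty y(M)) \simeq M$ for $M \in \spectramglfp$. I would also verify that $\Upsilon$ is cocontinuous: exactness follows from stability, while preservation of filtered colimits follows from the compactness of finite motivic spectra combined with \textbf{Corollary \ref{cor:filtered_colimits_in_spherical_sheaves_on_an_additive_site_computed_levelwise}}. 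Since both source and target are stable and presentable, all cocontinuous functors appearing here are automatically exact.

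For the counit $L\Upsilon X \to X$, on $X = M \in \spectramglfp$ the module $MGL_{*,*}M$ is freely generated in Chow degree zero and $MGL_{*,*}$ is itself Chow non-negative by \textbf{Lemma \ref{lemma:mgl_concentrated_in_nonnegative_chow_degree}}, so \textbf{Lemma \ref{lemma:mgl_chow_connective_motivic_spectrum_a_suspension_of_a_sheaf_of_spaces}} gives $\Upsilon M \simeq \sigmainfty y(M)$, whence $L\Upsilon M \simeq M$. To extend this to all of $\cspectra$ I would argue in two steps. First, since $L\Upsilon$ and the identity are both exact cocontinuous endofunctors, they agree on every (de)suspension of an object of $\spectramglfp$; in particular, because $S^{2k, k} \in \spectramglfp$ for every $k \in \mathbb{Z}$ and $S^{a, b} \simeq \Sigma^{a - 2b} S^{2b, b}$, the counit is an equivalence on every bigraded sphere. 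Since the bigraded spheres generate $\cspectra$ under colimits by definition, the counit is an equivalence everywhere. The unit $F \to \Upsilon L F$ is handled symmetrically: on $F = \sigmainfty y(M)$ it unwinds to $\sigmainfty y(M) \to \Upsilon M \simeq \sigmainfty y(M)$, and the representables $\sigmainfty y(M)$ generate $Sh_\Sigma^\spectra(\spectramglfp)$ as a stable presentable $\infty$-category, so the unit extends by exact cocontinuity to all sheaves.

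The main point that requires actual argument is the construction of $L$, but as explained this is a formal consequence of \textbf{Lemma \ref{lemma:cellular_motivic_spectrum_defines_a_sheaf_on_mglfp_spectra}}. The key conceptual content of the remaining argument is the observation that, although $\spectramglfp$ contains only the Chow-degree-zero spheres among the bigraded spheres, the stability of $\cspectra$ automatically accounts for all the other bigraded spheres via (de)suspensions, which any exact cocontinuous functor must preserve.
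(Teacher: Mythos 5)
Your proposal is correct and leans on the same two key inputs as the paper: the identification $\Upsilon M \simeq \sigmainfty y(M)$ for $M \in \spectramglfp$ from \textbf{Lemma~\ref{lemma:mgl_chow_connective_motivic_spectrum_a_suspension_of_a_sheaf_of_spaces}}, and the observation that $S^{2k,k} \in \spectramglfp$ so that (de)suspensions of finite $MGL$-projectives generate $\cspectra$ under colimits. Where your route differs is in its packaging: you first build the left adjoint $L$ explicitly via \textbf{Remark~\ref{rem:universal_property_of_spherical_sheaves_of_spectra}} and \textbf{Lemma~\ref{lemma:cellular_motivic_spectrum_defines_a_sheaf_on_mglfp_spectra}}, then check that the unit and counit are equivalences on generators and propagate by exactness and cocontinuity, whereas the paper never names $\Upsilon^{-1}$ during the proof but directly verifies full faithfulness (fix the target, reduce the source to a representable, Yoneda) and essential surjectivity. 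The two are formally equivalent ways of establishing an adjoint equivalence, and your explicit construction of $L$ has the small added benefit of making the symmetric monoidal structure on $\Upsilon^{-1}$ apparent for free (cf.\ \textbf{Remark~\ref{rem:equivalence_between_cspectra_and_spherical_sheaves_is_symmetric_monoidal}}); the paper's route is marginally shorter since it does not need to unwind the universal property. One small point worth making explicit in your write-up: checking the counit at each $M \in \spectramglfp$ gives equivalences at objects, but what you are really propagating is that the full subcategory of objects at which the counit is an equivalence is closed under (de)suspensions and colimits because both the identity and $L\Upsilon$ are exact cocontinuous endofunctors — your phrasing "they agree on" these objects slightly understates this, though the surrounding sentences make the intent clear.
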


\begin{proof}
Notice that $\Upsilon$ is an exact functor between stable $\infty$-categories, hence it preserves finite colimits. We deduce that to show that it is in fact cocontinuous, it is enough to check that it commutes with filtered colimits. However, since any $M \in \spectramglfp$ is a finite motivic spectrum, it is in particular a compact object of $\cspectra$ and it follows that $\Upsilon$ takes filtered colimits to levelwise filtered colimits. Any levelwise colimit diagram of sheaves is in particular a colimit diagram of sheaves, this establishes cocontinuity. 

We now show that $\Upsilon$ is fully faithful; that is, that for any $X, Y \in \cspectra$ the induced map

\begin{center}
$\map(X, Y) \rightarrow \map(\Upsilon X, \Upsilon Y)$
\end{center}
is an equivalence of spaces. By the standard cocontinuity argument of fixing $Y$ and letting $X$ vary we deduce that the latter can be assumed to be a finite $\MGL$-projective motivic spectrum. To see this, notice that $S^{2k, k} \in \spectramglfp$, so that the topological (de)suspensions of objects of $\spectramglfp$ generate all of $\cspectra$ under colimits. 

To prove the claim for $M \in \spectramglfp$, we need some form of Yoneda lemma, which is a statement about sheaves of spaces, rather than spectra. To reduce to that case, we claim that we have an equivalence $\Upsilon M \simeq \Sigma^{\infty}_{+} y(M)$, by \cref{lemma:mgl_chow_connective_motivic_spectrum_a_suspension_of_a_sheaf_of_spaces}, it is enough to show that $\MGL_{*, *} M$ vanishes in negative Chow degree. Since $\MGL_{*, *} M$ is assumed to be free on generators in Chow degree zero, this is immediate from the case of $\MGL_{*, *}$, which was \cref{lemma:mgl_concentrated_in_nonnegative_chow_degree}. We now rewrite

\begin{center}
$\map(\Upsilon M, \Upsilon X) \simeq \map(\Sigma^{\infty}_{+} y(M), \Upsilon Y) \simeq \map(y(M), \Omega^{\infty} \Upsilon Y)$
\end{center}
and further 

\begin{center}
$\map(y(M), \Omega^{\infty} \Upsilon Y) \simeq \map(y(M), y(Y)) \simeq \map(M, Y)$,
\end{center}
where the last line is the Yoneda lemma. This establishes that $\Upsilon$ is fully faithful.

To see that $\Upsilon$ is essentially surjective, notice that the essential image is closed under suspensions and colimits. Since we've verified that it contains $\Upsilon M \simeq \Sigma^{\infty}_{+} \Omega^{\infty} \Upsilon M \simeq \Sigma^{\infty}_{+} y(M)$, it follows that it must be all of $Sh_{\Sigma}^{\spectra}(\spectramglfp)$. 
\end{proof}

\begin{cor}
\label{cor:mgl_induced_t_structure_on_cellular_motivic_spectra}
There exists a right complete $t$-structure on $\cspectra$ in which a cellular motivic spectrum $X$ is connective if and only if $\MGL_{*, *} X$ is concentrated in non-negative Chow degrees. Moreover, there is an equivalence of categories $\cspectra^{\heartsuit} \simeq \ComodMU^{ev}$ between the heart of $\cspectra$ and the category of even $\MU_{*}\MU$-comodules.
\end{cor}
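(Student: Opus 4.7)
The plan is to transport everything through the equivalence $\Upsilon: \cspectra \xrightarrow{\sim} Sh_{\Sigma}^{\spectra}(\spectramglfp)$ established in \textbf{Theorem \ref{thm:cellular_motivic_category_as_spherical_sheaves}}. Since $\Upsilon$ is a cocontinuous equivalence of stable presentable $\infty$-categories, any $t$-structure on the sheaf side transports uniquely to one on $\cspectra$ with all the same formal properties.

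First, I would invoke \textbf{Proposition \ref{prop:tstructure_on_spherical_sheaves_of_spectra}} applied to the excellent $\infty$-site $\spectramglfp$: this equips $Sh_{\Sigma}^{\spectra}(\spectramglfp)$ with a right complete $t$-structure compatible with filtered colimits, whose heart is naturally identified with $Sh_{\Sigma}^{\sets}(\spectramglfp)$. Transporting via $\Upsilon$ immediately gives a right complete $t$-structure on $\cspectra$ that is compatible with filtered colimits. For the heart, I chain this identification with \textbf{Corollary \ref{cor:spherical_sheaves_of_sets_on_spectramglfp_are_even_mumu_comodules}}, which provides the equivalence $Sh_{\Sigma}^{\sets}(\spectramglfp) \simeq \ComodMU^{ev}$, yielding the desired $\cspectra^{\heartsuit} \simeq \ComodMU^{ev}$.

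For the homological criterion of connectivity, I would use that under any right complete $t$-structure a spectrum $X$ is connective if and only if $\pi_{k}^{\heartsuit}(\Upsilon X) = 0$ for all $k < 0$. The key input is then \textbf{Lemma \ref{lemma:homotopy_groups_of_yoneda_embedding_of_motivic_spectra_as_an_mumu_comodule}}, which identifies $\pi_{k}^{\heartsuit}(\Upsilon X)$ with the Chow-degree-$k$ part $(MGL_{*, *} X)_{k}$ (as an even $MU_{*}MU$-comodule, though for the characterization only the underlying graded abelian group is needed). The vanishing of all negative-Chow-degree pieces of $MGL_{*, *} X$ is exactly the stated condition.

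There is essentially no obstacle here: the corollary is a direct compilation of prior results, and the only point requiring a modicum of care is that the $t$-structure one produces on $\cspectra$ is genuinely the right-completed, filtered-colimit-compatible one --- but this is automatic because $\Upsilon$ is an equivalence of presentable stable $\infty$-categories and both properties are preserved by such equivalences.
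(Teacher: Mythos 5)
Your proposal is correct and follows essentially the same route as the paper's proof: transport the $t$-structure from $Sh_{\Sigma}^{\spectra}(\spectramglfp)$ via the equivalence of \textbf{Theorem \ref{thm:cellular_motivic_category_as_spherical_sheaves}}, invoke \textbf{Proposition \ref{prop:tstructure_on_spherical_sheaves_of_spectra}} and \textbf{Corollary \ref{cor:spherical_sheaves_of_sets_on_spectramglfp_are_even_mumu_comodules}} for its existence and heart, and use \textbf{Lemma \ref{lemma:homotopy_groups_of_yoneda_embedding_of_motivic_spectra_as_an_mumu_comodule}} for the connectivity criterion. Nothing to change.
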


\begin{proof}
A $t$-structure with these properties exists on $Sh_{\Sigma}^{\spectra}(\spectramglfp)$ by \cref{prop:tstructure_on_spherical_sheaves_of_spectra},  \cref{cor:spherical_sheaves_of_sets_on_spectramglfp_are_even_mumu_comodules} and \cref{thm:cellular_motivic_category_as_spherical_sheaves} shows that we have an equivalence $\cspectra \simeq Sh_{\Sigma}^{\spectra}(\spectramglfp)$. The characterization of connectivity is a consequence of \cref{lemma:homotopy_groups_of_yoneda_embedding_of_motivic_spectra_as_an_mumu_comodule}. 
\end{proof}

The $t$-structure of \cref{cor:mgl_induced_t_structure_on_cellular_motivic_spectra} can be formally extended to the whole $\infty$-category $SH(\mathbb{C})$ of all motivic spectra by keeping the connective part intact; that is, by letting $(SH(\mathbb{C})_{\geq 0}$ denote the $\infty$-category generated by finite $\MGL$-projective spectra. It follows that the coconnective part $(SH(\mathbb{C}))_{\leq 0}$ will be given by those motivic spectra $X$ such that $[M, X]_{k} = 0$ for all $M \in \spectramglfp$ and $k \geq 1$, in particular, whether $X$ is coconnective or not only depends on its cellularization and it is in this sense that the extension is formal. 

\begin{rem}
It is plausible that one can show that the coconnectivity condition in the induced $t$-structure on $SH(\mathbb{C})$ can be checked only on $M \simeq S^{2k, k}$, rather than all finite $\MGL$-projectives, perhaps using arguments similar to that in the proof of \cref{thm:synthetic_spectra_based_on_mu_are_cellular}. 

If that was indeed the case, then $X$ being coconnective in this induced $t$-structure would amount to $\pi_{*, *} X$ being concentrated in non-positive Chow degree. 
\end{rem}

The existence of a $t$-structure on $SH(\mathbb{C})$ closely related to $\MU_{*}\MU$-comodules is perhaps not that surprising, even though the construction of the motivic category does not explicitly mention $\MU$ as a homology theory. This remarkable connection has been known before, for example it is a result of Levine that the spectral sequence obtained by applying Betti realization to the slice tower of $S^{0, 0}$ is, up to suitable reindexing, the classical Adams-Novikov spectral sequence, see \cite{levine2015adams}.

\begin{rem}
\label{rem:equivalence_between_cspectra_and_spherical_sheaves_is_symmetric_monoidal} 
The equivalence of \cref{thm:cellular_motivic_category_as_spherical_sheaves} is symmetric monoidal if we consider $\cspectra$ with its usual tensor product and endow $Sh_{\Sigma}^{\spectra}(\spectramglfp)$ with the Day convolution symmetric monoidal structure induced from $\spectramglfp$. To see this, it is enough to show that the inverse $\Upsilon^{-1}$ admits a structure of a symmetric monoidal functor. 

The inverse $\Upsilon^{-1} \colon Sh_{\Sigma}^{\spectra}(\spectramglfp) \rightarrow \cspectra$ is a cocontinuous functor of stable $\infty$-categories, moreover by \cref{lemma:mgl_chow_connective_motivic_spectrum_a_suspension_of_a_sheaf_of_spaces} we have that $\Upsilon M \simeq \Sigma_{+}^{\infty} y(M)$, hence the composite 

\begin{center}
$\spectramglfp \rightarrow Sh_{\Sigma}(\spectramglfp) \rightarrow Sh_{\Sigma}^{\spectra}(\spectramglfp) \rightarrow \cspectra$
\end{center}
is just the inclusion of finite $\MGL$-projective motivic spectra and so is symmetric monoidal. It follows formally that $\Upsilon^{-1}$ admits a canonical symmetric monoidal structure, see \cref{rem:universal_property_of_spherical_sheaves_of_spectra}. 
\end{rem}

\begin{rem}
\label{rem:hypercompleteness_corresponds_to_mgl_locality_in_cmotivic_cat_as_spherical_sheaves}
Notice that we have proven in \cref{lemma:cellular_motivic_spectrum_defines_a_sheaf_on_mglfp_spectra} that if $X$ is an $\MGL$-local motivic spectrum, then $\Upsilon X$ is a hypercomplete sheaf on $\spectramglfp$. In fact, the converse is true as well, so that the equivalence of \cref{thm:cellular_motivic_category_as_spherical_sheaves} restricts to an equivalence $(\cspectra)_{\MGL} \simeq \hpssheavesofspectra(\spectramglfp)$. To see this, notice that $(\cspectra)_{\MGL} \hookrightarrow \cspectra$ and $\hpssheavesofspectra(\spectramglfp) \hookrightarrow Sh_{\Sigma}^{\spectra}(\spectramglfp)$ are both localizations and by \cref{lemma:homotopy_groups_of_yoneda_embedding_of_motivic_spectra_as_an_mumu_comodule} they are localizations at the same class of maps. 
\end{rem}

\subsection{Homotopy of p-complete finite motivic spectra}

In this section we review some computational results about the structure of the $p$-complete cellular motivic category, these will then form the technical ingredient needed to compare the latter to even synthetic spectra based on $\MU$. 

Nothing here is new, and we only aim to collect references, especially since most of them are only written either at $p = 2$ or at an odd prime, even though the phenomena we discuss hold uniformly. Our main focus will be to derive a theorem of Gheorghe-Isaksen which states that the $p$-complete motivic homotopy groups coincide with topological ones in non-negative Chow degrees. 

We will be working with $p$-complete motivic spectra, so we start by recalling the relevant definitions. If $\ccat$ is a presentable, stable $\infty$-category, we say a map $X \rightarrow Y$ in a \emph{$p$-complete equivalence} if $X / p \rightarrow Y / p$ is an equivalence. We say $X \in \ccat$ is \emph{$p$-complete} if it is local with respect to the class of $p$-complete equivalences and denote the subcategory of $p$-complete objects by $\ccat \pcomplete$. 

One can show that under the assumption of presentability the $\infty$-category $\ccat \pcomplete$ is a localization of $\ccat$, so that there is a localization functor $(-)_{p} \colon \ccat \rightarrow \ccat \pcomplete$ which we call \emph{$p$-completion}. Moreover, for formal reasons we have $X_{p} \simeq \varprojlim X / p^{k}$, see \cite{lurie_spectral_algebraic_geometry}[7.3.2.1].

We now focus on the cellular motivic category $\cspectra$. If $\integralem$ denotes the motivic cohomology, then we have a canonical isomorphism $\integralem_{-1, -1} \simeq \mathbb{C}^{\times}$. The cofibre sequence induced by multiplication by $p$ on $\integralem$ induces a long exact sequence
\[
\ldots \rightarrow \integralem / p _{0, -1} \rightarrow \integralem_{-1, -1}  \rightarrow \integralem_{-1, -1} \rightarrow \ldots,
\]
where the second arrow can be identified with the $p$-th power map on $\mathbb{C}^{\times}$. It follows that a chosen primitive $p$-th root of unity lifts to an element which we denote by $\tau \in \integralem / p_{0, -1}$. By a result of Voevodsky \cite{voevodsky2010motivic}, we have an isomorphism $\integralem / p _{*, *} \simeq \mathbb{Z} / p [\tau]$.

\begin{rem}
Another definition of a $p$-complete motivic spectrum in use is a motivic spectrum local with respect to $\integralem / p$-equivalences, rather than $S^{0, 0} / p$-equivalences. In the cellular setting, these two notions of $p$-completion coincide for so called motivic spectra of finite type, in particular for finite motivic spectra, see \cite{hu2011convergence} and \cite{hu2011remarks}[Lemma 28] at $p = 2$. 
\end{rem}

We now recall a few facts about the motivic Adams-Novikov and motivic Adams spectral sequences. If $X$ is a finite motivic spectrum we have a motivic Adams spectral sequence, see \cite{dugger2010motivic}, of the form

\begin{center}
$\Ext_{\integralem / p _{*, *} \integralem / p} ^{s, t, w} (\integralem / p _{*, *}, \integralem / p _{*, *} X) \Rightarrow \pi_{t-s, w} X_{p}$,
\end{center}
where $\Ext$ is computed over the Hopf algebroid $(\integralem / p _{*, *}, \integralem / p _{*, *} \integralem / p)$. 

\begin{notation}
We will trigrade $\Ext$-terms like above using $(s, t, w)$, where $s$ is the homological degree and $t, w$ are internal. The \emph{Chow degree} associated to such a trigrading is equal to $t - s - 2w$, notice that under this convention the Chow degree $k$ part of $\Ext$ is exactly those elements that converge to Chow degree $k$ homotopy. Moreover, the differential lowers the Chow degree by one, as this is how it acts on the Adams degree $t - s$, while keeping the weight $w$ intact. 
\end{notation}
One can show that in the particular case of $X = S^{0, 0}$, the class of $\tau \in \integralem / p _{*, *} $ is a permanent cycle in this spectral sequence and so descends to a class $\tau \in \pi_{0, -1} S^{0, 0}_{p}$ which we denote with the same letter. 

Let us work in the $p$-complete setting, so that by $\BP$ we will denote the $p$-complete Brown-Peterson spectrum and similarly write $\BP_{*}X := \pi_{*}(\BP \otimes X)_{p}$ for the $p$-complete homology. 

After $p$-completion, as in the topological case, the motivic bordism spectrum splits into a direct sum of the motivic Brown-Peterson spectra $\BPL$. Writing $\BPL_{*, *}(-)$ for $p$-complete homology, one can show that there is an isomorphism of Hopf algebroids

\begin{center}
$(\BPL_{*, *}, \BPL _{*, *} \BPL) \simeq (\BP_{*}, \BP_{*} \BP) \otimes _{\mathbb{Z}_{p}} \mathbb{Z}_{p}[\tau]$,
\end{center}
see \cite{stahn2016motivic}[2.5] at odd primes, \cite{hu2011remarks}[Section 4] at $p = 2$. In the above isomorphism, the copy of $(\BP_{*}, \BP_{*}\BP)$ lies inside of $(\BPL_{*, *}, \BPL_{*, *})$ as the subalgebra of elements of Chow degree zero. In particular, $(\BPL_{*, *}, \BPL_{*, *} \BPL)$ is concentrated in non-negative Chow degrees.

As expected, for finite $X$ one also has a motivic Adams-Novikov spectral sequence of the form 

\begin{center}
$\Ext^{s, t, w} _{\BPL_{*, *} \BPL} (\BPL_{*, *}, \BPL_{*, *} X) \Rightarrow \pi_{t-s, w} X_{p}$
\end{center}
constructed using the $\BPL$-Adams tower in the $p$-complete category. The above description of the Hopf algebroid $\BPL_{*, *} \BPL$ leads to the following description of the $E_{2}$-term. 

\begin{lemma}
\label{lemma:motivic_adams_novikov_as_topological_extended_by_tau}
The motivic Adams-Novikov $E_{2}$-term coincides with the topological one extended by $\tau$; that is, we have 

\begin{center}
$\Ext _{\BPL_{*, *} \BPL} (\BPL_{*, *}, \BPL_{*, *}) \simeq \Ext_{\BP_{*}\BP}(\BP_{*}, \BP_{*}) \otimes _{\mathbb{Z}_{p}} \mathbb{Z}_{p}[\tau]$,
\end{center}
where here $\tau$ has degree $(0, 0, -1)$ and $\Ext_{\BP_{*}\BP}(\BP_{*}, \BP_{*})$ is the subalgebra of elements of weight $w = \frac{1}{2} t$. 
\end{lemma}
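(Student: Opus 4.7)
The plan is to deduce the lemma directly from the given isomorphism of Hopf algebroids
\begin{equation*}
(BPL_{*,*}, BPL_{*,*}BPL) \simeq (BP_*, BP_*BP) \otimes_{\mathbb{Z}} \mathbb{Z}_p[\tau]
\end{equation*}
by a flat base change argument on the cobar complex. First I would recall that for a flat Hopf algebroid $(A, \Gamma)$ the groups $\Ext^{*}_{\Gamma}(A, A)$ can be computed as the cohomology of the reduced cobar complex, whose term in cohomological degree $s$ is the tensor power $\bar{\Gamma}^{\otimes_{A} s}$ of the unit coideal $\bar{\Gamma} = \ker(\Gamma \rightarrow A)$. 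Both $(BP_*, BP_*BP)$ and its base change by $\mathbb{Z}_p[\tau]$ are flat Hopf algebroids, so the cobar description applies in each case.

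Next, because $\mathbb{Z}_p[\tau]$ is flat over $\mathbb{Z}$, the given Hopf algebroid isomorphism induces a decomposition $\overline{BPL_{*,*}BPL} \simeq \overline{BP_*BP} \otimes_{\mathbb{Z}} \mathbb{Z}_p[\tau]$, and, more generally, a termwise isomorphism of the cobar complexes
\begin{equation*}
\overline{BPL_{*,*}BPL}^{\otimes_{BPL_{*,*}} s} \simeq \overline{BP_*BP}^{\otimes_{BP_{*}} s} \otimes_{\mathbb{Z}} \mathbb{Z}_p[\tau].
\end{equation*}
Since $\mathbb{Z}_p[\tau]$ is $\mathbb{Z}$-flat, taking cohomology commutes with this base change, and one reads off
\begin{equation*}
\Ext_{BPL_{*,*}BPL}(BPL_{*,*}, BPL_{*,*}) \simeq \Ext_{BP_*BP}(BP_*, BP_*) \otimes_{\mathbb{Z}} \mathbb{Z}_p[\tau]
\end{equation*}
as trigraded rings.

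Finally, the degree bookkeeping is forced by the bigrading of the Hopf algebroid isomorphism: the class $\tau \in BPL_{0,-1}$ acquires trigrade $(0,0,-1)$ in $\Ext$, while a topological class of internal degree $t$ in $BP_{*}$ or $BP_{*}BP$ corresponds under the identification to a motivic class of bidegree $(t, t/2)$, placing $\Ext_{BP_*BP}(BP_*, BP_*) \otimes_{\mathbb{Z}} \mathbb{Z}_p$ inside the motivic $\Ext$ as precisely the subalgebra of elements of weight $w = \tfrac{1}{2}t$. The argument is essentially formal given the cited isomorphism of Hopf algebroids; the only step requiring any care is the bigrading tracking that yields the weight assignment $w = t/2$, which is the main place a sign or factor could be misplaced.
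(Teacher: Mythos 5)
The paper does not actually give an argument for this lemma: the proof is a bare citation to Hu--Kriz--Ormsby at $p=2$ and to Stahn at odd primes, the same sources cited just above for the Hopf algebroid isomorphism $(BPL_{*,*}, BPL_{*,*}BPL) \simeq (BP_*, BP_*BP) \otimes_{\mathbb{Z}} \mathbb{Z}_p[\tau]$. Your proposal instead derives the lemma from that isomorphism by a flat base change of the reduced cobar complex, and the argument is correct. Both Hopf algebroids are flat, so the cobar complexes do compute $\Ext$; since $\mathbb{Z}_p$ is torsion-free and $\mathbb{Z}_p[\tau]$ is polynomial over it, $\mathbb{Z}_p[\tau]$ is flat over $\mathbb{Z}$, which gives both the termwise identification $\overline{BPL_{*,*}BPL}^{\otimes_{BPL_{*,*}} s} \simeq \overline{BP_*BP}^{\otimes_{BP_*} s} \otimes_{\mathbb{Z}} \mathbb{Z}_p[\tau]$ (via the general rule $(M \otimes R) \otimes_{A \otimes R} (N \otimes R) \cong (M \otimes_A N) \otimes R$) and the commutation of cohomology with $-\otimes_{\mathbb{Z}} \mathbb{Z}_p[\tau]$. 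The degree bookkeeping also matches what the paper records: the copy of $(BP_*, BP_*BP)$ sits in Chow degree zero, which is precisely $w = \tfrac{1}{2}t$. What your route buys is self-containedness — given the cited Hopf algebroid isomorphism, the $\Ext$ statement follows formally rather than being an independent external input — at the mild cost of spelling out a standard piece of homological algebra that the references presumably also perform.
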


\begin{proof}
This is \cite{hu2011remarks}[Thm. 7, Section 4] at $p = 2$ and \cite{stahn2016motivic}[2.8] at odd primes.
\end{proof}

We now describe how the above implies that the $p$-complete homotopy groups of a finite $\MGL$-projective motivic spectrum $M$ coincide in non-negative Chow degrees with the topological homotopy groups. This is a result of Gheorghe-Isaksen which appears in \cite{gheorghe2017structure}[4.5] at $p = 2$ and for the sphere, although the same proof works at odd primes. We recall the argument here for reader's convenience. 

\begin{lemma}
\label{lemma:homotopy_of_cofibre_of_tau_concentrated_in_nonpositive_chow_degree}
Let $M$ be a finite $\MGL$-projective motivic spectrum. Then, the motivic Adams-Novikov $E_{2}$-term $\Ext _{\BPL_{*, *} \BPL} (\BPL_{*, *}, \BPL_{*, *} (M \otimes C\tau))$, and hence $\pi_{*, *} (M \otimes C \tau)$, is concentrated in non-positive Chow degree. An analogous result holds for $M / p$. 
\end{lemma}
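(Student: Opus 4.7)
The plan is to compute the $E_2$-page of the motivic Adams-Novikov spectral sequence for $M \wedge C\tau$ by reducing to a topological $\Ext$-calculation for $Re(M)$, and then to pass to homotopy via convergence, tracking Chow degrees throughout. First I would extend Lemma~\ref{lemma:motivic_adams_novikov_as_topological_extended_by_tau} from the sphere to an arbitrary $M \in \spectramglfp$. Since $MGL_{*,*}M$ is free, finitely generated in Chow degree zero, the Quillen-idempotent splitting of $MGL\pcomplete$ into copies of $BPL$ yields $BPL \wedge M \simeq \bigoplus \Sigma^{2a_i, a_i} BPL$, and hence an isomorphism $BPL_{*,*}M \simeq BPL_{*,*} \otimes_{BP_{*}} BP_{*}Re(M)$ of $BPL_{*,*}BPL$-comodules. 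Using the Hopf algebroid identification $(BPL_{*,*}, BPL_{*,*} BPL) \simeq (BP_{*}, BP_{*}BP) \otimes_{\mathbb{Z}_p} \mathbb{Z}_{p}[\tau]$ together with a standard change-of-rings argument, I would then deduce
\[
\Ext^{s,t,w}_{BPL_{*,*}BPL}(BPL_{*,*}, BPL_{*,*}M) \simeq \Ext^{s,t}_{BP_{*}BP}(BP_{*}, BP_{*}Re(M)) \otimes_{\mathbb{Z}_{p}} \mathbb{Z}_{p}[\tau],
\]
with the topological factor sitting at weight $w = t/2$ and $\tau$ of tridegree $(0,0,-1)$.

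Next I would exploit the cofibre sequence $\Sigma^{0,-1} M \to M \to M \wedge C\tau$. On $BPL$-homology the first map is multiplication by $\tau$, which acts injectively on $BPL_{*,*}M$ since that module is free over $\mathbb{Z}_p[\tau]$; hence the sequence is short exact on homology and identifies $BPL_{*,*}(M \wedge C\tau)$ with $BP_{*}Re(M)$. The associated long exact $\Ext$-sequence then collapses into short exact sequences, because $\tau$ also acts injectively on the $\Ext$-group computed above, yielding
\[
\Ext^{s,t,w}_{BPL_{*,*}BPL}(BPL_{*,*}, BPL_{*,*}(M \wedge C\tau)) \simeq \Ext^{s,t}_{BP_{*}BP}(BP_{*}, BP_{*}Re(M)),
\]
concentrated at $w = t/2$, i.e.\ in Chow degree $t - s - 2w = -s \leq 0$.

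To conclude, I would invoke convergence of the motivic Adams-Novikov spectral sequence for the finite motivic spectrum $M \wedge C\tau$. Since $E_\infty$ is a subquotient of $E_2$, it is also concentrated in non-positive Chow degree, and because the associated filtration on $\pi_{t-s,w}((M \wedge C\tau)\pcomplete)$ groups together terms of the same Chow degree $t - s - 2w$, the claim on homotopy follows. The case of $M/p$ proceeds identically after replacing $BPL_{*,*}M$ by $BPL_{*,*}(M/p) \simeq BPL_{*,*}M/p$, which is still free over $\mathbb{Z}/p[\tau]$ on generators in Chow degree zero.

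I expect the main obstacle to be the first step: establishing the comodule-level identification $BPL_{*,*}M \simeq BPL_{*,*} \otimes_{BP_{*}} BP_{*}Re(M)$ precisely enough for the change-of-rings isomorphism for $\Ext$ to be rigorous, and keeping track of the trigradings so that the Chow-degree bookkeeping in the remaining steps becomes automatic. Once this reduction to topology is in place, the $\tau$-Bockstein manipulation of step two and the spectral sequence argument of step three are essentially formal.
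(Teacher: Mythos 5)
Your proposal is correct and follows the same route as the paper: exploit the injectivity of $\tau$ on $BPL_{*,*}M$ (coming from freeness over $BPL_{*,*}$ on Chow degree zero generators) to get a short exact sequence on $BPL$-homology, pass to the long exact sequence of $\Ext$-groups, deduce concentration in degrees $w = t/2$ from the description of Lemma~\ref{lemma:motivic_adams_novikov_as_topological_extended_by_tau}, and conclude via the Chow degree formula $t-s-2w = -s \leq 0$ together with convergence. The only difference is cosmetic: you spell out the change-of-rings identification of the $E_2$-term for a general $M$ via $BPL_{*,*}M \simeq BPL_{*,*}\otimes_{BP_*}BP_*Re(M)$, whereas the paper compresses this into the phrase ``as this is true when $M = S^{0,0}$,'' relying on freeness to reduce to shifts of the sphere.
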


\begin{proof}
Notice that since $M$ is finite $\MGL$-projective, $\BPL_{*, *} M$ is a finite free $\BPL_{*, *}$-module generated in Chow degree zero. In particular $\tau$ acts injectively and we get a short exact sequence 

\begin{center}
$0 \rightarrow \BPL_{*, *} M \rightarrow \BPL_{*, *} M \rightarrow \BPL_{*, *} (M \otimes C\tau) \rightarrow 0$.
\end{center}
This induces a long exact sequence in $\Ext$ and from the explicit description of \cref{lemma:motivic_adams_novikov_as_topological_extended_by_tau} we deduce that $\Ext _{\BPL_{*, *} \BPL} (\BPL_{*, *}, \BPL_{*, *} (M \otimes C\tau))$ is concentrated in degrees $w = \frac{1}{2} t$, as this is true when $M = S^{0, 0}$. Since the Chow degree is $t - s - 2w$ and the cohomological degree $s$ is always non-negative, we deduce that the relevant $\Ext$-term is concentrated in non-positive Chow degree, as we wanted. The statement about the homotopy groups is a consequence of that and the motivic Adams-Novikov spectral sequence. The proof for $M / p$ is the same. 
\end{proof}

\begin{rem}
In fact, one checks easily that for degree reasons the motivic Adams-Novikov spectral sequence 

\begin{center}
$\Ext _{\BPL_{*, *} \BPL} (\BPL_{*, *}, \BPL_{*, *} (M \otimes C\tau)) \Rightarrow \pi_{*, *} (M \otimes C \tau)$
\end{center}
necessarily collapses. In particular, $\pi_{*, *} C \tau$ coincides with a suitably regraded classical Adams-Novikov $E_{2}$-term, an observation that has sparked the initial interest in this motivic spectrum.
\end{rem}

\begin{thm}[Gheorghe-Isaksen, \cite{gheorghe2017structure}]
\label{thm:gheorghe_isaksen_p_complete_homotopy_in_nonneg_chow_degree_is_topological}
Let $M$ be a finite $\MGL$-projective spectrum. Then, the natural map $\pi_{t, w}(M_{p}) \rightarrow \pi_{t}(Re(M)_{p})$ is an isomorphism in non-negative Chow degrees.
\end{thm}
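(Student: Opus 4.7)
The proof exploits the cofiber sequence $\Sigma^{0,-1} M_p \xrightarrow{\tau} M_p \to M_p \wedge C\tau$ together with Lemma \ref{lemma:homotopy_of_cofibre_of_tau_concentrated_in_nonpositive_chow_degree}. That lemma, stated there for $M = S^{0,0}$, extends to arbitrary finite $MGL$-projective $M$ by the same motivic Adams--Novikov argument: since $BPL_{*,*}M$ is finite free on generators in Chow degree $0$, the isomorphism $BPL_{*,*}M \cong BP_{*}Re(M) \otimes_{\mathbb{Z}} \mathbb{Z}_{p}[\tau]$ combined with Lemma \ref{lemma:motivic_adams_novikov_as_topological_extended_by_tau} shows that $\Ext_{BPL_{*,*}BPL}(BPL_{*,*}, BPL_{*,*}(M\wedge C\tau))$, and hence $\pi_{*,*}(M_p \wedge C\tau)$, is concentrated in non-positive Chow degree $t - 2w \leq 0$.

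With this vanishing in hand, the long exact sequence
\[
\pi_{t+1,w}(M_p \wedge C\tau) \to \pi_{t,w+1}(M_p) \xrightarrow{\tau} \pi_{t,w}(M_p) \to \pi_{t,w}(M_p \wedge C\tau)
\]
forces $\tau \colon \pi_{t,w+1}(M_p) \to \pi_{t,w}(M_p)$ to be an isomorphism whenever the target lies in strictly positive Chow degree, i.e.\ $t - 2w \geq 1$. Given $(t,w)$ with $t - 2w \geq 0$, each successive map $\tau \colon \pi_{t,w-k}(M_p) \to \pi_{t,w-k-1}(M_p)$ for $k \geq 0$ has target of Chow degree $t - 2w + 2(k+1) \geq 2$, so is an isomorphism for the same reason. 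Passing to the sequential colimit therefore yields
\[
\pi_{t,w}(M_p) \xrightarrow{\ \sim\ } \varinjlim_{k} \pi_{t,w-k}(M_p) \;=\; \pi_{t,w}(\tau^{-1} M_p).
\]

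The final step is to identify $\pi_{t,w}(\tau^{-1}M_p)$ with $\pi_{t}(Re(M)_p)$. Betti realization is a symmetric monoidal left adjoint sending $\tau$ to the identity of $S^0$, so it factors canonically through the $\tau$-inversion localization of $(\cspectra)_p$. I would then invoke Levine's rigidity theorem \cite{levine2014comparison}, which identifies $p$-complete spectra with the essential image of the constant motivic spectrum functor; together with cellularity this shows that the induced functor $\tau^{-1}(\cspectra)_p \to \spectra_p$ is an equivalence. Composing the three isomorphisms above gives the desired comparison $\pi_{t,w}(M_p) \cong \pi_{t}(Re(M)_p)$ for $t - 2w \geq 0$.

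The main obstacle is the third step: identifying $p$-complete $\tau$-inverted motivic homotopy with $p$-complete topological homotopy. The first two steps are essentially formal consequences of Lemma \ref{lemma:homotopy_of_cofibre_of_tau_concentrated_in_nonpositive_chow_degree} and elementary bigraded bookkeeping with the cofiber sequence for $\tau$; by contrast, the last identification requires genuinely motivic input (Levine's theorem, or equivalently a term-by-term comparison of the motivic and classical Adams--Novikov spectral sequences via Lemma \ref{lemma:motivic_adams_novikov_as_topological_extended_by_tau}), and it is there that the substance of the theorem resides.
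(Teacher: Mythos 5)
Your first two steps—reducing the question via the cofibre sequence for $\tau$ and \textbf{Lemma~\ref{lemma:homotopy_of_cofibre_of_tau_concentrated_in_nonpositive_chow_degree}} to the claim that $\tau$ acts invertibly on $\pi_{t,w}(M_p)$ in non-negative Chow degree—are correct and essentially duplicate the first half of the paper's proof (also: the lemma is already stated in the paper for arbitrary finite $MGL$-projective $M$, not just $S^{0,0}$, so no extension is needed). Your third step has a genuine gap.

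You assert that Levine's theorem, together with cellularity, shows that $\tau^{-1}(\cspectra)\pcomplete \to \spectra\pcomplete$ is an equivalence, and then deduce $\pi_{t,w}(\tau^{-1}M_p) \cong \pi_t(Re(M)_p)$. But Levine's theorem only says that the constant spectrum functor $c\colon\spectra \to \cspectra$ is fully faithful; it says nothing a priori about what happens after inverting $\tau$. Full faithfulness of $c$ does not persist after post-composing with the localization $\cspectra \to \tau^{-1}\cspectra$, because $\tau$ is not already invertible on the image of $c$ (indeed $\tau\colon S^{0,-1}\to S^{0,0}$ is not an equivalence), so one must compute $\map(c(X),\varinjlim_k \Sigma^{0,k}c(Y))$, and the suspensions $\Sigma^{0,k}c(Y)$ for $k\ne 0$ are not themselves in the essential image of $c$. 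The equivalence $\tau^{-1}(\cspectra)\pcomplete \simeq \spectra\pcomplete$ is in fact a \emph{consequence} of the Gheorghe--Isaksen comparison you are trying to prove (or of closely related results), not an independent input deducible from Levine's rigidity; invoking it here is essentially circular.

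The paper's proof circumvents this: having established that in non-negative Chow degree the groups $\pi_{t,w}(M_p)$ depend only on $t$, it observes that it suffices to exhibit \emph{some} $k\ge 0$ for which the comparison is an isomorphism in Chow degree $\ge k$. This weaker condition is thick in $M$, so one reduces to motivic spheres, indeed to $S^{0,0}$; and there Levine's theorem is applied only in weight $w=0$ (together with a vanishing argument for $t<0$), which is precisely the regime where Levine's full faithfulness gives a direct comparison without any $\tau$-inversion. That flexibility in the choice of Chow-degree bound, plus thickness, is what lets the proof avoid the stronger and harder-to-access categorical equivalence you are reaching for.
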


\begin{proof}
We have a long exact sequence

\begin{center}
$\ldots \pi_{t+1, w-1} (M \otimes C \tau) \rightarrow \pi_{t, w} M_{p} \rightarrow \pi_{t, w-1} M_{p} \rightarrow \pi_{t, w-1} (M \otimes C\tau) \rightarrow \ldots$
\end{center}
of homotopy groups. Notice that when $t, w$ define a non-negative Chow degree, $\pi_{t+1, w-1} (M \otimes C \tau)$ and $\pi_{t, w-1} (M \otimes C\tau)$ are in positive Chow degree and hence vanish by \cref{lemma:homotopy_of_cofibre_of_tau_concentrated_in_nonpositive_chow_degree}. We deduce that in non-negative Chow degrees, $\pi_{t, w} M_{p}$ only depends on the topological degree $t$, but not on the weight, the isomorphism established by multiplication by $\tau$.

By the above, it is enough to show that for any finite $\MGL$-projective $M$, the natural map $\pi_{t, w}(M_{p}) \rightarrow \pi_{t}(Re(M)_{p})$ is an isomorphism in Chow degrees larger or equal to some $k \geq 0$. Indeed, if that is the case, it must be an isomorphism in all non-negative Chow degrees, as we verified above the homotopy groups of $M_{p}$ don't depend on weight in that range. In other words, for a finite $\MGL$-projective $M$ the existence of such $k$ already implies that one can take $k = 0$. 

We claim a $k \geq 0$ such that $\pi_{t, w}(M_{p}) \rightarrow \pi_{t}(Re(M)_{p})$ is an isomorphism in Chow degrees larger than equal to $k$ exists for any finite motivic spectrum. Indeed, the subcategory of finite motivic spectra that satisfy this condition is clearly thick and so it is enough to verify that the motivic spheres are in this category. In fact, one sees that it is enough to do $S^{0, 0}$, where we will show that $k = 0$ is enough, as this implies that $k = 2b - a$ is good enough for $S^{a, b}$.  

Notice that the map $\pi_{t, w} S^{0, 0}_{p} \rightarrow \pi_{t} S^{0}_{p}$ is an isomorphism when $t < 0 $, as then both groups vanish, the first one by Morel's connectivity. By Levine's theorem, see \cite{levine2014comparison}, it is also an isomorphism when $w = 0$, in fact both of these statements hold even before $p$-completion. It follows that for any $t$, there is some $w$ such that $t - 2w \geq 0$ and $\pi_{t, w} S^{0, 0}_{p} \rightarrow \pi_{t} S^{0}_{p}$ is an isomorphism, as we can take $w = t$ when $t  < 0$ and $w = 0$ otherwise. Since $S^{0, 0}$ is finite $\MGL$-projective and hence we know in non-negative Chow degrees $\pi_{t, w} S^{0,0}_{t, w}$ doesn't depend on the weight, this gives the needed result. 
\end{proof}

\begin{rem}
\label{rem:gheorghe_isaksen_also_holds_mod_p}
An analogous result holds for $M / p$, where $M$ is finite $\MGL$-projective; that is, the natural map $\pi_{t, w} M / p \rightarrow \pi_{t, w} Re(M) / p$ is an isomorphism in non-negative Chow degrees. The above proof goes without change, replacing $S^{0, 0}$ by $S^{0, 0} / p$. This is in fact the only form of the result we will need, as questions about $p$-completion can often be reduced to questions about cofibres of multiplication by $p$. 
\end{rem}

\subsection{A topological model for the $p$-complete cellular motivic category} 

In this section we construct the promised adjunction $\cspectra \rightleftarrows \synspectra_{\MU}^{ev}$ between the cellular motivic category and the $\infty$-category of even synthetic spectra. We then show that this adjunction induces an equivalence on the $\infty$-categories of $p$-complete objects at each prime $p$, this gives a topological model for the $p$-complete cellular motivic category. 

Before proceeding with the construction, let us recall what we have proven about both sides already. The notion of an even synthetic spectrum was introduced in \cref{defin:finite_even_projective_spectrum}, it is a spherical sheaf of spectra on $\spectramufpe$ so that we have $\synspectra_{\MU}^{ev} \simeq Sh_{\Sigma}^{\spectra}(\spectramufpe)$ by definition. 

\begin{rem}
By \cref{thm:synthetic_even_spectra_a_subcategory_of_all_synthetic_spectra}, $\synspectra_{\MU}^{ev}$ can be equivalently described as the subcategory of $\synspectra_{\MU}$ generated under colimits by the suspensions of $\nu M$, where $M$ is finite even $\MU$-projective. In fact, it is generated by the spheres $S^{t, w}$ of even weight $w$, see \cref{rem:even_synthetic_spectra_based_on_mu_generated_by_even_weight_spheres}. This perspective will be largely unneeded to describe the comparison with the motivic category, but it implies that $\synspectra_{\MU}^{ev}$ is closely related to its only slightly larger non-even variant, which we have studied more comprehensively.
\end{rem}

On the motivic side, by \cref{thm:cellular_motivic_category_as_spherical_sheaves} we have an equivalence $\cspectra \simeq Sh_{\Sigma}^{\spectra}(\spectramglfp)$ between the cellular motivic $\infty$-category and spherical sheaves on the site of finite $\MGL$-projective motivic spectra. Thus, both sides of the adjunction we're trying to construct are $\infty$-categories of spherical sheaves of spectra on, respectively, $\spectramufpe$ and $\spectramglfp$. 

The comparison will be induced by the Betti realization functor $Re \colon \spectramglfp \rightarrow \spectramufpe$, which has excellent properties by \cref{thm:betti_realization_induces_a_coconontinous_right_adjoint_and_equivalence_on_sheaves_of_sets}, in particular the covering lifting property. Using implicitly the identifications described above, we make use of the following notation.

\begin{notation}
By $\Theta^{*} \dashv \Theta_{*} \colon \cspectra \rightleftarrows \synspectra_{\MU}^{ev}$ we denote the adjunction between cellular motivic spectra and even synthetic spectra induced by the Betti realization $Re \colon \spectramglfp \rightarrow \spectramufpe$. 
\end{notation}
Using the needed identifications, the left adjoint $\Theta^{*} \colon Sh_{\Sigma}^{\spectra}(\spectramglfp) \rightarrow Sh_{\Sigma}^{\spectra}(\spectramufpe)$ is the unique cocontinuous functor such that $\Theta^{*} \Sigma^{\infty}_{+} y(M) \simeq \Sigma^{\infty}_{+} y(Re(M))$, where $y(M)$ is a representable sheaf of spaces and $M \in \spectramglfp$. On the other hand, $\Theta_{*}$ is simply given by precomposition with $Re \colon \spectramglfp \rightarrow \spectramufpe$. 

The precise statement we show is that the adjunction $\Theta^{*} \dashv \Theta_{*} \colon \cspectra \rightleftarrows \synspectra^{ev}$ induces an adjoint equivalence between the $\infty$-categories of $p$-complete objects. Let us first describe what we mean by an induced adjunction, this works in a general setting. 

Suppose that $L \dashv R \colon \ccat \rightleftarrows \dcat$ is an adjunction between presentable, stable $\infty$-categories. Then, since both $L, R$ are exact, they necessarily preserve $p$-complete equivalences; that is, those maps $X \rightarrow Y$ such that $X / p \rightarrow Y / p$ is an equivalence. It follows formally from the adjunction that $R$ takes $p$-complete objects to $p$-complete objects, in fact, it takes $p$-completions to $p$-completions. This is not necessarily true for $L$, so we instead define $L_{p} = (-)_{p} \circ L$, where $(-)_{p} \colon \dcat \rightarrow \dcat \pcomplete$ is the $p$-completion functor. Then, one verifies easily that $L_{p} \dashv R \colon \ccat \pcomplete \rightleftarrows \dcat \pcomplete$ yields a new adjunction between the $\infty$-categories of $p$-complete objects, this is what we mean by the induced adjunction. 

\begin{thm}
\label{thm:after_p_completion_motivic_category_coincides_with_even_synthetic_spectra}
The adjunction $\Theta^{*} \dashv \Theta_{*} \colon \cspectra \rightleftarrows \synspectra_{\MU}^{ev}$ between the $\infty$-categories of cellular motivic spectra over $\textnormal{Spec}(\mathbb{C})$ and even synthetic spectra based on $\MU$ induces an adjoint equivalence $(\Theta^{*})_{p} \dashv \Theta_{*} \colon (\cspectra) \pcomplete \rightleftarrows (\synspectra_{\MU}^{ev}) \pcomplete$ between the $\infty$-categories of $p$-complete objects at each prime $p$. 
\end{thm}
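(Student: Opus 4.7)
The plan is to establish (a) full faithfulness of $(\Theta_*)_p$, meaning the unit $X \to \Theta^*(\Theta_* X)_p$ is a $p$-complete equivalence for all $X \in \cspectra$, and (b) essential surjectivity of $(\Theta_*)_p$. For setup, under the equivalences $\cspectra \simeq Sh^{\spectra}_{\Sigma}(\spectramglfp)$ of \textbf{Theorem \ref{thm:cellular_motivic_category_as_spherical_sheaves}} and $\synspectra^{ev}_{MU} \simeq Sh^{\spectra}_{\Sigma}(\spectramufpe)$ by definition, the Betti realization is a cover-reflecting morphism of excellent $\infty$-sites admitting a common envelope (\textbf{Theorem \ref{thm:betti_realization_induces_a_coconontinous_right_adjoint_and_equivalence_on_sheaves_of_sets}}), so \textbf{Proposition \ref{prop:additive_morphisms_induce_adjunctions_on_infty_categories_of_sheaves_of_spectra}} produces the adjunction, with $\Theta_*$ cocontinuous and symmetric monoidal satisfying $\Theta_* \Upsilon M \simeq \nu Re(M)$ for $M$ finite $MGL$-projective and $\Theta_* S^{t,w} \simeq S^{t,2w}$, while $\Theta^*$ (precomposition with $Re$) is cocontinuous by \textbf{Proposition \ref{prop:additive_morphisms_induce_adjunctions_where_the_right_adjoint_is_cocontinuous}}, $t$-exact, and preserves $p$-complete objects.

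For (a), since $\Theta^*\Theta_*$ is cocontinuous, the class of $X$ for which the unit is a mod $p$ equivalence is closed under colimits, and by generation it suffices to treat $X = \Upsilon M$ for $M$ finite $MGL$-projective. Evaluating the resulting map of sheaves at the finite $MGL$-projective spheres $S^{2a,a}$ and using the Yoneda identifications, the problem reduces to showing
\[
\pi_{t,w}(M/p) \longrightarrow \pi_{t,2w}(\nu Re(M)/p)
\]
is an isomorphism for all $(t,w)$. In non-negative Chow degree $t - 2w \geq 0$, the Gheorghe--Isaksen theorem (\textbf{Theorem \ref{thm:gheorghe_isaksen_p_complete_homotopy_in_nonneg_chow_degree_is_topological}} with \textbf{Remark \ref{rem:gheorghe_isaksen_also_holds_mod_p}}) identifies the left side with $\pi_t(Re(M)/p)$, and \textbf{Corollary \ref{cor:homotopy_of_synthetic_analogues_in_non_negative_chow_degree}} identifies the right side with the same group; naturality of the comparison map finishes this case.

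The hard part will be negative Chow degree; my plan is to compare the motivic $BPL$-based and synthetic $\nu MU$-based Adams--Novikov spectral sequences. Using that $MGL$ is a filtered colimit of shifts of Thom spectra of finite Grassmanians (each finite $MGL$-projective) and that $\Theta_*$ is cocontinuous and symmetric monoidal, I obtain $\Theta_* \Upsilon MGL \simeq \nu MU$ and analogously $\Theta_* \Upsilon BPL \simeq \nu BP$ after $p$-completion. By exactness and symmetric monoidality, $\Theta_*$ then carries the motivic $BPL$-Adams tower of $M/p$ termwise to the corresponding synthetic tower over $\nu Re(M)/p$, inducing a comparison of spectral sequences. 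On $E_2$-terms both compute $\Ext_{BP_*BP}(BP_*, BP_*Re(M)/p) \otimes \mathbb{F}_p[\tau]$---motivically by \textbf{Lemma \ref{lemma:motivic_adams_novikov_as_topological_extended_by_tau}}, synthetically via the identification $\nu MU \otimes \nu Re(M) \simeq \nu(MU \wedge Re(M))$ (\textbf{Lemma \ref{lemma:tensoring_with_a_filtered_colimit_of_finite_projectives_commutes_with_synthetic_analogue_construction}}) together with \textbf{Proposition \ref{prop:homotopy_of_synthetic_analogues_of_homotopy_e_modules}}---with the motivic $\tau$ matching the synthetic $\tau^2$ under $\Theta_* S^{t,w} \simeq S^{t,2w}$, so $\Theta_*$ acts as the identity on $E_2$. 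Strong convergence of both spectral sequences for $M$ finite $MGL$-projective then yields an isomorphism on abutments, completing (a).

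For (b), \textbf{Remark \ref{rem:even_synthetic_spectra_based_on_mu_generated_by_even_weight_spheres}} shows that $\synspectra^{ev}_{MU}$ is generated under colimits by the bigraded spheres $S^{t,w}$ with $w$ even, and every such sphere $S^{t,2k}$ equals $\Theta_* S^{t,k}_{\mathrm{mot}}$. Hence every generator of $(\synspectra^{ev}_{MU})_p$ lies in the essential image of $(\Theta_*)_p$, and since this image remains closed under colimits after $p$-completion, $(\Theta_*)_p$ is essentially surjective, finishing the proof.
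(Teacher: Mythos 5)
Your reduction to $X = \Upsilon M$ with $M$ finite $MGL$-projective and your essential surjectivity argument match the paper's; the difference, and the gap, is in how you then handle full faithfulness. You evaluate the sheaf map at $S^{2a,a}$ and thus insist on proving $\pi_{t,w}(M/p) \to \pi_{t,2w}(\nu Re(M)/p)$ is an isomorphism for \emph{all} $(t,w)$, which drives you into a spectral sequence comparison to handle negative Chow degree. But negative Chow degree never needs to be touched. Using \textbf{Lemma \ref{lemma:mgl_chow_connective_motivic_spectrum_a_suspension_of_a_sheaf_of_spaces}} and \textbf{Lemma \ref{lemma:fibre_sequences_that_are_short_exaft_sequences_on_homology_preserved_by_synthetic_analogue_construction}}, the paper identifies both sides of the unit with sheafifications of the presheaves $N \mapsto \Map(N, M/p)_{\geq 0}$ and $N \mapsto \Map(Re(N), Re(M)/p)_{\geq 0}$, which have \emph{connective} values. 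The claim is then that the map of presheaves is \emph{already} an equivalence before sheafification, and by replacing $M$ with $DN \wedge M$ this reduces to $\pi_{k,0}(M/p) \simeq \pi_k(Re(M)/p)$ for $k \geq 0$ — exactly what Gheorghe--Isaksen (\textbf{Theorem \ref{thm:gheorghe_isaksen_p_complete_homotopy_in_nonneg_chow_degree_is_topological}}, \textbf{Remark \ref{rem:gheorghe_isaksen_also_holds_mod_p}}) supplies. Since a presheaf equivalence forces a sheaf equivalence, the negative Chow degree statement comes for free and never needs independent verification.

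Your alternative route, as written, has concrete gaps. You would need convergence of the synthetic $\nu BP$-based Adams--Novikov spectral sequence computing $\pi_{*,*}(\nu Re(M)/p)$, but the paper establishes nothing like $\nu BP$-nilpotent completeness of synthetic analogues; the synthetic Adams spectral sequence is only sketched in \textbf{Remark \ref{rem:ctau_philosophy}} ``under technical conditions similar to the usual topological ones''. You would also need to check that $\Theta_*$ — being a left adjoint, which does not \emph{a priori} preserve the limits defining nilpotent completions — sends the motivic tower to the synthetic tower compatibly on abutments and not merely termwise. Patching these would cost substantially more work than the presheaf-level argument that the paper actually uses.
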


\begin{proof}
We first verify that the unit $X_{p} \rightarrow \Theta_{*} (\Theta^{*})_{p} X_{p}$ is an equivalence for any $X_{p} \in (\cspectra) \pcomplete$. Since both $\Theta^{*}, \Theta_{*}$ are exact and so preserve $p$-equivalences, it is enough to verify that for any $X \in \cspectra$, the unit map $X \rightarrow \Theta_{*} \Theta^{*} X$ is a $p$-equivalence. 

The latter is the same as $X / p \rightarrow \Theta_{*} \Theta^{*} X / p$ being an equivalence. Since all functors here are cocontinuous in $X$, $\Theta_{*}$ by \cref{prop:additive_morphisms_induce_adjunctions_on_infty_categories_of_sheaves_of_spectra}, the class of $X$ for which this holds is closed under colimits and suspensions. We deduce that it is enough to verify that the unit is an equivalence when $X = \Sigma^{\infty}_{+} y(M)$, where $M$ is a finite $\MGL$-projective motivic spectrum. 

We first identify $\sigmainfty y(M) / p$. Using \cref{lemma:mgl_chow_connective_motivic_spectrum_a_suspension_of_a_sheaf_of_spaces} twice, one for each of $M$ and $M / p$, we observe that 

\begin{center}
$(\sigmainfty y(M))  / p \simeq (\Upsilon M) / p \simeq \Upsilon (M / p) \simeq \sigmainfty y(M / p)$.
\end{center}
In other words, we can describe $\sigmainfty y(M) / p$ as the sheafification of the presheaf of spectra defined by $\Map(N, M / p)_{\geq 0}$, where $N$ runs through finite $\MGL$-projective motivic spectra. Here we write $\Map(N, M / p)_{\geq 0}$ instead of $\map(N, M /p)$, the underlying space, to emphasize that this is a presheaf of spectra.

The image of $\sigmainfty y(M) / p$ under the left adjoint $\Theta_{*}$ is $\sigmainfty y(Re(M)) / p$. We claim that the latter is equivalent to $\sigmainfty y(Re(M)/p)$, it is enough to show that the sequence

\begin{center}
$\sigmainfty y(Re(M)) \rightarrow \sigmainfty y(Re(M)) \rightarrow \sigmainfty y(Re(M)/p))$,
\end{center}
where the first map is multiplication by $p$, is cofibre. Since cofibres are the same in even synthetic and synthetic spectra, this is an immediate consequence of \cref{lemma:fibre_sequences_that_are_short_exaft_sequences_on_homology_preserved_by_synthetic_analogue_construction}, as $p$ acts injectively on $\MU_{*}M$. This shows that $\Theta^{*} \sigmainfty y(M) / p \simeq \sigmainfty y(Re(M)/p))$, notice that analogously to the case done above we can describe the latter as the sheaf of spectra associated to the presheaf $\Map(P, Re(M)/p)_{\geq 0}$, where $P$ runs through finite even $\MU$-projective spectra. 

We deduce that $\Theta_{*} \Theta^{*} \sigmainfty y(M)$ is the sheaf of spectra on $N \in \spectramglfp$ associated to the presheaf defined by $\Map(Re(N), Re(M)/p)_{\geq 0}$. The unit map $\sigmainfty y(M) / p \rightarrow \Theta_{*} \Theta^{*} \sigmainfty (M) / p$ of sheaves is obtained as the sheafification of the map of presheaves 

\begin{center}
$\Map(N, M/p)_{\geq 0} \rightarrow \Map(Re(N), Re(M)/p)_{\geq 0}$
\end{center}
induced by the Betti realization. We claim that this map is already an equivalence, this will imply that so must be the unit. 

By using motivic Spanier-Whitehead duality to replace $M$ by $DN \otimes M$, we can assume that $N \simeq S^{0, 0}$, in which case we reduce to showing that Betti realization induces an isomorphism $\pi_{k, 0} M / p \rightarrow \pi_{k} Re(M) / p$ for $k \geq 0$. This is a form of the Gheorghe-Isaksen theorem, which we stated as \cref{thm:gheorghe_isaksen_p_complete_homotopy_in_nonneg_chow_degree_is_topological}, see
\cref{rem:gheorghe_isaksen_also_holds_mod_p}. This ends the proof that the unit of the induced adjunction $(\Theta^{*})_{p} \dashv \Theta_{*} \colon (\cspectra) \pcomplete \rightleftarrows (\synspectra^{ev}) \pcomplete$ is a natural equivalence. 

We're left with verifying that $(\Theta^{*})_{p}$ is essentially surjective. The $\infty$-category $\synspectra_{\MU}^{ev}$ is generated under colimits and suspensions by the even spheres $\sigmainfty y(S^{2k})$, see \cref{rem:even_synthetic_spectra_based_on_mu_generated_by_even_weight_spheres}, it follows that $(\synspectra_{\MU}^{ev}) \pcomplete$ is generated under colimits by the $p$-completions $(\sigmainfty y(S^{2k})) _{p}$. Since $(\Theta^{*})_{p}$ is cocontinuous and we have $(\Theta^{*})_{p} (y(S^{2k, k})) _{p} = (\sigmainfty y(S^{2k}))_{p}$, we are done. 
\end{proof}

\begin{rem}
The left adjoint $\Theta^{*} \colon \cspectra \rightarrow \synspectra_{\MU}^{ev}$ is symmetric monoidal. To see this, notice that the functor $Sh_{\Sigma}^{\spectra}(\spectramglfp) \rightarrow Sh_{\Sigma}^{\spectra}(\spectramufpe)$ admits a unique symmetric monoidal structure with respect to the Day convolution monoidal structures of \cref{cor:symmetric_monoidal_structure_on_different_sheaf_variants} extending the symmetric monoidal structure of the Betti realization $Re \colon \spectramglfp \rightarrow \spectramufpe$. Moreover, the Day convolution symmetric monoidal structure on $Sh_{\Sigma}^{\spectra}(\spectramglfp)$ coincides with the usual one on $\cspectra$ by \cref{rem:equivalence_between_cspectra_and_spherical_sheaves_is_symmetric_monoidal}.

Since the $p$-completion functor is symmetric monoidal, we deduce that so is the induced functor $(\Theta^{*})_{p} \colon (\cspectra) \pcomplete \rightarrow (\synspectra_{\MU}^{ev}) \pcomplete$. It follows that $(\cspectra) \pcomplete \simeq (\synspectra_{\MU}^{ev}) \pcomplete$ are equivalent as symmetric monoidal $\infty$-categories. 
\end{rem}

\begin{rem}
\label{rem:action_of_the_left_adjoint_between_motivic_and_synthetic_on_finite_mgl_projectives}
Chasing through the definitions, we see that the left adjoint $\Theta^{*} \colon \cspectra \rightarrow \synspectra_{\MU}^{ev}$ can be described as the unique cocontinuous functor such that $\Theta^{*}(M) \simeq \nu Re(M)$ for any finite $\MGL$-projective motivic $M$. In particular, it takes spheres to spheres and so by adjunction we have $\pi_{t, w} \Theta^{*} X \simeq \pi_{t, 2w} X$ for any $X \in \synspectra_{\MU}^{ev}$. Here the grading discrepancy, which we have discussed before, comes from the fact that $\Theta_{*} S^{2k, k} \simeq \nu S^{2k} \simeq S^{2k, 2k}$. 
\end{rem}

\begin{rem}
\label{rem:synthetic_eilenberg_maclane_corresponds_to_motivic_one}
Let $M_{\alpha}$ be a filtered diagram of finite $\MGL$-projective motivic spectra such that $\varinjlim M_{\alpha} \simeq \MGL$. Then, since $\Theta^{*} M_{\alpha} \simeq \nu M_{\alpha}$ by \cref{rem:action_of_the_left_adjoint_between_motivic_and_synthetic_on_finite_mgl_projectives}, $\Theta^{*} \MGL \simeq \nu \MU$. In both the motivic and synthetic worlds we can obtain the corresponding mod $p$ Eilenberg-MacLane spectrum by killing the regular sequence $p, a_{1}, \ldots \in \MU_{*} \simeq (\MGL_{*, *})_{0}$, this is a consequence of Hopkins-Morel-Hoyois motivically and of a repeated application of \cref{lemma:fibre_sequences_that_are_short_exaft_sequences_on_homology_preserved_by_synthetic_analogue_construction} synthetically.

In particular, we have $\Theta^{*} \integralem / p \simeq \nu H$, where $\nu H$ is the synthetic Eilenberg-MacLane spectrum whose homotopy we have studied before. One can deduce from this and \cref{rem:action_of_the_left_adjoint_between_motivic_and_synthetic_on_finite_mgl_projectives} that our computation of the synthetic dual Steenrod algebra, namely \cref{thm:the_structure_of_synthetic_dual_steenrod_algebra_at_odd_primes} at $p > 2$ and \cref{thm:the_structure_of_synthetic_dual_steenrod_algebra_at_even_prime} at $p =2$, corresponds to the computation of the motivic one due to Voevodsky, as we have claimed before. 
\end{rem}

\FloatBarrier
\appendix

\section{Sheaves}

\subsection{Grothendieck pretopologies}
In this section we develop the language of Grothendieck pretopologies on $\infty$-categories. As in ordinary category theory, this is not strictly necessary, as one can work exclusively with topologies, but we find it convenient. These simple results are certainly folklore, but they do not seem to be written down in this level of generality, which is why we collect these here. We claim no originality, the definitions and proofs are classical. 

\begin{defin}
Let $\ccat$ be an $\infty$-category. A \emph{Grothendieck pretopology} on $\ccat$ assigns to any $c \in \ccat$ a collection of families of maps $\{ c_{i} \rightarrow c \}$ called \emph{covering families} such that

\begin{enumerate}
\item if $d \rightarrow c$ is an equivalence, then the one-element family $\{ d \rightarrow c \}$ is a covering family,
\item if $\{ c_{i} \rightarrow c \}$ is covering and $d \rightarrow c$ is any map, then the pullbacks $d \times _{c} c_{i}$ exist and $\{ d \times _{c} c_{i} \rightarrow d \}$ is covering and 
\item if $\{ c_{i} \rightarrow c \}$ is covering and so are $\{ c_{i, j} \rightarrow c_{i} \}$, then any family of composites $\{ c_{i, j} \rightarrow c \}$ is covering.
\end{enumerate}
An \emph{$\infty$-site} is an $\infty$-category equipped with a Grothendieck pretopology. 
\end{defin}

\begin{rem}
The first and third conditions taken together imply if $\{ c_{i} \rightarrow c \}$, $\{ c^\prime_{i} \rightarrow c^\prime \}$ are two families of maps such that $c_{i} \simeq c^\prime_{i}$ are homotopic, then one is a covering family if and only if the other is. It follows that the covering families can be really taken to be given by families of maps in the homotopy category. 

However, this does not imply that we have a one-to-one correspondence between Grothendieck pretopologies on $\ccat$ and its homotopy category $h\ccat$, as we have for Grothendieck topologies. The reason is that pullbacks in $\ccat$ in general do not represent pullbacks in the homotopy category. 
\end{rem}

\begin{defin}
Let $\ccat$ be an $\infty$-site and $c \in \ccat$. We say a sieve $T \hookrightarrow y(c)$ is \emph{pretopological} if it is the smallest sieve containing the maps $c_{i} \rightarrow c$ for some covering family $\{ c_{i} \rightarrow c \}$.
\end{defin}

\begin{rem}
\label{rem:description_of_pretopological_sieves}
If $\{ c_{i} \rightarrow c \}$ is a covering family, the pretopological sieve generated by it can be described as the map $\varinjlim \check{C}(\sqcup y(c_{i}) \rightarrow y(c)) \rightarrow y(c)$, where the domain is the colimit of the \v{C}ech nerve of $\bigsqcup y(c_{i}) \rightarrow y(c)$. To see this, notice that $P(\ccat)$ is an $\infty$-topos, so that by \cite[6.2.3.4]{lurie_higher_topos_theory} the colimit of the \v{C}ech nerve is exactly the image of the morphism. 
\end{rem}

\begin{prop}
Let $\ccat$ be an $\infty$-site. Let us say that a sieve $S \hookrightarrow y(c)$ on $c \in \ccat$ is \emph{covering} if $S$ a pretopological sieve; that is, if it contains morphisms $c_{i} \rightarrow c$ for some covering family $\{ c_{i} \rightarrow c \}$. Then, the collection of covering sieves defines a Grothendieck topology on $\ccat$.
\end{prop}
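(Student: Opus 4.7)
The plan is to verify the three Grothendieck topology axioms: maximality, stability, and local character. Throughout, I use the convention from \textbf{Remark \ref{rem:description_of_pretopological_sieves}} that a pretopological sieve is a sieve generated by a covering family in the $\infty$-category of presheaves (via the image factorisation).

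For maximality, I would observe that the identity $1_c : c \to c$ is an equivalence, so by axiom (1) of a Grothendieck pretopology the singleton $\{1_c\}$ is a covering family. The pretopological sieve it generates is exactly $y(c)$, so the maximal sieve is covering. For stability, suppose $S \hookrightarrow y(c)$ contains a pretopological sieve $T$ generated by $\{c_i \to c\}$, and let $f : d \to c$ be arbitrary. By axiom (2) of a pretopology, the pullbacks $d \times_c c_i$ exist and $\{d \times_c c_i \to d\}$ is a covering family; in particular it generates a pretopological sieve $T'$ on $d$. Each projection $d \times_c c_i \to d$ factors through $f : d \to c$ via the composite $d \times_c c_i \to c_i \to c$, which lies in $S$ by the sieve property applied to $c_i \to c \in S$. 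Hence every generator of $T'$ lies in $f^{*}S$, and so $T' \subseteq f^{*}S$, which means $f^{*}S$ is covering.

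The only step requiring genuine care is local character. Suppose $S$ is a sieve on $c$ and $R$ is a covering sieve on $c$ such that $f^{*}S$ is covering for every $f : d \to c$ in $R$. By hypothesis $R$ contains a covering family $\{c_i \to c\}$. For each index $i$, since $c_i \to c$ is in $R$, the pullback sieve $(c_i \to c)^{*}S$ is covering on $c_i$, hence contains some covering family $\{c_{i,j} \to c_i\}$; this is precisely the statement that each composite $c_{i,j} \to c_i \to c$ lies in $S$. Now axiom (3) of a Grothendieck pretopology applied to $\{c_i \to c\}$ and $\{c_{i,j} \to c_i\}$ says the family of composites $\{c_{i,j} \to c\}$ is itself a covering family. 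Since every one of these composites lies in $S$, the pretopological sieve generated by $\{c_{i,j} \to c\}$ is contained in $S$, so $S$ is covering.

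The main obstacle, to the extent there is one, is purely bookkeeping in the local character axiom, where one has to keep track of two layers of covering families and verify that the sieve conditions line up; everything else is an immediate translation from classical category theory, since the axioms of a Grothendieck pretopology were tailored precisely to make this translation work. No $\infty$-categorical subtlety intervenes because the notion of a sieve and the pullback operation on sieves in $P(\ccat)$ only depend on the pullbacks $d \times_c c_i$ existing in $\ccat$, which the pretopology axioms guarantee.
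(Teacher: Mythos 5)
Your proof is correct and follows essentially the same route as the paper: verify the three axioms of a Grothendieck topology (maximality, stability, local character) directly from the corresponding axioms of the pretopology, with the local character axiom being a matter of composing two layers of covering families and checking the sieve conditions. The paper's argument is terser but identical in substance.
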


\begin{proof}
There are three axioms we have to verify. Clearly, for any $c \in \ccat$, $y(c) \hookrightarrow y(c)$ is a covering sieve since it contains the identity of $c$, which is an equivalence and so forms a covering family on its own. 

Now suppose that $S \hookrightarrow y(c)$ is covering and let $f \colon d \rightarrow c$ be arbitrary. Since $S$ is a covering sieve, it contains $c_{i} \rightarrow c$ for some covering family $\{ c_{i} \rightarrow c \}$. Observe that by the second axiom of pretopologies, $d \times _{c} c_{i}$ exist and $\{ d \times _{c} c_{i} \rightarrow d \}$ form a covering family. Yet one easily sees that $d \times _{c} c_{i} \rightarrow c$ are contained in $f^{*}S$, so that the latter is covering too. 

We have one axiom left. Let $S, T \hookrightarrow y(c)$ be sieves. Assume that $T$ is covering and that for every $f \in T$, $f^{*}S$ is covering. By assumption, maps $f_{i} \colon c_{i} \rightarrow c$ for some covering family belong to $T$. Now, since $f_{i}^{*}S$ is covering, it contains morphism $c_{i, j} \rightarrow c_{i}$ belonging to some covering family. This means that the composites $c_{i, j} \rightarrow c$ belong to $S$, which by the third axiom of pretopologies must be then a covering sieve. 
\end{proof}

\begin{defin}
Let $\ccat$ be an $\infty$-site and $\dcat$ an arbitrary $\infty$-category. Then we say a presheaf $X \colon \ccat^{op} \rightarrow \dcat$ is a \emph{sheaf} if it is a sheaf with respect to the topology induced by the given pretopology. 
\end{defin}
We will now derive the usual description of the sheafification functor in terms of the iterated plus construction, which is itself obtained by an appropriate colimit over coverings. As a consequence, we will deduce a characterization of sheaves on an $\infty$-site. 

Recall that if $\ccat$ is an $\infty$-category equipped with the Grothendieck topology, then the sheafification functor $L \colon P(\ccat) \rightarrow Sh(\ccat)$ can be described as the transfinite iteration of the \emph{plus construction} $X \mapsto X^{\dagger}$, where
\[
X^{\dagger}(c) := \varinjlim_{S \in Cov(c)} \ X(S)
\]
with the colimit taken over the poset $Cov(c)$ of covering sieves $S \hookrightarrow y(c)$. Here, to interpret $X(S)$ we implicitly extend $X$ to a presheaf of spaces on all of $P(\ccat)$ in a unique way that takes all colimits to limits. Concretely, we have
\[
X(S) \simeq \varprojlim _{c \in \ccat _{/S}} \ X(c),
\]
here the limit is taken over the $\infty$-category $\ccat_{/S} := \ccat \times_{P(\ccat)} P(\ccat)_{/S}$\footnote{The $\infty$-category $\ccat_{/S}$ is sometimes called the $\infty$-category of elements or the Grothendieck construction. Informally, its objects are pairs $(c, s)$, where $c \in \ccat$ and $s \in S(c)$.}. For details, \cite[\S6.2.2]{lurie_higher_topos_theory}.

\begin{prop}
\label{prop:plus_construction_in_terms_of_pretopology}
Let $\ccat$ be an $\infty$-site. Then, the plus construction with respect to the induced topology can be described as 

\begin{center}
$X^{\dagger}(c) \simeq \varinjlim \ X(T)$,
\end{center}
where $S$ runs through the poset $Cov_{pre}(c)$ of pretopological sieves $T \hookrightarrow y(c)$. 
\end{prop}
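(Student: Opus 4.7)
The plan is to identify $Cov_{pre}(c) \hookrightarrow Cov(c)^{op}$ as a cofinal subposet and then invoke the cofinality theorem for filtered colimits. Recall that the colimit defining $X^{\dagger}(c)$ is taken over covering sieves ordered by reverse inclusion, so that a refinement $T \subset S$ gives a transition map $X(S) \to X(T)$; this indexing $\infty$-category is filtered because the intersection of two covering sieves is again covering.

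First I would show that the subposet $Cov_{pre}(c) \hookrightarrow Cov(c)^{op}$ is itself filtered. Given two pretopological sieves $T_{1}, T_{2} \hookrightarrow y(c)$ generated, respectively, by covering families $\{c_{i} \to c\}$ and $\{d_{j} \to c\}$, I would use axiom (2) of a Grothendieck pretopology to conclude that the pullbacks $\{c_{i} \times_{c} d_{j} \to d_{j}\}$ form a covering family for each fixed $j$, and then axiom (3) to conclude that the composites $\{c_{i} \times_{c} d_{j} \to c\}$ form a covering family. The pretopological sieve $T$ they generate sits inside both $T_{1}$ and $T_{2}$, since each map $c_{i} \times_{c} d_{j} \to c$ factors through $c_{i} \to c$ and through $d_{j} \to c$. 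This provides the required common refinement.

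Next I would verify cofinality. By the very definition, any covering sieve $S \hookrightarrow y(c)$ contains, by construction, the maps $c_{i} \to c$ for some covering family and hence contains the pretopological sieve $T$ generated by this family. So for any $S \in Cov(c)$ there exists $T \in Cov_{pre}(c)$ with $T \subset S$, i.e.\ $T$ lies above $S$ in $Cov(c)^{op}$. Combined with the previous paragraph, this verifies the hypothesis of \cite[4.1.3.1]{lurie_higher_topos_theory}: $Cov_{pre}(c) \hookrightarrow Cov(c)^{op}$ is cofinal.

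Finally, by the cofinality theorem the colimit of the diagram $S \mapsto X(S)$ over $Cov(c)^{op}$ agrees with its restriction to $Cov_{pre}(c)$, which yields the claimed formula. The only mild technical point is the first paragraph, where one must be careful that the pullbacks required by axiom (2) exist in $\ccat$ (which is exactly what the pretopology axiom guarantees) and that the resulting pretopological sieve indeed sits inside both $T_{1}$ and $T_{2}$ as subobjects of $y(c)$; this is routine once one unravels the description of pretopological sieves given in \textbf{Remark \ref{rem:description_of_pretopological_sieves}}.
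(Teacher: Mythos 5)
The proposal is correct and takes essentially the same approach as the paper's proof: exhibit $Cov_{pre}(c)$ as a cofinal subposet of the indexing poset for the plus construction. One small omission: the paper begins by observing that pretopological sieves are stable under pullback (if $T$ is generated by $\{c_i \to c\}$, then $f^*T$ is generated by $\{d \times_c c_i \to d\}$), so that the formula $\varinjlim X(T)$ is contravariantly functorial in $c$ and the asserted equivalence makes sense as one of presheaves; you skip this point. Otherwise you verify cofinality via the filtered-poset criterion (global filteredness plus the existence of a pretopological refinement of each covering sieve) while the paper applies Quillen's Theorem A directly to each slice $Cov_{pre}(c)_{/S}$, checking it is non-empty and cofiltered, but these are equivalent verifications since your common-refinement construction automatically preserves membership in any fixed slice.
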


\begin{proof}
Notice that pretopological sieves are stable under pullback, more precisely if $T \hookrightarrow y(c)$ a sieve generated by a covering family $\{ c_{i} \hookrightarrow c \}$, then $f^{*}T$ is the sieve generated by $\{ d \times _{c} c_{i} \rightarrow d \}$. It follows that the above formula is contravariantly functorial in $c \in \ccat$, to formalize it one follows \cite[6.2.2.9]{lurie_higher_topos_theory} replacing covering sieves by pretopological sieves.

To compare the two constructions, we have to show that the map $\varinjlim \ X(T) \rightarrow \varinjlim \ X(S)$, where $T$ runs through the poset $Cov_{pre}(c)$ of pretopological sieves and $S$ runs through the poset $Cov(c)$ of all sieves, is an equivalence. It is clearly enough to show that $Cov_{pre}(c) \hookrightarrow Cov(c)$ is cofinal. 

By Quillen's Theorem A, see \cite[4.1.3.1]{lurie_higher_topos_theory}, we have to verify that for any $S \in Cov(c)$, the poset $Cov_{pre}(c) _{/S}$ of pretopological sieves $T$ contained in $S$ is weakly contractible; that is, that it's geometric realization is contractible. It is clearly non-empty, so it's enough to show that it is cofiltered. 

Suppose that $T, T^{\prime}$ are two pretopological sieves contained in $S$, say $T$ is the sieve generated by $\{ c_{i} \rightarrow c \}$ and $T^\prime$ by $\{ c^\prime _{j} \rightarrow c \}$. Then clearly the sieve generated by $\{ c_{i} \times _{c} c_{j} \rightarrow c \}$ is pretopological and contained in both of them, and we are done. 
\end{proof}

\begin{cor}
\label{cor:sheaves_as_objects_local_with_respect_to_pretopological_sieves}
Let $\ccat$ be a $\infty$-site and let $X \in P(\ccat)$ be a presheaf. Then $X$ is a sheaf if and only if it is local with respect to inclusions of pretopological sieves.
\end{cor}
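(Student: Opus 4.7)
The forward direction is immediate from the definitions. Every pretopological sieve is, by construction, a covering sieve, so any presheaf $X$ that is local with respect to all inclusions of covering sieves into representables is a fortiori local with respect to inclusions of pretopological sieves.

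For the converse, the plan is to use \textbf{Proposition \ref{prop:plus_construction_in_terms_of_pretopology}} to show that if $X$ is local with respect to all pretopological sieve inclusions, then the unit $X \to X^{\dagger}$ of the plus construction is an equivalence. Recall that by that proposition, $X^{\dagger}(c) \simeq \varinjlim_{T} X(T)$, where $T$ ranges over $\mathrm{Cov}_{pre}(c)$ and the diagram is the covariant one coming from restrictions along inclusions $T^\prime \subseteq T$ of pretopological sieves. By axiom (1) of a Grothendieck pretopology, the identity $\{c \to c\}$ is a covering family, so the full sieve $y(c)$ itself is pretopological and is an initial object of $\mathrm{Cov}_{pre}(c)$ in the relevant ordering. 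Under the hypothesis that every inclusion $T \hookrightarrow y(c)$ with $T$ pretopological is an $X$-equivalence, every structure map in the diagram $T \mapsto X(T)$ is an equivalence, so the diagram is essentially constant with value $X(c)$. Since its indexing category has an initial object, the colimit is computed by that initial value, giving $X^{\dagger}(c) \simeq X(c)$ naturally in $c$.

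It then remains only to deduce that $X$ is a sheaf from the equivalence $X \simeq X^{\dagger}$. This follows from the fact that the sheafification $L: P(\ccat) \to Sh(\ccat)$ is obtained by transfinitely iterating the plus construction: since each iteration $X \to X^{\dagger} \to X^{\dagger\dagger} \to \ldots$ produces equivalences starting from an $X$ with $X \simeq X^\dagger$, the unit $X \to LX$ is an equivalence and $X$ already belongs to $Sh(\ccat)$.

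The only point requiring care is the bookkeeping in the middle paragraph: checking that the inclusion $y(c) \in \mathrm{Cov}_{pre}(c)$ is genuinely initial in the indexing category of Proposition \ref{prop:plus_construction_in_terms_of_pretopology} with the orientation that makes $T \mapsto X(T)$ covariant, and that the map $X \to X^{\dagger}$ thereby identified agrees with the canonical comparison map. The real content of the argument has already been extracted into Proposition \ref{prop:plus_construction_in_terms_of_pretopology}, so this is the main obstacle only in the mild sense of getting the conventions right.
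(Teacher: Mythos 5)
Your proof is correct and follows the same route as the paper: the forward direction is immediate, and the converse shows via Proposition \ref{prop:plus_construction_in_terms_of_pretopology} that the unit $X \to X^{\dagger}$ is an equivalence and then concludes $X \simeq LX$ by transfinite iteration. You merely spell out (correctly, modulo the conventional point about orientation that you flag yourself) the step that the paper leaves as ``by description of the plus construction'': that the diagram $T \mapsto X(T)$ over $\mathrm{Cov}_{pre}(c)$ is essentially constant with contractible index, hence its colimit is $X(c)$.
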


\begin{proof}
Every pretopological sieve is covering, so that clearly any sheaf is local with respect to the inclusions of pretopological sieves. 

Conversely, assume that $X$ is local with respect to inclusions of pretopological sieves. Then, by description of the plus construction of \cref{prop:plus_construction_in_terms_of_pretopology}, the natural map $X \rightarrow X^{\dagger}$ is an equivalence. Since the sheafification functor can be written as the transfinite application of the plus construction, it follows that $X \simeq LX$ so that $X$ is a sheaf. 
\end{proof}

\begin{cor}
\label{cor:characterization_of_sheaves_in_terms_of_pretopology}
Let $\ccat$ be an $\infty$-site and let $\dcat$ be an $\infty$-category with finite products. Then, $X \colon \ccat^{op} \rightarrow \dcat$ is a sheaf if and only if for any covering family $\{ c_{i} \rightarrow c \}$, the \v{C}ech nerve of $\bigsqcup y(c_{i}) \rightarrow y(c)$ is taken by $X$ to a limit diagram. In other words, if for any covering family we have

\begin{center}
$X(c) \simeq \varprojlim \ \prod_{i} X(c_{i}) \rightrightarrows \prod_{i, j} X(c_{i} \times _{c} c_{j}) \triplerightarrow \ldots .$
\end{center}
\end{cor}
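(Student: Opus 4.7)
The strategy is to reduce the claim directly to Corollary \ref{cor:sheaves_as_objects_local_with_respect_to_pretopological_sieves} by unpacking what it means to be local with respect to pretopological sieve inclusions, then use the description of pretopological sieves from Remark \ref{rem:description_of_pretopological_sieves}.

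First, I would reduce to the $\dcat = \spaces$ case. For a general $\dcat$ with finite products, $X \colon \ccat^{op} \to \dcat$ is a sheaf if and only if for every $d \in \dcat$, the space-valued presheaf $\map_{\dcat}(d, X(-))$ is a sheaf; moreover $d$-valued limits of \v{C}ech-type diagrams are detected pointwise in the same way. So it suffices to prove the equivalence when $\dcat = \spaces$.

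Next, by Corollary \ref{cor:sheaves_as_objects_local_with_respect_to_pretopological_sieves}, $X \in P(\ccat)$ is a sheaf if and only if the restriction map $\map_{P(\ccat)}(y(c), X) \to \map_{P(\ccat)}(T, X)$ is an equivalence for every pretopological sieve inclusion $T \hookrightarrow y(c)$. By Remark \ref{rem:description_of_pretopological_sieves}, the pretopological sieve generated by a covering family $\{c_i \to c\}$ is
\[
T \;\simeq\; \varinjlim\;\check{C}\bigl(\textstyle\bigsqcup_i y(c_i) \to y(c)\bigr),
\]
so the left adjoint $\map_{P(\ccat)}(-, X)$ applied to this inclusion yields the map
\[
X(c) \;\longrightarrow\; \varprojlim\;\map_{P(\ccat)}\bigl(\check{C}(\textstyle\bigsqcup_i y(c_i) \to y(c))_\bullet, X\bigr).
\]
The $n$-th term of the \v{C}ech nerve is $\bigsqcup_{(i_0,\ldots,i_n)} y(c_{i_0} \times_c \cdots \times_c c_{i_n})$, and since mapping out of a coproduct into a presheaf is a product and $\map_{P(\ccat)}(y(-), X) \simeq X(-)$ by the Yoneda lemma, this term computes to $\prod X(c_{i_0} \times_c \cdots \times_c c_{i_n})$. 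Putting this together gives exactly the displayed formula in the statement.

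The only real content beyond unwinding definitions is the identification of $T$ with the colimit of the \v{C}ech nerve, but this is already handed to us by Remark \ref{rem:description_of_pretopological_sieves} (which invokes the $\infty$-topos structure of $P(\ccat)$ to identify colimits of \v{C}ech nerves with images). The remaining step — converting the formal limit $\varprojlim X(\text{\v{C}ech nerve levels})$ into the cosimplicial object of products written in the statement — is a routine application of Yoneda and the universal property of coproducts of representables, and this is the step that implicitly uses that $\dcat$ has products indexed by the covering families in question (which is harmless after the reduction to $\spaces$).
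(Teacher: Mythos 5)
Your argument is correct and follows essentially the same path as the paper's proof: reduce to $\dcat = \spaces$ by testing limits against $\map_{\dcat}(d,-)$, invoke Corollary~\ref{cor:sheaves_as_objects_local_with_respect_to_pretopological_sieves} to rephrase the sheaf condition as locality with respect to pretopological sieve inclusions, and then apply Remark~\ref{rem:description_of_pretopological_sieves} to identify the pretopological sieve as the colimit of the \v{C}ech nerve. The paper stops at this point with ``which is exactly the condition above,'' whereas you spell out the final Yoneda/coproduct unwinding explicitly, but the content is the same. (Minor slip: $\map_{P(\ccat)}(-,X)$ is not a left adjoint, it just takes colimits to limits because it is a mapping-space functor in the first variable; this does not affect the argument.)
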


\begin{proof}
Both the sheaf condition and the condition given above are limit conditions and so can be tested by applying $\map(d, -)$ for $d \in \dcat$. It follows that we can reduce to the case of presheaves of spaces. In this case, by \cref{cor:sheaves_as_objects_local_with_respect_to_pretopological_sieves} we know that $X$ is a sheaf if and only if it is local with respect to inclusions $T \hookrightarrow y(c)$ of pretopological sieves, which is exactly the condition above by \cref{rem:description_of_pretopological_sieves}.
\end{proof}

We now make some observations about functoriality of the $\infty$-category of sheaves, introducing the notions of a morphism of $\infty$-sites and of the covering lifting property. 

\begin{defin}
\label{defin:morphism_of_sites}
Let $\ccat, \dcat$ be $\infty$-sites. A \emph{morphism of $\infty$-sites} $f \colon \ccat \rightarrow \dcat$ is a functor which preserves pullbacks along coverings and such that if $\{ c_{i} \rightarrow c \}$ is a covering family in $\ccat$, then $\{ f(c_{i}) \rightarrow f(c) \}$ is a covering family in $\dcat$.
\end{defin}

\begin{prop}
\label{prop:compatible_functors_preserve_sheaves}
Let $f \colon \ccat \rightarrow \dcat$ be a morphism of $\infty$-sites. Then, the precomposition functor $f_{*} \colon P(\dcat) \rightarrow P(\ccat)$ preserves sheaves and is part of an adjunction $f^{*} \dashv f_{*} \colon Sh(\ccat) \rightleftarrows Sh(\dcat)$, where $f^{*} = L \circ \mathrm{Lan}_{f}$ for $\mathrm{Lan}_{f} \colon P(\ccat) \rightarrow P(\dcat)$ the left Kan extension and $L$ the sheafification functor. 
\end{prop}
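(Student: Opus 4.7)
The plan is to prove the two claims separately: first that $f^{*}$ preserves sheaves, and then to deduce the adjunction by combining this with the standard Kan extension adjunction on presheaves.

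The starting point is the classical adjunction $f_{!} \dashv f^{*}: P(\ccat) \leftrightarrows P(\dcat)$ between left Kan extension and precomposition, which exists for any functor between small $\infty$-categories. Assuming $f^{*}$ preserves sheaves, the adjunction on sheaves is then formal: for $X \in Sh(\ccat)$ and $Y \in Sh(\dcat)$ one has
\begin{align*}
\Map_{Sh(\dcat)}(L f_{!} X, Y) &\simeq \Map_{P(\dcat)}(f_{!} X, Y) \\
&\simeq \Map_{P(\ccat)}(X, f^{*} Y) \simeq \Map_{Sh(\ccat)}(X, f^{*} Y),
\end{align*}
where the first step uses that $L$ is left adjoint to the inclusion and that $Y$ is already a sheaf, and the last step uses that $f^{*} Y$ lies in $Sh(\ccat)$. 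Thus the real content is the preservation statement.

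To prove that $f^{*}$ takes sheaves to sheaves, I will use the characterization of sheaves in terms of the pretopology given by \textbf{Corollary \ref{cor:characterization_of_sheaves_in_terms_of_pretopology}}. Let $Y \in Sh(\dcat)$ and let $\{ c_{i} \rightarrow c \}$ be a covering family in $\ccat$. Since $(f^{*} Y)(c^{\prime}) = Y(f(c^{\prime}))$, the Čech diagram for $f^{*} Y$ with respect to $\{ c_{i} \rightarrow c \}$ is obtained by applying $Y$ to the image under $f$ of the iterated pullbacks $c_{i_{1}} \times_{c} \ldots \times_{c} c_{i_{n}}$. By \textbf{Definition \ref{defin:morphism_of_sites}}, the functor $f$ both sends the covering family $\{ c_{i} \rightarrow c \}$ to the covering family $\{ f(c_{i}) \rightarrow f(c) \}$ in $\dcat$ and preserves the relevant pullbacks, so the image of these iterated pullbacks is $f(c_{i_{1}}) \times_{f(c)} \ldots \times_{f(c)} f(c_{i_{n}})$. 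Hence the Čech diagram for $f^{*} Y$ over $\{ c_{i} \rightarrow c \}$ agrees with the Čech diagram for $Y$ over the covering family $\{ f(c_{i}) \rightarrow f(c) \}$, which is a limit diagram because $Y$ is a sheaf on $\dcat$. Applying \textbf{Corollary \ref{cor:characterization_of_sheaves_in_terms_of_pretopology}} again gives that $f^{*} Y$ is a sheaf.

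There is essentially no serious obstacle: the argument is a direct unwinding of definitions once the pretopological characterization of sheaves is in place. The only point requiring attention is checking that the Čech nerves in $\ccat$ and $\dcat$ really do correspond under $f$, which is exactly what the two conditions in the definition of a morphism of $\infty$-sites are designed to guarantee.
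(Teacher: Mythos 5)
Your proof is correct and takes essentially the same approach as the paper: both verify the sheaf condition for $f^{*}Y$ by using the pretopological characterization of sheaves and the facts that $f$ sends covering families to covering families and preserves pullbacks along coverings, and both derive the adjunction formally from $f_{!} \dashv f^{*}$ and the sheafification adjunction. The only cosmetic difference is that you unwind the \v{C}ech nerve levelwise via \textbf{Corollary \ref{cor:characterization_of_sheaves_in_terms_of_pretopology}}, whereas the paper works with the pretopological sieve $S$ directly, noting that $f_{!}S$ is the pretopological sieve on $f(c)$ and using the adjunction $f_{!} \dashv f^{*}$ to transport the locality condition.
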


\begin{proof}
Since $f$ preserves pullbacks along covering morphisms, we see that the left Kan extension $\mathrm{Lan}_{f} \colon P(\ccat) \rightarrow P(\dcat)$ takes the colimit of the \v{C}ech nerve of $\bigsqcup y(c_{i}) \rightarrow y(c)$, which is the pretopological sieve $S$ associated to a covering family $\{ c_{i} \rightarrow c \}$, to the colimit of the \v{C}ech nerve of $\bigsqcup y(f(c_{i})) \rightarrow y(f(c))$, which is the pretopological sieve generated by the covering family $\{ f(c_{i}) \rightarrow f(c) \}$. Then by adjunction $\mathrm{Lan}_{f} \dashv f_{*}$, if $X \in Sh(\dcat)$, then 

\begin{center}
$(f_{*}X)(c) \simeq X(f(c)) \simeq X(\mathrm{Lan}_{f}(S)) \simeq (f_{*}X)(S)$,
\end{center}
so that we see that $X$ is a sheaf again. To see that we have an adjunction $f^{*} \dashv f_{*}$, observe that for $Y \in Sh(\ccat), X \in Sh(\dcat)$, 

\begin{center}
$\map(f^{*}Y, X) \simeq \map(L(\mathrm{Lan}_{f} Y), X) \simeq \map(\mathrm{Lan}_{f} Y, X) \simeq \map(Y, f_{*}X)$,
\end{center}
where the middle equivalence used that $X$ is a sheaf, which is what we wanted to show. 
\end{proof}

\begin{defin}
\label{defin:morphism_of_sites_with_covering_lifting_property}
Let $f \colon \ccat \rightarrow \dcat$ be a morphism of $\infty$-sites. We say that $f$ has the \emph{covering lifting property} if for any $c \in \ccat$ and any covering family $\{ d_{i} \rightarrow f(c) \}$, there is a covering family $\{ c_{j} \rightarrow c \}$ such that for all $j$, $f(c_{j}) \rightarrow f(c)$ factors through one of $d_{i} \rightarrow f(c)$. 
\end{defin}

\begin{prop}
\label{prop:morphisms_of_sites_with_covering_lifting_property_is_geometric}
Let $f \colon \ccat \rightarrow \dcat$ be a morphism of $\infty$-sites with the covering lifting property. Then, the precomposition functor $f_{*} \colon P(\dcat) \rightarrow P(\ccat)$ commutes with the respective sheafification functors. In particular, the restriction $f_{*} \colon Sh(\dcat) \rightarrow Sh(\ccat)$ to sheaf $\infty$-categories is cocontinuous and hence a left adjoint to a geometric morphism of $\infty$-topoi.
\end{prop}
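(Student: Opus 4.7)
The plan is to show that $L \circ f^{*} \simeq f^{*} \circ L$; the remaining assertions then follow formally, since $f^{*}$ on presheaves is pointwise and hence cocontinuous, colimits in sheaf $\infty$-categories are computed by sheafifying pointwise colimits, and the adjoint functor theorem supplies the right adjoint needed to realize $f^{*}\colon Sh(\dcat) \to Sh(\ccat)$ as the left adjoint half of a geometric morphism. Because $L$ is a transfinite iteration of the plus construction $(-)^{\dagger}$, I will show that $f^{*}$ commutes with $(-)^{\dagger}$, using the explicit formula $Y^{\dagger}(c) \simeq \varinjlim_{T \in Cov_{pre}(c)} Y(T)$ from Proposition \ref{prop:plus_construction_in_terms_of_pretopology}.

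Because $f$ preserves covering families and pullbacks along coverings, the left Kan extension $f_{!}$ sends the pretopological sieve on $c$ generated by $\{c_{i} \to c\}$ to the pretopological sieve on $f(c)$ generated by $\{f(c_{i}) \to f(c)\}$, yielding a poset map $\phi\colon Cov_{pre}(c) \to Cov_{pre}(f(c))$, $T \mapsto f_{!}T$. For any $X \in P(\dcat)$, the adjunction $f_{!} \dashv f^{*}$ identifies
\[
(f^{*}X)^{\dagger}(c) \simeq \varinjlim_{T \in Cov_{pre}(c)} X(f_{!}T), \qquad f^{*}(X^{\dagger})(c) \simeq \varinjlim_{S \in Cov_{pre}(f(c))} X(S),
\]
with the natural comparison map induced by $\phi$. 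It therefore suffices to show that $\phi$ is cofinal in the sense of HTT 4.1.3.1.

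Both posets are filtered under reverse inclusion, since intersections of pretopological sieves are pretopological. By Quillen's Theorem A for $\infty$-categories, $\phi$ is cofinal provided that for each $S \in Cov_{pre}(f(c))$ the subposet $\{T \in Cov_{pre}(c) : f_{!}T \subseteq S\}$ is weakly contractible. It is closed under intersection, because $f_{!}$ is inclusion-preserving, so once non-empty it is filtered and hence weakly contractible. Non-emptiness is exactly the covering lifting property: if $S$ is generated by some covering family $\{d_{i} \to f(c)\}$, the property supplies a covering family $\{c_{j} \to c\}$ whose images $f(c_{j}) \to f(c)$ factor through the $d_{i}$, and the sieve $T$ they generate then satisfies $f_{!}T \subseteq S$. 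Transfinite iteration yields $L \circ f^{*} \simeq f^{*} \circ L$, and the remaining assertions follow as indicated. The main technical obstacle is lining up the variances and cofinality direction correctly in the passage between presheaves and pretopological sieves; once this is set up, the covering lifting property slots precisely into the non-emptiness hypothesis of the cofinality criterion.
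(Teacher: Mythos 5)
Your argument is correct and follows essentially the same route as the paper: reduce commutation with $L$ to commutation with the plus construction, use the pretopological-sieve formula to rewrite both sides as colimits, and show the induced map $Cov_{pre}(c) \to Cov_{pre}(f(c))$ is cofinal via Quillen's Theorem A, with the covering lifting property supplying non-emptiness of the relevant fibers. One small imprecision: the subposet $\{T : f_{!}T \subseteq S\}$ is not literally closed under intersection of sieves (the intersection of two pretopological sieves need not be pretopological), but any two elements admit a common lower bound given by the pretopological sieve generated by the pairwise pullbacks of generators, which is what cofilteredness actually requires and what the paper uses.
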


\begin{proof}
Recall that the sheafification functor can be written as the transfinite composition of the plus construction, and since since $f_{*}$ commutes with levelwise colimits, it is enough to show that it commutes with the plus construction. Since we're working with Grothendieck pretopologies rather than Grothendieck topologies, \cref{prop:plus_construction_in_terms_of_pretopology} implies that the plus construction can be computed by taking colimits over the values at pretopological sieves. 

We verified in the proof of \cref{prop:compatible_functors_preserve_sheaves} that the left Kan extension $\mathrm{Lan}_{f} \colon P(\ccat) \rightarrow P(\dcat)$ takes the pretopological sieve generated by $\{ c_{i} \rightarrow c \}$ to the pretopological sieve generated by $\{ f(c_{i}) \rightarrow f(c) \}$. Hence, for any $X \in P(\dcat)$ there's a canonical map $(f_{*}X)^{\dagger} \rightarrow f_{*} (X^{\dagger})$ which over $c \in \ccat$ is given by the comparison map of colimits

\begin{center}
$\varinjlim \ (f_{*}X)(S) \rightarrow \varinjlim \ X(T)$,
\end{center}
where $S \in Cov_{pre}(c)$ runs through pretopological sieves on $c$ and $T \in Cov_{pre}(f(c))$ runs through pretopological sieves on $f(c)$. To show that this is an equivalence, it is enough to show that $Cov_{pre}(c) \rightarrow Cov_{pre}(f(c))$ is cofinal. Applying Quillen's Theorem A, see \cite[4.1.3.1]{lurie_higher_topos_theory}, this is equivalent to showing that all of the overcategories of that functor are contractible; that is, they have contractible classifying spaces. 

Let $T$ be a pretopological sieve on $f(c)$, say generated by $\{ d_{i} \rightarrow f(c) \}$. By assumption of covering lifting property, there is a covering family $\{ c_{j} \rightarrow c \}$, generating pretopological sieve which we denote by $S$, such that $\{ f(c_{j} \rightarrow f(c) \}$ factors through $\{ d_{i} \rightarrow f(c) \}$. This implies that $f_{!}(S)$ is a pretopological sieve contained in $T$, so that $Cov_{pre}(c)_{/T}$ is non-empty, containing $S$. However, $Cov_{pre}(c)_{/T}$ is clearly a cofiltered poset, hence we deduce that it must be contractible. This proves that $f_{*}$ commutes with sheafification. 

To deduce cocontinuity of $f_{*}$ as a functor between sheaf $\infty$-categories, observe that colimits in sheaf $\infty$-categories are computed by first computing them levelwise and then sheafifying. Since $f_{*}$ commutes with both operations, we deduce it is cocontinuous. Since it also clearly preserves limits, as these are computed levelwise, it is in particular left exact, hence left adjoint to a geometric morphism. 
\end{proof}

\begin{rem}
Both the notion of the covering lifting property and \cref{prop:morphisms_of_sites_with_covering_lifting_property_is_geometric} are classical in the case of sheaves of sets, see for example \cite[VII.10.5]{maclane2012sheaves}.
\end{rem}

\subsection{Hypercompleteness and hypercovers} 

In this section we present a hypercompleteness criterion for sheaves on $\infty$-categories equipped with a Grothendieck pretopology where each covering family consists of a single map. The criterion given is a straightforward variant of the recognition principle of Lurie for hypercomplete sheaves on certain sites arising in algebraic geometry, which doesn't apply directly in this case, although the proof does. We provide the details for completeness, but we are brief, a more comprehensive treatment appears in \cite[A.5]{lurie_spectral_algebraic_geometry} . 

To define hypercovers, it is slightly more convenient to work with semisimplicial, rather than simplicial, objects. The definitions we need, especially of matching objects, are slightly less classical than their simplicial counterparts and so we begin by recalling them in detail. 

By $\thickdelta_{s, +}$ and $\thickdelta_{s}$ we denote the categories of finite and finite non-empty ordinals and injective, order-preserving maps. If $\ccat$ is an $\infty$-category, then an \emph{augmented semisimplicial}, respectively \emph{semisimplicial}, object of $\ccat$ is a functor $X \colon \thickdelta_{s, +}^{op} \rightarrow \ccat$, respectively $X \colon \thickdelta_{s}^{op} \rightarrow \ccat$. We usually write $X_{k}$ for $X(\Delta^{k})$. 

Notice that since $\thickdelta_{s, +}$ is obtained from $\thickdelta_{s}$ by adjoining an initial object which we will denote by $-1$, an augmented semisimplicial object in $\ccat$ can be identified with a semisimplicial object in the overcategory $\ccat _{/ X_{-1}}$.

If $K$ is a simplicial set, we denote its categories of simplices and non-degenerate simplices in the sense of \cite[\S5.9]{dwyer1997model} by, respectively, $\thickdelta_{K}$ and $\thickdelta_{K}^{nd}$. We say $K$ is \emph{nonsingular} if every non-degenerate $n$-simplex determines a monomorphism $\Delta^{n} \hookrightarrow K$. If that is the case, then the inclusion $\thickdelta_{K}^{nd} \hookrightarrow \thickdelta_{K}$ is left cofinal, morover, the projection $\thickdelta_{K}^{nd} \rightarrow \thickdelta$ factors through $\thickdelta_{s}$. 
\begin{defin}
Let $\ccat$ be an $\infty$-category and let $X \colon \thickdelta_{s}^{op} \rightarrow \ccat$ be a semisimplicial object. If $K$ is a nonsingular simplicial set, then the \emph{cotensor} $X[K]$ is the limit of the induced diagram 

\begin{center}
$(\thickdelta^{nd}_{K})^{op} \rightarrow \thickdelta_{s}^{op} \rightarrow \ccat$,
\end{center}
provided that such a limit exists. If $X \colon \thickdelta_{s, +}^{op} \rightarrow \ccat$ is an augmented semisimplicial object, then by $X[K]$ we denote the cotensors of the associated semisimplicial object in $\ccat _{/ X_{-1}}$. 
\end{defin}

\begin{example}
We have $X[\Delta^{n}] \simeq X_{n}$ canonically, since the poset of non-degenerate simplices of $\Delta^{n}$ has a final object, namely the $n$-simplex itself. If it exists, we call the cotensor $X[\partial \Delta^{n}]$ the $n$-th \emph{matching object} and denote it by $M_{n}(X)$. 
\end{example}

\begin{prop}
\label{prop:existence_of_cotensors_along_a_finite_set}
Let $X \colon \thickdelta_{s}^{op} \rightarrow \ccat$ be a semisimplicial object and suppose that there is an integer $n$ such that for all $0 
\leq k \leq n$, $M_{k}X$ exists and such that $\ccat$ admits pullbacks along the map $X_{k} \rightarrow M_{k}(X)$. Then, $X[K]$ exists for any finite nonsingular simplicial set of dimension $dim(K) \leq n$.
\end{prop}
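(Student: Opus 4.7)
The plan is to argue by induction on the number of non-degenerate simplices of $K$. The base case $K = \emptyset$ is trivial since the cotensor over an empty diagram is the terminal object of $\ccat / X_{-1}$ (or rather, in the unaugmented case, we should interpret things in $\ccat$; but the point is that an empty cotensor is represented by $X_{-1}$ in the augmented case and always exists in the sense we need). If $K = \Delta^{k}$ for $k \leq n$, then $X[K] \simeq X_{k}$ exists by assumption.

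For the inductive step, let $K$ be a finite nonsingular simplicial set of dimension $\leq n$ with at least one non-degenerate simplex, and pick a top-dimensional non-degenerate simplex $\sigma: \Delta^{k} \hookrightarrow K$, necessarily with $k \leq n$. Let $K'$ be the subcomplex obtained by removing the interior of $\sigma$; concretely, $K'$ is the pushout of $\partial \Delta^{k} \hookleftarrow \partial \Delta^{k} \hookrightarrow K$ removing the unique non-degenerate $k$-simplex (equivalently, the union of all non-degenerate simplices of $K$ other than $\sigma$). Since $K$ is nonsingular and $\sigma$ is top-dimensional, $K$ is obtained from $K'$ by attaching a single cell, i.e.\ it sits in a pushout square
\[
\begin{tikzpicture}[baseline]
\node (TL) at (0,1) {$\partial \Delta^{k}$};
\node (TR) at (2,1) {$\Delta^{k}$};
\node (BL) at (0,0) {$K'$};
\node (BR) at (2,0) {$K$};
\draw [->] (TL) to (TR);
\draw [->] (TL) to (BL);
\draw [->] (TR) to (BR);
\draw [->] (BL) to (BR);
\end{tikzpicture}
\]
of simplicial sets, and $K'$ is again finite, nonsingular, and of dimension $\leq n$, with strictly fewer non-degenerate simplices than $K$.

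Passing to non-degenerate simplex posets, one checks that $\thickdelta^{nd}_{K}$ is the pushout (equivalently, the union along $\thickdelta^{nd}_{\partial \Delta^{k}}$) of $\thickdelta^{nd}_{K'}$ and $\thickdelta^{nd}_{\Delta^{k}}$, so the limit defining $X[K]$ is the pullback in $\ccat$
\[ X[K] \simeq X[K'] \times_{X[\partial \Delta^{k}]} X[\Delta^{k}] \simeq X[K'] \times_{M_{k}(X)} X_{k}.\]
By the inductive hypothesis applied to $K'$, the object $X[K']$ exists, and by assumption $M_{k}(X)$ exists. The needed pullback $X[K'] \times_{M_{k}(X)} X_{k}$ exists by the hypothesis that $\ccat$ admits pullbacks along $X_{k} \to M_{k}(X)$. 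This gives $X[K]$ and completes the induction.

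The only non-routine step is verifying that $\thickdelta^{nd}_{K}$ really decomposes as a pushout of $\thickdelta^{nd}_{K'}$ and $\thickdelta^{nd}_{\Delta^{k}}$ along $\thickdelta^{nd}_{\partial \Delta^{k}}$ in a way that computes the limit as a pullback — which is where nonsingularity of $K$ is used, ensuring that $\Delta^{k} \hookrightarrow K$ is a monomorphism and that no non-degenerate simplices are identified in the attachment. The augmented case is handled identically by working in $\ccat_{/X_{-1}}$, which has the needed pullbacks since they are computed in $\ccat$.
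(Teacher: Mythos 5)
Your proof is correct and follows the same strategy the paper indicates: induction on the number of non-degenerate simplices, removing a top-dimensional non-degenerate cell and realizing $X[K]$ as the pullback $X[K'] \times_{M_k(X)} X_k$, which exists by hypothesis. The paper simply cites the analogous argument in HTT 4.4.2.4 without spelling out the inductive step; your write-up makes the same argument explicit, including the correct identification of the base case with the assumed existence of $M_0(X) = X[\emptyset]$.
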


\begin{proof}
This is proven in the same way as \cite[4.4.2.4]{lurie_higher_topos_theory}. One proceeds by induction on the number of non-degenerate simplices of $K$, describing the limit as constructed by iterating pullbacks. The induction base is $K = \emptyset = \partial \Delta^{0}$, and the assumption that $\ccat$ admits pullbacks along $X_{k} \rightarrow M_{k}(X)$ means that all pullbacks needed in the proof will exist. 
\end{proof}

\begin{rem}
\label{rem:formation_of_matching_objects_commutes_with_some_functors}
The proof of \cref{prop:existence_of_cotensors_along_a_finite_set} also shows that if $X \colon \thickdelta_{s}^{op} \rightarrow \ccat$ is a semisimplicial object satisfying the conditions, and $f \colon \ccat \rightarrow \dcat$ is a functor that preserves pullbacks along $X_{k} \rightarrow M_{k}(X)$ for $k \leq n$, then $(F \circ X)[K] \simeq F(X[K])$. 
\end{rem}

\begin{defin}
\label{defin:grothendieck_pretopology_with_single_covers}
We say that an $\infty$-site $\ccat$ has \emph{single covers} if every covering family $\{ c_{i} \rightarrow c \}$ consists of a single morphism. In this case we say a morphism $c^{\prime} \rightarrow c$ is a \emph{cover} if it forms a covering family. 
\end{defin}

\begin{defin}
Let $\ccat$ be an $\infty$-site with single covers and let $c \in \ccat$. An augmented semisimplicial object $U \colon \Delta_{s, +}^{op} \rightarrow \ccat$ is a \emph{hypercover} if for any $n \geq 0$, the matching map $U_{n} \rightarrow M_{n}(U)$ is a cover. 
\end{defin}

\begin{example}
\label{example:covers_are_hypercovers}
Let $\ccat$ be an $\infty$-site with single covers and let $d \rightarrow c$ be a cover. Then, the underlying augmented semisimplicial object $\check{C}(d \rightarrow c) _{| \thickdelta_{s, +}}$ of the \v{C}ech nerve is a hypercover. To see this, notice that the $0$-matching map can be identified with $d \rightarrow c$ itself, so that it's a cover, and that the higher matching maps are equivalences. 
\end{example}

\begin{construction}
\label{construction:hypercover}
We now describe how hypercovers can be constructed inductively. The category $\thickdelta_{s, +}$ admits a filtration by its subcategories $\thickdelta_{s, +, \leq n}$ of simplices up to dimension $n$. We say that a functor $U \colon \thickdelta_{s, +, \leq n}^{op} \rightarrow \ccat$ is an \emph{$n$-truncated hypercover} if for each $0 \leq k \leq n$, $M_{n}(U)$ exists and the map $U_{k} \rightarrow M_{k}(U)$ is a cover. 

Notice that the above makes sense, since to define $U_{n}$ and $M_{k}(U)$ we only need the values of $U$ on simplices of dimension up to $n$, this also implies that an augmented semisimplicial object is a hypercover if and only if all of its restrictions are truncated hypercovers. We deduce that to construct a hypercover it's enough to define a compatible sequence of truncated hypercovers. 

When $n=-1$, then $\thickdelta_{s, +, \leq -1}$ consists of a single object and we see that a $-1$-truncated hypercover is the same as an object of $\ccat$ subject to no conditions. Now assume that we have already defined $U \colon \thickdelta_{s, +, \leq n-1}^{op} \rightarrow \ccat$ and we want to extend it to the category $\thickdelta_{s, +, \leq n}$, where $n \geq 0$. To give such an extension it is enough to choose $X_{n}$ together with a map $X_{n} \rightarrow M_{n-1}(X)$. Indeed, any injective map $\Delta^{k} \rightarrow \Delta^{n}$ with $k < n$ factors through $\partial \Delta^{n}$, so that we can define the induced map to be the composite $U_{n} \rightarrow M_{n}(U) \rightarrow U_{k}$. Moreover, one sees that to make the extension into an $n$-truncated hypercover it is necessary and sufficient for the chosen map $U_{n} \rightarrow M_{n}(U)$ to be a covering.

Notice that if we proceed in this fashion, then the the existence of $M_{k}(U)$, which is implicit in the definition of a hypercover, is guaranteed by \cref{prop:existence_of_cotensors_along_a_finite_set}, since in an $\infty$-site all pullbacks along coverings exist. 
\end{construction}

\begin{prop}
\label{prop:recognition_of_hypercomplete_sheaves}
Let $\ccat$ be an $\infty$-category with single covers and let $X \colon \ccat^{op} \rightarrow \spaces$ be a functor. Then, $X$ is a hypercomplete presheaf if and only if for every hypercover $U \colon \thickdelta_{s. +}^{op} \rightarrow \ccat$, $X \circ U$ is a limit diagram of spaces. 
\end{prop}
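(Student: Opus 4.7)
The plan is to adapt Lurie's recognition principle for hypercomplete sheaves from \cite{lurie_spectral_algebraic_geometry}[A.5] to the semisimplicial setting afforded by our single-cover hypothesis. Recall that a sheaf is hypercomplete if and only if it is local with respect to $\infty$-connective morphisms of sheaves. The entire proposition will follow once I establish the key geometric claim: for any hypercover $U\colon \thickdelta_{s,+}^{op} \to \ccat$, the canonical comparison map
\begin{equation*}
\varinjlim_{n\in \thickdelta_{s}^{op}} y(U_{n}) \longrightarrow y(U_{-1})
\end{equation*}
becomes an $\infty$-connective morphism after sheafification.

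To prove the claim, I would proceed by skeletal induction, following \textbf{Construction \ref{construction:hypercover}}. Write $U^{(n)}\colon \thickdelta_{s,+,\leq n}^{op} \to \ccat$ for the truncations, so the colimit above is a sequential colimit of the partial geometric realizations $|U^{(n)}|$ in $P(\ccat)$. Passing from $|U^{(n-1)}|$ to $|U^{(n)}|$ is a pushout attaching $y(U_n)$ along the matching map $y(U_n) \to y(M_n(U))$, which by the hypercover condition and our single-cover assumption is a covering morphism, hence sheafifies to an effective epimorphism in $Sh(\ccat)$. A standard Reedy connectivity estimate then shows that the induced map $|U^{(n)}| \to y(U_{-1})$ is $(n+1)$-connective after sheafification; passing to the colimit, which commutes with sheafification and preserves $\infty$-connectivity, gives the claim.

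With the key claim in hand, the ``only if'' direction is immediate: if $X$ is hypercomplete, then $\map(-, X)$ sends the sheafified $\infty$-connective map above to an equivalence, and unwinding this equivalence yields $X(U_{-1}) \simeq \varprojlim X \circ U$. For the converse, note first that applying descent to the augmented semisimplicial object underlying the \v{C}ech nerve of a cover (\textbf{Example \ref{example:covers_are_hypercovers}}) and invoking \textbf{Corollary \ref{cor:characterization_of_sheaves_in_terms_of_pretopology}} shows that $X$ is a sheaf. To upgrade to hypercompleteness, I would show $X$ inverts every $\infty$-connective morphism $f\colon A \to B$ in $Sh(\ccat)$; reducing to the case $B = y(c)$ by colimit arguments, one uses the $\infty$-connectivity of $f$ to lift the data of $f$ stage-by-stage through \textbf{Construction \ref{construction:hypercover}}, producing a hypercover $U$ with $U_{-1}=c$ whose sheafified realization factors $f$ by an equivalence. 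Hypercover descent for $X$ then gives $\map(y(c), X) \simeq \varprojlim X \circ U \simeq \map(A, X)$.

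The main obstacle is the skeletal connectivity estimate supporting the key geometric claim: it requires carefully controlling the $n$-connectivity of Reedy latching/matching pushouts in the presheaf $\infty$-topos while tracking the behaviour of sheafification, and it is essentially where all of the content of the proposition is concentrated. The second-most delicate point is the inductive construction of a hypercover witnessing an arbitrary $\infty$-connective morphism in the converse direction, which is a semisimplicial analogue of Lurie's simplicial hypercover construction and depends on the single-cover hypothesis to guarantee that the required covers at each stage actually fit into our pretopology.
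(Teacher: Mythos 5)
Your general plan — adapt Lurie's SAG A.5.7 to the semisimplicial, single-cover setting — is the same as the paper's, and your handling of the forward implication is fine: your ``key geometric claim'' that $\varinjlim L y(U_n) \to L y(U_{-1})$ is $\infty$-connective in $Sh(\ccat)$ is exactly the content of \cite[6.5.3.11]{lurie_higher_topos_theory}, and the paper gets the same conclusion by invoking \cite[6.5.3.12]{lurie_higher_topos_theory} directly rather than re-proving the connectivity estimate.

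The converse direction, however, has a genuine gap. You propose, for an $\infty$-connective morphism $f\colon A \to y(c)$ of sheaves, to produce a hypercover $U$ of $c$ ``whose sheafified realization factors $f$ by an equivalence,'' i.e.\ an equivalence $|U| \simeq A$ over $y(c)$, and then conclude by hypercover descent for $X$. This is asking too much: there is no reason an arbitrary $\infty$-connective object over a representable should be equivalent to the realization of a hypercover. What the inductive construction actually yields is a hypercover $U$ together with a map $|U| \to A$ over $y(c)$, and this map is itself merely $\infty$-connective, not an equivalence; since whether $X$ inverts $\infty$-connective maps between non-representables is precisely what is at stake, this does not close the argument. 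Concretely, applying $\map(-,X)$ gives only that $\map(y(c),X) \to \map(A,X)$ is a split monomorphism (a retract of the descent equivalence $\map(y(c),X) \simeq \map(|U|,X)$), not an equivalence.

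The way around this, which is what the paper (following Lurie) does, is to weaken the goal before running the induction. One shows that the unit $X \to \widehat{L}X$ is an equivalence by reducing, via pullbacks along the diagonal, to $\pi_0$-surjectivity; after replacing $\ccat$ by $\ccat_{/c}$ and pulling back along a point $1 \to \widehat{L}X$, one is left with an $\infty$-connective sheaf $Y$ which takes hypercovers to limits, and one must show only that $Y(c)$ is nonempty. The induction over \textbf{Construction \ref{construction:hypercover}} then builds a hypercover $U$ of $c$ together with a compatible point of the totalization $\varprojlim Y\circ U$, using $\infty$-connectivity of $Y$ at stage $n+1$ merely to kill a map $\partial\Delta^{n+1} \to Y(M_{n+1}U)$ after refining along a cover of $M_{n+1}(U)$. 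This is a strictly weaker (and achievable) demand than producing an equivalence $|U| \simeq A$, and it is where the single-cover hypothesis enters to guarantee that the needed covers are available. Your write-up should incorporate this reduction to a pointwise lifting problem; as currently phrased the converse does not go through.
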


\begin{proof}
The proof of \cite[A.5.7]{lurie_spectral_algebraic_geometry} works here, the only difference is that our Grothendieck pretopology has single covers so one does not need to take the coproducts when constructing the needed hypercover. For completeness, we give Lurie's proof. 

First suppose that $X$ is a hypercomplete sheaf and let $U \colon \thickdelta_{s, +}^{op} \rightarrow \ccat$ be a hypercover. Then, the composition $y \circ U$ with the Yoneda embedding is a hypercover in the $\infty$-topos $Sh(\ccat)$ in the sense of Lurie and the result follows from \cite[6.5.3.12]{lurie_higher_topos_theory}.

Now assume that $X$ takes all hypercovers to limit diagrams, since all ordinary covers are hypercovers by \cref{example:covers_are_hypercovers}, the characterization of sheaves given by \cref{cor:characterization_of_sheaves_in_terms_of_pretopology} implies that $X$ is a sheaf. It follows that it admits an $\infty$-connective map $X \rightarrow X^{\prime}$ into a hypercomplete object of $Sh(\ccat)$. It is thus enough to show that any $\infty$-connective map of sheaves that take hypercovers to limits is in fact an equivalence. 

If $c \in \ccat$, we have to show that $X(c) \rightarrow X^{\prime}(c)$ is $n$-connective for all $n$. If $n \geq 0$, this is the same as $X(c) \rightarrow (X^{\prime} \times _{X} X^{\prime})(c)$ being $(n-1)$-connected, so that we can assume $n = -1$. Hence we have to show that $X(c) \rightarrow X^{\prime}(c)$ is surjective on path components. Notice that we can replace $\ccat$ by $\ccat _{/c}$ and $X, X^{\prime}$ by their pullbacks along the projection $\ccat _{/c} \rightarrow \ccat$, since $\ccat_{/c}$ admits an induced Grothendieck pretopology which also has single covers and an augmented semisimplicial object in $\ccat_{/c}$ is a hypercover if and only if its projection onto $\ccat$ is. It follows we can assume that $c \in \ccat$ is final. 

Let $x \in X^{\prime}(c)$ be a point, since $c$ is final, this specifies a map $1 \rightarrow X^{\prime}$, where $1 \in Sh(\ccat)$ is the final object in sheaves. Notice that $1$ is discrete, hence automatically hypercomplete. We have to show that $Y(c)$ is non-empty, where $Y = X \times _{X^{\prime}} 1$. Notice that $Y$ is an $\infty$-connective sheaf, since the map $X \rightarrow X^{\prime}$ was assumed to be $\infty$-connective, and that it takes hypercovers to limits. 

Let $U \colon \thickdelta^{op}_{s, +} \rightarrow \ccat$ be a hypercover of $c$ in the sense that $U_{-1} = c$. Then, by assumption $Y(c) \simeq \varprojlim X \circ U _{| \thickdelta_{s}}$, so that $Y(c)$ can be computed as the totalization of the cosimplicial object expressing the values of $X$ on the underlying simplicial object of the hypercover. We will inductively construct a hypercover $U$ as above together with a point in this totalization, showing that $Y(c)$ is non-empty, to do so we follow as in \cref{construction:hypercover}.

Since $c$ is final, it's enough to construct a semisimplicial object $U \colon \thickdelta_{s}^{op} \rightarrow \ccat$, as it will uniquely extend to an augmented semisimplicial object with $U_{-1} = c$.  Suppose that we've already constructed an $n$-truncated hypercover $U \colon \thickdelta_{s, \leq n}^{op} \rightarrow \ccat$ and we have a chosen point in the limit of $Y \circ U \colon \Delta_{s, \leq n} \rightarrow \spaces$. The latter is the same as a natural transformation from the constant diagram and hence induces a composition

\begin{center}
$\partial \Delta^{n+1} \rightarrow \varinjlim Y \circ U \rightarrow Y(M_{n+1}U)$,
\end{center}
where the first map is obtained by taking colimits over the poset of subsimplices of $\partial \Delta^{n+1}$ and the second map compares the limit and colimit. 

To construct an extension of $U$ to $\thickdelta_{s, \leq n+1}$ we need to choose a cover $d \rightarrow M_{n+1}(U)$, we will then set $U_{n+1} = d$.  Then, to give the needed point in the limit of $Y \circ U$, compatible with the previous choice, we need to extend the composition $\partial \Delta^{n+1} \rightarrow Y(M_{n+1}U) \rightarrow Y(U)$ to $\Delta^{n+1}$, this will extend the map from a constant diagram to a map of diagrams indexed over $\thickdelta_{s, \leq n+1}$. Since $Y$ is $\infty$-connective, there exists a cover $d \rightarrow M_{n+1}(U)$ such that the composite $\partial \Delta^{n+1} \rightarrow Y(M_{n}U) \rightarrow Y(d)$ is null-homotopic, so that we can set $U_{n+1} = d$ and choose some such null-homotopy. This ends the argument. 
\end{proof}

\begin{cor}
\label{cor:precomposition_with_compatible_functors_preserves_hypercomplete_sheaves}
Let $f \colon \ccat \rightarrow \dcat$ be a morphism of $\infty$-sites with single covers. Then, the precomposition functor $f_{*}$ preserves hypercomplete sheaves and is part of an adjunction $\widehat{f}^{*} \dashv f_{*} \colon \hpsheaves(\ccat) \rightleftarrows \hpsheaves(\dcat)$, where $\widehat{f}^{*} = \widehat{L} \circ f^{*}$.
\end{cor}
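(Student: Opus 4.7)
The plan is to deduce this corollary from the recognition principle for hypercomplete sheaves, \textbf{Proposition \ref{prop:recognition_of_hypercomplete_sheaves}}, by verifying that the functor $f$ itself takes hypercovers in $\ccat$ to hypercovers in $\dcat$. Once that is established, preservation of hypercomplete sheaves follows formally, and the adjunction is obtained by composing the presheaf-level adjunction $f_{!} \dashv f^{*}$ with the reflection $\widehat{L} \dashv i$ onto hypercomplete sheaves.

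The key step, and really the only nontrivial point, is: if $U \colon \thickdelta_{s,+}^{op} \to \ccat$ is a hypercover, then $f \circ U \colon \thickdelta_{s,+}^{op} \to \dcat$ is again a hypercover. This amounts to showing that $f$ preserves the matching objects $M_n(U)$ and sends the matching maps $U_n \to M_n(U)$ to covers. The latter is immediate from the assumption that $f$ preserves covers (\textbf{Definition \ref{defin:morphism_of_sites}}). For the former, I would invoke the inductive construction of the cotensor $U[\partial\Delta^n]$ given in the proof of \textbf{Proposition \ref{prop:existence_of_cotensors_along_a_finite_set}}: the matching object is built by iterated pullbacks, and at each stage the relevant pullback is taken along a map of the form $U_k \to M_k(U)$, which is a cover. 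Since $f$ is assumed to preserve pullbacks along coverings, \textbf{Remark \ref{rem:formation_of_matching_objects_commutes_with_some_functors}} yields $f(M_n(U)) \simeq M_n(f \circ U)$ compatibly with the matching maps, and the matching map of $f \circ U$ is then $f$ applied to a cover, hence a cover.

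With that in hand, if $X \in \hpsheaves(\dcat)$ and $U$ is a hypercover of $\ccat$, then $f\circ U$ is a hypercover of $\dcat$, so $(f^{*}X) \circ U \simeq X \circ (f \circ U)$ is a limit diagram of spaces by \textbf{Proposition \ref{prop:recognition_of_hypercomplete_sheaves}} applied to $X$. Applying \textbf{Proposition \ref{prop:recognition_of_hypercomplete_sheaves}} in the reverse direction to $f^{*}X$ shows $f^{*}X$ is a hypercomplete sheaf on $\ccat$, so $f^{*}$ restricts to a functor $\hpsheaves(\dcat) \to \hpsheaves(\ccat)$.

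Finally, for the adjunction, I would simply compose the presheaf adjunction $f_{!} \dashv f^{*}$ with the reflective localization $\widehat{L} \dashv i \colon P(\dcat) \leftrightarrows \hpsheaves(\dcat)$: for $X \in \hpsheaves(\ccat)$ and $Y \in \hpsheaves(\dcat)$ one computes
\[
\Map_{\hpsheaves(\dcat)}(\widehat{L} f_{!} X, Y) \simeq \Map_{P(\dcat)}(f_{!} X, Y) \simeq \Map_{P(\ccat)}(X, f^{*} Y) \simeq \Map_{\hpsheaves(\ccat)}(X, f^{*} Y),
\]
where the outer equivalences use that $Y$ and $X$ are already hypercomplete. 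This exhibits $\widehat{f}_{*} \dashv f^{*}$ with $\widehat{f}_{*} = \widehat{L}\circ f_{!}$, as claimed. The hard part, such as it is, is purely bookkeeping: checking that $f$ commutes with matching objects reduces by the inductive construction to pullbacks along covers, and after that the rest of the argument is formal.
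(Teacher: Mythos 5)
Your proof is correct and takes essentially the same route as the paper's: reduce via \textbf{Proposition \ref{prop:recognition_of_hypercomplete_sheaves}} to showing that $f$ carries hypercovers to hypercovers, which follows from preservation of covers together with the fact that $f$ commutes with matching objects (via \textbf{Remark \ref{rem:formation_of_matching_objects_commutes_with_some_functors}}, since they are built by iterated pullbacks along covers). The paper states this more tersely, citing only \textbf{Proposition \ref{prop:existence_of_cotensors_along_a_finite_set}} and leaving the adjunction implicit; your write-up simply spells out the details.
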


\begin{proof}
This is proven the same as the corresponding statement for sheaves, namely \cref{prop:compatible_functors_preserve_sheaves}. By \cref{prop:recognition_of_hypercomplete_sheaves} it is enough to show that $f^{*}$ takes hypercovers to hypercovers, which is immediate from \cref{prop:existence_of_cotensors_along_a_finite_set}. 
\end{proof}

\begin{cor}
\label{cor:precomposition_with_morphism_with_covering_property_cocontinuous_on_hypercomplete_sheaves}
Let $f \colon \ccat \rightarrow \dcat$ be a morphism of $\infty$-sites with single covers that has a covering lifting property. Then, the precomposition functor $f_{*} \colon Sh(\dcat) \rightarrow Sh(\ccat)$ commutes with hypercompletion, in particular, the restriction $f_{*} \colon \hpsheaves(\dcat) \rightarrow \hpsheaves(\dcat)$ to $\infty$-categories of hypercomplete sheaves is cocontinuous.
\end{cor}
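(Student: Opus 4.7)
The plan has two parts corresponding to the two assertions. First I would prove that $f^{*} \colon Sh(\dcat) \to Sh(\ccat)$ commutes with hypercompletion, and then deduce the cocontinuity of the restriction as a formal consequence.

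For the commutation with hypercompletion, let $X \in Sh(\dcat)$ and consider the hypercompletion map $X \to \widehat{L} X$, which by definition is $\infty$-connective. The strategy is to show that $f^{*} X \to f^{*}(\widehat{L}X)$ is again $\infty$-connective and that its target is already hypercomplete, which will force it to be a hypercompletion. Hypercompleteness of the target is exactly the content of \textbf{Corollary \ref{cor:precomposition_with_compatible_functors_preserves_hypercomplete_sheaves}}, which applies since $f$ is a morphism of $\infty$-sites with single covers. For $\infty$-connectedness of the map, I would invoke \textbf{Proposition \ref{prop:morphisms_of_sites_with_covering_lifting_property_is_geometric}}, which supplies that $f^{*}$ is left exact and cocontinuous between $\infty$-topoi. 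By \cite[5.5.6.28]{lurie_higher_topos_theory}, any such functor commutes with $n$-truncation and hence preserves $n$-connective morphisms for every $n$; since an $\infty$-connective morphism is one that is $n$-connective for all $n$, $f^{*}$ preserves these as well.

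Having established that $f^{*} \widehat{L} \simeq \widehat{L} f^{*}$, the cocontinuity of the restriction $f^{*} \colon \hpsheaves(\dcat) \to \hpsheaves(\ccat)$ is then a formal bookkeeping exercise. Given a small diagram $\{X_{i}\}$ in $\hpsheaves(\dcat)$, colimits in hypercomplete sheaves are computed by taking the colimit in $Sh(\dcat)$ and then hypercompleting, because $\hpsheaves(\dcat)$ is a reflective subcategory of $Sh(\dcat)$ with reflector $\widehat{L}$. Applying the commutation established above gives
\begin{equation*}
f^{*}\Bigl(\widehat{L} \varinjlim_{Sh(\dcat)} X_{i}\Bigr) \simeq \widehat{L}\, f^{*} \varinjlim_{Sh(\dcat)} X_{i} \simeq \widehat{L}\, \varinjlim_{Sh(\ccat)} f^{*} X_{i} \simeq \varinjlim_{\hpsheaves(\ccat)} f^{*} X_{i},
\end{equation*}
where the middle equivalence uses cocontinuity of $f^{*}$ on ordinary sheaves, already given by \textbf{Proposition \ref{prop:morphisms_of_sites_with_covering_lifting_property_is_geometric}}.

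The main obstacle is really the first part, specifically the verification that the inverse image part of a geometric morphism between $\infty$-topoi preserves $\infty$-connective maps. One has to be a little careful, as this is genuinely an $\infty$-categorical phenomenon and has no $1$-categorical analogue; however, once one knows that left exact cocontinuous functors commute with $n$-truncations, the argument dualizes cleanly and the rest is formal. The remainder of the proof is essentially bookkeeping using the adjunctions and reflectivity already set up in the preceding results.
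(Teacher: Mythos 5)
Your proof is correct and follows essentially the same route as the paper: both reduce the claim to the facts that $f^{*}$ preserves $\infty$-connective morphisms (since it is the left exact left adjoint coming from \textbf{Proposition \ref{prop:morphisms_of_sites_with_covering_lifting_property_is_geometric}}) and hypercomplete objects (\textbf{Corollary \ref{cor:precomposition_with_compatible_functors_preserves_hypercomplete_sheaves}}), then deduce cocontinuity on hypercomplete sheaves from the formula for colimits in a reflective subcategory. The only difference is that you justify the preservation of $\infty$-connective morphisms by appealing to commutation with $n$-truncation via \cite[5.5.6.28]{lurie_higher_topos_theory} (which, strictly speaking, requires a further slice-category step to pass from $n$-connective objects to $n$-connective morphisms), whereas the paper simply invokes the standard fact that left adjoints of geometric morphisms preserve $\infty$-connective morphisms.
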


\begin{proof}
By \cref{prop:morphisms_of_sites_with_covering_lifting_property_is_geometric}, the covering lifting property implies that $f_{*} \colon Sh(\dcat) \rightarrow Sh(\ccat)$ is cocontinuous, in particular left adjoint to a geometric morphism. Thus, the second part follows from the first, since colimits in hypercomplete sheaves are computed by calculating them in sheaves and hypercompleting. 

To see that $f_{*} \colon Sh(\dcat) \rightarrow Sh(\ccat)$ commutes with hypercompletion, observe that since $f_{*}$ is left adjoint to a geometric morphism, it preserves $\infty$-connective maps of sheaves. Because it also preserves hypercomplete sheaves by \cref{cor:precomposition_with_compatible_functors_preserves_hypercomplete_sheaves}, we are done. 
\end{proof}

\bibliographystyle{amsalpha}
\bibliography{synthetic_spectra_bibliography}

\newcommand{\etalchar}[1]{$^{#1}$}
\providecommand{\bysame}{\leavevmode\hbox to3em{\hrulefill}\thinspace}
\providecommand{\MR}{\relax\ifhmode\unskip\space\fi MR }
% \MRhref is called by the amsart/book/proc definition of \MR.
\providecommand{\MRhref}[2]{%
  \href{http://www.ams.org/mathscinet-getitem?mr=#1}{#2}
}
\providecommand{\href}[2]{#2}
\begin{thebibliography}{BKWX20}

\bibitem[Ada95]{adams1995stable}
John~Frank Adams, \emph{Stable homotopy and generalised homology}, University
  of Chicago press, 1995.

\bibitem[Ber06]{bergner_rigidification_of_algebras}
Julia~E. Bergner, \emph{Rigidification of algebras over multi-sorted theories},
  Algebr. Geom. Topol. \textbf{6} (2006), 1925--1955. \MR{2263055
  (2007f:18005)}

\bibitem[BH17]{barthel2017algebraic}
Tobias Barthel and Drew Heard, \emph{Algebraic chromatic homotopy theory for
  {$BP_{*}BP$}-comodules}, arXiv preprint arXiv:1708.09261 (2017).

\bibitem[BHS19]{burklund2019boundaries}
Robert Burklund, Jeremy Hahn, and Andrew Senger, \emph{On the boundaries of
  highly connected, almost closed manifolds}, arXiv preprint arXiv:1910.14116
  (2019).

\bibitem[BHS20a]{burklund2020galois}
\bysame, \emph{Galois reconstruction of artin-tate $\mathbb{R}$-motivic
  spectra}, arXiv preprint arXiv:2010.10325 (2020).

\bibitem[BHS20b]{burklund2020inertia}
\bysame, \emph{Inertia groups in the metastable range}, arXiv preprint
  arXiv:2010.09869 (2020).

\bibitem[BHV15]{barthel2015local}
Tobias Barthel, Drew Heard, and Gabriel Valenzuela, \emph{Local duality in
  algebra and topology}, arXiv preprint arXiv:1511.03526 (2015).

\bibitem[BK72]{bousfield1972homotopy}
Aldridge~Knight Bousfield and Daniel~Marinus Kan, \emph{Homotopy limits,
  completions and localizations}, vol. 304, Springer Science \& Business Media,
  1972.

\bibitem[BKWX20]{bachmann2020chow}
Tom Bachmann, Hana~Jia Kong, Guozhen Wang, and Zhouli Xu, \emph{The chow $ t
  $-structure on the $\infty$-category of motivic spectra}, arXiv preprint
  arXiv:2012.02687 (2020).

\bibitem[Bra14]{brandenburg2014tensor}
Martin Brandenburg, \emph{Tensor categorical foundations of algebraic
  geometry}, arXiv preprint arXiv:1410.1716 (2014).

\bibitem[BS20]{burklund2020high}
Robert Burklund and Andrew Senger, \emph{On the high-dimensional geography
  problem}, arXiv preprint arXiv:2007.05127 (2020).

\bibitem[B{\"u}h10]{buhler2010exact}
Theo B{\"u}hler, \emph{Exact categories}, Expositiones Mathematicae \textbf{28}
  (2010), no.~1, 1--69.

\bibitem[Bur21]{burklund2021extension}
Robert Burklund, \emph{An extension in the adams spectral sequence in dimension
  54}, Bulletin of the London Mathematical Society \textbf{53} (2021), no.~2,
  404--407.

\bibitem[Cha20]{chang_banded_vanishing_line}
Kevin Chang, \emph{A $v_1$-banded vanishing line for the mod 2 moore spectrum},
  arXiv preprint arXiv:2009.02834 (2020).

\bibitem[CS69]{conner1969complex}
Pierre~E Conner and Larry Smith, \emph{On the complex bordism of finite
  complexes}, Publications math{\'e}matiques de l'IH{\'E}S \textbf{37} (1969),
  no.~1, 117--221.

\bibitem[Del71]{deligne1971theorie}
Pierre Deligne, \emph{Th{\'e}orie de {Hodge}, {II}}, Publications
  Math{\'e}matiques de l'Institut des Hautes {\'E}tudes Scientifiques
  \textbf{40} (1971), no.~1, 5--57.

\bibitem[Dev97]{dev_morava}
Ethan~S. Devinatz, \emph{Morava modules and {B}rown-{C}omenetz duality}, Amer.
  J. Math. \textbf{119} (1997), no.~4, 741--770. \MR{1465068 (98i:55008)}

\bibitem[DHK97]{dwyer1997model}
William~G Dwyer, Philip Hirschhorn, and Daniel Kan, \emph{Model categories and
  more general abstract homotopy theory: a work in what we like to think of as
  progress}.

\bibitem[DI05]{dugger2005motivic}
Daniel Dugger and Daniel~C Isaksen, \emph{Motivic cell structures}, Algebraic
  \& Geometric Topology \textbf{5} (2005), no.~2, 615--652.

\bibitem[DI10]{dugger2010motivic}
\bysame, \emph{The motivic {Adams} spectral sequence}, Geometry \& Topology
  \textbf{14} (2010), no.~2, 967--1014.

\bibitem[GH]{moduli_problems_for_structured_ring_spectra}
P.~G. Goerss and M.~J. Hopkins, \emph{Moduli problems for structured ring
  spectra},
  \url{http://www.math.northwestern.edu/~pgoerss/spectra/obstruct.pdf}.

\bibitem[GH04]{moduli_spaces_of_commutative_ring_spectra}
P.~G. Goerss and M.~J. Hopkins, \emph{Moduli spaces of commutative ring
  spectra}, Structured ring spectra, London Math. Soc. Lecture Note Ser., vol.
  315, Cambridge Univ. Press, Cambridge, 2004, pp.~151--200. \MR{2125040
  (2006b:55010)}

\bibitem[GI17]{gheorghe2017structure}
Bogdan Gheorghe and Daniel~C Isaksen, \emph{The structure of motivic homotopy
  groups}, Bolet{\'\i}n de la Sociedad Matem{\'a}tica Mexicana \textbf{23}
  (2017), no.~1, 389--397.

\bibitem[Gre21]{gregoric2021moduli}
Rok Gregoric, \emph{Moduli stack of oriented formal groups and cellular motivic
  spectra over $\mathbb{C}$}, arXiv preprint arXiv:2111.15212 (2021).

\bibitem[GWX21]{gheorghe2018special}
Bogdan Gheorghe, Guozhen Wang, and Zhouli Xu, \emph{The special fiber of the
  motivic deformation of the stable homotopy category is algebraic}, Acta
  Mathematica \textbf{226} (2021), no.~2, 319--407.

\bibitem[HKO11a]{hu2011convergence}
Po~Hu, Igor Kriz, and Kyle Ormsby, \emph{Convergence of the motivic {Adams}
  spectral sequence}, Journal of K-theory \textbf{7} (2011), no.~3, 573--596.

\bibitem[HKO11b]{hu2011remarks}
\bysame, \emph{Remarks on motivic homotopy theory over algebraically closed
  fields}, Journal of K-Theory \textbf{7} (2011), no.~1, 55--89.

\bibitem[Hov]{hovey2003homotopy}
Mark Hovey, \emph{Homotopy theory of comodules over a {Hopf} algebroid,
  homotopy theory: relations with algebraic geometry, group cohomology, and
  algebraic {K}-theory, 261--304}, Contemp. Math \textbf{346}.

\bibitem[Hoy]{hoyois_mathoverflow_question}
Marc Hoyois, \emph{The vanishing of ${MGL^{2n+i, n}(X)}$},
  \url{https://mathoverflow.net/questions/111849/the-vanishing-of-mgl2ni-nx-do-spectra-of-smooth-projective-varieties-ge}.

\bibitem[Hoy15]{hoyois2015algebraic}
\bysame, \emph{From algebraic cobordism to motivic cohomology}, Journal f{\"u}r
  die reine und angewandte Mathematik (Crelles Journal) \textbf{2015} (2015),
  no.~702, 173--226.

\bibitem[Isa19]{isaksen_stable_stems}
Daniel Isaksen, \emph{Stable stems}, vol. 262, American mathematical society,
  2019.

\bibitem[IWX20]{more_stable_stems}
Daniel~C Isaksen, Guozhen Wang, and Zhouli Xu, \emph{More stable stems}, arXiv
  preprint arXiv:2001.04511 (2020).

\bibitem[Joa15]{joachimi2015thick}
Ruth Joachimi, \emph{Thick ideals in equivariant and motivic stable homotopy
  categories}, arXiv preprint arXiv:1503.08456 (2015).

\bibitem[JW12]{jeanneret2012cohomology}
Alain Jeanneret and S~W{\"u}thrich, \emph{On the cohomology of certain
  quotients of the spectrum {BP}}, Glasgow mathematical journal \textbf{54}
  (2012), no.~1, 61--66.

\bibitem[L{\etalchar{+}}15]{levine2015adams}
Marc Levine et~al., \emph{The {Adams-Novikov} spectral sequence and
  {Voevodsky}'s slice tower}, Geometry \& Topology \textbf{19} (2015), no.~5,
  2691--2740.

\bibitem[Laz03]{lazarev2003towers}
Andrey Lazarev, \emph{Towers of {MU}-algebras and the generalized
  {Hopkins-Miller} theorem}, Proceedings of the London Mathematical Society
  \textbf{87} (2003), no.~2, 498--522.

\bibitem[Lev14]{levine2014comparison}
Marc Levine, \emph{A comparison of motivic and classical stable homotopy
  theories}, Journal of Topology \textbf{7} (2014), no.~2, 327--362.

\bibitem[LH]{lurie_hopkins_brauer_group}
Jacob Lurie and Michael Hopkins, \emph{On {Brauer} groups of {Lubin-Tate}
  spectra {I}}, http://www.math.harvard.edu/~lurie/papers/Brauer.pdf.

\bibitem[Lura]{higher_algebra}
Jacob Lurie, \emph{Higher algebra},
  \url{http://www.math.harvard.edu/~lurie/papers/HA.pdf}.

\bibitem[Lurb]{lurie_spectral_algebraic_geometry}
\bysame, \emph{Spectral algebraic geometry},
  \url{http://www.math.harvard.edu/~lurie/papers/SAG-rootfile.pdf}.

\bibitem[Lur09]{lurie_higher_topos_theory}
\bysame, \emph{Higher topos theory}, Annals of Mathematics Studies, vol. 170,
  Princeton University Press, Princeton, NJ, 2009. \MR{2522659 (2010j:18001)}

\bibitem[Mar22]{marek2022h}
Peter Marek, \emph{{$H\mathbb{F}_{2}$}-synthetic homotopy groups of topological
  modular forms}, arXiv preprint arXiv:2202.11305 (2022).

\bibitem[MM12]{maclane2012sheaves}
Saunders MacLane and Ieke Moerdijk, \emph{Sheaves in geometry and logic: A
  first introduction to topos theory}, Springer Science \& Business Media,
  2012.

\bibitem[MNN17]{mathew2017nilpotence}
Akhil Mathew, Niko Naumann, and Justin Noel, \emph{Nilpotence and descent in
  equivariant stable homotopy theory}, Advances in Mathematics \textbf{305}
  (2017), 994--1084.

\bibitem[MV99]{morel19991}
Fabien Morel and Vladimir Voevodsky, \emph{$\mathbb{A}^{1}$-homotopy theory of
  schemes}, Publications Math{\'e}matiques de l'Institut des Hautes {\'E}tudes
  Scientifiques \textbf{90} (1999), no.~1, 45--143.

\bibitem[NS{\O}09]{naumann2009motivic}
Niko Naumann, Markus Spitzweck, and Paul~Arne {\O}stv{\ae}r, \emph{Motivic
  {Landweber} exactness}, Doc. Math \textbf{14} (2009), no.~551-593, 6--7.

\bibitem[PP21]{patchkoria2021adams}
Irakli Patchkoria and Piotr Pstr{\k{a}}gowski, \emph{Adams spectral sequences
  and franke's algebraicity conjecture}, arXiv preprint arXiv:2110.03669
  (2021).

\bibitem[Pst17]{pstrkagowski2017moduli}
Piotr Pstr{\k{a}}gowski, \emph{Moduli of {$\Pi$}-algebras}, arXiv preprint
  arXiv:1705.05761 (2017).

\bibitem[PV21]{pstrkagowski2021abstract}
Piotr Pstr{\k{a}}gowski and Paul VanKoughnett, \emph{Abstract goerss-hopkins
  theory}, Advances in Mathematics (2021), 108098.

\bibitem[PV22]{pstrkagowski2022abstract}
\bysame, \emph{Abstract goerss-hopkins theory}, Advances in Mathematics
  \textbf{395} (2022), 108098.

\bibitem[Rav86]{ravenel_complex_cobordism}
Douglas~C. Ravenel, \emph{Complex cobordism and stable homotopy groups of
  spheres}, Pure and Applied Mathematics, vol. 121, Academic Press, Inc.,
  Orlando, FL, 1986. \MR{860042 (87j:55003)}

\bibitem[S{\etalchar{+}}10]{spitzweck2010relations}
Markus Spitzweck et~al., \emph{Relations between slices and quotients of the
  algebraic cobordism spectrum}, Homology, Homotopy and Applications
  \textbf{12} (2010), no.~2, 335--351.

\bibitem[Sch20a]{schappi2020flat}
Daniel Sch{\"a}ppi, \emph{Flat replacements of homology theories}, arXiv
  preprint arXiv:2011.12106 (2020).

\bibitem[Sch20b]{schappi2020graded}
\bysame, \emph{Graded-tannakian categories of motives}, arXiv preprint
  arXiv:2001.08567 (2020).

\bibitem[Spa88]{spaltenstein1988resolutions}
Nicolas Spaltenstein, \emph{Resolutions of unbounded complexes}, Compositio
  Mathematica \textbf{65} (1988), no.~2, 121--154.

\bibitem[Sta16]{stahn2016motivic}
Sven-Torben Stahn, \emph{The motivic {Adams-Novikov} spectral sequence at odd
  primes over $\mathbb{C}$ and $\mathbb{R}$}, arXiv preprint arXiv:1606.06085
  (2016).

\bibitem[Sus83]{suslin1983thek}
Andrei Suslin, \emph{On the $k$-theory of algebraically closed fields},
  Inventiones mathematicae \textbf{73} (1983), no.~2, 241--245.

\bibitem[Voe03]{voevodsky2003motivic}
Vladimir Voevodsky, \emph{Motivic cohomology with {$\mathbb{Z} /
  2$}-coefficients}, Publications Math{\'e}matiques de l'Institut des Hautes
  {\'E}tudes Scientifiques \textbf{98} (2003), no.~1, 59--104.

\bibitem[Voe10a]{voevodsky2010motivic}
\bysame, \emph{Motivic {Eilenberg-MacLane} spaces}, Publications
  math{\'e}matiques de l'IH{\'E}S \textbf{112} (2010), no.~1, 1--99.

\bibitem[Voe10b]{voevodsky2010simplicial}
\bysame, \emph{Simplicial radditive functors}, Journal of K-theory \textbf{5}
  (2010), no.~2, 201--244.

\bibitem[W{\"u}r91]{wurgler1991morava}
Urs W{\"u}rgler, \emph{Morava {$K$}-theories: a survey}, Algebraic topology
  {P}ozna\'{n} 1989, Lecture Notes in Math., vol. 1474, Springer, Berlin, 1991,
  pp.~111--138. \MR{1133896}

\end{thebibliography}

\end{document}